\newtheorem{thm}{Theorem}[section]
\newtheorem{claim}[thm]{Claim}
\newtheorem{cor}[thm]{Corollary}
\newtheorem{lem}[thm]{Lemma}
\newtheorem{prop}[thm]{Proposition}
\newtheorem*{prob*}{Problem}
\newtheorem*{ques*}{Question}
\newtheorem*{thm*}{Theorem}
\newtheorem*{quest*}{Question}
\theoremstyle{definition}
\newtheorem{defn}[thm]{Definition}
\newtheorem{example}[thm]{Example}
\newtheorem{conj}[thm]{Conjecture}
\newtheorem*{defn*}{Definition}
\newtheorem{rem}[thm]{Remark}
\newtheorem{rem*}[thm]{Remark}
\numberwithin{equation}{section}
\newcommand{\Z}{\mathbb Z}
 \DeclareMathOperator{\sgn}{sgn}
\newcommand{\mG}{\mathcal {G}}
\newcommand{\mL}{\mathcal {L}}
\newcommand{\mH}{\mathcal {H}}
	\title[Finite dimensional nilpotent systems]{Host-Kra factors for $\bigoplus_{p\in P}\mathbb{Z}/p\mathbb{Z}$ actions and finite dimensional nilpotent systems}
\date{\today}
\author{Or Shalom}
\begin{document}
	\begin{abstract}
	Let $\mathcal{P}$ be a countable multiset of primes and let $G=\bigoplus_{p\in P}\mathbb{Z}/p\mathbb{Z}$. We study the universal characteristic factors associated with the Gowers-Host-Kra seminorms for the group $G$. We show that the universal characteristic factor of order $<k+1$ is a factor (Definition \ref{kext}) of an inverse limit of \emph{finite dimensional $k$-step nilpotent homogeneous spaces} (Theorem \ref{Main2:thm}). The latter is a counterpart of a $k$-step nilsystem where the homogeneous group is not necessarily a Lie group. As an application of our structure theorem we derive an alternative proof for the $L^2$-convergence of multiple ergodic averages associated with $k$-term arithmetic progressions in $G$ (Theorem \ref{convergence}) and derive a formula for the limit in the special case where the underlying space is a nilpotent homogeneous system (Theorem \ref{formula}). Our results provide a counterpart of the structure theorem of Host and Kra \cite{HK} and Ziegler \cite{Z} concerning $\mathbb{Z}$-actions and generalizes the results of Bergelson Tao and Ziegler \cite{Berg& tao & ziegler}, \cite{BTZ} concerning $\mathbb{F}_p^\omega$-actions. This is also the first instance of studying the Host-Kra factors of non-finitely generated groups of unbounded torsion. 
	\end{abstract}
	\maketitle
\section{Introduction}
The universal characteristic factors for multiple ergodic averages play an important role in ergodic Ramsey theory. For instance, in the case of $\mathbb{Z}$-actions they are related to the theorem of Szemer\'edi about the existence of arbitrary large arithmetic progressions in sets of positive upper Banach density in the integers \cite{Sz}. The universal characteristic factors associated with multiple ergodic averages in $\mathbb{Z}$-actions were studied by Host and Kra \cite{HK} and independently by Ziegler \cite{Z}. Later, Bergelson, Tao and Ziegler proved a counterpart for the non-finitely generated group $G=\mathbb{F}_p^\omega$. The goal of this paper is to generalize these results further for the group $G=\bigoplus_{p\in P}\mathbb{Z}/p\mathbb{Z}$ where $P$ is a multiset of primes. Moreover, in section \ref{generalizations} we discuss the general case where $G$ is any countable abelian group. In particular, we identify a result (Conjecture \ref{conj}) which leads to a general structure theorem for the Gowers-Host-Kra seminorms.\\

\textbf{Conventions:} We use $X$ to denote a probability space. For technical reasons we assume that any probability space $X=(X,\mathcal{B},\mu)$ is regular\footnote{Meaning that $X$ is a compact metric
space, $\mathcal{B}$ is the completion of the $\sigma$-algebra of Borel sets, and $\mu$ is a Borel measure.} and separable modulo null sets. We let $(U,\cdot)$ denote a compact abelian group and we assume that all topological groups in this paper are metrizable. Let $(G,+)$ be a countable abelain group, a $G$-system is a probability space $X=(X,\mathcal{B},\mu)$ together with an action of $G$ on $X$ by measure preserving transformations $T_g:X\rightarrow X$. Throughout most of this paper we use $G$ to denote the group $\bigoplus_{p\in P}\mathbb{Z}/p\mathbb{Z}$ where $P$ is a given countable multiset of primes.\\

Host and Kra proved that the universal characteristic factors for $\mathbb{Z}$-actions are closely related to an infinite version of the Gowers norms. The following version of the Gowers-Host-Kra (GHK) seminorms in the special case where $G=\mathbb{Z}$ was essentially introduced by Host and Kra in \cite{HK} (see \cite[Proposition 16 Chapter 8]{HKbook} for this version).
\begin{defn}
	[Gowers Host Kra seminorms]
	Let $(X,T_g)$ be a $G$-system, let $\phi\in L^\infty (X)$, and let $k\geq 1$ be an integer. The GHK seminorm $\|\phi\|_{U^k}$ of order $k$ of $\phi$ is defined recursively by the formula
	\[
	\|\phi\|_{U^1}:=\lim_{N\rightarrow\infty}\frac{1}{|\Phi_N^1|}\|\sum_{g\in\Phi_N^1}\phi\circ T_g\|_{L^2}
	\]
	for $k=1$, and
	\[
	\|\phi\|_{U^k}:=\lim_{N\rightarrow\infty}\left(\frac{1}{|\Phi_N^k|}\sum_{g\in\Phi_N^k}\|\Delta_g\phi\|_{U^{k-1}}^{2^{k-1}}\right)^{1/2^k}
	\]
	for $k>1$, where $\phi_N^1,...,\phi_N^k$ are arbitrary F{\o}lner sequences and $\Delta_g \phi(x)=\phi(T_gx)\cdot\overline{\phi(x)}$.
\end{defn}
These seminorms were first introduced by Gowers in the special case where  $G=\mathbb{Z}/N\mathbb{Z}$ in \cite{G}, where he proved quantitative bounds for Szemer\'edi's theorem \cite{Sz}. As mentioned above, Host and Kra \cite{HK} generalized these seminorms for the infinite group $\mathbb{Z}$ and proved that each seminorm corresponds to a unique factor of $X$. Later, Leibman \cite{Leib} proved that these factors coincides with the universal characteristic factors for multiple ergodic averages which were studied by Ziegler in \cite{Z}.
\begin{prop}[Existence and uniqueness of the universal characteristic factors] \label{UCF} Let $G$ be a countable abelian group, let $X$ be a $G$-system, and let $k\geq 1$. Then there exists a factor $Z_{<k}(X)=(Z_{<k}(X),\mathcal{B}_{Z_{<k}(X)},\mu_{Z_{<k}(X)},\pi^X_{Z_{<k}(X)})$ of $X$ with the property that for every $f\in L^\infty (X)$, $\|f\|_{U^{k}(X)}=0$ if and only if $E(f|Z_{<k}(X))=0$. This factor is unique up to isomorphism and is called the $k$-th universal characteristic factor of $X$. If $X=Z_{<k}(X)$ we say that $X$ is of order $<k$.
\end{prop}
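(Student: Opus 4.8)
The plan is to adapt the construction of Host and Kra \cite{HK} (see also \cite[Chapters~8--9]{HKbook}) to the present setting: the only features of $\mathbb Z$ used there are the existence of F{\o}lner sequences and the mean ergodic theorem, both of which are available for any countable abelian --- hence amenable --- group, so the argument carries over with essentially only notational changes, and in particular no ergodicity hypothesis on $X$ is needed.

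First I would build the Host--Kra self-joinings $\mu^{[k]}$ on $X^{2^k}$ recursively: set $\mu^{[0]}=\mu$, and let $\mu^{[k+1]}$ on $X^{2^{k+1}}=X^{2^k}\times X^{2^k}$ be the relatively independent self-joining of $\mu^{[k]}$ over its sub-$\sigma$-algebra of sets invariant under the diagonal action $g\mapsto (T_g)^{\times 2^k}$; an induction shows that $\mu^{[k]}$ has all coordinate marginals equal to $\mu$ and is invariant under this diagonal action and under the coordinate symmetries of the discrete cube $\{0,1\}^k$. A van der Corput argument combined with the mean ergodic theorem then gives, for every $\phi\in L^\infty(X)$,
\[
\|\phi\|_{U^k}^{2^k}=\int_{X^{2^k}} \prod_{\omega\in\{0,1\}^k}\mathcal C^{|\omega|}\bigl(\phi\circ p_\omega\bigr)\, d\mu^{[k]},
\]
where $p_\omega\colon X^{2^k}\to X$ is the $\omega$-th coordinate projection and $\mathcal C$ is complex conjugation. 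This identity shows at once that the limits in the definition of the seminorm exist and are independent of the chosen F{\o}lner sequences, and, via the Gowers--Cauchy--Schwarz inequality $\bigl|\int\prod_\omega \mathcal C^{|\omega|}(\phi_\omega\circ p_\omega)\,d\mu^{[k]}\bigr|\le\prod_\omega\|\phi_\omega\|_{U^k}$ obtained by iterated Cauchy--Schwarz against the defining self-joinings, that $\|\cdot\|_{U^k}$ is a seminorm on $L^\infty(X)$.

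Next I would introduce the dual functions. For $\phi\in L^\infty(X)$, disintegrate $\mu^{[k]}=\int \mu^{[k]}_x\, d\mu(x)$ over the coordinate $p_0$ and put $\mathcal D_k\phi(x)=\int \prod_{\omega\ne 0}\mathcal C^{|\omega|+1}\bigl(\phi\circ p_\omega\bigr)\, d\mu^{[k]}_x$; this is a bounded function on $X$ satisfying $\langle \phi,\mathcal D_k\phi\rangle_{L^2(X)}=\|\phi\|_{U^k}^{2^k}$, and Gowers--Cauchy--Schwarz gives $|\langle\phi,\mathcal D_k\psi\rangle|\le\|\phi\|_{U^k}\,\|\psi\|_{U^k}^{2^k-1}$ for all $\psi\in L^\infty(X)$. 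Let $\mathcal K_k\subseteq L^2(X)$ be the closed linear span of the dual functions $\mathcal D_k\psi$, $\psi\in L^\infty(X)$; the two relations above show that a bounded $f$ satisfies $\|f\|_{U^k}=0$ if and only if $f\perp\mathcal K_k$. It then remains to exhibit a $G$-invariant, $\mu$-complete sub-$\sigma$-algebra $\mathcal Z\subseteq\mathcal B$ with $L^2(X,\mathcal Z,\mu)=\mathcal K_k$; taking $Z_{<k}(X):=(X,\mathcal Z,\mu|_{\mathcal Z})$ (realized as a regular probability space in the usual way) with the natural factor map, the conditional expectation $E(\cdot\,|\,Z_{<k}(X))$ is the orthogonal projection onto $\mathcal K_k$, so $E(f\,|\,Z_{<k}(X))=0\iff\|f\|_{U^k}=0$ as asserted. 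Uniqueness then follows by a standard argument: if $Z$ and $Z'$ both have the stated property, then for bounded $Z$-measurable $f$ the function $f-E(f|Z')$ has vanishing $U^k$-seminorm, hence $E\bigl(f-E(f|Z')\,\big|\,Z\bigr)=0$; since $f$ is $Z$-measurable this reads $f=E(E(f|Z')\,|\,Z)$, and comparing $L^2$-norms forces $f=E(f|Z')$, i.e.\ $f$ is $Z'$-measurable. By symmetry $\mathcal B_Z$ and $\mathcal B_{Z'}$ agree modulo $\mu$-null sets, so the factors are isomorphic.

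The main obstacle is the construction of $\mathcal Z$, i.e.\ showing that $\mathcal K_k$ is the $L^2$-space of a sub-$\sigma$-algebra, equivalently that the orthogonal projection onto $\mathcal K_k$ is a conditional expectation operator. The key point is that the dual functions are stable under multiplication up to approximation: a product $\mathcal D_k\phi_1\cdots\mathcal D_k\phi_m$ is an $L^1(\mu)$-limit of dual functions $\mathcal D_k\Psi$, with $\Psi$ built from the tensor product $\phi_1\otimes\cdots\otimes\phi_m$ on the product system $X^m$, using that the self-joinings $\mu^{[k]}$ are compatible with passage to products and that $\phi\mapsto\mathcal D_k\phi$ is continuous. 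Consequently the uniform closure of the algebra generated by the constants and the dual functions $\mathcal D_k\psi$ is a conjugation-invariant closed subalgebra of bounded functions whose $L^2$-closure is $\mathcal K_k$, so $\mathcal Z:=\sigma(\mathcal D_k\psi:\psi\in L^\infty(X))$ (completed with respect to $\mu$) satisfies $L^2(X,\mathcal Z,\mu)=\mathcal K_k$, while $G$-invariance of $\mathcal Z$ follows from $\mathcal D_k(\phi\circ T_g)=(\mathcal D_k\phi)\circ T_g$, itself a consequence of the diagonal invariance of $\mu^{[k]}$. This algebra/approximation step, which rests on the precise combinatorial structure of the measures $\mu^{[k]}$, is the heart of the Host--Kra construction; once it is in place the remaining verifications are routine.
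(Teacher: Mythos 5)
The paper does not actually prove Proposition \ref{UCF}; it is quoted from the literature (Host--Kra, Ziegler, Leibman), so the relevant comparison is with the standard Host--Kra construction that the paper cites and whose cubic measures $\mu^{[k]}$ are recalled in Appendix \ref{background}. Your outline follows that construction (cubic measures, the integral identity for $\|\cdot\|_{U^k}$, Gowers--Cauchy--Schwarz, dual functions, uniqueness via conditional expectations), and those parts, including the remark that ergodicity is not needed and that the mean ergodic theorem for amenable groups gives existence of the limits, are fine.

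There is, however, a genuine gap at the decisive step, namely your argument that $\mathcal{K}_k$ (the closed span of the dual functions) is the $L^2$-space of a sub-$\sigma$-algebra. You justify the multiplicative closure of $\mathcal{K}_k$ by building $\Psi$ from $\phi_1\otimes\cdots\otimes\phi_m$ on the product system and invoking that ``the self-joinings $\mu^{[k]}$ are compatible with passage to products.'' That compatibility is false in general: already for $k=1$, $\mu^{[1]}_{X\times X}$ is the relatively independent self-joining over the invariant $\sigma$-algebra of the \emph{diagonal} action on $X\times X$, which is typically strictly larger than $\mathcal{I}(X)\otimes\mathcal{I}(X)$ (take two copies of the same irrational rotation, where $x-y$ is invariant). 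Consequently $\mu^{[k]}_{X\times Y}\neq\mu^{[k]}_X\otimes\mu^{[k]}_Y$, and the dual function of $f\otimes g$ on the product system, restricted to the diagonal, is not $\mathcal{D}_kf\cdot\mathcal{D}_kg$: in the rotation example with $k=1$ it produces $\int fg\,d\mu$ instead of $\bigl(\int f\,d\mu\bigr)\bigl(\int g\,d\mu\bigr)$. So your route does not establish that $\mathcal{K}_k$ is an algebra. The statement you need is true, but in the standard treatment it is a \emph{consequence} of the existence of the factor rather than an ingredient: Host and Kra define $Z_{<k}(X)$ directly as the $\sigma$-algebra of sets $A$ such that $1_A\circ p_0$ agrees $\mu^{[k]}$-a.e.\ with a function of the coordinates $\omega\neq 0$, check that this is a $G$-invariant $\sigma$-algebra, and then prove that every dual function is measurable with respect to it; together with $\langle f,\mathcal{D}_kf\rangle=\|f\|_{U^k}^{2^k}$ and your Gowers--Cauchy--Schwarz estimate this gives both directions of the equivalence. (A direct proof that products of dual functions lie in the closed span of dual functions of the \emph{same} system is possible via Host--Kra's cubic convolution analysis, but not via tensorization.) With that step replaced, the rest of your argument goes through.
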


In Appendix \ref{background} we summarize previous work. In particular, we survey the definitions and various results by Host and Kra \cite{HK} and Bergelson Tao and Ziegler \cite{Berg& tao & ziegler}. We also state a structure theorem for totally disconnected $\bigoplus_{p\in P}\mathbb{Z}/p\mathbb{Z}$-systems (Theorem \ref{Main:thm}) from the author's previous work \cite{OS}. This theorem will be used as a black box in this paper. 

 Another related approach for the study of the universal characteristic factors and related problems like the inverse problem for the Gowers norms (see \cite{GT1}, \cite{GTZ} and \cite{TZ} for more details), which we do not pursue in this paper, is the study of nilspaces and nilspace systems. In \cite{SzCam} Antol\'in Camarena and Szegedy introduced a purely combinatorial counterpart of the Host-Kra factors called {\em nilspaces}. The idea was to give a more abstract and general notion which describes the ``cubic structure" of an ergodic system (see Host and Kra \cite[Section 2]{HK}). They proved that connected nilspaces are inverse limits of nilmanifolds in the category of nilspaces, which justifies the \emph{nil} in nilspace. A nilspace system, is a compact nilspace equipped with a continuous action of a group which preserves its cube structure. In \cite{Sz2} Candela, Gonz\'alez-S\'anchez and Szegedy studied nilspace systems and proved that the theory of nilspaces passes through to nilspace systems when the group acting on the space if finitely generated (Gutman Manners and Varj\'u \cite{Gut3} generalized this result to compactly generated groups). In \cite{Sz3} Candela and Szegedy use nilspaces to prove a structure theorem for characteristic factors for GHK
seminorms associated with any nilpotent group. They proved that the characteristic factors for the GHK seminorms of a nilpotent group is a nilspace system and obtained from previous result an alternative proof of Host-Kra structure theorem for finitely generated nilpotent groups. In a series of papers, \cite{Gut1},\cite{Gut2},\cite{Gut3} Gutman, Manners, and Varj{\'u} studied further the structure of nilspaces. By imposing another measure-theoretical aspect to these nilspaces, Gutman and Lian \cite{Gut4} gave yet another alternative proof of Host and Kra's theorem for arbitrary finitely generated abelian groups.\\  

The structure of \textit{nilspace systems} is currently only well understood when the acting group is finitely generated (or compactly generated in general). The only exception is a new result for actions of the group $G=\mathbb{F}_p^\omega$ which was recently obtained by Candela, Gonz\'alez-S\'anchez and Szegedy \cite{CGS}. We believe that the tools developed in this paper may turn out to be useful also in the study of nilspace systems associated when the acting group is not finitely generated and of unbounded torsion. For instance, it is evident from this work that the Host-Kra factors of a non-finitely generated group with unbounded torsion is not necessarily isomorphic to an inverse limit of nilsystems. Thus, contrary to the finitely generated case, it is impossible to describe arbitrary nilspace systems as an inverse limit of nilmanifolds. The main results in this paper suggests that the notion of \emph{nilpotent systems} in Defintion \ref{nilpotent system:def} (see also the double-coset construction from \cite[Theorem 1.21]{OS2}) may replace the notion of a nilmanifold when studying arbitrary nilspace systems.\\

\noindent \textbf{Acknowledgement} I would like to thank my advisor Prof. Tamar Ziegler for many valuables discussions and encouragements. The author is supported by an ERC grant ErgComNum 682150.

\section{main results} Recall that a $k$-step nilsystem is a quadruple $(\mathcal{G}/\Gamma,\mathcal{B},\mu,R)$ where $\mathcal{G}$ is a $k$-step nilpotent Lie group, $\Gamma$ is a discrete co-compact subgroup, $\mathcal{B}$ is the Borel $\sigma$-algebra, $\mu$ is the Haar measure and $R:\mG/\Gamma \rightarrow \mG/\Gamma$ is a left multiplication by some element $r\in\mG$. Host and Kra \cite[Theorem 10.1]{HK} and independently Ziegler \cite[Theorem 1.7]{Z} proved the following structure theorem for the universal characteristic factors concerning $\mathbb{Z}$-actions.
\begin{thm} \label{HKZ}
    Let $(X,\mathcal{B},\mu,T)$ be an ergodic invertible system. Then for every $k\geq 1$, the factor $Z_{<k+1}(X)$ is isomorphic to an inverse limit of $k$-step nilsystems.
\end{thm}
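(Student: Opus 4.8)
The plan is to argue by induction on $k$, using the recursive structure of the GHK seminorms together with the characterisation of $Z_{<k+1}(X)$ from Proposition \ref{UCF}; call an ergodic system with $X=Z_{<k+1}(X)$ a \emph{system of order $\le k$}, so that the claim is exactly that every such system is an inverse limit of $k$-step nilsystems. The base case $k=1$ is the classical fact that $Z_{<2}(X)$ is the Kronecker factor, a group rotation $z\mapsto z+\alpha$ on a compact abelian group $Z$; writing $Z$ as the inverse limit of its quotients by annihilators of finitely generated subgroups of $\widehat{Z}$ exhibits it as an inverse limit of rotations on finite-dimensional tori (times finite groups), i.e.\ of $1$-step nilsystems.

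For the inductive step I would first invoke the tower structure of \cite{HK}: an ergodic system of order $\le k$ is a compact abelian group extension of its order-$\le(k-1)$ factor, so $Z_{<k+1}(X)=Z_{<k}(X)\times_\rho U$ for a compact abelian group $U$ and a cocycle $\rho:Z_{<k}(X)\to U$ over the transformation $T$. Since by the inductive hypothesis $Z_{<k}(X)$ is an inverse limit of $(k-1)$-step nilsystems, and $U$ is an inverse limit of finite-dimensional tori and finite groups, a standard inverse-limit reduction lets me assume that $Z_{<k}(X)$ is a single $(k-1)$-step nilsystem $\mathcal{G}'/\Gamma'$ and that $U$ is a circle.

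The core of the proof is the analysis of $\rho$. The fact that $f(x,u)=u$ is measurable with respect to $Z_{<k+1}(X)$ but not with respect to $Z_{<k}(X)$ forces, through the seminorm recursion, that each derivative cocycle $\Delta_s\rho(x)=\rho(s\cdot x)\overline{\rho(x)}$, as $s$ ranges over the rotation group of the base, is cohomologous to a constant via a transfer function of order $\le k-1$ — a Conze–Lesigne–type functional equation. Choosing these solutions measurably and consistently in $s$, one assembles a group $\mathcal{G}$ of measure-preserving transformations of $Z_{<k+1}(X)$, consisting of lifts $(x,u)\mapsto(s\cdot x,F_s(x)\,u)$ of the elements $s\in\mathcal{G}'$ together with the vertical rotations $(x,u)\mapsto(x,vu)$, $v\in U$. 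A commutator computation using the nilpotency of $\mathcal{G}'$ and the fact that the $F_s$ drop one step in order at each bracket shows that $\mathcal{G}$ is $k$-step nilpotent, acts transitively on $Z_{<k+1}(X)$, and has discrete cocompact stabiliser $\Gamma$ at a point, whence $Z_{<k+1}(X)\cong\mathcal{G}/\Gamma$; undoing the reduction realises the general $Z_{<k+1}(X)$ as an inverse limit of $k$-step nilsystems.

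I expect the main obstacle to be the transitivity of $\mathcal{G}$, equivalently that the Conze–Lesigne solutions $F_s$ can be selected so as both to glue into genuine transformations and to sweep out the whole vertical group $U$. This needs ergodicity of $T$, separability of $X$, the precise form of the cubic measures $\mu^{[k]}$ and Mackey-type cohomological arguments to be used in concert, and it is exactly the point that fails for $G=\bigoplus_{p\in P}\mathbb{Z}/p\mathbb{Z}$, where the natural structure groups need not be Lie groups — which is what the finite-dimensional nilpotent systems introduced in this paper are designed to remedy.
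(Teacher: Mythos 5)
Your outline reproduces the Host--Kra strategy --- the tower of abelian extensions (Proposition \ref{abelext:prop}), reduction to Lie-fibered (``toral'') systems via inverse limits, Conze--Lesigne equations for the derivatives $\Delta_s\rho$, assembly of the group of lifts $(x,u)\mapsto(sx,F_s(x)u)$ together with the vertical rotations, and the identification $Z_{<k+1}(X)\cong\mathcal{G}/\Gamma$ --- which is exactly the argument of \cite{HK} (and, in different language, \cite{Z}) that this paper invokes without proof and restates in refined form as Theorem \ref{HK}, so you are on the paper's route. The one place where your sketch understates the work is the ``standard inverse-limit reduction'' to a single nilsystem base with circle fiber: quotienting $U$ is harmless only for the top structure group, and descending the cocycle (up to coboundaries) to a finite stage of the inverse limit of the base is not formal but rests on the countability of Conze--Lesigne solutions modulo phase polynomials (the $\mathbb{Z}$-action analogue of Theorem \ref{countable}, i.e.\ \cite[Lemma 9.2]{HK}), which is also what ultimately makes the Host--Kra group a Lie group with discrete cocompact stabiliser in the toral case.
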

Our goal is to generalize this result to $\bigoplus_{p\in P}\mathbb{Z}/p\mathbb{Z}$-actions, where $P$ is a countable multiset of primes. As a first step we explain how to interpret the results of Bergelson, Tao and Ziegler \cite{Berg& tao & ziegler} about $\mathbb{F}_p^\omega$-systems in this language. We need the following version of nilsystems.
\begin{defn} [Zero dimensional nilpotent system] \label{zerodimnilsystem}
    Let $k\geq 1$ and let $G$ be a countable abelian group. A \textit{zero dimensional $k$-step nilpotent system} is a quadruple $X=(\mG/\Gamma,\mathcal{B},\mu,(R_g)_{g\in G})$ where $\mG$ is a zero dimensional\footnote{A zero dimensional group is a topological group with a totally disconnected topology. That is, every point has a basis of clopen sets.} $k$-step nilpotent group, $\Gamma$ is a closed (not necessarily discrete) co-compact subgroup, $\mathcal{B}$ is the Borel $\sigma$-algebra, $\mu$ is a left $\mathcal{G}$-invariant measure and there exists a homomorphism $\varphi:G\rightarrow \mG$ such that for every $g\in G$ the transformation  $R_g:\mG/\Gamma\rightarrow \mG/\Gamma$ is given by a left multiplication by $\varphi(g)$.
\end{defn}
The following structure theorem for the universal characteristic factors concerning $\mathbb{F}_p^\omega$ can be derived from the work of Bergelson, Tao and Ziegler \cite{Berg& tao & ziegler}.\footnote{A proof of this result is not explicitly given in \cite{Berg& tao & ziegler}. One way to prove this theorem is by following the arguments in this paper in the simple case where $P=\{p,p,p,...\}$.}
\begin{thm} \label{BTZthm}
    Let $k\geq 1$, any ergodic $\mathbb{F}_p^\omega$-system of order $<k+1$ is a zero dimensional $k$-step nilpotent system whenever $k<p$. In the low characteristic case $k\geq p$, our argument shows that there exists some $m=O_k(1)$ and an $m$-extension (see Definition \ref{kext}) isomorphic to a zero dimensional $k$-step nilpotent $\mathbb{Z}/p^m\mathbb{Z}$-system.
\end{thm}
\begin{rem}
We cautiously note that it is possible that the group $\mathcal{G}$ in definition \ref{zerodimnilsystem} is not locally compact. For instance consider the ergodic $\mathbb{F}_p^\omega$-system $$X=\prod_{i=1}^\mathbb{N} C_p \times_\sigma \prod_{i=1}^\mathbb{N}C_p$$ of order $<3$, where $\sigma$ is any phase polynomial of degree $<2$. If $\mathcal{G}(X)$ is the Host-Kra group of $X$ (see Definition \ref{HKgroup:def}), then since $\sigma$ is a phase polynomial of degree $<2$, one can show that $\mathcal{G}(X)$ is a semi-direct product of $\prod_{i=1}^\mathbb{N}C_p$ with $\prod_{i=1}^\mathbb{N}C_p\oplus \text{Hom}(\prod_{i=1}^\mathbb{N}C_p,\prod_{i=1}^\mathbb{N}C_p)\cong \prod_{i=1}^\mathbb{N}C_p\oplus \left(\mathbb{F}_p^\omega\right)^{\mathbb{N}}$. The group $\Gamma=\left(\mathbb{F}_p^\omega\right)^{\mathbb{N}}$ is a non-locally compact totally disconnected subgroup and $\mathcal{G}(X)/\Gamma \cong X$.
\end{rem}
Let $P$ be a countable multiset of primes and $G=\bigoplus_{p\in P}\mathbb{Z}/p\mathbb{Z}$. In order to prove a counterpart of the theorem above for $G$-systems, we introduce a new notion of extensions (Definition \ref{kext}). Our main theorem (Theorem \ref{Main2:thm}) asserts, roughly speaking, that every ergodic $G$-system of order $<k+1$ is a (special) factor of an inverse limit of $k$-step \textit{finite dimensional nilpotent systems}.
\subsection{$k$-extensions and finite dimensional groups}
Let $(X,T_g)$ be a $G$-system
and $\varphi:H\rightarrow G$ be a homomorphism from a countable abelian group $H$ onto $G$. The homomorphism $\varphi$ gives rise to an action of $H$ on $X$ by $S_h x = T_{\varphi(h)} x$. This observation allows to define extensions outside of the category of $G$-systems.
\begin{defn} [$k$-extensions] \label{kext} Let $P=\{p_1,p_2,...\}$ be a multiset of primes and $G=\bigoplus_{p\in P}\mathbb{Z}/p\mathbb{Z}$. For a natural number $k\geq 1$, we define $G^{(k)}=\bigoplus_{p\in P} \mathbb{Z}/{p^k}\mathbb{Z}$ and let
\begin{align*}\varphi_k&:G^{(k)}\rightarrow G\\
\varphi(g)_i& = g_i \mod p_i 
\end{align*}
We say that a $G^{(k)}$-system $Y$ is a  $k$-extension of $X$ if it is an extension of $(X,G^{(k)})$.
\end{defn}
\begin{example} \label{example}
	    Let	$G=\Z/2\Z$ and $X=\{-1,1\}$ and define an action of $G$ on $X$ by $T_gx = (-1)^gx$. Similarly, let $H=\Z/4\Z$ and $Y=\{-1,-i,i,1\}$, then $H$ acts on $Y$ by $S_h y=i^hy$. The system $(Y,H)$ defines a $2$-extension of $(X,G)$ with respect to the the homomorphism
		\[
		\begin{aligned}
		&\varphi:H\rightarrow G \\ \varphi(h)&=h\mod 2
		\end{aligned}
		\]
		and the factor map $\pi:Y\rightarrow X$, where  $\pi(y)=y^2$.
	\end{example}
There are multiple notions of dimension for topological spaces in the literature, which do not coincide for non-locally compact groups (see \cite{AM} for a survey about the different notions of dimension and the history behind them). Throughout we say that a topological space $X$ is totally disconnected if for every $x,y\in X$, there exists a clopen subset $C\subseteq X$ such that $x\in C$ and $y\not\in C$.  Another possible definition for a totally disconnected space $X$ is to require that all connected components are singletons. These definitions do not coincide in general, but in this paper the results remain the same if we interchange one definition with another. It is well known that all products and closed subsets of totally disconnected sets are totally disconnected.\\ Since there is no concrete notion of dimension for non-locally compact groups we will use the following natural definition instead.
\begin{defn}\label{finitedim:def}
A topological group $H$ is said to be \textit{zero dimensional} if it is totally disconnected. A topological group $H$ is said to be \textit{finite dimensional} if it is contained in the family of groups $\mathcal{FD}$, where $\mathcal{FD}$ is the minimal family satisfying that
\begin{itemize}
    \item[(i)] $\mathcal{FD}$ contains all Lie groups and all totally disconnected groups.
    \item[(ii)] If $K\leq \mathcal{FD}$ then any closed subgroup of $K$ is in $\mathcal{FD}$.
    \item[(iii)] If $L\leq K$ is a closed subgroup and  $K/L,L\in\mathcal{FD}$, then $K\in\mathcal{FD}$.
    \end{itemize}
\end{defn}
In the specific case of compact abelian groups, this is equivalent to the following definition.
\begin{prop} [finite dimensional compact abelian groups] \label{FD:prop} \cite[Theorem 8.22]{HM} The following conditions are equivalent for a compact abelian group $U$ and a natural number $n$:
	\begin{enumerate}
		\item {$U$ is of dimension $n$.}
		\item {There exists a compact totally disconnected subgroup $\Delta$ of $U$ and a short exact sequence $$1\rightarrow \Delta \rightarrow U\rightarrow (S^1)^n\rightarrow 1.$$}
		\item {There exists a compact zero dimensional subgroup $\Delta$ of $U$ and a continuous surjective homomorphism $\varphi:\Delta\times\mathbb{R}^n\rightarrow U$ such that $\Gamma:=\ker\varphi$ is discrete. Hence, $U\cong (\Delta\times \mathbb{R}^n)/ \Gamma$ as topological groups.}
	\end{enumerate}
\end{prop}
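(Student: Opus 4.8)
The plan is to transport the whole statement to the category of discrete abelian groups via Pontryagin duality and then read off the three equivalences from elementary facts about abelian groups together with the standard dictionary relating a compact abelian group $U$ to its character group $\widehat U$: closed subgroups correspond to quotient groups and conversely; the circle $S^1$ is dual to $\Z$ while $\R$ is self-dual; $U$ is connected if and only if $\widehat U$ is torsion-free; and $U$ is compact and totally disconnected (i.e.\ profinite) if and only if $\widehat U$ is a torsion group. I will also use that $\dim U$ equals the torsion-free rank $r_0(\widehat U)=\dim_{\mathbb{Q}}(\widehat U\otimes_{\Z}\mathbb{Q})$ of the character group, a fact one may take as the working meaning of $\dim U$ here or derive from the structure theory of compact connected abelian groups.

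For $(1)\Leftrightarrow(2)$ I would dualize: a short exact sequence $1\to\Delta\to U\to(S^1)^n\to1$ with $\Delta$ compact and totally disconnected is precisely a short exact sequence of discrete groups $0\to\Z^n\to\widehat U\to\widehat\Delta\to0$ with $\widehat\Delta$ torsion, that is, a free subgroup of $\widehat U$ of rank $n$ with torsion quotient. If $r_0(\widehat U)=n$, a maximal $\Z$-independent subset $x_1,\dots,x_n\subseteq\widehat U$ generates such a subgroup, the quotient being torsion by maximality; conversely the existence of such a subgroup forces $r_0(\widehat U)=n$. Dualizing back — using that the dual of an injection of discrete groups is a surjection of compact groups whose kernel is dual to the cokernel — returns the exact sequence of $(2)$, with $\Delta$ profinite since $\widehat\Delta$ is torsion.

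For $(2)\Rightarrow(3)$ the idea is to \emph{resolve} the extension by a copy of $\R^n$, exploiting that $\R^n$ is divisible and contains $\Z^n$ as a cocompact discrete subgroup. Starting from $0\to\Z^n\xrightarrow{\iota}\widehat U\to\widehat\Delta\to0$, one combines $\iota$ with the inclusion $\Z^n\hookrightarrow\R^n$ (for instance via the pushout $\widehat U\oplus_{\Z^n}\R^n$, or directly) to realize $\widehat U$ as a discrete cocompact subgroup of $T\times\R^n$ for a suitable discrete torsion group $T$, absorbing the torsion of $\widehat U$ into $T$ and using $r_0(\widehat U)=n$ to account for the euclidean directions. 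Taking duals turns this into a continuous surjection $\Delta'\times\R^n\twoheadrightarrow U$ with discrete kernel $\Gamma$, where $\Delta'=\widehat T$ is profinite; after quotienting $\Delta'$ by the finite group $\Gamma\cap(\Delta'\times\{0\})$ one may take the zero dimensional factor to be a subgroup $\Delta\le U$, which is exactly $(3)$. Finally $(3)\Rightarrow(1)$ is short: if $\Gamma$ is discrete in the locally compact group $\Delta\times\R^n$ then it acts freely and properly discontinuously, so $\Delta\times\R^n\to U$ is a covering map and $U$ is locally homeomorphic to $\Delta\times\R^n$; since $\Delta$ is compact and zero dimensional, $\dim(\Delta\times\R^n)=n$, whence $\dim U=n$.

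The main obstacle is the middle implication $(2)\Rightarrow(3)$: passing from the purely algebraic extension of discrete groups to a genuine topological presentation $(\Delta\times\R^n)/\Gamma$ requires keeping careful track of which subgroups stay discrete and which quotients stay compact under duality, and of arranging that exactly $n$ euclidean coordinates appear. Since we only invoke the proposition as a black box, and all of this is carried out in detail in \cite[Theorem~8.22]{HM}, I would not reproduce it here.
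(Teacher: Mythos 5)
The paper itself offers no proof of this proposition: it is quoted verbatim as an external black box from Hofmann--Morris \cite[Theorem 8.22]{HM}, so there is no internal argument to compare against. Your duality sketch is, in outline, the standard route and is consistent with how the result is actually proved there: translate everything through Pontryagin duality ($S^1\leftrightarrow\mathbb{Z}$, $\mathbb{R}$ self-dual, compact totally disconnected $\leftrightarrow$ discrete torsion), so that (2) becomes the existence of a free subgroup $\mathbb{Z}^n\leq\widehat U$ with torsion quotient, and (3) becomes a realization of $\widehat U$ as a discrete cocompact subgroup of $T\times\mathbb{R}^n$ with $T$ discrete torsion, obtained by pushing out along $\mathbb{Z}^n\hookrightarrow\mathbb{R}^n$. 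That part of your sketch is sound, and the (3)$\Rightarrow$(1) covering-map argument is fine.

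Two caveats, both of which you partially acknowledge. First, the real content of the cited theorem is precisely the identification of the \emph{topological} dimension of $U$ with the torsion-free rank $r_0(\widehat U)$; if you simply ``take this as the working meaning of $\dim U$'' then (1)$\Leftrightarrow$(2) becomes close to a tautology, and the statement you have proved is weaker than the one the paper uses, since Definition \ref{finitedim:def} and the surrounding arguments treat dimension topologically. A complete proof must show that the covering (or inductive) dimension of a compact abelian group equals $r_0(\widehat U)$, which is the nontrivial step in \cite{HM}. Second, in (2)$\Rightarrow$(3) the pushout $\widehat U\oplus_{\mathbb{Z}^n}\mathbb{R}^n$ must be topologized so that it is locally compact, $\widehat U$ sits in it discretely and cocompactly, and the torsion part splits off as a \emph{discrete} direct factor; the algebraic splitting via divisibility of $\mathbb{R}^n$ does not automatically give this, so some care is genuinely needed there. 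Since you defer exactly these points to \cite[Theorem 8.22]{HM}, your treatment matches the paper's own use of the proposition as a quoted result, but as a standalone proof it is a sketch rather than a complete argument.
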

We generalize Definition \ref{zerodimnilsystem}.
\begin{defn}[Nilpotent systems] \label{nilpotent system:def}
     Let $k\geq 1$ and let $G$ be a countable abelian group. A quadruple $X=(\mG/\Gamma,\mathcal{B},\mu,(R_g)_{g\in G})$ where $\mG$ is a $k$-step nilpotent Polish group, $\Gamma$ is a closed co-compact zero dimensional subgroup, and $\mathcal{B},\mu$ and $R_g$ as in Definition \ref{zerodimnilsystem} is called a \textit{$k$-step nilpotent system}. If in addition $\mG$ is a finite dimensional group, we say that $X$ is a finite dimensional $k$-step nilpotent system.
\end{defn}
We note that any zero dimensional subgroup of a Lie group is discrete. Therefore, if $\mG$ is a Lie group, then the nilpotent system $X$ is a nilsystem. Moreover, even though the notion of dimension in non-locally compact groups may exhibit some pathologies, the quotient space $\mG/\Gamma$ is compact and so it is of finite dimension with respect to any natural notion of dimension of compact topological spaces (e.g. Lebesgue covering dimension or the small or large inductive dimension).
\subsection{The Host-Kra group}
The Host-Kra group plays an important role in this paper. We generalize the definition from \cite[Section 5]{HK} to arbitrary countable abelian group $G$.
\begin{defn} \label{HKgroup:def}
    Let $G$ be a countable abelian group and let $(X,G)$ be a $G$-system. We denote by $\mG(X)$ the group of all transformations $t:X\rightarrow X$ with the property that for every $l>0$, the measure $\mu^{[l]}$ is $t^{[l]}$-invariant and $t^{[l]}$ acts trivially on $\mathcal{I}^{[l]}(X)$.
    \end{defn}
The measure $\mu^{[l]}$, the transformation $t^{[l]}:X^{[l]}\rightarrow X^{[l]}$ and the $\sigma$-algebra $\mathcal{I}^{[l]}(X)$ are defined in Appendix \ref{background}. We note that if $X$ is a systems of order $<k+1$ (i.e. $X=Z_{<k+1}(X)$), then $\mG(X)$ is a $k$-step nilpotent locally compact Polish group \cite[Corollary 5.9]{HK}.\\

In \cite{HK} Host and Kra proved the following stronger version of Theorem \ref{HKZ}.
\begin{thm} \label{HK}
Let $k\geq 0$. Let $X$ be an ergodic $\mathbb{Z}$-system of order $<k+1$. Then, for every $n\in\mathbb{N}$ there exists a factor $X_n$ of $X$ such that: 
\begin{enumerate}
    \item{$X_n$ is an increasing sequence (i.e. $X_n$ is a factor of $X_{n+1}$ for every $n$) and $X$ is the inverse limit of $X_n$.}
    \item {For each $n$, $X_n$ is isomorphic to the system $(\mG(X_n)/\Gamma(X_n),\mathcal{B}_n,\mu_n,S_n)$ where the Host-Kra group $\mG(X_n)$ is a $k$-step nilpotent locally compact Lie group, $\Gamma(X_n)$ is a discrete co-compact subgroup of $\mG(X_n)$, $\mathcal{B}_n$ is the Borel $\sigma$-algebra and $\mu_n$ is the Haar measure. The action of $S_n$ on $\mG(X_n)/\Gamma(X_n)$ is given by left multiplication by an element in $\mG(X_n)$.}
\end{enumerate}
\end{thm}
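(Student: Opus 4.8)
The plan is to obtain item (1) almost immediately from Theorem \ref{HKZ} and to concentrate the work on item (2), which is really a statement about the Host--Kra group of a single $k$-step nilsystem. Since $X$ is of order $<k+1$ we have $X=Z_{<k+1}(X)$, so Theorem \ref{HKZ} presents $X$ as an inverse limit of $k$-step nilsystems; using separability of $X$ I would pass to a countable cofinal subfamily of the defining inverse system and relabel it as a sequence $(X_n)_{n\in\mathbb N}$ with factor maps $X_{n+1}\to X_n$, which is exactly the assertion that $X_n$ is a factor of $X_{n+1}$ and $X=\varprojlim_n X_n$. What then remains is to show that a given ergodic $k$-step nilsystem $N=\mathcal{H}/\Lambda$ (with $\mathcal{H}$ a nilpotent Lie group and $\Lambda$ discrete co-compact) is \emph{also} isomorphic to $\mG(N)/\Gamma(N)$, with $\mG(N)$ a $k$-step nilpotent locally compact Lie group, $\Gamma(N)$ discrete co-compact, the $\sigma$-algebra Borel, the measure Haar, and the transformation a left translation; applying this to each $X_n$ gives (2).

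To establish this for a fixed $N$, I would argue as follows. As $N$ has order $<k+1$, \cite[Corollary 5.9]{HK} shows that $\mG(N)$ is a locally compact polish $k$-step nilpotent group acting continuously on $N$. Next I would check that each left translation $L_h\colon g\Lambda\mapsto hg\Lambda$ by $h\in\mathcal{H}$ lies in $\mG(N)$: it preserves every cube measure $\mu^{[l]}$ and acts trivially on $I^{[l]}(N)$, which is the standard description of the Host--Kra cube structure of a nilsystem (the $\mu^{[l]}$ being Haar measures on the corresponding Host--Kra cube subgroups). Hence $\mG(N)$ acts continuously and transitively on the compact space $N$, and faithfully since its elements are by definition point transformations of $N$. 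Since $N$ is a finite-dimensional, locally connected, locally compact Hausdorff space, the Gleason--Montgomery--Zippin solution of Hilbert's fifth problem (a locally compact group acting continuously, faithfully and transitively on such a space is a Lie group; in the disconnected case one works on the stabilizer of a connected component, which is open of finite index, and this forces $\mG(N)$ itself to be Lie) shows that $\mG(N)$ is a Lie group, which is $k$-step nilpotent and locally compact by the above.

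Finally, fixing $x_0\in N$ and setting $\Gamma(N)=\{t\in\mG(N):tx_0=x_0\}$, transitivity gives a $\mG(N)$-equivariant homeomorphism $\mG(N)/\Gamma(N)\cong N$ carrying the Haar measure to the unique invariant measure $\mu_N$ and the transformation of $N$ to left translation by the element $\tau$ with $\tau x_0$ equal to the image of $x_0$ under the transformation. The subgroup $\Gamma(N)$ is closed, and co-compact because $N$ is compact; for discreteness I would use that the $\mG(N)$-action is locally free — an element of $\mG(N)$ near the identity and fixing $x_0$ is, by the rigidity forced by acting trivially on every $I^{[l]}(N)$, pinned on a neighbourhood of $x_0$ and hence on all of $N$, so equals the identity; equivalently one identifies $\Gamma(N)$, up to finite index, with the lattice $\Lambda$ under the inclusion $\mathcal{H}\hookrightarrow\mG(N)$. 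This yields the desired presentation of $N$, and applying it to each $X_n$ completes the proof.

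The step I expect to be the genuine obstacle is the passage from the ``abstract'' locally compact group $\mG(N)$ to a Lie group: it is there that one must combine the no-small-subgroups structure theory of locally compact groups with Host--Kra's explicit computation of the cube measures of a nilsystem, everything else being formal inverse-limit bookkeeping. An alternative route that avoids using Theorem \ref{HKZ} as a black box is to induct on $k$, using that an ergodic system of order $<k+1$ is a compact abelian group extension of $Z_{<k}(X)$: one would write the structure group as an inverse limit of compact abelian Lie groups, push the defining cocycle onto each of them and then, exploiting that it is a cocycle of ``type $k$'', down to a finite stage of the inductively constructed Lie presentation of $Z_{<k}(X)$, and finally identify the Host--Kra group of the resulting abelian extension as a Lie group; in that approach the main obstacle shifts to the cocycle-descent step.
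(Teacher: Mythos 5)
First, a point of calibration: this paper does not prove Theorem \ref{HK} at all --- it is quoted from \cite{HK} as known input --- so your proposal has to be measured against Host--Kra's own argument and against the analogous proof this paper gives for Theorem \ref{Main2:thm}. Your main route inverts the actual logic: in \cite{HK} the strong statement (transitivity of $\mG(X_n)$ with discrete stabilizer, for an exhausting sequence of toral factors) is what gets proved, by induction on $k$, writing $X_n=Z_{<k}(X_n)\times_\rho U$ with $U$ a Lie group and lifting elements of $\mG(Z_{<k}(X_n))$ along the lower central series via the criterion reproduced here as Lemma \ref{CLproperty:lem}; Theorem \ref{HKZ} is the corollary. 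Your plan instead deduces Theorem \ref{HK} from Theorem \ref{HKZ} plus the claim that \emph{any} ergodic $k$-step nilsystem $N=\mathcal{H}/\Lambda$ satisfies $N\cong\mG(N)/\Gamma(N)$ with $\mG(N)$ a nilpotent Lie group and $\Gamma(N)$ discrete. That claim is true, but it is essentially the content of the theorem, and your argument for it has genuine gaps; the ``alternative route'' you sketch in your last sentence is in fact the proof.

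The concrete gaps are these. (i) $\mG(N)$ is a group of measure-preserving transformations defined modulo null sets, so a priori it has only a near-action on $N$; Montgomery--Zippin and Effros (Theorem \ref{Effros}) require a genuine continuous, effective, transitive action on the compact space. This is precisely the obstacle the paper points out after Theorem \ref{order<3} (``we only have a near-action of $\mG(X)$ on $X$'') and resolves in Section \ref{identification:sec} by building a topological model through the algebra of $\mG(X)$-continuous functions; you simply assert continuity and faithfulness. Moreover, for a disconnected nilmanifold your reduction to the stabilizer of a component is not automatic, since elements of $\mG(N)$ need not commute with the $G$-action, so effectiveness on a single component does not come for free. (ii) Discreteness of $\Gamma(N)$: the ``rigidity'' argument (an element near the identity fixing $x_0$ is pinned near $x_0$, hence everywhere) is not a proof, and the fallback identification of $\Gamma(N)$ with $\Lambda$ ``up to finite index'' is false in general --- already for a $2$-step system $X=Z\times_\rho U$ the stabilizer of a point is essentially $\{S_{1,\chi}:\chi\in\text{Hom}(Z,U)\}$ normalized at the point, a countable group with no natural commensurability to the lattice of the given presentation. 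The correct argument runs through the exact sequence $1\rightarrow P_{<k+1}(Z_{<k}(N),U)\rightarrow \mG(N)\rightarrow \mG(Z_{<k}(N))$ and the separation lemma (Lemma \ref{sep:lem}), which makes phase polynomials modulo constants countable and discrete when $U$ is a Lie group; this is how both \cite{HK} and Section \ref{identification:sec} obtain discreteness (and, incidentally, how one sees $\dim\mG(N)=\dim N$, without which ``stabilizer discrete'' could fail). Finally, the input that every translation by $\mathcal{H}$ lies in $\mG(N)$ (cube measures of a nilsystem are Haar on the cube subgroups, and translations act trivially on $I^{[l]}(N)$) is correct but is itself a computation carried out in \cite{HK}, not a formality. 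So the skeleton is salvageable, but the steps you treat as routine --- continuity of the action and discreteness of the stabilizer --- are exactly where the real work of the theorem lives.
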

Our main result is the following counterpart of Theorem \ref{HK} for the group $G=\bigoplus_{p\in P}\mathbb{Z}/p\mathbb{Z}$.
\begin{thm}[Structure Theorem] \label{Main2:thm} 
	Let $k\geq 0$ and let $X$ be an ergodic $\bigoplus_{p\in P}\mathbb{Z}/p\mathbb{Z}$-system of order $<k+1$. Then for some $m=O_k(1)$\footnote{We use $O_k(1)$ to denote a quantity which is bounded by a constant depending only on $k$.} there exists an $m$-extension $Y$ of $X$ that is an inverse limit of finite dimensional $k$-step nilpotent systems. Moreover, for each $n\in\mathbb{N}$ there exists a factor $Y_n$ of $Y$ such that:
	\begin{enumerate}
	 \item{$Y_n$ is an increasing sequence and $Y$ is the inverse limit of $Y_n$.}
    \item {For each $n$, $Y_n$ is isomorphic to the system $(\mG(Y_n)/\Gamma(Y_n),\mathcal{B}_n,\mu_n,(S_{n,g})_{g\in \bigoplus_{p\in P}\mathbb{Z}/p^m\mathbb{Z}})$ where the Host-Kra group $\mG(Y_n)$ is a finite dimensional $k$-step nilpotent group, $\Gamma(Y_n)$ is a zero dimensional closed co-compact subgroup of $\mG(Y_n)$, $\mathcal{B}_n$ is the Borel $\sigma$-algebra and $\mu_n$ is a left $\mG(Y_n)$-invariant measure. For every $g\in \bigoplus_{p\in P}\mathbb{Z}/p^m\mathbb{Z}$, the action of $S_{n,g}$ on $\mG(Y_n)/\Gamma(Y_n)$ is given by left multiplication by an element in $\mG(Y_n)$.}
    \end{enumerate}
\end{thm}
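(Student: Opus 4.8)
The plan is to run an induction on the order $k$, mimicking the Host--Kra machinery while carefully tracking the failure of local compactness. The base case $k=0$ is trivial (a system of order $<1$ is a point). For the inductive step we assume the statement for order $<k$ and consider an ergodic $G$-system $X$ of order $<k+1$. By the known structure theory (the totally disconnected structure theorem, Theorem \ref{Main:thm}, together with the abelian-group extension description coming from the Host--Kra tower), $X$ is an abelian group extension $X = Z \times_\rho U$ where $Z = Z_{<k}(X)$ has order $<k$ and $U$ is a compact abelian group, with cocycle $\rho: G \times Z \to U$ of "degree $<k$" (a phase polynomial cocycle in the appropriate sense). By the inductive hypothesis, after passing to an $m'$-extension $\widetilde Z$ of $Z$ (with $m' = O_k(1)$) we may assume $Z$ is an inverse limit of finite dimensional $(k-1)$-step nilpotent systems, each with Host--Kra group finite dimensional and $\Gamma$ zero dimensional.

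Next I would analyze the cocycle $\rho$. The key point is that $U$ need not be finite dimensional, but the part of $U$ that genuinely contributes a "new dimension" at level $k$ is controlled: using Proposition \ref{FD:prop}, write $U$ via a short exact sequence $1 \to \Delta \to U \to (S^1)^n \to 1$ with $\Delta$ zero dimensional, and separately handle the zero dimensional part (which contributes nothing to dimension and is absorbed into $\Gamma$) and the torus part. The crucial quantitative input is that taking an $m$-extension of $G$, i.e. replacing $G = \bigoplus \Z/p\Z$ by $G^{(m)} = \bigoplus \Z/p^m\Z$ with $m = O_k(1)$, allows one to "integrate" or "straighten" the cocycle: a phase polynomial cocycle of degree $<k$ pulled back to $G^{(m)}$ becomes cohomologous to one taking values in a finite dimensional group, because the obstruction classes live in cohomology groups that are killed by multiplication by a bounded power of the primes. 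This is exactly the role the $m$-extension plays and is where the hypothesis "multiset of primes, unbounded torsion" forces us to leave the category of $G$-systems. Concretely, I would show that on $G^{(m)}$ the cocycle $\rho$ is cohomologous to a cocycle valued in a finite dimensional subgroup $U_0 \le U$, so that the relevant extension $Y = \widetilde Z \times_{\rho'} U$ (as a $G^{(m)}$-system, after combining the two extension steps so that $m = \max(m', \ldots) = O_k(1)$) has a finite dimensional Host--Kra group.

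The remaining steps are the standard Host--Kra bookkeeping, now in the finite dimensional rather than Lie category. First, identify the Host--Kra group $\mathcal{G}(Y)$: one shows $\mathcal{G}(Y)$ is an extension $1 \to U_0 \to \mathcal{G}(Y) \to \mathcal{G}(Z) \to 1$ (using $\mathcal{G}(Z)$ finite dimensional by induction and $U_0$ finite dimensional by the cocycle straightening), hence $\mathcal{G}(Y)$ is finite dimensional by property (iii) of Definition \ref{finitedim:def}; nilpotency of step $k$ follows from \cite[Corollary 5.9]{HK} adapted to $G$-systems. Then show $Y \cong \mathcal{G}(Y)/\Gamma(Y)$ with $\Gamma(Y)$ a co-compact closed zero dimensional subgroup — here one uses that the stabilizer of a point together with the zero dimensional part $\Delta$ of $U$ gives $\Gamma(Y)$, and zero dimensionality is preserved under the relevant operations. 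Finally, the inverse limit presentation $Y = \varprojlim Y_n$ with each $Y_n$ finite dimensional is obtained by the usual exhaustion of $G^{(m)}$ by finite subgroups and of the base by its inverse system, together with a pushout/finite-generation argument showing each finite piece has finite dimensional Host--Kra group.

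The main obstacle is the cocycle-straightening step: proving that a degree $<k$ phase polynomial cocycle, after pullback along $G^{(m)} \to G$ with $m = O_k(1)$, becomes cohomologous to a finite dimensional-valued cocycle. This requires a careful analysis of the structure of phase polynomials on $\bigoplus_{p \in P} \Z/p\Z$ — in particular understanding which "infinite-dimensional" directions in $U$ can actually be hit by a polynomial cocycle of bounded degree, and showing the obstruction to reducing them is torsion of bounded exponent (so that passing to $p^m$-torsion trivializes it). Controlling the dependence of $m$ on $k$ alone (and not on $P$ or on $X$) is the delicate quantitative heart of the argument.
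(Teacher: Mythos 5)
Your proposal rests on a premise that fails: that the cocycle $\rho:G\times Z_{<k}(X)\to U$ defining the top extension is (or becomes, after pulling back along $G^{(m)}\to G$) a phase polynomial cocycle, so that the only obstruction to straightening it is torsion of bounded exponent. For $\bigoplus_{p\in P}\mathbb{Z}/p\mathbb{Z}$ with $P$ unbounded this is false: as discussed in the paper, cocycles of type $<k$ need not be cohomologous to phase polynomials of any degree (Furstenberg--Weiss type examples persist in this setting), and the only salvage is Theorem \ref{countable}, namely that $PC_{<m}(G,X,S^1)\cdot B^1(G,X,S^1)$ has countable index in $Z^1_{<m}(G,X,S^1)$. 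That countable-index statement, proved by quotienting out connected components of the structure groups (Lemmas \ref{up}, \ref{down}, \ref{HKargcon}) and invoking the totally disconnected structure theorem, is an essential ingredient your outline has no substitute for; without it one cannot even produce the character-by-character Conze--Lesigne equations that every later step depends on. You have also misidentified what the $m$-extension buys: finite dimensionality of the structure groups is obtained separately, by Theorem \ref{InvFD:thm} (every system of order $<k$ is an inverse limit of finite dimensional systems of bounded exponent), not by straightening $\rho$ over $G^{(m)}$; the $m$-extensions (together with extensions of the system by zero dimensional groups) are used to extract $p$-power roots of phase polynomials (Theorems \ref{extension:thm}, \ref{extensionlowchar:thm}), which is what lets one glue the character-wise solutions $\chi\circ\rho=P_\chi\cdot\Delta F_\chi$ into a single $U$-valued identity (Lemma \ref{fixCL:lem}) and split the short exact sequence obstructing lifts.

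The second gap is that the heart of the theorem is asserted rather than proved. Writing $1\to U_0\to\mG(Y)\to\mG(Z)\to 1$ presumes the projection $\mG(Y)\to\mG(Z)$ is onto; its kernel is in fact $P_{<k+1}(Z_{<k}(Y),U)$, and surjectivity is precisely the lifting problem that occupies Sections \ref{Main2:proof1}--\ref{Main2:proof2}: one only obtains an \emph{open} subgroup of liftable elements after a tower of further extensions, an induction along the lower central series (the groups $\mathcal{L}_j$, $V_j$), a separate treatment of degenerate versus weakly mixing extensions, and then transitivity via Lemma \ref{open=full}. Likewise the identification $Y_n\cong\mG(Y_n)/\Gamma(Y_n)$ is not a matter of "taking the stabilizer": $\mG(Y_n)$ only near-acts on $Y_n$, and the paper needs the topological model built from $\mG(Y_n)$-continuous functions, Gleason's local section theorem and Effros' theorem (Section \ref{identification:sec}) to get a genuine homeomorphism with zero dimensional stabilizer. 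Finally, the inverse limit presentation comes from approximating the structure groups (Theorem \ref{InvFD:thm}) and rerunning the construction compatibly along that tower, not from exhausting $G^{(m)}$ by finite subgroups. As written, the proposal skips the three pillars of the actual argument and replaces them with claims that are either unproved or false in this generality.
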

In particular, it follows that if $X$ is a $G$-system where $G=\bigoplus_{p\in P}\mathbb{Z}/p\mathbb{Z}$, then for every $k\in\mathbb{N}$ there exists $m=O_k(1)$ and an $m$-extension $(Y,G^{(m)})$ such that the following diagram commutes.
	\begin{figure}[H]
		\includegraphics{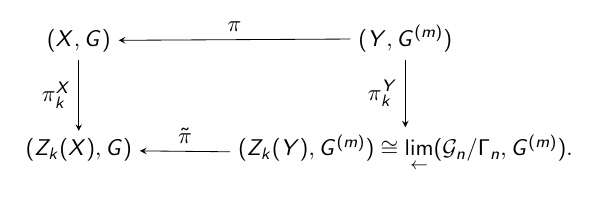}
	\end{figure}
The case $k=2$ of Theorem \ref{Main2:thm} was established by the author in \cite{OS} without the use of extensions (see also Theorem \ref{order<3}). We do not know whether this result (without $m$-extensions) holds for higher values of $k$. In section \ref{special case} we explain how $m$-extensions are used to overcome certain difficulties in the simple case where $k=3$. 
\subsection{Convergence of multiple ergodic averages and limit formula}
As an application of our structure theory, we derive an alternative proof for the convergence of some multiple ergodic averages and a limit formula in the special case where the underlying system is a nilpotent system and the homogeneous group is the Host-Kra group. More concretely, fix $k\geq 0$ and let $(X,T_g)$ be an ergodic $G$-system where $G=\bigoplus_{p\in P}\mathbb{Z}/p\mathbb{Z}$ and $f_1,...,f_{k+1}\in L^\infty(X)$. We study the limit of the following multiple ergodic averages as $N$ goes to infinity.
    \begin{equation} \label{average} \mathbb{E}_{g\in\Phi_N} T_g f_1 T_{2g} f_2\cdot...\cdot T_{(k+1)g} f_{k+1}.
	\end{equation}
	In the case of $\mathbb{Z}$-actions, Host and Kra \cite{HK} and Ziegler \cite{Z} proved the convergence of these averages by studying the universal characteristic factors. In the special case where $X$ is a nilsystem, Ziegler \cite{Z1} proved a limit formula for average (\ref{average}), see equation (\ref{formulaequation}) below. A simpler proof of this result and some applications to multiple recurrence can be found in \cite{BHK}. In \cite{BTZ} Bergelson, Tao and Ziegler proved a variant of this formula for $\mathbb{F}_p^\omega$-systems from which they deduced a Khintchine type recurrence for various configurations (see \cite[Theorem 1.13]{BTZ} for more details). In the special case where $k=2$ this formula and the multiple recurrence results were generalized to other abelian groups in \cite{OS2} and \cite{ABB}. We note that the norm convergence of average (\ref{average}) as $N\rightarrow\infty$ was proved by Walsh \cite{Walsh} for any countable nilpotent group $G$. We give an alternative proof for this result in the special case where $G=\bigoplus_{p\in P}\mathbb{Z}/p\mathbb{Z}$.
\begin{thm}[Convergence of the multiple ergodic averages] \label{convergence}
    Let $(X,T_g)$ be an ergodic $G$-system. Let $k\geq 0$ be such that $k+1\leq \min_{p\in P} p$ and $f_1,...,f_{k+1}\in L^\infty(X)$. Then, the multiple ergodic average (\ref{average}) convergence in $L^2(X)$ as $N$ goes to infinity.
\end{thm}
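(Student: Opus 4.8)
\textbf{Proof proposal for Theorem \ref{convergence}.}

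The plan is to reduce the convergence of the average (\ref{average}) to the structure theorem (Theorem \ref{Main2:thm}), following the classical strategy of Host--Kra: first pass to the characteristic factor, then exploit the explicit homogeneous-space description on that factor. The first step is a van der Corput / Gowers--Cauchy--Schwarz argument showing that the average (\ref{average}) is controlled by the seminorm $\|\cdot\|_{U^{k+1}(X)}$: if any one of $f_1,\dots,f_{k+1}$ has zero conditional expectation onto $Z_{<k+1}(X)$, then the limit (in the sense that the $\limsup$ of the $L^2$-norm) vanishes. This is a standard PET-type induction; here it is where the hypothesis $k+1 \le \min_{p\in P} p$ enters, since in characteristic $p$ one needs the multipliers $1,2,\dots,k+1$ to be invertible modulo every $p \in P$ for the usual change-of-variables in the van der Corput step to stay within a Følner-averaging scheme (and so that the progression $g, 2g, \dots, (k+1)g$ is "non-degenerate" for the group $G$). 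By multilinearity and density of bounded functions, we may therefore assume without loss of generality that $X = Z_{<k+1}(X)$, i.e. $X$ is of order $<k+1$.

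Next I would invoke Theorem \ref{Main2:thm}: there is an $m$-extension $(Y, G^{(m)})$ of $X$, with $m = O_k(1)$, which is an inverse limit of finite dimensional $k$-step nilpotent systems $Y_n$. Since $L^2$-convergence on an inverse limit follows from $L^2$-convergence on each $Y_n$ together with a standard $\varepsilon/3$ approximation (every $f_j$ is approximated in $L^2$ by a $Y_n$-measurable function for $n$ large), and since convergence for the $G$-action on $X$ can be deduced from convergence for the $G^{(m)}$-action on $Y$ by pushing functions up to $Y$ via the factor map and using $T_g = S_h$ for any lift $h$ of $g$ (note $g, 2g, \dots, (k+1)g$ all lift compatibly), it suffices to prove the $L^2$-convergence of the averages
\[
\mathbb{E}_{h\in\Phi_N}\, S_h f_1\, S_{2h} f_2\cdots S_{(k+1)h} f_{k+1}
\]
on a single finite dimensional $k$-step nilpotent system $\mathcal{G}/\Gamma$, where the $S_h$ are given by left translations by $\varphi(h)$ for a homomorphism $\varphi: G^{(m)} \to \mathcal{G}$.

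On the nilpotent homogeneous space the argument is the familiar one: by Stone--Weierstrass and continuity it is enough to treat $f_j$ continuous, and then the product $S_h f_1 \cdots S_{(k+1)h} f_{k+1}$, viewed as a function of $h$, is a "polynomial sequence" on the nilpotent system in the sense that $h \mapsto (\varphi(h)x\Gamma, \varphi(2h)x\Gamma, \dots, \varphi((k+1)h)x\Gamma)$ traces out an orbit of a polynomial map into $(\mathcal{G}/\Gamma)^{k+1}$. One then runs the Host--Kra/Ziegler-type equidistribution argument: decompose the orbit closure of this polynomial sequence into its ergodic components under the relevant closed subgroup generated by the translations, and on each component the average converges to the corresponding Haar integral by unique ergodicity of polynomial orbits on (finite dimensional) nilpotent homogeneous spaces. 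The main obstacle, and the step requiring the most care, is precisely this last equidistribution statement in the finite dimensional (non-Lie) setting: the classical Leibman/Ziegler machinery is built for nilmanifolds, so one must check that the relevant ingredients --- the fact that a closed subgroup of $\mathcal{G}$ acting on $\mathcal{G}/\Gamma$ admits an ergodic decomposition into sub-nilpotent-systems, and that unipotent/polynomial averages on each piece converge --- survive when $\mathcal{G}$ is only a finite dimensional polish group (a tower of Lie-by-(totally disconnected) groups as in Definition \ref{finitedim:def}) and $\Gamma$ is merely closed, co-compact, and zero dimensional rather than discrete. I expect this to go through by combining the Lie-group case with the totally disconnected case (where the system is an inverse limit of finite systems and averages are eventually constant), induction along the subnormal series defining $\mathcal{FD}$, and a relative version of the convergence statement for towers of such extensions; alternatively one can bypass equidistribution entirely and give a softer proof using the van der Corput lemma directly on $Y_n$ together with an induction on $k$ exploiting that the last structure group of $Y_n$ is a finite dimensional compact abelian group and that translation on it is a rotation, which reduces the $U^{k+1}$-average to a $U^k$-average twisted by a character --- this is probably the cleanest route and avoids re-proving nilpotent equidistribution in the polish category.
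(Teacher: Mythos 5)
Your outer skeleton is the same as the paper's: reduce to the characteristic factor $Z_{<k+1}(X)$ (the paper quotes the Bergelson--Tao--Ziegler argument as Lemma \ref{char}), pass to the $m$-extension $Y$ of Theorem \ref{Main2:thm}, approximate by the finite dimensional nilpotent factors $Y_n$, and note that $L^2$-convergence for the $G$-action follows from convergence for the $G^{(m)}$-action on $Y$. But the actual content of the theorem is the step you explicitly leave open: convergence on a single finite dimensional $k$-step nilpotent system. That is a genuine gap, not a routine verification. The paper does not ``check that the Leibman/Ziegler machinery survives''; it proves a pointwise limit formula (Theorem \ref{formula}) by establishing unique ergodicity of the arithmetic-progression system $\tilde{\mG}/\tilde{\Gamma}$ (Theorems \ref{group}, \ref{uniquelyergodic}), and this rests on new machinery built precisely for the non-Lie setting: a Green-type theorem for ``special homogeneous spaces'' (Theorem \ref{almostgreen}, with Lemma \ref{key}), the identification $Z_{<r}(X)\cong \mG/\mG_r\Gamma$ (Theorem \ref{ufc}, proved simultaneously by induction with the formula), and the $k$-divisibility of the quotients $\mG_r/\mG_{r+1}\Gamma_r$ (Theorem \ref{structurediv}, via the cocycle-type Lemma \ref{claim}). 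Note in particular that the hypothesis $k+1\leq \min_{p\in P}p$ is not consumed in the van der Corput/characteristic-factor step as you suggest; it is what makes the divisibility theorem work, which in turn is what makes the Parry--Green ergodicity argument go through for these disconnected homogeneous spaces. Your option (a) is essentially the paper's route but with all of this asserted rather than proved, and induction ``along the subnormal series defining $\mathcal{FD}$'' does not by itself produce these statements, since $\Gamma$ is not discrete and the structure groups are neither tori nor finite.

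Your proposed ``cleanest route'' (b) does not work as stated. Writing $Y_n=Z_{<k}(Y_n)\times_\rho U$ and decomposing into vertical characters, the nontrivial components give averages of the form $\mathbb{E}_{g\in\Phi_N}\prod_j T_{jg}\bigl(\tilde f_j\cdot \chi_j\circ u\bigr)$ whose $g$-dependence involves the cocycle $\rho$ through the skew product; these are \emph{twisted} multiple averages, not $U^k$-type averages on the base, and there is no soft van der Corput reduction that converts them into lower-complexity averages plus a character. Handling exactly these terms is why every known proof at this level either goes through equidistribution on the nilpotent model (Host--Kra, Ziegler, Bergelson--Host--Kra, and the present paper) or through an entirely different soft scheme (Walsh), and your sketch supplies neither.
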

The properties of the Host-Kra group are needed in the proof of the following formula for the limit of average (\ref{average}) in the special case where the underlying system $X$ is a nilpotent system. In other words, it is important that the homogeneous groups in theorem \ref{Main2:thm} are the Host-Kra groups of the system.\\

 Let $\mG$ be a $k$-step nilpotent group and $\mu_{\mG/\Gamma}$ be a left $\mG$-invariant measure on $\mG/\Gamma$. Let $\mG_1=\mG$ and for every $2\leq r\leq k$ let $\mG_r$ be the closed subgroup generated by all the $r$-commutators $[...[[x_1,x_2],x_3],...x_r]$ where $x_1,...x_r\in \mG$ . Let $\Gamma_r = \mG_r\cap \Gamma$ and let $m_r$ be a left $\mG_r$-invariant measure on the quotient space $\mG_r/\mG_{r+1}\Gamma_r$. We have the following formula for the limit of average (\ref{average}).
\begin{thm} [Limit formula] \label{formula}
	Let $m\geq 1$ and let $G=\bigoplus_{p\in P}\mathbb{Z}/p^{m}\mathbb{Z}$. Fix $k\geq 0$ with $k+1<\min_{p\in P} p$ and let $X=\mG(X)/\Gamma$ be a $k$-step nilpotent $G$-system, where $\mathcal{G}(X)$ is the Host-Kra group of $X$. Then, for every $f_1,...,f_{k+1}\in L^\infty(X)$, every F{\o}lner sequence $\Phi_N$ of $G$ and $\mu_{\mG/\Gamma}$-almost every $x\in \mG/\Gamma$ we have
\begin{equation} \label{formulaequation}
\begin{split}
	\lim_{N\rightarrow\infty}\mathbb{E}_{g\in\Phi_N} T_g f_1(x) T_{2g} f_2(x)\cdot...\cdot T_{(k+1)g} f_{k+1}(x)&=\\ \int_{\mG/\Gamma} \int_{\mG_2/\Gamma_2}...\int_{\mG_k/\Gamma_k}  \prod_{i=1}^{k+1}f_i(x\cdot y_1^i\cdot y_2^{\binom{i}{2}}\cdot ...\cdot & y_i^{\binom{i}{i}})d\prod_{i=1}^k m_i(y_i\Gamma_i)
	\end{split}
	\end{equation}
 with the abuse of notation $f(x)=f(x\Gamma)$.
\end{thm}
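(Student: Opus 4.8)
The plan is to reduce the computation to the behavior of a single orbit on the homogeneous space $\mG/\Gamma$ and then invoke equidistribution on a nilpotent homogeneous space. First I would fix the element $r=\varphi_m(1)\in\mG$ (or rather the tuple of generators coming from the homomorphism $\varphi:G^{(m)}\to\mG$ defining the action), so that $T_g x = \varphi(g)\cdot x$. The key algebraic input is the identity expressing the product of shifts $T_g, T_{2g},\dots,T_{(k+1)g}$ along a progression in terms of the lower central series: for an element $a\in\mG$ one has, in a $k$-step nilpotent group, the Hall--Petrescu type expansion
\[
(a^{i})\cdot x = x\cdot y_1^{i}\cdot y_2^{\binom{i}{2}}\cdots y_k^{\binom{i}{k}}
\]
where $y_r$ lies in $\mG_r$ and depends polynomially (in the group-theoretic sense) on $a$ and $x$; this is exactly the origin of the binomial coefficients $\binom{i}{j}$ in \eqref{formulaequation}. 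The point $k+1<\min_{p\in P}p$ guarantees that the relevant binomial coefficients and the arithmetic progression of length $k+1$ behave as they would in characteristic zero, so no collapsing occurs.

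Next I would set up the equidistribution statement. Consider the "progression orbit" map $G^{(m)}\ni g\mapsto \bigl(T_g x, T_{2g}x,\dots,T_{(k+1)g}x\bigr)\in (\mG/\Gamma)^{k+1}$. I claim this orbit equidistributes, as $g$ ranges over a F\o lner sequence $\Phi_N$ of $G^{(m)}$, in the closed sub-nilmanifold (more precisely, sub-nilpotent-homogeneous-space) cut out by the Host--Kra cube structure — this is the orbit closure of the diagonal under the group generated by $\diag(\mG)$ and the "vertical" groups $\mG_r$ appearing in the formula. Here is where the moreover part of Theorem \ref{Main2:thm} enters: because $\mG(X)$ is the genuine Host--Kra group, the side information $I^{[l]}$ is trivial and the relevant parallelepiped measures $\mu^{[l]}$ are exactly the Haar measures on these homogeneous spaces, which pins down the orbit closure and the limiting measure to be $\prod_{i=1}^k m_i$ on $\prod \mG_i/\Gamma_i$ as written. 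For the equidistribution itself on a single finite-dimensional nilpotent homogeneous space I would use a van der Corput / PET-induction argument reducing $U^{k}$-type averages over $G^{(m)}$ to lower order, or alternatively a Weyl-type argument on the (possibly infinite-dimensional but finite-dimensional-by-zero-dimensional) group, using that $\mG/\mG_2\Gamma$ is a compact abelian group on which the image of the action is ergodic.

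Then the proof concludes by a routine unfolding: substitute the Hall--Petrescu expansion into the product $\prod_{i=1}^{k+1} f_i(T_{ig}x)$, apply the equidistribution to replace the limit of the average by the integral against $\prod_i m_i$, and recognize the resulting expression as the right-hand side of \eqref{formulaequation}. The pointwise ("$\mu_{\mG/\Gamma}$-almost every $x$") nature of the statement follows because the equidistribution of the progression orbit holds for a.e. starting point $x$ — indeed, on a nilpotent homogeneous space the orbit closure of a progression through $x$ projects onto a subgroup coset whose Haar measure varies measurably in $x$, and ergodicity forces it to be the full homogeneous piece for a.e.\ $x$. I expect the main obstacle to be establishing the equidistribution of the progression orbit when the homogeneous group $\mG$ is merely finite dimensional rather than a Lie group: the classical proofs (Leibman, Ziegler) use the Lie structure and Ratner-type / Malcev-coordinate arguments, so I would need to either pass to the inverse limit presentation $Y = \varprojlim Y_n$ of Theorem \ref{Main2:thm} and prove equidistribution at each finite level $Y_n$ (where $\mG(Y_n)$ is finite dimensional, hence an extension of a Lie group by a zero dimensional group, on which a factor-by-factor argument works), or carefully adapt the van der Corput induction directly to the finite-dimensional setting, handling the zero dimensional part by approximating observables by functions pulled back from finite quotients. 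A secondary technical point is checking that the "diagonal times verticals" subgroup is exactly the orbit closure — this requires knowing the commutator structure is detected by the $U^{k}$ seminorms, which is where I would again lean on the identification of $\mG(X)$ with the Host--Kra group and on Theorem \ref{Main2:thm}.
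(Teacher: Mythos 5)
Your overall architecture — pass to the space of arithmetic progressions inside $(\mG/\Gamma)^{k+1}$, prove that the progression orbit equidistributes there, then unfold to get the integral formula — is the same general strategy the paper follows (after Bergelson--Host--Kra): the paper forms Leibman's group $\tilde{\mG}=\{(g,gg_1,gg_1^2g_2,\dots)\}$, proves the system $\tilde{\mG}/\tilde{\Gamma}$ is ergodic, hence uniquely ergodic, under the joint action of $T_g^\triangle$ and $T_g^\star$, and then recovers the a.e.\ pointwise formula by applying the pointwise ergodic theorem on the fiber $\tilde{X}_x$ and averaging over $T_h$-translates to invoke unique ergodicity on all of $\tilde{X}$. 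But the step you yourself flag as ``the main obstacle'' is precisely the content of the paper's proof, and your two proposed workarounds do not close it. A van der Corput/PET or Weyl-type argument, or an appeal to classical equidistribution results of Green, Parry, Leibman, Ziegler, all rely on Lie/Mal'cev structure that $\mG(X)$ does not have here; and reducing to the inverse limit levels $Y_n$ of Theorem \ref{Main2:thm} does not help, because $\mG(Y_n)$ is still only finite dimensional (Lie-by-zero-dimensional with unbounded torsion in the zero dimensional part), and ``approximating observables by functions pulled back from finite quotients'' is not available since $\Gamma$ is not normal and the relevant quotients are not group quotients. What the paper actually does is: (a) prove, by simultaneous induction with the formula itself, the identification $Z_{<r}(X)\cong\mG/\mG_r\Gamma$ (Theorem \ref{ufc}); (b) prove that each $\mG_r/\mG_{r+1}\Gamma_r$ is $k$-divisible (Theorem \ref{structurediv}), via a delicate cocycle-type argument (Lemma \ref{claim}) — this, and not a statement about binomial coefficients ``behaving as in characteristic zero,'' is where the hypothesis $k+1<\min_{p\in P}p$ is used, because divisibility is what allows one to extract $m!$-th roots in the group; and (c) generalize Green's theorem to these possibly disconnected ``special'' homogeneous spaces (Lemma \ref{key} and Theorem \ref{almostgreen}) to deduce ergodicity of $\tilde{\mG}/\tilde{\Gamma}$ from ergodicity of its abelianization. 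Without ingredients (a)--(c) your equidistribution claim is unsupported.

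A second, smaller gap: your justification of the ``$\mu$-a.e.\ $x$'' conclusion (``ergodicity forces the orbit closure to be the full homogeneous piece for a.e.\ $x$'') is not an argument in this setting. The paper gets the pointwise statement from the pointwise ergodic theorem applied on each fiber $\tilde{X}_x\cong\tilde{\mG}^\star/\tilde{\Gamma}_x$ together with \emph{unique} ergodicity of $\tilde{X}$ (which for nilpotent systems follows from ergodicity by Parry's theorem) and the disintegration $\tilde{\mu}=\int_X\delta_x\otimes\tilde{\mu}_x\,d\mu(x)$; you would need to supply this mechanism, or something equivalent, to identify the a.e.\ limit with the fiberwise Haar integral.
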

\subsection{Discussion of the main steps in the proof of Theorem \ref{Main2:thm} and generalizations to other countable abelian groups.} \label{generalizations}
We summarize the main steps in the proof of  Theorem \ref{Main2:thm}. In each step we survey previous work concerning $\mathbb{Z}$-actions \cite{HK},\cite{Z} and $\mathbb{F}_p^\omega$-actions \cite{Berg& tao & ziegler}, describe the counterpart that we prove for $\bigoplus_{p\in P}\mathbb{Z}/p\mathbb{Z}$-actions and discuss the general case of $G$-actions where $G$ is any countable abelian group.\\

Let $m\geq 0$. We denote by $Z^1_{<m}(G,X,S^1)$ the group of cocycles of type $<m$ (Definition \ref{type:def}) and by $PC_{<m}(G,X,S^1) = P_{<m}(G,X,S^1)\cap Z^1(G,X,S^1)$ the phase polynomial cocycles of degree $<m$. The first step in the proof of Theorem \ref{Main2:thm} is to study how \textit{large} this subgroup is.\\

\textbf{Step 1}: (Theorem \ref{countable}) Let $X$ be an ergodic $\bigoplus_{p\in P}\mathbb{Z}/p\mathbb{Z}$-system. Then  $PC_{<m}(\bigoplus_{p\in P}\mathbb{Z}/p\mathbb{Z},X,S^1)\cdot B^1(\bigoplus_{p\in P}\mathbb{Z}/p\mathbb{Z},X,S^1)$ has at most countable index in $Z^1_{<m}(\bigoplus_{p\in P}\mathbb{Z}/p\mathbb{Z},X,S^1)$.\\

A well known theorem of Moore and Schmidt states that for any countable abelian group $G$ and any ergodic $G$-system $X$, $$Z^1_{<1}(G,X,S^1)=PC_{<1}(G,X,S^1)\cdot B^1(G,X,S^1).$$ In the special case where $G=\mathbb{F}_p^\omega$ this equality holds for higher values of $m$. Formally, Bergelson, Tao and Ziegler \cite{Berg& tao & ziegler} proved that $$Z^1_{<m}(\mathbb{F}_p^\omega,X,S^1)=PC_{<m}(\mathbb{F}_p^\omega,X,S^1)\cdot B^1(\mathbb{F}_p^\omega,X,S^1)$$ for every $m<p$ and an ergodic $\mathbb{F}_p^\omega$-system $X$.\\
This equality fails in general. For instance Furstenberg and Weiss proved that there exists an ergodic $\mathbb{Z}$-system $X$ and a $\mathbb{Z}$-cocycle $\rho$ of type $<2$ that is not cohomologous to a phase polynomial of any order \cite{Furstenberg}. In previous work \cite{OS}, we find an if and only if criterion for this equality to hold for $\bigoplus_{p\in P}\mathbb{Z}/p\mathbb{Z}$-systems and construct a counterpart of the Furstenberg and Weiss' example in the case where $P$ is an unbounded multiset of primes.
If $(X,T)$ is a group rotation, Host and Kra proved that $P_{<1}(\mathbb{Z},X,S^1)\cdot B^1(\mathbb{Z},X,S^1)$ is of countable index in $Z^1_{<2}(\mathbb{Z},X,S^1)$ and mentioned that the counterpart for higher values of $m$ fails. Instead, they proved the following result \cite[Lemma 9.2]{HK}: let $(X,T)$ be an ergodic $\mathbb{Z}$-system and let $(\Omega,P)$ be a probability space. Let $\omega\mapsto \rho_\omega$ be a measurable map into $Z^1_{<m}(\mathbb{Z},X,S^1)$. Then there exists a set of positive measure $\mathcal{A}\subseteq \Omega$ such that $\rho_\omega/\rho_{\omega'}$ is cohomologous to a constant for every $\omega,\omega'\in A$.\\ We thus conjecture that the following general version holds.
\begin{conj} \label{conj}
    Let $G$ be a countable abelian group and $(X,G)$ be an ergodic $G$-system. Let $m\geq 1$ and $(\Omega,P)$ a probability space. Then, for any measurable map $\omega\mapsto \rho_\omega$ from $\Omega$ to $Z^1_{<m}(G,X,S^1)$, there exists a set of positive measure $\mathcal{A}\subseteq \Omega$ such that $\rho_\omega/\rho_{\omega'}\in PC_{<m}(G,X,S^1)\cdot B^1(G,X,S^1)$.
\end{conj}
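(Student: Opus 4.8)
The plan is to split the argument according to whether the analogue of Theorem \ref{countable} is available for $G$.

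\textbf{The countable-index regime.} Suppose $PC_{<m}(G,X,S^1)\cdot B^1(G,X,S^1)$ has at most countable index in $Z^1_{<m}(G,X,S^1)$. Since the target $S^1$ is abelian, $Z^1(G,X,S^1)$ is an abelian group, $PC_{<m}\cdot B^1$ is a subgroup of it, and $Z^1_{<m}(G,X,S^1)=\bigsqcup_n c_n\cdot PC_{<m}(G,X,S^1)\cdot B^1(G,X,S^1)$ is a countable disjoint union of cosets. Using the regularity and separability of $X$, equip $Z^1_{<m}(G,X,S^1)$ with its Polish topology of convergence in measure; one checks that $PC_{<m}\cdot B^1$, and hence each coset, is Borel. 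Pulling this partition back along the measurable map $\omega\mapsto\rho_\omega$ yields a countable Borel partition $\Omega=\bigsqcup_n\Omega_n$; pick $n$ with $P(\Omega_n)>0$ and set $\mathcal{A}=\Omega_n$. For $\omega,\omega'\in\mathcal{A}$ the cocycles $\rho_\omega,\rho_{\omega'}$ lie in a single coset, so $\rho_\omega/\rho_{\omega'}\in PC_{<m}\cdot B^1$. Combined with Theorem \ref{countable}, this already proves Conjecture \ref{conj} for $G=\bigoplus_{p\in P}\mathbb{Z}/p\mathbb{Z}$ (and, via the Moore and Schmidt type equality available there, for $\mathbb{F}_p^\omega$, recovering \cite{Berg& tao & ziegler}).

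\textbf{The general case.} For an arbitrary countable abelian $G$ the index may be uncountable---Furstenberg and Weiss \cite{Furstenberg} for $G=\mathbb{Z}$---so one must adapt the proof of \cite[Lemma 9.2]{HK}, inducting on $m$. The base case $m=1$ is the Moore and Schmidt theorem, which gives $Z^1_{<1}(G,X,S^1)=PC_{<1}(G,X,S^1)\cdot B^1(G,X,S^1)$, so the previous paragraph applies. For the inductive step fix a countable generating set $\{g_j\}$ of $G$; for each $j$ the family $\omega\mapsto\Delta_{g_j}\rho_\omega$ is a measurable family of cocycles of type $<m-1$, so the inductive hypothesis, applied for each $j$ and combined over the countably many $j$ by pigeonholing, produces a positive-measure set $\mathcal{A}\subseteq\Omega$ with $\Delta_{g_j}(\rho_\omega/\rho_{\omega'})\in PC_{<m-1}(G,X,S^1)\cdot B^1(G,X,S^1)$ for all $\omega,\omega'\in\mathcal{A}$ and all $j$. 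One then has to \emph{integrate} this: upgrade simultaneous control of every first derivative $\Delta_{g_j}\sigma$ of $\sigma:=\rho_\omega/\rho_{\omega'}$ to membership $\sigma\in PC_{<m}\cdot B^1$, using the cocycle identity, the algebra of phase polynomials, and the characterization of type-$<m$ cocycles recalled in Definition \ref{type:def} and Appendix \ref{background}.

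\textbf{The main obstacle.} The integration step is the crux. For general $G$ there is no nilsystem-type classification of cocycles of type $<m$, and the Furstenberg and Weiss phenomenon shows that control of all $\Delta_g\sigma$ need not force $\sigma\in PC_{<m}\cdot B^1$; the statement is rescued only because one is allowed to shrink $\mathcal{A}$ further, and it is precisely this "for a positive-measure set of $\omega$, rather than for a single $\sigma$" flexibility that must be exploited. I expect this will require a Baire category or martingale argument carried out inside the Polish group $Z^1_{<m}(G,X,S^1)/B^1(G,X,S^1)$: one would show that the equivalence relation $\sigma\sim\sigma'\iff\sigma/\sigma'\in PC_{<m}\cdot B^1$ has Borel classes, and that, restricted to the image of $\Omega$ under $\omega\mapsto[\rho_\omega]$, some class has positive measure. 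When this quotient is genuinely uncountable such concentration is exactly what is not currently known---hence the statement is posed as a conjecture---whereas for groups $G$ admitting a structure theorem in the spirit of Theorem \ref{Main2:thm} it should instead reduce to an explicit computation on finite dimensional nilpotent systems.
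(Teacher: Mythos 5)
You have not proved the statement, and indeed the paper does not either: Conjecture \ref{conj} is posed precisely because no proof is known for general countable abelian $G$. Your first paragraph is correct, but it only reproves what the paper already establishes as Theorem \ref{HK1}: when $PC_{<m}(G,X,S^1)\cdot B^1(G,X,S^1)$ has at most countable index in $Z^1_{<m}(G,X,S^1)$ (which Theorem \ref{countable} supplies for $G=\bigoplus_{p\in P}\mathbb{Z}/p\mathbb{Z}$), the Lusin separation argument shows the subgroup and its cosets are Borel, and pigeonholing over the countably many cosets gives $\mathcal{A}$. This is exactly the paper's argument, so in that regime you add nothing beyond what is already in Section \ref{CL:sec}, and the conjecture in that regime is not in doubt.

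The genuine gap is your "integration step," and it is not a technical loose end but the entire content of the conjecture. Two points make this concrete. First, your induction scheme (control $\Delta_{g_j}(\rho_\omega/\rho_{\omega'})$ for all generators $g_j$, then integrate) cannot work as stated: by the Furstenberg--Weiss example cited in the paper there are $\mathbb{Z}$-cocycles of type $<2$, hence with all derivatives $\Delta_g\sigma$ coboundaries up to constants, which are not cohomologous to phase polynomials of any degree; so derivative control of a \emph{single} quotient $\sigma=\rho_\omega/\rho_{\omega'}$ never suffices, and you give no mechanism by which shrinking $\mathcal{A}$ rescues it --- you only express the expectation that a Baire category or martingale argument "should" do so. Second, in the one case where the uncountable-index phenomenon genuinely occurs and the statement is known ($G=\mathbb{Z}$, Host--Kra \cite[Lemma 9.2]{HK}), the proof does not proceed by differentiating in the group direction and re-integrating; it leans on the structure theory of systems of order $<m$ (towers of abelian extensions, reduction to toral systems), i.e. on input of the same nature as the paper's Steps 2--3, which is unavailable for arbitrary $G$ without already knowing a structure theorem. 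So your proposal correctly delimits the open problem but does not close it, and its proposed route for the general case is not known to work even for $\mathbb{Z}$.
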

The next step in the proof of Theorem \ref{Main2:thm} is to reduce matters to the case where $X$ is a \textit{finite dimensional system} (see Definition \ref{fdsystems:def}) using inverse limits. Recall (Proposition \ref{abelext:prop}) that every ergodic system $X$ of order $<k$ is a tower of abelian extensions. Namely, $$X=U_0\times_{\rho_1} U_1\times...\times_{\rho_{k-1}}U_{k-1}.$$ We refer to the compact abelian groups $U_0,....,U_{k-1}$ as the structure groups of the system $X$.\\

\textbf{Step 2:} (Theorem \ref{InvFD:thm}) Let $k\geq 1$, then any ergodic $\bigoplus_{p\in P}\mathbb{Z}/p\mathbb{Z}$-system is an inverse limit of finite dimensional systems (see Definition \ref{fdsystems:def}).\\

Bergelson, Tao and Ziegler \cite{Berg& tao & ziegler} proved that the structure groups of an ergodic $\mathbb{F}_p^\omega$-system of order $<k$ are totally disconnected (zero dimensional). We refer to these systems as totally disconnected systems. The following definition is due to Host and Kra: a system $X$ of order $<k$ is called \textit{toral} if $U_1$ is a Lie group and any other structure group is a finite dimensional torus. In the case of $\mathbb{Z}$-actions Host and Kra proved that $X$ is an inverse limit of toral systems. In the generality of countable abelian groups, it is impossible to approximate every system with finite dimensional systems. As a counterexample consider the action of the group $\mathbb{Z}^\omega$ on $(S^1)^\mathbb{N}$ by $R_nx = (\alpha^{n_1}x,\alpha^{n_2}x,...)$ where $n=(n_1,n_2,...)$. If $\alpha$ is irrational then the action is ergodic. Let $(e_1,e_2,...)$ denote the natural basis for $\mathbb{Z}^\omega$ and $\rho:\mathbb{Z}^\omega\times (S^1)^\mathbb{N}\rightarrow S^1$ be the unique cocycle with $\rho(e_i,x) = x_i$. Then, the extension $(S^1)^\mathbb{N}\times_\rho S^1$ is an ergodic system of order $<3$ that is not an inverse limit of finite dimensional systems.\\

\textbf{Step 3:} The last and most technically difficult step in the proof of Theorem \ref{Main2:thm} is solving the following lifting problem.\\
Let $X=Z_{<k}(X)\times_\rho U$ be a finite dimensional ergodic system of order $<k+1$. Using a proof by induction and passing to an extension, we may assume that $Z_{<k}(X)=\mG/\Gamma$ is a nilpotent system. Let $\mG_{k}\leq \mG_{k-2}\leq...\leq\mG_2\leq \mG_1=\mG$ be the lower central series for $\mG$. We adapt an inductive argument of Host and Kra \cite{HK}, where in step $j$ we lift some elements from the group $\mG_{k-j+1}$ to transformations on $X$ which belongs to $\mG(X)$. The following difficulties arise in the case where $X$ is a finite dimensional system that is not a Toral system. 
\begin{enumerate}
    \item {The near-action defined by group generated by the connected component of $\mG(X)$ and $\{T_g:g\in G\}$ may not be transitive on $X$.}
    \item{The cocycles in the process may take values in a compact abelian group $U$ which is not necessarily a torus. In particular, it is difficult to apply the results from step $1$.}
\end{enumerate}
To deal with these difficulties we use extensions and $k$-extensions (see Definition \ref{kext}). For instance, in order to overcome the second difficulty, we prove the following result (Lemma \ref{fixCL:lem}): Let $G=\bigoplus_{p\in P}\mathbb{Z}/p\mathbb{Z}$, and $\rho:G\times X\rightarrow U$ be a cocycle into some finite dimensional group $U$. If $\chi\circ\rho \in PC_{<k}(G,X,S^1)\cdot B^1(G,X,S^1)$ for every $\chi\in\widehat U$, then there exists some $m=O_k(1)$ and an $m$-extension $\pi:Y\rightarrow X$ with the property that $\rho\circ\pi \in PC_{<k}(G^{(m)},Y,U)\cdot B^1(G^{(m)},Y,U)$. To deal with the first difficulty we also need extensions, but of different type. Roughly speaking, we show that for every countable set of transformations of bounded torsion in $\mG(X)$, there exists an extension $Y$ where these transformations has a lift in $\mG(Y)$. (see Lemma \ref{mainlem}, and section \ref{Main2:proof2} for more details).
We note that the passing to extensions of $X$ leads to other difficulties which we discuss in detail in section \ref{Main2:proof1}.\\
The reduction to the case where $X$ is a finite dimensional system is only necessary to ensure that $U$ is finite dimensional and $Y$ is an $m$-extension. This is no longer necessary when working in the generality of all countable abelian groups. Therefore, by following the arguments in this paper and assuming that Conjecture \ref{conj} holds we can prove a structure theorem for all countable abelian groups.
\begin{thm} [Structure theorem]
    Let $G$ be a countable abelian group and $k\geq 1$. Let $X$ be an ergodic $G$-system of order $<k+1$ and assume that Conjecture \ref{conj} holds. Then $X$ is a factor of a $k$-step nilpotent system $(Y,H)$, where $H$ is a countable abelian group extending $G$ and the homogeneous group $\mG(Y)$ is a $k$-step nilpotent Polish group.
\end{thm}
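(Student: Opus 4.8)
The plan is to argue by induction on $k$, running the same three-step scheme that proves Theorem \ref{Main2:thm} for $G=\bigoplus_{p\in P}\mathbb{Z}/p\mathbb{Z}$, with two adjustments. First, Conjecture \ref{conj} is invoked wherever the proof for $\bigoplus_{p\in P}\mathbb{Z}/p\mathbb{Z}$ used the ``countable index'' statement of Step 1 (Theorem \ref{countable}); this is the only point at which the special structure of $\bigoplus_{p\in P}\mathbb{Z}/p\mathbb{Z}$ was essential. Second, Step 2, the reduction to finite dimensional systems, is dropped: it served only to force the structure cocycles into finite dimensional coefficient groups and to make the final cover an $m$-extension, and in full generality neither feature is sought. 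Throughout, ``extension'' means an extension in the category of $H$-systems for some countable abelian $H$ mapping onto $G$, as in Definition \ref{kext}, and ``$X$ is a factor of $(Y,H)$'' means $Y$ is an extension of the $H$-system obtained by pulling the $G$-action back along $H\to G$. The base case $k=1$ is immediate: an ergodic system of order $<2$ is a compact abelian group rotation, which is already a $1$-step nilpotent system (take $\mG=U$ and $\Gamma=\{e\}$, which is zero dimensional), so $Y=X$ works.

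\textbf{Inductive step.} Assume the statement for $k-1$ and let $X$ be ergodic of order $<k+1$. By Proposition \ref{abelext:prop} write $X=Z_{<k}(X)\times_\rho U$ for a cocycle $\rho$ of type $<k$. Applying the inductive hypothesis to $Z_{<k}(X)$ yields a nilpotent cover $W$; forming the relatively independent joining of $X$ and $W$ over $Z_{<k}(X)$, passing to an ergodic component, and using that the pullback of $\rho$ to $W$ is again of type $<k$, we may replace $X$ by an extension in which $Z_{<k}(X)=\mG/\Gamma$ is a $(k-1)$-step nilpotent system with $\mG=\mG(Z_{<k}(X))$ polish and $\Gamma$ a closed co-compact zero dimensional subgroup. (This is the bookkeeping carried out in Section \ref{Main2:proof1}.) Now run Host and Kra's induction \cite{HK} along the lower central series $\mG=\mG_1\geq\mG_2\geq\cdots\geq\mG_{k-1}\geq\mG_k=\{e\}$: at stage $j$ one wants to lift the elements of $\mG_{k-j}$, acting on $Z_{<k}(X)$, to transformations of some extension of $X$ that lie in the Host-Kra group. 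For such $t$ the obstruction is the cohomology class of the derived cocycle $\Delta_t\rho(g,x)=\rho(g,tx)\overline{\rho(g,x)}$, which is of type $<k$ and becomes simpler the further down the series $t$ lies, and $t$ lifts precisely when $\Delta_t\rho$ lies in the product $PC_{<k}\cdot B^1$ of phase polynomial cocycles of degree $<k$ and coboundaries.

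\textbf{Use of the conjecture and assembly.} The map $t\mapsto\Delta_t\rho$ is a measurable family of cocycles parametrized by a probability space of transformations built from $\mG_{k-j}$, so applying Conjecture \ref{conj} to $t\mapsto\chi\circ\Delta_t\rho$ for each $\chi\in\hat U$ gives a positive-measure set of parameters on which the ratios $\chi\circ\Delta_t\rho/\chi\circ\Delta_{t'}\rho$ lie in $PC_{<k}\cdot B^1$; since the set of $t$ with that property is a subgroup, this upgrades to a large (dense, of countable index) subgroup, which is exactly the replacement for Theorem \ref{countable}. Feeding this into the general-$G$ analogue of Lemma \ref{fixCL:lem}, stated for an arbitrary compact abelian coefficient group $U$ and producing an ordinary extension rather than an $m$-extension, we pass to an extension on which $\Delta_t\rho$ itself, not merely its characters, lies in $PC_{<k}\cdot B^1$, so $t$ acquires a lift in the Host-Kra group of that extension. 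Iterating over $j$ and over a countable family of transformations rich enough to generate a transitively acting group, and taking the inverse limit of the resulting tower of extensions (the general-$G$ analogue of Lemma \ref{mainlem}), we obtain an ergodic extension $(Y,H)$ of $X$ on which $\mG(Y)$ acts transitively with closed co-compact zero dimensional stabilizer $\Gamma(Y)$. Since $Y$ has order $<k+1$, $\mG(Y)$ is a $k$-step nilpotent polish group as in \cite{HK}, so $Y=\mG(Y)/\Gamma(Y)$ is a $k$-step nilpotent system in the sense of Definition \ref{nilpotent system:def} and $X$ is a factor of it, which closes the induction.

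\textbf{Main obstacle.} I expect the crux to be the general-$G$ version of Lemma \ref{fixCL:lem}. Without the finite dimensionality reduction one loses the $m$-extension mechanism that, for $\bigoplus_{p\in P}\mathbb{Z}/p\mathbb{Z}$, supplied the roots needed to pass from ``every character of $\rho$ is cohomologous to a phase polynomial'' to ``$\rho$ itself is''; making this work for an arbitrary compact abelian $U$ seems to require a measurable-selection argument gluing the per-character phase-polynomial and coboundary decompositions consistently over $\hat U$, presumably by extending the underlying space by a compact abelian group that trivializes the relevant extension or roots-of-unity obstruction. A secondary, more technical, difficulty is that $\mG_{k-j}$ need no longer be locally compact, so one must be careful about which probability space of transformations Conjecture \ref{conj} is applied to, and the final transitivity argument must be arranged so that countably many extensions suffice.
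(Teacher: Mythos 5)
Your proposal follows essentially the same route as the paper: its proof of this theorem is precisely to drop the finite-dimensional reduction (Step 2), to invoke Conjecture \ref{conj} wherever Theorem \ref{countable} (and its corollary Theorem \ref{HK1}) was used, and to rerun the lifting arguments of Sections \ref{Main2:proof1} and \ref{Main2:proof2}. The ``main obstacle'' you single out --- a version of Lemma \ref{fixCL:lem} and of the root-extraction Theorem \ref{extensionlowchar:thm} valid for an arbitrary countable abelian acting group and arbitrary compact abelian coefficient group, with ordinary extensions in place of $m$-extensions --- is exactly the ingredient the paper imports from \cite[Proposition 3.8 and Theorem 3.14]{OS2}, so your outline is complete modulo that citation rather than missing an idea.
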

Since any countable abelian group is a quotient of $\mathbb{Z}^\omega$, one can always take $H=\mathbb{Z}^\omega$ in the theorem above.
The case $k=2$ of this theorem was established in \cite{OS2}. The proof of the theorem above follows by arguing as in sections \ref{Main2:proof1} and \ref{Main2:proof2} together with the counterpart of Lemma \ref{fixCL:lem} and Theorem \ref{extensionlowchar:thm} for abelian groups that is given in \cite[Proposition 3.8 and Theorem 3.14]{OS2}.
\subsection{A simple case of Theorem \ref{Main2:thm} and $k$-extensions.} \label{special case}
We emphasize the part of the proof where we used $k$-extensions. For the sake of the example, let $X$ be an arbitrary $G$-system of order $<4$. By Proposition \ref{abelext:prop}, there exist compact abelian groups $Z$, $U$ and $W$ and cocycles $\sigma$ and $\rho$ such that $$X=Z\times_\sigma U\times_\rho W.$$
In section \ref{inv:sec} we reduce matters to the case where $X$ is a finite dimensional system (see Definition \ref{fdsystems:def}) using inverse limits. Let us therefore assume that $Z$ and $U$ are finite dimensional groups and $W$ is a Lie group. Since $W$ is a Lie group, modifying the arguments of Host and Kra one can show that $X$ is (a factor of) a $3$-step nilpotent system with respect to its Host Kra group if and only if $Y=Z_{<3}(X)$ is (a factor of) a $2$-step nilpotent system with respect to its Host-Kra group. In that case the Host-Kra group takes the following convenient form:
$$\mG(Y) = \{ S_{s,F} : s\in Z, F\in \mathcal{M}(Z,U) \text{ s.t } \exists c\in \text{Hom}(G,U) \text{ with } \Delta_s\sigma(g,x) = c(g)\cdot \Delta_g F(x)\}$$
where $S_{s,F}:Y\rightarrow Y$ is the transformation $S_{s,F}(z,u) = (sz,F(z)u)$.\\

If $U$ is a torus, then for every $s\in Z$ there exists a character $c_s:G\rightarrow U$ and a measurable map $F_s:Z\rightarrow U$ such that $\Delta_s\sigma= c_s\cdot \Delta F_s$ (see Theorem \ref{type0}). Equivalently, this means that the transformation $s\in Z$ has a lift $S_{s,F_s}$ in $\mG(Y)$. If this holds for every $s\in Z$ then the action of $\mG(Y)$ on $Y$ is transitive and $Y$ is a nilpotent system. Below we discuss the case where $U$ is a finite dimensional group that is not a Lie group (If $U$ is a Lie group then it is a direct product of a torus and a finite group. The case where $U$ is finite is covered in \cite{OS}). 

Observe that if $\chi:U\rightarrow S^1$ is a character, then by the torus case there exist $c_{s,\chi}\in\text{Hom}(G,S^1)$ and $F_{s,\chi}\in \mathcal{M}(Z,S^1)$ such that \begin{equation} \label{CLcoordinate}\Delta_s \chi\circ\sigma(g,x)  = c_{s,\chi}(g)\cdot \Delta_g F_{s,\chi}(x).
\end{equation}
 By Pontryagin dual, $s$ has a lift in $\mG(Y)$ if and only if there is a choice of $F_{s,\chi}$ and $c_{s,\chi}$ for which equation (\ref{CLcoordinate}) holds and $\chi\mapsto F_{s,\chi}$ is a homomorphism. Equivalently, for every $s\in Z$ we can consider the map $k_s:\widehat U\times \widehat U \rightarrow P_{<2}(Z,S^1)$ where $$k_s(\chi,\chi') = \frac{ F_{s,\chi\cdot \chi'}}{F_{s,\chi}\cdot F_{s,\chi'}}.$$ The map $k_s$ defines an abelian multiplication on the Cartesian product $\widehat U \times P_{<2}(Z,S^1)$ by $(\chi,p)\cdot (\chi',p')  = (\chi\chi', k_s(\chi, \chi')pp')$. We denote this group by $U\times_{k_s} P_{<2}(Z,S^1)$ and observe that the short exact sequence 
 \begin{equation}\label{short} 1\rightarrow P_{<2}(Z,S^1)\rightarrow \widehat U\times_{k_s} P_{<2}(Z,S^1) \rightarrow \widehat U \rightarrow 1
 \end{equation} splits if and only if $s$ has a lift in $\mG(Y)$.
 
 Since $U$ is not a torus, $\widehat U$ is not a projective object in the category of discrete abelian groups. Moreover, it is not necessary that $P_{<2}(Z,S^1)$ is a divisible group (an injective object). In other words, other properties of $k_s$ must be used in order to prove that this extension always splits. Instead, we chose a different method which involves $k$-extensions.
 
Let $n=\dim U$, a careful analysis of the finite dimensional group $U$ shows that we can find a multiset of primes $P$ and vectors $v_p\in\mathbb{Z}^n$ for every $p\in P$ such that $\widehat U \cong \mathbb{Z}^n[\frac{1}{p}\cdot v_p]$. It is easy to find a cross-section from the subgroup $\mathbb{Z}^n$ to $\widehat U \times_{k_s} P_{<2}(X,S^1)$ and so it is left to find $p$-th roots for certain phase polynomials in $P_{<2}(X,S^1)$. We do not know whether or not such roots exist and therefore we use extensions. In section \ref{ext:sec} we prove, roughly speaking, that by extending $Y$ to a $2$-extension $Y'$, we can find a $p$-th root for any phase polynomial of degree $<2$ on $Z$ in $P_{<2}(Z_{<2}(Y'),S^1)$  (see Theorem \ref{extension:thm} or Theorem \ref{extensionlowchar:thm}). This means that by passing to an extension and replacing $P_{<2}(Z,S^1)$ with $P_{<2}(Z_{<2}(Y'),S^1)$, the short exact sequence (\ref{short}) splits and we can lift $s$ to $\mG(Y')$. Then, since we passed from $Y$ to $Y'$, we need to make sure that we can also lift all of the new transformations which arise from the extension $Z_{<2}(Y')\rightarrow Z$. A formal proof is given in section \ref{Main2:proof1} and \ref{Main2:proof2}.\\

\section{Conze-Lesigne type equation} \label{CL:sec}
Throughout, fix a multiset of primes $P$.
Let $m\geq 0$, let $G$ be a countable abelian group and denote by $Z^1_{<m}(G,X,S^1)$ the group of $(G,X,S^1)$-cocycles of type $<m$ and by $PC_{<m}(G,X,S^1)$ the phase polynomial cocycles of degree $<m$. It follows by Lemma \ref{PP} that \begin{equation}\label{subgroup}PC_{<m}(G,X,S^1)\cdot B^1(G,X,S^1)\leq Z^1_{<m}(G,X,S^1)
\end{equation}
The following theorem is the main result in this section.
\begin{thm} \label{countable}
    Let $X$ be an ergodic $\bigoplus_{p\in P}\mathbb{Z}/p\mathbb{Z}$-system. Then for every $m\geq 0$, the subgroup $PC_{<m}(\bigoplus_{p\in P}\mathbb{Z}/p\mathbb{Z},X,S^1)\cdot B^1(\bigoplus_{p\in P}\mathbb{Z}/p\mathbb{Z},X,S^1)$ is of at most countable index in $Z^1_{<m}(\bigoplus_{p\in P}\mathbb{Z}/p\mathbb{Z},X,S^1)$. 
\end{thm}
We recall some relevant results from previous work. In the case where $m=1$, we have the following theorem by Moore and Schmidt \cite{MS}.
\begin{lem} [Cocycles of type $<1$ are cohomologous to constants] \label{type0}
	Let $G$ be a countable abelian group, and let $X$ be an ergodic $G$-system. Suppose that $\rho:G\times X\rightarrow S^1$ is a cocycle of type $<1$. Then, there exists a character $c:G\rightarrow S^1$ such that $\rho$ is $(G,X,S^1)$-cohomologous to $c$. Equivalently, $Z^1_{<1}(G,X,S^1)= PC_{<1}(G,X,S^1)\cdot B^1(G,X,S^1)$.
\end{lem}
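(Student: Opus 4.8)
\medskip

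\noindent The inclusion $PC_{<1}(G,X,S^1)\cdot B^1(G,X,S^1)\subseteq Z^1_{<1}(G,X,S^1)$ is (\ref{subgroup}), so the content of the lemma is the reverse inclusion: every cocycle $\rho$ of type $<1$ is $(G,X,S^1)$-cohomologous to a character of $G$. This is exactly the theorem of Moore and Schmidt \cite{MS}, and the plan is to recall its proof in three moves. The first is to unwind Definition~\ref{type:def}: the hypothesis that $\rho$ has type $<1$ furnishes a measurable $H\colon X\times X\to S^1$ witnessing that the ``first derivative'' of $\rho$ is a coboundary on $X^{[1]}$, i.e.
\[
\rho(g,x_1)\,\overline{\rho(g,x_0)}\;=\;H(T_gx_0,T_gx_1)\,\overline{H(x_0,x_1)}\qquad(\forall g\in G,\ \text{a.e. }(x_0,x_1)).
\]

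The second move is a separation of variables. Fixing $g$ and writing $\Psi_g(x_0,x_1):=H(T_gx_0,T_gx_1)\overline{H(x_0,x_1)}$, the identity above reads $\rho(g,x_1)=\rho(g,x_0)\Psi_g(x_0,x_1)$; since the left-hand side is independent of $x_0$, the quotient $\Psi_g(x_0,x_1)\overline{\Psi_g(x_0',x_1)}$ is independent of $x_1$, and because $X\times X$ carries the product measure we may freeze a generic value of $x_0'$ to obtain $\Psi_g(x_0,x_1)=\lambda_g(x_0)\beta_g(x_1)$ with $\lambda_g,\beta_g\colon X\to S^1$ measurable. Substituting back, $\rho(g,x)\overline{\beta_g(x)}$ does not depend on $x$, so $\rho(g,x)=c(g)\beta_g(x)$ for some $c\colon G\to S^1$. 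Once we know that each $\beta_g$ is a genuine coboundary, say $\beta_g(x)=F(T_gx)\,\overline{F(x)}$ for a single measurable $F\colon X\to S^1$, the cocycle identity for $\rho$ immediately forces $c$ to be a homomorphism, and then $\rho(g,x)=c(g)F(T_gx)\overline{F(x)}$ exhibits $\rho$ as an element of $PC_{<1}(G,X,S^1)\cdot B^1(G,X,S^1)$, as desired.

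Thus everything reduces to the third move: showing that the functions $\beta_g$ produced above are coboundaries. This is the heart of the matter, and where the naive approach breaks down — one would like to freeze a coordinate in the relation $H(T_gx_0,T_gx_1)=\lambda_g(x_0)\beta_g(x_1)H(x_0,x_1)$, but the two arguments of $H$ are translated together, so a term of the form $H(\cdot,T_gx_1)$ never appears and the coupling cannot be undone by hand. The obstruction is genuine: the self-joining $\mu\times\mu$ on $X^{[1]}$ is not ergodic for the diagonal $G$-action (the Kronecker factor of $X$ persists), so one cannot simply appeal to ergodicity of $X^{[1]}$. The way out, which is the technical core of Moore and Schmidt's theorem, is to pass to the skew product $Y=X\times_\rho S^1$ with its commuting $G$- and $S^1$-actions and analyze the ergodic decomposition of the $G$-action — equivalently, the Mackey range of $\rho$; the relation from the first move forces this range to be as degenerate as possible, namely a group rotation on $S^1$, which is precisely the statement that $\rho$ is cohomologous to a constant cocycle (hence, since a constant cocycle is a homomorphism, to a character). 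An alternative packaging works inside the measurable cohomology $H^{1}\big(G,\mathcal M(X,S^1)\big)$, using the ergodicity of $X$ itself (not of $X\times X$) to trivialize the twisting and collapse the $x$-dependence of the $\lambda_g$. I expect this third move to be the only real difficulty; the first two are bookkeeping.
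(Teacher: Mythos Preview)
The paper does not prove this lemma; it simply quotes it as the theorem of Moore and Schmidt \cite{MS}. So the relevant comparison is to that argument, and here your outline has a structural gap. Your second move is empty: since $\Psi_g(x_0,x_1)=\rho(g,x_1)\overline{\rho(g,x_0)}$ already by move one, the ``separation of variables'' only recovers $c(g)=\rho(g,x_0')$ and $\beta_g(x)=\rho(g,x)\overline{\rho(g,x_0')}$, and asking that this $\beta_g$ equal $\Delta_g F$ for a \emph{single} $F$ is literally the statement of the lemma. You see this yourself when you observe that freezing a coordinate of $H$ fails; the upshot is that moves one and two reduce nothing, and the entire content lives in move three, which you leave as a pointer to the Mackey range without carrying it out.

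Here is one clean way to execute move three. The function $H$ is the kernel of a nonzero Hilbert--Schmidt operator $K$ on $L^2(X)$, and the identity from move one says precisely that $K$ commutes with the unitary representation $(V_gf)(x)=\overline{\rho(g,x)}\,f(T_gx)$ of $G$. Since $K^*K$ is compact, nonzero, and $V$-invariant, it has a positive eigenvalue whose finite-dimensional eigenspace carries a unitary action of the abelian group $G$; any one-dimensional subrepresentation yields a nonzero $F\in L^2(X)$ and a character $c$ with $V_gF=\overline{c(g)}F$. Ergodicity forces $|F|$ to be a nonzero constant, and unwinding gives $\rho=c\cdot\Delta F$. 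Your skew-product language describes the same phenomenon: $V$ is the $G$-action on the $u$-Fourier layer of $L^2(X\times_\rho S^1)$, and the existence of such an $F$ is exactly the statement that this layer contains an eigenfunction.
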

The following result of Host and Kra \cite[Corollary 7.9]{HK}  allow us to reduce matters to the case where $X$ is of order $<m$.
\begin{prop} \label{finiteorder}
Let $G$ be a countable abelian group and $X$ an ergodic $G$-system. If $m\geq 0$ and $\rho:G\times X\rightarrow U$ is a cocycle of type $<m$ into some compact abelian group $U$, then $\rho$ is $(G,X,U)$-cohomologous to a cocycle $\rho':G\times X\rightarrow U$ that is measurable with respect to $Z_{<m+1}(X)$.
\end{prop}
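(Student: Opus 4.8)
The statement is Corollary 7.9 of \cite{HK}; the plan is to reproduce the Host--Kra argument, which stays inside the cube-space formalism recalled in Appendix \ref{background} and proceeds by induction on $m$. For the base cases, a cocycle of type $<0$ is a coboundary and hence cohomologous to the trivial cocycle, which is measurable with respect to the one-point factor $Z_{<1}(X)$; and for $m=1$, Lemma \ref{type0} gives that $\rho$ is cohomologous to a character $c\colon G\to U$, which, being independent of the point, is measurable with respect to every factor of $X$, in particular $Z_{<2}(X)$. So assume $m\geq 2$ and that the proposition holds for $m-1$, for every ergodic $G$-system and every compact abelian target, and let $\rho\colon G\times X\to U$ be of type $<m$.

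The engine of the induction is the behaviour of the vertical derivative: for a transformation $t\in\mG(X)$ (Definition \ref{HKgroup:def}) the map $\Delta_t\rho(g,x):=\rho(g,tx)\overline{\rho(g,x)}$ is again a $(G,X,U)$-cocycle, and one reads off from Definition \ref{type:def} (cf. \cite[\S7]{HK}) that $\rho$ of type $<m$ forces $\Delta_t\rho$ to be of type $<m-1$. Combining this with the identity $\Delta_{t_1t_2}\rho=\Delta_{t_2}(\Delta_{t_1}\rho)\cdot\Delta_{t_1}\rho\cdot\Delta_{t_2}\rho$ and the fact that an extra derivative of a coboundary is again a coboundary, differentiating $\rho$ along an iterated commutator lowers the type by the commutator length. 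Feeding this into the induction hypothesis, one concludes that $\Delta_t\rho$ is a $(G,X,U)$-coboundary for every $t$ in the $(m+1)$-st term $\mG(X)_{m+1}$ of the lower central series of $\mG(X)$.

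It then remains to descend. Here one invokes the Host--Kra identification of $Z_{<m+1}(X)$ with the factor of $X$ cut out by the $\mG(X)_{m+1}$-invariant functions --- for a finite-order factor this is literally $X/\mG(X)_{m+1}$, and the general case follows from the consistency of the tower $\{Z_{<j}(X)\}_j$ --- together with a cohomology-descent lemma: if a $(G,X,U)$-cocycle has the property that $\Delta_t$ of it is a coboundary for every $t$ in a Polish subgroup $K\leq\mG(X)$ acting on $X$, then the cocycle is cohomologous to a $K$-invariant one. Taking $K=\mG(X)_{m+1}$ produces $\rho'\sim\rho$ with $\rho'$ measurable with respect to $Z_{<m+1}(X)$, which is the claim; I would run this directly for the compact abelian target $U$ rather than reducing to $U=S^1$, since reassembling circle-valued transfer functions into a $U$-valued one is not obstruction-free.

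I expect this last, measurable-descent step to be the main obstacle, the preceding differentiation bookkeeping being essentially formal. Producing a single transfer function $F\colon X\to U$ making $\rho\cdot\Delta F$ genuinely invariant under all of $\mG(X)_{m+1}$ --- not merely having each individual derivative trivialised --- requires a measurable (Fubini-type) selection $t\mapsto F_t$ of the trivialising functions, arranging their consistency in $t$, and then integrating out the residual ambiguity; this is precisely where the compactness of $U$ and the ergodicity of $X$ and of the auxiliary cube systems are used, and it is the content of the relevant lemmas of \cite[\S7]{HK}.
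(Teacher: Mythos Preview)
Your argument has a genuine circularity. The step where you ``invoke the Host--Kra identification of $Z_{<m+1}(X)$ with the factor of $X$ cut out by the $\mG(X)_{m+1}$-invariant functions'' presupposes the structure theorem. One direction is easy---$\mG(X)_{m+1}$ does act trivially on $Z_{<m+1}(X)$---but the reverse inclusion, that every $\mG(X)_{m+1}$-invariant function is $Z_{<m+1}(X)$-measurable, requires the action of $\mG(X)$ on $X$ to be transitive. That transitivity is exactly the content of Theorem~\ref{HK} (for $\mathbb{Z}$) or Theorem~\ref{Main2:thm} (in this paper), and both are proved \emph{using} Proposition~\ref{finiteorder}; see its role in the proof of Theorem~\ref{countable} in Section~\ref{CL:sec}. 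For a general ergodic $G$-system, with no structure theorem yet in hand, $\mG(X)$ need not act transitively and there is no reason for the $\mG(X)_{m+1}$-invariant factor to coincide with $Z_{<m+1}(X)$. A secondary gap: the cohomology-descent lemma you invoke (simultaneously trivialising all $\Delta_t\rho$ for $t$ ranging over a Polish group, with values in a general compact abelian $U$) is stronger than what is actually available; Lemma~\ref{cob:lem} requires a \emph{compact abelian} group acting freely, yields invariance only under an \emph{open} subgroup, and the remark following it explicitly warns that it does not extend from $S^1$ to arbitrary targets $U$.

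The actual proof in \cite[\S7]{HK} avoids the Host--Kra group entirely. One works through the extension $Y=X\times_\rho U$ and the relationship between the characteristic factors of $Y$ and those of $X$: using only cube-measure and abelian-extension considerations (\cite[Proposition~7.6 and Corollary~7.7]{HK}), one shows that $Z_{<m+1}(Y)$ is itself an abelian extension of $Z_{<m+1}(X)$, and the cocycle defining that extension is the desired $Z_{<m+1}(X)$-measurable representative $\rho'$, with the cohomology relation $\rho\sim\rho'$ read off from the factor map $Y\to Z_{<m+1}(Y)$. No transitivity hypothesis on any group action is needed, so the argument is available at this early stage and can safely be fed into the later structure-theoretic machinery.
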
 From this and Theorem \ref{Main:thm} we conclude the following result.
\begin{thm}
    Let $m\geq 0$ and $P$ be a multiset of primes. If $X$ is an ergodic totally disconnected $\bigoplus_{p\in P}\mathbb{Z}/p\mathbb{Z}$-system (see Definition \ref{TD:def}) then every cocycle $\rho:\bigoplus_{p\in P}\mathbb{Z}/p\mathbb{Z}\times X\rightarrow S^1$ of type $<m$ is $(\bigoplus_{p\in P}\mathbb{Z}/p\mathbb{Z},X,S^1)$-cohomologous to a phase polynomial of degree $<d$ for some $d=O_m(1)$.
\end{thm}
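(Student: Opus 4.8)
\noindent\emph{Proof idea.} The plan is to combine the Host-Kra reduction to finite order with the structure theorem for totally disconnected systems, using the countable index bound of Theorem~\ref{countable} to push the problem down to finite levels of an inverse limit. Since any factor of a totally disconnected system is again totally disconnected, Proposition~\ref{finiteorder} lets us replace $\rho$ by a $(G,X,S^1)$-cohomologous cocycle that is measurable with respect to $Z_{<m+1}(X)$; as a cocycle on a factor has the same type whether read on the factor or pulled back to $X$, and the pullback of a phase polynomial of degree $<d$ is again a phase polynomial of degree $<d$, it suffices to prove the statement when $X$ itself is an ergodic totally disconnected $G$-system of order $<m+1$.

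Next I would invoke the structure theorem for such systems (Theorem~\ref{Main:thm}), which presents $X$ as an inverse limit $X=\varprojlim_n X_n$ of ``elementary'' totally disconnected systems of order $<m+1$. Given a cocycle $\rho$ of type $<m$ on $X$, Theorem~\ref{countable} furnishes a countable set of coset representatives of $PC_{<m}(G,X,S^1)\cdot B^1(G,X,S^1)$ inside $Z^1_{<m}(G,X,S^1)$; after multiplying $\rho$ by a suitable phase polynomial cocycle and a coboundary, each such representative can be realised at a finite level $X_n$. The problem then reduces to showing that on each elementary $X_n$ one has $Z^1_{<m}(G,X_n,S^1)\subseteq PC_{<d}(G,X_n,S^1)\cdot B^1(G,X_n,S^1)$ for a single $d=O_m(1)$.

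On $X_n$ the defining tower of abelian extensions has at most $m$ stages, all with totally disconnected structure groups, so I would run the inductive scheme of Bergelson, Tao and Ziegler~\cite{Berg& tao & ziegler}: peel off one structure group at a time and solve, at each stage, the Conze-Lesigne type equation that arises, with Lemma~\ref{type0} as the base case. The essential point is that over a totally disconnected structure group such equations are always solvable within the phase polynomials, the degree growing by at most a bounded amount at each of the $\le m$ stages, so the final degree depends on $m$ alone. Unwinding the inverse limit, and then the reduction of the first paragraph, yields the claim for the original $\rho$.

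The steps I expect to be most delicate are extracting from Theorem~\ref{Main:thm} a description of the elementary factors $X_n$ concrete enough that the Conze-Lesigne induction can be carried out explicitly on them, and controlling the growth of the phase polynomial degree through that induction so that the bound $d=O_m(1)$ is genuinely uniform --- independent of $n$, of the multiset $P$, and of which of the countably many coset representatives is being treated.
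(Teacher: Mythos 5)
Your first paragraph is exactly the paper's reduction: by Proposition \ref{finiteorder} (plus descent to the factor and pulling phase polynomials back) one may assume $X$ is an ergodic totally disconnected system of order $<m+1$. But from that point on your plan goes wrong. Theorem \ref{Main:thm} is not an inverse-limit structure theorem presenting $X$ as $\varprojlim_n X_n$ of ``elementary'' systems; it is literally the statement you are trying to prove, quoted from \cite{OS} and used in this paper as a black box: every function of type $<l$ on an ergodic totally disconnected system of order $<k$ is cohomologous to a phase polynomial of degree $<O_{k,m,l}(1)$. Applied with order $<m+1$ and type $<m$ (and torsion exponent $1$), it gives the bound $O_m(1)$ at once, and this two-line deduction is the paper's entire proof. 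There is no inverse-limit decomposition to unwind and no reduction to ``finite levels'' to perform.

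Two further problems make your proposed route untenable as written. First, invoking Theorem \ref{countable} here is circular within the paper's logic: the base case of the induction proving Theorem \ref{countable} (the case $l=0$ of a totally disconnected system) is handled precisely by Theorem \ref{Main:thm} together with Lemma \ref{reductioncountable}, i.e.\ by the very statement under discussion; moreover it is not clear how the countable-index statement would ``realise each coset representative at a finite level''. Second, your plan to re-run the Bergelson--Tao--Ziegler Conze--Lesigne induction and assert that over totally disconnected structure groups ``such equations are always solvable within the phase polynomials, the degree growing by at most a bounded amount'' assumes away the genuinely hard content: for an unbounded multiset of primes $P$ the uniformity of the degree bound (independent of $P$ and of the order of torsion) is exactly what \cite{OS} establishes, and it does not follow by a routine repetition of the bounded-characteristic argument of \cite{Berg& tao & ziegler}. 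In short: cite Theorem \ref{Main:thm} directly after your first paragraph and you have the paper's proof; the rest of your outline either re-proves the black box without its substance or relies on a result proved later from this one.
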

If hypothetically we knew that the quantity $d$ in the theorem equals to $m$, then this would imply Theorem \ref{countable} for a totally disconnected system $X$. In fact, in this case $$PC_{<m}(\bigoplus_{p\in P}\mathbb{Z}/p\mathbb{Z},X,S^1)\cdot B^1(\bigoplus_{p\in P}\mathbb{Z}/p\mathbb{Z},X,S^1) = Z^1_{<m}(\bigoplus_{p\in P}\mathbb{Z}/p\mathbb{Z},X,S^1)$$.\\
In order to deal with the fact that $d$ is potentially higher than $m$ we need the following generalization of Lemma \ref{sep:lem}.
\begin{lem} \label{sep2:lem}
	Let $X$ be an ergodic $G$-system. Let $d\geq m\geq 0$ and $p:G\times X\rightarrow S^1$ be a phase polynomial of degree $<d$ and type $<m$. Write $d^{[m]}p=\Delta Q$, and let $r=d-m$. If $\|Q-1\|_{L^2(X^{[m]},\mu^{[m]})}<\sqrt{2}/2^{r+d-1}$ then $p$ is a phase polynomial of degree $<m$.
\end{lem}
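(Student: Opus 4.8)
\textbf{Proof plan for Lemma \ref{sep2:lem}.}

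The plan is to run an induction on $r = d-m$, using the hypothesis on $\|Q-1\|_{L^2}$ to propagate a quantitative "closeness to a lower-degree polynomial" statement down the ladder of derivatives. The base case $r=0$ should be immediate: if $p$ has degree $<m$ there is nothing to prove, or more precisely if $d=m$ then the hypothesis forces $\|Q-1\|_{L^2}$ small with $Q = d^{[m]}p$ essentially trivial, and one concludes directly. For the inductive step, the idea is to look at $d^{[m]}p = \Delta Q$ one level up: since $p$ has type $<m$, the function $Q$ on $X^{[m]}$ satisfies a coboundary-type relation, and the smallness assumption $\|Q-1\|_{L^2} < \sqrt2/2^{r+d-1}$ puts $Q$ in a regime where a rigidity phenomenon applies. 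The first step I would carry out is to make precise the relationship between $Q$ being $L^2$-close to $1$ and $p$ being $L^2$-close (on a suitable cubespace) to a phase polynomial of degree $<d-1$; this is where the factor $2^{r+d-1}$ comes from, as each derivative $\Delta_g$ roughly doubles distances.

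The key steps, in order, are as follows. First, invoke the hypothesis to get that $d^{[m]}p$ is uniformly close to the constant function $1$ on $X^{[m]}$, and translate this via the cube-measure structure into the statement that $p$ itself is close to being annihilated by $d^{[m]}$, i.e. close to a phase polynomial of type $<m$ — but we already know $p$ has type $<m$, so the content is really about \emph{degree}. Second, apply Lemma \ref{sep:lem} (the un-generalized version, which I am assuming handles some base configuration of the parameters, presumably the case where the relevant $L^2$ distance is below a fixed threshold forcing exact membership in a lower-degree class) to the derivative $\Delta_g p$, which has degree $<d-1$ and type $<m$: the bound $\sqrt2/2^{r+d-1}$ is engineered precisely so that after taking one derivative the resulting quantity $\|Q' - 1\|$ (where $d^{[m]}\Delta_g p = \Delta Q'$) satisfies the slightly weaker bound $\sqrt2/2^{(r-1)+(d-1)-1}$ needed to apply the inductive hypothesis to $\Delta_g p$. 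Third, conclude by induction that $\Delta_g p$ is a phase polynomial of degree $<m$ for (a positive-measure set of, hence by ergodicity essentially all) $g$, and then use the standard fact that if all derivatives $\Delta_g p$ have degree $<m$ then $p$ has degree $<m+1$ — combined with the type $<m$ hypothesis and a further application of the rigidity, upgrade this to degree $<m$.

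The main obstacle I expect is the bookkeeping in the second step: carefully tracking how the $L^2$-norm of $Q - 1$ transforms when one replaces $p$ by a derivative $\Delta_g p$, and verifying that the constant $\sqrt2/2^{r+d-1}$ is exactly what makes the induction close. One has to be careful that $d^{[m]}$ and $\Delta_g$ commute appropriately up to lower-order terms, that the measure $\mu^{[m]}$ on $X^{[m]}$ behaves well under these operations, and that the doubling of distances under each $\Delta_g$ is genuinely by a factor of $2$ in $L^2$ (using $\|fg - 1\|_{L^2} \leq \|f-1\|_{L^2} + \|g-1\|_{L^2}$ for unit-modulus functions, and $\|\Delta_g p - 1\| \leq 2\|p - \text{(appropriate reference)}\|$). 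A secondary subtlety is the passage from "positive-measure set of $g$" to "all $g$": this should follow from ergodicity together with the group structure of the set of $g$ for which $\Delta_g p$ has a given degree, as in the standard arguments of Host--Kra, but it needs to be invoked cleanly rather than re-proved. Once the constants are pinned down the rest is routine manipulation of phase polynomials and the derivative operators $d^{[m]}$, $\Delta$.
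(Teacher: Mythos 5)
Your outer induction on $r=d-m$ (differentiate $p$ in the $X$-variable, note that $\Delta_g p$ has degree $<d-1$, type $<m$, with $d^{[m]}\Delta_g p=\Delta(\Delta_{g^{[m]}}Q)$ and $\|\Delta_{g^{[m]}}Q-1\|\le 2\|Q-1\|$, so the constant survives) is fine as bookkeeping, but it only reduces the lemma to its crux case and does not solve it. After the induction you know $\Delta_g p\in P_{<m}$ for every $g$ (not merely a "positive-measure set of $g$" -- $G$ is countable and discrete, so that phrase has no content), i.e.\ $p\in P_{<m+1}$; the remaining task, upgrading a degree $<m+1$, type $<m$ polynomial with $\|Q-1\|$ small to degree $<m$, is exactly the case $r=1$ of the lemma and is where all the difficulty sits. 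Your "further application of the rigidity" cannot be Lemma \ref{sep:lem} applied on $X^{[m]}$ directly, because that lemma requires an ergodic system and $(X^{[m]},\mu^{[m]})$ is not ergodic for $m\ge 1$; applied within a single ergodic component it only tells you that $Q$ is constant (hence $\Delta Q=1$) on the positive-measure set of components where $\|Q-1\|_{L^2(\mu_\omega)}$ happens to be small, and nothing forces that set to be everything.

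The missing mechanism, which is the heart of the paper's proof, is a rigidity argument on the ergodic decomposition of $\mu^{[m]}$ driven by the face transformations rather than by derivatives of $p$: one uses the identity $d_{\alpha}^{[m]}\Delta_{g_1}\cdots\Delta_{g_d}p=\Delta\bigl(\Delta_{{g_1}_{\alpha_1}^{[m]}}\cdots\Delta_{{g_d}_{\alpha_d}^{[m]}}Q\bigr)$ together with $p\in P_{<d}$ to see that all $d$-fold face-derivatives of $Q$ are invariant under the diagonal $G$-action, and then iterates a claim of the form: if $P$ is close to $1$ in $L^2(\mu^{[m]})$ and all $\Delta_{g_\alpha^{[m]}}P$ are diagonal-invariant, then $P$ is diagonal-invariant. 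That claim is proved by observing (Lemmas \ref{facetransformations} and \ref{ergdec}) that the $g_\alpha^{[m]}$ permute the ergodic components and act ergodically on the space of components, that the set $A$ of components on which $\Delta P=1$ is invariant under this action precisely because the face-derivatives of $P$ are diagonal-invariant, and that $A$ contains the positive-measure set of components where $\|P-1\|_{L^2(\mu_\omega)}$ is small (by Lemma \ref{sep:lem} componentwise), hence $A$ has full measure. Iterating $d$ times yields $\Delta Q=1$, so $d^{[m]}p=1$ and Lemma \ref{PP} gives $p\in P_{<m}$. None of this appears in your plan: you differentiate $p$ along $G$, whereas the degree hypothesis has to be fed in by differentiating $Q$ along faces, and the passage from "good on a positive-measure set of ergodic components" to "good on almost all of them" needs the invariance-plus-ergodicity argument on the component space, not ergodicity of $X$ itself.
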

\begin{proof}
If $m=0$ then the claim follows by Lemma \ref{sep:lem}. Assume that $m\geq 1$. We have,\\
	\textbf{Claim}: Let $P:X^{[m]}\rightarrow S^1$ be a phase polynomial of degree $<r$ and suppose that \begin{equation}
	\label{distance}
\|P-1\|_{L^2(X^{[m]},\mu^{[m]})}<\sqrt{2}/2^{r-1}	\end{equation} then $P$ is invariant with respect to the diagonal action of $G$ on $X^{[m]}$ if and only if for every $(m-1)$-face $\alpha$ and $g\in G$, $\Delta_{g_\alpha^{[m]}} P$ is invariant with respect to that action.
	\begin{proof}
    Let $g\in G$ and $\alpha$ be an $(m-1)$-face. Since $g_\alpha^{[m]}$ is measure preserving (Lemma \ref{facetransformations}) and commutes with $h^{[m]}$ the first direction follows. We prove the other direction. By the ergodic decomposition theorem there exists a probability measure $(\Omega_m,P_m)$ such that \begin{equation} \label{decomposition}\mu^{[m]}=\int_{\Omega_m} \mu_\omega dP_m(\omega)
		\end{equation}
	Let $\alpha$ be an $(m-1)$-dimensional face. By Lemma \ref{ergdec}, the transformation $g_\alpha^{[m]}$ defines an isomorphism of ergodic components and an action on $(\Omega_m,P_m)$. Moreover, by the same lemma, the action generated by these transformations for every $g\in G$ and $(m-1)$-dimensional face $\alpha$ is ergodic. Let $$A=\{\omega\in \Omega_m :\Delta P = 1 \text{ }\mu_{\omega}\text{-a.e} \}.$$
		Since $\Delta_{g_{\alpha}^{[m]}} P$ is invariant, we have that $\omega\in A$ if and only if ${g_\alpha^{[m]}}w\in A$. In other words, $A$ is invariant. On the other hand, from (\ref{decomposition}) and (\ref{distance}) we conclude that the set
		 $$B:=\{\omega\in \Omega_m : \|P-1\|_{L^2(\mu_\omega)}<\sqrt{2}/2^{r-1}\}$$ is of positive measure with respect to $P_m$. Since $\mu_{\omega}$ is ergodic it follows by Lemma \ref{sep:lem} that $B\subseteq A$ and therefore by ergodicity (Lemma \ref{ergdec}) $A$ is of measure $1$. This proves the claim.
	\end{proof}
	We return to the proof of the lemma. For any $g_1,...,g_d\in G$ and $(m-1)$-dimensional faces, $\alpha_1,...,\alpha_d$ we have,
	$$d_{\alpha}^{[m]} \Delta_{g_1}...\Delta_{g_d}p = \Delta \Delta_{{g_1}_{\alpha_1}^{[m]}}...\Delta_{{g_d}_{\alpha_d}^{[m]}} Q$$ where $\alpha$ is the intersection of $\alpha_1,...,\alpha_d$. Since $p$ is of degree $<d$ we conclude that
		 $\Delta_{{g_1}_{\alpha_1}^{[m]}}...\Delta_{{g_d}_{\alpha_d}^{[m]}} Q$ is invariant with respect to the diagonal action of $G$. The claim above imply that if $d$-derivatives of $Q$ are invariant and these derivatives are sufficiently close to $1$, then only $d-1$ derivatives of $Q$ are invariant. Repeating this claim iteratively $d$ times, we deduce that when $Q$ sufficiently small (as in the lemma), it is invariant with respect to that action. Since $d^{[m]}p = \Delta Q=1$, Lemma \ref{PP} implies that $p$ is of degree $<m$ as required.
\end{proof}
We have the following reduction of theorem \ref{countable}.
\begin{lem} \label{reductioncountable}
Let $G=\bigoplus_{p\in P}\mathbb{Z}/p\mathbb{Z}$.  In order to prove Theorem \ref{countable} it is enough to show that for some $d>m$, $$PC_{<d}(G,X,S^1)\cdot B^1(G,X,S^1)\cap Z^1_{<m}(G,X,S^1) \leq Z^1_{<m}(G,X,S^1)$$ is of at most countable index.
\end{lem}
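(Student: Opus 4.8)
\textbf{Proof proposal for Lemma \ref{reductioncountable}.}

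The plan is to show that the reduced statement implies Theorem \ref{countable} by a short two-part argument. First I would observe that for any $d>m$ the subgroup
\[
PC_{<d}(G,X,S^1)\cdot B^1(G,X,S^1)\cap Z^1_{<m}(G,X,S^1)
\]
sits between $PC_{<m}(G,X,S^1)\cdot B^1(G,X,S^1)$ and $Z^1_{<m}(G,X,S^1)$: it contains the former because $PC_{<m}\subseteq PC_{<d}$ and every element of $PC_{<m}(G,X,S^1)\cdot B^1(G,X,S^1)$ already lies in $Z^1_{<m}(G,X,S^1)$ by \eqref{subgroup}; and it is obviously contained in $Z^1_{<m}(G,X,S^1)$. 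So by the multiplicativity of indices in a chain of subgroups,
\[
[Z^1_{<m}:PC_{<m}\cdot B^1] = [Z^1_{<m}:PC_{<d}\cdot B^1\cap Z^1_{<m}]\cdot [PC_{<d}\cdot B^1\cap Z^1_{<m}:PC_{<m}\cdot B^1],
\]
and the hypothesis of the lemma gives that the first factor on the right is at most countable. Hence it suffices to bound the second factor, i.e.\ to show that the phase polynomial cocycles of degree $<d$ and type $<m$, modulo coboundaries, form a set of at most countable index in (equivalently: of at most countable size modulo) $PC_{<m}(G,X,S^1)\cdot B^1(G,X,S^1)$.

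The key tool here is Lemma \ref{sep2:lem}. The point is that if $p$ is a phase polynomial of degree $<d$ and type $<m$, then writing $d^{[m]}p=\Delta Q$, the lemma says that $p$ already has degree $<m$ provided $Q$ is within distance $\sqrt{2}/2^{r+d-1}$ of $1$ in $L^2(X^{[m]},\mu^{[m]})$, where $r=d-m$. So I would cover the unit ball (or the relevant subset) of the separable space $L^2(X^{[m]},\mu^{[m]})$ by countably many balls of radius $\sqrt{2}/2^{r+d-1}$, indexed by a countable set. Given two phase polynomial cocycles $p_1,p_2$ of degree $<d$ and type $<m$ whose associated $Q_1,Q_2$ fall in the same ball of this cover, the ratio $Q_1/Q_2$ (which equals $d^{[m]}(p_1/p_2)$ up to the $\Delta$) is within $2\cdot\sqrt{2}/2^{r+d-1}$ of $1$; after adjusting constants in Lemma \ref{sep2:lem} (or working with radius $\sqrt{2}/2^{r+d}$ from the start), this forces $p_1/p_2$ to be a phase polynomial of degree $<m$, hence an element of $PC_{<m}(G,X,S^1)\subseteq PC_{<m}(G,X,S^1)\cdot B^1(G,X,S^1)$. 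Therefore the map sending a class of $PC_{<d}\cdot B^1\cap Z^1_{<m}$ modulo $PC_{<m}\cdot B^1$ to the index of the $L^2$-ball containing a representative $Q$ is well-defined and injective, which shows the index in question is at most countable.

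There is one technical wrinkle to handle carefully: the element $Q$ with $d^{[m]}p = \Delta Q$ is only determined up to a $G$-invariant (for the diagonal action) phase polynomial, and more fundamentally a cohomology class in $PC_{<d}\cdot B^1\cap Z^1_{<m}$ has many representatives $p$, each with its own $Q$. So I would need to check that modifying $p$ by a coboundary, or by an element of $PC_{<m}$, changes $Q$ only by something that does not affect which ball (after a suitable dyadic refinement of the cover) it lands in modulo the equivalence we care about — essentially because such modifications change $p/p'$ within the target subgroup $PC_{<m}\cdot B^1$, which is exactly what we are quotienting by. Concretely, I would phrase the argument as: fix for each coset a representative cocycle $p$ and the resulting $Q=Q_p$; two cosets with $Q_p, Q_{p'}$ in a common small ball coincide. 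The main obstacle, and the place that needs the most care, is precisely this bookkeeping — making sure that "$Q$ in a fixed countable cover" genuinely separates cosets of $PC_{<m}(G,X,S^1)\cdot B^1(G,X,S^1)$ and not merely cocycles — but once Lemma \ref{sep2:lem} is in hand this is a routine separability-plus-pigeonhole argument with no further input needed.
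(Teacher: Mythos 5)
Your proposal is correct and takes essentially the same route as the paper: reduce by index multiplicativity to showing that $PC_{<m}(G,X,S^1)\cdot B^1(G,X,S^1)$ has at most countable index in $PC_{<d}(G,X,S^1)\cdot B^1(G,X,S^1)\cap Z^1_{<m}(G,X,S^1)$, and then obtain this from separability of $L^2(X^{[m]},\mu^{[m]})$ together with Lemma \ref{sep2:lem} applied to ratios $p/p'$ whose associated $Q$'s lie in a common small ball. The only cosmetic difference is bookkeeping: the paper fixes one representative $\rho_i$ per ball and shows every cocycle lands in some coset $\rho_i\cdot PC_{<m}\cdot B^1$, whereas you fix one representative per coset and show the induced map to balls is injective; these are the same pigeonhole argument.
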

\begin{proof}
Let $\mathcal{W}\subseteq L^2(X^{[m]})$ denote the space of phase polynomials of degree $<d-m+1$ in $X^{[m]}$. Since $L^2(X^{[m]},\mu^{[m]})$ is separable, we can decompose $\mathcal{W}$ into a countable union of balls $\{B_i\}_{i\in\mathbb{N}}$ of diameter $<\sqrt{2}/2^{2d-m+1}$. For each ball, we choose (if exists) a cocycle $\rho_i\in P_{<d}(G,X,S^1)\cdot B^1(G,X,S^1)$ such that $\rho_i = p_i\cdot \Delta F_i$ for a measurable map $F_i:X\rightarrow S^1$ and a phase polynomial cocycle $p_i\in P_{<d}(G,X,S^1)$ which satisfies that $d^{[m]}p_i=\Delta Q_i$ where $Q_i\in B_i$. We conclude that for every $\rho\in PC_{<d}(G,X,S^1)\cdot B^1(G,X,S^1)$, there exists $i$ such that $\rho/\rho_i\in PC_{<m}(G,X,S^1)\cdot B^1(G,X,S^1)$. Indeed, write $\rho = p\cdot \Delta F$ for some $p\in PC_{<d}(G,X,S^1)$. Since $p$ is cohomologous to $\rho$, it is of type $<m$. Therefore, we can write $d^{[m]} p = \Delta Q$ for some phase polynomial $Q:X^{[m]}\rightarrow S^1$ of degree $<d-m+1$ (by lemma \ref{PP}). We conclude that $Q\in \mathcal{W}$ and there exists $i$ such that $Q\in B_i$. Let $\rho_i$ as above. We conclude that $\rho/\rho_i = p/p_i\cdot \Delta F/F_i$ and $d^{[m]} p/p_i = \Delta Q/Q_i$. Since $$\|Q/Q_i - 1 \|_{L^2(\mu^{[m]})} = \|Q-Q_i\|_{L^2(\mu^{[m]})}  < \sqrt{2}/2^{2d-m+1}$$ Lemma \ref{sep2:lem} implies that $p/p_i$ is of degree $<m$. It follows that $PC_{<m}(G,X,S^1)\cdot B^1(G,X,S^1)$ is of countable index in $PC_{<d}(G,X,S^1)\cdot B^1(G,X,S^1)\cap Z^1_{<m}(G,X,S^1)$. By the assumption, the latter is of at most countable index in $Z^1_{<m}(G,X,S^1)$. We conclude that $PC_{<m}(G,X,S^1)\cdot B^1(G,X,S^1)$ is of countable index in $Z^1_{<m}(G,X,S^1)$, as required.
\end{proof}
The main difficulty in the proof of Theorem \ref{countable} is therefore to reduce matters to the case where $X$ is totally disconnected. Before we turn to the proof of Theorem \ref{countable} we prove some corollaries. Since we prove Theorem \ref{countable} by induction on the order of $X$ we will be able to use these corollaries for systems of smaller order.
\subsection{Corollaries} Throughout and unless specified otherwise $G=\bigoplus_{p\in P}\mathbb{Z}/p\mathbb{Z}$. We begin with the following counterpart of \cite[Lemma 9.2]{HK}.
\begin{thm} \label{HK1}
	Let $m\geq 0$ be a natural number and $X$ be an ergodic $G$-system. Let $(\Omega,P)$ be a probability space and $\omega\mapsto \rho_\omega$ be a measurable map from $\Omega$ into $Z_{<m}^1(G,X,S^1)$. Then, there exists a set of positive measure $\mathcal{A}\subset \Omega$, such that $\rho_\omega/\rho_{\omega'}\in PC_{<m}(G,X,S^1)\cdot B^1(G,X,S^1)$  for every $\omega,\omega'\in \mathcal{A}$.
\end{thm}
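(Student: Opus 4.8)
\textbf{Proof proposal for Theorem \ref{HK1}.}

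The plan is to deduce this statement directly from Theorem \ref{countable} together with a measure-theoretic pigeonhole argument, following the template of \cite[Lemma 9.2]{HK} but replacing the "countable index" input (which in the $\mathbb{Z}$-group-rotation case concerned $P_{<1}\cdot B^1$ in $Z_{<2}$) by our Theorem \ref{countable}. First I would observe that, by Theorem \ref{countable}, the subgroup $\Lambda:=PC_{<m}(G,X,S^1)\cdot B^1(G,X,S^1)$ has at most countable index in the Polish group $Z^1_{<m}(G,X,S^1)$; fix a countable set of coset representatives, so that $Z^1_{<m}(G,X,S^1)=\bigsqcup_{n\in\mathbb{N}} c_n\Lambda$. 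Pulling back along the measurable map $\omega\mapsto\rho_\omega$, the sets $A_n:=\{\omega\in\Omega:\rho_\omega\in c_n\Lambda\}$ partition $\Omega$ into countably many measurable pieces (measurability of each $A_n$ follows since the coset partition of a Polish group by a subgroup of countable index consists of Borel sets, or alternatively one works with the Borel structure on $Z^1$ coming from convergence in measure). Hence at least one $A_n$ has positive $P$-measure; set $\mathcal{A}:=A_n$ for such an $n$.

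Then for any $\omega,\omega'\in\mathcal{A}$ we have $\rho_\omega,\rho_{\omega'}\in c_n\Lambda$, so $\rho_\omega/\rho_{\omega'}\in \Lambda\cdot\Lambda=\Lambda$ because $\Lambda$ is a subgroup by \eqref{subgroup}. This is exactly the assertion $\rho_\omega/\rho_{\omega'}\in PC_{<m}(G,X,S^1)\cdot B^1(G,X,S^1)$, and $\mathcal{A}$ has positive measure, completing the argument.

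The main obstacle is the measurability bookkeeping: one must be careful about which $\sigma$-algebra is placed on $Z^1_{<m}(G,X,S^1)$ so that (a) the hypothesis "$\omega\mapsto\rho_\omega$ is measurable" is the same notion used elsewhere in the paper, and (b) the cosets $c_n\Lambda$ are measurable sets for that structure. The natural choice is the topology of convergence in measure on cocycles (making $Z^1$ a Polish group, with $B^1$ and hence $\Lambda$ analytic/Borel subgroups); since $\Lambda$ has countable index, each coset is then Borel, and the pullback sets $A_n$ are measurable. A secondary point worth a sentence in the proof is that one genuinely needs Theorem \ref{countable} here rather than just the Moore--Schmidt case $m=1$ of Lemma \ref{type0}: for $m\ge 2$ the equality $Z^1_{<m}=PC_{<m}\cdot B^1$ can fail (as the Furstenberg--Weiss example shows), so the countable-index statement is the correct substitute, and the argument above uses nothing more than that index being countable.
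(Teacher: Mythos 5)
Your argument is essentially the paper's proof: invoke Theorem \ref{countable} to get countably many cosets of $\Lambda=PC_{<m}(G,X,S^1)\cdot B^1(G,X,S^1)$ in $Z^1_{<m}(G,X,S^1)$, pull back along $\omega\mapsto\rho_\omega$, pigeonhole to find a positive-measure set landing in a single coset, and use that $\Lambda$ is a group to conclude $\rho_\omega/\rho_{\omega'}\in\Lambda$. The one point to fix is your measurability justification: the blanket statement that the coset partition of a Polish group by a subgroup of countable index consists of Borel sets is false in general (e.g.\ a $\mathbb{Q}$-vector-space complement of $\mathbb{Q}$ in $(\mathbb{R},+)$ has countable index but is not measurable), so countable index alone does not give measurability of the sets $A_n$. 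The correct route, which you gesture at in your second alternative and which the paper uses explicitly, is that $\Lambda$ is analytic (it is a product of analytic sets, $B^1$ being the continuous image of $\mathcal{M}(X,S^1)$ under the derivative map), its complement is a countable union of translates of $\Lambda$ and hence also analytic, and therefore $\Lambda$ is Borel by Lusin's separation theorem; the same then holds for each coset, and the measurability of the $A_n$ follows. With that substitution your proof is complete and coincides with the one in the paper.
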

We need some notation: An analytic subset of a measurable space $X$ is the continuous image of a Polish space in $X$. Lusin separation theorem \cite[Theorem 14.7]{Ke} implies that if $A$ and $X\backslash A$ are analytic, then $A$ is Borel measurable.
\begin{proof}
Observe that since the analytic set $PC_{<m}(G,X,S^1)\cdot B^1(G,X,S^1)$ is of at most countable index in $Z_{<m}^1(G,X,S^1)$ then by the separation theorem it is Borel. Therefore the map $\omega\mapsto \rho_\omega \cdot PC_{<m}(G,X,S^1)\cdot B^1(G,X,S^1)$ is a measurable map into a countable set. We conclude that there exists a measurable set $\mathcal{B}\subseteq\Omega$ of positive measure such that for every $\omega,\omega'\in \mathcal{B}$, $\rho_\omega$ and $\rho_{\omega'}$ belong to the same co-set. This completes the proof. 
\end{proof}
As a corollary we have the following result.
\begin{thm} \label{CL:thm}
	Let $m\geq 0$, $X$ be an ergodic $G$-system and $U$ be a compact abelian group which acts on $X$ by automorphisms. Let $\rho:G\times X\rightarrow S^1$ be a cocycle and suppose that for every $u\in U$, $\Delta_u \rho$ is of type $<m$. Then there exists an open subgroup $U'\leq U$ such that $\Delta_u \rho\in PC_{<m}(G,X,S^1)\cdot B^1(G,X,S^1)$ for all $u\in U'$.
\end{thm}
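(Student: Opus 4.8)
The plan is to reduce to Theorem \ref{HK1} by packaging the family $(\Delta_u\rho)_{u\in U}$ as a measurable map from the probability space $(U,\text{Haar})$ into the group of cocycles of type $<m$, and then upgrade the "positive measure set" conclusion of Theorem \ref{HK1} to an "open subgroup" conclusion using the group structure of $U$ and the cocycle identity for $\Delta$.

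First I would check the hypotheses of Theorem \ref{HK1} are met. Equip $U$ with its normalized Haar measure $P$, so $(U,P)$ is a probability space. The map $u\mapsto \Delta_u\rho$ is a well-defined map into $Z^1_{<m}(G,X,S^1)$ by hypothesis (each $\Delta_u\rho$ is a cocycle of type $<m$; that it is a cocycle in the $G$-variable follows since $u$ acts by automorphisms commuting with the $G$-action, so $\Delta_u$ preserves the cocycle equation). Measurability of $u\mapsto \Delta_u\rho$ as a map into the Polish group $Z^1_{<m}(G,X,S^1)$ (with the topology of convergence in measure on each $G$-fiber) follows from joint measurability of the action of $U$ on $X$ together with measurability of $\rho$. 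Applying Theorem \ref{HK1} yields a set $\mathcal{A}\subseteq U$ of positive Haar measure such that $\Delta_u\rho/\Delta_{u'}\rho \in PC_{<m}(G,X,S^1)\cdot B^1(G,X,S^1)$ for all $u,u'\in\mathcal{A}$.

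Now I would exploit the algebraic identity relating $\Delta_u$ to the $U$-cocycle structure. Since $U$ acts by automorphisms, $\Delta_{uv}\rho = (\Delta_u\rho\circ v)\cdot \Delta_v\rho$ — or, more usefully for us, the map $u\mapsto \Delta_u\rho$ is a cocycle for the $U$-action on the group of $(G,X,S^1)$-cocycles, and hence $\Delta_u\rho \cdot (\Delta_{u'}\rho)^{-1}$ is, up to a coboundary-in-$u$ correction, a quantity depending on $u(u')^{-1}$. The key point: the set $\mathcal{H}=\{u\in U : \Delta_u\rho\in PC_{<m}(G,X,S^1)\cdot B^1(G,X,S^1)\}$ is a subgroup of $U$ (this uses that $PC_{<m}\cdot B^1$ is a subgroup, Lemma \ref{PP}, and that $\Delta_{uv}\rho$ differs from $\Delta_u\rho\cdot\Delta_v\rho$ by composing with an automorphism, which preserves being a phase-polynomial-times-coboundary). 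From Theorem \ref{HK1} we get $\mathcal{A}\mathcal{A}^{-1}\subseteq \mathcal{H}$; since $\mathcal{A}$ has positive Haar measure, $\mathcal{A}\mathcal{A}^{-1}$ contains an open neighborhood of the identity (Steinhaus–Weil theorem), so $\mathcal{H}$ contains an open neighborhood of the identity and hence (being a subgroup) an open subgroup $U'$. Then $\Delta_u\rho\in PC_{<m}(G,X,S^1)\cdot B^1(G,X,S^1)$ for all $u\in U'$, as required.

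The main obstacle I anticipate is the bookkeeping in the step that $\mathcal{H}$ is a subgroup: one must verify that $\Delta_u\rho$ and $\Delta_v\rho$ both lying in $PC_{<m}\cdot B^1$ forces $\Delta_{uv}\rho$ to as well, which requires knowing that composing a phase polynomial cocycle (resp.\ a coboundary) with the automorphism $v$ of $X$ keeps it a phase polynomial cocycle of the same degree (resp.\ a coboundary) — this is where one uses that phase polynomial degree is defined via the iterated $\Delta_g$ operators, which are intertwined by any measure-preserving automorphism commuting with the $G$-action. Once that invariance is in hand, the subgroup property and the Steinhaus–Weil argument are routine, and the measurability verification needed to invoke Theorem \ref{HK1} is standard.
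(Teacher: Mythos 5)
Your proposal is correct and follows essentially the same route as the paper: apply Theorem \ref{HK1} to $u\mapsto\Delta_u\rho$ on $(U,\mathrm{Haar})$, use the identity $\Delta_{u(u')^{-1}}\rho=V_{(u')^{-1}}\bigl(\Delta_u\rho/\Delta_{u'}\rho\bigr)$ (valid since $U$ commutes with the $G$-action, so $V_{u'}$ preserves $PC_{<m}\cdot B^1$) to get $\mathcal{A}\mathcal{A}^{-1}\subseteq U'$, and conclude openness of the subgroup $U'$ via the Steinhaus--Weil lemma. The only difference is that you spell out the measurability and subgroup-invariance bookkeeping that the paper leaves implicit.
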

\begin{proof}
	From the cocycle identity it is easy to see that 
	$$U' = \{u\in U : \Delta_u \rho\in PC_{<m}(G,X,S^1)\cdot B^1(G,X,S^1)\}$$
	is a subgroup of $U$. We use Theorem \ref{HK1} with $\Omega = U$ and $\rho_u = \Delta_u \rho$. We see that there exists a set of positive measure $\mathcal{A}\subseteq U$ such that  $\Delta_u \rho/\Delta_{u'}\rho\in PC_{<m}(G,X,S^1)\cdot B^1(G,X,S^1)$ for all $u,u'\in\mathcal{A}$. A direct computation shows that $\Delta_{uu'^{-1}} \rho = V_{u'^{-1}}\Delta_u \rho/\Delta_{u'}\rho$ for every $u,u'\in U$. Since $U$ commutes with $G$ we conclude that $\mathcal{A}\cdot\mathcal{A}^{-1}\subseteq U'$. Therefore, by Lemma \ref{A.Weil}, $U'$ is an open subgroup and the proof is complete. 
\end{proof}
Given a compact abelian group $U$ and an integer $m$ we define $U^m :=\{u^m : u\in U\}$. Observe that the subgroup $U'$ in the previous theorem depends on the cocycle $\rho$. In the next lemma we compute this group for a root of $\rho$.
\begin{lem} \label{Clroot:lem}
      Let $X$ be an ergodic $G$-system. Let $U$ be a compact abelian group which acts on $X$ by automorphisms and let $n,m,d\in\mathbb{N}$. If $\rho:G\times X\rightarrow S^1$ is a cocycle of type $<m$ and  $\Delta_u\rho^n \in PC_{<d}(G,X,S^1)\cdot B^1(G,X,S^1)$ for every $u\in U$, then $\Delta_u \rho \in PC_{<d}(G,X,S^1)\cdot B^1(G,X,S^1)$ for every $u\in U^{nm}$.
\end{lem}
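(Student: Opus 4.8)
The statement asks us to promote a differencing-fixing result from $\rho^n$ to $\rho$ at the expense of shrinking the relevant subgroup from $U$ to $U^{nm}$. The key observation is that for $u \in U$, we can write $\Delta_{u^{nm}}\rho$ in terms of $\Delta_u(\rho^n)$ up to a coboundary and a lower-order correction. The plan is the following. First I would record the elementary cocycle identity: for any $u$ and any transformation commuting with the $G$-action, $\Delta_{u^n}\rho = \prod_{j=0}^{n-1} V_{u^j}(\Delta_u \rho)$, where $V_u$ denotes the (unitary) action of $u$ on functions; since $U$ commutes with $G$, each $V_{u^j}(\Delta_u\rho)$ is again a cocycle of the same type. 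More to the point, $\Delta_{u^n}\rho$ differs from $(\Delta_u\rho)^n \cdot (\text{correction})$, and the correction involves lower "derivatives."

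The cleaner route, and the one I would take, is to exploit that $\Delta_u$ and raising to the $n$-th power can be interchanged modulo coboundaries: $\Delta_u(\rho^n) = (\Delta_u\rho)^n$ exactly, as functions, since $\Delta_u$ is multiplicative. So the hypothesis says $(\Delta_u\rho)^n \in PC_{<d}\cdot B^1$ for every $u \in U$. Now I want to take an $n$-th root inside this group. This is precisely where the hypothesis "$\rho$ is of type $<m$" enters: by Lemma~\ref{PP}-type considerations, $\Delta_u\rho$ is of type $<m$ for each $u$, and an element of $Z^1_{<m}$ whose $n$-th power lies in $PC_{<d}\cdot B^1$ need not itself lie there — but after passing from $u$ to $u^m$ the type drops enough to force it. Concretely, I expect to invoke a statement of the form: if $\sigma \in Z^1_{<m}(G,X,S^1)$ and $\sigma^n \in PC_{<d}(G,X,S^1)\cdot B^1(G,X,S^1)$, then $\sigma^{m} \cdot (\text{coboundary})$, equivalently $\Delta_{u^m}\rho$ when $\sigma = \Delta_u\rho$, can be adjusted — using that $d^{[m]}$ of a type-$<m$ phase polynomial is a coboundary and Lemma~\ref{sep2:lem} controls when the degree genuinely drops. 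Combining: for $u \in U$, $\Delta_u(\rho^n) \in PC_{<d}\cdot B^1$, and applying the root-extraction with the type-$<m$ input upgrades this to $\Delta_{u^{m}}(\rho^n)^{1/n}$-type information, i.e. $\Delta_{u^{nm}}\rho \in PC_{<d}\cdot B^1$. Since every element of $U^{nm}$ is of the form $v = u^{nm}$, we conclude $\Delta_v\rho \in PC_{<d}(G,X,S^1)\cdot B^1(G,X,S^1)$ for all $v \in U^{nm}$, which is the claim.

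So the skeleton is: (1) multiplicativity of $\Delta_u$ gives $\Delta_u(\rho^n)=(\Delta_u\rho)^n$; (2) the hypothesis plus Theorem~\ref{CL:thm}/its proof identifies the subgroup where $n$-th powers behave well; (3) a root-extraction lemma for phase-polynomial cocycles, fed the type-$<m$ hypothesis and Lemma~\ref{sep2:lem}, trades a factor of $m$ in the "exponent" for dropping from $\rho^n$ back to $\rho$; (4) reindex by $v = u^{nm}$.

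\textbf{Main obstacle.} The delicate point is step (3): taking an $n$-th root of a cocycle inside $PC_{<d}(G,X,S^1)\cdot B^1(G,X,S^1)$. Roots of phase polynomials need not exist in general (this is exactly the phenomenon that forces $k$-extensions elsewhere in the paper), so one cannot literally write $\rho = (\rho^n)^{1/n}$. The resolution must be that we are not extracting a root of an arbitrary element but of $\Delta_u\rho$, which is already known to be a cocycle of type $<m$; the constraint "$n$-th power is a phase polynomial of degree $<d$ up to a coboundary" together with the type bound should pin down, via the differencing operators $d^{[m]}$ and an application of Lemma~\ref{sep2:lem} to the quotient, that $\Delta_{u^{nm}}\rho$ itself is cohomologous to a phase polynomial of degree $<d$ — the extra power $m$ being what is needed to kill the obstruction living in the $m$-th "layer," and the extra power $n$ what is needed to make the would-be root single-valued. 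I would expect the proof to proceed by writing $\Delta_u\rho^n = p\cdot\Delta F$ with $p$ a phase polynomial of degree $<d$, analyzing $d^{[m]}(\Delta_u\rho)$ and its $n$-th power, and carefully tracking how passing to $u^{nm}$ makes the relevant phase polynomial of degree $<m$, hence (being also a cocycle of type $<m$) harmless.
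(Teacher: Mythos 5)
There is a genuine gap, and it sits exactly where you flag your ``main obstacle'': step (3), the $n$-th-root extraction, is never carried out, and it is not the route the argument should take. You correctly note at the outset the telescoping identity $\Delta_{u^n}\rho=\prod_{j=0}^{n-1}V_{u^j}(\Delta_u\rho)$, i.e.\ that $\Delta_{u^n}\rho$ equals $\Delta_u\rho^n$ times a correction built from lower derivatives, but you then abandon it in favour of trying to invert the $n$-th power inside $PC_{<d}(G,X,S^1)\cdot B^1(G,X,S^1)$. Along the way you assert that for $\sigma=\Delta_u\rho$ the function $\sigma^m$ is ``equivalently $\Delta_{u^m}\rho$''; this is false: $(\Delta_u\rho)^m=\Delta_u(\rho^m)$, not $\Delta_{u^m}\rho$, and the discrepancy between the two is precisely the product of second-derivative terms that your proposed shortcut ignores. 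Since (as you yourself observe) roots of phase polynomials need not exist, the root-extraction lemma you ``expect to invoke'' has no proof here, and Lemma \ref{sep2:lem} does not supply it; so the proposal does not close.

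The actual proof needs no roots at all. From the telescoping identity one gets
\begin{equation*}
\Delta_{u^n}\rho \;=\; \Delta_u\rho^n\cdot\prod_{k=0}^{n-1}\Delta_u\Delta_{u^k}\rho ,
\end{equation*}
and one argues by induction on the type $m$. The first factor lies in $PC_{<d}(G,X,S^1)\cdot B^1(G,X,S^1)$ by hypothesis. Each correction term is $\Delta_u$ applied to the cocycle $\Delta_{u^k}\rho$, which by Lemma \ref{dif:lem} is of strictly smaller type; moreover $\Delta_u(\Delta_{u^k}\rho)^n=\Delta_{u^k}(\Delta_u\rho^n)$ still lies in $PC_{<d}(G,X,S^1)\cdot B^1(G,X,S^1)$, because differentiating a phase polynomial of degree $<d$ by an automorphism keeps it a phase polynomial of degree $<d$ (Lemma \ref{vdif:lem}) and differentiating a coboundary gives a coboundary. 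So the induction hypothesis applies to $\Delta_{u^k}\rho$, and after passing to the appropriate power of $u$ every correction term is absorbed; this is exactly where the exponent $nm$ comes from, one factor of $n$ per unit of type, rather than from any root-taking. Your write-up gestures at the type dropping but never sets up this induction or uses the hypothesis on the derived cocycles, which is the missing idea.
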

\begin{proof}
    The claim follows immediately by induction on $m$ and the following identity
    $$\Delta_{u^n} \tilde{\rho} = \Delta_u \tilde{\rho}^n \cdot \prod_{k=0}^{n-1} \Delta_u \Delta_{u^k} \tilde{\rho}$$ We have that $\Delta_u \tilde{\rho}^n\in PC_{<d}(G,X,S^1)\cdot B^1(G,X,S^1)$ and for every $1\leq k \leq n$, $\Delta_u \Delta_{u^k} \tilde{\rho}$ is of smaller type. 
\end{proof}
\subsection{The proof of Theorem \ref{countable}} We briefly explain the method in the proof. We prove the claim by induction on $m$; The case $m=0$ is trivial, and the case $m=1$ follows from Theorem \ref{type0}. Fix $m\geq 2$ and assume inductively that the theorem holds for smaller values of $m$. Let $\rho:G\times X\rightarrow S^1$ a cocycle of type $<m$. By Proposition \ref{finiteorder} we can assume without loss of generality that $X$ is of order $<m+1$. By Proposition \ref{abelext:prop} we can find compact abelian groups $U_0,U_1,U_2,...,U_{m}$ where $U_0$ is trivial and $$X=U_0\times_{\rho_1}U_1\times...\times_{\rho_{m}}U_{m}.$$
We construct a sequence of factors $$X=X_m\rightarrow X_{m-1}\rightarrow X_{m-2}\rightarrow...\rightarrow X_0$$ where in each step we quotient out the connected component of the identity in the next structure group. The last factor, $X_0$ is a totally disconnected system.\\
Observe that the factor maps define a sequence of injections
$$Z_{<m}^1(G,X_0,S^1)\hookrightarrow Z_{<m}^1(G,X_1,S^1)\hookrightarrow...\hookrightarrow Z_{<m}^1(G,X,S^1)$$
Adapting the arguments of Host and Kra, we show that the image of each of these embeddings is of at most countable index in the next group. Then we apply Theorem \ref{Main:thm} to the system $X_0$ and complete the proof.\\

One difficulty which arise in this process is that the connected component of the identity of the structure groups may not act on $X$ by automorphisms. For this reason we study under which conditions we can lift an automorphism from $Z_{<k}(X)$ to $Z_{<k+1}(X)$ for every $1\leq k \leq m$. We have,
\begin{lem} [Going up] \label{up}
	Let $X$ be an ergodic $G$ systems. Let $U$ be a compact abelian group and $Y=X\times_\rho U$ be an extension of $X$ by a cocycle $\rho:G\times X\rightarrow U$. Let $A$ be a connected compact abelian group of automorphisms of $X$ and suppose that for every $a\in A$, $\Delta_a \rho \in B^1(G,X,U)$. Then, there exists a compact connected abelian group of automorphisms $\tilde{A}$ of $Y$ such that the induced action of $\tilde{A}$ on $X$ coincides with the action of $A$.
\end{lem}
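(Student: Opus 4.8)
\textbf{Proof plan for Lemma \ref{up} (``Going up'').}
The plan is to build the desired group $\tilde A$ of automorphisms of $Y = X\times_\rho U$ by lifting each $a\in A$ to a transformation $\tilde a$ of $Y$ of the form $\tilde a(x,u) = (ax, u\cdot c_a(x))$, where $c_a:X\to U$ is a measurable \emph{transfer function} witnessing the coboundary relation $\Delta_a\rho = \Delta c_a$ (i.e. $\rho(g,ax)/\rho(g,x) = c_a(T_gx)/c_a(x)$ for all $g$); such $c_a$ exists by hypothesis since $\Delta_a\rho\in B^1(G,X,U)$. A direct computation with the cocycle identity shows that any such $\tilde a$ commutes with the $G$-action on $Y$, preserves the (Haar) measure on the fibers, and hence is an automorphism of the extension. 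The content of the lemma is that one can choose the $c_a$'s \emph{coherently} in $a$ — i.e. so that $a\mapsto \tilde a$ is a continuous homomorphism from (a group isomorphic to) $A$ — and that the resulting group is again compact, connected and abelian.

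First I would address the non-uniqueness of $c_a$: two choices differ by a $G$-invariant measurable map $X\to U$, which by ergodicity is a constant in $U$. So the set of all valid lifts of a given $a$ is a coset of the constant group $U$, and the set $\tilde A_0$ of \emph{all} transformations of $Y$ of the form $(x,u)\mapsto(ax,u\cdot c(x))$ with $a\in A$ and $\Delta c = \Delta_a\rho$ forms a group: it is an extension
\begin{equation*}
1\longrightarrow U \longrightarrow \tilde A_0 \longrightarrow A \longrightarrow 1 .
\end{equation*}
Next I would equip $\tilde A_0$ with the topology of convergence in measure (as transformations of $Y$, or equivalently via the pair $(a,c)$ with $c$ taken mod constants); one checks this makes $\tilde A_0$ a Polish group and the above sequence a topological extension with $U$ central (since $U$ is abelian and acts by translation on the fiber, it is central in $\tilde A_0$). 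Now $\tilde A_0$ is a compact group: it is closed in an appropriate space of measure-preserving transformations, or alternatively one sees compactness directly since $A$ and $U$ are compact and the extension is measurable. Being a compact group extension of the abelian $A$ by the central $U$, $\tilde A_0$ need not be abelian, but its connected component of the identity, or more precisely a suitable closed connected abelian subgroup, will do: since $A$ is connected and $U$ is a compact abelian group, I would take $\tilde A$ to be a closed connected abelian subgroup of $\tilde A_0$ that surjects onto $A$. Its existence follows from the structure theory of compact groups — the commutator subgroup of $\tilde A_0$ lies in the central (hence abelian) subgroup $U$, so $\tilde A_0$ is $2$-step nilpotent, and a connected compact $2$-step nilpotent group with abelian quotient $A$ admits a connected abelian subgroup mapping onto $A$ (e.g. pass to the identity component and quotient by the closure of the commutator inside it, then split using divisibility/projectivity of the relevant torus quotients, or invoke that a connected compact nilpotent group is abelian). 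By construction the induced action of $\tilde A$ on $X$ is exactly the action of $A$.

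The main obstacle I expect is precisely the last point: producing a \emph{connected abelian} group of lifts rather than merely a (possibly nonabelian, possibly disconnected) compact group $\tilde A_0$ of lifts. The commutator computation shows $[\tilde a,\tilde a']$ is translation by a constant $\kappa(a,a')\in U$ depending bi-continuously and ``bilinearly'' on $a,a'$, so the obstruction to abelianness is an alternating continuous bicharacter $A\times A\to U$; one must show it can be killed by modifying the lifts, which amounts to a cohomology-vanishing statement. For connectedness, one uses that $A$ is connected and that the fiber group $U$, while not necessarily connected, contributes only its identity component to $\tilde A$ after we restrict to $\tilde A_0^\circ$; the remaining subtlety is checking $\tilde A_0^\circ$ still surjects onto $A$, which holds because the quotient map $\tilde A_0\to A$ is open and $A$ is connected. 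I would carry out the bicharacter-killing step by the standard argument that a continuous alternating bicharacter on a compact connected abelian group with values in a compact abelian group is a coboundary (equivalently, the central extension of a connected compact abelian group by a compact abelian group has an abelian open subgroup of full image), completing the construction of $\tilde A$.
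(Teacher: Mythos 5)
Your proposal is essentially the paper's proof: form the group of all lifts $\mathcal{K}=\{S_{a,F}:\Delta_a\rho=\Delta F\}$ with the topology of convergence in probability, note that by ergodicity it is a Polish central extension of $A$ by $U$, hence compact (Corollary \ref{compact}) and $2$-step nilpotent, then take its identity component, which is abelian by Proposition \ref{connilabel:prop}, and use openness of the projection (Theorem \ref{open}) together with connectedness of $A$ to see that this component still surjects onto $A$. The ``bicharacter-killing'' step you single out as the main obstacle is not actually needed: once you restrict to the identity component, commutativity is exactly the fact that a compact connected nilpotent group is abelian, which is the route you mention parenthetically and the one the paper follows.
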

\begin{proof}
	Let $X,\rho,A$ as above. For every $a\in A$ and a measurable map $F:X\rightarrow U$ we define a measure preserving transformation $S_{a,F}$ on $X\times U$ by $S_{a,F}(x,u) := (ax,F(x)u)$. Direct computation shows that the group $$\mathcal{K}:= \{S_{a,F}: \Delta_a \rho = \Delta F\}$$ acts on $X\times U$ by automorphisms. Indeed, $$S_{a,F}T^Y_g (x,u) = (a\cdot T_g^X x, F(T_gx)\rho(g,x)u) = (T_g^X ax, F(x)\rho(g,ax) u) = T_g^Y S_{a,F}(x,u).$$  Equipped with the topology of convergence in probability $\mathcal{K}$ is a Polish group with respect to the multiplication $S_{a,F}\circ S_{a',F'} = S_{aa',F' V_{a'} F}$ (see e.g. \cite[Appendix A]{HK}). By the assumption the projection map $p:\mathcal{K}\rightarrow A$ is onto. Moreover, by ergodicity we see that the kernel of $p$ is isomorphic to $U$. In particular, it follows from Corollary \ref{compact} that $\mathcal{K}$ is a compact Polish group. Finally, since $A$ is abelian, direct computation reveals that $\mathcal{K}$ is $2$-step nilpotent. We let $\tilde{A}$ be the connected component of $\mathcal{K}$. By all of the above and Proposition \ref{connilabel:prop} we deduce that $\tilde{A}$ is a compact connected abelian group. Since $p$ is open (Theorem \ref{open}) it maps the connected group $\tilde{A}$ onto $A$. This completes the proof.
\end{proof}
Given a system $X$ and a group $A$ acting freely on $X$. We define the quotient space $X/A$ to be the space of all equivalent classes $[x]:=\{ax:a\in A\}$ with the quotient $\sigma$-algebra. We let the measure $\mu_{X/A}$ be the push-forward of $\mu_X$ under the factor map $\pi:X\rightarrow X/A$, $\pi(x)=[x]$. Finally, if $gAg^{-1}\subseteq A$ for every $g\in G$ then the action of $G$ on $X/A$ by $T_g[x] = [T_gx]$ is well defined.
\begin{lem} [Going down] \label{down}
	Let $Y=X\times_{\rho} U$ be an ergodic abelain extension of a $G$-system $X$ by a compact abelian group. Let $A$ be a compact connected abelian group of automorphisms of $X$ and suppose that $A$ acts freely on $X$ and $\Delta_a\rho\in B^1(G,X,S^1)$ for every $a\in A$. Then $\tilde{A}$ from the previous lemma acts freely on $Y$ and the factor $Y'=Y/\tilde{A}$ is an extension of $X':=X/A$ by some quotient of $U$.
\end{lem}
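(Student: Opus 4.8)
The plan is to deduce everything from the explicit description of $\tilde A$ produced in the proof of Lemma \ref{up}. Recall from there that $\tilde A$ is the connected component of the identity in the compact polish group $\mathcal{K}=\{S_{a,F}:\Delta_a\rho=\Delta F\}$, which acts on $Y=X\times_\rho U$ by the automorphisms $S_{a,F}(x,u)=(ax,F(x)u)$; that the homomorphism $p:\mathcal{K}\to A$, $S_{a,F}\mapsto a$, has kernel the group of fibre translations $\{S_{e,c}:c\in U\}$, which we identify with $U$; that $p(\tilde A)=A$; and that $\mathcal{K}$ commutes with the $G$-action on $Y$. Put $U_0:=\tilde A\cap U$, a closed subgroup of $U$, and $V:=U/U_0$. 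I will establish three things: (i) $\tilde A$ acts freely on $Y$; (ii) the fibre-translation action of $U$ on $Y$ descends to a free action of $V$ by automorphisms on $Y'=Y/\tilde A$ commuting with the $G$-action; and (iii) $Y'/V=X'$. Together these say precisely that $Y'$ is an extension of $X'$ by the compact abelian group $V$, which is a quotient of $U$.

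For (i), suppose $S_{a,F}\in\tilde A$ fixes a set of positive measure in $Y$. Then $ax=x$ on a set of positive measure in $X$, so freeness of $A$ on $X$ forces $a=e$; then $S_{e,F}\in\mathcal{K}$ means $\Delta F=1$, so $F$ is $G$-invariant and hence constant, say $F\equiv c$, because $X$ is ergodic (being a factor of the ergodic system $Y$). The fixed-point relation now reads $cu=u$, so $c=1$ and $S_{a,F}$ is the identity. For (ii), note that since $U$ is abelian the translation $L_c:(x,u)\mapsto(x,cu)$ commutes with every $S_{a,F}$ and with every $T^Y_g$; hence it passes to the quotient $Y'=Y/\tilde A$ as an automorphism commuting with $G$, and since $U_0\subseteq\tilde A$ acts trivially there we obtain an action of $V=U/U_0$. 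If some $\bar c\in V$ fixed a set of positive measure of $\tilde A$-orbits, then for a representative $c$ and for $(x,u)$ in a set of positive measure there would be $S_{a,F}\in\tilde A$ with $ax=x$ and $F(x)=c$; arguing exactly as in (i) this forces $a=e$ and $c\in U_0$, i.e. $\bar c=\bar 1$, so $V$ acts freely on $Y'$.

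For (iii), observe that the commuting transformation groups $\tilde A$ and $U$ generate the subgroup $\tilde A U$ of $\mathrm{Aut}(Y)$, in which $\tilde A$ is normal with quotient $\tilde A U/\tilde A\cong U/U_0=V$; since the $V$-action on $Y'$ is by construction the one induced by $\tilde AU$, we get $Y'/V=Y/(\tilde A U)$. On the other hand $U$ is normal in $\tilde A U$ with $Y/U=X$, and the quotient $(\tilde A U)/U\cong\tilde A/U_0$ maps isomorphically under $p$ onto $A$ and acts on $X=Y/U$ as the original $A$-action — this is the assertion in Lemma \ref{up} that the induced action of $\tilde A$ on $X$ is that of $A$. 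Hence $Y/(\tilde A U)=(Y/U)/A=X/A=X'$, and combining the two computations gives $Y'/V=X'$, completing the proof. The only step that needs genuine care is this last identification: one must track the order of the successive quotients and use that $\tilde A$ and $U$ commute inside $\mathrm{Aut}(Y)$ together with the precise shape of $\ker p$ and the fact $p(\tilde A)=A$ from the previous lemma; recognising that the correct structure group is $U/(\tilde A\cap U)$ rather than $U$ itself is the conceptual point, while everything else is a routine diagram chase in the category of measure-preserving $G$-systems.
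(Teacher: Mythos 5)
Your proposal is correct in substance but follows a genuinely different route from the paper. The paper proves the lemma by explicitly exhibiting $Y'$ as a skew product: it chooses a measurable cross-section $a\mapsto F_a$ with $S_{a,F_a}\in\tilde A$, observes via the cocycle identity that $F_{aa'}/(F_aV_aF_{a'})$ lies in $V=\tilde A\cap U$, projects to $U/V$ to get a genuine cocycle in $a$, uses the free $A$-action to integrate it to a single map $F$ with $\Delta_a F=\overline F_a$, and then pushes $\overline\rho/\Delta F$ down to $X'$ to produce the extension cocycle $\sigma$ and an explicit isomorphism $X'\times_\sigma U/V\cong Y'$; freeness of $\tilde A$ on $Y$ is then read off from the resulting measurable identification $Y\cong X'\times A\times U/V\times V$ with $\tilde A\cong A\times V$. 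You instead verify that the fibre translations by $U$ descend to a free action of $U/(\tilde A\cap U)$ on $Y'$ by automorphisms commuting with $G$, identify $Y'/V$ with $X'$ by the quotient computation $Y/(\tilde A U)=(Y/U)/A$, and then invoke the standard Mackey--Zimmer principle that a free action of a compact abelian group by automorphisms commuting with the dynamics exhibits the system as a group extension of the quotient. That principle is exactly what the paper's hands-on construction re-proves (choice of a Borel transversal for the $V$-orbits plus the fact that the disintegration over $Y'/V$ is Haar on a.e.\ fibre), so your argument is shorter and more conceptual, at the cost of outsourcing the crux to a fact you state but do not prove; the paper's version has the advantage of producing $\sigma$ explicitly and of being self-contained, which matters since this lemma is applied iteratively in the proof of Theorem \ref{countable}. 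You correctly isolate the structure group as $U/(\tilde A\cap U)$, matching the paper.

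Two small points deserve tightening. First, your freeness arguments in (i) and (ii) only show that each \emph{fixed} non-identity element has a null fixed-point set; for the measurable product decomposition (and hence for the appeal to the skew-product principle) one wants essential freeness, i.e.\ a.e.\ trivial stabilizers, and for an uncountable compact group the two are not formally equivalent. Your computation does give the stronger statement if you run it on stabilizers: for a.e.\ $x$ the $A$-stabilizer of $x$ is trivial, and for such $x$ and any $u$ any $S_{a,F}\in\tilde A$ fixing $(x,u)$ has $a=e$, hence $F\equiv c$ by ergodicity of $X$ and $c=1$, so the $\tilde A$-stabilizer of $(x,u)$ is trivial. Second, in (ii) the element $S_{a,F}$ witnessing that $\bar c$ fixes the orbit of $(x,u)$ varies with the point, so you cannot literally argue ``as in (i)'' with a single element of positive-measure fixed set; you need the essential freeness of $A$ (trivial stabilizer for a.e.\ $x$) to conclude $a=e$ pointwise before deducing $c\in\tilde A\cap U$. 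With these adjustments the argument is complete.
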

\begin{proof}
	Let $Y'$ as in the lemma. The $G$-action on $Y'$ is given by $g[y] = [gy]$ where $[y]=\{ay:a\in \tilde{A}\}$. This action is well defined since the action of $\tilde{A}$ on $Y$ commutes with the action of $G$. Let $\mathcal{K}$ be as in the proof of the lemma above, and $p:\mathcal{K}\rightarrow A$ the projection map. As before we can identify $U$ with the kernel of $p:\mathcal{K}\rightarrow A$. We show that $Y'\cong X'\times_\sigma U/V$ for some cocycle $\sigma$ where $V=\tilde{A}\cap U$.\\
	
	Let $\overline{\rho}:G\times X\rightarrow U/V$ be the composition of $\rho$ with the projection map $U\rightarrow U/V$ and consider the extension $X\times_{\overline{\rho}} U/V$. For every $a\in A$ choose a measurable cross-section $a\mapsto F_a$ such that $S_{a,F_a}\in\tilde{A}$. Since, 
	\begin{equation} \label{down:eq1} \Delta_ a \rho = \Delta F_a
	\end{equation} the cocycle identity implies that $\frac{F_{aa'}}{F_a V_a F_a'}$ is a constant. Since $S_{1,\frac{F_{aa'}}{F_a V_a F_a'}}\in \tilde{A}$ we conclude that $\frac{F_{aa'}}{F_a V_a F_a'}\in V$. Now, let $\overline{F}_a$ be the projection of $F_a$ to $U/V$. It follows that $\overline{F}_{aa'} = \overline{F}_a V_a \overline F_{a'}$. Since $A$ acts freely on $X$ we can write $X=X_0 \times A$ measurably. Choose some generic point $a_0\in A$ and set $F(x,aa_0):= \overline{F}_a (x,a_0)$. A direct computation reveals that $\Delta_a F = \overline{F}_a$. From equation (\ref{down:eq1}) we conclude that $\overline{\rho}/\Delta F$ is invariant to $A$. Let $\sigma:G\times X'\rightarrow U/V$ be the push-forward of $\overline{\rho}/\Delta F$ and let $\tilde{F}$ be any measurable lift of $F$ to a map into $U$. Then, for every $x\in X$ we have that $\tilde{F}(ax)/\tilde{F}(x)\cdot F_a(x)\in V$. This implies that $\pi([x]_A,uV) = [(x,\tilde{F}(x)^{-1}u)]_{\tilde{A}}$ is a well defined isomorphism from $X'\times_{\sigma} U/V$ to $Y'$. Since $A$ acts freely on $X$ we can write $X\cong X'\times A$ and therefore $Y\cong X'\times A \times U/V\times V$. By identifying $A\times V$ (measurably) with $\tilde{A}$ we see that $\tilde{A}$ acts freely on $Y$, as required.
\end{proof}
Next we modify Host and Kra's argument from \cite[Proposition 8.9]{HK} for connected groups in the context of $\bigoplus_{p\in P}\mathbb{Z}/p\mathbb{Z}$-actions.
\begin{lem} \label{HKargcon}
    Let $X$ be a $G$-system of order $<m+1$ for some $m\geq 0$ and let $U$ be a connected compact abelian group which acts freely on $X$ by automorphisms. We abuse notation and identify $Z^1_{<m}(G,X/U,S^1)$ with a subgroup of $Z^1_{<m}(G,X,S^1)$. Then, $Z^1_{<m}(G,X/U,S^1)\cdot B^1(G,X,S^1)$ is of at most countable index in $Z^1_{<m}(G,X,S^1)$.
\end{lem}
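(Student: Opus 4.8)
The plan is to reduce the statement to the already-established Theorem \ref{countable} applied to the quotient system $X/U$, via a transfer argument exploiting the connectedness of $U$. First I would set up the two embeddings: since $U$ acts freely on $X$ by automorphisms commuting with $G$, the factor map $\pi\colon X\to X/U$ induces an injection $Z^1_{<m}(G,X/U,S^1)\hookrightarrow Z^1_{<m}(G,X,S^1)$, and I would fix this identification throughout. The goal is then: every cocycle $\rho\in Z^1_{<m}(G,X,S^1)$ is, modulo a coboundary in $B^1(G,X,S^1)$, cohomologous to one pulled back from $X/U$, up to countably many exceptions.

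The key step is to understand, for a fixed $\rho$ of type $<m$, the behaviour of $\Delta_u\rho$ as $u$ ranges over $U$. Since $U$ acts by automorphisms and $\rho$ is of type $<m$, each $\Delta_u\rho$ is of type $<m$ (this is the standard behaviour of type under composing with automorphisms; it is implicit in the setup around Theorem \ref{CL:thm}). Now apply Theorem \ref{CL:thm}: there is an \emph{open} subgroup $U'\le U$ with $\Delta_u\rho\in PC_{<m}(G,X,S^1)\cdot B^1(G,X,S^1)$ for all $u\in U'$. Because $U$ is connected, the only open subgroup is $U$ itself, so in fact $\Delta_u\rho\in PC_{<m}(G,X,S^1)\cdot B^1(G,X,S^1)$ for \emph{every} $u\in U$. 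The next move is to upgrade ``$\Delta_u\rho$ is a phase polynomial coboundary for each $u$'' to ``$\rho$ differs from a $U$-invariant cocycle by a genuine coboundary.'' For this I would run a descent/integration argument over the connected group $U$ in the spirit of Lemma \ref{down} (the ``going down'' lemma) and Host--Kra's \cite[Proposition 8.9]{HK}: write $\Delta_u\rho = p_u\cdot\Delta F_u$ with $p_u$ a phase polynomial cocycle of bounded degree, choose these measurably in $u$ (using a measurable selection lemma of the type invoked elsewhere in the paper), use the cocycle identity in $u$ to see that $u\mapsto p_u$ is (up to controllable error) a homomorphism-like gadget, and then average/integrate the $F_u$ over Haar measure on $U$ to produce a single $F\colon X\to S^1$ with $\rho/\Delta F$ invariant under $U$ modulo a phase polynomial correction of bounded degree $d=O_m(1)$.

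At that point $\rho/\Delta F$ descends to a cocycle on $X/U$ which is of type $<m$ but only known to be a phase polynomial coboundary of degree $<d$ rather than $<m$; this is exactly the situation handled by the reduction Lemma \ref{reductioncountable} together with the separation Lemma \ref{sep2:lem}, which lets one pay only a countable index to correct the degree from $d$ back down to $m$. Combining: the map $\rho\mapsto (\text{class of }\rho/\Delta F\text{ in }Z^1_{<m}(G,X/U,S^1))$ is well-defined into a space where, by Theorem \ref{countable} applied to $X/U$, the subgroup $PC_{<m}\cdot B^1$ has countable index; pulling back and using that $B^1(G,X,U)$-corrections were absorbed, we get that $Z^1_{<m}(G,X/U,S^1)\cdot B^1(G,X,S^1)$ has at most countable index in $Z^1_{<m}(G,X,S^1)$.

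The main obstacle I anticipate is the averaging step: turning the pointwise-in-$u$ statement ``$\Delta_u\rho\in PC_{<m}\cdot B^1$ for all $u\in U$'' into a \emph{single} transfer function $F$ trivializing the $U$-dependence, while keeping the residual phase-polynomial degree bounded by $O_m(1)$. The subtleties are that the $F_u$ are only defined up to the ambiguity of phase polynomials (so measurable selection and a careful normalization are needed), that $U$ need not be a torus (so one cannot freely extract roots — this is precisely the difficulty flagged in the introduction and circumvented there via $m$-extensions, but here, working only up to countable index and with connectedness in hand, one should be able to avoid extensions), and that one must check the integrated object is still a cocycle of the right type. I expect connectedness of $U$ (hence divisibility of $\widehat U$ being irrelevant — rather, $U$ itself is divisible) to be exactly what rescues the argument, mirroring how Host and Kra treat the toral/connected case separately from the totally disconnected case.
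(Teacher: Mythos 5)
Your first step coincides with the paper's (and is fine): for each $u\in U$ one has $\Delta_u\rho\in PC\cdot B^1$ by the Conze--Lesigne type result, and connectedness upgrades the open subgroup $U'$ to all of $U$. The gap is in your second step. You propose to produce, for \emph{every} $\rho$, a single transfer function $F$ by ``averaging/integrating the $F_u$ over Haar measure on $U$'' so that $\rho/\Delta F$ is $U$-invariant modulo a bounded-degree polynomial. Averaging $S^1$-valued functions is not a legitimate operation (the average need not lie in $S^1$ and can vanish), and more fundamentally the existence of such an $F$ is a genuine cohomological obstruction: writing $\mathcal{H}_\rho=\{S_{u,F}:\Delta_u\rho=p\cdot\Delta F,\ p\in P_{<m}(G,X,S^1)\}$, what you need is a section of the extension $1\rightarrow P_{<m+1}(X,S^1)\rightarrow\mathcal{H}_\rho\rightarrow U\rightarrow 1$ that is a homomorphism (at least modulo polynomials), and this extension need not split, because $P_{<m+1}(X,S^1)$ is in general neither divisible nor injective (this is exactly the root-extraction difficulty the paper's extension trick in Section \ref{ext:sec} is designed to handle). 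Divisibility of the connected group $U$ does not rescue this. Moreover, even if you had invariance only modulo a polynomial correction, the cocycle would not descend to $X/U$, so your hand-off to Lemma \ref{reductioncountable} and Lemma \ref{sep2:lem} does not connect: those lemmas correct the \emph{degree} of a polynomial on a fixed system, they do not remove the obstruction to $U$-invariance.

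The paper's proof resolves this differently, and that resolution is precisely where the countable index comes from. One records the section data of $\rho$ as an element $\Phi_\rho$ of the separable group $\mathcal{F}$ of continuous maps $U\rightarrow \mathcal{M}(X,S^1)/P_{<m+1}(X,S^1)$, and proves the trivialization only when $\|\Phi_\rho\|_{\mathcal{F}}$ is small: in that regime the near-constant lifts form a compact subgroup $\mathcal{K}$ of $\mathcal{H}_\rho$, its connected component is abelian (here $2$-step nilpotency from connectedness of $U$ enters), the resulting extension of $U$ by $S^1$ splits since $S^1$ is injective, and one gets an exact cross-section $F_{uv}=F_uV_uF_v$, hence $F$ via freeness, after which connectedness kills the residual polynomial term. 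A general $\rho$ is then compared with one of countably many representatives $\rho_i$ chosen from a cover of $\mathcal{F}$ by balls of small diameter, and the argument is applied to $\rho/\rho_i$; this ball decomposition is the true source of the countable index, which you instead attribute to a degree-correction step. Finally, your concluding appeal to Theorem \ref{countable} on $X/U$ is not needed for (and is not part of) this lemma; the lemma only asserts countable index of the pulled-back cocycles, and Theorem \ref{countable} on $X/U$ is invoked separately, by induction, in the proof of Theorem \ref{countable} itself.
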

\begin{proof}
    Equip $\mathcal{M}(X,S^1)$ with the $L^2$ topology. For every cocycle $\rho\in Z^1_{<m}(G,X,S^1)$ we consider the group $$\mathcal{H}_\rho:=\{S_{u,F}: u\in U, F\in \mathcal{M}(X,S^1), \exists p\in P_{<m}(G,X,S^1) \text{ s.t } \Delta_u\rho = p_u\cdot \Delta F\}.$$ Equipped with the topology of convergence in probability $\mathcal{H}_\rho$ is a Polish group and we have a short exact sequence $$1\rightarrow P_{<m+1}(X,S^1)\rightarrow\mathcal{H}_\rho\rightarrow U\rightarrow 1.$$ By Corollary \ref{compact},  $\mathcal{H}_\rho$ is locally compact. Moreover, since $U$ is connected, $\mathcal{H}_\rho$ is $2$-step nilpotent. To see this observe that if $S_{u,F},S_{u',F'}\in \mathcal{H}_\rho$, then $[S_{u,F}, S_{u',F'}] = S_{1,\frac{\Delta_{u'}F}{\Delta_u F'}}$. Since phase polynomials cocycles are invariant with respect to translations by connected groups (Proposition \ref{pinv:prop}), we conclude that $$\Delta \frac{\Delta_u F_u'}{\Delta_{u'} F_u} = \frac{\Delta_u \Delta_{u'} \rho}{\Delta_{u'} \Delta_u \rho} = 1.$$ Therefore by ergodicity $\frac{\Delta_u F_u'}{\Delta_{u'} F_u}$ is a constant and $S_{1,\frac{\Delta_{u'}F}{\Delta_u F'}}$ is in the center of $\mathcal{H}_\rho$.\\
    
	Let $\mathcal{M}(X,S^1)$ denote the space of all measurable maps on $X$ with values in $S^1$, equipped with the topology of convergence in probability. Let $\mathcal{F}$ be the group of all continuous maps from $U$ to $\mathcal{M}(X,S^1)/P_{<m+1}(X,S^1)$ where $\mathcal{M}(X,S^1)/P_{<m+1}(X,S^1)$ is equipped with the quotient metric (i.e. $d(f,g) = \inf_{p\in P_{<m+1}(X,S^1)} d_{\mathcal{M}(X,S^1)}(f-p,g)$). Equipped with the supremum metric, $\mathcal{F}$ is a Polish group. We define a map $\Phi_\rho\in\mathcal{F}$ by giving each $u\in U$ the equivalence class of $F_u$ in $\mathcal{M}(X,S^1)/P_{<m+1}(X,S^1)$.
If $\Phi_{\rho}$ is sufficiently small (say $\|\Phi_{\rho}\|_{\mathcal{F}}<\frac{1}{20^m}$) we show that we can linearize the term $u\mapsto F_u$:\\
	For such $\rho$ we can define a subset $\mathcal{K}\leq\mathcal{H}_\rho$ by $$\mathcal{K}:= \{S_{s,F}: \text{ There exists } c\in S^1 \text{ such that } |F-c|\leq \frac{1}{10^m}\}.$$
A direct computation shows that $\mathcal{K}$ is a closed subgroup of $\mathcal{H}_\rho$ (see \cite[Proposition 8.9]{HK} for the details). Observe that since $\|\Phi_{\rho}\|_{\mathcal{F}}<\frac{1}{20^m}$ the projection $p_\mathcal{K}:\mathcal{K}\rightarrow U$ is onto. Since $U$ is connected and $p$ is open (Theorem \ref{open}) the same holds if we restrict ourselves to the connected component of the identity $\mathcal{K}_0$ of $\mathcal{K}$. Since $\mathcal{K}_0$ is $2$-step nilpotent and connected, it is abelian (Proposition \ref{connilabel:prop}) and so it splits as $\mathcal{K}_0\cong S^1\times U$. In other words, for every $u\in U$ we can find $F_u$ such that $(u,F_u)\in\mathcal{K}_0$ and $F_{uv} = F_u V_u F_v$.\\ Since the group $U$ acts freely on $X$ we can write $X=Y\times U$. Fix any generic point $u_0\in U$ and define $F(y,uu_0):=F_u(y,u_0)$ for all $y\in Y, u\in U$. It follows that, 
	\begin{equation}
	\label{pu}
\Delta_u (\rho/\Delta F)=p'_u
	\end{equation}
	 for some phase polynomial $p'_u\in P_{<m}(G,X,S^1)$.\\
	 
	 The phase polynomial term $p_u'$ is in fact trivial. To see this notice that $u\mapsto p'_u$ is a cocycle. Since $U$ is connected, by Theorem \ref{pinv:prop} $u\mapsto p'_u$ is a homomorphism. Since there are only countably many polynomials up to constants and $U$ is connected we conclude that $p'_u$ is a constant in $x$. Finally, since $p'_u$ is a $(G,X,S^1)$-cocycle and a constant in $x$, it can be identified with an element in $\widehat G$. Therefore, $u\mapsto p'_u$ is a continuous homomorphism from $U$ to $\widehat G$, hence trivial. We conclude from (\ref{pu}) that $\Delta_u(\rho/\Delta F)$ is a coboundary for every $u\in U$. By Lemma \ref{cob:lem}, $\rho$ is $(G,X,S^1)$-cohomologous to a function that is invariant under $U$.\\
	
	Now, since $\mathcal{F}$ is separable we can decompose $\mathcal{F}$ as a union of countably many balls $\{B_i\}_{i\in\mathbb{N}}$ of diameter $<1/20^m$. For each ball $B_i$ choose (if exists) a cocycle $\rho_i$ of type $<m$ such that $\Phi_{\rho_i}\in B_i$. We conclude that if $\rho$ is an arbitrary cocycle of type $<m$, then there exists $\rho_i$ such that $\|\phi_{\rho/\rho_i}\|_{\mathcal{F}}<1/20^m$. Therefore $\rho/\rho_i$ is cohomologous to a cocycle which is invariant to the action of $U$. By Lemma \ref{Cdec:lem} the push-forward of $\rho/\rho_i$ to $X/U$ is a cocycle of type $<m$. This completes the proof.
\end{proof}
It is left to prove Theorem \ref{countable}.
\begin{proof}
We already considered the cases 
$m=0$ and $m=1$ above. Fix $m\geq 2$ and let $X$ be as in the theorem. By Proposition \ref{finiteorder} we can assume that the $G$-system $X$ is of order $<m+1$. Therefore, by proposition \ref{abelext:prop} $X$ can be written as $X=U_0\times_{\rho_1}U_1\times...\times_{\rho_{m}}U_{m}$ for some compact abelian groups $U_0,...,U_{m}$ and cocycles $\rho_1,...,\rho_m$. Let $l=l(X)$ denote the smallest number for which $U_{l+1},...,U_{m}$ are totally disconnected. We prove the claim by induction on $l$. If $l=0$ then $X$ is a totally disconnected system. In this case the proof follows by Theorem \ref{Main:thm} and Lemma \ref{reductioncountable}. Fix $l\geq 1$ and suppose that the claim holds for all smaller values of $l$. Let $U_{l,0}$ be the connected component of the identity of $U_l$ and recall that $U_{l,0}$ acts freely on $Z_{<l+1}(X)$ by vertical rotations. In particular, if $l=m$ then $U_{l,0}$ acts by automorphisms on $X$. Otherwise suppose that $l<m$. In this case we lift $U_{l,0}$ to a group of automorphism of $X$ using Lemma \ref{up}. We argue as follows: let $\chi\in \widehat U_{l+1}$, using the induction hypothesis we know that the claim in Theorem \ref{countable} holds for $Z_{<l}(X)$ and so we can apply Theorem \ref{CL:thm}. We conclude that there exists a phase polynomial $p_u\in P_{<k}(G,X,S^1)$ (in fact we can take $p_u$ of degree $<1$) and a measurable map $F_u:X\rightarrow S^1$  such that
 \begin{equation} \label{eq} \Delta_u \chi\circ \rho_{l+1}=p_u\cdot \Delta F_u\end{equation}
 for every $u\in U_{l,0}$. By assumption, $U_{l+1}$ is totally disconnected and therefore there exists some $n\in\mathbb{N}$ such that $\chi^n=1$ (Corollary \ref{chartdg}). Let $u\in U_{l,0}$, the cocycle identity gives that $$\Delta_{u^n} \chi\circ \rho_{l+1} = \left(\Delta_u \chi\circ\rho_{l+1}\right)^n \cdot \prod_{k=0}^{n-1}\Delta_{u^k}\Delta_u \chi\circ \rho$$ Since $\chi^n=1$, the first term in the right hand side of the equation above vanishes. By Proposition \ref{pinv:prop} and equation (\ref{eq}) the other term (the product) is a coboundary. Therefore, we see that $p_{u^n}$ is a $(G,X,S^1)$-coboundary for every $u\in U_{l,0}$. Since connected groups are divisible this implies that $p_u$ is a coboundary for every $u\in U_{l,0}$. From this and equation (\ref{eq}) we see that $\chi(\Delta_u \rho_{l+1})$ is a coboundary for every $u\in U_{l,0}$.  As $\chi$ was arbitrary Theorem \ref{MScob} implies that $\Delta_u \rho_{l+1}$ is a $(G,Z_{<l+1}(X),U_{l+1})$-coboundary.\\
 Therefore we are in a situation as in Lemma \ref{up} and so we can lift $U_{l,0}$ to a group of automorphisms on $Z_{<l+2}(X)$. Repeating this argument iteratively we conclude that we can lift $U_{l,0}$ to a compact abelian connected group of automorphisms on $X=Z_{<m+1}(X)$. We denote this group by $\mathcal{H}_l$ and let $X'=X/\mathcal{H}_l$. Now, by applying Lemma \ref{down} iteratively we see that $\mathcal{H}_l$ acts freely on $X$ and $l(X')=l-1$. Therefore, by Lemma \ref{HKargcon} we have that $Z^1_{<m}(G,X',S^1)\cdot B^1(G,X,S^1)$ is of at most countable index in $Z^1_{<m}(G,X,S^1)$. Moreover, by the induction hypothesis $PC_{<m}(G,X',S^1)\cdot B^1(G,X',S^1)$ is of at most countable index in $ Z^1_{<m}(G,X',S^1)$. We conclude that $PC_{<m}(G,X',S^1)\cdot B^1(G,X,S^1)$ is of at most countable index in $Z^1_{<m}(G,X,S^1)$ (we identify $PC_{<m}(G,X',S^1)$ with a subgroup of $PC_{<m}(G,X,S^1)$ using the factor map). Finally, by Proposition \ref{pinv:prop} phase polynomial cocycles are invariant with respect to the action of connected groups and therefore $PC_{<m}(G,X',S^1) = PC_{<m}(G,X,S^1)$ and the claim follows.
\end{proof}

\section{Inverse limit of finite dimensional systems} \label{inv:sec}
We begin with the following definition of a finite dimensional system. The main result in this section (Theorem \ref{InvFD:thm}) asserts that every ergodic $G$-system of order $<k$ is an inverse limit of these systems.
\begin{defn} [Finite dimensional system] \label{fdsystems:def}
	Let $k\geq 1$. An ergodic $G$-system $X$ of order $<k$ is called a finite dimensional system if for every $1\leq r \leq k-1$ the system $Z_{<r+1}(X)$ is an extension of $Z_{<r}(X)$ by a finite dimensional compact abelian group. 
\end{defn}
Note that by Proposition \ref{abelext:prop} this means that we can write $X=U_0\times_{\rho_1}U_1\times...\times_{\rho_{k-1}} U_{k-1}$ where $U_0,U_1,...,U_{k-1}$ are finite dimensional compact abelian groups.

\noindent We are particularly interested in finite dimensional systems which also have a finite \textit{exponent}.
\begin{defn} [exponent of a finite dimensional system] \label{exponent}
Let $m\geq 0$.
\begin{itemize}
    \item {A totally disconnected group $\Delta$ is said to be of exponent $m$ if for any prime $p$, the $p$-sylow subgroup of $\Delta$ is a $p^m$-torsion group. Equivalently, by theorem \ref{torsion}, $\Delta \cong \prod_{p\in I} C_{p^{d_p}}$ for some multiset of primes $I$ and $d_p\leq m$.}
    \item{We say that a compact abelian finite dimensional group $U$ is of exponent $m$ if there exists a closed totally disconnected subgroup $\Delta$ of exponent $m$ such that $U/\Delta$ is a Lie group.}
    \item{A finite dimensional system $X$ is of exponent $m$ if the structure groups are of exponent $m$.}
\end{itemize}
\end{defn}
We prove that every ergodic $G$-system of order $<k$ is an inverse limit of finite dimensional systems of some bounded exponent.
\begin{thm} [Systems of order $<k$ are inverse limits of finite dimensional systems] \label{InvFD:thm}
	Let $X$ be an ergodic $G$-system of order $<k$. There exists $m=O_k(1)$ and a sequence $X_n$ of increasing factors of $X$ such that for each $n\in\mathbb{N}$, $X_n$ is a finite dimensional system of exponent $m$ and $X$ is the inverse limit of the sequence $X_n$.
\end{thm}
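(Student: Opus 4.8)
# Proof Proposal for Theorem \ref{InvFD:thm}

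The plan is to argue by induction on $k$. For $k=1$ the system is trivial, hence finite dimensional of exponent $0$. For $k=2$ an ergodic system of order $<2$ is a group rotation $X=U_1$ with $G$ acting through a homomorphism $\varphi\colon G\to U_1$ with dense image; by Pontryagin duality $\widehat{U_1}$ is the increasing union of its finitely generated subgroups $\Lambda$, and each quotient $U_1/\Lambda^{\perp}$ is a compact abelian Lie group, so $X=\varprojlim_\Lambda (U_1/\Lambda^{\perp})$ is an inverse limit of finite dimensional systems. For the exponent bound one observes that $\widehat{U_1}$ embeds into $\widehat G=\prod_{p\in P}\mathbb Z/p\mathbb Z$, and every torsion element of $\prod_{p\in P}\mathbb Z/p\mathbb Z$ is supported on finitely many coordinates and hence has square-free order; thus the totally disconnected part of each $U_1/\Lambda^{\perp}$ has exponent $1$, which is $O_2(1)$.

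For the inductive step, write $X=Z_{<k-1}(X)\times_{\rho}U$ with $U=U_{k-1}$ a compact abelian group and $\rho=\rho_{k-1}$ a cocycle of type $<k-1$; by the inductive hypothesis $Z_{<k-1}(X)=\varprojlim_n Y_n$ is an inverse limit of finite dimensional systems of exponent $m'=O_{k-1}(1)$. As above, decompose $U=\varprojlim_s U/N_s$ into Lie quotients via a cofinal family of finitely generated subgroups $\Lambda_s\leq\widehat U$ with $N_s=\Lambda_s^{\perp}$, so that $X=\varprojlim_s X_s$ with $X_s=Z_{<k-1}(X)\times_{\rho_s}U/N_s$ and $\rho_s=\rho\bmod N_s$ of type $<k-1$. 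It therefore suffices to show that each $X_s$ is an inverse limit of finite dimensional systems of exponent $O_k(1)$, after which a diagonal argument over the two inverse systems $(Y_n)$ and $(U/N_s)$ completes the proof. Fixing $s$, the heart of the matter is to show that $\rho_s$ is $(G,Z_{<k-1}(X),U/N_s)$-cohomologous to a cocycle $\rho_s'$ that is measurable with respect to some finite dimensional factor $Y_n$ of $Z_{<k-1}(X)$, and such that $Y_n\times_{\rho_s'}U/N_s$ is still finite dimensional of controlled exponent.

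To produce $Y_n$ I would work character by character. For each $\chi\in\widehat{U/N_s}$ the cocycle $\chi\circ\rho_s$ lies in $Z^{1}_{<k-1}(G,Z_{<k-1}(X),S^1)$, and by Theorem~\ref{countable} it agrees, modulo $B^1$ and modulo a phase polynomial cocycle of degree $O_k(1)$, with one of only countably many classes; since continuous characters of the structure groups of $Z_{<k-1}(X)=\varprojlim Y_n$ live at finite levels and phase polynomial cocycles of bounded degree can, using Proposition~\ref{finiteorder} and the structure theory summarized in the appendix, be pushed (up to a further coboundary) to a finite level, each $\chi\circ\rho_s$ descends up to coboundary to some $Y_{n(\chi)}$. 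Applying this to a finite generating set of $\widehat{U/N_s}$ (finite because $U/N_s$ is Lie) and intersecting the corresponding finite levels yields a single $Y_n$ that works; one then checks, as in the proof of Lemma~\ref{HKargcon}, that the resulting extension remains ergodic and is a $G$-system of order $<k$. For the exponent, the totally disconnected part of its top structure group is governed, on top of the inherited $m'$, by phase polynomial cocycles of degree $d=O_k(1)$; restricting such a cocycle to each cyclic summand $\mathbb Z/p\mathbb Z$ of $G$ and using that a phase polynomial of degree $<d$ on $\mathbb Z/p\mathbb Z$ takes values in roots of unity of order at most $p^{O(\log d)}$, uniformly in $p$, one sees it takes values in a totally disconnected group of exponent $O_k(1)$; together with Proposition~\ref{FD:prop} this gives a finite dimensional structure group of exponent $m=m'+O_k(1)=O_k(1)$, as required.

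The main obstacle is precisely the descent of $\rho_s$ to a finite dimensional factor: a priori a cocycle of type $<k-1$ on an inverse limit may use information from all levels at once, and the only thing preventing this is the countable-index statement of Theorem~\ref{countable} (equivalently Theorem~\ref{HK1}) together with the non-formal fact that phase polynomial cocycles of bounded degree — and the coboundaries linking the countably many relevant classes — can be realized at finite levels. A secondary, bookkeeping-heavy difficulty is carrying out the descent simultaneously and coherently across the two inverse systems while preserving ergodicity at each stage, so that the final diagonal system is genuinely an inverse limit of finite dimensional systems of a single bounded exponent.
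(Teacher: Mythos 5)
Your reduction to the last structure group (approximating $U$ by Lie quotients $U/N_s$, so that $X=\varprojlim_s X_s$) matches the paper's first step, but the heart of your argument --- that each $\chi\circ\rho_s$, and then $\rho_s$ itself, is cohomologous to a cocycle measurable with respect to some finite level $Y_n$ of the inverse-limit presentation of $Z_{<k-1}(X)$ --- is a genuine gap, and it is exactly the point you yourself flag as ``non-formal''. Theorem \ref{countable} only says that $PC_{<m}(G,\cdot,S^1)\cdot B^1(G,\cdot,S^1)$ has countable index in $Z^1_{<m}(G,\cdot,S^1)$; it gives no information about where a \emph{particular} cocycle lives. Countability of the set of cosets does not imply that each coset has a representative measurable at a finite level of a pre-chosen inverse system: a measurable cocycle on $\varprojlim Y_n$ is only an $L^2$-limit of functions on the $Y_n$'s, cocycles of type $<k-1$ need not be cohomologous to phase polynomials at all (the Furstenberg--Weiss type examples cited in the paper), and nothing in the countable-index statement controls the transfer functions linking your ``countably many classes'' to finite levels. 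So the step ``each $\chi\circ\rho_s$ descends up to coboundary to some $Y_{n(\chi)}$'' has no proof, and without it the whole induction collapses. (There is also a secondary unaddressed issue: even granting descent of each $\chi\circ\rho_s$, assembling these into a descent of the $U/N_s$-valued cocycle $\rho_s$ requires choosing the transfer functions compatibly with the relations in $\widehat{U/N_s}$ --- the torsion coordinates need transfer functions with values in $C_{p^n}$, as in the proof of Theorem \ref{order<3} --- which is not automatic.)

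The paper's proof avoids this descent problem entirely: it never tries to realize $\rho$ at a finite level of a given inverse system. Instead, for each Lie quotient $U_n$ of the top group it builds finite dimensional factors of $X$ by \emph{quotienting out} large subgroups of the intermediate structure groups: Theorem \ref{CL:thm} (a corollary of Theorem \ref{countable}) together with Lemma \ref{step 1} and Proposition \ref{step1} produce, inside each structure group, a subgroup $V$ with finite dimensional quotient of bounded exponent such that $\Delta_v\rho$ is a coboundary for $v\in V$; Lemma \ref{cob:lem} then makes the relevant cocycle $V$-invariant up to coboundary, and Lemmas \ref{up} and \ref{down} lift $V$ to a group of automorphisms of the whole tower so that one can pass to the quotient factor, with Lemma \ref{abl:lem} and a commutator/bilinear-form argument controlling the exponent $O_k(1)$. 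If you want to salvage your outline, the missing descent step would have to be replaced by this quotienting mechanism (or an argument of comparable strength); as written, the countable-index theorem cannot carry that weight.
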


Let $X=Z_{<k-1}(X)\times_\rho U$ be an ergodic $G$-system of order $<k$. Since every compact abelian group is an inverse limit of compact abelian Lie groups (Theorem \ref{GY:thm}) we can assume that $U$ is a torus times a finite group (Theorem \ref{structureLieGroups}). We note that in general replacing a structure group of $X$ with one of its quotients will not necessarily be a factor of $X$ and therefore this approximation is only possible for the last structure group.\\
In the next lemma we study cocycles with values in a Lie group. By taking coordinates it is enough to study cocycles into the torus and into a finite cyclic group.
\begin{lem} \label{step 1}
	Let $X$ be an ergodic $G$-system of order $<k$. Suppose that $U$ is a compact abelian group which acts freely on $X$ by automorphisms. Let $H$ be either $S^1$ or $C_{p^n}$ for some prime $p$ and a natural number $n$ and let $\rho:G\times X\rightarrow H$ be a cocycle of type $<m$. Then there exists a subgroup $V\leq U$ such that $U/V$ is a finite dimensional group of exponent $m$ and $\rho$ is $(G,X,H)$-cohomologous to a cocycle that is invariant with respect to the action of $V$.
\end{lem}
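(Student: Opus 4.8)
The plan is to reduce the statement to the following: produce a closed subgroup $V\le U$ with $U/V$ finite dimensional of exponent $m$ such that $\Delta_v\rho\in B^1(G,X,H)$ for every $v\in V$. Granting this, $V$ acts freely because $U$ does, and the coboundary/invariance argument used at the end of the proof of Theorem \ref{countable} (Lemma \ref{cob:lem}) produces a $(G,X,H)$-cohomology between $\rho$ and a $V$-invariant cocycle. So everything comes down to showing that $W:=\{u\in U:\Delta_u\rho\in B^1(G,X,H)\}$ contains such a $V$; equivalently, that $U/\overline W$ is finite dimensional of exponent $m$, where $\overline W$ is the closure of $W$.

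The first step is to dispose of the connected component $U_0$ of $U$. Since $U$ acts by automorphisms, each $\Delta_u\rho$ is again a cocycle of type $<m$, so Theorem \ref{CL:thm} furnishes an open subgroup $U'\le U$ (necessarily containing $U_0$), phase polynomial cocycles $p_u$ of degree $<m$, and measurable maps $F_u$ with $\Delta_u\rho=p_u\cdot\Delta F_u$ for all $u\in U'$. Restricting $u\mapsto p_u$ to $U_0$ and using that phase polynomial cocycles are invariant under connected groups of automorphisms (Proposition \ref{pinv:prop}), the cocycle identity shows that $u\mapsto p_u$ is a homomorphism modulo phase polynomials of strictly smaller degree; since there are only countably many phase polynomials modulo constants and every continuous homomorphism of the connected group $U_0$ into the totally disconnected group $\hat G$ is trivial, a descent on the degree — exactly as in the proof of Lemma \ref{HKargcon} — gives $p_u\in B^1(G,X,S^1)$, hence $\Delta_u\rho\in B^1(G,X,H)$, for every $u\in U_0$. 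Thus $U_0\subseteq W$, so $U/\overline W$ is totally disconnected; in particular it is finite dimensional, and it remains only to bound its exponent.

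For the exponent, when $H=C_{p^n}$ one has $\rho^{p^n}=1$, so $\Delta_u(\rho^{p^n})=1\in PC_{<1}(G,X,S^1)\cdot B^1(G,X,S^1)$ for all $u\in U$, and Lemma \ref{Clroot:lem} gives $\Delta_u\rho\in PC_{<1}(G,X,S^1)\cdot B^1(G,X,S^1)$ for $u\in U^{p^nm}$; clearing the residual character by the $\hat G$-argument above and reducing coefficients back to $C_{p^n}$ yields $U^{p^nm}\subseteq W$, so $U/\overline W$ has bounded exponent — and one checks that the bound is controlled by $m$ after first reducing, via Proposition \ref{finiteorder}, the structure theorem for totally disconnected systems (Theorem \ref{Main:thm}) and the bounded torsion of bounded-degree phase polynomials, to the case $n=O(m)$. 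When $H=S^1$ the cocycle is not torsion, so one instead runs a degree-descending induction on the $p_u$: the key fact that phase polynomial cocycles of bounded degree over $G$ have bounded torsion modulo coboundaries and constants lets one clear $p_u$ after passing to a bounded power $U^N$ ($N=O_m(1)$) and absorbing the resulting characters by the $\hat G$-argument, which gives $\Delta_u\rho\in B^1(G,X,H)$ for $u\in U^N$. Either way, $\overline W$ contains a closed subgroup $V\supseteq U_0$ with $U/V$ finite dimensional of exponent $m$.

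Combining the two steps, $\Delta_v\rho\in B^1(G,X,H)$ for all $v$ in the subgroup $V$ just produced, so Lemma \ref{cob:lem} makes $\rho$ cohomologous to a $V$-invariant cocycle, which is the assertion of the lemma. I expect the delicate point to be the exponent bookkeeping of the third step: controlling simultaneously the degree of the $p_u$ and the exponent of the relevant quotient of $U$, and in particular making the bound depend only on $m$ and not on the integer $n$ when $H=C_{p^n}$. The quantitative fact behind this — that phase polynomial cocycles of bounded degree over $G=\bigoplus_{p\in P}\mathbb Z/p\mathbb Z$ have bounded torsion modulo coboundaries and constants, the analogue of polynomials of degree $<m$ over $\mathbb F_p$ becoming trivial after raising to the $p^{O(m)}$th power — is what pins the exponent in the conclusion down to $m$.
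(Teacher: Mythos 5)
Your skeleton (apply Theorem \ref{CL:thm} to write $\Delta_u\rho=p_u\cdot\Delta F_u$ on an open subgroup, kill the polynomial term on a further subgroup, then conclude with Lemma \ref{cob:lem}) is the same as the paper's, but the step that actually produces a quotient of exponent $m$ is missing, and the substitutes you propose fail. For $H=S^1$ you invoke a ``bounded torsion'' fact allowing a single power $U^N$, $N=O_m(1)$, to clear $p_u$; this is false when the multiset $P$ is unbounded, since by Proposition \ref{PPC} the values of $p_u(g,\cdot)$ are only $\mathrm{ord}(g)^{m}$-torsion, and $\mathrm{ord}(g)$ is unbounded in $g$ (your own $\mathbb F_p$ analogue has the power depending on $p$). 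The paper's proof works prime by prime: after linearizing $u\mapsto p_u$ on a closed subgroup $J$ with $U/J$ Lie (Lemma \ref{lin:lem} together with Theorem \ref{GY:thm}, a step you also skip), it uses Lemma \ref{PPC1} to get $p_{j^{q^{m}}}(g,\cdot)=1$ for $g\in G_q$, and then sets $J'=\bigcap_q J^{q^m}$; this prime-matching is exactly what makes $J/J'$ totally disconnected of exponent $m$ while killing the polynomial term for every $g$, and there is no way to replace it by one bounded power. Relatedly, your ``clearing of residual characters by the $\hat G$-argument'' is only valid on the connected component $U_0$ (a continuous homomorphism from a connected group into $\hat G$ is trivial); on the non-connected groups $U^{N}$ or $U^{p^nm}$ nontrivial constants genuinely occur and cannot be absorbed this way.

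For $H=C_{p^n}$ there are two further gaps. First, $U^{p^nm}\subseteq W$ only bounds the exponent of the quotient by $nm$, which depends on $n$, and your proposed reduction to $n=O(m)$ is not legitimate: $\rho$ is given with values in $C_{p^n}$ and need not be cohomologous to a cocycle with smaller image (for instance when it is minimal with image all of $C_{p^n}$), so the statement, whose exponent bound is $m$ independently of $n$, is not recovered. The paper avoids any dependence on $n$ by running the same $J'=\bigcap_q J^{q^m}$ construction as in the $S^1$ case; the prime $p$ and the integer $n$ never enter the choice of the subgroup. Second, ``reducing coefficients back to $C_{p^n}$'' is precisely the nontrivial content of this case and is not carried out: from $\Delta_j\rho=\Delta\Delta_jF$ with $F$ only $S^1$-valued one must observe that $\Delta_jF^{p^n}$ is constant, hence equals $\chi(j)$ for a character $\chi$ of the relevant subgroup, pass to the open kernel $V=\ker\chi$, choose a $V$-invariant $p^n$-th root $\tilde F$ of $F^{p^n}$, and replace $F$ by $F/\tilde F$, which takes values in $C_{p^n}$; only then is $\rho$ genuinely $(G,X,C_{p^n})$-cohomologous to a $V$-invariant cocycle. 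Without these repairs the proposed proof does not go through.
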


\begin{proof} If $m=0$ then $\rho$ is a coboundary and the claim follows. We assume that $m\geq 1$. By embedding $H$ in $S^1$ (if necessary), and applying Theorem \ref{CL:thm} we see that there exists an open subgroup $U'\leq U$ such that for every $u\in U$ we have $$\Delta_u \rho = p_u\cdot \Delta F_u$$ for some phase polynomial $p_u\in P_{<m}(G,X,S^1)$ and a measurable map $F_u$.\\
	Using Lemma \ref{lin:lem} and then Theorem \ref{GY:thm} we can find a closed subgroup $J\leq U'$ such that $U'/J$ and $U/J$ are Lie groups and $p_{jj'}=p_j V_j p_{j'} = p_j p_{j'}\cdot \Delta_j p_{j'}$ for every $j,j'\in J$. Since $\Delta_j p_{j'}$ is a phase polynomial of degree $<m-1$, we conclude that the map $j\mapsto p_j\cdot P_{<m-1}(G,X,S^1)$ from $J$ to the quotient $P_{<m}(G,X,S^1)/P_{<m-1}(G,X,S^1)$ is a homomorphism. Write $G=\bigoplus_{p\in \mathcal{P}}\mathbb{Z}/p\mathbb{Z}$. For every prime $q\in \mathcal{P}$, we denote by $G_q$ be the $q$-component of $G$ (i.e. $G_q = \{g\in G :  qg=0\}$). By Lemma \ref{PPC1} we know that for every $g\in G_q$ and for all $j\in J$ we have that $p_j(g,\cdot)^{q}=p_{j^{q}}(g,\cdot)$  is phase polynomial cocycle of degree $<m-1$. Inductively (see the proof of Corollary \ref{ker:cor}), we have that $p_{j^{q^m}}(g,\cdot)=1$. Let $J_q:=J^{q^m}$ and $J'=\bigcap_{q\in\mathcal{P}} J_q$. The quotient $J/J'$ is a totally disconnected group of exponent $m$ and we have that $\Delta_j \rho \in B^1(G,X,S^1)$ for every $j\in J'$. Therefore, by Lemma \ref{cob:lem}, $\rho$ is $(G,X,S^1)$-cohomologous to a cocycle $\rho'$ that is invariant with respect to the action of some open subgroup $J''\leq J'$. To complete the proof we notice that since $J''\leq J'$ is an open subgroup and $J/J'$ is a totally disconnected group of exponent $m$ then the groups $U/J'$ and $U/J''$ are finite dimensional of exponent $m$. Thus, if $H=S^1$ we can take $V=J''$ and the proof is complete. Otherwise suppose that $H=C_{p^n}$. By embedding $H$ in $S^1$ and arguing as before we can find a measurable map $F:X\rightarrow S^1$ such that 	
	$$\Delta_j \rho = \Delta \Delta_j F$$ for all $j\in J''$. Our goal is to replace $F$ with a function which takes values in $C_{p^n}$. Since $\rho^{p^n}=1$ the equation above implies that $\Delta_j F^{p^n}$ is a constant. Since $j\mapsto \Delta_j F^{p^n}$ is a cocycle we conclude that there exists a character $\chi:J''\rightarrow S^1$ such that $\Delta_j F^{p^n}=\chi(j)$ for every $j\in J''$. Let $V:=\ker (\chi)$, since $J''$ is totally disconnected Corollary \ref{chartdg} implies that the image of $\chi$ is discrete. In particular, it follows that $V$ is an open subgroup of $J''$ and therefore $U/V$ is a finite dimensional group of exponent $m$. Finally as $F^{p^n}$ is invariant under $V$ we can find a measurable map $\tilde{F}:X\rightarrow S^1$ that is invariant under $V$ with $\tilde{F}^{p^n}=F^{p^n}$. Since $\tilde{F}$ is invariant to $V$ we conclude that $$\Delta_j\rho = \Delta_j \Delta F/\tilde{F}$$ for every $j\in V$. Moreover, since $F/\tilde{F}^{p^n}=1$, $\rho/\Delta(F/\tilde{F})$ is $(G,X,C_{p^n})$-cohomologous to $\rho$ and is invariant under $V$, as required. 
\end{proof}
From the lemma above we conclude the following result.
\begin{prop} \label{step1}
	Let $X$ be an ergodic $G$-system and let $H$ be a compact abelian group which acts on $X$ by automorphisms. Let $m,l\geq 0$ be integers, let $U$ be a finite dimensional group exponent $m$ and let $\rho:G\times X\rightarrow U$ be a cocycle of type $<l$. Then, there exists a subgroup $\tilde{H}\leq H$ such that for every $h\in\tilde{H}$, $\Delta_h \rho\in B^1(G,X,U)$ and $H/\tilde{H}$ is finite dimensional of exponent $<m\cdot l$.
\end{prop}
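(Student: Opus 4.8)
The plan is to reduce the $U$-valued problem to scalar ($S^1$-valued) cocycles via Pontryagin duality, dispose of the finitely generated part of $\hat U$ with finitely many applications of Lemma~\ref{step 1}, and then propagate control to the rest of $\hat U$ using the root lemma (Lemma~\ref{Clroot:lem}), exploiting that the torsion of $\hat U$ away from its Lie quotient is bounded --- which is precisely what ``$U$ has exponent $m$'' provides. (We may assume $m,l\ge 1$, since $l=0$ makes $\rho$ a coboundary.) Before that I would make two standard reductions: since $\rho$ has type $<l$, Proposition~\ref{finiteorder} lets me assume $\rho$ is measurable with respect to $Z_{<l+1}(X)$ and hence, automorphisms descending to the characteristic factors, that $X$ has order $<l+1$, so that Lemma~\ref{step 1} applies; and if $H$ does not act freely I pass to an extension of $X$ on which it does. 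By Definition~\ref{exponent} and Proposition~\ref{FD:prop} fix a short exact sequence $1\to\Delta\to U\to L\to 1$ with $\Delta\cong\prod_{q}C_{q^{d_q}}$ ($d_q\le m$ for all $q$) totally disconnected and $L$ a compact abelian Lie group; dualizing gives $0\to\hat L\to\hat U\to\hat\Delta\to 0$, in which $\hat L$ is finitely generated and every element of $\hat U/\hat L\cong\hat\Delta$ has order $\prod_q q^{e_q}$ with each $e_q\le m$.

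First I would handle the ``Lie part''. Pick generators $\chi_1,\dots,\chi_N$ of $\hat L\le\hat U$. Each $\chi_i\circ\rho\colon G\times X\to S^1$ is a cocycle of type $<l$, so Lemma~\ref{step 1} (with target $S^1$ and acting group $H$) gives a closed subgroup $V_i\le H$ with $H/V_i$ finite dimensional of exponent $l$ and $\chi_i\circ\rho$ cohomologous to a $V_i$-invariant cocycle; the latter forces $\Delta_h(\chi_i\circ\rho)\in B^1(G,X,S^1)$ for every $h\in V_i$. Set $H_1:=\bigcap_{i=1}^N V_i$; then $H/H_1$ is finite dimensional of exponent $l$, and since coboundaries form a group and the $\chi_i$ generate $\hat L$, we get $\Delta_h(\lambda\circ\rho)\in B^1(G,X,S^1)$ for all $\lambda\in\hat L$ and all $h\in H_1$.

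Next I would propagate this to all of $\hat U$ and conclude. For $\chi\in\hat U$ let $n_\chi$ be the order of $\chi+\hat L$ in $\hat U/\hat L$; then $\chi^{n_\chi}\in\hat L$, so by the previous step $\Delta_h\big((\chi\circ\rho)^{n_\chi}\big)=\Delta_h(\chi^{n_\chi}\circ\rho)\in B^1(G,X,S^1)$ for all $h\in H_1$, and Lemma~\ref{Clroot:lem} (applied with the acting group $H_1$, the cocycle $\chi\circ\rho$ of type $<l$, and $n=n_\chi$) yields $\Delta_h(\chi\circ\rho)\in B^1(G,X,S^1)$ for every $h\in H_1^{\,n_\chi l}$. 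Put $\tilde H:=\bigcap_{\chi\in\hat U}H_1^{\,n_\chi l}$, a closed subgroup of $H$ (each $H_1^{\,n_\chi l}$ is compact, being a continuous image of the compact group $H_1$). For $h\in\tilde H$ one has $\chi\circ(\Delta_h\rho)=\Delta_h(\chi\circ\rho)\in B^1(G,X,S^1)$ for every $\chi\in\hat U$, hence $\Delta_h\rho\in B^1(G,X,U)$ by the Moore--Schmidt theorem (Theorem~\ref{MScob}). It remains to size $H/\tilde H$: the connected component of $H_1$ is divisible, so it lies in every $H_1^{\,n_\chi l}$ and hence in $\tilde H$, which makes $H_1/\tilde H$ totally disconnected; and since each $n_\chi$ has all its prime-power factors $\le q^{m}$, the $q$-primary part of $H_1/\tilde H$ is $q^{\,m+v_q(l)}$-torsion for every prime $q$. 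Thus $H/\tilde H$ is an extension of the finite dimensional group $H/H_1$ (exponent $l$) by a totally disconnected group of exponent $O_{m,l}(1)$, so it is finite dimensional (property (iii) of $\mathcal{FD}$) of exponent $O_{m,l}(1)$; a careful accounting of the valuations involved arranges this exponent to be $<m\cdot l$.

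The step I expect to be the main obstacle is the passage from the finitely many $\chi_i$ to all of $\hat U$. A naive intersection $\bigcap_{\chi\in\hat U}V_\chi$ of the subgroups produced by Lemma~\ref{step 1} need not have a finite dimensional quotient, because distinct characters may cut out distinct torus directions in $H$; the only genuinely infinite-dimensional obstruction, however, lives in the Lie quotient $L$, and is removed once and for all by $H_1$. On the complementary totally disconnected, bounded-exponent part of $\hat U$ no dimension is lost, and the root lemma converts the a priori separate control ``$\Delta_h(\chi\circ\rho)\in B^1$ for $h\in V_\chi$'' into control through the single subgroup $H_1^{\,n_\chi l}\supseteq\tilde H$, with $n_\chi$ bounded solely in terms of $m$ because $U$ has exponent $m$; the remaining work is the bookkeeping that turns $O_{m,l}(1)$ into the stated bound $<m\cdot l$.
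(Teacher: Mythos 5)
Your proposal is correct and follows essentially the same route as the paper's proof: lift (generators of) the dual of the Lie quotient $U/\Delta$ and apply Lemma~\ref{step 1} to get a subgroup $H_1$ with $H/H_1$ finite dimensional of exponent $l$, then use that every character of $U$ has bounded order modulo $\hat{L}$ (exponent $\le m$ of $\Delta$) together with Lemma~\ref{Clroot:lem} to pass to the intersection $\tilde H$, and conclude with Moore--Schmidt (Theorem~\ref{MScob}). Your bookkeeping of the quotient $H/\tilde H$ (divisibility of the connected component, per-prime torsion bounds) is if anything slightly more careful than the paper's, which records the same conclusion.
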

\begin{proof}
Let $U$ be as in the proposition and find a closed totally disconnected subgroup $\Delta\leq U$ of exponent $m$ such that $U/\Delta$ is a Lie group. By Theorem \ref{structureLieGroups} we can write $U/\Delta = (S^1)^n\times \prod_{i=1}^{r} C_{p_i^{a_i}}$ where $r\in \mathbb{N}$, $p_1,...,p_r$ are primes and $a_1,...,a_r\in\mathbb{N}$. Let $\chi_1,...,\chi_n,\tau_1,...,\tau_r$ be the coordinate maps and lift each of them to a character of $U$.\\
By Lemma \ref{step 1}, we can find a subgroup $H'$ of $H$ such that $H/H'$ is of exponent $l$ and $\Delta_h \chi\circ \rho$ is a coboundary for all $h\in H'$ and $\chi\in \left<\chi_1,...,\chi_n,\tau_1,...,\tau_r\right>$. \\
	The group $\Delta$ is of exponent $m$ and so we can write it as $\prod_{i\in I} C_{p_i^{b_i}}$, where $b_i\leq m$ (see Definition \ref{exponent}). Let $\{\pi_i:i\in I\}$ denote the coordinates of $\Delta$ and lift each of them arbitrarily to a character of $\widehat U$. Then the countable set $\chi_1,...,\chi_n,\tau_1,...,\tau_r,\pi_1,\pi_2,...$ generates $\widehat U$.\\
	
	Fix a coordinate $\pi$ of $\Delta$ of order $p^b$. Then $\pi^{p^b}$ is invariant under $\Delta$ and therefore is a character of $U/\Delta$. In particular, $\Delta_h \pi\circ\rho^{p^b} $ is a coboundary for all $h\in H'$. We conclude by Lemma \ref{Clroot:lem} that $\Delta_{h^{p^b\cdot l}}\rho\in B^1(G,X,S^1)$ for every $h\in H'$. For every $i\in I$, let $H_{p_i}=H^{p_i^{b_i}\cdot l}$ and $\tilde{H} := \bigcap_{i\in I} H_{p_i}$. It follows that the quotient $H/\tilde{H}$ is a totally disconnected group of exponent $m\cdot l$. Since $\Delta_h\chi\circ\rho$ is a coboundary for every $h\in \tilde{H}$ and $\chi\in\widehat U$, the claim follows by Theorem \ref{MScob}.
\end{proof}
We also need the following technical group theoretical lemma.
\begin{lem} \label{abl:lem}
	Let $H$ and $K$ be compact abelian groups and suppose that $K$ is finite dimensional of exponent $m$ for some $m\in\mathbb{N}$. We give $\text{Hom}(H,K)$ the topology of uniform convergence. Then, for any continuous homomorphism $\varphi:H\rightarrow \text{Hom} (H,K)$ we have that the group $H/\ker \varphi$ admits a totally disconnected open subgroup of exponent $m$. Moreover, if the group $K$ is totally disconnected of exponent $m$, then $H/\ker \varphi$ is totally disconnected of exponent $m$.
\end{lem}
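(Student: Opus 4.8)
\textbf{Proof proposal for Lemma \ref{abl:lem}.}
The plan is to dualize and reduce everything to a statement about a self-map of a discrete torsion abelian group, where "totally disconnected of exponent $m$" becomes "$p^m$-torsion on every $p$-component". First I would record the structural input: since $K$ is finite dimensional of exponent $m$, Proposition \ref{FD:prop} gives a short exact sequence $1\to \Delta \to K \to (S^1)^n\to 1$ with $\Delta$ compact totally disconnected of exponent $m$, and $\Delta\cong\prod_{q\in I}C_{q^{b_q}}$ with $b_q\le m$. Applying Pontryagin duality, $\hat K$ is a discrete abelian group sitting in an exact sequence $0\to \Z^n\to \hat K\to \widehat\Delta\to 0$, where $\widehat\Delta\cong\bigoplus_{q\in I}C_{q^{b_q}}$ is a torsion group whose $q$-torsion is killed by $q^m$. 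Thus $\hat K$ is a finitely generated-by-torsion group: every element, after multiplying by a suitable integer coprime-to-nothing but supported on the primes of $I$, lands in a free part; more precisely the torsion subgroup $T(\hat K)\subseteq\widehat\Delta$ is of exponent dividing $m$ on each $q$-component, and $\hat K/T(\hat K)$ embeds in $\Z^n$, hence is free of rank $\le n$.

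Next I would translate the hypothesis. The group $\mathrm{Hom}(H,K)$ with the uniform topology is, by Pontryagin duality for the compact group $H$, naturally identified with $\hat H\otimes \hat K$-type data; concretely $\mathrm{Hom}(H,K)\cong \mathrm{Hom}(\hat K,\hat H)$ as topological groups, and its Pontryagin dual is a quotient of $H\otimes(\text{something})$ — but it is cleaner to simply dualize the map $\varphi:H\to\mathrm{Hom}(H,K)$ directly. Dualizing, $\widehat{\mathrm{Hom}(H,K)}$ is a discrete group, and $\hat\varphi$ goes from it into $\hat H$; the key point is that the image of $\hat\varphi$ is exactly $(\ker\varphi)^{\perp}\subseteq\hat H$, and $H/\ker\varphi$ is dual to that image. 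So it suffices to show: the subgroup $\widehat{H/\ker\varphi}=\mathrm{im}(\hat\varphi)\subseteq\hat H$ admits a subgroup of finite index (equivalently: whose quotient is finitely generated free, dualizing "open subgroup") that is torsion of exponent $m$ on each $q$-component; and in the totally disconnected case, that it is itself such a torsion group.

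Now for the core computation. Unwinding, for $h,h'\in H$ we have $\varphi(h)(h')\in K$, and the bilinearity/symmetry that one expects from "$\varphi:H\to\mathrm{Hom}(H,K)$" is not assumed — but we do not need symmetry. Fix a character $\chi\in\hat K$. Then $(h,h')\mapsto \chi(\varphi(h)(h'))$ is a continuous bihomomorphism $H\times H\to S^1$, i.e. an element of $\mathrm{Hom}(H,\hat H)$ in each variable. Write $\beta_\chi:H\to\hat H$, $\beta_\chi(h)=\chi\circ\varphi(h)\in\widehat H$. The hypothesis "$\varphi$ continuous homomorphism" says $h\mapsto\beta_\chi(h)$ is a continuous homomorphism for each $\chi$, and $\ker\varphi=\bigcap_{\chi\in\hat K}\ker\beta_\chi$. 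Since $\hat K$ is generated (as a group) by $\Z^n$ together with the torsion coordinates $\{\pi_q\}_{q\in I}$ with $q^{b_q}\pi_q=0$, we get: for each torsion coordinate $\pi_q$, $q^{b_q}\beta_{\pi_q}(h)=\beta_{q^{b_q}\pi_q}(h)=0$, so $\beta_{\pi_q}$ has image in the $q^{b_q}$-torsion of $\hat H$; set $H_q:=\ker\beta_{\pi_q}$, an open subgroup of $H$ (open because its dual quotient $\mathrm{im}\,\beta_{\pi_q}$ is a discrete torsion group — here one uses that a continuous homomorphism from a compact group with totally-disconnected-of-bounded-exponent image has open kernel, cf. Corollary \ref{chartdg}). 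Let $H_0:=\bigcap_q H_q$; in the totally disconnected case for $K$ there are no $\Z^n$ coordinates and $H_0=\ker\varphi\cap(\text{the}\ \beta\text{'s})$... more carefully: $H/\ker\varphi$ dualizes to $\sum_\chi\mathrm{im}\,\beta_\chi\subseteq\hat H$, which is generated by the $\mathrm{im}\,\beta_{\pi_q}$ (each of exponent $q^{b_q}\mid q^m$ on its $q$-part) together with finitely many $\mathrm{im}\,\beta_{e_j}$, $j=1,\dots,n$, coming from a $\Z$-basis of $\Z^n\subseteq\hat K$; the former generate a torsion subgroup $S$ with $q$-component of exponent $\le m$, and the quotient $(\sum_\chi\mathrm{im}\,\beta_\chi)/S$ is a quotient of $\Z^n$, hence finitely generated. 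Dualizing back: $S^\perp$ is an open subgroup of $H/\ker\varphi$ (finite-index-type: its quotient is dual to a finitely generated group, so compact with finitely generated dual — if one additionally notes it is connected-free it is a torus, but "open subgroup" is what is claimed) — hmm, here is the subtle point.

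\textbf{The main obstacle.} The delicate step is getting \emph{"totally disconnected of exponent $m$ as an open subgroup"} rather than merely "a closed subgroup with Lie quotient": one must check that $S$ (the torsion part built from the $\Delta$-coordinates) is actually \emph{closed} in $\sum_\chi\mathrm{im}\,\beta_\chi$, equivalently that $S^\perp$ is open in $H/\ker\varphi$, and that $S$ has exponent exactly controlled by $m$ on each $q$-component (not $m\cdot(\text{something})$). Boundedness of exponent is what forces discreteness of $\mathrm{im}\,\beta_{\pi_q}$ and hence openness of $H_q=\ker\beta_{\pi_q}$ — this is exactly the mechanism in Proposition \ref{step1} and Lemma \ref{step 1}, so I would cite Corollary \ref{chartdg} and the fact that a bounded-exponent torsion abelian group is a direct sum of finite cyclic groups, hence discrete as a dual. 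I expect the $\Z^n$ (Lie) part to cause no trouble — it just produces the "open subgroup" slack, since in the totally disconnected case $n=0$ and we directly get $H/\ker\varphi$ itself totally disconnected of exponent $m$. So the write-up amounts to: (1) dualize; (2) decompose $\hat K$ via Proposition \ref{FD:prop}; (3) for each $\Delta$-coordinate $\pi_q$ observe $\beta_{\pi_q}$ has bounded-exponent image, hence discrete image and open kernel (Corollary \ref{chartdg}); (4) intersect finitely... no, countably many such open kernels — here use that only the torsion coordinates matter and that the \emph{dual} picture ($\sum\mathrm{im}\,\beta_{\pi_q}$ is a bounded-exponent, hence direct sum of cyclics, hence the $\beta_{\pi_q}$'s collectively have image of exponent $\le m$ on each prime) makes the intersection's perp land correctly; (5) dualize back to read off that $H/\ker\varphi$ has the asserted structure, with the Lie-part coordinates $\chi_1,\dots,\chi_n$ contributing the possibly-non-trivial open quotient in the general case and nothing in the totally disconnected case.
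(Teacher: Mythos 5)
There is a genuine gap, and it sits exactly at the point you flag as ``the subtle point.'' Your torsion-coordinate step assumes that the lifts $\pi_q\in\hat K$ of the coordinates of $\hat\Delta$ can be chosen with $q^{b_q}\pi_q=0$, i.e.\ that the dual sequence $0\to\Z^n\to\hat K\to\hat\Delta\to 0$ splits. This is false in general: take $K=S^1$ with $\Delta=C_2$ (a legitimate instance of ``finite dimensional of exponent $1$''), so $\hat K=\Z$ is torsion-free and no lift of the generator of $\hat\Delta\cong C_2$ is torsion. In the non-split case one only has $q^{b_q}\pi_q\in\Z^n$, so $q^{b_q}\beta_{\pi_q}=\beta_{q^{b_q}\pi_q}$ is a nontrivial ``Lie-part'' map rather than zero, and your subgroup $S$ generated by the torsion-coordinate contributions need not have $q$-exponent $\le q^m$; at the finitely many primes dividing the exponent of the Lie-part image you only get $q^{m+O(1)}$, which yields an open totally disconnected subgroup of some exponent $m'\ge m$, not exponent $m$ as the lemma asserts. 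The fix is precisely the move the paper makes and that your plan never carries out: first compose $\varphi$ with $\text{Hom}(H,K)\to\text{Hom}(H,K/\Delta)$; since $K/\Delta$ is a compact abelian Lie group, $\text{Hom}(H,K/\Delta)$ is discrete, so the kernel $\tilde H$ of this composition is open in $H$, and on $\tilde H$ the map $\varphi$ takes values in $\text{Hom}(H,\Delta)$, whose $q$-Sylow projections are genuinely $q^m$-torsion. Then $\tilde H^{q^m}$ dies under each projection, $H'=\bigcap_q\tilde H^{q^m}\subseteq\ker\varphi$, and $\tilde H/\ker\varphi$ is totally disconnected of exponent $m$ and open in $H/\ker\varphi$; in the totally disconnected case $\tilde H=H$. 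Without first passing to $\tilde H$, the bounded-exponent mechanism you invoke does not apply to the lifted coordinates.

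Two smaller remarks. First, your duality bookkeeping mixes the two variables of the pairing: $\mathrm{im}\,\beta_\chi$ need not annihilate $\ker\varphi$ (no symmetry of $(h,h')\mapsto\chi(\varphi(h)(h'))$ is assumed), so $\widehat{H/\ker\varphi}$ is the subgroup of $\hat H$ generated by the annihilators $(\ker\beta_\chi)^\perp$, not by the images; this is repairable since $(\ker\beta_\chi)^\perp\cong H/\ker\beta_\chi\cong\mathrm{im}\,\beta_\chi$ as finite abelian groups, so exponents transfer. Second, even granting a correct bounded-exponent subgroup $S$ with $A/S$ finite on the dual side, translating that back into ``open subgroup of exponent $m$'' requires a short extra argument (extracting a finite subgroup $F$ with $A/F$ of $q$-exponent $\le q^m$, using that only finitely many primes divide $|A/S|$), which you do not spell out. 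The paper's proof avoids all of this by working directly with $\text{Hom}(H,K/\Delta)$ and $\text{Hom}(H,\Delta_q)$, with no Pontryagin duality at all.
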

\begin{proof}
	Let $\Delta$ be a totally disconnected subgroup of $K$ of exponent $m$ such that $K/\Delta$ is a Lie group. Let $\tilde{\varphi}:H\rightarrow \text{Hom}(H,K/\Delta)$ be the composition of $\tilde{\varphi}$ with the projection $\text{Hom}(H,K)\rightarrow \text{Hom}(H,K/\Delta)$. Since $K/\Delta$ is embedded in a finite dimensional torus we conclude that $\text{Hom}(H,K/\Delta)$ is discrete. It follows that $\ker \tilde{\varphi}$ is an open subgroup of $H$. We denote by $\tilde{H}$ the kernel of $\tilde{\varphi}$ and by $\varphi'$ the restriction of $\varphi$ to $\tilde{H}$. Then the map $\varphi':\tilde{H}\rightarrow \text{Hom}(H,K)$ takes values in $\text{Hom}(H,\Delta)$. We prove that $\tilde{H}/\ker\varphi'$ is totally disconnected of exponent $m$. First recall that $\Delta$ can written as $\Delta = \prod_{i\in I} C_{p_i^{b_i}}$ for some $b_i\leq m$ and a countable set of indices $I$. For each prime $p$ let $\pi_p:\text{Hom}(H,\Delta)\rightarrow \text{Hom}(H,\Delta_p)$ be the projection map where $\Delta_p$ is the $p$-sylow subgroup of $\Delta$. Clearly, the group $H_{p}:=\tilde{H}^{p^m}$ is in the kernel of $\pi_p\circ\varphi$. Let $H'$ be the intersection of all $H_p$ over all primes. We see that $H'\leq \ker \varphi'$ and $\tilde{H}/H'$ is a totally disconnected group of exponent $m$ from which we conclude that $\tilde{H}/\ker \varphi'$ is totally disconnected of exponent $m$. Since $\tilde{H}$ is open in $H$, we have that $\tilde{H}/\ker\varphi'$ is open in $H/\ker\varphi$. In the case where $K$ is totally disconnected we get that $\tilde{H}=H$ and so the claim follows from the same argument.
\end{proof}
We can now prove Theorem \ref{InvFD:thm}.
\begin{proof}
	Let $X$ be a $G$-system of order $<k$. and write $X=Z_{<k-1}(X)\times_\rho U$. Let $U_n$ be a sequence of Lie groups such that $U=\underset{\longleftarrow}{\lim}  U_n$.
	
 The idea is to construct a sequence of $k$ factors of $Z_{<k-1}(X)\times_{\rho_n} U_n$ each time replacing one of the structure groups of $Z_{<k-1}(X)$ with a finite dimensional group. More concretely, we construct systems $X_{l,n}$ recursively as follows: let $X_{k,n}:=X_n$. Fix $l\leq k$ and suppose inductively that we have already constructed $$X_{l,n} = U_0\times_{\rho_{1}}U_1\times...\times_{\rho_{l-1}} U_{l-1}\times_{\rho_{l,n}}U_{l,n}\times...\times_{\rho_{{k-1},n}}U_{k-1,n}$$
	where $U_{l,n},...,U_{k-1,n}$ are finite dimensional quotients of $U_l,...,U_{k-1}$ respectively of exponent $m=O_k(1)$ and that $X_{l,n}$ is a factor of $X_{l,n+1}$ for every $n\in\mathbb{N}$. To construct $X_{l-1,n}$, we use a similar argument as in Lemma \ref{up} to lift a subgroup of the vertical rotations by $U_{l-1,n}$ to automorphisms of $X_{l,n}$. We argue as follows: the group $U_{l-1}$ acts on $Z_{<l}(X)$ by automorphisms. Therefore, by Proposition \ref{step1} we can find a subgroup $\tilde{U}_{l-1,n}$ such that $\Delta_u \rho_{l,n}\in B^1(G,Z_{<l}(X),U_{l,n})$ for every $u\in \tilde{U}_{l-1,n}$. We consider the group $$\mathcal{H}_{l-1,n}:=\{S_{u,F}:u\in U_{l-1,n}, F\in \mathcal{M}(Z_{<l}(X),U_{l,n}), \Delta_u \rho_{l,n}=\Delta F\}.$$ Since $\tilde{U}_{l-1,n}$ is abelian, $\mathcal{H}_{l-1,n}$ is a $2$-step nilpotent group. Let $p:\mathcal{H}_{l-1,n}\rightarrow U_{l-1,n}$ be the projection map. The kernel of $p$ consists of transformations of the form $S_{1,c}$ where $c$ is a constant in $U_{l,n}$. We can therefore identify $\ker p$ with the compact group $U_{l,n}$. By Theorem \ref{open} and Lemma \ref{compact} we conclude that $\mathcal{H}_{l-1}$ is compact and $\mathcal{H}_{l-1,n}/U_{l,n}\cong \tilde{U}_{l-1,n}$. This implies that the commutator map on $\mathcal{H}_{l-1,n}$ induces a bilinear map  $b:\tilde{U}_{l-1,n}\times \tilde{U}_{l-1,n}\rightarrow U_{l,n}$. Using Pontryagin duality, we see that $b$ can be identified with a continuous homomorphism $\tilde{U}_{l-1,n}\rightarrow \text{Hom} (\tilde{U}_{l-1,n},U_{l,n})$. Since $U_{l,n}$ is finite dimensional of exponent $m=O_k(1)$ we conclude from the previous lemma that the kernel is a subgroup $U_{l-1,n}'\leq \tilde{U}_{l-1,n}$ such that the quotient $\tilde{U}_{l-1,n}/U_{l-1,n}'$ admits a totally disconnected group of exponent $m'=O_k(1)$ as an open subgroup. By shrinking $U_{l-1,n}'$ if necessary we can assume that $\tilde{U}_{l-1,n}/U_{l-1,n}'$ is increasing in $n$. The pre-image of $U_{l-1,n}'$ under the projection $p$ is a compact abelian finite dimensional group and it acts by automorphisms on $Z_{<l+1}(X_{l,n})$. We repeat this process inductively another $k-l$ steps each time lifting a subgroup of $U_{l-1,n}$ to a group of automorphisms of the next universal characteristic factor of $X_{l,n}$. At the end of the day we obtain a compact abelian finite dimensional group of exponent $m''=O_k(1)$, $\tilde{H}_{l-1,n}$ which acts by automorphisms on $X_{l,n}$. Let $\tilde{p}:\tilde{H}_{l-1,n}\rightarrow U_{l-1,n}$ be the projection map, we see that  $U_{l-1,n}/\tilde{p}(\tilde{H}_{l-1,n})$ is a finite dimensional group of exponent $m''=O_k(1)$. Now in the last step we can use the fact that $U_{k-1,n}$ is a Lie group. In that step we invoke Lemma \ref{step 1} instead of Proposition \ref{step1}. We conclude that $\rho_{k-1,n}$ is $(G,X,U_{l,n})$-cohomologous to a cocycle that is invariant under the action of $\tilde{\mathcal{H}}_{l-1,n}$. Let $X_{l-1,n} = X_{l,n}/\tilde{\mathcal{H}}_{l-1,n}$ (as in Lemma \ref{down}). Clearly, $X_{l-1,n}$ is a factor of $X_{l,n}$ and the $(l-1)$-th structure group of $X_{l-1,n}$ is finite dimensional of exponent $m''=O_k(1)$. Since in every step in the proof we extended the structure groups of $X_{l-1,n}$ to exceeds those of $X_{l-1,n-1}$ we have that $X_{l-1,n}$ is increasing in $n$. This completes the proof.
\end{proof}
\subsection{Proof of Theorem \ref{Main2:thm} for systems of order $<3$} \label{case3}
 The proof of Theorem \ref{Main2:thm} for systems of order $<3$ is significantly easier than the general case. 

\begin{thm}\label{order<3}
	Let $G=\bigoplus_{p\in P}\mathbb{Z}/p\mathbb{Z}$, and $X$ be an ergodic $G$-system of order $<3$. Then $X$ is an inverse limit of finite dimensional $2$-step nilpotent systems.
\end{thm}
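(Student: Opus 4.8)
The plan is to reduce to a finite dimensional system, strip off the connected part of the Kronecker factor so as to fall into the totally disconnected setting, apply the totally disconnected structure theorem there, and then reassemble the Host--Kra group.

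\textbf{Step 1 (reduction to the finite dimensional case).} First I would apply Theorem \ref{InvFD:thm} to write $X$ as an inverse limit of finite dimensional ergodic $G$-systems of order $<3$; since an inverse limit of inverse limits of finite dimensional $2$-step nilpotent systems is again one, it suffices to treat a finite dimensional $X$. By Proposition \ref{abelext:prop} write $X=Z\times_{\rho}U$, where $Z=Z_{<2}(X)$ is a finite dimensional compact abelian group carrying an ergodic rotation, $U$ is a finite dimensional compact abelian group, and $\rho\colon G\times Z\to U$ is a cocycle of type $<2$. Since $U$ is the last structure group, every quotient of $U$ gives a factor of $X$, so writing $U$ as the inverse limit of its finite dimensional Lie quotients we may assume in addition that $U\cong (S^1)^n\times F$ with $F$ finite. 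The goal is then to show that the Host--Kra group $\mathcal{G}(X)$ of Definition \ref{HKgroup:def} is finite dimensional and $2$-step nilpotent, acts transitively on $X$, and has a zero dimensional cocompact stabilizer $\Gamma$ with $X\cong\mathcal{G}(X)/\Gamma$; the theorem then follows by passing back through the inverse limits. Here $\mathcal{G}(X)$ is $2$-step nilpotent because $X$ has order $<3$ (as in \cite[Corollary 5.9]{HK}) and, being an extension of a closed subgroup of $Z$ by a finite dimensional group, is finite dimensional by Definition \ref{finitedim:def}; the real content is transitivity, i.e. lifting every $s\in Z$ to $\mathcal{G}(X)$.

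\textbf{Step 2 (stripping off the connected part).} Let $Z_0\le Z$ be the identity component. For $a\in Z_0$ and $\chi\in\hat U$ the transformation $\chi\circ\Delta_a\rho$ is of type $<1$ (since $\rho$ is of type $<2$ and $a$ acts by automorphisms), so Lemma \ref{type0} writes it as a homomorphism times a coboundary; since phase polynomial cocycles are invariant under the connected group $Z_0$ (Proposition \ref{pinv:prop}), the assignment $a\mapsto$ (the homomorphism part) is a continuous homomorphism of $Z_0$ into a countable group, hence trivial, and therefore $\chi\circ\Delta_a\rho\in B^1(G,Z,S^1)$ for all $a\in Z_0$ and $\chi\in\hat U$ -- this is the same computation used inside the proof of Theorem \ref{countable} to lift $U_{l,0}$, and via Theorem \ref{MScob} it yields $\Delta_a\rho\in B^1(G,Z,U)$ for every $a\in Z_0$. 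Lemma \ref{up} then lifts $Z_0$ to a compact connected abelian group $\widetilde{Z_0}$ of automorphisms of $X$, and Lemma \ref{down} produces the factor $X/\widetilde{Z_0}$, an ergodic $G$-system of order $<3$ whose Kronecker factor $Z/Z_0$ is totally disconnected. Iterating the same device on the identity component of the fiber (a torus, so its coordinate cocycles behave exactly as in Host and Kra \cite{HK}) reduces us to the case where the remaining system is totally disconnected, at the cost of recording a connected finite dimensional $2$-step nilpotent Lie group that will be re-inserted at the end.

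\textbf{Step 3 (totally disconnected case, reassembly, and the main obstacle).} For the resulting totally disconnected ergodic $G$-system of order $<3$, Proposition \ref{finiteorder} together with Theorem \ref{Main:thm} shows that $\rho$ is cohomologous to a phase polynomial cocycle of degree $<d$ for some absolute $d$; for a phase polynomial cocycle the transformations $S_{s,F}$ with $F$ a phase polynomial of bounded degree on $Z$ form a group projecting onto $Z$, so the corresponding Host--Kra group is transitive and the system is a zero dimensional $2$-step nilpotent system. Re-inserting the connected Lie group of Step 2, the Host--Kra group of $X$ is a zero dimensional $2$-step nilpotent extension of a connected finite dimensional $2$-step nilpotent Lie group, hence finite dimensional and $2$-step nilpotent, it acts transitively, and its stabilizer $\Gamma$ is zero dimensional and cocompact; thus $X\cong\mathcal{G}(X)/\Gamma$, and passing back through the inverse limit of Step 1 finishes the argument. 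I expect the genuine difficulty to lie in Step 2 and the reassembly: solving the Conze--Lesigne equation along $Z_0$ with values in the possibly non-toral group $U$, and then checking that the zero dimensional piece coming from Theorem \ref{Main:thm} and the connected Lie piece fit together into one group acting transitively on all of $X$ while remaining finite dimensional. This is precisely where order $<3$ is used: the base over which this gluing takes place is the group rotation $Z_{<2}(X)$, over which the relevant extension -- the analogue of the exact sequence (\ref{short}) -- is either split by connectedness or trivialized by the phase polynomial description of Theorem \ref{Main:thm}, whereas for systems of higher order the base is itself a nontrivial abelian extension and $P_{<m}$ is a non-divisible group admitting no roots for the phase polynomials that arise, which is exactly what forces the $k$-extensions in Theorem \ref{Main2:thm}.
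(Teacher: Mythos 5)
Your Step 2 contains the critical gap. To invoke Lemma \ref{up}/\ref{down} you need the \emph{coboundary} statement $\Delta_a\rho\in B^1(G,Z,U)$ for every $a$ in the identity component $Z_0$, and your argument for it does not work when $U$ has torus factors. For a torus coordinate $\chi$, Lemma \ref{type0} only gives $\Delta_a(\chi\circ\rho)=c_a\cdot\Delta F_a$ with $c_a\in\hat G$, and the ``constant part'' $a\mapsto c_a$ is a homomorphism only modulo the group $\Lambda$ of characters of $G$ that are coboundaries over $Z$ (the pullbacks of $\hat Z$ under the rotation). The quotient $\hat G/\Lambda$ is \emph{not} countable (for infinite $P$, $\hat G\cong\prod_{p\in P}C_p$ is uncountable while $\Lambda$ is countable), and no continuity of $a\mapsto c_a$ is established, so the ``continuous homomorphism into a countable group, hence trivial'' step collapses. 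Note that the paper is careful never to need this statement: in the proof of Theorem \ref{countable} the connected group is only ever lifted through extensions whose relevant characters $\chi$ have \emph{finite} order (there $\chi^n=1$ kills the main term and divisibility of the connected group finishes), and in the proof of Theorem \ref{order<3} itself the torus-valued coordinates are handled by accepting a possibly nontrivial constant $c_s$ in the Conze--Lesigne equation --- a nonzero $c_s$ on $Z_0$ is exactly what a genuinely $2$-step nilpotent system looks like, so an argument that forces $c_a=1$ on $Z_0$ for the torus part would be flattening the very structure the theorem describes, and at minimum requires a proof you have not given.

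Even granting Step 2, your Step 3 omits what is in fact the bulk of the paper's proof. Transitivity of the projection $p:\mG(X)\rightarrow Z$ must be checked coordinate-by-coordinate in $U\cong(S^1)^n\times F$, and for a coordinate $C_{p^n}$ the issue is not that $\Delta_s\rho$ solves a Conze--Lesigne equation with $S^1$-valued data (that is immediate from Lemma \ref{type0} after embedding), but that one must \emph{re-value} the solution: replace $F_s$ and $c_s$ by maps into $C_{p^n}$. In the paper this is done by first showing $\rho$ is cohomologous to a phase polynomial $Q$ of degree $<2$ (using divisibility of $Z_0$ --- legitimate there because $\rho^{p^n}=1$ --- plus the push-forward to $Z/Z_0$ and the sharpened form of Theorem \ref{Main:thm}), then analyzing $Q$ as a constant times a bilinear map $q$, passing to the open subgroup $Z'\supseteq\ker(q^{p^n})$ whose index is finite and coprime to $p$, and extracting an explicit $p^n$-th root $R$ of $F^{p^n}/(c\cdot\chi)$ inside $P_{<3}(Z,S^1)$; no extension of the system is needed precisely because of this coprimality. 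Your outline replaces all of this by ``for a phase polynomial cocycle the transformations $S_{s,F}$ with $F$ a phase polynomial form a group projecting onto $Z$'' in the totally disconnected quotient, together with an unspecified ``re-insertion'' of the connected piece --- but the lifting of transformations of $X/\widetilde{Z_0}$ back to $\mG(X)$, and the surjectivity of $p$ in the $C_{p^n}$ coordinates over the original finite dimensional $Z$, are exactly the points at issue and are not addressed. So the proposal, as written, does not constitute a proof; the paper's route (stay on $X$, prove surjectivity of $p$ coordinatewise, with the root-extraction/coprimality argument for the finite coordinates, and quote Meiri for the identification $X\cong\mG(X)/\Gamma$) is genuinely different from, and not recoverable from, your reduction-to-totally-disconnected scheme.
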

A version of this theorem without the finite dimensional result was given in \cite{OS}. For the sake of completeness we repeat the proof here. Recall that a system of order $<3$ takes the form $X=Z\times_\rho U$ where $U$ and $Z$ are compact abelian groups and $Z$ is the Kronecker factor. By Theorem \ref{InvFD:thm} we may assume that $Z$ is finite dimensional and $U$ is a Lie group.  

\begin{defn} [Host and Kra group, for systems of order $<3$] Let $X$ be a system of order $<3$. Let $s\in Z$ and $F:Z\rightarrow U$ be a measurable map. We denote by $S_{s,F}$ the measure preserving transformation $(z,u)\mapsto (sz,F(z)u)$, and let $\mathcal{G}(X)$ denote the group of all such transformations with the property that there exists $c_s:G\rightarrow U$ such that $\Delta_s \rho = c_s \cdot \Delta F_s$.
\end{defn}
Host and Kra \cite{HK} proved that this definition of $\mathcal{G}(X)$ coincides with Definition \ref{HKgroup:def} for systems of order $<3$. In particular, this means that $\mG(X)$ is a $2$-step nilpotent Polish group.\\
Observe that the kernel of the projection $p:\mathcal{G}(X)\rightarrow Z$ can be identified with $P_{<2}(Z,U)$. We claim that in order to prove Theorem \ref{order<3} it is enough to show that $p$ is onto. Indeed, in that case, $p$ is an open map (Theorem \ref{open}) and the group $\mathcal{G}(X)$ acts transitively on $X$. Moreover, if $Z$ is finite dimensional and $U$ is a Lie group then $P_{<2}(Z,U)$ is finite dimensional. Since the projection $p:\mG(X)\rightarrow Z$ is onto this implies that $\mG(X)$ is also a finite dimensional group. At this point we would want to use Theorem \ref{Effros}, but unfortunately we only have a near-action of $\mG(X)$ on $X$. Yet, the identification $X\cong \mG(X)/\Gamma$ for $\Gamma=\{S_{1,\chi}:\chi\in\text{Hom}(Z,U)\}$ was obtained by Meiri in \cite[Theorem 3.21]{Meiri}. This completes the proof of the claim. We note that for the higher order case we will use a different argument (see section \ref{identification:sec}).
\begin{proof}[Proof of Theorem \ref{order<3}]
	 The projection $p:\mG(X)\rightarrow Z$ is onto if and only if for every $s\in Z$ there exist a measurable map $F_s:Z\rightarrow U$ and a homomorphism $c_s:G\rightarrow U$ such that $\Delta_s \rho = c_s\cdot \Delta F_s$. Since $U$ is a Lie group, by studying each coordinate separately it is enough to show that the equation holds in the case where $U$ is a torus or equals to $C_{p^n}$ for some prime $p$ and $n\in\mathbb{N}$. By Lemma \ref{dif:lem} the cocycle $\Delta_s\rho$ is of type $<1$, therefore if $U$ is a torus the equation follows by Lemma \ref{type0}. Otherwise ,assume that $U=C_{p^n}$. By embedding $C_{p^n}$ in $S^1$ and applying Lemma \ref{type0}, we see that for every $s\in Z$,
	 \begin{equation} \label{cl}\Delta_s \rho = c_s\cdot \Delta F_s
	\end{equation} for some constant $c_s:G\rightarrow S^1$ and $F_s:Z\rightarrow S^1$.\\
	 Our goal is to replace $F_s$ and $c_s$ with some $F_s'$ and $c_s'$ such that equation (\ref{cl}) holds and $F_s',c_s'$ takes values in $C_{p^n}$.\\
	
	As a first step, we show that $\rho$ is $(G,Z,S^1)$-cohomologous to a phase polynomial of degree $<2$. Observe that by the cocycle identity we have, $$\Delta_{s^{p^n}}\rho = \Delta_s\rho^{p^n} \cdot \prod_{k=0}^{p^n-1} \Delta_s \Delta_{s^k} \rho$$
	From equation (\ref{cl}) we see that $\prod_{k=0}^{p^n-1} \Delta_s \Delta_{s^k} \rho$ is a coboundary. Since $\rho$ takes values in $C_{p^n}$, the term $\Delta_s\rho^{p^n}$ vanishes and we conclude that $\Delta_{s^{p^n}} \rho$ is a coboundary for every $s\in Z$. Let $Z_0$ be the connected component of the identity in $Z$. Since connected groups are divisible (Lemma \ref{divisible}), we conclude that $\Delta_s\rho$ is a $(G,Z,S^1)$-coboundary for every $s\in Z_0$. By Lemma \ref{cob:lem}, $\rho$ is $(G,Z,S^1)$-cohomologous to a cocycle $\rho'$ that is invariant with respect to the action of $Z_0$. Let $\pi_\star \rho'$ be the push-forward of $\rho'$ to $Z/Z_0$. By Lemma \ref{Cdec:lem}, $\pi_\star \rho'$ is of type $<2$. Therefore, by Theorem \ref{Main:thm} it is cohomologous to a phase polynomial of degree $<2$.\footnote{This result requires a slightly stronger version of Theorem \ref{Main:thm}. If $l<\min_{p\in\mathcal{P}} p$ then one can replace the quantity $O_{k,m,l}(1)$ in Theorem \ref{Main:thm} with $l$. A proof for this can be found in \cite{OS}.} Lifting everything back to $Z$ we conclude that $\rho'$ and $\rho$ are $(G,Z,S^1)$-cohomologous to a phase polynomial $Q:G\times Z\rightarrow S^1$ of degree $<2$. Moreover, $Q$ is invariant to translations by $Z_0$. We write \begin{equation} \label{Poly}\rho = Q\cdot \Delta F\end{equation} for some $F:Z\rightarrow S^1$.\\
	Since $\rho$ takes values in $C_{p^n}$, we have that \begin{equation} \label{Cl2}1=Q^{p^n}\cdot\Delta F^{p^n}
	\end{equation}
	By taking the derivative of both sides of the equation above by $s\in Z$, we conclude that $\Delta_s F^{p^n}$ is a phase polynomial of degree $<2$. Our next goal is to replace $F$ with a function $F'$ such that $F'/F$ is a phase polynomial of degree $<3$ and at the same time $\Delta_s F'^{p^n}$ is a constant.\\
	We study the phase polynomial $Q$. It is a fact that every phase polynomial of degree $<2$ is a constant multiple of a homomorphism. Therefore, we can write $Q(g,x)=c(g)\cdot q(g,x)$ where $c:G\rightarrow S^1$ is a constant and $q:G\times Z\rightarrow S^1$ is a homomorphism in the $Z$-coordinate. Since $Q$ is a cocycle $$c(g+g')q(g+g',x)=c(g)c(g')\Delta_{g'}q(g,x)\cdot q(g,x)\cdot q(g',x).$$ It follows that $q$ is bilinear in $g$ and $x$. Let $$Z'_{p} = \ker(q^{p^n}) = \{s\in Z : q(g,s)^{p^n}=1 \text{ for every } g\in G\}.$$ 
	Then $Z/Z'_p$ is isomorphic to a subgroup of $\widehat G^{p^n}=\prod_{p\not= q\in \mathcal{P}} C_q$. By taking the derivative of both sides of equation (\ref{Cl2}) by $s\in Z_p'$, we conclude by the ergodicity of the Kronecker factor that $\Delta_s F^{p^n}$ is a constant. Recall that $F^{p^n}$ is a phase polynomial. Therefore, by Corollary \ref{ker:cor} and the above we see that there exists an open subgroup $Z'\leq Z$, which contains $Z_p'$, such that $\Delta_s F^{p^n}$ is a constant for every $s\in Z'$. By the cocycle identity we conclude that $\Delta_s F^{p^n}=\chi(s)$ for some character $\chi:Z'\rightarrow S^1$. Lift $\chi$ to a character of $Z$ arbitrarily. We conclude that $F^{p^n}/\chi$ is a phase polynomial which is invariant under translations by $Z'$. Since $Z'$ is open the quotient $Z/Z'$ is a finite group. Moreover, since $Z'$ contains $Z_p'$ we conclude that the order of $Z/Z'$ is co-prime to $p$. Let $m=|Z/Z'|$. Since $F^{p^n}/\chi$ is of degree $<3$, we conclude that $\Delta F^{p^n}/\chi$ is a constant multiple of a homomorphism from $Z$ to $S^1$. This homomorphism is invariant to $Z'$ and therefore $\Delta (F^{p^n}/\chi)^m$ is a constant. This implies that $(F^{p^n}/\chi)^m$ is a polynomial of degree $<2$ and by the same argument we conclude that $(F^{p^n}/\chi)^{m^2}$ is a constant. Let $c\in S^1$ be an $m$-th root of that constant, we conclude that $F^{p^n}/c\cdot \chi$ take values in $C_{m^2}$. Since $p$ and $m$ are co-prime we can find an integer $l$ such that $l\cdot p^n = 1 \mod m^2$. We conclude that $R:= (F^{p^n}/c\cdot \chi)^l$ is a phase polynomial of degree $<3$ and that $R^{p^n} = F^{p^n}/c\cdot\chi$. Let $Q':=Q\cdot \Delta R$ and $F':=F/R$. Then, as in equation (\ref{Poly}), we have $$\rho = Q'\cdot \Delta F'$$ and $\Delta_s F'^{p^n} = c\cdot \chi(s)$.\\
	
	Now, by taking the derivative by $s\in Z$ on both sides of the equation above, we conclude that $$\Delta_s \rho = \Delta_s Q' \cdot \Delta \Delta_s F'.$$ Observe that $c_s':=\Delta_s Q'$ is a character of $G$ and $$c_s'^{p^n} = \Delta_s Q^{p^n} \cdot \Delta \Delta_s F^{p^n}/c\cdot \chi = 1$$ where the last equality follows from (\ref{Cl2}) and the fact that $\Delta \Delta_s c\cdot \chi$ vanishes. It is left to change the term $\Delta_s F$. Set $F_s' := \Delta_s F'/\phi(s)$ where $\phi(s)$ is a $p^n$-th root of $\chi(s)$ in $S^1$. Then, as before we have that $$\Delta_s \rho = c_s'\cdot \Delta F_s'$$ but this time $c_s'^{p^n} = F_s'^{p^n}=1$. This implies that $S_{s,F_s'}\in \mG(X)$ and therefore $p$ is onto.
\end{proof}
We note that this argument fails for systems of higher order. The main reason for this is that Theorem \ref{InvFD:thm} only allows to approximate the last structure group by Lie groups. In particular, we do not know how to prove a counterpart of this result in the case where $U$ is not a Lie group without the use of $k$-extensions.
\section{Extension trick} \label{ext:sec}
Let $X$ be an ergodic $G$-system. Under certain conditions we show that there exist an extension $\pi:Y\rightarrow X$ with the following property: for any prime $p$, a natural number $n\in\mathbb{N}$ and a phase polynomial $P:X\rightarrow S^1$, there exists a phase polynomial $Q:Y\rightarrow S^1$ such that $Q^{p^n}=P\circ\pi$.\\
We begin with an example which illustrates the idea. 
\begin{example} \label{example2}
Let $X=S^1$ and  $G=\bigoplus_{p\in P}\mathbb{Z}/p\mathbb{Z}$ where $P$ is an infinite subset of odd prime numbers. We define a homomorphism $\varphi:G\rightarrow X$ by $$\varphi((g_p)_{p\in P}) = \prod_{p\in P} w_p^{g_p}$$ where $\omega_p$ is the first $p$-th root of unity. The action of $G$ on $X$ by $T_g x = \varphi(g)x$ is ergodic, and the identity map $\chi:X\rightarrow S^1$ is a phase polynomial of degree $<2$. Our goal is to construct an extension $\pi:Y\rightarrow X$ and a phase polynomial $Q:Y\rightarrow S^1$ of degree $<2$ such that $Q^2 = \chi\circ \pi$.\\ We let $c(g):=\Delta_g \chi$ and observe that since $2\not\in P$, there exists some $d\in\widehat G$ such that $d^2=c$. Fix any measurable map $F:X\rightarrow S^1$ with $F^2=\chi$ and let $\tau=d\cdot \Delta \overline{F}$. The cocycle $\tau:G\times X\rightarrow C_2$ defines an extension  $Y:=X\times_{\tau} C_2$. On $Y$ we have that $d$ is an eigenvalue of the eigenfunction $Q(x,u):=u\cdot F(x)$. Moreover, $Q^2 = \chi\circ \pi$.\\ Since $X$ is ergodic and $\tau$ is minimal (see Lemma \ref{minimal}), we conclude that $Y$ is ergodic. Moreover $\tau$ is of type $<1$ and therefore $Y$ is of order $<2$. In fact, it is easy to see that $(x,u)\mapsto u \cdot \overline F(x)$ defines an isomorphism of $Y$ and $(S^1,G)$ where the action of $G$ on $S^1$ is given by $T_gy = d(g)y$. 
\end{example}
Let $G=\bigoplus_{p\in P}\mathbb{Z}/p\mathbb{Z}$ as usual. We state a simple technical proposition about $G$-cocycles.\footnote{The proposition below is the cocycle-counterpart of the fact that any homomorphism is uniquely determined by the values it gives to a generating set.}
\begin{prop}[Conditions for cocycle] \label{cocycle:prop}
	Let $X$ be a $G$-system and $f:G\times X\rightarrow S^1$ be a function such that 
	\begin{equation} \label{commute0}
	\Delta_h f(g,x) = \Delta_g f(h,x)
	\end{equation}
	for all $h,g\in G$. Let $E=\{e_1,e_2,...\}$ denote the natural basis of $G$ and suppose that for every $g\in E$ we have,
	\begin{equation} \label{linecocycle0}
	\prod_{k=0}^{\text{order}(g)-1} T_g^k f(g,x)=1
	\end{equation}
	Then the function $\tilde{f}:G\times X\rightarrow S^1$ below is a cocycle which agrees with $f$ for every $g\in E$. 
	\begin{equation} \label{cocycle:eq0}
\tilde{f}(g,x):= \prod_{i=1}^\infty T_{g_1}...T_{g_{i-1}} \prod_{k=0}^{g_i-1} T_{ke_i} f(e_i,x)
	\end{equation}
	where $g=(\overline{g}_1,\overline{g}_2,...)$, the constants $g_1,g_2,...$ are any representatives of $\overline{g}_1,\overline{g}_2,...$ in $\mathbb{N}$ respectively and the product $\prod_{k=0}^{-1}(\text{anything})$ is assumed to be $1$.
\end{prop}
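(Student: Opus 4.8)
The plan is to read formula (\ref{cocycle:eq0}) as the integration of the ``closed $1$-form'' $g\mapsto f(g,\cdot)$ along the coordinate path $0\to g_1e_1\to g_1e_1+g_2e_2\to\cdots$, with (\ref{commute0}) playing the role of closedness and (\ref{linecocycle0}) the relations imposed by the presentation of $G=\bigoplus_{p\in P}\mathbb Z/p\mathbb Z$. Since every $g$ has finite support and the chosen $g_i$ are finite, the product in (\ref{cocycle:eq0}) is really finite. Abbreviate $H_i^{(m)}(x):=\prod_{k=0}^{m-1}f(e_i,T_{ke_i}x)$ (so $H_i^{(0)}\equiv 1$), write $p_i$ for the order of $e_i$, and for $n\ge 0$ put $G_n=\langle e_1,\dots,e_n\rangle$ and $\tilde f^{(n)}(g,x):=\prod_{i=1}^n H_i^{(g_i)}(T_{g_1e_1+\cdots+g_{i-1}e_{i-1}}x)$. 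Then $\tilde f$ restricted to $G_n$ equals $\tilde f^{(n)}$, and $\tilde f^{(n-1)}=\tilde f^{(n)}|_{G_{n-1}}$; so it suffices to show that each $\tilde f^{(n)}$ is a well-defined cocycle on the finite group $G_n$ that agrees with $f$ on $\{e_1,\dots,e_n\}$, and the proposition then follows by choosing $n$ large enough to contain $g,h,g+h$ for a given pair.

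First I would dispatch well-definedness and the agreement with $f$ on $E$. If a representative $g_i$ is replaced by $g_i+p_i$, the $i$-th factor of (\ref{cocycle:eq0}) is multiplied by the block $\prod_{k=g_i}^{g_i+p_i-1}f(e_i,T_{ke_i}(\,\cdot\,))=\prod_{j=0}^{p_i-1}f(e_i,T_{je_i}T_{g_ie_i}(\,\cdot\,))$, which equals $1$ by (\ref{linecocycle0}); and every later factor is unchanged because $T_{p_ie_i}=T_{e_i}^{p_i}=\mathrm{id}$, as $e_i$ has order $p_i$ in $G$. Hence $\tilde f$ is independent of the chosen representatives. Putting $g=e_i$ (all coordinates zero except the $i$-th, which is $1$) collapses (\ref{cocycle:eq0}) to its $i$-th factor $H_i^{(1)}(x)=f(e_i,x)$, so $\tilde f$ agrees with $f$ on $E$; and $\tilde f$ is measurable, being a finite pointwise product of measurable functions.

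The cocycle identity I would prove by induction on $n$, carrying along the auxiliary ``commuting'' relation $\Delta_{me_i}\tilde f^{(n)}(g,\cdot)=\Delta_g H_i^{(m)}$ for all $g\in G_n$, $m\ge 0$ and $i>n$; call this $(\dagger_n)$. The case $n=0$ is trivial. For the step, split $G_n=G_{n-1}\oplus\langle e_n\rangle$, so that $\tilde f^{(n)}(g'+g_ne_n,x)=\tilde f^{(n-1)}(g',x)\cdot H_n^{(g_n)}(T_{g'}x)$ for $g'\in G_{n-1}$. Two telescopings drive everything: from (\ref{commute0}) one gets $\Delta_{me_i}f(e_n,z)=\Delta_{e_n}H_i^{(m)}(z)$, and telescoping this over $k=0,\dots,g_n-1$ gives $\Delta_{me_i}H_n^{(g_n)}=\Delta_{g_ne_n}H_i^{(m)}$; combining these with $(\dagger_{n-1})$ at index $i>n$ yields $(\dagger_n)$. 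The cocycle identity for $\tilde f^{(n)}$ then follows by expanding both sides via the displayed decomposition, using the elementary splitting $H_n^{(g_n+h_n)}(y)=H_n^{(h_n)}(y)\cdot H_n^{(g_n)}(T_{h_ne_n}y)$, the inductive cocycle identity for $\tilde f^{(n-1)}$ on $G_{n-1}$, and $(\dagger_{n-1})$ with $i=n$, $m=h_n$ (which is what lets one move the translation $T_{h_ne_n}$ past the factor $\tilde f^{(n-1)}(g',\cdot)$); the two sides then match term by term.

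I expect the only real difficulty to be bookkeeping — keeping straight which translation acts on which factor when (\ref{cocycle:eq0}) is reorganized under $g\mapsto g+h$, and verifying the two telescoping identities behind $(\dagger_n)$ — rather than anything conceptual. The conceptual point is simply that (\ref{cocycle:eq0}) is independent of the coordinate path, for which (\ref{linecocycle0}) (reflecting that $E$ only \emph{generates}, and does not freely generate, $G$) and (\ref{commute0}) (closedness, so the integral depends only on the endpoint) are precisely what is used.
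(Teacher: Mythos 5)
Your argument is correct, and it is exactly the intended one: the paper states Proposition \ref{cocycle:prop} as a simple technical fact (the cocycle analogue of extending a homomorphism from a generating set) and omits the verification, which your proof supplies — well-definedness of (\ref{cocycle:eq0}) from (\ref{linecocycle0}) together with $T_{p_ie_i}=\mathrm{id}$, agreement with $f$ on $E$, and the cocycle identity by induction over the coordinates, using the telescoped form of (\ref{commute0}) (your relation $(\dagger_n)$) to commute the extra translation past the lower-coordinate factors. The key identities $\Delta_{me_i}f(e_n,\cdot)=\Delta_{e_n}H_i^{(m)}$ and $\Delta_{me_i}H_n^{(g_n)}=\Delta_{g_ne_n}H_i^{(m)}$ do follow from (\ref{commute0}) by the telescoping you describe, so the bookkeeping closes as claimed (up to the harmless swap of the roles of $g_n$ and $h_n$ in the last step).
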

\textbf{Convention:} We refer to an element $g\in E$ as a generator.\\

We need the following results about group extensions.
\begin{defn} [Image and minimal cocycles]
	Let $X$ be a $G$-system and $\rho:G\times X\rightarrow U$ be a cocycle into a compact abelian group $U$. The subgroup $U_\rho\leq U$ generated by $\{\rho(g,x):g\in G,x\in X\}$ is called the \textit{image} of $\rho$. We say that $\rho$ is \textit{minimal} if it is not $(G,X,U)$-cohomologous to a cocycle $\sigma$ with $U_\sigma\lneqq U_\rho$.
\end{defn}
\begin{lem} \label{minimal}\cite[Corollary 3.8]{Zim}
	Let $X$ be an ergodic $G$-system. Let $\rho:G\times X\rightarrow U$ be a cocycle into a compact abelian group. Then,
	\begin{itemize}
		\item{There exists a minimal cocycle $\sigma:G\times X\rightarrow U$ such that $\rho$ is $(G,X,U)$-cohomologous to $\sigma$.}
		\item{$X\times_\rho U$ is ergodic if and only if $X$ is ergodic and $\rho$ is minimal with image $U_\rho = U$.}
	\end{itemize}
\end{lem}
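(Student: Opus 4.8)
Since this is \cite[Corollary 3.8]{Zim}, one could simply invoke Zimmer; let me instead sketch a self-contained route, which in this compact abelian setting reduces entirely to Pontryagin duality on the \emph{discrete} dual group $\hat U$ together with the Moore--Schmidt theorem (Theorem \ref{MScob}). The first move is to record the coboundary obstruction subgroup
\[
S:=\{\chi\in\hat U : \chi\circ\rho\in B^1(G,X,S^1)\}\leq\hat U,
\]
which is genuinely a subgroup of $\hat U$ because $\chi\mapsto\chi\circ\rho$ is a homomorphism and $B^1(G,X,S^1)$ is a group. Write $V_0:=S^{\perp}\leq U$ for its annihilator; since $\hat U$ is discrete, $(S^{\perp})^{\perp}=S$, and $\perp$ is an order-reversing bijection between closed subgroups of $U$ and subgroups of $\hat U$.

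For the first bullet I would establish the key equivalence: for a closed subgroup $V\leq U$, the cocycle $\rho$ is $(G,X,U)$-cohomologous to a cocycle with image contained in $V$ if and only if $V^{\perp}\subseteq S$. The ``only if'' direction is immediate, since $\rho=\sigma\cdot\Delta F$ with $\sigma$ valued in $V$ gives $\chi\circ\rho=\Delta(\chi\circ F)$ for every $\chi\in V^{\perp}$. For ``if'', note that $V^{\perp}\subseteq S$ says exactly that $\psi\circ(q\circ\rho)$ is a coboundary for every character $\psi$ of $U/V$, where $q:U\to U/V$ is the quotient; by Moore--Schmidt (Theorem \ref{MScob}) this forces $q\circ\rho=\Delta g$ for some measurable $g:X\to U/V$, and composing $g$ with a Borel section of $q$ yields a measurable lift $\tilde g:X\to U$ with $\rho/\Delta\tilde g$ valued in $\ker q=V$. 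Applying the equivalence with $V=V_0$ (legitimate, as $V_0^{\perp}=S$) produces $\sigma$ cohomologous to $\rho$ with image contained in $V_0$; its image must equal $V_0$, for if it lay in a proper closed $W\subsetneq V_0$ then by duality $W^{\perp}\supsetneq S$, contradicting the equivalence applied to $W$. The same remark shows $\sigma$ is minimal: any cocycle cohomologous to it with strictly smaller image $W\subsetneq V_0$ again violates $W^{\perp}\subseteq S$.

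For the second bullet, the forward direction is soft: $X$ is a factor of $X\times_{\rho}U$, so ergodicity descends; and if $\rho$ is not minimal or has image $U_{\rho}\subsetneq U$, then $\rho$ is cohomologous to some $\sigma$ with $U_{\sigma}\subsetneq U$, whence $X\times_{\rho}U\cong X\times_{\sigma}U$ via $(x,u)\mapsto(x,F(x)u)$, and the latter carries the nonconstant invariant function $(x,u)\mapsto u\,U_{\sigma}$. Conversely, suppose $X$ is ergodic and $\rho$ is minimal with image $U$; by the first part $V_0=U$, i.e.\ $S=\{1\}$. Given an invariant $f\in L^{2}(X\times_{\rho}U)$, expand it along the fibre, $f=\sum_{\chi\in\hat U}f_{\chi}\otimes\chi$ with $f_{\chi}\in L^{2}(X)$; invariance gives $f_{\chi}(T_{g}x)\,\chi(\rho(g,x))=f_{\chi}(x)$ for each $\chi$, so $|f_{\chi}|$ is $G$-invariant, hence a.e.\ constant by ergodicity of $X$, and whenever $f_{\chi}\neq0$ the relation exhibits $\chi\circ\rho$ as a coboundary, i.e.\ $\chi\in S$. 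Since $S=\{1\}$, only $f_{1}$ survives, and then $f(x,u)=f_{1}(x)$ with $f_{1}$ $G$-invariant on $X$, hence constant. So $X\times_{\rho}U$ is ergodic.

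The step I expect to be the main obstacle is the ``if'' direction of the key equivalence in the first bullet: it is precisely where one needs the Moore--Schmidt theorem and a measurable selection of a lift $\tilde g$ of the $U/V$-valued coboundary, and it is also where one must ensure that the infimum over the relevant closed subgroups of $U$ is actually attained --- which is exactly what passing to the discrete dual $\hat U$ (where a ``subgroup'' is just a subset closed under the group law, so $(S^{\perp})^{\perp}=S$) buys us. Once this is in place, the Fourier computation and the reduction of ergodicity to the triviality of $S$ are routine.
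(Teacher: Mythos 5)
Your argument is correct, but note that the paper does not actually prove this lemma at all: it is quoted as \cite[Corollary 3.8]{Zim}, so the ``paper's proof'' is Zimmer's, which constructs the minimal (Mackey) cocycle for an arbitrary compact group $U$ via the ergodic decomposition of the skew product $X\times_\rho U$ and the associated Mackey range. Your route is genuinely different and exploits the abelian hypothesis: you encode the whole problem in the obstruction subgroup $S=\{\chi\in\hat U:\chi\circ\rho\in B^1(G,X,S^1)\}$ of the discrete dual, prove the equivalence ``$\rho$ is cohomologous to a $V$-valued cocycle $\iff V^\perp\subseteq S$'' using Theorem \ref{MScob} (the $k=0$ case, i.e.\ Moore--Schmidt) plus a Borel cross-section of $U\to U/V$ (available by Theorem \ref{open}), and then read off both bullets: $V_0=S^\perp$ is the minimal essential image by the order-reversing bijectivity of $\perp$ on closed subgroups, and ergodicity of $X\times_\rho U$ reduces, via the fibrewise Fourier expansion, to the triviality of $S$, which your first part shows is equivalent to minimality with full image. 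What Zimmer's approach buys is generality (no commutativity of $U$ needed, and it identifies the minimal image canonically as the Mackey range); what yours buys is a short, self-contained proof from tools the paper already quotes. One small point to make explicit: your duality steps (e.g.\ $W\subsetneq V_0\Rightarrow W^\perp\supsetneq S$, and the nonconstant invariant function $(x,u)\mapsto\psi(uU_\sigma)$ in the forward direction of the second bullet) require the image $U_\sigma$ to be a \emph{closed} subgroup; the paper's definition of $U_\rho$ as the subgroup generated by the values should be read as the closed subgroup generated (the essential image), which is the standard convention and costs nothing, but deserves a sentence.
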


We describe an obstacle. Suppose that $P\in P_{<2}(X,S^1)$, then $\Delta P$ can be identified with an element in $\widehat G$. If for some $g\in G$ of order $p$ we have that $\Delta_g P\not = 1$ then $\Delta P$ does not have a $p$-th root in $\widehat G$. In particular, it is impossible to find a $p$-th root for $P$ in $P_{<2}(X,S^1)$, even if one passes to an extension of the original system. We deal with this problem later using $k$-extensions (Definition \ref{kext}), but as for now we assume that there is no such obstacle.
\begin{thm} [Roots for phase polynomials in an extension]\label{extension:thm}
Let $X$ be an ergodic $G$-system. Fix $d\geq 1$ and suppose that $P_1,P_2,...$ are at most countably many $(X,S^1)$-phase polynomials of degree $<d$. Let $p_1,p_2,...$ be (not necessarily distinct) prime numbers and assume that for every $i\in\mathbb{N}$, $\Delta_g P_i = 1$ for all $g\in G$ of order $p_i$. Then, there exist a totally disconnected group $\Delta$ and a cocycle $\tau:G\times X\rightarrow \Delta$ of type $<d-1$ such that the extension $Y=X\times_\tau \Delta$ is ergodic and for every $n\in\mathbb{N}$ and $i=1,2,..$ there exist $(G,X,S^1)$-phase polynomials $Q_{i,n}:Y\rightarrow S^1$ of degree $<d$ such that $Q_{i,n}^{p_i^{n}}=P_i\circ \pi$ where $\pi:Y\rightarrow X$ is the factor map.
\end{thm}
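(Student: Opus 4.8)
The plan is to reduce the theorem to one algebraic statement about phase polynomial cocycles and then build the extension by hand.

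\smallskip
\noindent\textbf{Reduction.} For each pair $(i,n)$ fix a measurable $F_{i,n}\colon X\to S^1$ with $F_{i,n}^{p_i^n}=P_i$. Suppose -- this is the key point, taken up below -- that for every $(i,n)$ there is a phase polynomial cocycle $e_{i,n}\in PC_{<d-1}(G,X,S^1)$ with $e_{i,n}^{p_i^n}=\Delta P_i$. Put $\sigma_{i,n}:=e_{i,n}\cdot\overline{\Delta F_{i,n}}$; this is a cocycle of type $<d-1$ (a product of a type $<d-1$ cocycle, by Lemma \ref{PP}, and a coboundary) which is $\mu_{p_i^n}$-valued, since $\sigma_{i,n}^{p_i^n}=\Delta P_i\cdot\overline{\Delta P_i}=1$. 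Let $\Delta_0:=\prod_{(i,n)}\mu_{p_i^n}$, a compact metrizable totally disconnected abelian group, and let $\tau_0:=(\sigma_{i,n})_{(i,n)}\colon G\times X\to\Delta_0$, a cocycle of type $<d-1$. By Lemma \ref{minimal} there is a minimal cocycle $\tau$ cohomologous to $\tau_0$; write $\tau_0=\tau\cdot\Delta b$ with $b\colon X\to\Delta_0$ measurable and let $\Delta\le\Delta_0$ be the (closed, hence totally disconnected) image of $\tau$, so that $Y:=X\times_\tau\Delta$ is ergodic and $\tau$ has type $<d-1$. Let $\psi_{i,n}\colon\Delta_0\to\mu_{p_i^n}\hookrightarrow S^1$ be the $(i,n)$-coordinate and $\pi\colon Y\to X$ the factor map. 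Using $\tau=\tau_0\cdot\overline{\Delta b}$ and $\sigma_{i,n}=e_{i,n}\cdot\overline{\Delta F_{i,n}}$ one checks directly that
\[
Q_{i,n}(x,\delta):=F_{i,n}(x)\cdot\psi_{i,n}(b(x))\cdot\psi_{i,n}(\delta)
\]
satisfies $Q_{i,n}^{p_i^n}=P_i\circ\pi$ and $\Delta_g^{Y}Q_{i,n}=e_{i,n}(g,\cdot)\circ\pi$ for all $g\in G$; since the latter is a phase polynomial of degree $<d-1$ on $Y$, the function $Q_{i,n}$ is a phase polynomial of degree $<d$ on $Y$, as wanted. (For $d=1$ everything is vacuous: the $P_i$ are constants and one takes $\Delta$ trivial.)

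\smallskip
\noindent\textbf{The key lemma.} It remains to show: if $P\colon X\to S^1$ is a phase polynomial of degree $<d$ with $\Delta_gP=1$ for every $g\in G$ of order a given prime $p$, then for every $n$ there is $e\in PC_{<d-1}(G,X,S^1)$ with $e^{p^n}=\Delta P$. I would define $e$ by prescribing its values on the standard basis $E=\{e_1,e_2,\dots\}$ of $G$ and appealing to Proposition \ref{cocycle:prop}. Set $\psi_j:=\Delta_{e_j}P=\Delta P(e_j,\cdot)$, a phase polynomial of degree $\le d-2$; if $e_j$ has order $p$ then $\psi_j=1$ by hypothesis and I put $e(e_j,\cdot):=1$. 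If $e_j$ has order $q_j\ne p$, then expanding $P=T_{e_j}^{q_j}P=\prod_{r\ge0}(\Delta_{e_j}^{r}P)^{\binom{q_j}{r}}$ and iterating (cf.\ the proof of Corollary \ref{ker:cor}, using that $\Delta_{e_j}$ lowers degree together with the line identity $\prod_{k<q_j}T_{e_j}^{k}\psi_j=1$) shows that $\psi_j$ has finite order dividing $q_j^{\,d-1}$, which is prime to $p$; I choose $a_j\in\mathbb{Z}$, depending only on $q_j$, with $a_jp^{n}\equiv1\pmod{q_j^{\,d-1}}$ and put $e(e_j,\cdot):=\psi_j^{a_j}$. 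Each such value is a phase polynomial of degree $\le d-2$, is $G_p$-invariant (since $\Delta_g\psi_j=\Delta_{e_j}(\Delta_gP)=1$ for $g$ of order $p$), satisfies the line identity, and satisfies $e(e_j,\cdot)^{p^{n}}=\psi_j$ exactly. The content of the argument is verifying the compatibility relations $\Delta_{e_j}e(e_{j'},\cdot)=\Delta_{e_{j'}}e(e_j,\cdot)$ required by Proposition \ref{cocycle:prop}: both sides are powers of $\mu_{j,j'}:=\Delta_{e_j}\Delta_{e_{j'}}P$, which (being a $\Delta_{e_j}$-derivative of $\psi_{j'}$ and a $\Delta_{e_{j'}}$-derivative of $\psi_j$) has order dividing both $q_j^{\,d-1}$ and $q_{j'}^{\,d-1}$; hence $\mu_{j,j'}=1$ when $q_j\ne q_{j'}$, while when $q_j=q_{j'}$ the exponents $a_j,a_{j'}$ agree modulo $q_j^{\,d-1}$, so in both cases $\mu_{j,j'}^{a_j}=\mu_{j,j'}^{a_{j'}}$ (the mixed case, one of $e_j,e_{j'}$ of order $p$, being covered by $G_p$-invariance). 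Proposition \ref{cocycle:prop} then yields a cocycle $e$ with these values; it lies in $PC_{<d-1}(G,X,S^1)$ because each $e(g,\cdot)$ is a finite product of translates of the $e(e_j,\cdot)$, hence of degree $\le d-2$; and $e^{p^{n}}$ and $\Delta P$ are cocycles agreeing on $E$, hence equal.

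\smallskip
\noindent\textbf{Main obstacle.} The substantive work is the key lemma, and within it the compatibility check just sketched -- this is exactly where the hypothesis $\Delta_gP_i=1$ for $g$ of order $p_i$ enters: it both trivializes the defining values of the cocycle on the $p_i$-torsion generators and (through $G_{p_i}$-invariance of the derivatives of $P_i$) forces the cross-terms $\mu_{j,j'}$ to be torsion of order prime to $p_i$, so that the explicit roots $\psi_j^{a_j}$ genuinely assemble into a cocycle. Demanding roots for every power $p_i^{n}$ is precisely what forces the fibre group $\Delta$ to be totally disconnected; the type and degree bookkeeping in the reduction, and the passage to a minimal cohomologous cocycle to secure ergodicity of $Y$, are then routine.
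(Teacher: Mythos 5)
Your proposal is correct and is essentially the paper's own argument: on generators of order coprime to $p_i$ you take the power of $\Delta_{e_j}P_i$ by an inverse of $p_i^n$ modulo the order of its values (the paper gets this order bound from Proposition \ref{PPC}), set the value to $1$ on generators of order $p_i$, assemble these into a phase polynomial cocycle root of $\Delta P_i$ via Proposition \ref{cocycle:prop}, extend $X$ by the resulting $\prod C_{p_i^n}$-valued cocycle of type $<d-1$ (passing to a minimal cohomologous cocycle for ergodicity, as in Lemma \ref{minimal}), and write down $Q_{i,n}$ explicitly exactly as the paper does. The only cosmetic deviation is that you verify the commutation relation $\Delta_h e(g,\cdot)=\Delta_g e(h,\cdot)$ only for pairs of generators, while Proposition \ref{cocycle:prop} as stated asks for it on all of $G$; this is repaired in one line by first extending your definition multiplicatively via the decomposition $g=g_p+g'$, as the paper does, after which your order argument gives the identity for all pairs.
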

\begin{proof}
Let $p$ be a prime number. Let $P:X\rightarrow S^1$ be a polynomial of degree $<d$ and assume that $P$ is $T_g$-invariant for every $g\in G$ of order $p$. Let $c(g,x):=\Delta_g P(x)$ and observe that by Proposition \ref{PPC} the phase polynomial $c(g,x)$ takes values in $C_m$ for some $m=\text{order}(g)^{d-1}$. Let $n\in\mathbb{N}$ be a natural number, fix $g\in G$ of order coprime to $p$ and let $m=\text{order}(g)^{d-1}$. Since $p^n$ and $m$ are co-prime we can find a natural number $l_g(n)$ such that $p^n\cdot l_g(n) = 1$ modulo $m$. It follows that the phase polynomial $d_{n}(g,x):=c(g,x)^{l_g(n)}$ is a $p^n$-th root of $c(g,x)$. We extend $d_n$ to $G$ by decomposing every $g\in G$ as $g=g_p+g'$ where $g_p$ is of order $p$ and $g'$ of order co-prime to $p$, and setting $d_n(g,x):=d_n(g',x)$. Our next goal is to replace $d_n$ with a cocycle using Proposition \ref{cocycle:prop}. Observe first that since $d_n(g,\cdot)$ is a power of $c(g,\cdot)$, we have that \begin{equation} \label{linecocycle}
	\prod_{i=0}^{\text{order}(g)-1} d_n(g,T_{g^i}x) = 1.
	\end{equation} Now we claim that for every $g,h\in G$,  
	\begin{equation}\label{commute}
	\frac{\Delta_h d_n(g,x)}{\Delta_g d_n(h,x)}=1.
	\end{equation}
	On one hand, $d_n^{p^n}=c$ and therefore this quotient is of order $p^n$. On the other hand, $d_n(g,x)$ and $d_n(h,x)$ are of order co-prime to $p$, hence the quotient is trivial. Therefore by Proposition \ref{cocycle:prop} there exists a cocycle $\tilde{d}_n:G\times X\rightarrow S^1$ which agrees with $d_n$ on a generating set. Since ${d_n}^{p^n}=c$ and $c$ is a cocycle we conclude that $\tilde{d}_n^{p^n}=c$.\\
	Now, we apply the argument above for each of the polynomials in the theorem. Set $c_i(g,x):=\Delta_g P_i$. We conclude that for every $i,n\in\mathbb{N}$ there exists a phase polynomial cocycle $\tilde{d}_{i,n}$ of degree $<d-1$ such that $\tilde{d}_{i,n}^{p_i^n} = c_i.$ For each $i,n\in\mathbb{N}$ fix a measurable map\footnote{One way to do so is by identifying $S^1$ with $\mathbb{R}/\mathbb{Z}$ and setting $F_{i,n}(x) = \frac{\{P_i(x)\}}{n}$, where $\{\cdot\}$ is the fractional part.} $F_{i,n}:X\rightarrow S^1$ such that $F_{i,n}^{p_i^n}=P_i$ and let $\tau:=(\tilde{d}_{i,n}\cdot \Delta F_{i,n})_{i,n\in\mathbb{N}}$ be a cocycle, $\tau: G\times X \to \prod_{i,n\in\mathbb{N}}C_{p_i^n}$.\\
	
The extension of $X$ by $\tau$ may not be ergodic, so we choose a minimal cocycle $\tau'$ that is $(G,X,\prod_{i,n\in\mathbb{N}}C_{p_i^n})$-cohomologous to $\tau$ (Lemma \ref{minimal}) and write $\tau' = \tau\cdot \Delta F$ for some measurable map $F:X\rightarrow\prod_{i,n\in\mathbb{N}}C_{p_i^n}$. We denote the image of $\tau'$ by $\Delta$ and consider the extension $Y:=X\times_{\tau'}\Delta$. The closed subgroup $\Delta \leq \prod_{i,n\in\mathbb{N}}C_{p_i^n}$ is totally disconnected. Moreover, it follows from the construction that the system $Y$ is ergodic. Finally, since $\tau$ is of type $<d-1$, we conclude that so is $\tau'$. For $i,n\in\mathbb{N}$ let $\pi_{i,n}:\prod_{i,n\in\mathbb{N}}C_{p_i^n}\rightarrow C_{p_i^n}$ be the $(i,n)$-th coordinate map. We conclude that the function $$Q_{i,n}(x,u) := \pi_{i,n}|_{\Delta'}(u)\cdot \overline{F}_{i,n}(x)\cdot \pi_{i,n}\circ \overline{F}(x)$$ is a phase polynomial in $Y$ (whose derivative is $\tilde{d}_{i,n}$) and it satisfies that $Q_{i,n}^{p_i^n} = P_i \circ \pi$, as required.\\
\end{proof}
\begin{rem} \label{inv} Following the same argument as above we have the following generalizations:
	\begin{itemize}
	\item {
If $P$ is a phase polynomial and $\Delta_g P=1$ for every $g\in G$ of order $p$ and of order $q$ then we can adapt the proof and find an $p^nq^m$ root for all $n,m\in\mathbb{N}$. The same goes for multiple primes. }
\item {If instead of $G=\bigoplus_{p\in P}\mathbb{Z}/p\mathbb{Z}$ we take an extension $G^{(l)}= \bigoplus_{p\in P} \mathbb{Z}/p^l\mathbb{Z}$ then the same proof still holds.}
\end{itemize}
Moreover, observe that if $P$ is $T_g$-invariant for some $g\in G$, then by choosing $d_n$ in the proof above with $d_n(g,\cdot)=1$ for these $g$'s, we can construct an $n$-th root of $P$ which is also $T_g$-invariant.
\end{rem}
Now, we want to remove the hypothesis that $\Delta_g P_i = 1$ for every $g\in G$ of some order $p_i$. To do this we use $m$-extensions (see Definition \ref{kext}). We begin with the following definitions.
\begin{defn}[Multi-cocycles]
    Let $m\geq 1$. Let $X$ be an ergodic $G$-system and $U$ a compact abelian group. We say that a function $q:G^m\times X\rightarrow U$ is a multi-cocycle if it is a cocycle in each coordinate. Namely, for every $1\leq i \leq m$, every $g_1,g_2,...,g_m\in G$ and $g_i'\in G$ we have,
    $$q(g_1,...,g_{i-1},g_i\cdot g_i',g_{i+1},...,g_m,x) = q(g_1,...,g_i,...,g_m,x)\cdot q(g_1,...,g_i',...,g_m,T_{g_i}x).$$
    We say that $q$ is symmetric if it is invariant under permutations of coordinates of $G^m$ and we denote by $\text{SMC}_m(G,X,U)$ the group of symmetric multi-cocycles $q:G^m\times X\rightarrow U$.\\
    If the multi-cocycle $q$ is a constant in $x$ then we say that $q$ is multi-linear and denote by $\text{SML}_m(G,U)$ the group of symmetric multi-linear maps $\lambda:G^m\rightarrow U$. 
\end{defn}
We say that a multi-cocycle $q:G^m\times X\rightarrow U$ is a phase polynomial of degree $<r$ if for every $g_1,...,g_m\in G$ the map $x\mapsto q(g_1,...,g_m,x)$ is a phase polynomial of degree $<r$. We have the following result.
\begin{lem} \label{multicocycle}
    Let $X$ be an ergodic $G$ system and let $m,r\in\mathbb{N}$. Let $q\in \text{SMC}_m(G,X,S^1)$ be a phase polynomial of degree $<r$. Then, there exists an $O_{m,r}(1)$-extension $Y$ with factor map $\pi:Y\rightarrow X$ and a phase polynomial $Q$ of degree $<r+m$ such that $q(g_1,...,g_m,\pi(y))=\Delta_{g_1}...\Delta_{g_m} Q(y)$.
\end{lem}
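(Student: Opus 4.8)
The plan is to induct on $m$. When $m=1$ the statement just asks to integrate a phase polynomial cocycle $q\colon G\times X\to S^1$ of degree $<r$, and for this I would pass to the extension $Y=X\times_{q'}\Delta$, where $q'=q\cdot\Delta F$ is a minimal cocycle cohomologous to $q$ (Lemma \ref{minimal}) and $\Delta$ is its image. Then $Y$ is an ergodic $1$-extension of $X$, the function $Q(x,u):=u\cdot\overline{F(x)}$ satisfies $\Delta_g Q=q(g,\cdot)\circ\pi$ for every $g$, and since each $\Delta_gQ$ is a phase polynomial of degree $<r$, $Q$ is a phase polynomial of degree $<r+1$; no $m$-extension is needed at this stage.

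For the inductive step, assume the result for $m-1$ and let $q\in\mathrm{SMC}_m(G,X,S^1)$ have degree $<r$. For each generator $e_i$ of $G$ the slice $q_i:=q(\cdot,\dots,\cdot,e_i,\cdot)$ is a symmetric $(m-1)$-multi-cocycle of degree $<r$, so by the induction hypothesis there is an $O_{m-1,r}(1)$-extension carrying a phase polynomial $Q_i$ of degree $<r+m-1$ with $\Delta_{g_1}\cdots\Delta_{g_{m-1}}Q_i=q(g_1,\dots,g_{m-1},e_i,\cdot)\circ\pi$. After raising all of these to a common exponent and forming a single ergodic extension $Y'$ of $X$ carrying all the $Q_i$ (a relatively independent joining, made ergodic as in the proof of Theorem \ref{extension:thm}), I would aim to produce a phase polynomial $Q$ of degree $<r+m$ with $\Delta_{e_i}Q=Q_i\circ\pi$ for all $i$. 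Granting this, for fixed $g_1,\dots,g_{m-1}$ the maps $g_m\mapsto\Delta_{g_1}\cdots\Delta_{g_m}Q$ and $g_m\mapsto q(g_1,\dots,g_{m-1},g_m,\cdot)$ are cocycles in $g_m$ that agree on the generating set, hence agree identically, which is the desired conclusion. To build $Q$ I would first assemble a cocycle $P$ with $P(e_i,\cdot)=Q_i\circ\pi$ using Proposition \ref{cocycle:prop}, note that each $P(g,\cdot)$ is a product of translates of the $Q_i$ and hence a phase polynomial of degree $<r+m-1$, and then integrate $P$ by the $m=1$ case to obtain $Q$ of degree $<r+m$ with $\Delta Q=P\circ\pi$.

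The delicate point — and the step I expect to be the main obstacle — is verifying the two hypotheses of Proposition \ref{cocycle:prop} for the $Q_i$: the commutation relation $\Delta_{e_j}Q_i=\Delta_{e_i}Q_j$ and the orbit closing-up identity $\prod_k Q_i\circ T_{e_i}^k=1$. Symmetry of $q$ forces $\Delta_{e_j}Q_i$ and $\Delta_{e_i}Q_j$ to have equal $(m-2)$-fold derivatives, and the multi-cocycle identity together with $q(p_ie_i,\,\cdot\,,\dots)=1$ forces $\prod_k Q_i\circ T_{e_i}^k$ to be a phase polynomial of degree $<m-1$; thus both discrepancies are bounded-degree phase polynomials, and correcting each $Q_i$ by a phase polynomial of degree $<m-1$ (which leaves $\Delta^{m-1}Q_i=q_i$ unchanged) should make them trivial. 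Solving for these corrections, however, leads — via Proposition \ref{cocycle:prop} again and coprimality arguments that split off the part of order coprime to $p_i$ — to extracting $p_i$-power roots of bounded-degree phase polynomials, and the obstruction to such roots (a nonzero derivative along an element of order $p_i$, as discussed before Theorem \ref{extension:thm}) need not vanish over $X$. This is exactly where one must pass to an $m$-extension (Definition \ref{kext}): I would apply Theorem \ref{extension:thm} and the generalizations in Remark \ref{inv} (to $G^{(l)}$-actions and to several primes simultaneously) a bounded number of times to reach an $O_{m,r}(1)$-extension on which all the needed roots exist, keeping ergodicity via minimal cocycles. The genuine work lies in this correction step and in bookkeeping the exponent and the phase-polynomial degrees through the induction.
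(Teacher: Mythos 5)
Your base case and your degree bookkeeping match the paper's, but your inductive step reverses the order of operations, and that reversal is where the argument gets stuck. The paper integrates the cocycle variable \emph{first}: for each tuple $(g_1,\dots,g_{m-1})$ it produces a polynomial $Q_{g_1,\dots,g_{m-1}}$ of degree $<r+1$ with $\Delta_g Q_{g_1,\dots,g_{m-1}}=q(g,g_1,\dots,g_{m-1},\cdot)$, corrects this family to a symmetric multi-cocycle in the remaining $m-1$ variables, and only then invokes the induction hypothesis, once, for an $(m-1)$-multi-cocycle of degree $<r+1$. You instead apply the induction hypothesis to the slices $q(\cdot,\dots,\cdot,e_i,\cdot)$ at each generator, obtaining $Q_i$ of degree $<r+m-1$, and then try to glue the $Q_i$ into a cocycle in the last variable via Proposition \ref{cocycle:prop}. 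The decisive difference shows up in the discrepancies to be corrected. In the paper's arrangement one has $\Delta_g\bigl(\prod_{k=0}^{p-1}T_{g_i}^kQ_{g_1,\dots,g_{m-1}}\bigr)=\prod_{k}T_{g_i}^kq(g,\dots)=1$ for every $g$, by the multi-cocycle identity in the $i$-th slot, so by ergodicity the line-condition defect is a \emph{constant} in $S^1$; constants always admit $p$-th roots, so the correction costs nothing. In your arrangement the integrated variables are the other $m-1$, so $\prod_kT_{e_i}^kQ_i$ is only annihilated by $\Delta_{g_1}\cdots\Delta_{g_{m-1}}$: it is a phase polynomial of degree $<m-1$, not a constant, and likewise $\Delta_{e_j}Q_i/\Delta_{e_i}Q_j$ is only a phase polynomial of degree $<m-2$.

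The gap is that you do not actually solve the resulting correction problem. Making the hypothesis of Proposition \ref{cocycle:prop} hold exactly requires finding $R_i\in P_{<m-1}(X,S^1)$ with $\Delta_{e_j}R_i/\Delta_{e_i}R_j=\Delta_{e_i}Q_j/\Delta_{e_j}Q_i$ for \emph{all} pairs $i,j$ simultaneously, compatibly with the line conditions. This is a system of cohomological equations in the polynomial category (an exactness statement for the closed data $b_{ij}=\Delta_{e_j}Q_i/\Delta_{e_i}Q_j$), not merely a root extraction; ``should make them trivial'' is exactly the step that needs a proof, and Theorems \ref{extension:thm} and \ref{extensionlowchar:thm} give you roots of individual polynomials but say nothing about solving such a simultaneous system. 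There are also secondary issues you pass over: you invoke the induction hypothesis countably many times (once per generator) and must assemble the resulting extensions into a single ergodic $O_{m,r}(1)$-extension, and Proposition \ref{cocycle:prop} requires the commutation identity for all pairs $g,h\in G$, so the $Q_i$ must first be extended off the generating set. All of this is avoidable: integrating the first variable and handing the induction hypothesis the corrected $(m-1)$-multi-cocycle of degree $<r+1$ reduces every correction to division by constants and to the normalization $Q''_{g_1,\dots,g_{m-1}}=\prod_pQ''_{g_1^{(p)},\dots,g_{m-1}^{(p)}}$ coming from the order considerations of Proposition \ref{PPC}.
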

\begin{proof}
	We prove the lemma by induction on $m$. For $m=1$, $q$ is a cocycle. Therefore, by Lemma \ref{minimal}, $q$ is cohomologous to a minimal cocycle $\sigma$. Let $V$ be the image of $\sigma$ and consider the extension $Y=X\times_\sigma V$. Arguing in Theorem \ref{extension:thm}, we see that $q$ is a coboundary in $Y$ and the claim follows. Now, let $m\geq 2$ and suppose that the claim holds for all smaller values of $m$. Let $q: G^m\times X\rightarrow S^1$, be a multi-cocycle. Then, for every $g_1,...,g_{m-1}$, the map $g\mapsto p(g,g_1,...,g_{m-1},x)$ is a cocycle. Therefore, as in the case $m=1$ we can find an extension $\tilde{\pi}:\tilde{X}\rightarrow X$ and phase polynomials $Q_{g_1,...,g_m}:\tilde{X}\rightarrow S^1$ such that $q(g,g_1,...,g_{m-1},\tilde{\pi}(x)) = \Delta_g Q_{g_1,...,g_{m-1}}(x)$ for every $g,g_1,...,g_{m-1}\in G$. By choosing the same $Q_{g_1,...,g_{m-1}}$ for any permutation of $g_1,...,g_{m-1}$, we can assume that $(g_1,...,g_{m-1})\mapsto Q_{g_1,...,g_{m-1}}$ is symmetric. It is left to show that we can choose $Q_{g_1,...,g_{m-1}}$ to be a cocycle in every coordinate. 
The fact that $q$ is symmetric implies that
	\begin{equation} \label{commute'}
	\Delta_h Q_{g_1,...,g_{m-1}} = \Delta_{g_i} Q_{g_1,...,g_{i-1},h,g_{i+1},...,g_{m-1}}
	\end{equation}
	for every $1\leq i \leq m-1$ and $h,g_1,...,g_{m-1}\in G$. Observe that by Proposition \ref{PPC} the order of the left hand side is some power of $\text{order}(h)$ and the order of the right hand side some power of $\text{order}(g_i)$. It follows that if one of the $g_i$'s is of order co-prime to $p$, then $Q_{g_1,...,g_{m-1}}$ is invariant with respect to the action of the subgroup $G_p=\{g\in G : pg=0\}$. In particular, if $g_i$ is coprime to $g_j$ then $Q_{g_1,...,g_{m-1}}$ is a constant. By changing the choice of $Q_{g_1,...,g_{m-1}}$ we can assume that $Q_{g_1,...,g_{m-1}} = \prod_{p\in P} Q_{g_1^{(p)},...,g_{m-1}^{(p)}}$ where $g_i^{(p)}$ is the $p$-component of $g_i$ (we note that the infinite product is well defined because all but finitely many $p$-components are trivial).\\
	Suppose now that all $g_1,...,g_{m-1}$ are of order $p$. Then, since $q$ is a multi-cocycle, for every $1\leq i \leq k$ we get that 
	\begin{equation} \label{linep}
	    \prod_{k=0}^{p-1} T_{g_i}^k Q_{g_1,...,g_{m-1}} = c_p(g_1,...,g_{m-1})
	\end{equation}
	where $c_p(g_1,...,g_{m-1})$ is symmetric and independent of $i$. Let $c_p'(g_1,...,g_{m-1})$ be a $p$-th root of $c_p(g_1,...,g_{m-1})$. By picking the same root for all permutations of $g_1,...,g_{m-1}$ we can assume that $c'_p$ is symmetric. Let $Q'_{g_1,...,g_{m-1}} = Q_{g_1,...,g_{m-1}}/c_p'(g_1,...,g_{m-1})$ whenever $g_1,...,g_{m-1}$ are of order $p$. Then, for every $1\leq i \leq m-1$
	\begin{equation} \label{linecocycle'}
	\prod_{k=0}^{p-1} T_{g_i}^k Q'_{g_1,...,g_{m-1}}=1.
	\end{equation}
	Now set $Q''_{g_1,...,g_{m-1}} = \prod_{p\in P} Q'_{g_1^{(p)},...,g_{m-1}^{(p)}}$. By Proposition \ref{cocycle:prop} (applied for all coordinates), there exists a symmetric multi-cocycle $Q''_{g_1,...,g_{m-1}}$ that agrees with $Q'_{g_1,...,g_{m-1}}$ whenever $g_1,...,g_{m-1}$ are elements in a basis of $G$.\\ Since $Q''_{g_1,...,g_{m-1}}$ is symmetric, it is a cocycle in every coordinate. In other words $Q''\in SML_{m-1}(G,X,S^1)$ is a phase polynomial of degree $<r+1$. Therefore, by the induction hypothesis we can find an $O_{m,r}(1)$-extension $\pi:Y\rightarrow \tilde{X}$ and a phase polynomial $Q:Y\rightarrow S^1$ of degree $<r+m$ such that $Q''_{g_1,...,g_{m-1}}(\pi(y)) = \Delta_{g_1}...\Delta_{g_{m-1}}Q(y)$. Observe moreover that since $q$ is a multi-cocycle, we have that $\Delta_g Q''_{g_1,...,g_{m-1}} = \Delta_g Q_{g_1,...,g_{m-1}}$ for every $g_1\in G$. Therefore, $Q''_{g_1,...,g_{m-1}}/Q_{g_1,...,g_{m-1}}$ is a constant and we conclude that for every $g\in G$ and every $g_1,g_2,...,g_{m-1}$ in a basis of $G$, we have that 
	$$q(g,g_1,...,g_{m-1},\tilde{\pi}(\pi(y))) = \Delta_g \Delta_{g_1}...\Delta_{g_{m-1}} Q(y)$$ since $q$ is a cocycle in each coordinate, the same holds if the generators $g_1,g_2,...,g_{m-1}$ are replaced with any elements of $G$. This completes the proof.
\end{proof}
We can finally prove the desired result.
\begin{thm} [Roots for phase polynomials in a $k$-extension]\label{extensionlowchar:thm}
	Let $X$ be an ergodic $G$-system. Let $P_1,P_2,...$ be at most countably many $(G,X,S^1)$-phase polynomials of degree $<m$ and let $p_1,p_2,...$ be prime numbers. Then, for every natural number $l$ there exists an $O_{m,l}(1)$-extension $Y$ with the following property: for every $n=p_1^{n_1}\cdot p_2^{n_2}\cdot...\cdot p_j^{n_j}$ where $j\in\mathbb{N}$ and  $n_1,n_2,...n_j\leq l$, there exist $(G^{(O_{m,l}(1))},Y,S^1)$ phase polynomials $Q_{i,n}:Y\rightarrow S^1$ of degree $<m$ such that $Q_{i,n}^{n} = P_i\circ \pi$, where $\pi:Y\rightarrow X$ is the factor map.
\end{thm}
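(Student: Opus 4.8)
The plan is to argue by induction on $m$, with Lemma~\ref{multicocycle} and the remark following Theorem~\ref{extension:thm} --- which lets one run every earlier result over a group $G^{(j)}$ in place of $G$ --- as the main inputs. For $m=1$ every $P_i$ is $T_g$-invariant, hence constant by ergodicity, and a constant has an $n$-th root in $S^1$ that is again constant, so $Y=X$ works. Assume $m\ge 2$ and that the statement holds for $m-1$. For $n=q_1^{n_1}\cdots q_j^{n_j}$ with $n_\nu\le l$, put $a_\nu:=n/q_\nu^{n_\nu}$; then $\gcd(a_1,\dots,a_j)=1$, so $\sum_\nu b_\nu a_\nu=1$ for some integers $b_\nu$, and if $\theta^{(q_\nu)}_i$ is a $q_\nu^l$-th root of $P_i\circ\pi$ for each $\nu$ then $\prod_\nu\bigl((\theta^{(q_\nu)}_i)^{q_\nu^{\,l-n_\nu}}\bigr)^{b_\nu}$ is an $n$-th root of $P_i\circ\pi$. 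Hence it suffices to produce, on one $O_{m,l}(1)$-extension of $X$ and with uniformly bounded degree, a $q^l$-th root of $P_i\circ\pi$ for every $i$ and every prime $q$ occurring in $P$.

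First, fix $k=O_{m,l}(1)$ large enough (it suffices that $k\ge l+1$ and that $k$ dominates the extension degrees produced below), pass to the $G^{(k)}$-action on $X$, and view each $P_i$ as a phase polynomial of degree $<m$ for this action. Let $\lambda_i(g_1,\dots,g_{m-1}):=\Delta_{g_1}\cdots\Delta_{g_{m-1}}P_i$ be the top derivative; as $P_i$ has degree $<m$ this is a symmetric multilinear map $\lambda_i\in\text{SML}_{m-1}(G^{(k)},S^1)$, constant in $x$ by ergodicity, and it is pulled back from $G$ along $\varphi_k$. Hence $\lambda_i$ vanishes on tuples mixing distinct primes and, on a tuple of generators of a single $q$-component, takes values of order dividing $q$. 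Since $k\ge l+1$ the $q$-component of $G^{(k)}$ is $\mathbb{Z}/q^k\mathbb{Z}$, leaving room to choose, for each prime $q$ and each $i$, a symmetric multilinear map $\mu^{(q)}_i\in\text{SML}_{m-1}(G^{(k)},S^1)$ with $(\mu^{(q)}_i)^{q^l}=\lambda_i$: on each $q$-primary block one selects $q^l$-th roots (of order dividing $q^{l+1}\mid q^k$) of the values of $\lambda_i$ on multisets of generators, symmetrically, and extends multilinearly, setting $\mu^{(q)}_i$ trivial on the other blocks. This is the one place where passing from $G$ to $G^{(k)}$ is forced: over $G$ a nonzero multilinear form is $q$-torsion in its $q$-component, hence has no $q$-power root among multilinear forms.

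Each $\mu^{(q)}_i$ is a symmetric multi-cocycle constant in $x$, i.e.\ a phase polynomial of degree $<1$ in the sense preceding Lemma~\ref{multicocycle}. Applying that lemma over $G^{(k)}$ to the single product multi-cocycle $(\mu^{(q)}_i)_{i,q}$ valued in $\prod_{i,q}S^1$ (its proof carries over to compact abelian targets) yields an $O_m(1)$-extension $\pi_1\colon Y_1\to X$ and phase polynomials $R^{(q)}_i\colon Y_1\to S^1$ of degree $<m$ with $\Delta_{g_1}\cdots\Delta_{g_{m-1}}R^{(q)}_i=\mu^{(q)}_i$. Put $\widetilde P^{(q)}_i:=(P_i\circ\pi_1)/(R^{(q)}_i)^{q^l}$ on $Y_1$; its top derivative is $\lambda_i\cdot(\mu^{(q)}_i)^{-q^l}=1$, so $\widetilde P^{(q)}_i$ has degree $<m-1$. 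The inductive hypothesis for $m-1$, applied over $G^{(k)}$ to the countable family $\{\widetilde P^{(q)}_i\}_{i,q}$ with the same $l$, gives an $O_{m-1,l}(1)$-extension $\pi_2\colon Y_2\to Y_1$ and phase polynomials $S^{(q)}_i\colon Y_2\to S^1$ of degree $O_{m-1,l}(1)$ with $(S^{(q)}_i)^{q^l}=\widetilde P^{(q)}_i\circ\pi_2$. Then $T^{(q)}_i:=(R^{(q)}_i\circ\pi_2)\cdot S^{(q)}_i$ is a phase polynomial of degree $O_{m,l}(1)$ on $Y_2$ with $(T^{(q)}_i)^{q^l}=P_i\circ\pi_1\circ\pi_2$; feeding the $T^{(q)}_i$ into the recombination of the first paragraph produces the required $Q_{i,n}$, all of degree $<d$ for a fixed $d=O_{m,l}(1)$ on $Y:=Y_2$, which is an $O_{m,l}(1)$-extension of $X$ since only $O(m)$ extension steps of bounded degree were taken.

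I expect the crux to be precisely the passage through the top derivative: isolating the obstruction to a $q^l$-th root of $P_i$ in the multilinear datum $\lambda_i$, noting that this obstruction is real over $G$ but disappears over $G^{(k)}$ once $k\gtrsim l$, and then using Lemma~\ref{multicocycle} to realise a chosen root $\mu^{(q)}_i$ as the top derivative of an honest phase polynomial $R^{(q)}_i$ on an extension --- after which division removes the top degree and the induction on $m$ closes. The remaining bookkeeping --- treating all countably many $P_i$ and all primes at once (via the product multi-cocycle and a single invocation of the inductive hypothesis), keeping all extension degrees uniformly $O_{m,l}(1)$, and using that products and powers stay in a bounded degree class (Lemma~\ref{PP}) --- is routine. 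One could alternatively build the induction on top of Theorem~\ref{extension:thm}, using it for the part of $\lambda_i$ supported on elements of order prime to $q$ and the $k$-extension trick only for the order-$q$ part; the argument above folds both into one mechanism.
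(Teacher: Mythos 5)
Your proposal follows essentially the same route as the paper's proof: induct on the degree $m$, observe that the top derivative $\lambda_i=\Delta_{g_1}\cdots\Delta_{g_{m-1}}P_i$ is a symmetric multilinear form (constant by ergodicity), lift it to the extended group $G^{(k)}$ where a root exists among multilinear forms, realise that root as the top derivative of an honest phase polynomial on an extension via Lemma \ref{multicocycle}, divide to drop the degree, and close with the induction hypothesis applied to all $P_i$ simultaneously; your Bézout recombination of prime-power roots and your explicit requirement $k\geq l+1$ are only cosmetic refinements of this. The one local slip is in the construction of $\mu^{(q)}_i$: you set it \emph{trivial} on the primary blocks of primes $q'\neq q$, but then $(\mu^{(q)}_i)^{q^l}=\lambda_i$ fails there whenever $\lambda_i$ is nontrivial on such a block, and consequently $\widetilde P^{(q)}_i=(P_i\circ\pi_1)/(R^{(q)}_i)^{q^l}$ does not drop in degree (its top derivative is still $\lambda_i$ on those blocks). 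The fix is the same coprimality trick used in Theorem \ref{extension:thm}: on a $q'$-block the values of $\lambda_i$ have order dividing $q'$, which is coprime to $q^l$, so one sets $\mu^{(q)}_i=\lambda_i^{t_{q'}}$ there with $t_{q'}q^l\equiv 1 \pmod{q'}$; this stays multilinear and restores $(\mu^{(q)}_i)^{q^l}=\lambda_i$ globally, after which your argument goes through exactly as in the paper.
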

\begin{proof}
	We prove the theorem by induction on $m$. If $m=1$, then by ergodicity $P_1,P_2,...$ are constants and the claim follows without extensions. Fix $m\geq 1$ and assume inductively that the claim holds for this $m$ and let $P$ be a phase polynomial of degree $<m+1$. For every $g_1,...,g_m\in G$ we have that $$c(g_1,...,g_m):=\Delta_{g_1}...\Delta_{g_m} P$$ is a symmetric multi-linear map. Let $\tilde{G}=G^{(l)}$ be the extension of $G$. We can lift $c$ to an element in $\text{SML}_m(\tilde{G},S^1)$. Once lifted, we can find $d\in \text{SML}_m(\tilde{G},S^1)$ with $d^n = c$. By the previous lemma applied to $d$ there exists an extension $\pi:Y\rightarrow X$ and a phase polynomial $Q$, such that $\Delta_{g_1}...\Delta_{g_m} Q^n = \Delta_{g_1}...\Delta {g_m} P\circ\pi$ and so by ergodicity $P\circ \pi/Q^n$ is of degree $<m-1$ and the claim for this $P$ follows by induction hypothesis. The same argument holds if applied to all $P_1,P_2,...$ simultaneously, as required.
\end{proof}
The results in Theorem \ref{extension:thm} and Theorem \ref{extensionlowchar:thm} require to extend $X$ by a zero dimensional group multiple times. For this reason it will be convenient to use the following definition.
\begin{defn} [Zero dimensional extension]
	Let $X$ be an ergodic $G$-system. We say that an extension $Y$ is a zero dimensional extension of $X$ if there exists finitely many zero dimensional groups $\Delta_1,...,\Delta_n$ such that $Y=\left((X\times_{\rho_1} \Delta_1) \times_{\rho_2} \Delta_2\times...\right)\times_{\rho_n} \Delta_n$ for some cocycles $\rho_1,...,\rho_n$. We say that a zero dimensional extension is of exponent $l$ if $\Delta_1,...,\Delta_n$ are of exponent $l$.
\end{defn}
We note that by the Mackey-Zimmer theory, $Y$ can be written as a single extension of $X$ by a zero dimensional group, but we do not use this here.\\
Below we prove various corollaries of Theorem \ref{extension:thm} and Theorem \ref{extensionlowchar:thm}. We begin with the following important lemma which allows us to reduce any Conze-Lesigne type equation to the torus.
\begin{lem} \label{fixCL:lem}
       Let $l,m\geq 1$, $X$ be an ergodic $G$-system and $U$ be a finite dimensional compact abelian group of exponent $l$. Let $\rho:G\times X\rightarrow U$ a cocycle of type $<m$ and suppose that for every $\chi\in\widehat U$ the cocycle $\chi\circ\rho$ is $(G,X,S^1)$-cohomologous to a phase polynomial of degree $<m$. Then, there exists some $r=O_{m,l}(1)$ and a zero dimensional $r$-extension $\pi:Y\rightarrow X$ such that $\rho\circ \pi$ is $(G^{(r)},Y,U)$-cohomologous to a phase polynomial of degree $<m$. Moreover, the extension $Y$ is independent of $\rho$.
\end{lem}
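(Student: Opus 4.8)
\emph{Overview and Step 1 (structure of $\hat U$, reduction by duality).} The plan is to reduce, via Pontryagin duality, the assertion ``$\rho\circ\pi$ is cohomologous to a phase polynomial cocycle of degree $<m$ into $U$'' to the existence of a compatible family of such cohomologies for the individual characters $\chi\in\hat U$, and then to kill the only genuine obstruction — extraction of prime-power roots of phase polynomials — by passing to the extension furnished by Theorem \ref{extensionlowchar:thm}. First I would record the structure of $\hat U$: since $U$ is finite dimensional of exponent $l$, Proposition \ref{FD:prop} gives a short exact sequence $1\to\Delta\to U\to (S^1)^n\to 1$ with $n=\dim U$ and $\Delta$ totally disconnected of exponent $l$; dualizing, $\hat U$ sits in $0\to\mathbb{Z}^n\to\hat U\to\hat\Delta\to 0$ with $\hat\Delta$ a torsion group in which every element has order dividing $p^l$ for some prime $p$. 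I would then present $\hat U$ by a countable generating set consisting of a free basis $\chi_1,\dots,\chi_n$ of the subgroup $\mathbb{Z}^n$ together with \emph{root chains}: for each prime $p$ and each cyclic summand of the $p$-part of $\hat\Delta$, a sequence $w^{(0)},\dots,w^{(d)}$ with $d\le l$, $w^{(0)}\in\langle\chi_1,\dots,\chi_n\rangle$ and $(w^{(j)})^p=w^{(j-1)}$, and no further relations. By Pontryagin duality, exhibiting $\rho\circ\pi$ as $(G^{(r)},Y,U)$-cohomologous to a phase polynomial cocycle of degree $<m$ is equivalent to producing homomorphisms $\chi\mapsto p_\chi$ from $\hat U$ into $PC_{<m}(G^{(r)},Y,S^1)$ and $\chi\mapsto F_\chi$ from $\hat U$ into $\mathcal M(Y,S^1)$ with $\chi\circ\rho\circ\pi=p_\chi\cdot\Delta F_\chi$; because the only relations are the chain relations, it suffices to specify this data on the generators with $(p_{w^{(j)}})^p=p_{w^{(j-1)}}$ and $(F_{w^{(j)}})^p=F_{w^{(j-1)}}$. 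I also note that once the $p_\chi$ are fixed the $F_\chi$ may first be chosen only projectively: the resulting symmetric $2$-cocycle $\hat U\times\hat U\to S^1$ is a coboundary because $S^1$ is divisible ($\mathrm{Ext}^1(\hat U,S^1)=0$), so a constant correction turns $\chi\mapsto F_\chi$ into a genuine homomorphism.

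\emph{Step 2 (building $Y$, independently of $\rho$).} I would construct the extension before $\rho$ enters the picture, which is what gives the ``moreover'' clause. For each $0\le j<m$ the phase polynomials of degree $<j+1$ on $X$ form a separable group; cover each such group by countably many balls of radius below the separation threshold of Lemma \ref{sep2:lem}, and let $\{P_i\}_{i\in\mathbb{N}}$ be the centres, regarded as phase polynomial cocycles through their values on a basis of $G$ (Proposition \ref{cocycle:prop}). Applying Theorem \ref{extensionlowchar:thm}, together with Remark \ref{inv} to handle all the relevant primes and all powers up to $l$ at once, to the family $\{P_i\}$ yields a zero dimensional $r$-extension $\pi:Y\to X$ with $r=O_{m,l}(1)$ on which every $P_i\circ\pi$ has a phase polynomial root of the same degree, of every order that is a product of the relevant primes to exponents $\le l$. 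Thus $Y$ depends only on $X$, $m$ and $l$.

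\emph{Step 3 (the data for a given $\rho$).} Now, given $\rho$, the hypothesis pulls back to $Y$, so for the free generators I would choose $p_{\chi_i}\in PC_{<m}(G^{(r)},Y,S^1)$ and $F_{\chi_i}\in\mathcal M(Y,S^1)$ with $\chi_i\circ\rho\circ\pi=p_{\chi_i}\cdot\Delta F_{\chi_i}$, and extend multiplicatively over $\mathbb{Z}^n$. Along each chain I proceed by induction: given $p_{w^{(j-1)}}$, I write it as $P_i\circ\pi$ times a phase polynomial of degree $<m$ of $L^2$-norm below the separation threshold; Lemma \ref{sep2:lem} forces that error term to have strictly smaller degree, so a $p$-th root of it is obtained by the same device one degree lower (bottoming out at constants), while a $p$-th root of $P_i\circ\pi$ comes from $Y$; the product is the desired $p$-th root $p_{w^{(j)}}$ of $p_{w^{(j-1)}}$, modulo a phase polynomial coboundary (which is all that is needed). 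I take $F_{w^{(j)}}$ to be a measurable $p$-th root of $F_{w^{(j-1)}}$. The discrepancy $\theta:=w^{(j)}\circ\rho\circ\pi/(p_{w^{(j)}}\cdot\Delta F_{w^{(j)}})$ then has $\theta^p=1$, is of type $<m$, and by the hypothesis on $Y$ is cohomologous to a phase polynomial of degree $<m$; Lemma \ref{Clroot:lem} applied with the finite group $\mu_p$ lets me absorb $\theta$ into a correction of $F_{w^{(j)}}$ by a degree $<m$ phase polynomial, so that $w^{(j)}\circ\rho\circ\pi=p_{w^{(j)}}\cdot\Delta F_{w^{(j)}}$ holds exactly. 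Assembling $\chi\mapsto p_\chi$ and $\chi\mapsto F_\chi$ and dualizing produces $\rho'$ and completes the argument.

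\emph{Main obstacle.} I expect the delicate part to be Step 3: arranging the coboundary bookkeeping so that the chosen prime-power roots of phase polynomials, the measurable roots of the $F$'s, and the $\mu_p$-valued discrepancies fit together into honest — not merely projective — homomorphisms out of $\hat U$, all while keeping every phase polynomial of degree $<m$ and the number of root extractions (hence the exponent $r$) bounded by $O_{m,l}(1)$ uniformly in $\rho$ and in $\dim U$; the latter is possible only because roots are needed solely along the torsion part of $\hat U$, whose chains have length $\le l$, and Theorem \ref{extensionlowchar:thm} disposes of all the primes in a single extension.
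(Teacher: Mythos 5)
Your overall strategy (dualize, present $\hat U$ by generators and prime-power relations, and use the extension theorems of Section \ref{ext:sec} to extract the needed roots on a $\rho$-independent extension) is the same as the paper's, but two steps as written have genuine gaps. First, your structural claim in Step 1 is false: Definition \ref{exponent} only supplies a totally disconnected $\Delta$ of exponent $l$ with $U/\Delta$ a \emph{Lie group}, i.e.\ a torus times a finite group, while Proposition \ref{FD:prop} supplies a (different) subgroup with exact torus quotient over which you have no exponent control. For example $U=S^1\times C_{p^{100}}$ has exponent $1$ (take $\Delta$ trivial), yet every closed totally disconnected $\Delta'$ with $U/\Delta'\cong S^1$ contains an element of order $p^{100}$. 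Consequently your root chains need not have length $\le l$: their length is governed by the finite part of the Lie quotient, which is not bounded in terms of $l$ and $m$, so the "powers up to $l$" in Step 2 do not suffice and the claimed bound $r=O_{m,l}(1)$ collapses. The paper avoids exactly this by splitting the torsion characters of the Lie quotient according to their order: when the order $p^n$ is large compared to $m$, Proposition \ref{PPC} makes the relevant polynomial invariant under every $g$ of order $p$, so Theorem \ref{extension:thm} supplies all $p$-power roots with no $k$-extension cost, and only the bounded-order characters and the relations coming from $\Delta$ (which genuinely have exponent $\le l$) are sent to Theorem \ref{extensionlowchar:thm}. Your proposal has no analogue of this case split.

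Second, the absorption step in Step 3 — which is the actual crux of the lemma — is not handled by a correct tool. Since the relations $(p_{w^{(j)}})^p=p_{w^{(j-1)}}$ and $(F_{w^{(j)}})^p=F_{w^{(j-1)}}$ are already fixed, any correction of $(p_{w^{(j)}},F_{w^{(j)}})$ must be $p$-torsion valued; so you need to write the $C_p$-valued cocycle $\theta$ as a product of a $C_p$-valued phase polynomial of degree $<m$ and the coboundary of a $C_p$-valued function. This upgrade from $S^1$-valued to torsion-valued cohomology is precisely the nontrivial content of the lemma, and Lemma \ref{Clroot:lem} (a statement about $\Delta_u\rho^n$ for automorphisms $u$) does not provide it. The mechanism that works — the first step of the paper's proof — is: from $\theta=p'\cdot\Delta F'$ and $\theta^p=1$ one gets that $F'^{\,p}$ is a phase polynomial of degree $<m+1$; extracting a $p$-th root $Q$ of $F'^{\,p}$ on a further (zero dimensional) extension and replacing $(p',F')$ by $(p'\cdot\Delta Q,\,F'/Q)$ makes both factors $C_p$-valued. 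Note that this forces you to provide roots of polynomials of degree $<m+1$, whereas your universal extension in Step 2 only covers degree $<m$; note also the internal inconsistency that your roots of the $p_{w^{(j)}}$'s hold only "modulo a coboundary", which is incompatible with the asserted identity $\theta^p=1$ and with the exact homomorphism relations needed for the Pontryagin duality step (and the smallness argument there uses Lemma \ref{sep:lem}, not Lemma \ref{sep2:lem}). The paper's proof sidesteps root extraction on the cocycle side entirely by always taking roots of the $F$-side (functions $X\to S^1$, which is what Theorems \ref{extension:thm} and \ref{extensionlowchar:thm} deliver) and correcting $P_\chi$ by $\Delta Q$; reorganizing your Step 3 in that way, together with the case split above, is what makes the argument and the $O_{m,l}(1)$ bound go through.
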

\begin{proof}
	Let $\Delta$ be a zero dimensional subgroup of exponent $l$ such that $U/\Delta\cong (S^1)^\alpha\times \prod_{i=1}^\beta C_{p_i^{\beta}}$ is a Lie group (i.e. $\alpha,\beta$ are finite natural numbers). We denote by $\chi_1,...,\chi_n$ the lifts of the coordinate maps of $U/\Delta$ to $\widehat U$.\\
	
	By assumption, for every $\chi\in \widehat U$ there exists a phase polynomial $P_\chi$ and a measurable map $F_\chi$ such that 
	\begin{equation} \label{CL}\chi\circ\rho = P_\chi \cdot \Delta F_\chi.
	\end{equation}
	Our goal is to find a choice of $P_\chi$ and $F_\chi$ such that $\chi\mapsto P_\chi$ and $\chi\mapsto F_\chi$ are homomorphisms. Recall that the dual group of $U/\Delta$ takes the form $\mathbb{Z}^\alpha \oplus \mathbb{Z}/p_1^{n_1}\mathbb{Z}\oplus...\oplus \mathbb{Z}/p_\beta^{n_\beta}\mathbb{Z}$. As a first step we show that for every prime $p$ and a natural number $n$, if $\chi:U/\Delta\rightarrow S^1$ is a character of order $p^n$ then we can replace $P_\chi,F_\chi$ such that equation (\ref{CL}) holds for the new replacements and at the same time $P_
	\chi$ and $F_\chi$ takes values in $C_{p^n}$.\\ From Equation (\ref{CL}) and the fact that $\chi$ is of order $p^n$, it follows that $\Delta F_{\chi}^{p^n} = \overline{P_{\chi}} ^{p^n}$. In particular, $F_{\chi}^{p^n}$ is a phase polynomial of degree $<m+1$. If $n$ is sufficiently large with respect to $m$ (Proposition \ref{PPC}) then $P_{\chi}^{p^n}(g,\cdot)$ is trivial for every $g\in G$ of order $p$. For such $n$ apply Theorem \ref{extension:thm}. Otherwise, $n=O_m(1)$ and we apply Theorem \ref{extensionlowchar:thm}. We conclude that there exists a zero dimensional $O_m(1)$-extension $\pi_1:\tilde{X}\rightarrow X$ and a phase polynomial $Q_{\chi} :\tilde{X}\rightarrow S^1$ of degree $<m+1$ such that $Q_{\chi}^{p^n}= F_{\chi}^{p^n}\circ \pi_1$. Now, we replace $F_{\chi}$ with $F_{\chi}\circ\pi_1/Q_{\chi}$ and $P_{\chi}$ with $P_{\chi}\circ \pi_1\cdot \Delta Q_{\chi}$. We can therefore assume that 
	\begin{equation} \label{CL2}\chi\circ (\rho \circ \pi_1) = P_{\chi} \cdot \Delta F_{\chi}
	\end{equation} and at the same time $F_{\chi},P_{\chi}$ takes values in $C_{p^n}$, as desired.\\
	We conclude that there exist homomorphisms $\chi\mapsto P_\chi$ and $\chi\mapsto F_\chi$ from the dual of $U/\Delta$ to $P_{<m}(G,\tilde{X},S^1)$ and $\mathcal{M}(\tilde{X},S^1)$, respectively such that equation (\ref{CL2}) holds.\\ Our next step is to extend these homomorphisms to $\widehat U$. Since $\Delta$ is of exponent $l$, by Theorem \ref{torsion} there exists a countable multiset of primes $I=\{p_1,p_2,...\}$ such that $\Delta = \prod_{i\in\mathbb{N}} C_{p_i^{n_i}}$ where $n_i\leq l$ for every $i$. Let $\tau_1,\tau_2,...\in\widehat \Delta$ be the coordinate maps. Using the Pontryagin duality, we lift each of the $\tau_i$'s to $\widehat U$ arbitrarily. Abusing notation, we denote the lifts of these characters by $
	\tau_1,\tau_2,...$ as well. Observe that $\tau_1,\tau_2,...$ and $\chi_1,...,\chi_n$ form a generating set of $\widehat U$. Therefore, in order to extend the homomorphisms above to $\widehat U$ it is left to work out the relations of the form $\tau_i^{p_i^{n_i}} = \chi_1^{l_1}...\chi_n^{l_n}$ where $n_i\leq l$ is as before and $l_1,...,l_n$ are natural numbers. \\
	Fix some $\tau:U\rightarrow S^1$ as above and suppose that $\tau^{p^r}=\chi_1^{l_1}\cdot...\cdot\chi_n^{l_n}$, for some prime $p$ and a natural number $r\leq l$. Equation (\ref{CL2}) implies, $$P_{\tau} ^{p^{r}} \cdot \Delta F_{\tau}^{{p}^{r}} = \prod_{i=1}^n \chi_i^{l_i} P_{\chi_i}^{l_i}\cdot \Delta F_{\tau_i}^{l_i}$$ 
	and we conclude that $$\frac{F_{\tau}^{p^{r}}}{\prod_{i=1}^nF_{\chi_i}^{l_i}}$$ is a phase polynomial of degree $<m+1$.\\
	Therefore by Theorem \ref{extensionlowchar:thm} there exists a zero dimensional $O_l(1)$-extension $\pi_2:Y\rightarrow \tilde{X}$ of $\tilde{X}$ (which is independent of $\tau$) of exponent $l$ and a phase polynomial $R_{\tau}:Y\rightarrow S^1$ such that $R_{\tau}^{p^{r}} = \left( \frac{F_{\tau}^{p^{r}}}{\prod_{i=1}^nF_{\chi_i}^{l_i}} \right)\circ \pi_2$.
	Now, we replace $F_{\tau}$ with $\frac{F_{\tau}\circ \pi_2}{R_{\tau}}$ and $P_{\tau}$ with $P_{\tau}\circ \pi_2\cdot \Delta R_{\tau}$ and we lift the other polynomials to $Y$ as well. After the replacement, we have that $P_{\tau}^{p^{r}}= \prod_{i=1}^{n} P_{\chi_i}^{l_i}$. This means that we can find homomorphisms $\chi\mapsto P_\chi$ and $\chi\mapsto F_\chi$ from $\widehat U$ to $P_{<m}(G,Y,S^1)$ and $\mathcal{M}(Y,S^1)$ respectively, such that (\ref{CL}) holds. Using the Pontryagin duality, we see that there exists a phase polynomial $P\in P_{<m}(G,Y,U)$ and a measurable map $F\in\mathcal{M}(Y,U)$ such that $\rho\circ \pi_2 = P\cdot \Delta F$. This completes the proof. 
\end{proof}
In a similar manner we have the following result.
\begin{lem} \label{polylift}
    Let $l,m\geq 1$ be natural numbers, let $X$ be an ergodic $G$-system and $U$ be a compact abelian group. Let $\varphi:V\rightarrow U$ be a surjective homomorphism from a compact abelian group $V$ onto $U$ and suppose that the kernel of $\varphi$ is a totally disconnected group of exponent $l$. Then, there exists an $O_{m,l}(1)$-extension $\pi:Y\rightarrow X$ with the property that for every phase polynomial $p:X\rightarrow U$ of degree $<m$ there exists a phase polynomial $\tilde{p}:Y\rightarrow V$ such that $\varphi\circ\tilde{p}=p\circ \pi$.
 \end{lem}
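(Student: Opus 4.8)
The plan is to follow the proof of Lemma~\ref{fixCL:lem}, with the Conze--Lesigne equation there replaced by the tautological factorisation of $p$ through the characters of $U$, and with the required roots supplied by Theorem~\ref{extensionlowchar:thm}.

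First I would dualise the data. Put $\Delta:=\ker\varphi$; by hypothesis $\Delta$ is totally disconnected of exponent $l$, so by Theorem~\ref{torsion} we may write $\Delta\cong\prod_i C_{p_i^{n_i}}$ with $n_i\le l$, whence $\hat\Delta\cong\bigoplus_i C_{p_i^{n_i}}$. Pontryagin duality turns $1\to\Delta\to V\to U\to1$ into a short exact sequence $0\to\hat U\to\hat V\to\hat\Delta\to0$ of countable discrete abelian groups. Fix lifts $\tau_i\in\hat V$ of the standard generators of $\hat\Delta$. Comparing images in $\hat\Delta$ (an element of $\hat V$ killed there with exponents reduced below the $p_i^{n_i}$ must already lie in $\hat U$) shows that $\hat V$ is generated by $\hat U\cup\{\tau_i\}$ subject only to the relations holding in $\hat U$ together with relations $\tau_i^{p_i^{n_i}}=\psi_i$ for suitable $\psi_i\in\hat U$ --- the analogue here of the relations ``$\tau_i^{p_i^{n_i}}=\chi_1^{l_1}\cdots\chi_n^{l_n}$'' in the proof of Lemma~\ref{fixCL:lem}.

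Next I would rephrase the conclusion via duality for measurable maps: a measurable $\tilde p:Y\to V$ with $\varphi\circ\tilde p=p\circ\pi$ is the same thing as a homomorphism $\Theta:\hat V\to\mathcal{M}(Y,S^1)$ whose restriction to $\hat U$ is $\psi\mapsto(\psi\circ p)\circ\pi$, and $\tilde p$ is a phase polynomial precisely when $\Theta$ takes values in phase polynomials of bounded degree. By the presentation of $\hat V$ above, producing such a $\Theta$ amounts to choosing, for each $i$, an element $\Theta(\tau_i)\in P_{<m}(Y,S^1)$ with $\Theta(\tau_i)^{p_i^{n_i}}$ equal to $(\psi_i\circ p)\circ\pi$ up to a constant; that is, to extracting a $p_i^{n_i}$-th root of the phase polynomial $\psi_i\circ p\in P_{<m}(X,S^1)$ after passing to a suitable extension. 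Since $S^1$ is divisible, constants always have roots, so the constant ambiguity is harmless.

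The only genuinely delicate point is that $Y$ must be fixed before $p$, hence must work for all $p\in P_{<m}(X,U)$ simultaneously. Here I would invoke the fact (used e.g. in the proof of Lemma~\ref{HKargcon}) that there are only countably many phase polynomials of a given bounded degree on an ergodic system up to multiplicative constants; consequently the family $\{\psi\circ p:\psi\in\hat U,\ p\in P_{<m}(X,U)\}$ meets only countably many cosets of the constants, and I would fix representatives $R_1,R_2,\dots\in P_{<m}(X,S^1)$. Applying Theorem~\ref{extensionlowchar:thm} to the countable family $\{R_j\}$, to the primes $\{p_i\}$ appearing in $\Delta$, and to the parameter $l$, yields an $O_{m,l}(1)$-extension $\pi:Y\to X$ in which each $R_j$ has a $p_i^{n_i}$-th root in $P_{<m}(Y,S^1)$ for every $i$ (admissible since $n_i\le l$). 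Given $p$, for each $i$ write $\psi_i\circ p=R_{j(i)}\cdot b_i$ with $b_i$ constant, pick $c_i\in S^1$ with $c_i^{p_i^{n_i}}=b_i$, and let $\Theta(\tau_i)$ be the chosen root of $R_{j(i)}$ times $c_i$, together with $\Theta|_{\hat U}=(\psi\mapsto(\psi\circ p)\circ\pi)$; the relation analysis of the second paragraph makes $\Theta$ a well-defined homomorphism $\hat V\to P_{<m}(Y,S^1)$. Dualising back produces the desired phase polynomial $\tilde p:Y\to V$, and $\varphi\circ\tilde p=p\circ\pi$ follows because $\hat U$ separates the points of $U$. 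The main obstacle is exactly this uniformity over $p$; once it is handled, the argument is a dualisation of the root-extraction already packaged in Theorems~\ref{extension:thm} and~\ref{extensionlowchar:thm}.
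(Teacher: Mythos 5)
Your proposal is correct and follows essentially the same route as the paper: dualise, extend the homomorphism $\chi\mapsto\chi\circ p$ from $\hat U$ to $\hat V$ by extracting $p_i^{n_i}$-th roots of the phase polynomials $\psi_i\circ p$ in an extension supplied by Theorem \ref{extensionlowchar:thm}, and dualise back --- which is precisely the ``arguing as in the previous lemma'' (Lemma \ref{fixCL:lem}) step the paper's proof invokes. Your explicit handling of the uniformity of $Y$ over all $p$, via the countability of $P_{<m}(X,S^1)$ modulo constants (Lemma \ref{sep:lem}), fills in a detail the paper leaves implicit, and it is done correctly.
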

 \begin{proof}
 Using the Pontryagin duality, we see that the surjective homomorphism $\varphi$ gives rise to an injective homomorphism $\widehat\varphi : \widehat U \rightarrow \widehat V$. Therefore, we can assume without loss of generality that $\widehat U\leq \widehat V$. Let $p:X\rightarrow U$ be as in the theorem, then for every $\chi\in\widehat U$, $\chi\circ p:X\rightarrow S^1$ is also a phase polynomial of degree $<m$ and $\chi\mapsto \chi\circ p$ is clearly a homomorphism. Arguing as in the previous lemma we see that by passing to an extension we can extend this homomorphism to $\widehat V$. Namely, there exists an $O_{m,l}(1)$-extension $\pi:Y\rightarrow X$ and a homomorphism $\chi\mapsto \tilde{p}_\chi$ from $\widehat V$ to $P_{<m}(X,S^1)$ such that $\tilde{p}_\chi = p_\chi\circ \pi$ for every $\chi\in\widehat U$. By the Pontryagin duality, there exists a phase polynomial $\tilde{p}:Y\rightarrow V$ such that $\chi\circ\tilde{p} = \tilde{p}_\chi$. This completes the proof.
 \end{proof}
We recall some definitions from \cite{Berg& tao & ziegler}.
\begin{defn}  [Quasi-cocycles]\label{qcoc:def}
	Let $X$ be a $G$-system and $k\geq 0$ be a natural number. We say that $f$ is a quasi-cocycle of order $<k$ if $d^{[k]}f:G\times X^{[k]}\rightarrow S^1$ is a cocycle. 
\end{defn}
Note that by Lemma \ref{PP} this means that for all $g,g'\in G$ there exists a phase polynomial $p_{g,g'}$ of degree $<k$ such that $$\frac{f(g+g',x)}{f(g,x)\cdot f(g',T_gx)} = p_{g,g'}(x).$$
We also need the following definition.
\begin{defn}
	We say that a function $f:G\times X\rightarrow S^1$ is a line-cocycle if for any $g\in G$ we have $$\prod_{k=0}^{\text{order(g)-1}} f(g,T_g^k x) = 1.$$
\end{defn}
We weaken the assumptions in Theorem \ref{HK1}.
\begin{thm} \label{HK1functions}
	Theorem \ref{HK1} holds with the weaker assumption that each $\rho_\omega$ is a quasi-cocycle of order $<k-1$ and a line-cocycle.
\end{thm}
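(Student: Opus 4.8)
The plan is to reprove Theorem \ref{HK1} by first establishing the quasi-cocycle analogue of Theorem \ref{countable} and then invoking the same Lusin-separation argument. Fix $m$ and $k$ and let $\mathcal Q$ be the group of all functions $f:G\times X\to S^1$ that are simultaneously quasi-cocycles of order $<k-1$, line-cocycles, and of type $<m$. Every genuine cocycle of type $<m$ lies in $\mathcal Q$, since a cocycle is automatically a line-cocycle ($\prod_{j=0}^{\text{order}(g)-1}\rho(g,T_g^jx)=\rho(\text{order}(g)\,g,x)=1$); hence $Z^1_{<m}(G,X,S^1)\le\mathcal Q$ and $PC_{<m}(G,X,S^1)\cdot B^1(G,X,S^1)\le\mathcal Q$. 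I claim $PC_{<m}(G,X,S^1)\cdot B^1(G,X,S^1)$ has at most countable index in $\mathcal Q$. Granting this, the proof concludes exactly as in Theorem \ref{HK1}: the analytic set $PC_{<m}\cdot B^1$ is Borel in $\mathcal Q$ by the Lusin separation theorem, so $\omega\mapsto\rho_\omega\cdot(PC_{<m}\cdot B^1)$ is a measurable map into a countable set, and one of its fibres is the desired positive-measure set $\mathcal A$.

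By Theorem \ref{countable} and multiplicativity of indices it suffices to show that $Z^1_{<m}(G,X,S^1)$ is of at most countable index in $\mathcal Q$, and the route I would take is to run the proof of Theorem \ref{countable} itself with $Z^1_{<m}(G,X,S^1)$ replaced by $\mathcal Q$ throughout. The reduction to a system of order $<m+1$ is supplied by Proposition \ref{finiteorder} together with Lemma \ref{dec:lem} (the quasi-cocycle counterpart of Lemma \ref{Cdec:lem}); the induction on the length $l(X)$, the going-up and going-down Lemmas \ref{up} and \ref{down}, and the connected-group argument of Lemma \ref{HKargcon} only use coboundary behaviour under translations, push-forwards to quotients, and invariance of phase polynomials under connected groups, all of which are insensitive to our weakening. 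The content lies in the totally disconnected base case, which is where the hypotheses ``quasi-cocycle'' and ``line-cocycle'' are genuinely used.

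For that base case the key point is: if $f\in\mathcal Q$ and its obstruction $\delta f(g,g',x):=f(g+g',x)\overline{f(g,x)}\,\overline{f(g',T_g x)}$ vanishes on all pairs of generators from the basis $E$, then $f$ is a genuine phase polynomial of degree $<d$ for some $d=O_k(1)$. This follows from Proposition \ref{cocycle:prop}: the line-cocycle property gives condition (\ref{linecocycle0}) on generators, triviality of $\delta f$ on $E$ gives condition (\ref{commute0}) on generators, so the function reconstructed by formula (\ref{cocycle:eq0}) is a cocycle agreeing with $f$ on $E$; since $f$ itself is reconstructed from its restriction to $E$ together with $\delta f|_{E\times E}$ (phase polynomials of degree $<k-1$ by Lemma \ref{PP}), $f$ equals a product of translates of such polynomials and is therefore a phase polynomial of bounded degree. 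Since phase polynomials of bounded degree on $X$ form a countable family modulo constants, and since the line-cocycle normalisation forces the constant parts of $\delta f|_{E\times E}$ into a set realised entirely by genuine characters of $G$ (a constant line-cocycle on a generator of order $p$ is a $p$-th root of unity, hence extends to a character), the obstruction data of elements of $\mathcal Q$ ranges over a countable set modulo that of $Z^1_{<m}(G,X,S^1)$; for a totally disconnected $X$ this yields countable index of $Z^1_{<m}$ in $\mathcal Q$, after a final degree correction from $<d$ to $<m$ via Lemma \ref{sep2:lem}, carried out exactly as in the proof of Lemma \ref{reductioncountable}.

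The main obstacle is precisely this bookkeeping in the base case: one must verify that the phase-polynomial-valued $2$-coboundary $\delta f$ attached to $f\in\mathcal Q$ carries only countably much information beyond what genuine cocycles already realise, and one must track the degree $d=O_k(1)$ of the resulting genuine phase polynomial carefully so that, after Lemma \ref{sep2:lem}, the correction lands in $PC_{<m}\cdot B^1$ and not merely in $PC_{<d}\cdot B^1$. The line-cocycle hypothesis is indispensable here: without it the constant part of $\delta f$ alone would already range over an uncountable group of $S^1$-valued $2$-cocycles on $G$, and the index would fail to be countable.
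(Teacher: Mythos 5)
Your strategy hinges on the claim that $Z^1_{<m}(G,X,S^1)$, and hence $PC_{<m}(G,X,S^1)\cdot B^1(G,X,S^1)$, has at most countable index in your group $\mathcal{Q}$ of type-$<m$ quasi-cocycles which are line-cocycles, and this claim is false. Consider the functions $c:G\times X\rightarrow S^1$ that are constant in $x$ and satisfy $c(g)^{\mathrm{order}(g)}=1$ for every $g$, with $c$ not a homomorphism: each such $c$ is a line-cocycle, a quasi-cocycle of every positive order (its defect is constant in $x$), and of type $<m$ for every $m\geq 1$, so it lies in $\mathcal{Q}$; but $c/c'$ is a genuine cocycle precisely when it is a character of $G$, and the group of all such $c$ modulo $\hat G$ is uncountable (for instance the $c$ vanishing on the natural basis of $G$ meet $\hat G$ only in the identity). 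So both of your index claims fail, and with them the Lusin-separation step. The same example shows that the conclusion you aim for, membership of $\rho_\omega/\rho_{\omega'}$ in $PC_{<m}\cdot B^1$, cannot be the right target: every element of $PC_{<m}\cdot B^1$ is an honest cocycle, while a ratio of quasi-cocycles in general is not. What is actually proved and used later (see Lemmas \ref{above} and \ref{HK2functions}) is that $\rho_\omega/\rho_{\omega'}$ is $(G,X,S^1)$-cohomologous to a phase polynomial of degree $<m$, i.e.\ lies in $P_{<m}(G,X,S^1)\cdot B^1(G,X,S^1)$. Your closing heuristic, that the line-cocycle normalisation forces the constant parts of the defect to come from characters and hence keeps the obstruction data countable, does not repair this: even after that normalisation the defect data (a function on countably many pairs of group elements with values in phase polynomials, or merely in roots of unity) ranges over an uncountable set.

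Your accounting of where the cocycle hypothesis enters is also inverted relative to the actual proof. You declare the connected-group argument of Lemma \ref{HKargcon} insensitive to the weakening, but that is exactly the one place requiring new input: in eliminating the term $p'_u$ in equation (\ref{pu}), the original argument uses that $\rho$ is a cocycle to identify the constant $p'_u$ with an element of $\hat G$; for a quasi-cocycle which is a line-cocycle one argues instead that $p'_u(g)$ has finite order for every $g$, so the homomorphism $u\mapsto p'_u$ from the connected, hence divisible, group is trivial. The only other use of the cocycle property in the proof of Theorem \ref{countable} is the descent to the factor $X/U$, where Lemma \ref{Cdec:lem} is replaced by the quasi-cocycle descent Lemma \ref{dec:lem}; with these two modifications the proof of Theorem \ref{HK1} goes through verbatim. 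By contrast, your totally disconnected base case is neither needed in this form nor correct as written: Theorem \ref{Main:thm} already applies to arbitrary functions of finite type, and your appeal to Proposition \ref{cocycle:prop} assumes the defect vanishes on pairs of generators (which you cannot arrange for a given $f\in\mathcal{Q}$) and that $f$ is reconstructed from its values on $E$ together with $\delta f|_{E\times E}$, neither of which is justified.
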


\begin{proof}
	The proof is the same as in Theorem \ref{HK1} and therefore will not be repeated. The main observation is that in the proof of Theorem \ref{countable} we only used the fact that $\rho$ is a cocycle for two purposes: First, so we can apply Lemma \ref{Cdec:lem} which we now can replace with Lemma \ref{dec:lem} and second, to eliminate the term $p'_u$ in equation (\ref{pu}). This time, $p'_u:G\rightarrow S^1$ is a line-cocycle. In particular,  $p'_u(g)$ is of finite order for every $g\in G$. Since $\mathcal{H}_l$ is connected, it is divisible and therefore the homomorphism $u\mapsto p'_u$ is trivial.
\end{proof}
Let $X$ be an ergodic $G$-system. In the next lemma we see how the extension theorems are useful to construct line-cocycles from arbitrary functions of finite type.
\begin{lem}\label{line} Let $k,m\geq 1$ and $X$ be an ergodic $G$-system of order $<k$. Then there exists $r=O_{k,m}(1)$ an ergodic zero-dimensional $O_{k,m}(1)$-extension $\pi:Y\rightarrow X$, with the following property: for every function $f:G\times X\rightarrow S^1$ of type $<m$ there exists a phase polynomial $p:G^{(r)}\times Y\rightarrow S^1$ of degree $<m$ such that $f\circ \pi/p$ is a line cocycle.
\end{lem}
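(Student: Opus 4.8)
\emph{Proof proposal.} The plan is to isolate from $f$ its \emph{line defects} along every $g\in G$, to recognise these as phase polynomials of degree $<m$ on $X$, and then to kill all of them simultaneously inside a single $f$‑independent $k$‑extension by extracting roots with Theorem \ref{extensionlowchar:thm}. For $h\in G$ set $d_h:=\operatorname{ord}(T_h)$ (a divisor of $\operatorname{ord}_G(h)$, hence a product of \emph{distinct} primes occurring in $P$) and
\[
c_h(x):=\prod_{j=0}^{d_h-1} f\bigl(h,T_h^{\,j}x\bigr).
\]
Re‑indexing the product gives $c_h\circ T_h=c_h$, so $c_h$ is $T_h$‑invariant. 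First I would show $c_h$ is a phase polynomial of degree $<m$: since $f$ has type $<m$ we may write $d^{[m]}f=\Delta Q$ for some $Q\colon X^{[m]}\to S^1$, and applying the multiplicative operator $d^{[m]}$ to $c_h$ the diagonal action $T_h^{[m]}$ commutes with $d^{[m]}$, the product over $j$ telescopes, and $T_h^{\,d_h}=\mathrm{id}$ forces $d^{[m]}c_h=1$; by Lemma \ref{PP}, $c_h\in P_{<m}(X,S^1)$.

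Next, to make the extension independent of $f$: by Lemma \ref{sep:lem} the phase polynomials of degree $<m$ on the ergodic system $X$ are uniformly separated in $L^2(X)$ modulo constants, so by separability of $L^2(X)$ there are only countably many of them up to constants; fix a countable set $\mathcal C\subseteq P_{<m}(X,S^1)$ of representatives. Every $d_h$ is squarefree with prime factors among those of $P$, so I would apply Theorem \ref{extensionlowchar:thm} with $l=1$, to the family $\mathcal C$ and the primes of $P$: this yields $r=O_{k,m}(1)$ and an ergodic zero‑dimensional $r$‑extension $\pi\colon Y\to X$, depending only on $X$ (hence independent of $f$), such that every $P\in\mathcal C$ admits, for every squarefree $n$ with prime factors among those of $P$, a degree‑$<m$ phase‑polynomial $n$‑th root on $Y$. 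Now for any $f$ and any $h$ the polynomial $c_h$ agrees up to a constant with some member of $\mathcal C$, which is therefore also $T_h$‑invariant; consequently $c_h\circ\pi$ is $T_g^Y$‑invariant for every $g\in\varphi_r^{-1}(h)$, i.e. $\Delta_g(c_h\circ\pi)=1$, so the invariant‑root refinement of Remark \ref{inv} applies. Carrying that construction out with one coordinate of the extending zero‑dimensional group per requested root (so that all of them live in the single extension $Y$), and absorbing the resulting $S^1$‑constants into the roots, I obtain for each $g\in G^{(r)}$ a phase polynomial $R_g\colon Y\to S^1$ of degree $<m$ which is $T_g^Y$‑invariant and satisfies $R_g^{\,d_{\varphi_r(g)}}=c_{\varphi_r(g)}\circ\pi$.

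Finally, define $p\colon G^{(r)}\times Y\to S^1$ by $p(g,\cdot):=R_g$; this is a phase polynomial of degree $<m$. Fix $g\in G^{(r)}$, write $h=\varphi_r(g)$ and $n=\operatorname{ord}_{G^{(r)}}(g)$, so that $d_h\mid n$. Using $(f\circ\pi)(g,y)=f(h,\pi y)$ and $\pi T_g^Y=T_h^X\pi$, grouping the product by residues modulo $d_h$ and using $T_h$‑invariance of $c_h$,
\[
\prod_{k=0}^{n-1}(f\circ\pi)\bigl(g,T_g^{k}y\bigr)=\Bigl(\prod_{j=0}^{d_h-1}f\bigl(h,T_h^{\,j}\pi y\bigr)\Bigr)^{n/d_h}=c_h(\pi y)^{\,n/d_h},
\]
while, since $R_g$ is $T_g^Y$‑invariant,
\[
\prod_{k=0}^{n-1}p\bigl(g,T_g^{k}y\bigr)=R_g(y)^{\,n}=\bigl(R_g(y)^{\,d_h}\bigr)^{n/d_h}=c_h(\pi y)^{\,n/d_h}.
\]
Hence $\prod_{k=0}^{n-1}(f\circ\pi/p)(g,T_g^{k}y)=1$ for every $g\in G^{(r)}$, i.e. $f\circ\pi/p$ is a line cocycle, and $Y$ is an ergodic zero‑dimensional $r$‑extension with $r=O_{k,m}(1)$ independent of $f$, as required.

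\emph{Main obstacle.} The delicate step is the second one: one must extract, inside a \emph{single} $f$‑independent zero‑dimensional extension, for \emph{each} $g\in G^{(r)}$ a $d_{\varphi_r(g)}$‑th root of $c_{\varphi_r(g)}\circ\pi$ that is invariant under $T_g^Y$. That such roots exist individually is exactly the content of Remark \ref{inv} applied to the $T_g$‑invariant polynomials $c_h$; that they can be realised simultaneously in one extension requires running the proof of Theorems \ref{extension:thm}–\ref{extensionlowchar:thm} with a separate coordinate of the extending group for each requested root, and checking that the countability of $\mathcal C$ (via the separation Lemma \ref{sep:lem}) keeps the whole bookkeeping countable and the extension degree $O_{k,m}(1)$. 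The telescoping identity $d^{[m]}c_h=1$ in Step 1 is routine once one notes it uses only $d^{[m]}f=\Delta Q$ and $T_h^{\,d_h}=\mathrm{id}$.
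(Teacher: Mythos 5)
Your proposal is correct and follows essentially the same route as the paper's proof: the line defect $\prod_{j=0}^{\mathrm{ord}(g)-1} f(g,T_g^j x)$ is identified as a $T_g$-invariant phase polynomial of degree $<m$ via $d^{[m]}$ and Lemma \ref{PP}, and then Theorem \ref{extensionlowchar:thm} together with the invariant-root refinement of Remark \ref{inv} supplies the roots $p(g,\cdot)$ on a zero-dimensional $O_{k,m}(1)$-extension, so that $f\circ\pi/p$ is a line cocycle. Your additional step of invoking Lemma \ref{sep:lem} to pass to a countable, $f$-independent family of polynomials (so the extension depends only on $X$, $k$, $m$) and your explicit check of the line-cocycle identity over $G^{(r)}$ are welcome clarifications of points the paper's proof leaves implicit, but they do not change the argument.
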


\begin{proof}
	Let $g\in G$ and denote by $n$ the order of $g$. Let $f$ be of type $<m$. Since $d^{[m]} f$ is a coboundary, it is also a cocycle. We conclude that $$d^{[m]} \prod_{k=0}^{n-1} f(g,T_g^k x) = 1.$$
	Lemma \ref{PP} implies that $\prod_{k=0}^{n-1} f(g,T_g^k x)$ is a $T_g$-invariant phase polynomial of degree $<m$. We apply Theorem \ref{extensionlowchar:thm} for every $g\in G$ (simultaneously). We see that there exist an $O_{k,m}(1)$-extension $Y$ and a phase polynomial $p:G\times Y\rightarrow S^1$ of degree $<m$ such that $p(g,\cdot)$ is $T_g$-invariant for every $g\in G$ (see Remark \ref{inv}) and $p(g,\cdot)^n = \prod_{k=0}^{n-1} f(g,T_g^k x)$. It follows that $f\circ\pi/p$ is a line-cocycle, as required.
\end{proof}
The following theorem summarizes the main results in this section.
\begin{thm} \label{everything}
    Let $k,m,l,\alpha \geq 1$ and let $X$ be an ergodic system of order $<k$. Let $U$ be a finite dimensional group of exponent $\alpha$ and $\varphi:V\rightarrow U$ a surjective homomorphism such that $\ker \varphi$ is a zero dimensional group of exponent $l$. Then, there exists a zero dimensional $O_{k,m,l,\alpha}(1)$-extension $Y$ of $X$ with a factor map $\pi:Y\rightarrow X$ such that the following properties hold.
    \begin{itemize}
        \item { Let $\rho:G\times X\rightarrow U$ be a cocycle. If $\chi\circ\rho\in P_{<m}(G,X,S^1)\cdot B^1(G,X,S^1)$ for every $\chi\in\widehat U$, then $\rho\circ\pi \in P_{<m}(G,Y,U)\cdot B^1(G,Y,U)$.}
        \item {For every phase polynomial $p:X\rightarrow U$ of degree $<m$, there exists a phase polynomial $\tilde{p}:Y\rightarrow V$ such that $\varphi\circ\tilde{p} = p\circ \pi$.}
        \item {For every function $f:G\times X\rightarrow S^1$ of type $<m$, there exists a phase polynomial of degree $<m$, $p:G\times X\rightarrow S^1$ such that $f\circ \pi/p$ is a line cocycle. }
    \end{itemize}
\end{thm}
\section{Proof of Theorem \ref{Main2:thm} Part I} \label{Main2:proof1}
Throughout the rest of this paper we let $G$ denote a group of the form $\bigoplus_{p\in P}\mathbb{Z}/p^m\mathbb{Z}$ where $P$ is a multiset of primes and $m\in\mathbb{N}$. Moreover, we will no longer deal with general nilpotent systems and so whenever we say a nilpotent system we implicitly assume that the homogeneous group is the Host-Kra group. Our goal is to prove the following result.
\begin{thm} [Any finite dimensional system is a factor of a nilpotent system] \label{InvFDr1:thm}
	Let $k,\alpha\in\mathbb{N}$ and let $X$ be an ergodic finite dimensional $G$-system of order $<k+1$ and exponent $\alpha$. Then, there exists a finite dimensional ergodic $O_{k,\alpha}(1)$-extension $Y$ of exponent $\beta=O_{k,\alpha,m}(1)$ such that $Y\cong \mathcal{G}(Y)/\Gamma$, for some totally disconnected group $\Gamma$.
\end{thm}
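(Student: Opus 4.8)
The plan is to argue by induction on $k$, following the overall strategy of Host and Kra's construction of the nilpotent structure but using the extension machinery of Section \ref{ext:sec} at every stage where a lifting obstruction appears. The base case $k=0$ is trivial (a point), and $k=1$ is the Kronecker case, where the system is already a group rotation, hence of the desired form. So fix $k\geq 2$ and assume the theorem for systems of order $<k$. Write $X = Z_{<k}(X)\times_\rho U$ with $U$ a finite dimensional compact abelian group of exponent $\alpha$ (by Definition \ref{fdsystems:def} and Proposition \ref{abelext:prop}). By the inductive hypothesis applied to $Z_{<k}(X)$, after passing to an $O_{k,\alpha}(1)$-extension we may assume $Z_{<k}(X) = \mathcal{G}(Z_{<k}(X))/\Gamma'$ is a nilpotent system with $\mathcal{G}:=\mathcal{G}(Z_{<k}(X))$ a finite dimensional $(k-1)$-step nilpotent group and $\Gamma'$ zero dimensional co-compact. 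Here one must be careful that pulling back $\rho$ along the extension keeps it a cocycle of type $<k$ on the new base and that the new base is still finite dimensional of controlled exponent; this is handled by the results quoted in Section \ref{ext:sec} (in particular Lemma \ref{dec:lem} and the fact, used repeatedly, that zero dimensional extensions of finite dimensional systems remain finite dimensional of bounded exponent).

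The heart of the argument is the lifting step: for each element $t$ of (a suitable dense subgroup of) $\mathcal{G}$, one wants to produce a lift $S_{t,F_t}\in\mathcal{G}(X)$, i.e. a measurable $F_t:Z_{<k}(X)\to U$ with $\Delta_t\rho = c_t\cdot\Delta F_t$ for an appropriate phase-polynomial cocycle correction $c_t$. As in Host--Kra one proceeds inductively down the lower central series $\mathcal{G} = \mathcal{G}_1\geq\mathcal{G}_2\geq\cdots\geq\mathcal{G}_k$, lifting elements of $\mathcal{G}_{k-j+1}$ at stage $j$; the key input that $\Delta_t\rho$ has low type (so that the Conze--Lesigne analysis applies) comes from the descent along the central series together with Lemma \ref{dec:lem}. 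The two obstructions flagged in the introduction now get resolved as follows. First, because $U$ need not be a torus, $\chi\circ\Delta_t\rho$ is only cohomologous to a phase polynomial for each character $\chi\in\hat U$ separately, and these need not assemble into a $U$-valued phase polynomial; this is exactly the hypothesis of Lemma \ref{fixCL:lem} (equivalently the first bullet of Theorem \ref{everything}), so passing to a further zero dimensional $O_{k,\alpha}(1)$-extension makes $\Delta_t\rho$ genuinely $U$-valued phase-polynomial-plus-coboundary. Second, the group generated by the connected component of $\mathcal{G}(X)$ together with the $G$-action need not act transitively; to fix this one enlarges $G$ to $G^{(r)}$ for $r=O_k(1)$ and uses Theorem \ref{extensionlowchar:thm} (the $k$-extension root-extraction result, i.e. the second bullet of Theorem \ref{everything}) to extract the $p$-th roots of phase polynomials that are needed to split the relevant short exact sequences — precisely the mechanism illustrated in Section \ref{special case} for $k=3$. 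At each such splitting one records that the extension has exponent $O_{k,\alpha,m}(1)$, since the roots live in zero dimensional groups of bounded exponent.

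Once every element of a countable dense subgroup of $\mathcal{G}$ (together with the $G^{(r)}$-action) has been lifted into $\mathcal{G}(Y)$, the group $\mathcal{G}(Y)$ acts transitively on $Y$: its image in $Z_{<k}(Y)$ is dense and contains $\mathcal{G}$, which already acts transitively there, while its kernel $P_{<k}(Z_{<k}(Y),U)$ acts transitively on the fibres. Transitivity plus the identification of the stabilizer $\Gamma(Y) = \{S_{1,\chi}:\chi\in\mathrm{Hom}(Z_{<k}(Y),U)\}$, which is zero dimensional and co-compact, yields $Y\cong\mathcal{G}(Y)/\Gamma(Y)$ as $G^{(r)}$-systems; finite dimensionality of $\mathcal{G}(Y)$ follows from finite dimensionality of $\mathcal{G}$, of $U$, and of $P_{<k}(Z_{<k}(Y),U)$ (using Proposition \ref{FD:prop} and Definition \ref{finitedim:def}(iii) applied to the extension $1\to P_{<k}\to\mathcal{G}(Y)\to\mathcal{G}\to 1$). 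The measurable identification $Y\cong\mathcal{G}(Y)/\Gamma(Y)$ — rather than just a near-action — is obtained as in the $k=2$ case of \cite{OS}, \cite{Meiri}, and is treated in full generality in Section \ref{identification:sec}. I expect the main obstacle to be the bookkeeping in the downward induction along the central series: keeping track of \emph{which} countable family of transformations must be lifted at each stage (so that no new untreated transformations are created by the extensions, as warned at the end of Section \ref{generalizations}), while simultaneously ensuring all exponents and extension degrees stay $O_{k,\alpha,m}(1)$. This is where Lemma \ref{mainlem} and the new inductive scheme of Section \ref{Main2:proof2} do the real work.
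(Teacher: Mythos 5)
Your outline is essentially the paper's own argument: induction on $k$ with $X=Z_{<k}(X)\times_\rho U$, lifting elements of $\mathcal{G}(Z_{<k}(X))$ down the lower central series via the Host--Kra criterion, resolving the two obstructions with Lemma \ref{fixCL:lem}/Theorem \ref{everything} and the $k$-extension root extraction of Theorem \ref{extensionlowchar:thm}, handling countable families of bounded-torsion transformations via Lemma \ref{mainlem}, and obtaining the identification $Y\cong\mathcal{G}(Y)/\Gamma$ with totally disconnected stabilizer as in Section \ref{identification:sec}. The only structural point you leave implicit is the splitting of the extension into its degenerate and weakly mixing parts (Theorem \ref{redweaklymixing} versus Section \ref{Main2:proof2}), but this falls within the same scheme, so the proposal matches the paper's proof.
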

Let $\alpha\in\mathbb{N}$ and $X$ be as in the theorem above. Then,  $X=Z_{<k}(X)\times_\rho U$ where $U$ is a finite dimensional compact abelian group of exponent $\alpha$. Proving the theorem above by induction, we can replace $Z_{<k}(X)$ with a finite dimensional nilpotent system $\mG/\Gamma$. It is natural to ask when an element $s\in \mG$ has a lift in $\mG(X)$.
\begin{defn}
Let $\mG/\Gamma$ be a $k$-step nilpotent system, $\rho:G\times \mG/\Gamma\rightarrow U$ be a cocycle into some compact abelian group $U$ and $Y=\mG/\Gamma\times_\rho U$. We say that an element $s\in \mG$ has a lift in $\mG(Y)$ if there exists a transformation $\overline{s}\in\mG(Y)$ which induces the same action as $s$ on $\mG/\Gamma$. In this context we let $$\mG^\star=\{s\in \mG : s \text{ has a lift in } \mG(Y)\}.$$
\end{defn}
Note that translations by $U$ are automatically in $\mG(Y)$. Therefore, if the action of $\mG^\star$ on $\mG/\Gamma$ is transitive then the action of $\mG(Y)$ on $Y$ is transitive.\\
We need the following easy lemma.
\begin{lem} \label{open=full}
    Let $k\geq 1$ and $X=\mG/\Gamma$ be a $k$-step nilpotent $G$-system. Let $\mL$ be an open subgroup of $\mG(X)$ which contains $T_g$ for every $g\in G$. Then the action of $\mL$ on $X$ is transitive.
\end{lem}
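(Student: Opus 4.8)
The plan is to exploit the fact that a $k$-step nilpotent $G$-system $X = \mathcal{G}/\Gamma$ carries a transitive action of $\mathcal{G}(X)$ by definition (the whole group acts transitively since $X$ is a homogeneous space). Let $\mathcal{L} \leq \mathcal{G}(X)$ be an open subgroup containing $T_g$ for every $g \in G$. Since $\mathcal{L}$ is open, each of its cosets in $\mathcal{G}(X)$ is open, so the orbit $\mathcal{L}\cdot x_0$ of the base point $x_0 = \Gamma$ is an open subset of $X = \mathcal{G}(X)/\Gamma$ (here I use that the quotient map $\mathcal{G}(X) \to X$ is open, which follows from Theorem \ref{open}, or alternatively that $X$ is the homogeneous space of $\mathcal{G}(X)$ modulo the stabilizer $\Gamma(X)$). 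More generally, every $\mathcal{L}$-orbit is open, hence also closed (its complement is a union of the other open orbits), so $X$ decomposes into a disjoint union of clopen $\mathcal{L}$-orbits.

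Next I would use ergodicity: the transformations $T_g$, $g \in G$, act ergodically on $(X,\mu)$, and by hypothesis all $T_g$ lie in $\mathcal{L}$. Therefore any $\mathcal{L}$-invariant measurable set has measure $0$ or $1$. Each $\mathcal{L}$-orbit is clopen and $\mathcal{L}$-invariant; since $\mu$ is a left $\mathcal{G}(X)$-invariant measure it gives positive mass to every nonempty open set (this is where one uses that $\mu$ is the homogeneous measure — open sets in a homogeneous space under an invariant measure have positive measure), so each orbit has positive measure, forcing it to have full measure. Two distinct orbits are disjoint, so there can be only one orbit, i.e. $\mathcal{L}\cdot x_0 = X$ up to a null set; but since the orbit is clopen and dense it must in fact be all of $X$. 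Hence $\mathcal{L}$ acts transitively on $X$.

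The only mild subtlety — and the step I would be most careful about — is the interaction between the topological statement ("orbits are clopen") and the measure-theoretic statement ("ergodicity kills invariant sets"): one must make sure $\mu$ assigns positive measure to nonempty open sets and that the orbit decomposition is genuinely a measurable partition into $\mathcal{L}$-invariant sets, so that ergodicity of the $G$-action (via $T_g \in \mathcal{L}$) applies. Both facts are standard for homogeneous systems $\mathcal{G}(X)/\Gamma(X)$ with their Haar-type invariant measure, so no new input is needed; the lemma should follow in a few lines once these are spelled out. If one wants to avoid invoking positivity of $\mu$ on open sets directly, an alternative is: the set of orbits is at most countable (the index $[\mathcal{G}(X):\mathcal{L}]$ is at most countable since $\mathcal{G}(X)$ is a Polish group and $\mathcal{L}$ is open, hence $\mathcal{L}$ has the Baire property and countable index), so the orbit decomposition is a countable clopen partition; a $G$-ergodic system admits no nontrivial countable clopen partition into invariant pieces, forcing a single orbit.
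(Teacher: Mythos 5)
Your proposal is correct and follows essentially the same route as the paper: the paper identifies $\mL/(\Gamma\cap\mL)$ with a $G$-invariant clopen subset of $\mG/\Gamma$ (your $\mL$-orbit of the base point, open because the quotient map is open and closed because its complement is a union of open orbits) and then invokes ergodicity of the $G$-action, exactly as you do. The extra care you take about positivity of the invariant measure on open sets is the implicit content of the paper's one-line appeal to ergodicity, so no genuinely different input is involved.
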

\begin{proof}
Let $\Gamma_{\mL} = \Gamma\cap \mL$. The quotient $\mL/\Gamma_{\mL}$ can be identified with a $G$-invariant open and closed subset of $\mG/\Gamma_{\mG}$. Ergodicity implies that $\mL/\Gamma_{\mL} = \mG/\Gamma_{\mG}$ under the identification $l\cdot \Gamma_{\mL}\mapsto l\cdot \Gamma_{\mG}$. In particular, the action of $\mL$ on $X$ is transitive.
\end{proof}
We use the following criterion for lifting due to Host and Kra \cite[Lemma 10.6]{HK}.\footnote{Lemma 10.6 in \cite{HK} is formulated only in the case where $U$ is a torus. However, since this assumption does not play a role in their proof, the claim holds for every compact abelian group $U$.}
\begin{lem} \label{CLproperty:lem}
	Let $k\geq 0$ and let $X$ be an ergodic $G$-system of order $<k+1$. Write  $X=Z_{<k}(X)\times_\rho U$ for some compact abelian group $U$ and a cocycle $\rho:G\times Z_{<k}(X)\rightarrow U$ of type $<k$ with $d^{[k]}\rho = \Delta F$. Let $t\in\mathcal{G}(X)$, if there exists a map $\phi:Z_{<k}(X)\rightarrow U$ with the property that  \begin{equation}
	\label{CLequation}
	\Delta_{t^{[k]}} F = d^{[k]}\phi
	\end{equation} Then the transformation  \begin{equation}\label{form}\overline{t}(x,u) := (tx,\phi(x)u)
	\end{equation} is a lift of $t$ in $\mathcal{G}(X)$.\\
	Conversely, every element in $\mathcal{G}(X)$ is of the form (\ref{form}).
\end{lem}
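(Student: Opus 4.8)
The quickest route is simply to invoke \cite[Lemma 10.6]{HK}: an inspection of that proof shows that the hypothesis that $U$ be a torus is never used — it rests only on the structure of the cube systems of an abelian extension and on the description of $\mathcal{G}(X)$ in Definition \ref{HKgroup:def}, both of which hold verbatim for an arbitrary compact abelian group $U$. For a self-contained argument, the plan is as follows. Write $Z=Z_{<k}(X)$, so that $X=Z\times_\rho U$ and, for each $l$, $X^{[l]}$ is an abelian extension of $Z^{[l]}$ by $U^{2^l}$ with cocycle cohomologous to $d^{[l]}\rho$. First I would record the reduction, available because $X$ has order $<k+1$, that a measure preserving transformation $s$ of $X$ lying over a transformation $t\in\mathcal{G}(Z)$ of $Z$ belongs to $\mathcal{G}(X)$ \emph{if and only if} $s^{[k]}$ preserves $\mu^{[k]}$ and acts trivially on $\mathcal{I}^{[k]}(X)$: the conditions at levels $l<k$ reduce to the corresponding ones for $t$ on $Z$ together with the fact that $\rho$ is of type $<k$, and those at levels $l>k$ are forced by the level $k+1$ statement, which holds because $Z_{<k+1}(X)=X$.

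For the ``if'' direction I would set $\overline{t}(x,u)=(tx,\phi(x)u)$ and compute that $\overline{t}^{[k]}$ acts on $X^{[k]}$ by $(\mathbf{z},\mathbf{u})\mapsto(t^{[k]}\mathbf{z},\phi^{[k]}(\mathbf{z})\,\mathbf{u})$, where $\phi^{[k]}$ applies $\phi$ in each of the $2^k$ coordinates. Since $t\in\mathcal{G}(Z)$, the map $t^{[k]}$ already preserves $\mu_Z^{[k]}$ and is trivial on $\mathcal{I}^{[k]}(Z)$, so only the behaviour in the fibre needs attention. The conditional measure of $\mu^{[k]}$ over a generic point $\mathbf{z}\in Z^{[k]}$ is the Haar measure of a coset of the closed subgroup of $U^{2^k}$ cut out by $F$ (via $d^{[k]}\rho=\Delta F$); translating this fibre by $\phi^{[k]}(\mathbf{z})$ is compatible with $t^{[k]}$, preserves $\mu^{[k]}$, and makes $\overline{t}^{[k]}$ act trivially on $\mathcal{I}^{[k]}(X)$ \emph{exactly} when $\Delta_{t^{[k]}}F$ is a $d^{[k]}$-coboundary, i.e.\ exactly when (\ref{CLequation}) holds. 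Combined with the reduction above this yields $\overline{t}\in\mathcal{G}(X)$.

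For the converse I would start from an arbitrary $s\in\mathcal{G}(X)$. Since $Z_{<k}(X)$ is a characteristic factor (Proposition \ref{UCF}) it is $s$-invariant, so $s$ induces a transformation $t$ of $Z$, and a direct check from Definition \ref{HKgroup:def} gives $t\in\mathcal{G}(Z)$; write $s(z,u)=(tz,\Phi(z,u))$. The vertical rotations $R_v\colon(z,u)\mapsto(z,vu)$, $v\in U$, lie in $\mathcal{G}(X)$ and are central in it — this is part of the $k$-step nilpotent structure of $\mathcal{G}(X)$ for systems of order $<k+1$ (\cite[Corollary 5.9]{HK}), and can also be verified directly in the tower description of Proposition \ref{abelext:prop}. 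Commuting $s$ past every $R_v$ forces $\Phi(z,vu)=v\,\Phi(z,u)$ for a.e.\ $(z,u)$ and $v$; setting $u$ to the identity and $\phi(z):=\Phi(z,1)$ gives $\Phi(z,u)=\phi(z)u$, so $s=\overline{t}$ has the shape (\ref{form}). Finally, running the equivalence of the previous paragraph backwards (now that $s$ is known to lie in $\mathcal{G}(X)$) shows that $\phi$ satisfies (\ref{CLequation}).

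The one genuinely delicate point will be the reduction, for order $<k+1$ systems, of the a priori infinitely many defining conditions of $\mathcal{G}(X)$ to the single level $l=k$, together with the precise bookkeeping of the conditional measures of $\mu^{[k]}$ on $X^{[k]}$ viewed as an abelian extension of $Z^{[k]}$; this is exactly the cube-system computation carried out in \cite{HK}, and everything else in the argument is formal. Since the lemma is only used here as a black box, in the paper itself it is enough to cite \cite[Lemma 10.6]{HK} and to remark that the restriction to toral $U$ made there plays no role in the proof.
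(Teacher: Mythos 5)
Your proposal matches the paper exactly: the paper gives no independent proof of Lemma \ref{CLproperty:lem}, but simply cites \cite[Lemma 10.6]{HK} together with the footnoted remark that the assumption there that $U$ is a torus plays no role in the argument, which is precisely your main recommendation. The additional self-contained sketch (cube-system computation for the ``if'' direction, commutation with vertical rotations for the converse) follows the same lines as Host--Kra's original proof, so nothing further is needed.
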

Let $p:\mG(X)\rightarrow \mG(Z_{<k}(X))$ be the projection map (Lemma \ref{induce}). By the lemma above, the kernel of $p$ consists of transformations of the form $S_{1,F}$ where $F\in P_{<k+1}(Z_{<k}(X),U)$. As a consequence we have the following result.
\begin{lem}
    If $X$ is a finite dimensional system, then $\mG(X)$ is a finite dimensional group (See definition \ref{finitedim:def}).
\end{lem}
\begin{proof}
Using a proof by induction it is enough to show that $P_{<k+1}(Z_{<k}(X),U)$ is finite dimensional. Let $\Delta\leq U$ be a zero dimensional group such that $U/\Delta$ is a finite dimensional torus. The projection $U\rightarrow U/\Delta$ gives rise to a short exact sequence
\begin{equation}\label{A,B}1\rightarrow A\rightarrow P_{<k+1}(Z_{<k}(X),U) \rightarrow B\rightarrow 1
\end{equation} where $A\leq P_{<k+1}(Z_{<k}(X),\Delta)$ and $B\leq P_{<k+1}(Z_{<k}(X),U/\Delta)$ are closed subgroups. By Lemma \ref{sep:lem} we conclude that the subgroup of constants $P_{<1}(Z_{<k}(X),U/\Delta)\cong U/\Delta$ is an open subgroup of $P_{<k+1}(Z_{<k},U/\Delta)$ and therefore, $B$ is finite dimensional. Moreover, since every profinite group is embedded in a direct product of finite groups we can assume that $\Delta\leq \prod_{i=1}^\infty F_i$, which implies that  $P_{<k+1}(Z_{<k}(X),\Delta)\leq \prod_{i=1}^\infty P_{<k+1}(Z_{<k}(X),F_i)$. For every $i$, $P_{<k+1}(Z_{<k}(X),F_i)$ is discrete, this implies that the product $\prod_{i=1}^\infty P_{<k+1}(Z_{<k}(X),F_i)$ is totally disconnected. In particular, the closed subgroup $P_{<k+1}(Z_{<k}(X),\Delta)$ and the closed subgroup $A$ are zero dimensional. The short exact sequence (\ref{A,B}) implies that $P_{<k+1}(Z_{<k}(X),U)$ is finite dimensional, as required.
\end{proof}
The proof of Theorem \ref{InvFDr1:thm} is reduced to solving the following lifting problem.
\begin{thm} \label{mainclaim}
    Let $k,m\geq 1$. Let $X=\mG/\Gamma$ be a finite dimensional $k$-step nilpotent system of exponent $m$ and suppose that $\mG$ is an open subgroup of $\mG(X)$ which contains $T_g$ for every $g\in G$. Let $\rho:G\times X\rightarrow U$ be a cocycle of type $<k+1$ into some finite dimensional compact abelian group $U$ of exponent $\alpha$. Then, there exists an ergodic zero-dimensional $O_{k,m}(1)$-extension $\pi:Y_0\rightarrow X$ of order $<k+1$ and exponent $m'=O_{k,m}(1)$ which is independent on $\rho$ such that the following properties hold.
    \begin{enumerate}
        \item{The subgroup $$\mathcal{L}_0 = \{s\in \mG : \text{ there exists a lift of } s \text{ in } \mG(Y_0)^\star\}$$ is open, where $\mG(Y_0)^\star$ is defined with respect to the extension $Y_0\times_{\rho\circ\pi} U$.}
		\item { The subgroup of $\mathcal{G}(Y_0)^\star$ generated by all of the lifts of the elements in $\mathcal{L}_0$ acts transitively on $Y_0$. In particular, $Y_0$ is a finite dimensional $k$-step nilpotent system.}
    \end{enumerate}
\end{thm}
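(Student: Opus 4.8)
Write $d^{[k+1]}\rho=\Delta F$ for a measurable $F\colon X^{[k+1]}\to U$, and for an extension $\pi\colon Y_0\to X$ still denote by $F$ its pull‑back to $Y_0^{[k+1]}$, so that $d^{[k+1]}(\rho\circ\pi)=\Delta F$. By Lemma \ref{CLproperty:lem}, applied with $k+1$ in place of $k$ to the order $<k+2$ system $Y_0\times_{\rho\circ\pi}U$ (whose $(k+1)$‑st universal characteristic factor is $Y_0$), an element $t\in\mathcal{G}(Y_0)$ has a lift in $\mathcal{G}(Y_0\times_{\rho\circ\pi}U)$ exactly when $\Delta_{t^{[k+1]}}F=d^{[k+1]}\phi_t$ for some measurable $\phi_t\colon Y_0\to U$; the cocycle identity shows that the set of such $t$ is a subgroup, that products of lifts are lifts, and that commutators of lifts are lifts. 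Hence it suffices to produce $Y_0$ so that an open subgroup of $\mathcal{G}$ admits lifts into $\mathcal{G}(Y_0)^\star$ and this subgroup contains all $T_g$ (which it will, since $T_g$ always lifts): Lemma \ref{open=full} then yields transitivity of the generated group and the identification $Y_0\cong\mathcal{G}(Y_0)/\Gamma_0$. Following the inductive scheme of Host and Kra (\cite[Lemma 10.6 and the subsequent argument]{HK}), we use the lower central series $\mathcal{G}=\mathcal{G}_1\supseteq\cdots\supseteq\mathcal{G}_k\supseteq\mathcal{G}_{k+1}=\{1\}$ and lift elements one central layer at a time, from $\mathcal{G}_k$ upward; at layer $r$ the quotient $\mathcal{G}_r/\mathcal{G}_{r+1}$ is a compact abelian group of exponent $m$ acting by automorphisms on the relevant characteristic factor, so the situation there parallels the order $<3$ case of Theorem \ref{order<3}.

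\textbf{Killing the obstruction by a bounded extension.} At each layer the obstruction to solving $\Delta_{s^{[k+1]}}F=d^{[k+1]}\phi_s$ reduces, as in Theorem \ref{order<3} and \cite[Section 10]{HK}, to the statement that $\Delta_{s^{[k+1]}}F$ differs from a cubic coboundary by a cocycle valued in a phase polynomial of degree $<k+1$. Two obstacles appear. First, $U$ need not be a torus; we invoke Lemma \ref{fixCL:lem} (the first bullet of Theorem \ref{everything}) to pass to a single zero‑dimensional $O_{k,m}(1)$‑extension $\pi\colon Y_0\to X$, \emph{independent of $\rho$}, over which every such $U$‑valued Conze--Lesigne type equation is equivalent to its $S^1$‑coordinates, and over which (the second and third bullets of Theorem \ref{everything}) the phase polynomials occurring have the roots we need and the cocycles of type $<k+1$ differ from phase polynomials by line‑cocycles. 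Second, even over the torus the residual obstruction is a phase polynomial of degree $<k+1$ whose leading symmetric multilinear part is $m$‑torsion, because $\mathcal{G}_r/\mathcal{G}_{r+1}$ has exponent $m$; by Theorem \ref{extensionlowchar:thm} (or Theorem \ref{extension:thm} when the degree is large relative to $m$) it becomes a coboundary after a further $O_{k,m}(1)$‑extension of bounded exponent, which we absorb into $Y_0$. Since there are only $O_k(1)$ central layers and each contributes an extension of complexity $O_{k,m}(1)$, the final $Y_0$ is a zero‑dimensional $O_{k,m}(1)$‑extension of $X$ of exponent $O_{k,m}(1)$.

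\textbf{Openness, transitivity, and the main obstacle.} Openness of $\mathcal{L}_0$ follows because, for fixed data, the assignment sending $s$ to the class of $\Delta_{s^{[k+1]}}F$ modulo cubic coboundaries takes values in a group on which Theorem \ref{CL:thm}, applied to the automorphism group generated by translations by $\mathcal{G}_r/\mathcal{G}_{r+1}$, forces the obstruction to vanish on an open subgroup, while Theorem \ref{countable} guarantees that only countably many phase‑polynomial classes occur, so the measurable‑selection argument behind Theorem \ref{HK1} applies; this gives (1). For (2), the lifts of the elements of $\mathcal{L}_0$ together with the vertical translations by the fibres of $Y_0\times_{\rho\circ\pi}U\to Y_0$ (which are always in $\mathcal{G}(Y_0\times_{\rho\circ\pi}U)$) generate an open subgroup containing all $T_g$, so by Lemma \ref{open=full} it acts transitively; hence $Y_0\times_{\rho\circ\pi}U$, and therefore the factor $Y_0$, is a finite dimensional $k$‑step nilpotent system. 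The main obstacle throughout is bookkeeping: all of the a priori $\rho$‑dependent root extractions, linearizations and layer‑by‑layer lifts must be funnelled through one extension of complexity $O_{k,m}(1)$, and one must check that lifting a higher central layer does not destroy the liftability obtained below it — this is exactly what the ``moreover'' clauses of Lemma \ref{fixCL:lem} and Theorem \ref{everything} are designed for, and why all extensions are performed first, the Conze--Lesigne equations then being re‑derived over $Y_0$.
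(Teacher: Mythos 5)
Your high-level plan (lift layer by layer along the lower central series, kill obstructions by the extension trick, get openness from countability/measurable selection, and conclude transitivity from Lemma \ref{open=full}) is the same skeleton as the paper's, but the proposal has genuine gaps exactly where the paper's proof does its real work. First, for $r<k$ the elements of $\mathcal{G}_r$ are \emph{not} automorphisms of $X$, so you cannot apply Theorem \ref{CL:thm} to ``translations by $\mathcal{G}_r/\mathcal{G}_{r+1}$'', nor is the situation at layer $r$ parallel to the order $<3$ case. The paper instead forms, for each $s\in\mathcal{G}_j$, the function $\theta_s(g,y)=\psi_{s,g}(g\overline{s}y)\cdot\Delta_s\rho(g,\pi(y))$, whose very definition requires that the commutators $[s^{-1},g^{-1}]$ already have lifts in $\mathcal{G}(Y_{j+1})^\star$; this is why the induction carries the extra datum that $V_j$ contains $\{[s,g]:s\in\mathcal{G}_{j-1}\cap\mathcal{L}_j,\ g\in G\}$, and why Lemmas \ref{lemcomplete} and \ref{mainlem} run a separate ``complexity'' induction over iterated commutators. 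Your proposal never explains how the $\psi_{s,g}$ are obtained, and without the commutator condition the layer-by-layer induction cannot even start at the next level. Moreover $\theta_s$ is not a cocycle; it is only a line-cocycle after a correction (Lemma \ref{line}), so Theorem \ref{HK1} does not apply directly and one needs Theorem \ref{HK1functions}, and since $\hat U$ is not finitely generated one also needs Lemma \ref{HK2functions} to find one positive-measure set working for all characters simultaneously --- none of which is covered by your appeal to Lemma \ref{fixCL:lem}.

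Second, the strategy ``perform all extensions first, then re-derive the Conze--Lesigne equations over $Y_0$'' is circular: each extension introduces new transformations (those coming from the extension $Z_{<k}(Y)\to Z_{<k}(X)$) that themselves must be lifted, so the set of equations to solve depends on the extensions already made. The paper breaks this circularity by splitting into the degenerate and weakly mixing cases (Theorem \ref{redweaklymixing} and Lemma \ref{liftweaklymixing}, using that all auxiliary extensions from Section \ref{ext:sec} are degenerate) and by the $\mathcal{L}_j$, $V_j$ bookkeeping; you would need to reproduce this, not just note it as ``bookkeeping''. Finally, the claim that the residual obstruction becomes a coboundary after an $O_{k,m}(1)$-extension because its ``leading multilinear part is $m$-torsion'' is not justified: the paper needs the delicate Lemma \ref{powers} (a bounded power of $\Delta_s f\cdot\sigma_s$ is a coboundary) together with the Bergelson--Tao--Ziegler theorem applied to the $G_p$-factor to show that the group one extends by is zero dimensional of bounded exponent; without this step there is no control on the exponent of $Y_0$, and the statement ``exponent $m'=O_{k,m}(1)$'' is unproved.
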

Given this result we prove Theorem \ref{InvFDr1:thm} in section \ref{identification:sec}.\\
The extension theorems from section \ref{ext:sec} play an important role in the proof of Theorem \ref{mainclaim}. However the use of these theorems require passing to an extension (or $k$-extension) of the original system. This leads to various difficulties which we will explain soon. First, we need to distinguish between two different types of extensions. These are, weakly mixing extensions and degenerate extensions. We begin with a definition for the former.  
\begin{defn} [Weakly mixing abelian extensions] Let $X$ be a system of order $<k$, $U$ be a compact abelian group and $\sigma:G\times X\rightarrow U$ a cocycle of type $<k$. The extension $X\times_{\sigma} U$ is called weakly mixing if $Z_{<k}(X\times_\sigma U) = X$. In this case we also say that $\sigma:G\times X\rightarrow U$ is weakly mixing.
\end{defn}
A classical result of Host and Kra \cite[Corollary 7.7]{HK} asserts that an extension of a system of order $<k$ by a cocycle of type $<k-1$ is also of order $<k$. We say that this kind of extensions are \textit{degenerate} and note that all of the extensions from section \ref{ext:sec} are degenerate.\\

Moreover, a cocycle $\sigma:G\times X\rightarrow U$ is not weakly mixing if and only there exists a character $1\not=\chi\in\widehat U$ such that $\chi\circ\sigma$ is of type $<k-1$ \cite[Proposition 4]{HK02}. We deduce the following result.
\begin{lem} \label{extweakmixing}
	Let $X$ be a system of order $<k$, and $\sigma:G\times X\rightarrow U$ a weakly mixing cocycle on $X$. Let $\pi:Y\rightarrow X$ be an ergodic extension of order $<k$, then $\sigma\circ\pi$ is weakly mixing.
\end{lem}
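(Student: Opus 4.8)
\textbf{Proof plan for Lemma \ref{extweakmixing}.} The plan is to argue by contradiction using the characterization of weak mixing recalled just before the statement: a cocycle $\tau:G\times W\rightarrow U$ on a system $W$ of order $<k$ fails to be weakly mixing if and only if there is a nontrivial character $\chi\in\hat U$ such that $\chi\circ\tau$ is of type $<k-1$. So suppose $\sigma\circ\pi$ is not weakly mixing. Then there exists $1\neq\chi\in\hat U$ with $\chi\circ(\sigma\circ\pi)=(\chi\circ\sigma)\circ\pi$ of type $<k-1$ as a cocycle on $Y$. The goal is to descend this back to $X$: I want to conclude that $\chi\circ\sigma$ itself is of type $<k-1$ on $X$, which then contradicts the assumption that $\sigma$ is weakly mixing.

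The descent is exactly the content of Lemma \ref{dec:lem} (the Bergelson--Tao--Ziegler descent for quasi-cocycles). First I would observe that $\chi\circ\sigma:G\times X\rightarrow S^1$ is an honest cocycle, hence in particular a quasi-cocycle of order $<k-1$ (indeed of every order, since $d^{[j]}$ of a cocycle is a cocycle). Since $X$ has order $<k$ and $\chi\circ\sigma$ is of type $<k$ (it is a cocycle of type $<k$ because $\sigma$ is, as $\sigma$ is weakly mixing of type $<k$; alternatively type $<k$ is automatic on a system of order $<k$ once we know the relevant seminorm vanishes — but more directly, $\sigma$ being weakly mixing means $X\times_\sigma U$ has order $<k+1$, so $\sigma$ is of type $<k$, hence so is $\chi\circ\sigma$), we are precisely in the setting of Lemma \ref{dec:lem} with the factor map $\pi:Y\to X$ and the function $f=\chi\circ\sigma$: $f$ is a quasi-cocycle of order $<k-1$ on $X$, and $f\circ\pi$ is of type $<k-1$ by our assumption. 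Lemma \ref{dec:lem} then gives that $f=\chi\circ\sigma$ is of type $<k-1$ on $X$.

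But by the characterization of weak mixing quoted above, the existence of such a nontrivial $\chi$ with $\chi\circ\sigma$ of type $<k-1$ contradicts the hypothesis that $\sigma$ is weakly mixing on $X$. Hence $\sigma\circ\pi$ must be weakly mixing, which is the claim.

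The only genuinely delicate point is bookkeeping about orders and types: one must make sure the hypotheses of Lemma \ref{dec:lem} are met, i.e. that $Y$ is ergodic of order $<k$ (given), that $\chi\circ\sigma$ is a quasi-cocycle of order $<k-1$ on $X$ (immediate, being a cocycle), and that $(\chi\circ\sigma)\circ\pi$ is of type $<k$ — which follows from $(\chi\circ\sigma)\circ\pi$ being of type $<k-1$, the latter being exactly the negation of weak mixing for $\sigma\circ\pi$ supplied by \cite[Proposition 4]{HK02}. No extension constructions or cohomology computations are needed; the lemma is a short formal consequence of the two cited results. I expect the write-up to be only a few lines.
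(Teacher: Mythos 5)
Your reduction to a descent statement is the right instinct, but the step where you invoke Lemma \ref{dec:lem} is exactly where the argument breaks. Both descent lemmas in the paper (Lemma \ref{Cdec:lem} and Lemma \ref{dec:lem}) couple their two parameters: the ambient system must be of order $<k$ and the lifted function of type $<k$ with the \emph{same} $k$, and the conclusion is type $<k$ downstairs. In your application the ambient system is $Y$, which is only of order $<k$, while the lift $(\chi\circ\sigma)\circ\pi$ is assumed of type $<k-1$; applying Lemma \ref{dec:lem} as stated only returns that $\chi\circ\sigma$ is of type $<k$ on $X$, which is no contradiction (that is automatic, since $\sigma$ is of type $<k$). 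To conclude type $<k-1$ downstairs via that lemma you would need $Y$ to be of order $<k-1$, which is not given. And the coupling of order and type in those lemmas is essential, not an artifact: type does not descend along factor maps in general. For instance, any cocycle $\rho:G\times X\to S^1$ becomes a coboundary after lifting to the skew product $X\times_\rho S^1$ (take $F(x,u)=u$, so that $\Delta F=\rho\circ\pi$), hence the lift has type $<m$ for every $m$, while $\rho$ itself need not have small type on $X$. So the statement you actually need --- ambient system of order $<k$, lift of type $<k-1$, conclude type $<k-1$ --- is essentially the content of the lemma you are proving and cannot be quoted for free.

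The paper sidesteps descent of type altogether. Assuming $1\neq\chi\in\hat U$ with $\chi\circ\sigma\circ\pi$ of type $<k-1$, it forms the abelian extension $Y\times_{\chi\circ\sigma\circ\pi}\chi(U)$, which is of order $<k$ by \cite[Corollary 7.7]{HK} (an extension of an order $<k$ system by a cocycle of type $<k-1$ stays of order $<k$); its factor $X\times_{\chi\circ\sigma}\chi(U)$ is then also of order $<k$, and since the latter is a factor of $X\times_\sigma U$ this contradicts $Z_{<k}(X\times_\sigma U)=X$: the coordinate $(x,u)\mapsto\chi(u)$ would have to be measurable with respect to $X$. If you want to keep your formulation, replace the appeal to Lemma \ref{dec:lem} by this order-$<k$ argument (or prove the mismatched descent statement directly, which amounts to the same work).
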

\begin{proof}
Suppose by contradiction that for some $1\not =\chi\in\widehat U$, $\chi\circ\sigma\circ\pi$ is of type $<k-1$. Therefore, the extension $Y\times_{\chi\circ\sigma\circ\pi}\chi(U)$ is of order $<k$. Since $X\times_{\chi\circ\sigma}\chi(U)$ is a factor of $Y\times_{\chi\circ\sigma\circ\pi}\chi(U)$, it is also of order $<k$. But then we have a contradiction because $Z_{<k}(X\times_\sigma U)$ must be a non-trivial extension of $X$.
\end{proof}
The following result will allow us to strengthen Lemma \ref{fixCL:lem} for weakly mixing cocycles.
\begin{lem} \label{bounded}
    Let $m,d\geq 1$. Fix a multiset of primes $P$ and let $G=\bigoplus_{p\in P}\mathbb{Z}/p^m\mathbb{Z}$ and $X$ be an ergodic $G$-system. Then for every prime $p\in P$ and a natural number $n\in\mathbb{N}$, if $\rho:G\times Z_{<d}(X)\rightarrow C_{p^n}$ is a weakly mixing cocycle and $(G,Z_{<d}(X),S^1)$-cohomologous to a phase polynomial of degree $<d$, then $n=O_{m,d}(1)$. Moreover, if $p\not\in P$ then $n=0$.
\end{lem}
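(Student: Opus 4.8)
The plan is to work throughout with a fixed decomposition $\rho=P\cdot\Delta F$, where $P\in PC_{<d}(G,Z_{<d}(X),S^1)$ is a phase polynomial cocycle of degree $<d$ and $F:Z_{<d}(X)\to S^1$ is measurable; such a decomposition exists because $\rho$ is $(G,Z_{<d}(X),S^1)$-cohomologous to a phase polynomial of degree $<d$. The case $d=1$ is immediate (then $Z_{<1}(X)$ is trivial and $\rho$ is a homomorphism $G\to C_{p^n}$, and weak mixing together with minimality of $\rho$ with full image forces $C_{p^n}$ to be trivial, so $n=0$ in all cases), so I assume $d\ge 2$. I will use the characterization, quoted above from \cite[Proposition 4]{HK02}, that weak mixing of $\rho$ is equivalent to: for every nontrivial character $\chi$ of $C_{p^n}$, the cocycle $\chi\circ\rho$ is \emph{not} of type $<d-1$. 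Thus it suffices to produce some $N=N(m,d)=O_{m,d}(1)$ with the property that $\rho^{p^N}$ is of type $<d-1$: indeed, if $n>N$ then $\rho^{p^N}=\chi_0^{p^N}\circ\rho$ for the faithful character $\chi_0:C_{p^n}\hookrightarrow S^1$, and $\chi_0^{p^N}$ is nontrivial, contradicting weak mixing; hence $n\le N=O_{m,d}(1)$.

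To build $N$ I would combine two ingredients. First, the degree-lowering property of phase polynomials under powers: if $Q$ is a phase polynomial cocycle of degree $<e$ and $g\in G$ has order $N_g$, then $Q(g,\cdot)^{N_g}$ is a phase polynomial of degree $<e-1$ (the cocycle form of Lemma~\ref{PPC1}); iterating this, there is $c=c(d)$ with $P(g,\cdot)^{N_g^{\,c}}$ constant on $Z_{<d}(X)$ for every $g$ of order $N_g$ (cf.\ the inductive argument in Corollary~\ref{ker:cor}). In particular, writing $G=G_{(p)}\oplus G_{(p')}$ for the decomposition into the $p$-part and the prime-to-$p$ part and setting $N:=mc$, every generator $e_i$ of $G_{(p)}$ (of order dividing $p^m$) satisfies $P(e_i,\cdot)^{p^N}=:c_i$, a constant in $S^1$. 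Second, a coboundary statement for the prime-to-$p$ directions: if $g\in G$ has order $N_g$ coprime to $p$, the line-cocycle identity $\prod_{k=0}^{N_g-1}\rho(g,T_{kg}x)=1$ gives $\rho(g,\cdot)^{N_g}=\Delta_g\bigl(\prod_{k=0}^{N_g-1}\rho(kg,\cdot)\bigr)^{-1}$, and since $\gcd(N_g,p^n)=1$ one extracts an $N_g$-th root and obtains $\rho(g,\cdot)=\Delta_g H_g$ for a measurable $H_g:Z_{<d}(X)\to C_{p^n}$. Consequently $P(g,\cdot)=\rho(g,\cdot)\,\overline{\Delta_g F}=\Delta_g(H_g\overline F)$ is a coboundary, and so is $P^{p^N}(g,\cdot)$, for every generator $g$ of $G_{(p')}$.

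Putting these together, $P^{p^N}$ agrees, on each generator of $G$, either with a constant (the $G_{(p)}$-generators) or with a coboundary (the $G_{(p')}$-generators). Using the commuting relations $\Delta_h\rho(g,\cdot)=\Delta_g\rho(h,\cdot)$ — which propagate to matching identities for the double derivatives $\Delta_h\Delta_g(H_g\overline F)$ across the splitting $G=G_{(p)}\oplus G_{(p')}$ — together with Proposition~\ref{cocycle:prop} (a cocycle is determined by its values on a generating set), I would subtract off the coboundary assembled from $F^{p^N}$ and the $H_g$, and check that the remaining cocycle is constant in the $Z_{<d}(X)$ variable, i.e.\ a homomorphism $G\to S^1$; any such cocycle is of type $<1$, hence of type $<d-1$ because $d\ge2$. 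This gives $\rho^{p^N}=P^{p^N}\cdot\Delta(F^{p^N})$ of type $<d-1$ (enlarging $N$ by a bounded amount if necessary to clear residual constants coming from the line relations of the $c_i$), and hence $n\le N=O_{m,d}(1)$. For the ``moreover'' part, when $p\notin P$ we have $G_{(p)}=\{0\}$, so $\rho(g,\cdot)$ is a coboundary for \emph{every} generator of $G$, and the same assembly shows that $\rho$ itself is $(G,Z_{<d}(X),S^1)$-cohomologous to a homomorphism $\lambda:G\to S^1$. Since $G=\bigoplus_{q\in P}\Z/q^m\Z$ with $p\notin P$, every value $\lambda(g)$ is a root of unity of order coprime to $p$, so the closed subgroup generated by $\operatorname{im}\lambda$ contains no copy of $C_{p^n}$ with $n\ge1$; but by minimality (ergodicity of $Z_{<d}(X)\times_\rho C_{p^n}$) this closed group must contain the full image $C_{p^n}$ of $\rho$, forcing $n=0$.

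The step I expect to be the main obstacle is the assembly carried out in the third paragraph: converting the per-generator information ($P^{p^N}$ is constant on $G_{(p)}$-generators and a coboundary on $G_{(p')}$-generators) into the global statement that $\rho^{p^N}$ is cohomologous to a homomorphism. The delicate point is that the potential functions $H_g$ attached to distinct prime-to-$p$ generators need not glue into a single function, so one must use the commuting relations of $\rho$ carefully to produce a \emph{coherent} coboundary correction, and then invoke Proposition~\ref{cocycle:prop} to recognise the residue as constant in $x$; this is precisely where the hypothesis that $\rho$ (equivalently $P$) is a genuine cocycle, and not merely a family of maps on generators, does the work. Everything else is routine bookkeeping with the degree-lowering lemma, the coprimality argument, and the weak-mixing characterization.
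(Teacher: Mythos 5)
Your reduction to ``find $N=O_{m,d}(1)$ with $\rho^{p^N}$ of type $<d-1$'' is fine, and the two per-generator ingredients are correct as stated: $P(g,\cdot)^{p^{md}}=1$ for $g$ in the $p$-part (Proposition \ref{PPC}), and for $g$ of order coprime to $p$ the line identity plus root extraction gives $\rho(g,\cdot)=\Delta_g H_g$. But the step you yourself flag as the main obstacle is a genuine gap, not routine bookkeeping. What you have for the prime-to-$p$ generators is only a \emph{directional} statement: $P^{p^N}(g,\cdot)=\Delta_g\bigl(H_g^{p^N}\overline{F^{p^N}}\bigr)$ is a $T_g$-coboundary with a potential depending on $g$. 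Passing from such direction-wise data to the global statement that $\rho^{p^N}$ is $(G,Z_{<d}(X),S^1)$-cohomologous to a homomorphism requires a single potential $\Phi$ working for all generators simultaneously, and neither the commuting relations $\Delta_h\rho(g,\cdot)=\Delta_g\rho(h,\cdot)$ nor Proposition \ref{cocycle:prop} supply this: the commuting relations only force the mixed derivatives $\Delta_{g}\Delta_{h}(H_g/H_h)$ to vanish, which does not glue the $H_g$'s, and Proposition \ref{cocycle:prop} constructs a cocycle from its values on generators but contains no cohomological content whatsoever. In general, a cocycle that is a coboundary in each generator direction separately need not be a coboundary (or cohomologous to a constant) for the whole action, so the residue you hope to ``recognise as constant in $x$'' need not exist on $Z_{<d}(X)$ itself. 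This gluing/root problem is exactly what the paper's proof of Lemma \ref{bounded} avoids by a different mechanism: since $\rho^{p^n}=1$, the single global function $F^{p^n}$ is a phase polynomial, one passes to an extension (Theorem \ref{extension:thm}) on which it has a phase-polynomial $p^n$-th root $Q$, rewrites $\rho=P'\cdot\Delta F'$ with $P'=P\Delta Q$ now $C_{p^n}$-valued, bounds the order of $P'$ by Proposition \ref{PPC}, and then uses Lemma \ref{extweakmixing} (weak mixing persists under ergodic extensions of order $<d$) to get the contradiction on the extension. Your proposal deliberately stays on $Z_{<d}(X)$, but then you must prove the gluing statement there, and no argument is given; I see no way to do it with the cited tools, which is presumably why the extension machinery of Section \ref{ext:sec} exists.

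Two smaller points. First, your base case $d=1$ is wrong as stated: if $p\in P$, a surjective homomorphism $\rho:G\to C_{p^n}$ with $n\le m$ gives an ergodic (hence ``weakly mixing'' in the degenerate sense) extension of the trivial system, so $n=0$ fails, although $n\le m=O_{m,1}(1)$ still holds; only for $p\notin P$ does torsion coprime to $p$ force $n=0$. Second, be careful with the word ``coboundary'': $P(g,\cdot)=\Delta_g(H_g\overline F)$ is a coboundary only in the $g$-direction, and treating it as a $(G,Z_{<d}(X),S^1)$-coboundary is precisely the conflation that hides the gap above.
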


\begin{proof}
Write $\rho  = P \cdot \Delta F$ where $P:G\times Z_{<d}(X)\rightarrow S^1$ is a phase polynomial cocycle of degree $<d$ and $F:Z_{<d}(X)\rightarrow S^1$. By assumption $P^{p^n}\cdot \Delta F^{p^n} = 1$, we conclude that $F^{p^n}$ is a phase polynomial of degree $<d+1$. If $n<d$ the claim follows, otherwise by Proposition \ref{PPC} we see that $F^{p^n}$ is $T_g$-invariant for every $g\in G$ of order $p$ (this part is trivial if $p\not\in P$). By Lemma \ref{extension:thm} we can pass to an extension of $X$ and assume that $F^{p^n}$ has a phase polynomial root $Q$ of degree $<d$. Let $P' = P\cdot \Delta Q$ and $F=F/Q$ then $\rho = P'\cdot \Delta F'$. Moreover, $P'$ takes values in $C_{p^n}$ and therefore by Proposition \ref{PPC} in $C_{p^l}$ for some $l=O_{m,d}(1)$ (or that $P'$ is trivial if $p\not\in P$). We conclude that $\rho^{p^l}$ is a coboundary on an extension of $Z_{<d}(X)$. This contradicts Lemma \ref{extweakmixing}, unless $n=l$ (or $n=0$ if $p\not\in P$).
\end{proof}
The role of Theorem \ref{extension:thm} in the proof of Lemma \ref{fixCL:lem} is to show that if $\chi$ takes values in some $C_{p^n}$ then we can replace $P_\chi$ and $F_\chi$ in equation (\ref{CL}) with $P_\chi'$ and $F_\chi'$ which takes values in $C_{p^{O_{m,d}(1)}}$. Therefore, the argument in the proof of Lemma \ref{fixCL:lem} requires passing to extensions by totally disconnected groups of potentially unbounded exponent. However now we know (by the lemma above) that if the cocycle is weakly mixing then the quantity $n$ must be bounded.  In other words we have the following stronger version of Lemma \ref{fixCL:lem}.
\begin{cor}
    In the settings of Lemma \ref{fixCL:lem}. If $X$ is of order $<k$ then there exists $\alpha=O_{k,m,l}(1)$ such that the extension $Y$ is a (bounded) tower of extensions of $X$ by totally disconnected groups of exponent $\alpha$.
\end{cor}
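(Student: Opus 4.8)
The strategy is to re-run the proof of Lemma \ref{fixCL:lem}, keeping track of the exponent of every zero dimensional group that is adjoined, and to pinpoint and remove the single source of unboundedness. Recall that in that proof the extension $Y$ is assembled in a bounded number of steps: a zero dimensional extension used to extract $p^n$-th roots of the phase polynomials $F_\chi^{p^n}$ attached to the torsion coordinates $\chi$ of the Lie group $U/\Delta$ (via Theorem \ref{extension:thm} when $n$ is large relative to the degree, and via Theorem \ref{extensionlowchar:thm} otherwise), a further zero dimensional extension realising the finitely many relations among a generating set of $\widehat U$, and the analogous root extractions for the coordinates of $\Delta$. Theorem \ref{extensionlowchar:thm} only ever produces zero dimensional groups of exponent $O_{m,l}(1)$, and the relations for the coordinates of $\Delta$ involve only $p^r$-th roots with $r\le l$; so the one place where a zero dimensional group of a priori unbounded exponent can enter is an application of Theorem \ref{extension:thm} to a character $\chi$ of $U/\Delta$ whose order $p^n$ is unbounded. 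It therefore suffices to prove $n=O_{k,m,l}(1)$ for every such $\chi$ that is actually used, since then Theorem \ref{extensionlowchar:thm} applies in its stead and adjoins a zero dimensional group of exponent $O_{k,m,l}(1)$.

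To bound $n$, I would first reduce to the case where $\rho$ is weakly mixing. Since $X$ is of order $<k$, Mackey--Zimmer theory presents the abelian extension $X\times_\rho U$ as a degenerate extension $X\to X':=X\times_{\rho'}(U/U_0)$, with $\rho'$ of type $<k-1$, followed by a weakly mixing extension $X'\to X'\times_{\widetilde\rho}U_0$; both $U_0$ and $U/U_0$ remain finite dimensional of exponent $\le l$, and dualising $1\to U_0\to U\to U/U_0\to1$ partitions the relevant characters of $\widehat U$ into those that factor through $U/U_0$ (for which the hypothesis of Lemma \ref{fixCL:lem} descends to $\rho'$) and those mapping onto a generating set of $\widehat{U_0}$ (for which it descends to $\widetilde\rho$). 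The contribution of the degenerate cocycle $\rho'$ is handled by induction on $k$: one passes to the appropriate lower characteristic factor and quotes the statement with the smaller order parameter, at the cost of $O_k(1)$ further zero dimensional extensions of exponent $O_{k,m,l}(1)$. For the weakly mixing cocycle $\widetilde\rho\colon G\times X'\to U_0$ the essential point is that, for every nontrivial $\chi\in\widehat{U_0}$ of order $p^n$, the cyclic cocycle $\chi\circ\widetilde\rho$ is again weakly mixing: any nontrivial character of $C_{p^n}$ composed with $\chi\circ\widetilde\rho$ has the form $\chi^j\circ\widetilde\rho$ with $\chi^j\ne1$, and were it of lower type this would contradict the weak mixing of $\widetilde\rho$ via the characterisation recalled before Lemma \ref{extweakmixing}. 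Together with the fact that a weakly mixing cocycle cannot be cohomologous to a phase polynomial of degree strictly below the order of its base (so that the relevant phase polynomial degree matches that order), this places $\chi\circ\widetilde\rho$ exactly in the scope of Lemma \ref{bounded}, which yields $n=O_{k,m,l}(1)$, and indeed $n=0$ whenever $p\notin P$, so that no extension is needed at all in that case.

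Assembling these steps: in the re-run of the proof of Lemma \ref{fixCL:lem} every application of Theorem \ref{extension:thm} is now with $n=O_{k,m,l}(1)$ and hence may be replaced by Theorem \ref{extensionlowchar:thm}; all extensions in sight are then by zero dimensional groups of exponent $O_{k,m,l}(1)$; and the whole construction --- including the finitely many extra zero dimensional extensions coming from the Mackey--Zimmer splitting and from unwinding the induction on $k$ --- is a tower of length $O_{k,m,l}(1)$ over $X$. Setting $\alpha=O_{k,m,l}(1)$ equal to the largest exponent that occurs completes the argument.

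The hard part is the reduction to the weakly mixing case. One must verify that finite dimensionality and exponent $\le l$ are inherited by the subgroup $U_0$ and the quotient $U/U_0$, that the Conze--Lesigne hypothesis genuinely splits along $1\to U_0\to U\to U/U_0\to1$, and --- most delicately --- that Lemma \ref{bounded} really does apply to the cyclic cocycles $\chi\circ\widetilde\rho$: this requires identifying their type and base characteristic factor correctly, and using that weak mixing of a cocycle into a cyclic $p$-group is inherited by every further quotient of that $p$-group, so that the hypothesis of Lemma \ref{bounded} is met with the degree parameter equal to the order of the base. The degenerate-part induction is routine but needs care so that the various zero dimensional extensions can be amalgamated into a single tower over $X$.
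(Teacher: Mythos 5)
Your proposal follows essentially the same route as the paper: the paper, too, obtains this corollary by re-running the proof of Lemma \ref{fixCL:lem}, noting that the only source of unbounded exponent there is the application of Theorem \ref{extension:thm} to characters of large order $p^n$, and invoking Lemma \ref{bounded} to bound $n$ via weak mixing. The reduction to the weakly mixing case (the Mackey--Zimmer splitting and the induction on $k$ for the degenerate part) that you spell out is left implicit in the paper, which states the corollary without a formal proof immediately after Lemma \ref{bounded}.
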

It is classical (see \cite[Proposition 7.6]{HK}) that every extension can be decomposed as a weakly mixing extension and a \textit{degenerate} extension. More formally, let $X$ be a system of order $<k$ and let $\sigma:G\times X\rightarrow U$ be a cocycle of type $<k+1$. If we let $W$ be the annihilator of $\{\chi\in\widehat U : \chi\circ\sigma \text{ is of type } <k\}$, then $X\times_{\sigma} U = X\times_{\sigma\mod W} U/W\times_\tau W$ where $X\times_{\sigma\mod W} U/W$ is a system of order $<k$ and $\tau$ is a weakly mixing cocycle. In particular, we see that it is enough to prove Theorem \ref{mainclaim} in the case where $\rho$ is of type $<k$ (the degenerate case) and in the case where $\rho$ is weakly mixing. In this section we prove the former, we begin with the following lemma about degenerate extensions.
\begin{lem} \label{factor}
Let $k\geq 1$ and let $X=Z_{<k}(X)\times_\sigma W$ be an ergodic $G$-system of order $<k+1$. Let $Y=Z_{<k}(Y)\times_\tau V$ be an ergodic extension of $X$ of the same order. Then, there exists a surjective homomorphism $\varphi:V\rightarrow W$ such that $\varphi\circ\tau $ is $(G,Z_{<k}(Y),W)$-cohomologous to $\sigma\circ\pi_{k}$ where $\pi_{k}:Z_{<k}(Y)\rightarrow Z_{<k}(X)$ is the factor map.
\end{lem}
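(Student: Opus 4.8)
The plan is to read the factor map $\pi\colon Y\to X$ in the abelian–extension coordinates of both systems and then use the ergodicity of $Y$ to show that $\pi$ is \emph{affine} in the vertical ($V$-)variable. First I would record that $Z_{<k}(X)$, being the maximal factor of $X$ of order $<k$ and $X$ a factor of $Y$, is also a factor of $Z_{<k}(Y)$; this produces the factor map $\pi_k\colon Z_{<k}(Y)\to Z_{<k}(X)$ and makes the square formed by $\pi$, $\pi_k$ and the two canonical projections $Y\to Z_{<k}(Y)$, $X\to Z_{<k}(X)$ commute. Hence, writing $Y=Z_{<k}(Y)\times_\tau V$ and $X=Z_{<k}(X)\times_\sigma W$ as measure spaces, $\pi$ has the form $\pi(z,v)=(\pi_k(z),p(z,v))$ for some measurable $p\colon Z_{<k}(Y)\times V\to W$, and intertwining the $G$-actions (together with $\pi_k\circ T_g=T_g\circ\pi_k$) forces
\begin{equation*}
p(T_gz,\tau(g,z)v)=\sigma(g,\pi_k(z))\,p(z,v)\qquad\text{for all }g\in G.
\end{equation*}

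Next I would linearise $p$. For fixed $v_0\in V$ put $p_{v_0}(z,v):=p(z,v_0v)\,p(z,v)^{-1}$; applying the displayed identity to $v_0v$ and to $v$ and dividing shows that $p_{v_0}$ is $G$-invariant, hence, by ergodicity of $Y$, a.e.\ equal to a constant $\varphi(v_0)\in W$. The cocycle identity for $p$ in the $V$-variable then gives $\varphi(v_0v_1)=\varphi(v_0)\varphi(v_1)$, so $\varphi\colon V\to W$ is a measurable homomorphism of compact groups and therefore continuous (Lemma~\ref{A.Weil}). Writing $F(z):=p(z,1_V)$ we get $p(z,v)=\varphi(v)F(z)$; substituting back into the displayed identity and cancelling $\varphi(v)$ yields $\varphi(\tau(g,z))\,F(T_gz)=\sigma(g,\pi_k(z))\,F(z)$, i.e.\ $\varphi\circ\tau$ is $(G,Z_{<k}(Y),W)$-cohomologous to $\sigma\circ\pi_k$.

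The remaining point — which I expect to be the only genuinely delicate one — is surjectivity of $\varphi$. The input here is that $\sigma$ is \emph{weakly mixing}: in the classical weakly mixing/degenerate decomposition of $\sigma$ (\cite[Proposition 7.6]{HK}) the degenerate part would be an order-$<k$ factor of $X$ strictly above $Z_{<k}(X)$, which is impossible since $Z_{<k}(X)$ is the $k$-th universal characteristic factor; recall also that $\sigma$ is minimal with image $W$ by Lemma~\ref{minimal}. Now suppose $W_0:=\varphi(V)$ is a proper closed subgroup of $W$, and choose a nontrivial $\chi\in\widehat{W/W_0}\hookrightarrow\widehat W$. Then $\chi\circ\varphi\equiv1$ on $V$, so the cohomology relation above gives $(\chi\circ\sigma)\circ\pi_k=\Delta(\chi\circ F)$, a coboundary on $Z_{<k}(Y)$, in particular of type $<k$. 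Applying the descent lemma (Lemma~\ref{dec:lem}) to the factor map $\pi_k\colon Z_{<k}(Y)\to Z_{<k}(X)$ between systems of order $<k$, we deduce that $\chi\circ\sigma$ is of type $<k$ on $Z_{<k}(X)$, contradicting weak mixing of $\sigma$. Hence $\varphi(V)=W$, which is the required surjection. The bookkeeping of the commuting square in the first step and the verification that the weak-mixing reduction and Lemma~\ref{dec:lem} apply verbatim in the present setting are the places I would be most careful.
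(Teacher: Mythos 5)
Your first two steps coincide with the paper's proof: writing $\pi(z,v)=(\pi_k(z),p(z,v))$, using the intertwining relation and ergodicity of $Y$ to see that $\Delta_{v_0}p$ is $G$-invariant hence constant, obtaining the homomorphism $\varphi$ and the factorization $p(z,v)=\varphi(v)F(z)$, and deducing $\sigma\circ\pi_k=\varphi\circ\tau\cdot\Delta F$. That part is correct. The problem is the surjectivity step, where you depart from the paper. You take $\chi\in\widehat{W}$ nontrivial on $W/\overline{\varphi(V)}$, note that $(\chi\circ\sigma)\circ\pi_k=\Delta(\chi\circ F)$ is a coboundary on $Z_{<k}(Y)$, hence ``of type $<k$,'' descend via Lemma \ref{dec:lem} to conclude $\chi\circ\sigma$ is of type $<k$ on $Z_{<k}(X)$, and declare this contradicts weak mixing of $\sigma$. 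It does not: $\sigma$ is of type $<k$ to begin with (Proposition \ref{abelext:prop}), so ``$\chi\circ\sigma$ of type $<k$'' is vacuous. The criterion quoted in the paper (\cite[Proposition 4]{HK02}) says $\sigma$ fails to be weakly mixing iff some nontrivial $\chi$ makes $\chi\circ\sigma$ of type $<k-1$, and the descent lemma you invoke only transfers type-$<k$ statements along a factor map from a system of order $<k$ ($Z_{<k}(Y)$ is of order $<k$, not $<k-1$), so it cannot produce the type-$<k-1$ conclusion you would need. As written, the final contradiction does not follow.

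The argument is repairable without descending the type at all: by Lemma \ref{extweakmixing}, applied to the ergodic order-$<k$ extension $\pi_k\colon Z_{<k}(Y)\to Z_{<k}(X)$ and the weakly mixing cocycle $\sigma$, the lifted cocycle $\sigma\circ\pi_k$ is itself weakly mixing on $Z_{<k}(Y)$; but a nontrivial $\chi$ with $(\chi\circ\sigma)\circ\pi_k$ a coboundary (in particular of type $<k-1$) contradicts this directly, so $\varphi(V)=W$. (Equivalently, one can run the order/maximality argument from the proof of Lemma \ref{extweakmixing} by hand.) Note also that the paper avoids weak mixing entirely here: it observes that $(y,v)\mapsto(y,\varphi(v)F(y))$ exhibits $Z_{<k}(Y)\times_{\sigma\circ\pi_k}W$ as a factor of the ergodic system $Y$, so this extension is ergodic, and then Zimmer's Lemma \ref{minimal} forces the image of the cocycle $\varphi\circ\tau$ (cohomologous to $\sigma\circ\pi_k$) to generate all of $W$, giving surjectivity of $\varphi$; that route is shorter and uses only ergodicity and minimality.
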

\begin{proof}
	Let $\pi:Y\rightarrow X$ be the factor map. It is classical that $\pi$ defines a factor $\pi_{k}:Z_{<k}(Y)\rightarrow Z_{<k}(X)$ and we have that
	$$\pi(y,v) = (\pi_{k}(y),p(y,v))$$ where $p:Y\rightarrow W$.
	
	Since $\pi$ commutes with $G$, we conclude that $\Delta_g p(y,v) = \sigma(g,\pi_{k}(y))$. In particular, we see that $\Delta p$ is invariant under the action of $V$. Since $Y$ is ergodic, $\Delta_v p$ is a constant. We conclude that there exists a homomorphism $\varphi:V\rightarrow W$ and a measurable map $F:Z_{<k}(Y)\rightarrow W$ such that $p(y,v)=\varphi(v)\cdot F(y)$. Therefore, $\sigma(g,\pi_k(y))=\Delta_g p(y,v) = \varphi\circ\tau(g,y)\cdot \Delta F$. Since $Z_{<k}(Y)\times_{\sigma\circ\pi_{k}} W$ is a factor of $Y$, it is ergodic. It follows by Lemma \ref{minimal} that the image of $\varphi\circ\tau$ is $W$. We conclude that $\varphi$ is surjective as required.
\end{proof}
Our next result is that Theorem \ref{mainclaim} holds for degenerate extensions if the group $U$ is totally disconnected of bounded exponent. 
\begin{thm}[A degenerate version of Theorem \ref{mainclaim}] \label{redweaklymixing}
 Let $k,m\geq 1$ and let $X=\mG(X)/\Gamma$ be a finite dimensional $k$-step nilpotent system. Let $\rho:G\times X\rightarrow U$ be a cocycle of type $<k$ into some zero dimensional compact abelian group $U$ of exponent $m$. Then, there exists an ergodic $O_{k,m}(1)$-extension $\pi:Y_0\rightarrow X$ with the same properties as in Theorem \ref{mainclaim}.
\end{thm}
Recall that by Lemma \ref{factor} if $Y$ is an extension of $X$ of the same order then we can find compact abelian groups $V$ and $U$ such that $Y=Z_{<k}(Y)\times_\tau V$ and $X=Z_{<k}(X)\times_\sigma U$. Moreover, there exists a surjective homomorphism $\varphi:V\rightarrow U$ such that $\varphi\circ\tau$ is cohomologous to $\sigma\circ\pi_k$ where $\pi_k:Z_{<k}(Y)\rightarrow Z_{<k}(X)$ is a factor map. It will be convenient to lift the elements of $\mG(X)$ to an intermediate factor $\tilde{X} = Z_{<k}(Y)\times_{\sigma\circ\pi_k} U$ and only then to lift them to $Y$. Each of these steps will require extending the $k$-th Host-Kra factor further. For the first lift we need the following corollary of Lemma \ref{CLproperty:lem}.
\begin{cor} \label{liftweaklymixing}
	Let $k\geq 1$. Let $X$ be an ergodic $G$-system of order $<k$ and $\rho:G\times X\rightarrow U$ a weakly mixing cocycle. Let $\pi:Y\rightarrow X$ be an ergodic extension of $X$ of order $<k$ and $s\in\mathcal{G}(X)$. If $s$ has a lift in $\mathcal{G}(X\times_\rho U)$ and a lift in $\mathcal{G}(Y)$, then $s$ can be lifted to $\mathcal{G}(Y\times_{\rho\circ\pi} U)$.
\end{cor}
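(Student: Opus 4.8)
The plan is to combine Lemma~\ref{CLproperty:lem} with the two given facts about $\rho$, $\tau$ and $\sigma$. Write $X = Z_{<k}(X)\times_\sigma U$ and $Y = Z_{<k}(Y)\times_\tau V$, and let $\pi_k:Z_{<k}(Y)\to Z_{<k}(X)$ be the factor map induced by $\pi$. Let $d^{[k]}\rho = \Delta F$ for some $F:Z_{<k}(X)^{[k]}\to U$ (using that $\rho$ is of type $<k$). Suppose $s\in\mathcal G(X)$ has a lift $\bar s^{(1)}$ in $\mathcal G(X\times_\rho U)$ and a lift $\bar s^{(2)}$ in $\mathcal G(Y)$. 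Applying Lemma~\ref{CLproperty:lem} to the extension $X\times_\rho U$, the first lift provides a map $\phi:Z_{<k}(X)\to U$ with $\Delta_{s^{[k]}}F = d^{[k]}\phi$, where here I abuse notation and write $s^{[k]}$ for the action of the lift of $s$ in $\mathcal G(X)^{[k]}$ on $X^{[k]}$. (One should note that $Z_{<k}(X\times_\rho U) = X$ since $\rho$ is weakly mixing, so the hypothesis of Lemma~\ref{CLproperty:lem} is exactly of this form.)

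Next I would pull everything back along $\pi_k$. The cocycle $\rho\circ\pi_k : G\times Z_{<k}(Y)\to U$ is again of type $<k$ (types only go down under factor maps, by Lemma~\ref{dec:lem} or the basic functoriality of types), and $d^{[k]}(\rho\circ\pi_k) = \Delta(F\circ\pi_k^{[k]})$. The lift $\bar s^{(2)}\in\mathcal G(Y)$ projects to an element $\tilde s\in\mathcal G(Z_{<k}(Y))$ which in turn projects under $\pi_k$ to the element of $\mathcal G(Z_{<k}(X))$ underlying $s$; this is where one uses that $\pi$ commutes with the $G$-action so it respects the whole tower of characteristic factors. Then $\Delta_{\tilde s^{[k]}}(F\circ\pi_k^{[k]}) = (\Delta_{s^{[k]}}F)\circ\pi_k^{[k]} = d^{[k]}(\phi\circ\pi_k)$, so by the converse direction of Lemma~\ref{CLproperty:lem}, $\tilde s$ has a lift $S_{\tilde s,\phi\circ\pi_k}$ in $\mathcal G(Z_{<k}(Y)\times_{\rho\circ\pi_k}U)$.

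It remains to pass from the intermediate extension $Z_{<k}(Y)\times_{\rho\circ\pi_k}U$ to $Y\times_{\rho\circ\pi}U$. Here I use that $Y = Z_{<k}(Y)\times_\tau V$ together with Lemma~\ref{factor}: there is a surjection $\varphi:V\to U$ with $\varphi\circ\tau$ cohomologous to $\sigma\circ\pi_k$, hence $Z_{<k}(Y)\times_{\rho\circ\pi_k}U$ is a factor of $Y$ — more precisely $Y\times_{\rho\circ\pi}U$ and $(Z_{<k}(Y)\times_{\rho\circ\pi_k}U)\times_{\tau'}V'$ agree for appropriate data, where the additional fibre is weakly mixing over the intermediate factor. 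The map $\bar s^{(2)}$ already lives in $\mathcal G(Y)$ and acts on the $V$-coordinate compatibly; combining it with the lift $S_{\tilde s,\phi\circ\pi_k}$ produced above (they induce the same transformation on $Z_{<k}(Y)$, so they differ by an element of the structure group, which is harmless since structure-group rotations are automatically in $\mathcal G$) gives the desired lift of $s$ to $\mathcal G(Y\times_{\rho\circ\pi}U)$.

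\textbf{Main obstacle.} The delicate point is bookkeeping the two lifts of $s$ — the one in $\mathcal G(Y)$ and the one in $\mathcal G(X\times_\rho U)$ — and checking that they can be \emph{simultaneously} realized as a single transformation of $Y\times_{\rho\circ\pi}U$, rather than merely each separately on its own factor. Concretely one must verify that the cocycle identity $\Delta_{\tilde s^{[k]}}(F\circ\pi_k^{[k]}) = d^{[k]}(\phi\circ\pi_k)$ is the \emph{same} equation needed for the $V$-coordinate to be consistent, which is where the hypothesis that $\rho$ is weakly mixing (so $Z_{<k}$ of the relevant extensions does not grow) is used essentially; without it the factor structure would not align and the two lifts could be incompatible. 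The rest is routine diagram-chasing through Lemmas~\ref{CLproperty:lem}, \ref{factor} and \ref{extweakmixing}.
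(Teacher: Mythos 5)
Your first two paragraphs are essentially the paper's own argument, and they already finish the proof: since $X$ and $Y$ are themselves of order $<k$, one has $Z_{<k}(X)=X$ and $Z_{<k}(Y)=Y$, so $\pi_k=\pi$, $F$ lives on $X^{[k]}$, and the ``intermediate extension'' $Z_{<k}(Y)\times_{\rho\circ\pi_k}U$ is literally $Y\times_{\rho\circ\pi}U$. The one point you should state explicitly at that moment (rather than only allude to at the end) is Lemma \ref{extweakmixing}: it guarantees that $\rho\circ\pi$ is again weakly mixing, i.e.\ $Z_{<k}(Y\times_{\rho\circ\pi}U)=Y$, and this is exactly what licenses applying Lemma \ref{CLproperty:lem} to that extension with base $Y$. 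With that, the displayed identity $\Delta_{\tilde s^{[k]}}(F\circ\pi^{[k]})=d^{[k]}(\phi\circ\pi)$ shows that $S_{\tilde s,\phi\circ\pi}$ lies in $\mathcal{G}(Y\times_{\rho\circ\pi}U)$, and you are done. (Also, the direction of Lemma \ref{CLproperty:lem} you use there is the forward implication, not the converse.)

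Your third paragraph, and the ``main obstacle'' you describe, do not correspond to anything in the statement. The decompositions $X=Z_{<k}(X)\times_\sigma U$, $Y=Z_{<k}(Y)\times_\tau V$ and Lemma \ref{factor} belong to the proof of Theorem \ref{redweaklymixing}, where this corollary is applied, not to the corollary itself; for $X,Y$ of order $<k$ these decompositions are vacuous. Moreover, the claim that $Z_{<k}(Y)\times_{\rho\circ\pi_k}U$ is a factor of $Y$ is false (it is an extension of $Y$), and no ``combination'' of the two given lifts is required: the desired lift is the single transformation $S_{\tilde s,\phi\circ\pi}$, where $\tilde s$ is the given lift in $\mathcal{G}(Y)$ and $\phi$ comes from the given lift in $\mathcal{G}(X\times_\rho U)$, with membership in $\mathcal{G}(Y\times_{\rho\circ\pi}U)$ verified purely by Lemmas \ref{extweakmixing} and \ref{CLproperty:lem}. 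So the core of your proposal is correct and coincides with the paper's proof, but the last step is superfluous and, as written, incorrect.
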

\begin{proof}
Write $d^{[k]}\rho =\Delta F$ and let $s\in \mG(X)$ be as in the claim. By Lemma \ref{CLproperty:lem} there exists a measurable map $\phi_s:X\rightarrow U$ such that $S_{s,\phi_s}\in\mathcal{G}(X\times_\rho U)$. Let $\tilde{s}$ be a lift of $s$ in $\mathcal{G}(Y)$, since $$\Delta_{\tilde{s}^{[k]}} F\circ \pi = (\Delta_{s^{[k]}} F) \circ \pi = d^{[k]}\phi_s\circ\pi$$ we conclude by Lemma \ref{extweakmixing} and Lemma \ref{CLproperty:lem} that the transformation $S_{\tilde{s},\phi_s\circ\pi}$ is a lift of $\tilde{s}$ in $\mathcal{G}(Y\times_{\rho\circ\pi} U)$, as required.
\end{proof}
For convenience, if $X$ is a factor of a system $Y$, $\pi^Y_X:Y\rightarrow X$ is the factor map and $F$ is a function on $X$ we refer to $F\circ\pi^Y_X$ as the lift of $F$ to $Y$. We turn to the proof of Theorem \ref{redweaklymixing}.
\begin{proof} 
Let $X=\mG/\Gamma$ and $\rho:G\times X\rightarrow U$ be as in Theorem \ref{redweaklymixing} and assume that $\rho$ is of type $<k$ and $U$ is totally disconnected of exponent $m$. By proposition \ref{abelext:prop} there exists a compact abelian group $W$ and a cocycle $\sigma:G\times Z_{<k}(X)\rightarrow W$ such that $X=Z_{<k}(X)\times_{\sigma} W$. Let $Y=X\times_\rho U$ and write $Y=Z_{<k}(Y)\times_{\tau} V$ for some compact abelian groups $V$ and a cocycle $\tau$. Let $\pi_k:Z_{<k}(Y)\rightarrow Z_{<k}(X)$ be the factor map, then by Lemma \ref{factor} there exist a surjective homomorphism $\varphi:V\rightarrow W$ and a measurable map $F:Z_{<k}(Y)\rightarrow W$ such that that $\sigma\circ \pi_k=\varphi\circ\tau\cdot \Delta F$.
\begin{figure}[H]
		\includegraphics{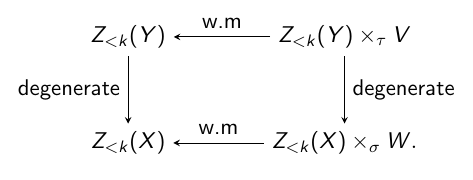}
	\end{figure}
Observe that $Z_{<k}(Y)$ is a degenerate extension of $Z_{<k}(X)$ by a zero dimensional group and $Y$ is a weakly mixing extension of $Z_{<k}(Y)$. Therefore we can apply the induction hypothesis in order to lift transformations from $Z_{<k}(X)$ to $Z_{<k}(Y)$ and then use the weakly mixing case to lift from $Z_{<k}(Y)$ to $Z_{<k}(Y)\times_{\tau}V$. Each time we have to pass to an extension. We conclude that there exists an extension $Y_0 = Z_{<k}(Y_0)\times_{\tau_0} V$ where $\tau_0=\tau\circ\pi^{Z_{<k}(Y_0)}_{Z_{<k}(Y)}$ and such that $$\mH = \{s\in \mG(Z_{<k}(X)) : \text{ there exists a lift of } s \text { in } \mG(Y_0)\}$$ is an open subgroup of $\mG(Z_{<k}(X))$. Indeed, $\mathcal{H}$ here equals to the group $\mathcal{L}$ defined in Theorem \ref{mainclaim}.\\

Now let $S_{s,\phi}$ be any transformation in $\mG(X)$. If $s\in\mH$ then there exists a lift $\overline{s}\in \mG(Z_{<k}(Y_0))$ and a measurable map $\psi:Y_0\rightarrow V$ such that $S_{\overline{s},\psi}\in \mG(Y_0)$.\\ Let $\tilde{\sigma}:G\times Z_{<k}(Y_0)\rightarrow W$ be the lift of $\sigma$ to $Z_{<k}(Y_0)$, that is $\tilde{\sigma} = \sigma\circ  \pi^{Z_{<k}(Y_0)}_{Z_{<k}(X)}$. Similarly let $\tilde{\phi}$ and $\tilde{F}$ be the lifts of $F$ and $\phi$ to $Z_{<k}(Y_0)$, respectively. We consider the intermediate factor $\tilde{X}= Z_{<k}(Y_0)\times_{\tilde{\sigma}} W$. By Lemma \ref{CLproperty:lem} and Corollary \ref{liftweaklymixing} we see that $S_{\overline{s},\tilde{\phi}}$  and $S_{\overline{s},\varphi\circ\psi\cdot \Delta_{\overline{s}}\tilde{F}}$ belong to $\mG(\tilde{X})$. Therefore,
$$\frac{\tilde{\phi}}{\varphi\circ\psi \cdot \Delta_{\overline{s}}\tilde{F}}\in P_{<k}(Z_{<k}(Y_0),W).$$
 By Theorem \ref{everything} we can pass to an extension $Y_1=Z_{<k}(Y_1)\times_{\tau_1} V$ where we can find a phase polynomial $p:Z_{<k}(Y_1)\rightarrow V$ such that $\varphi\circ p = \frac{\tilde{\phi}}{\varphi\circ\psi \cdot \Delta_{\overline{s}}\tilde{F}} $. Arguing as before we can pass to another extension $Y_2=Z_{<k}(Y_2)\times_{\tau_2} V$ and find an open subgroup $\mH'$ of $\mH$ of transformations in $\mG(Z_{<k}(X))$ which has a lift in $\mG(Y_2)$. Let $\overline{p}$ and $\overline{\psi}$ be the lifts of $p$ and $\psi$ to $Z_{<k}(Y_2)$, respectively. We conclude that for every $s\in \mH'$ the transformation $S_{\tilde{s},\overline{\psi}\cdot p}$ induces the action of $S_{s,\phi}$ on $X$. Since $\mH'$ is open, the group of all elements in $\mH$ which admits a lift in $\mG(X)$ contains the open subgroup $\mH'$ and is therefore also open. This completes the proof.
\end{proof}
The case where the group $U$ in theorem \ref{redweaklymixing} is of dimension greater than zero follows by a similar argument. Indeed, if $V$ is an extension of $W$ by a finite dimensional group $U$ of dimension $n$, then $V$ is an extension of $(S^1)^n\times W$ by a zero dimensional group. Since the extensions which arise from Theorem \ref{extensionlowchar:thm} are zero dimensional, this case is not needed in the proof of Theorem \ref{InvFDr1:thm} and we leave the details for the interested reader.
\section{Proof of Theorem \ref{Main2:thm} Part II} \label{Main2:proof2}
As mentioned in the previous section it is enough to prove Theorem \ref{mainclaim} in the case where the extension is weakly mixing. In this case we have the following result of Host and Kra \cite[Lemma 10.8]{HK}.
\begin{lem} \label{exactdiff2:lem}
	Let $k\geq 0$ and let $X$ be an ergodic $G$-system of order $<k+1$ and $\sigma:G\times X\rightarrow U$ be a weakly mixing cocycle with $d^{[k+1]}\sigma = \Delta F$. Let $j$ be an integer with $0\leq j <k+1$ and let $\mathcal{G}_j$ be the $j$-th group in the lower central series for $\mathcal{G}$. Then, for every $t\in \mathcal{G}_j$ and a measurable map $\phi:X\rightarrow U$ the following are equivalent:
	\begin{enumerate}
	\item { For every $(k-j+1)$-face $\beta$, $\Delta_{t^{[k+1]}_\beta} F  = d^{[k+1]}_\beta \phi$.}
	\item {For every $(k-j)$-face $\alpha$, $\frac{\Delta_{t^{[k+1]}_\alpha}F}{d^{[k+1]}_\alpha \phi}$ is an invariant function on $X^{[k+1]}$.}
	\end{enumerate}
\end{lem}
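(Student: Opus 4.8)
This is Host and Kra's Lemma~10.8 (\cite[Lemma 10.8]{HK}), and the plan is to reproduce their cube calculus in the present setting. Write $l=k+1$ and work with faces of $\{0,1\}^{l}$. The computational engine is the splitting of a $(k-j+1)$-face $\beta$ along a coordinate direction $i$ in which $\beta$ is free: this realises $\beta$ as a disjoint union $\beta=\alpha\sqcup\alpha'$ of two $(k-j)$-faces, and with the standard sign conventions for the restrictions of the cube operator and the edge operator to a face one has
\begin{equation*}
d^{[l]}_\beta\phi=d^{[l]}_\alpha\phi\cdot d^{[l]}_{\alpha'}\phi,\qquad
\Delta_{t^{[l]}_\beta}F=\Delta_{t^{[l]}_\alpha}F\cdot\big(\Delta_{t^{[l]}_{\alpha'}}F\big)\circ t^{[l]}_\alpha,
\end{equation*}
the first because the two $i$-slices of $\beta$ receive opposite orientation in the cube operator, the second because $t^{[l]}_\beta=t^{[l]}_\alpha\circ t^{[l]}_{\alpha'}$ is a product of two commuting transformations and $\Delta$ is multiplicative along compositions. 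Since $d^{[l]}_{\alpha'}\phi$ only involves coordinates indexed by $\alpha'$, which is disjoint from $\alpha$, it is unaffected by $t^{[l]}_\alpha$, so dividing the two identities gives the clean relation
\begin{equation*}
\frac{\Delta_{t^{[l]}_\beta}F}{d^{[l]}_\beta\phi}=\frac{\Delta_{t^{[l]}_\alpha}F}{d^{[l]}_\alpha\phi}\cdot\Big(\frac{\Delta_{t^{[l]}_{\alpha'}}F}{d^{[l]}_{\alpha'}\phi}\Big)\circ t^{[l]}_\alpha .
\end{equation*}
This single identity links the codimension-$j$ object in~(1) to the codimension-$(j+1)$ objects in~(2), and I would read off both implications from it.

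For $(1)\Rightarrow(2)$, fix a $(k-j)$-face $\alpha$, un-freeze one of its fixed coordinates to get a $(k-j+1)$-face $\beta\supset\alpha$ with complementary slice $\alpha'$, and note that by~(1) the left-hand side of the last display is $1$; hence the error attached to $\alpha$ equals the conjugate of the error attached to $\alpha'$, composed with $t^{[l]}_\alpha$. Iterating over the remaining fixed coordinates of $\alpha$ expresses this error through the errors attached to the $(k-j)$-faces in the $(k-j+1)$-skeleton around $\alpha$, each transported by a product of face transformations of $t$. The hypothesis $t\in\mathcal{G}_j$ now enters: by the description of the lower central series in terms of which face transformations preserve $\mu^{[l]}$ and act trivially on $I^{[l]}(X)$ (see \cite[Section 10]{HK}), all these transports are invisible on the level of $I^{[l]}(X)$, so the error $\Delta_{t^{[l]}_\alpha}F/d^{[l]}_\alpha\phi$ is $I^{[l]}(X)$-measurable, which is statement~(2).

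For $(2)\Rightarrow(1)$, assume every error $\Delta_{t^{[l]}_\gamma}F/d^{[l]}_\gamma\phi$ over $(k-j)$-faces $\gamma$ is $I^{[l]}(X)$-measurable. For a $(k-j+1)$-face $\beta=\alpha\sqcup\alpha'$ the clean relation exhibits $\Delta_{t^{[l]}_\beta}F/d^{[l]}_\beta\phi$ as a product of two $I^{[l]}(X)$-measurable functions, one composed with $t^{[l]}_\alpha$, which (again because $t\in\mathcal{G}_j$) fixes $I^{[l]}(X)$ pointwise; hence this error is $I^{[l]}(X)$-measurable. It remains to upgrade ``invariant'' to ``$=1$'', and here I would use that $d^{[l]}\sigma=\Delta F$, together with $X$ having order $<l$ and $t\in\mathcal{G}_j$, forces $\Delta_{t^{[l]}_\beta}F$ to agree, on $\mu^{[l]}$-almost every ergodic component of $X^{[l]}$, with $d^{[l]}_\beta$ of a measurable function on $X$ --- the relative version of the computation underlying Lemma~\ref{CLproperty:lem}. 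Then on each ergodic component the error is a face-cube of a function on $X$; being simultaneously $I^{[l]}(X)$-measurable it is constant along ergodic components, and a cube along a face of dimension $\ge 1$ of a constant is $1$, so the error vanishes and~(1) holds.

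The main obstacle is this last normalisation in $(2)\Rightarrow(1)$: showing that $\Delta_{t^{[l]}_\beta}F$ is a face-cube on almost every ergodic component of $X^{[l]}$, and then combining that ergodic-component information with the global invariance to conclude triviality. This is the step that genuinely exploits $X$ having order $<k+1$ and carries the inductive weight of Host and Kra's argument. A secondary but real difficulty is bookkeeping the orientations and signs in the two splitting identities, since an error there flips an error term and breaks the chain of implications.
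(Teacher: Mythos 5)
First, note that the paper itself does not prove this lemma: it is quoted verbatim from Host--Kra (\cite[Lemma 10.8]{HK}), so the benchmark is their argument, whose engine is indeed the face-splitting identity you write down (which is correct, including the sign bookkeeping and the fact that $d^{[k+1]}_{\alpha'}\phi$ is unaffected by $t^{[k+1]}_\alpha$). However, both directions of your sketch have genuine gaps. For $(1)\Rightarrow(2)$ your argument is a non sequitur: from $(1)$ and the splitting you only obtain relations of the form $u_\alpha=\overline{u_{\alpha'}}\circ t^{[k+1]}_\alpha$ among the errors $u_\gamma=\Delta_{t^{[k+1]}_\gamma}F/d^{[k+1]}_\gamma\phi$ attached to $(k-j)$-faces, and chaining such relations around the cube can never produce invariance under the \emph{diagonal} $G$-action -- there is no face at which invariance is known to start the chain (any non-invariant $u$ satisfies identities of this shape, e.g.\ with the identity transport). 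The statement that the transports ``act trivially on $I^{[k+1]}(X)$, so $u_\alpha$ is $I^{[k+1]}(X)$-measurable'' does not follow. Tellingly, your argument for this direction never uses $d^{[k+1]}\sigma=\Delta F$ at all; the actual mechanism is to differentiate $u_\alpha$ along the diagonal action, which via $\Delta_{g^{[k+1]}}F=d^{[k+1]}\sigma(g,\cdot)$ produces $d^{[k+1]}_\alpha\bigl(\Delta_t\sigma(g,\cdot)\cdot\overline{\Delta_g\phi}\bigr)$ together with a commutator correction $\Delta_{([t,T_g])^{[k+1]}_\alpha}F$ with $[t,T_g]\in\mathcal{G}_{j+1}$ (here $t$ and $T_g$ do not commute), and one must show that this product is trivial using the hypotheses on $\sigma$, the order of $X$, and the structural facts of Lemma \ref{facetransformations} for elements of the lower central series. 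None of this appears in the sketch.

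For $(2)\Rightarrow(1)$, the first half (invariance of the $\beta$-error from invariance of the two $\alpha$-errors, using that $t^{[k+1]}_\alpha$ for $t\in\mathcal{G}_j$ and $\alpha$ of codimension $j+1$ preserves the invariant $\sigma$-algebra) is plausible modulo citing the precise Host--Kra facts, but the decisive step -- upgrading ``invariant'' to ``$=1$'' -- rests on the unproven assertion that $\Delta_{t^{[k+1]}_\beta}F$ agrees on almost every ergodic component of $\mu^{[k+1]}$ with $d^{[k+1]}_\beta$ of a function on $X$. You flag this yourself as the main obstacle, but it is not a technicality one can defer: it is essentially the content of the lemma (and in the present non-Lie setting statements of this kind are exactly what the extension machinery of Sections \ref{CL:sec}--\ref{ext:sec} is built to supply), and it is also the only place where the weak-mixing hypothesis on $\sigma$ could enter -- without it the implication is simply false, since an invariant error need not be trivial. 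Even granting that assertion, the final step ``a face-cube of dimension $\geq 1$ which is constant on an ergodic component equals $1$'' is not justified: constancy of $d^{[k+1]}_\beta\psi$ on a component of $\mu^{[k+1]}$ does not by itself force the constant to be $1$, and one needs the structure of the components (or the weak mixing of $\sigma$) to conclude. So the proposal identifies the right computational identity but does not constitute a proof of either implication.
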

 \subsection{The main objects in the proof} \label{mobj} 
	 We begin by describing the objects which will be used in the proof of Theorem \ref{mainclaim}. For $0\leq j \leq k+1$ we construct a tower of extensions $Y_0\rightarrow Y_1\rightarrow...\rightarrow Y_{k+1}=X$, where each $Y_j$ is an $O_{k,m,j}(1)$- extension of $Y_{j+1}$ (and therefore of $X$) of order $<k$ with the following properties:
	 \begin{itemize}
	 	\item {The subgroup 
	 	$$\mathcal{L}_j:= \{s\in \mG(X) : s \text{ has a lift in } \mG(Y_j)\}$$ is open in $\mG(X)$.}
	 	\item { The subgroup 
	 	$$V_j:= \{s\in \mathcal{G}_j(X)\cap \mL_j : s \text{ has a lift in } \mG(Y_j)^\star\}$$ is open in $\mG_j(X)$.}
	 	\item {For technical reasons we will also prove that $V_j$ contains the subgroup generated by the commutators $\{[s,g]:s\in\mathcal{G}_{j-1}(X)\cap \mathcal{L}_j,g\in G\}$.}
	 \end{itemize}
 We note that in each step the group $G$ is replaced with some $O_{k,m,j}(1)$-extension, we abuse notation and denote all of them by $G$. Since we only construct $k+1$ extensions $(Y_k,Y_{k-1},...,Y_{0})$, the final extension $Y_0$ is an $O_{k,m}(1)$-extension of $X$.\\ We construct these objects by downward induction on $j$; For $j=k+1$ we can take $Y_{k+1}=X$, $\mathcal{L}_{k+1} = \mathcal{G}$ and $V_{k+1}=\{e\}$. For $j=k$ we have the following lemma.
\begin{lem}
In the setting of Theorem \ref{mainclaim}, there exists an ergodic zero dimensional $O_{k,m}(1)$-extension $Y_{k}$ of order $<k$, such that $\mL_k$ is open and $V_k=\mG_k\cap \mL_k$.
\end{lem}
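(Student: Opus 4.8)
\emph{Reduction.} The plan is to prove the base case $j=k$ of the downward induction. First I would reduce to the case where $\rho$ is a weakly mixing cocycle of type $<k+1$: the degenerate (type $<k$) part of $\rho$ appearing in the decomposition recalled after Lemma~\ref{factor} is already taken care of by Theorem~\ref{redweaklymixing}, so it is enough to produce $Y_k$ under this assumption. Fix $F\colon X^{[k+1]}\to U$ with $d^{[k+1]}\rho=\Delta F$; note that $X\times_\rho U$ has order $<k+2$ with $Z_{<k+1}(X\times_\rho U)=X$.

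\emph{Lifting the centre.} Since $X$ is a $k$-step nilpotent system, $\mG(X)$ is $k$-step nilpotent, so $\mG_k(X)$ is its centre; in particular every $t\in\mG_k(X)$ commutes with all $T_g$, whence $\Delta_t\rho$ is a genuine $G$-cocycle, and because $t$ lies in the $k$-th term of the lower central series, $\Delta_t\rho$ is of type $<1$ (a standard consequence of Lemma~\ref{dif:lem}). By the $U$-valued Moore--Schmidt theorem (Lemma~\ref{type0} together with Theorem~\ref{MScob}) we may write $\Delta_t\rho=c_t\cdot\Delta F_t$ with $c_t\in\operatorname{Hom}(G,U)$ and $F_t\colon X\to U$ measurable. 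A direct computation, using $[t,T_g]=e$ and $\sum_{\varepsilon}(-1)^{|\varepsilon|}=0$, shows that $G_t:=\Delta_{t^{[k+1]}}F\cdot\overline{d^{[k+1]}F_t}$ is a $G$-invariant function on $X^{[k+1]}$, so by Lemma~\ref{CLproperty:lem} the element $t$ admits a lift in $\mG(X\times_\rho U)$ exactly when $G_t$ lies in the image of $d^{[k+1]}$ on measurable maps $X\to U$. Using that $X$ is finite dimensional and nilpotent, together with the description of the invariant $\sigma$-algebra $I^{[k+1]}(X)$, one sees that $G_t=d^{[k+1]}\psi_t\cdot d^{[k+1]}P_t$ for some $\psi_t\colon X\to U$ and some phase polynomial $P_t\colon X\to S^1$ of degree and exponent $O_k(1)$ (Proposition~\ref{PPC}); thus the only obstruction to lifting $t$ is the existence of a suitable root of $P_t$.

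\emph{The extension.} I would then apply Theorem~\ref{everything} (whose defining extensions, via Theorems~\ref{extension:thm} and \ref{extensionlowchar:thm}, are zero dimensional) to the family of bounded-degree phase polynomials $\{P_t\}$ and to the cocycle $\rho$, obtaining an ergodic zero dimensional $O_{k,m}(1)$-extension $\pi\colon Y_k\to X$ of exponent $O_{k,m}(1)$, independent of $\rho$, in which each $P_t$ acquires the required root; consequently $G_t\circ\pi^{[k+1]}$ becomes a cube coboundary $d^{[k+1]}\phi_t$ with $\phi_t\colon Y_k\to U$. By Lemma~\ref{extweakmixing} the cocycle $\rho\circ\pi$ is still weakly mixing, so $Z_{<k+1}(Y_k\times_{\rho\circ\pi}U)=Y_k$, and for every $t\in\mG_k(X)$ that lifts to some $\bar t\in\mG(Y_k)$ --- i.e. every $t\in\mG_k(X)\cap\mL_k$ --- Lemma~\ref{CLproperty:lem} applied to $Y_k\times_{\rho\circ\pi}U$ produces a lift of $\bar t$ in $\mG(Y_k\times_{\rho\circ\pi}U)$, i.e. $\bar t\in\mG(Y_k)^\star$, so $t\in V_k$. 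Hence $V_k=\mG_k(X)\cap\mL_k$; the commutator clause of the induction is automatic, since for $s\in\mG_{k-1}(X)\cap\mL_k$ with lift $\bar s$ the commutator $[\bar s,T_g]$ is a lift of $[s,T_g]\in\mG_k(X)$, whence $[s,T_g]\in\mG_k(X)\cap\mL_k=V_k$.

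\emph{Openness of $\mL_k$ and the main obstacle.} Writing $Y_k$ as a bounded tower of extensions of $X$ by zero dimensional groups with combined defining cocycle $\eta$ of type $O_k(1)$, the assignment $s\mapsto\Delta_s\eta$ is a measurable family of cocycles of bounded type indexed by $\mG(X)$, so Theorem~\ref{HK1functions} yields a set $\mathcal A\subseteq\mG(X)$ of positive Haar measure with $\Delta_s\eta/\Delta_{s'}\eta\in PC\cdot B^1$ for all $s,s'\in\mathcal A$; combined with the identity $\Delta_{ss'^{-1}}\eta=V_{s'^{-1}}(\Delta_s\eta/\Delta_{s'}\eta)$ and Lemma~\ref{A.Weil}, this shows $\mathcal A\mathcal A^{-1}\subseteq\mL_k$ and hence that $\mL_k$ is open. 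I expect the delicate point to be the uniform control of $G_t$ in the second step --- proving that its obstruction to being a cube coboundary is captured by a phase polynomial of $O_k(1)$ degree and bounded exponent, uniformly in $t$ and independently of $\rho$ --- since this is precisely what allows the single $O_{k,m}(1)$-extension $Y_k$ to trivialise the obstruction for every $t$ at once, and it is where the hypotheses that $X$ is finite dimensional and a nilpotent system are genuinely used.
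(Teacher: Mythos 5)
Your skeleton is close to the paper's (note that $t\in\mG_k$ is an automorphism fixing $Z_{<k}(X)$, that $\Delta_t\rho$ is of type $<1$, pass to a bounded zero-dimensional extension, then use Lemma \ref{exactdiff2:lem}/Lemma \ref{CLproperty:lem} to get the $\star$-lift), but the "Lifting the centre" step contains the genuine gap. There is no ``$U$-valued Moore--Schmidt theorem'' over $X$ itself: Lemma \ref{type0} gives $\Delta_t\chi\circ\rho=c_{t,\chi}\cdot\Delta F_{t,\chi}$ character by character, and Theorem \ref{MScob} only detects coboundaries (not cohomology to a constant) via characters. To glue the $F_{t,\chi}$ into a single $U$-valued $F_t$ with $c_t\in\operatorname{Hom}(G,U)$ one must choose $\chi\mapsto F_{t,\chi}$ to be a homomorphism, and for finite dimensional $U$ that is not a Lie group this splitting can fail on $X$ --- this is exactly the obstruction described around the short exact sequence (\ref{short}) in Section \ref{special case}, and it is why the paper invokes Lemma \ref{fixCL:lem} at precisely this point, producing the zero-dimensional $O_{k,m}(1)$-extension $\tilde X$ on which $(\Delta_t\rho)\circ\tilde\pi=c_t\cdot\Delta F_t$ holds with genuinely $U$-valued data. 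Your later assertion that $G_t=d^{[k+1]}\psi_t\cdot d^{[k+1]}P_t$ with $P_t$ a phase polynomial of degree and exponent $O_k(1)$, ``using the description of $I^{[k+1]}(X)$,'' is stated without argument and is essentially the hard content you identify yourself in the last paragraph; the paper avoids having to prove any such statement by resolving the obstruction at the level of the Conze--Lesigne equation (Lemma \ref{fixCL:lem}) before ever forming the cube-level quotient, whose $G$-invariance is then immediate.

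The openness of $\mL_k$ is also not obtainable by the shortcut you propose. For a general $s\in\mG(X)$ (not an automorphism of the base of the tower $Y_k\to X$), $\Delta_s\eta$ need not be a cocycle nor of bounded type, so Theorem \ref{HK1functions} does not apply as stated; $\mG(X)$ is merely polish, so ``positive Haar measure'' and Lemma \ref{A.Weil} should at best be replaced by a non-meagreness argument via Lemma \ref{Pettis}; and, most importantly, knowing $\Delta_{ss'^{-1}}\eta\in PC\cdot B^1$ does not by itself produce a lift of $ss'^{-1}$ in $\mG(Y_k)$ --- turning such a Conze--Lesigne identity into an actual element of the Host--Kra group of the extension is the lifting problem itself. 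The paper gets openness of $\mL_k$ (and the fact that $\tilde X\to X$ can be traversed at all) from Theorem \ref{redweaklymixing}, i.e.\ the degenerate case of Theorem \ref{mainclaim} proved in Section \ref{Main2:proof1}, applied to the degenerate extension $\tilde X$ of $X$; some input of that strength is unavoidable here and is missing from your argument.
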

The proof is a modification of the arguments of Host and Kra \cite[Lemma 10.9]{HK}.
\begin{proof}
	Let $F:X^{[k+1]}\rightarrow U$ be such that $d^{[k+1]}\rho = \Delta F$. Since $\mathcal{G}(Z_{<k}(X))$ is $(k-1)$-step nilpotent we conclude that any $t\in\mathcal{G}_{k}$ is an automorphism which fixes $Z_{<k}(X)$. Therefore, by Lemma \ref{dif:lem}, $\Delta_t \rho$ is of type $<1$. Let $\chi\in\widehat U$, then by Lemma \ref{type0} $\Delta_t \chi\circ \rho$ is $(G,X,S^1)$-cohomologous to a constant. We conclude by Lemma \ref{fixCL:lem} that there exists a zero dimensional $O_{k,m}(1)$-extension $\tilde{\pi}:\tilde{X}\rightarrow X$ by a cocycle of type $<1$ such that \begin{equation} \label{cleq} (\Delta_t \rho)\circ\tilde{\pi} = c_t \cdot \Delta F_t
	\end{equation} for some constant $c_t:G\rightarrow U$ and a measurable map $F_t:\tilde{X}\rightarrow U$. It follows that for every $1$-dimensional face $\alpha$, \begin{equation}\label{invariant}\frac{(\Delta_{t_\alpha^{[k]}}F)\circ \tilde{\pi}}{d_\alpha^{[k]}F_t}
	\end{equation}
	is invariant in $\tilde{X}^{[k]}$. Notice that $\tilde{X}$ is a degenerate extension of $X$, therefore by the induction hypothesis of Theorem \ref{redweaklymixing} there exists an $O_{k,m}(1)$-extension $\pi:Y_{k}\rightarrow \tilde{X}$ (and therefore of $X$). By the same theorem the group $\mathcal{L}_k\leq \mG$ with respect to this extension is open. Any $t\in \mathcal{L}_k$ has a lift in $\mG(Y_k)$. If in addition $t\in \mathcal{G}_k$ then equation (\ref{invariant}) and Lemma \ref{exactdiff2:lem} imply that $t$ has a lift in  $\mathcal{G}(Y_{k})^\star$, where $\mG(Y_k)^\star$ is defined with respect to the extension $Y_k\times_{\rho\circ\pi^{Y_k}_{X}} U$, as required. 
\end{proof}
 We climb up along the central series inductively. Suppose by induction that we have already constructed $Y_{j+1}$, $\mathcal{L}_{j+1}$  and $V_{j+1}$ as above. We prove:
\begin{lem} \label{above}
There exists an ergodic $O_{k,m}(1)$-extension $Y_{j}'$ of $Y_{j+1}$ such that $$\mathcal{L}_j' := \{s\in \mL_{j+1} : s \text{ has a lift in } \mG(Y_j')\}$$ is open in $\mL_{j+1}$ and at the same time, $$V_j':= \{s\in \mL_{j}'\cap \mG_j : s \text{ has a lift in } \mG(Y_j')^\star\}$$ is open in $\mL_j'\cap \mG_j$.
\end{lem}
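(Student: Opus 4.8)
The plan is to carry out one step of the climb along the lower central series, following Host and Kra \cite[Lemma~10.10 and the step following it]{HK} but substituting the extension machinery of Section~\ref{ext:sec} for their torus-valued cohomological steps. Recall that, by the reduction carried out in Section~\ref{Main2:proof1}, it suffices to treat the case where $\rho\colon G\times X\to U$ is weakly mixing of type $<k+1$; write $d^{[k+1]}\rho=\Delta F$ with $F\colon X^{[k+1]}\to U$, and pull $\rho$ and $F$ back along the factor maps without further comment. Fix $t\in\mG_j(X)\cap\mL_{j+1}$ and a lift $\tilde t\in\mG(Y_{j+1})$. Since $t$ is an automorphism lying in $\mG_j(X)$, Lemma~\ref{dif:lem} (iterated) gives that $\Delta_t\rho$ is of type $<k+1-j$ (for $j=k$ this is type $<1$, recovering the base case already treated). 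It is in general only a quasi-cocycle --- its cocycle defect being governed by the commutators $[T_g,t]\in\mG_{j+1}(X)$ --- so one first applies the last conclusion of Theorem~\ref{everything}, with the resulting extension independent of $t$, to pass to an $O_{k,m}(1)$-extension on which every $\Delta_t\rho$ is a line-cocycle.

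Next I would extract the open subgroup on which the relevant Conze--Lesigne data is polynomial. For each $\chi\in\hat U$ the family $t\mapsto\chi\circ\Delta_t\rho$ is a measurable family of line-cocycle quasi-cocycles of the appropriate order, so Theorem~\ref{HK1functions}, combined with the identity expressing $\Delta_{tt'^{-1}}\rho$ through $\Delta_t\rho/\Delta_{t'}\rho$ up to the shift operator $V_{t'^{-1}}$ and the commutator $[t,t']\in\mG_{j+1}(X)$ (which, after shrinking $t,t'$ to a neighbourhood of the identity, lies in $V_{j+1}$ and hence contributes a coboundary on $Y_{j+1}$), and Lemma~\ref{A.Weil}, produces an open subgroup of $\mG_j(X)\cap\mL_{j+1}$ on which $\chi\circ\Delta_t\rho\in PC_{<k+1-j}(G,\,\cdot\,,S^1)\cdot B^1(G,\,\cdot\,,S^1)$. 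Running the finite-dimensional bookkeeping of Proposition~\ref{step1} (passing to suitable prime-power powers of the torsion coordinates of $U$ via Lemma~\ref{Clroot:lem}) over a countable generating set of $\hat U$ yields a single open subgroup $\mathcal{K}\leq\mG_j(X)\cap\mL_{j+1}$ with this property for all $\chi$. Then Lemma~\ref{fixCL:lem}, whose extension is independent of the cocycle, allows a further $O_{k,m}(1)$-extension $\tilde Y$ of $Y_{j+1}$ on which, for every $t\in\mathcal{K}$, one has $\Delta_t\rho=p_t\cdot\Delta F_t$ with $p_t\in PC_{<k+1-j}(G,\tilde Y,U)$ and $F_t\colon\tilde Y\to U$ measurable.

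The heart of the matter is then to turn the data $(\tilde t,p_t,F_t)$ into a genuine lift of $t$ in $\mG(\tilde Y)^\star$, that is, by Lemma~\ref{CLproperty:lem}, to find $\psi$ with $\Delta_{\tilde t^{[k+1]}}F=d^{[k+1]}\psi$. Applying $d^{[k+1]}$ to $\Delta_{\tilde t}\rho=p_t\cdot\Delta F_t$ and using that a phase polynomial of degree $<k+1-j\leq k+1$ has trivial $(k+1)$-st cubic derivative, one gets that $\Delta_{\tilde t^{[k+1]}}F/d^{[k+1]}F_t$ is invariant on $\tilde Y^{[k+1]}$. Since $t\in\mG_j(X)$, Lemma~\ref{exactdiff2:lem} reduces the exactness we want to the family of face conditions ``$\Delta_{\tilde t^{[k+1]}_\beta}F=d^{[k+1]}_\beta\psi$ for every $(k-j+1)$-face $\beta$'', and I would discharge these by downward induction on the dimension of $\beta$: the invariance just obtained is the base case, and the inductive step uses that the commutators of $\tilde t$ with the $T_g$ lie in $\mG_{j+1}(X)\cap\mL_{j+1}$ and --- after one more shrinking of $\mathcal{K}$ --- in $V_{j+1}$, so by the inductive hypothesis they lift to $\mG(Y_{j+1})^\star$ and the data attached to the lower-dimensional faces is already exact. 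The only residual obstruction is the phase polynomial $p_t$, whose contribution along the relevant faces becomes exact precisely after adjoining roots of finitely many phase polynomials of degree $<k+1-j$; the second conclusion of Theorem~\ref{everything} and Theorems~\ref{extension:thm}--\ref{extensionlowchar:thm} supply these after a final $O_{k,m}(1)$-extension $Y_j'$ of $\tilde Y$ by zero-dimensional groups of bounded exponent, and since every extension used is independent of $t$, the single system $Y_j'$ works for all $t\in\mathcal{K}$. I expect this face descent, together with peeling off $p_t$ uniformly in $t$, to be the main obstacle.

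It remains to read off the two openness assertions. The openness of $\mL_j'$ in $\mL_{j+1}$ follows from the lifting arguments of Section~\ref{Main2:proof1} (Corollary~\ref{liftweaklymixing} and the argument of Theorem~\ref{redweaklymixing}) applied along the degenerate, zero-dimensional, bounded-exponent tower $Y_j'\to Y_{j+1}$: the set of elements of $\mG(X)$ that lift to $\mG(Y_j')$ is open, and its intersection with $\mL_{j+1}$ is $\mL_j'$. For $V_j'$, the open subgroup $\mathcal{K}\leq\mG_j(X)\cap\mL_{j+1}$ produced above is contained in $\mL_j'$ (a star-lift is in particular a lift) and, by the third paragraph, in $V_j'$; hence $V_j'$ is a subgroup of $\mL_j'\cap\mG_j$ containing the open set $\mathcal{K}$, and is therefore itself open. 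Throughout one must also check that the exponents of the auxiliary zero-dimensional extensions stay bounded, so that the accumulated constants remain $O_{k,m}(1)$ over the full run of $k+1$ steps.
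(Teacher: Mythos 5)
There is a genuine gap, and it sits exactly at the point your sketch treats as routine. For $j<k$ an element $t\in\mG_j(X)$ is \emph{not} an automorphism (it does not commute with the $T_g$'s; only $\mG_k(X)$ acts vertically on $Z_{<k}(X)$), so Lemma \ref{dif:lem} cannot be iterated to conclude that $\Delta_t\rho$ has type $<k+1-j$; this type bound is the engine of your whole argument and it is unjustified. The paper's proof never works with $\Delta_t\rho$ alone: it forms the corrected function $\theta_t(g,y)=\psi_{t,g}(g\overline{t}y)\cdot\Delta_t\rho(g,\pi(y))$, where $\psi_{t,g}$ comes from a star-lift of the commutator $[t^{-1},g^{-1}]$, and the computation (\ref{eqF}) together with Lemma \ref{typereduce} is what yields type $<k-j+1$. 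The availability of $\psi_{t,g}$ for \emph{every} $g\in G$ is precisely the third inductive property of Section \ref{mobj} ($V_{j+1}$ contains all commutators $[s,g]$ with $s\in\mG_j\cap\mL_{j+1}$, $g\in G$), which is secured separately by Lemma \ref{lemcomplete}. Your substitute --- ``after shrinking $t,t'$ to a neighbourhood of the identity the commutator lies in $V_{j+1}$'' --- does not work: openness of $V_{j+1}$ gives, for each fixed $g$, a neighbourhood of the identity in $s$ with $[s,g]\in V_{j+1}$, but these neighbourhoods shrink with $g$ and their intersection over the countably many $g\in G$ need not be open; moreover nothing guarantees the commutators lie in $\mL_{j+1}$ in the first place. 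So the commutator membership must be carried as an induction hypothesis, not recovered by shrinking.

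A second, related overreach is your final claim that the open subgroup $\mathcal{K}$ is contained in $V_j'$, i.e.\ that each individual $t$ acquires a star-lift. The mechanism available here (Theorem \ref{HK1functions} plus Lemma \ref{fixCL:lem}) only controls \emph{ratios}: one gets a non-meagre set $\mathcal{A}$ on which $\theta_s/\theta_t$ is cohomologous to a $U$-valued phase polynomial, and then Lemma \ref{exactdiff2:lem} and Lemma \ref{CLproperty:lem} give star-lifts for the elements $st^{-1}$, $s,t\in\mathcal{A}$; openness of $V_j'$ then follows from $\mathcal{A}\cdot\mathcal{A}^{-1}\subseteq V_j'$ and Pettis' lemma (Lemma \ref{Pettis}). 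Your ``face descent'' for a single $t$, in which the polynomial term $p_t$ is removed by adjoining roots, does not address why the individual $\theta_t$ (rather than a quotient of two of them) should be cohomologous to a phase polynomial at all --- this is exactly the obstruction that forces the pairwise comparison --- and it also runs Lemma \ref{exactdiff2:lem} in the wrong direction: that lemma converts invariance along $(k-j)$-faces into exactness along $(k-j+1)$-faces for elements of $\mG_j$, it is not an induction scheme over face dimension. The correct conclusion of the lemma is only the openness of $\mL_j'$ and of $V_j'$, with the stronger per-element statement deferred to the construction of $Y_j$ and $V_j$ in Lemma \ref{lemcomplete}.
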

\begin{rem}
For technical reasons, we will postpone the proof of this lemma until after Lemma \ref{lemcomplete}. In other words, we will assume for now that this lemma was already established, and proceed to prove Lemma \ref{lemcomplete} below. At the end of this section we will prove Lemma \ref{above} without relying on Lemma \ref{lemcomplete}. The advantage is that some ideas and notions used in the proof of Lemma \ref{above} are more natural in the settings of Lemma \ref{lemcomplete} and so we prefer to define these in the proof of Lemma \ref{lemcomplete} first.
\end{rem}
The following lemma is the final step in the proof. We describe a process which allow (by passing to an extension) to add arbitrary countable set of transformations of the form $[s,g]$ to $V_j'$, where $s\in\mL_j'\cap \mG_{j-1}$ and $g\in G$.
\begin{lem} \label{lemcomplete}
There exists an ergodic zero dimensional $O_{k,m,j}(1)$-extension $Y_j$ of $Y_j'$ such that $\mL_j = \{s\in \mL_j' : s \text{ has a lift in } \mG(Y_j)\}$ is open and $V_j\subseteq\mathcal{G}_j\cap \mathcal{L}_j$ satisfy the properties in section \ref{mobj}.
\end{lem}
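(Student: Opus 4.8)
The plan is to combine the machinery already developed in Section~\ref{ext:sec} (in particular Lemma~\ref{line}, Lemma~\ref{fixCL:lem}, Theorem~\ref{extensionlowchar:thm} and Theorem~\ref{everything}) with the downward-induction framework of Section~\ref{mobj}. Recall that after Lemma~\ref{above} we have the extension $Y_j'$ of $Y_{j+1}$ with $\mathcal{L}_j'$ open in $\mathcal{L}_{j+1}$ and $V_j'$ open in $\mathcal{L}_j'\cap \mathcal{G}_j$; what remains is to enlarge $V_j'$ so that it also contains the commutators $[s,g]$ for $s\in\mathcal{L}_j'\cap\mathcal{G}_{j-1}$ and $g\in G$, which is exactly the ``technical'' third bullet of Section~\ref{mobj}. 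First I would fix a countable dense set $\{s_1,s_2,\dots\}$ of lifts in $\mathcal{G}(Y_j')$ of elements of $\mathcal{L}_j'\cap\mathcal{G}_{j-1}$ (using separability of the relevant Polish groups) together with the countable set $E$ of generators of $G$, and for each pair $(s_i,g)$ form the commutator $u_{i,g}=[s_i,g]\in\mathcal{G}_j$. Writing $d^{[k+1]}\rho=\Delta F$, I would examine for each such commutator the function $\Delta_{u_{i,g}^{[k+1]}_\beta}F$ over $(k-j)$-faces $\beta$: since $u_{i,g}$ is a commutator of an element of $\mathcal{G}_{j-1}\cap\mathcal{L}_j'$ with $g$, the computation from \cite[Proposition~10.10]{HK} (as reproduced in equation~(\ref{eqF})) shows this quantity is governed by a function of type $<k-j+1$ built from $\rho$ and the maps $\psi_{s_i,g}$ that witness membership of $u_{i,g}$ in some weaker sense.

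The core of the argument is then: for each commutator $u_{i,g}$ one produces (possibly after passing to an extension) a measurable map $\phi_{i,g}:Y_j'\to U$ satisfying the Host--Kra lifting criterion $\Delta_{u_{i,g}^{[k+1]}_\beta}F=d^{[k+1]}_\beta\phi_{i,g}$ of Lemma~\ref{exactdiff2:lem}, so that $u_{i,g}$ acquires a lift in $\mathcal{G}(Y_j)^\star$. To do this I would first apply Lemma~\ref{line} to replace the relevant type-$<k-j+1$ functions attached to the $u_{i,g}$ by line-cocycles (absorbing the phase-polynomial error into the $\psi$'s), then invoke Theorem~\ref{everything} (or directly Lemma~\ref{fixCL:lem} together with Theorem~\ref{extensionlowchar:thm}) applied simultaneously to the countable family $\{u_{i,g}\}$: this yields a single zero-dimensional $O_{k,m,j}(1)$-extension $Y_j$ of $Y_j'$ on which every $\chi\circ(\text{the obstruction cocycle})$ becomes a phase polynomial of controlled degree and hence, by the Conze--Lesigne reduction of Lemma~\ref{fixCL:lem}, the obstruction itself is cohomologous to a phase polynomial of degree $<k-j+1$; combined with Lemma~\ref{exactdiff2:lem} this is exactly the condition that makes $u_{i,g}$ liftable to $\mathcal{G}(Y_j)^\star$. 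The boundedness of the exponent of the extension $Y_j$ follows from the uniform $O_{k,m,j}(1)$ bounds in Theorem~\ref{extensionlowchar:thm} and the corollary to Lemma~\ref{fixCL:lem}, using that $U$ has bounded exponent and $Y_j'$ has order $<k+1$.

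Once this is done, $V_j$ is defined as $\{s\in\mathcal{G}_j\cap\mathcal{L}_j: s\text{ has a lift in }\mathcal{G}(Y_j)^\star\}$, where $\mathcal{L}_j=\{s\in\mathcal{L}_j':s\text{ has a lift in }\mathcal{G}(Y_j)\}$. That $\mathcal{L}_j$ is open in $\mathcal{L}_j'$ follows from Theorem~\ref{redweaklymixing} applied to the degenerate extension $Y_j\to Y_j'$ (since the extension is by a zero-dimensional group of bounded exponent), exactly as in the proof of Lemma~\ref{above}; openness of $V_j$ in $\mathcal{G}_j\cap\mathcal{L}_j$ then follows because $V_j$ is a subgroup (a consequence of the cocycle identity, as in Theorem~\ref{CL:thm}) containing the open subgroup $V_j'$ of the previous stage pulled up to $Y_j$. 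Finally, by construction $V_j$ contains all the $u_{i,g}=[s_i,g]$, and since these are dense among the $[s,g]$ with $s\in\mathcal{G}_{j-1}\cap\mathcal{L}_j$ and $g$ ranges over the generators $E$, and since $V_j$ is a closed subgroup, it contains the whole subgroup generated by $\{[s,g]:s\in\mathcal{G}_{j-1}(X)\cap\mathcal{L}_j,\ g\in G\}$, verifying the third bullet of Section~\ref{mobj}.

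The main obstacle I anticipate is bookkeeping: one must apply the extension theorems to a \emph{single} countable family of obstructions (over all chosen $s_i$, all generators $g$, and all faces $\beta$) so that only one bounded-exponent extension $Y_j$ is produced rather than an unbounded tower, and one must check that the passage from the dense countable set $\{s_i\}$ back to the full groups $\mathcal{G}_{j-1}\cap\mathcal{L}_j$ is legitimate --- i.e. that liftability of commutators is preserved under the closure operations and that the set of liftable elements is genuinely a closed subgroup. This is where the Baire-category and Pettis-type arguments (Lemma~\ref{Pettis}), together with the measurability of the selection $s\mapsto\theta_s$ already used in Lemma~\ref{above}, will be needed; the delicate point is ensuring that the same extension $Y_j$ works uniformly, which relies crucially on the $O_{k,m,l}(1)$ uniformity built into Theorem~\ref{extensionlowchar:thm} and Lemma~\ref{fixCL:lem}.
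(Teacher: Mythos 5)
Your plan diverges from the paper at the two places where the real work happens, and both divergences are genuine gaps. First, the passage from a countable family of commutators to all of $\{[s,g]:s\in\mG_{j-1}\cap\mL_j,\ g\in G\}$ cannot be done by ``density plus closedness of $V_j$''. Membership of $t$ in $V_j$ means that the functional equation $\Delta_{t_\beta^{[k+1]}}F=d_\beta^{[k+1]}\phi$ of Lemma \ref{CLproperty:lem} has a measurable solution $\phi$, and nothing gives continuity or uniform control of $\phi$ in $t$, so liftability does not pass to limits; the set of liftable elements is shown to be open (via countable index and Pettis-type arguments), never closed, and the paper never claims otherwise. The mechanism in the paper is different: since $V_j'$ is open in $\mL_j'\cap\mG_j$, it has countable index there, so the entire commutator set is covered by countably many cosets $s_nV_j'$, where moreover the representatives $s_n$ can be chosen of prime-power order $p_n^{O_{k,m,j}(1)}$. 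It then suffices to make the countably many $s_n$ liftable, and each $[s,g]=s_nv$ with $v\in V_j'$ inherits a lift from the group property --- no limiting argument over a dense set is needed, and none would work.

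Second, you do not address why the extension $Y_j$ is a \emph{zero dimensional} extension of bounded exponent, which is the hardest point of the paper's proof. The extension that does the work is obtained by adjoining a minimal cocycle cohomologous to the family $\theta=(\theta'_{s,\chi})_{s,\chi}$, which a priori takes values in $(S^1)^{\mathbb{N}}$, not in a totally disconnected group. Proving that its Mackey range is totally disconnected of exponent $O_{k,m}(1)$ requires Lemma \ref{powers} (a quantitative statement that controlled powers of $\Delta_s f$, corrected by explicit functions $\sigma_s$, are coboundaries, proved by a delicate induction along the lower central series) combined with the Bergelson--Tao--Ziegler theorem applied to the factor of $G_p^\perp$-invariant functions; and it uses crucially that the coset representatives $s_n$ have bounded prime-power torsion, a property a dense subset of $\mL_j'\cap\mG_{j-1}$ need not have. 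Your assertion that the bounded exponent follows from the uniformity in Theorem \ref{extensionlowchar:thm} and the corollary to Lemma \ref{fixCL:lem} mislocates the difficulty: those results only control the auxiliary root-taking extensions, not the extension by the image of $\theta$. Relatedly, the inputs you take for granted --- the maps $\psi_{s,g}$ and the fact that $\theta_s$ has type $<k-j+1$ --- are produced in the paper by the recursive construction of Lemma \ref{mainlem} (the iterated commutator classes $C_n$ and downward induction on complexity, with lifts $\overline{s}$ that need not lie in the Host--Kra group); a single simultaneous application of Lemma \ref{fixCL:lem} or Theorem \ref{everything} does not supply them.
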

\begin{proof}
	Let $s\in\mathcal{L}_j'$, and let $g\in G$ be a generator in the natural basis of $G$. By the structure of $G$, the order of $g$ is $p^{O_{k,m,j}(1)}$ for some prime $p$ (the exact power is not important). Since $\mathcal{G}$ is $k$-step nilpotent, the order of $[s,g]$ is also  $p^{O_{k,m,j}(1)}$ for possibly higher but bounded power. Since $V_j'$ is open in $\mL_j'\cap \mG_j$, it is of at most countable index in that group. Therefore, we can find a countable set $\{s_n\}_{n\in\mathbb{N}}$ of transformations in $\mL_{j}'\cap \mathcal{G}_{j}$ where each $s_n$ is of order $p_n^{O_{k,m,j}(1)}$, for some primes $p_n$ where the power is bounded uniformly for all $n$ and such that $\bigcup_{n\in \mathbb{N}} s_n V_j'$ contains $\{[s,g]:s\in \mathcal{G}_{j-1}\cap \mathcal{L}'_j,g\in G\}$. By adding inverses, we may assume that $\{s_n:n\in\mathbb{N}\}$ contains all of its inverses.\\
	Let $C_0:=\{s_n:n\in\mathbb{N}\}$, and for every $n\geq 1$ let $C_n:=\{[s,g]:s\in C_{n-1},g\in G\}$ and $C=\bigcup_{n\in\mathbb{N}} C_n$. \\
	\textbf{Definition}: For $s\in C$ we define the complexity of $s$ by $$\text{comp}(s):=\max\{n : s\in C_n\}$$
	We need the following result.
	\begin{lem} \label{mainlem} For every $n\in\mathbb{N}$, there exists a zero dimensional $O_{k,m}(1)$-extension $\pi_n:Y_{j,n}\rightarrow Y_{j}'$ such that for every $s\in C_n$ and for every $0$-dimensional face $\alpha$, there exists  $\psi_s:Y_{j,n}\rightarrow U$ such that 		 	\begin{equation}\label{eqt}(\Delta_{s_\alpha^{[k+1]}} F)\circ\pi_n - d_\alpha^{[k+1]}\psi_s
		 \end{equation}
		 is $T_g^{[k+1]}$-invariant for every $g\in G$.
	\end{lem}
\begin{proof}
 We prove the claim by downward induction on $n$. If $n\geq j$, then $C_j$ is trivial and the claim follows. Fix some $0\leq n<j$ and assume that Lemma \ref{mainlem} holds for all values greater than $n$. Let $s\in C_n$. For every $g\in G$, $[s^{-1},g^{-1}]$ has complexity greater than $n$. Therefore, by the induction hypothesis there exists an extension $\pi_{n+1}:Y_{j,n+1}\rightarrow Y_j'$ and a measurable map $\psi_{s,g}:Y_{j,n+1}\rightarrow U$ such that $(\Delta_{ [s^{-1},g^{-1}]_\alpha^{[k+1]}} F)\circ\pi_{n+1} - d^{[k+1]}_\alpha \psi_{s,g}$ is invariant for any $0$-face $\alpha$. Let $\overline{s}$ be any measure preserving transformation on $Y_{j,n+1}$ (not necessarily in $\mG(Y_{j,n+1})$), which induces the same action of $s$ on $Y_j'$.\footnote{Such lift always exists. Recall that $Y_j'$ is a tower of group extensions of $X$. Therefore, there exists a compact group $K$ such that $Y_j'\cong X\times K$ as measure spaces. In particular, $\overline{s}(x,k)=(sx,k)$ is a lift for $s$.} Consider the function
 \begin{equation} \label{theta}
\theta_s (g,y) : =   \psi_{s,g}(g\overline{s}y)\cdot \Delta_s \rho(g,\pi_{n+1}(y))
\end{equation}
As in the previous lemma, $\theta_s:G\times Y_{j,n+1}\rightarrow U$ is a function of type $<k-j+1$. Note that if the order of $g$ is co-prime to $p$ then $s$ commutes with $g$ and so we can take $\psi_{s,g}=1$.\\
We replace $\theta_s$ with a cocycle,\\
	\textbf{Claim:} By embedding $U$ into $(S^1)^\mathbb{N}$ using the Pontryagin dual, we view $\theta_s$ as a map into $(S^1)^\mathbb{N}$. We claim that there exists a constant $c_s:G\rightarrow (S^1)^\mathbb{N}$ and a cocycle $\theta_s':G\times Y_{j,n+1}\rightarrow (S^1)^\mathbb{N}$ such that for any generator $g\in G$, $\theta_s'(g)=c_s(g)\cdot\theta_s(g)$. Moreover, $c_s(g)=1$ whenever the order of $g$ is coprime to the order of $s$. \\
	\textbf{Proof of claim}: 
	Fix any $0$-face $\alpha$ and $h\in G$, by the induction hypothesis of Lemma \ref{mainlem} we know that $\Delta_h^{[k+1]}( \Delta_{[s^{-1},g^{-1}]^{[k+1]}_\alpha} F) \circ\pi_{n+1} = d_\alpha^{[k+1]} \Delta_h \psi_{s,g}$. The same calculation as in (\ref{eqF}) gives
	$$d_\alpha^{[k+1]} \Delta_h \theta_s(g) = \Delta_{h^{[k+1]}} \Delta_{g^{[k+1]}} (\Delta_{\overline{s}_\alpha^{[k+1]}} F\circ \pi_{n+1}).$$
	Since $g$ and $h$ commute, we have
 $d_\alpha^{[k+1]} \Delta_h \theta_s(g) = d_\alpha^{[k+1]} \Delta_g \theta_s (h)$ for any $0$-face $\alpha$. We conclude that 
 \begin{equation}
 \label{commute2}
\Delta_h \theta_s(g) = \Delta_g \theta_s (h). \end{equation}
Therefore, by ergodicity we have
	\begin{equation} \label{cocycle.eq} \frac{\theta_s (g+g')}{\theta_s(g)T_g \theta_s(g')} = c_s(g,g')
	\end{equation} for all $g,g'\in G$ and some constant $c_s(g,g')$.\\
	
	From this we conclude that $\prod_{k=0}^{\text{order}(g)-1} T_g^k \theta_s(g)$ is a constant in $U$. In order to take roots we embed $U$ in the divisible group $(S^1)^\mathbb{N}$ and choose $c_{s,\chi}(g)\in S^1$ such that $c_{s,\chi}(g)^{\text{order}(g)} = \prod_{k=0}^{p^d-1}T_g^{k}\chi\circ\theta_s$. Note that if $g$ is of order co-prime to $p$, then $s$ and $g$ commutes. In this case $\psi_{s,g}=1$ and $\prod_{k=0}^{\text{order}(g)-1}T_g^{k}\chi\circ\theta_s=1$. In other words, if $g$ is of order co-prime to $p$ we can take $c_{s,\chi}(g)=1$. Let $\tilde{\theta}_{s,\chi}(g):=\chi\circ\theta_s(g)/c_{s,\chi}(g)$. We see that $\prod_{k=0}^{\text{order}(g)-1}T_g^k \tilde{\theta}_{s,\chi}(g)=1$ and  $\Delta_h \tilde{\theta}_{s,\chi}(g) = \Delta_g \tilde{\theta}_{s,\chi}(h)$ for every $h,g\in G$. 
	Let $\theta_{s,\chi}'$ be the cocycle as in Proposition \ref{cocycle:prop}. Q.E.D.\\
	
	We return to the proof of Lemma \ref{mainlem}. Consider the cocycle $$\theta:=(\theta_{s,\chi}')_{s\in C_n,\chi\in\widehat U}:G\times Y_{j,n+1}\rightarrow (S^1)^\mathbb{N}$$ and choose a minimal cocycle $\sigma:G\times Y_{j,n+1}\rightarrow (S^1)^\mathbb{N}$ which is cohomologous to $\theta$. Let $Y_{j,n}=Y_{j,n+1}\times_\sigma W$ where $W$ is the image of $\sigma$ and assume for now that $W$ is zero dimensional (proof below). Since $\theta$ is cohomologous to $\sigma$ it is a coboundary on  $Y_{j,n}$. This implies that $\chi\circ\theta_s$ is cohomologous to constant for every $\chi\in\widehat U$ and $s\in C_n$. Using Theorem \ref{everything} we can replace $Y_{j,n}$ with an extension such that $\theta_s$ is $(G,Y_{j,n},U)$-cohomologous to a constant on that extension. In that case we can find $\psi_s:Y_{j,n}\rightarrow U$ such that $\theta_s - d\psi_s$ is a constant and this $\psi_s$ satisfies equation (\ref{eqt}). This completes the proof of Lemma \ref{mainlem}.
		\end{proof}
	
	We return to the proof of Lemma \ref{lemcomplete}. Consider the extension $Y_{j,0}$ from the previous lemma. This is a degenerate extension of $Y_j'$. Therefore, by Theorem \ref{redweaklymixing} we can find an extension $Y_j$ such that the subgroup $\mL_j = \{s\in \mL_j' : s \text{ has a lift in } \mG(Y_j)\}$ is open. Let $V_j$ be as in the theorem, since  $V_j'\cap\mL_j \subseteq V_j$, we have that $V_j$ is open in $\mL_j\cap \mG_j$. Recall that $\{[s,g] : s\in \mathcal{G}_{j-1}\cap \mathcal{L}_j',g\in G\}\subseteq C_0\cdot V_j'$. Let $s\in \mL_j'\cap \mG_{j-1}$ and $g\in G$. From \eqref{eqt} and Lemma \ref{CLproperty:lem} it follows that if $s$ has a lift in $\mG(Y_{j,0})$, then $[s,g]$ has a lift in $\mG(Y_{j,0})^\star$. From this and corollary \ref{liftweaklymixing}, we conclude that if $s$ has a lift in $\mG(Y_j')$, then $[s,g]$ it has a lift in $\mG(Y_j')^\star$. Since all elements in $\mL_j$ has lifts in $\mG(Y_j')$ we conclude that if $s\in \mL_j\cap \mG_{j-1}$ then $[s,g]$ has a lift in $\mG(Y_j')^\star$. We conclude that any element of the form $[s,g]$ where $s\in \mL_j\cap \mG_{j-1}$ and $g\in G$ has a lift in $\mG(Y_j')^\star$. In other words $\{[s,g]:s\in \mL_j\cap \mG_{j-1},g\in G\} \subseteq V_j$, as required. \\

	It is left to show that $W$ is zero dimensional. Fix $s\in C_n$ and $\chi\in\widehat U$, we prove that there exists $N$ such that $(\theta_{s,\chi}' )^ N$ is a coboundary. We need the following lemma.
	\begin{lem} \label{powers}
		 Let $Y$ be an ergodic extension of a $G$-system $X$ of order $<k$. We denote by $\mG$ be the Host-Kra group of $X$. Let $p$ be a prime number and write $G=G_p\oplus G_p^\perp$ where $G_p$ is the $p$-component of $G$. Let $m\geq 0$ and suppose that $f:G\times Y\rightarrow S^1$ satisfies that $d^{[m]}f=\Delta F$ for some $F:Y^{[m]}\rightarrow S^1$ which is measurable with respect to $X^{[m]}$. Then, for every $s\in\mathcal{G}$ of order $p^n$ for some $n\in\mathbb{N}$, there exists a function $\sigma_s:G\times X\rightarrow S^1$ and a natural number $N=O_{k,n,m}(1)$ such that with $\sigma_s(g,x)=1$ for all $g\in G_p^\perp$ and $$(\Delta_s f \cdot \sigma_s)^N$$ is a $(G,X,S^1)$-coboundary. Furthermore $(d^{[m]} \Delta_s f \cdot \sigma_s)^{N'} = \Delta \Delta_s^{[m]} F^{N'}$ for some natural number $N'=O_{k,n,m}(1)$.
	\end{lem}
We briefly explain the idea behind this result. Let $n$ be a natural number and let $s$ be a transformation of order $n$. Since $\Delta_{s^n} f=1$, the cocycle identity gives $$\Delta_s f^n = \prod_{k=0}^{n-1} \Delta_s \Delta_{s^k} \overline{f}.$$ Assume hypothetically that $s$ is an automorphism. Then, by Lemma \ref{dif:lem}, the type of  $\Delta_s f^n$ is smaller than the type of $\Delta_s f$. If we repeat this process iteratively, we will eventually get that some power of $\Delta_s f$ is a coboundary.\\
In the lemma we do not assume that $s$ is an automorphism. However, equation (\ref{theta}) indicates that up to a multiplication by some function it still behaves like one. The formal proof is given below.
	\begin{proof}[Proof of Lemma \ref{powers}]
		We prove the lemma by induction on $m$. For $m=0$ we have that $f=\Delta F$. Therefore, since $F$ is measurable with respect to $X$ we have, $$\Delta_s f = \Delta_s \Delta F = \Delta \Delta_s F \cdot V_s T_g \Delta_{[s^{-1},g^{-1}]} F$$ and the claim follows by taking $\sigma_s(g,x):=V_s T_g\Delta_{[s^{-1},g^{-1}]}\overline{F} (x)$. Note that if $g\in G_p^\perp$ then $s$ and $g$ commutes, in this case $\sigma_s(g,x)=1$.\\
		\textbf{Claim:} Fix $1\leq j\leq k$, let $s\in\mathcal{G}_j$ of order $p$ and  $\beta$ be an $(m-j)$-dimensional face (or a vertex if $j\geq m$). Then, there exists a natural number $M=O_{m,j,n}(1)$ and a function $\phi_s : Y\rightarrow S^1$ such that $\Delta_{s_\beta^{[m]}} F^M = d_\beta^{[m]} \phi_s^M$.
		
		We prove the claim by downward induction on $j$. If $j=k$, then $s=e$ and the claim is trivial. Fix $j<k$ and assume that the claim holds for all values greater than $j$. Let $s\in\mathcal{G}_j$ be as in the lemma, then by the induction hypothesis we see that for every $g\in G$ and every $(m-j-1)$-dimensional face $\beta$ there exist a power $M$, and $\phi_{s,g}:X\rightarrow S^1$ such that $$\Delta_{[s^{-1},g^{-1}]_\beta^{[m]}} F^M = d_\beta^{[m]} \phi_{s,g}^M.$$
		We use this to prove Lemma \ref{powers} for all $s\in \mG_j$ for this specific $j$ and then we use the lemma to prove the rest of the claim. Let $\sigma_s(g,x):=V_sT_g\phi_{s,g}(x)$ and observe that if $g\in G_p^\perp$ then $s$ and $g$ commute and so we can take $\phi_{s,g}=1$. Let $f'_s := \Delta_s f\cdot \sigma_s$. As in (\ref{eqF}) we have
		\begin{equation}
		\label{eqF'}
		\begin{split}
		\Delta (\Delta_{s_\beta^{[m]}}F^M) = V_{s_\beta}^{[m]}T_g^{[m]}\Delta_{[s^{-1},g^{-1}]_\beta^{[m]}} F^M \cdot \Delta_{s_\beta^{[m]}} \Delta F^M =\\
		=d_\beta^{[m]}V_s\circ T_g\phi_{s,g}^M \cdot \Delta_{s_\beta^{[m]}} d^{[m]}\rho^M = d_\beta^{[m]}{f'_s}^M.
		\end{split}
		\end{equation}
		Since this is true for every $(m-j)$-dimensional face $\beta$, we see by lemma \ref{typereduce} that $f_s'^M$ is of type $<m-j$ (or a coboundary if $j\geq m$). We conclude that there exists a measurable map $F_s:Y^{[m-j]}\rightarrow S^1$, which is measurable with respect to $X^{[m-j]}$ such that $d^{[m-j]}f_s'^M = \Delta F_s^M$. Moreover, we note here that $d^{[j]}F_s^M=\Delta_{{s}^{[m]}}F^M$. Since $f_s'^M$ is of smaller type, we can apply the induction hypothesis for the transformations $s,s^2,s^3,...,s^{p^n-1}$. We conclude that there exist $N, N' = O_{k,m,n}(1)$ and $\sigma_{s,l}'$ with
		\begin{equation} \label{cor:eq1} (d^{[m-j]}\Delta_{s^l} f_s' \cdot \sigma_{s,l}')^{N'} = \Delta \Delta_{{s^l}^{[m-j]}} F_{s}^{N'}
		\end{equation} 
		and
		\begin{equation} \label{cor:eq2}\Delta_{s^l} f_s'^N\cdot {\sigma'_{s,l}}^N\in B^1(G,X,S^1).
		\end{equation}
		By replacing $N$ and $N'$ with $NM$ and $N'M$ we can assume without loss of generality that $N$ and $N'$ are multiples of $M$.
		Recall, that $f'_s = \Delta_s f \cdot \sigma_s$. Let $p^n$ be the order of $s$ then by the cocycle identity we have,
		$$1=\Delta_{s^{p^n}} f = \prod_{k=0}^{p^n-1} \Delta_s V_{s^k} f = (\Delta_sf)^{p^n} \cdot \prod_{k=1}^{p^n-1} \Delta_{s^k} \Delta_s f$$ and it follows that $(\Delta_s f)^{p^n} = \prod_{k=1}^{p^n-1} \Delta_{s^k} \Delta_s \overline{f}$.\\
		From all of this we conclude that:
	\begin{equation}
	\label{all}
	\begin{split}
	(\Delta_s \overline{f})^{N'Np^n} \prod_{k=1}^{p^n-1}\Delta_{s^k}\sigma_s^{N'N}\cdot \prod_{k=1}^{p^n-1} \sigma_{s,k}'^{N'N}&=\\ \prod_{k=1}^{p^n-1} \Delta_{s^k}(\Delta_s f^{N'N}\sigma_s^{N'N})\cdot \sigma_{s,k}'^{N'N})&= \prod_{k=1}^{p^n-1} (\Delta_{s^k} f_s'\cdot \sigma_{s,k})^{N'N}
	\end{split}
		\end{equation} which by equation (\ref{cor:eq2}) is a coboundary. Moreover by equation (\ref{cor:eq1}) we have that \begin{equation} \label{cor:eq11}
		d^{[m-j]} 	(\Delta_s \overline{f})^{N'Np^n} \prod_{k=1}^{p^n-1}\Delta_{s^k}\sigma_s^{N'N}\cdot \prod_{k=1}^{p^n-1} \sigma_{s,k}'^{N'N} = \Delta \prod_{k=1}^{p^n-1} \Delta_{{s^k}^{[m-j]}} \overline{F}_{s}^{NN'}.
		\end{equation}
	Choose $\tilde{N} = N'Np^n$ and $\tilde{\sigma}_s$ any measurable function which satisfies that  $\sigma_s^{\tilde{N}} =  (\prod_{k=1}^{p^n-1}\Delta_{s^k}\sigma_s^{N'N}\cdot \prod_{k=1}^{p^n-1} \sigma_{s,k}'^{N'N})^{-1}$ and that $\tilde{\sigma}_s(g,x)=1$ whenever $g\in G_p^\perp$. We conclude that
	\begin{equation} \label{eq12}
	    d^{[m-j]}\Delta_s f^{\tilde{N}} \cdot \tilde{\sigma}_s^{\tilde{N}} = \Delta \prod_{k=1}^{p^n-1} \Delta_{{s^k}^{[m-j]}} F_s^{N'N}.
	\end{equation}
	From equation (\ref{cor:eq11}) and since $d^{[j]}F_s^M = \Delta_{s^{[m]}}F^M$ we get that for every $(m-j)$-dimensional face $\beta$,
	\begin{equation}\label{eq13}
	d^{[m]}_\beta \Delta_s f \tilde{\sigma}_s^{\tilde{N}} = \Delta \Delta_{{s}_\beta^{[m]}} F^{\tilde{N}}.
	\end{equation} Since this is true for every $\beta$ we conclude that $d^{[m]}\Delta_s f^{\tilde{N}}\cdot \tilde{\sigma_s}^{\tilde{N}}= \Delta \Delta_{s^{[m]}} F^{\tilde{N}}$ which completes the proof of the lemma. It is left to prove the claim for this $j$.\\ Observe that $$d_\beta^{[m]} \Delta_s f = \Delta_{{s^{[m]}_\beta}} \Delta F = \Delta \Delta_{{s^{[m]}_\beta}} F \cdot V_{s_\beta^{[k+1]}}T_{g^{[k+1]}}\Delta_{[s^{-1},g^{-1}]_\beta^{[m]}}F.$$
	Plugging this above we get that $$\left(\Delta_{{[s^{-1},g^{-1}]}_\beta^{[m]}} F^{\tilde{N}}(y)\right) \left(d_{\beta}^{[m]} T_{g}^{-1}V_{s}^{-1}\sigma_s(g,y)^{\tilde{N}}\right)^{-1}$$ is invariant with respect to the diagonal action of $G$ on $Y^{[m]}$. Since $\beta$ is an $(m-j)$-dimensional face, the claim follows by Lemma \ref{exactdiff2:lem}.
	\end{proof}
We return to the proof of Lemma \ref{lemcomplete}. By what we just proved, there exists a power $N$, and a function $\sigma_{s,\chi}$ such that ${\theta'_{s,\chi}}^N\cdot \sigma_{s,\chi}$ is a coboundary. Since $\theta'_{s,\chi}$ is a cocycle of type $<k-j+1$, so is $\sigma_{s,\chi}$. As in the lemma above, $\sigma_{s,\chi}(g,\cdot)$ is trivial for any $g\in G_p^\perp$. Therefore, by the cocycle equation it is invariant under the action of $G_p^\perp$. Recall that the action of $G$ on $Y_{j,n+1}$ is ergodic and let $Y_{j,n+1}'$ be the factor of $Y_{j,n+1}$ which corresponds to the $\sigma$-algebra of the $G_p^\perp$-invariant functions. The induced action of $G_p$ on $Y_{j,n+1}'$ is therefore ergodic.\\ We consider this system as an $\mathbb{Z}/{p^d}\mathbb{Z}^\omega$-system for some fixed $d$. By the main theorem of Bergelson, Tao and Ziegler \cite{Berg& tao & ziegler}, any $\mathbb{Z}/{p^d}\mathbb{Z}^\omega$-cocycle is cohomologous to a phase polynomial of some bounded degree. By proposition \ref{PPC} there exists some power $p^n$ such that  $\sigma_{s,\chi}^{p^n}$ is a coboundary. Therefore ${\theta'_{s,\chi}}^{Np^n}$ is a coboundary. As $n=O_{k,m}(1)$, we conclude that the image of the minimal cocycle cohomologous to $\theta_s'$ takes values in a finite dimensional group of exponent $O_{k,m}(1)$. The proof of Lemma \ref{mainclaim} is now complete.
\end{proof}
It is left to prove Lemma \ref{above}. 
\begin{proof}	Let $s\in \mG(Y_{j+1})$ be any lift of an element $s'\in\mathcal{G}_j \cap \mathcal{L}_{j+1}$. By assumption, $[s'^{-1},g^{-1}]\in V_{j+1}$ for every $g\in G$. We conclude that there exists $\psi_{s,g}:Y_{j+1}\rightarrow U$ such that for every $(k-j+1)$-face $\beta$ we have,
$$\Delta_{[s^{-1},g^{-1}]_\beta^{[k+1]}} F\circ \pi = d_\beta ^{[k+1]}\psi_{s,g}$$
where $\pi:Y_{j+1}^{[k+1]}\rightarrow X$ is the factor map. Consider the function $\theta_s(g,y):= \psi_{s,g}(g\overline{s}y) \cdot \Delta_s \rho(g,\pi(y))$ where $\pi:Y_{j+1}\rightarrow X$ is the factor map. The following computation is taken from \cite[Proposition 10.10]{HK}:
\begin{equation}
\label{eqF}
\begin{split}
 \Delta_{g^{[k+1]}} (\Delta_{s_\beta^{[k+1]}}F(\pi(y))) =& V_{s_\beta^{[k+1]}}\circ T_{g^{[k+1]}}(\Delta_{[s^{-1},g^{-1}]_\beta^{[k+1]}} F(\pi(y))) \cdot \Delta_{s_\beta^{[k+1]}} \Delta_{g^{[k+1]}} F(\pi(y))\\
=&d_\beta^{[k+1]}V_s \circ T_g(\psi_{s,g}(y)) \cdot \Delta_{s_\beta^{[k+1]}} d^{[k+1]}\rho(g,\pi(y)) = d_\beta^{[k+1]}\theta_s
\end{split}
\end{equation}
where the last equality follows from the fact that $\Delta_{s_\beta^{[k+1]}} d^{[k+1]}\rho(g,\pi(y))=d_\beta^{[k+1]}\Delta_s \rho(g,\pi(y))$. It follows that $d_\beta^{[k+1]}\theta_s$ is a $(G,Y_{j+1}^{[k+1]},S^1)$-coboundary for every $(k-j+1)$-face $\beta$ and therefore $\theta_s$ is a function of type $<k-j+1$ (Lemma \ref{typereduce}). By Lemma \ref{line} we can extend $Y_j$ and assume that all $\theta_s$ are line-cocycles (the polynomial term $p$ in Lemma \ref{line} can be ignored by changing $\psi_{s,g}$ with $\psi_{s,g}/p$).\\

The map $s\mapsto \theta_s$ is a measurable map from $\mathcal{G}_j$ to functions of type $<k-j+1$. By Theorem \ref{countable} and Baire theorem we have for every $\chi$ a non-meagre measurable set $\mathcal{A}_\chi\subseteq \mathcal{G}_j$ such that for every $s,t\in\mathcal{A}_\chi$,  $\chi(\theta_s/\theta_t)$ is $(G,Y_{j+1},S^1)$-cohomologous to a phase polynomial of degree $<k-j+1$. Assume for now that we can choose the same set $\mathcal{A}$ for all $\chi\in\widehat U$ simultaneously. This assumption will be explained at the end of the proof. 

By Lemma \ref{fixCL:lem} we can find an $<O_{k,m}(1)$-extension $\tilde{Y}_{j+1}$ of $Y_{j+1}$ such that as a function on $\tilde{Y}_{j+1}$, $\theta_s/\theta_t$ is $(G,\tilde{Y}_{j+1},U)$-cohomologous to a phase polynomial of degree $<k-j+1$.   Therefore, for every $s,t\in \mG_j\cap \mL_j$ there exists a measurable function $\theta_{s,t}:\tilde{Y}_{j+1}\rightarrow U$ with $d_\beta^{[k+1]}\theta_s/\theta_t = \Delta d_\beta^{[k+1]}\theta_{s,t}$. Since $\tilde{Y}_{j+1}$ is a degenerate extension of $Y_{j+1}$, we can use Theorem \ref{redweaklymixing}. Thus, we can pass to an extension $Y_j'$ of order $<k+1$ such that $$\mathcal{L}_j' := \{s\in \mL_{j+1} : s \text{ has a lift in } \mG(Y_j')\}$$ is open. By lifting everything to $Y_j'$ it follows from (\ref{eqF}) that
$$\Delta (V_{\overline{s}_\beta^{[k+1]}} F\circ\pi^{Y_{j}'}_X - V_{\overline{t}_\beta^{[k+1]}} F \circ \pi^{Y_{j}'}_X) = \Delta d_\beta^{[k+1]}\theta_{s,t}$$
where $F$ and $\theta_{s,t}$ are viewed as functions on $Y_j'$ and $\overline{s}$, $\overline{t}$ are any lifts of $s$ and $t$. It follows that that $\Delta_{\overline{s}_\beta^{[k+1]}} F\circ \pi^{Y_{j}'}_X - V_{\overline{t}_\beta^{[k+1]}} F\circ \pi^{Y_{j}'}_X - d_\beta^{[k+1]}\theta$ is invariant in $(Y_j')^{[k+1]}$. Since $t\in\mathcal{G}_{j}$, it maps the $\sigma$-algebra $\mathcal{I}_{k+1}(X)$ to itself. Moreover, since $F\circ \pi^{Y_{j}'}_X$ is measurable with respect to $X$, we have that $\Delta_{{\overline{s}\overline{t}^{-1}_\beta}^{[k+1]}} F\circ \pi^{Y_{j}'}_X - \Delta d_\beta^{[k+1]}V_{\overline{t}^{-1}}\theta_{s,t}$ is invariant with respect to the diagonal action of $G$ on $X^{[k+1]}$. Now, by Lemma \ref{CLproperty:lem}, we conclude that for every lifts of $s,t\in\mathcal{A}$ the element corresponding to $st^{-1}$ in $\mathcal{G}(Y_j')$ is in $\mathcal{G}(Y_{j}')^\star$, where $\mathcal{G}(Y_j')^\star$ is defined with respect to the extension $Y_j'\times_{\rho\circ \pi^{Y_{j}'}_X} U$. Thus, $V_j'$ contains $\mathcal{A}\cdot \mathcal{A}^{-1}$ and so the proof is complete by Lemma \ref{Pettis}.

It is left to establish the assumption above about the existence of a measurable set $\mathcal{A}$ of positive measure, which satisfies that $\chi\circ\theta_s/\chi\circ\theta_t$ is cohomologous to a polynomial of degree $<k-j+1$ for every $s,t\in \mathcal{A}$ and every $\chi\in\hat U$. Let $\Delta\leq U$ be a subgroup of bounded exponent such that $U/\Delta$ is a Lie group. Let $\chi_1,...,\chi_n\in\widehat U$ be a lift of a basis of the dual of $U/\Delta$ and $\pi_1,\pi_2,...$ a lift of the coordinate maps in the dual of $\Delta$. Since $\{\chi_1,...,\chi_n\}$ is a finite set of characters we can apply Theorem \ref{HK1functions} and find a set $\mathcal{A}$ of positive measure such that for every $1\leq i\leq n$, $\chi_i(\theta_s/\theta_t)$ is cohomologous to a phase polynomial of degree $<k-j+1$, simultaneously.\\
    
    We also notice, as in the proof of Lemma \ref{above} that $\chi(\theta_s/\theta_t)$ is cohomologous to a phase polynomial of degree $<k-j+1$ if and only if $\Delta_{(st^{-1})_\beta^{[m]}} \chi\circ F = d_\beta^{[m]} \phi $ for some $\phi:Y_{j+1}\rightarrow S^1$. Let $\mathcal{H}$ be the subgroup generated by $\mathcal{A}\cdot\mathcal{A}^{-1}$, then for every $1\leq i \leq n$, $h\in H$ and an $(k-j+1)$-dimensional face $\beta$ there exists $\phi_{h,i}$ such that 
    $$\Delta_{h_\beta^{[m]}} \chi_i\circ F = d_\beta^{[m]}\phi_{h,i}$$
    Now, let $\pi$ be one of the maps $\pi_1,\pi_2,\ldots$. Then, by Theorem \ref{HK1functions} we can find a set of positive measure $\mathcal{A}_\pi\subseteq \mathcal{A}$ such that $\pi(\theta_s/\theta_t)$ is cohomologous to a phase polynomial of degree $<k-j+1$. As before, let $\mathcal{H}_\pi$ be the group generated by $\mathcal{A}_\pi\cdot\mathcal{A}_\pi^{-1}$. We conclude that for every $h\in\mathcal{H}_\pi$ we have that
    $$\Delta_{h_\beta^{[k+1]}} \pi\circ F = d_\beta^{[k+1]}\phi_{h,\pi}.$$ In particular, we see that for every $s\in\mathcal{H}_\pi$, $\pi\circ\theta_s$ is cohomologous to a phase polynomial of degree $<k-j+1$.
    We want to extend $\mathcal{H}_\pi$ to $\mathcal{H}$. For every $i$, $\mathcal{H}_{\pi_i}$ is an open subgroup of $\mathcal{H}$. We conclude that the index $[\mathcal{H}:\mathcal{H}_{\pi_i}]$ is a most countable. Thus, for each $\pi_i\in\{\pi_i:i\in\mathbb{N}\}$ we find a set of countably many transformations $\{s_{n,\pi_i} : n\in\mathbb{N}\}$ such that $\bigcup_{n\in\mathbb{N}}s_{n,\pi_i}\mathcal{H}_{\pi_i}=\mathcal{H}$. It is left to show that, $\Delta_{s_{n,\pi_i}}^{[k+1]}\pi_i\circ F = d_\beta^{[k+1]}\phi_{n,\pi_i}$ for some $\phi_{n,\pi_i}:Y_{j+1}\rightarrow S^1$. Indeed, in this case we have that $s\in\mathcal{H}$ and $i\in\mathbb{N}$, the cocycle $\pi_i(\theta_s)$ is cohomologous to a phase polynomial of degree $<k-j+1$. In particular, we can take $\mathcal{A}=\mathcal{H}$ and the proof is complete.
    
  Since the set $\{s_{n,\pi_i} : n\in\mathbb{N}\}$ is countable, we can use the same argument as in Lemma \ref{mainlem} with one minor modification. This time the elements $s_{n,\pi_i}$ are not of finite order (but the commutators are). Therefore in the last step we can not use Lemma \ref{powers} in order to deduce that $W$ is zero dimensional. Instead, recall that for each $\pi_i$ there exists a constant $m_i$ such that $\pi_i^{m_i}\in \left<\chi_1,...,\chi_n\right>$. This means, in particular, that some power $d=O_{k,m}(1)$ of $\pi_i (\theta_s)$ is $(G,Y_{j+1},S^1)$-cohomologous to a phase polynomial $p_{s,i}$ of degree $<k-j+1$. As in the claim in Lemma \ref{mainlem} we can find a constant $c_{s,i}$ and a cocycle $\theta_{s,i}' = \pi_i (\theta_s)\cdot c_{s,i}$. It follows that $\theta_{s,i}'^d$ is a phase polynomial of degree $<k-j+1$. By Lemma  \ref{extensionlowchar:thm} we can also find a phase polynomial cocycle $q_{s,i}$ of degree $<k-j+1$ such that $q_{s,i}^d = p_{s,i}$ (by passing to an extension). We conclude that $\theta_{s,i}'/q_{s,i}$ is a cocycle, and the $d$-th power of this cocycle is a coboundary. As in Lemma \ref{mainlem}, by extending with a minimal cocycle which is cohomologous to $\theta'_{s,i}/q_{i,s}$ we can assume that the latter is a coboundary. Therefore, $\pi_i\circ \theta_{s}$ is cohomologous to a phase polynomial of degree $<k-j+1$ for all $s\in\mathcal{H}$ which completes the proof. 
\end{proof}
\subsection{Concluding everything}
To finish the proof of Theorem \ref{Main2:thm} we need to following variant of a theorem by Furstenberg and Weiss \cite{F&W}.
\begin{lem} \label{liftext:lem} Let $X$ and $Y$ be ergodic $G$ systems and $\pi:Y\rightarrow X$ be the factor map. Let $\rho:G\times X\rightarrow U$ be a cocycle and suppose that $X'=X\times_\rho U$ is ergodic. If $\sigma$ is the minimal cocycle cohomologous to $\rho\circ \pi:G\times Y\rightarrow U$ and $V$ is the image of $\sigma$, then $X'$ is a factor of $Y\times_\sigma V$.
\end{lem}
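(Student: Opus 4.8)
The plan is to unwind the definitions and produce the factor map explicitly. Since $\sigma$ is cohomologous to $\rho\circ\pi$, write $\sigma = (\rho\circ\pi)\cdot\Delta H$ for some measurable $H:Y\to U$. The candidate factor map $q: Y\times_\sigma V \to X\times_\rho U$ is
\[
q(y,v) := \bigl(\pi(y),\; H(y)^{-1}\cdot \iota(v)\bigr),
\]
where $\iota:V\hookrightarrow U$ is the inclusion of the image $V=V_\sigma\leq U$. First I would check that $q$ intertwines the $G$-actions: on $Y\times_\sigma V$ we have $T_g(y,v)=(T_gy,\sigma(g,y)v)$, and applying $q$ and using $\sigma(g,y)=\rho(g,\pi(y))\cdot \Delta_g H(y) = \rho(g,\pi(y))\cdot H(T_gy)H(y)^{-1}$ one gets exactly $(T_{\pi(y)}\pi(y),\rho(g,\pi(y))\cdot H(T_gy)^{-1}\iota(v)\cdot(\text{correction}))$, which is $T_g q(y,v)$ after the $H$-terms telescope. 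This is a routine cocycle computation. Next, $q$ is measure-preserving because $\pi$ is and because the Haar measure on $V$ pushes forward correctly under the twist by $H$ (the twist by a measurable $U$-valued function preserves the relevant measure class, and ergodicity pins down the measure).

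The substantive point is surjectivity of $q$ (equivalently, that $q$ realizes $X'$ as a genuine factor and not merely as a factor of something larger). Here I would use that $X' = X\times_\rho U$ is ergodic, hence by Lemma \ref{minimal} the cocycle $\rho$ is minimal with image $U_\rho = U$. The key claim is that the image $V$ of the minimal cocycle cohomologous to $\rho\circ\pi$ is all of $U$ — or rather, the image of $q$ is the sub-object $X\times_{\rho'}U_{\rho'}$ where $\rho'$ is minimal, and minimality of $\rho$ forces $U_{\rho'}=U$. Concretely: the image of $q$ is a $G$-invariant subset of $X'$ of the form $X\times_\rho' W$ for $W\leq U$, and since $\rho\circ\pi$ restricted to the factor $X$ is cohomologous (via $H$) to a cocycle into $W$, pushing back down to $X$ shows $\rho$ is cohomologous to a cocycle with image inside $W$; minimality of $\rho$ then gives $W=U$.

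I expect the main obstacle to be the measure-theoretic bookkeeping in the surjectivity argument: one must be careful that "pushing down" a cohomology from $Y$ to $X$ is legitimate, i.e. that if $\rho\circ\pi$ is cohomologous over $Y$ to a cocycle valued in a closed subgroup $W\leq U$, then $\rho$ itself is cohomologous over $X$ to such a cocycle. This is exactly the content of the Furstenberg–Weiss descent argument cited as \cite{F&W}, and the cleanest route is probably to invoke it directly rather than re-derive it: the transfer function can be chosen measurable with respect to the factor $X$ after averaging over the fibers of $\pi$, using ergodicity of the $G$-action on $Y$ and the relative ergodicity coming from $X'$ being ergodic. Once that descent step is in hand, the identification $\operatorname{im}(q)=X'$ is immediate, and the lemma follows.
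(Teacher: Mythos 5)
Your explicit map $q(y,v)=(\pi(y),H(y)^{-1}\iota(v))$ and the equivariance computation are fine, but the step you single out as the substantive one rests on a false principle. It is not true that if $\rho\circ\pi$ is cohomologous over $Y$ to a cocycle taking values in a closed subgroup $W\leq U$, then $\rho$ is cohomologous over $X$ to a cocycle into $W$; this is not the content of \cite{F&W}, and it fails already in the most basic instance of the lemma: take $Y=X'=X\times_\rho U$ with $\pi$ the canonical projection. Then $F(x,u)=u$ satisfies $\Delta F=\rho\circ\pi$, so $\rho\circ\pi$ is a coboundary on $Y$ (cohomologous to a cocycle into $W=\{1\}$, and $V=\{1\}$), while $\rho$ is minimal with image $U$ precisely because $X'$ is ergodic. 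If your descent step were valid, applying it with $W=V$ would force $V=U$ in every case, i.e.\ the lemma would be vacuous --- the whole point is that the Mackey group can genuinely shrink when passing to the extension $Y$. Relatedly, the image of $q$ need not have the form $X\times_{\rho'}W$: the fiber over $x\in X$ is $\bigcup_{y\in\pi^{-1}(x)}H(y)^{-1}V$, which is not a coset of a fixed subgroup in general (in the example above it is all of $X\times U$ even though $V$ is trivial).

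The correct way to finish is the remark you make only in passing (``ergodicity pins down the measure''), and it is essentially how the paper argues: the pushforward of $\mu_Y\times m_V$ under $q$ is a $G$-invariant measure on $X\times U$ whose projection to $X$ is $\mu_X$; since $\mu_X\times m_U$ is ergodic for the skew action, averaging the $U$-translates of this pushforward (or citing \cite[Section 2.2, Lemma 4]{HKbook}, as the paper does) shows that it must equal $\mu_X\times m_U$. This single observation gives simultaneously that $q$ is measure preserving and ``onto'' in the measure-theoretic sense, so $X'$ is a factor of $Y\times_\sigma V$; no descent of cohomology is needed, and minimality of $\rho$ enters only through the ergodicity of $X'$. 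The paper phrases the same argument without the explicit transfer function $H$: it takes an ergodic component of the possibly non-ergodic system $Y\times_{\rho\circ\pi}U$, identifies it with $Y\times_\sigma V$ via \cite[Proposition 7.1]{F&W}, and pushes its measure forward to $X\times U$.
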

\begin{proof}
	Consider the (possibly non-ergodic) system $Y\times_{\rho\circ\pi} U$. It follows by the theory of Mackey (see \cite[Proposition 7.1]{F&W}) that every ergodic component of $Y\times_{\rho\circ\pi} U$ is isomorphic to $Y\times_\sigma V$ for some $V\leq U$. Choose any ergodic invariant measure $\mu_{Y'}$ on $Y\times_{\rho\circ\pi} U$. It is easy to see that the push-forward of $\mu_{Y'}$ to $Y$ is $\mu_{Y}$. Moreover, since $X$ is a factor of $Y$ we conclude that the push-forward of $\mu_{Y'}$ to $X$ is $\mu_X$. Let $\mu_{X'}$ be the push-forward of $\mu_{Y'}$ to $X'$. Since $X'$ is ergodic $\mu_{X'}$ must be the product measure $\mu_X\times m_U$ where $m_U$ is the Haar measure on $U$ (see \cite[Section 2.2, Lemma 4]{HKbook}). In other words, $X'$ is a factor of $Y\times_\sigma V$ as required.
\end{proof}
Given an ergodic $G$-system $X$, by Lemma \ref{InvFD:thm} it is an inverse limit of finite dimensional systems $X=\underset{\longleftarrow}{\lim} X_n$. By Theorem \ref{InvFDr1:thm}, we can find a constant $l=O_k(1)$ and for each $(X_n,G^{(l)})$ we can find an extension $(Y_n,G^{(l)})$ which is an finite dimensional nilsystem. By increasing $Y_n$ we may assume that $Y_{n-1}$ is a factor of $Y_n$. More concretely, in the proof of Theorem \ref{mainclaim} we build $Y_n$ as a sequence of extensions of $X_n$ (by zero dimensional groups). In each step, instead of extending by the groups associated to $X_n$ we can also extend by the groups associated to the previous systems $X_{n-1},...,X_1$ (and replace with minimal cocycles, as in Lemma \ref{liftext:lem}). In this case $Y:=\underset{\longleftarrow}{\lim} Y_n$ is an inverse limit of nilsystems. It is standard that $(X,G^{(l)})$ is a factor of $(Y,G^{(l)})$ or equivalently that $Y$ is an $l$-extension of $X$ as required.\footnote{Another approach would be to use a version of Lemma A.4 from \cite{FranHost} and replace $Y_n$ with an ergodic joining of $Y_n,Y_{n-1},....,Y_1$.}
\section{Proving the identification $\mG(X)/\Gamma\cong X$}
\label{identification:sec}
The goal of this section is to deduce Theorem \ref{InvFDr1:thm} from Theorem \ref{mainclaim} and thus complete the proof of Theorem \ref{Main2:thm}. Given a finite dimensional system $X$ we have already established the existence of a finite dimensional extension $Y$ which is an inverse limit of systems $Y_n$ where the action of $\mG(Y_n)$ on $Y_n$ is transitive. It is therefore enough to derive the identification $Y_n\cong \mG(Y_n)/\Lambda_n$ for some totally disconnected closed subgroup $\Lambda_n$ of $\mG(Y_n)$. In other words it suffices to prove the following theorem.
\begin{thm}
    Let $X$ be an ergodic $G$-system of order $k$ and suppose that $\mG(X)$ acts transitively on $X$ (as a near-action), then there exists a totally disconnected subgroup $\Lambda\leq \mG(X)$ so that $X$ and $\mG(X)/\Lambda$ are isomorphic as $\mathcal{G}(X)$-systems.
\end{thm}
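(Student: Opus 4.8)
The plan is to show that the near-action of $\mathcal{G}(X)$ on $X$ can be upgraded to a genuine continuous action on a model of $X$, and then identify $X$ with a quotient $\mathcal{G}(X)/\Lambda$. The main point is that $\mathcal{G}(X)$ is a Polish group (it is a $k$-step nilpotent locally compact Polish group by \cite[Corollary 5.9]{HK}, and in our situation even finite dimensional), and the transitivity hypothesis gives, for $\mu$-a.e.\ pair $x,y\in X$, an element $t\in\mathcal{G}(X)$ with $tx=y$. First I would fix a point $x_0\in X$ (a generic point in the sense that all the relevant a.e.\ statements hold at $x_0$) and consider the orbit map $\Psi:\mathcal{G}(X)\to X$, $\Psi(t)=tx_0$. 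The transitivity of the near-action means $\Psi$ is essentially surjective, i.e.\ its image has full measure. Let $\Lambda:=\{t\in\mathcal{G}(X): tx_0=x_0\}$ be the stabilizer of $x_0$; it is a closed subgroup since $t\mapsto tx_0$ is measurable and hence (on a Polish group, being a homomorphism-compatible map) has closed fibers after the usual a.e.\ cleanup.

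The key technical step is to promote the measurable near-action to a Borel action. Here I would invoke the standard fact (Mackey's point realization theorem, or the Becker--Kechris machinery for Polish group actions, cf.\ \cite[Theorem 14.7]{Ke} style arguments and Zimmer's cocycle reduction \cite{Zim}) that a measure-preserving near-action of a locally compact second countable group on a standard probability space is isomorphic to a genuine continuous (or at least Borel) measure-preserving action on a compact metric model. Applying this to $\mathcal{G}(X)\curvearrowright X$, we may assume the action is honest. Then $\Psi:\mathcal{G}(X)\to X$ is a genuine Borel map intertwining left translation on $\mathcal{G}(X)$ with the action on $X$, it descends to a Borel injection $\overline{\Psi}:\mathcal{G}(X)/\Lambda\to X$, and its image is $\mathcal{G}(X)$-invariant of full measure, hence (by ergodicity) conull. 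A Borel bijection between standard Borel spaces that intertwines the group actions and pushes the Haar-induced measure on $\mathcal{G}(X)/\Lambda$ to $\mu_X$ is then a measure-isomorphism of $\mathcal{G}(X)$-systems; the fact that the pushforward is $\mu_X$ follows from uniqueness of the invariant measure on the transitive space $\mathcal{G}(X)/\Lambda$ together with ergodicity of $\mu_X$. This gives $X\cong\mathcal{G}(X)/\Lambda$.

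It remains to check that $\Lambda$ is totally disconnected. For this I would use the structure of $\mathcal{G}(X)$ established earlier: since $X=Z_{<k}(X)\times_\rho U$ and by induction on $k$ the stabilizer $\Gamma_k$ of the image point in $\mathcal{G}(Z_{<k}(X))$ is totally disconnected, Lemma \ref{CLproperty:lem} shows that every element of $\Lambda$ has the form $S_{t,F}$ with $t\in\Gamma_k$ and $F$ a phase polynomial of degree $<k+1$ fixing the base point. Hence the kernel of the projection $\Lambda\to\Gamma_k$ is identified with a subgroup of $P_{<k+1}(Z_{<k}(X),U)/P_{<1}(Z_{<k}(X),U)$. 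Writing $U\hookrightarrow(S^1)^{\mathbb N}$ via Pontryagin duality and using that $P_{<k+1}(Z_{<k}(X),S^1)/P_{<1}(Z_{<k}(X),S^1)$ is countable (Lemma \ref{sep:lem}), this quotient embeds into a countable product of discrete groups, which is totally disconnected. An extension of a totally disconnected group by a totally disconnected group is totally disconnected (a closed subgroup of a product of totally disconnected groups), so $\Lambda$ is totally disconnected, completing the proof.

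The main obstacle I anticipate is the passage from the measurable near-action to a genuine topological (or Borel) action of $\mathcal{G}(X)$ on a model of $X$ — one has to be careful that $\mathcal{G}(X)$ is only \emph{locally} compact (not compact), so the naive compactness arguments used for group rotations do not directly apply; this is exactly the point where the order-$<3$ argument appealed to Meiri's result \cite[Theorem 3.21]{Meiri} instead. The cleanest route is probably to realize $\mathcal{G}(X)$ acting continuously via the Becker--Kechris theorem on Polish group actions and then argue as above; alternatively, one can build the isomorphism $\overline{\Psi}$ directly by choosing a measurable section of $\Psi$ and checking the cocycle/equivariance identities hold a.e., which avoids point realization at the cost of a somewhat more hands-on verification that the resulting map is an isomorphism of systems.
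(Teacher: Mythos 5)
Your reduction of the whole theorem to ``promote the measurable near-action to a genuine action via Mackey / Becker--Kechris'' is where the proof breaks. First, $\mathcal{G}(X)$ is in general \emph{not} locally compact for the acting groups considered here: the remark following Theorem \ref{BTZthm} exhibits a system of order $<3$ whose Host--Kra group contains the closed, non-locally-compact subgroup $\left(\mathbb{F}_p^\omega\right)^{\mathbb{N}}$. Consequently Mackey's point-realization theorem (a locally compact second countable statement) does not apply, and there is no Haar measure on $\mathcal{G}(X)$, so the phrase ``pushes the Haar-induced measure on $\mathcal{G}(X)/\Lambda$ to $\mu_X$'' has no meaning; for genuinely non-locally-compact Polish groups, spatial realization of Boolean (near-)actions is a delicate problem that can fail outright. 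Second, the Becker--Kechris machinery realizes honest Borel actions topologically; it does not convert a near-action into one, and even an abstract Borel realization would not be enough: to invoke Effros (Theorem \ref{Effros}) and to transport the measure you need a \emph{compact metric} model on which $\mathcal{G}(X)$ acts \emph{jointly continuously} and transitively, together with an identification of that model with $X$ as a measure space. None of the results you cite produce this.

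That missing step is precisely the content of the paper's proof: one builds the model $\hat X$ as the Gelfand spectrum of the algebra $\mathcal{A}$ of $\mathcal{G}(X)$-continuous functions, and the entire technical weight falls on Proposition \ref{dense} -- that the unit ball of $\mathcal{A}$ is dense in the unit ball of $L^\infty(X)$ in the $L^2$-topology -- proved by writing $X=\mathcal{L}/\Lambda_{\mathcal{L}}\times U$, averaging over a small subgroup $J\leq U$ with $U/J$ Lie (Gleason--Yamabe), convolving along the Lie group $P_{<k+1}(X,U)/P_{<k+1}(X,J)$, and using Gleason's local cross-section theorem (Theorem \ref{gleason}) to upgrade $\mathcal{L}$-continuity to $\mathcal{G}(X)$-continuity. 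Even after that, transitivity of $\mathcal{G}(X)$ on $\hat X$ requires a further Urysohn/averaging argument over $U$ before Effros yields $X\cong\mathcal{G}(X)/\Lambda$. By contrast, your final paragraph on total disconnectedness of $\Lambda$ -- projecting to the stabilizer in $\mathcal{G}(Z_{<k}(X))$ via Lemma \ref{CLproperty:lem}, identifying the kernel with a subgroup of $P_{<k+1}(Z_{<k}(X),U)/P_{<1}(Z_{<k}(X),U)$ and invoking Lemma \ref{sep:lem} -- is essentially the paper's argument (the paper additionally uses freeness of the $U$-action to get $U\cap\Lambda=\{e\}$). So the soft part is fine, but the heart of the theorem -- constructing the continuous compact model and showing it carries enough $\mathcal{G}(X)$-continuous functions -- is not addressed by your proposal.
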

In order to prove this theorem we construct a topological model for $X$. That is a compact Hausdorff space $\widehat X$ with a continuous action $\widehat T:\mathcal{G}(X)\times\widehat X\rightarrow \widehat X$ such that $X$ and $\widehat X$ are isomorphic as measure spaces. Write $X=Z_k(X)\times_\rho U$, by induction hypothesis we may write $Z_{<k}(X)= \mathcal{L}/\Lambda_{\mathcal{L}}$ and we have a projection map $p:\mathcal{G}(X)\rightarrow \mathcal{L}$ which is onto.
\begin{defn}
Let $H$ be a Polish group with a near-action on $X$. A function $f\in L^\infty(X)$ is said to be $H$-continuous if $f(hx)\rightarrow f(x)$ in $L^\infty(X)$ as $h\rightarrow 1_H$.
\end{defn}
\begin{prop}\label{dense}
Let $\mathcal{A}\subseteq L^\infty(X)$ denote the algebra of $\mathcal{G}(X)$-continuous functions. We claim that the unit ball of this algebra is dense in the unit ball of $L^\infty(X)$ with respect to the $L^2$-topology.
\end{prop}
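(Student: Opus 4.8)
The plan is to average $f$ over a small neighbourhood of the identity in $\mG(X)$. Recall that, $X$ being of order $<k+1$, the group $\mG(X)$ is a locally compact, second countable, $k$-step nilpotent polish group; fix a left Haar measure $m$ on it. The measure preserving near-action of $\mG(X)$ on $X$ induces, for every $p\in[1,\infty)$, a strongly continuous isometric representation $g\mapsto V_g$ on $L^p(X)$ (with $V_g\phi=\phi\circ g^{-1}$ and $V_gV_h=V_{gh}$ as operators), and a weak-$*$ continuous one on $L^\infty(X)$. For $f\in L^\infty(X)$ and a nonnegative $\psi\in C_c(\mG(X))$ with $\int_{\mG(X)}\psi\,dm=1$, I would set
\[
\psi* f\;:=\;\int_{\mG(X)}\psi(g)\,V_g f\,dm(g),
\]
interpreted as a weak-$*$ integral in $L^\infty(X)=(L^1(X))^*$; testing against $\phi\in L^1(X)$ and using $|\langle V_g f,\phi\rangle|\le\|f\|_{L^\infty}\|\phi\|_{L^1}$ shows that $\psi* f$ is a well-defined element of $L^\infty(X)$ with $\|\psi* f\|_{L^\infty}\le\|f\|_{L^\infty}$.

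Next I would check the two required properties of $\psi* f$. First, $\psi* f\in\mathcal{A}$: from $V_gV_h=V_{gh}$ and left invariance of $m$ one gets the identity $V_h(\psi* f)=(L_h\psi)* f$ with $(L_h\psi)(g)=\psi(h^{-1}g)$, whence
\[
\|V_h(\psi* f)-\psi* f\|_{L^\infty}\;\le\;\|L_h\psi-\psi\|_{L^1(m)}\,\|f\|_{L^\infty}\;\longrightarrow\;0\quad(h\to 1_{\mG(X)})
\]
by continuity of left translation on $L^1(\mG(X),m)$; so $\psi* f$ is $\mG(X)$-continuous. (That $\mathcal{A}$ is itself a conjugation-closed subalgebra of $L^\infty(X)$ is routine: on bounded sets, products and conjugates of functions whose small translates converge in $L^\infty$-norm again have this property.) Second, choosing $\psi=\psi_n$ supported in a symmetric neighbourhood $U_n$ of $1_{\mG(X)}$ with $\bigcap_n U_n=\{1_{\mG(X)}\}$ gives
\[
\|\psi_n* f-f\|_{L^2(X)}\;\le\;\sup_{g\in U_n}\|V_g f-f\|_{L^2(X)}\;\longrightarrow\;0,
\]
by strong continuity of $g\mapsto V_g f$ at the identity. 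Thus, given $f$ in the unit ball of $L^\infty(X)$ and $\varepsilon>0$, the function $\psi_n* f$ lies in the unit ball of $\mathcal{A}$ and is within $\varepsilon$ of $f$ in $L^2(X)$ for $n$ large, which is the claim.

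The only point requiring care is the input used at the start, namely that a measure preserving near-action of the polish group $\mG(X)$ on the standard probability space $X$ yields genuinely (strongly) continuous Koopman representations on the spaces $L^p(X)$, so that the averages $\psi* f$ and all the estimates above are legitimate; this is standard (via a point realization of the near-action together with the passage from measurable to continuous representations) and is used tacitly elsewhere in the paper. Local compactness of $\mG(X)$ — hence the availability of a Haar measure to convolve against — is what makes this the most economical argument; without it one could instead smooth $f$ using a decreasing sequence of finite measurable partitions of $\mG(X)$ adapted to a compatible left-invariant metric, but that refinement is unnecessary here.
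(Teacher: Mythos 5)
Your argument stands or falls on its very first step: you fix a left Haar measure on $\mG(X)$ and convolve against it, which presupposes that $\mG(X)$ is locally compact. In the generality in which Proposition \ref{dense} is used here, that premise fails. The kernel of the projection $p:\mG(X)\rightarrow \mathcal{L}$ is $P_{<k+1}(X,U)$ (Lemma \ref{CLproperty:lem}), and when the structure group $U$ is compact abelian but not Lie — the typical case in this paper, e.g.\ $U$ profinite such as $\prod_{i\in\mathbb{N}}C_{p_i}$, which is finite dimensional in the sense of Definition \ref{finitedim:def} but not a Lie group — this kernel contains, already at the level of degree $<2$ polynomials, a copy of $\mathrm{Hom}(Z,U)$, a countable product of infinite discrete groups in the topology of convergence in measure, hence not locally compact. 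The remark following Theorem \ref{BTZthm} exhibits exactly this: a Host--Kra group of an order $<3$ system containing the non-locally-compact subgroup $\left(\mathbb{F}_p^\omega\right)^{\mathbb{N}}$. (The sentence after Definition \ref{HKgroup:def} quoting \cite{HK} refers to the $\mathbb{Z}$-setting and cannot be invoked here; note also that the lemma in Section \ref{Main2:proof1} only proves $\mG(X)$ is \emph{finite dimensional}, by embedding $P_{<k+1}(Z_{<k}(X),\Delta)$ into a countable product of discrete groups — precisely the kind of group with no Haar measure.) Without a Haar measure the integral $\psi\ast f=\int\psi(g)V_gf\,dm(g)$ is undefined, and your suggested fallback — averaging over finite measurable partitions of $\mG(X)$ — does not rescue the argument: with no invariant reference measure on the group there is no averaging that intertwines with left translation, so nothing forces the output to be $\mG(X)$-continuous.

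This obstruction is exactly what the paper's proof is engineered around. It starts from a \emph{continuous} $f$ (these are $L^2$-dense), averages over a small compact subgroup $J\leq U$ supplied by Gleason--Yamabe (Theorem \ref{GY:thm}) so that $U/J$ is Lie, convolves only over $K=P_{<k+1}(X,U)/P_{<k+1}(X,J)$ — which \emph{is} a Lie group by Lemma \ref{sep:lem} and therefore carries a Haar measure — and then handles the transverse $\mathcal{L}$-direction not by convolution at all, but via uniform continuity of $f$ together with Gleason's local cross-section theorem (Theorem \ref{gleason}) to promote $K$-continuity plus $\mathcal{L}$-continuity to $\mG(X)/P_{<k+1}(X,J)$-continuity, and finally lifts back to $X$. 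Your analytic details (strong continuity of the Koopman representation coming from the topology of convergence in probability, the bound $\|V_h(\psi\ast f)-\psi\ast f\|_{L^\infty}\leq\|L_h\psi-\psi\|_{L^1}\|f\|_{L^\infty}$, and the $L^2$ estimate near the identity) are fine and would give a correct, shorter proof whenever $U$ is a Lie group; but as a proof of Proposition \ref{dense} in the setting where it is actually applied, the missing local compactness is a genuine gap, not a removable technicality.
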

The following result of Gleason \cite[Theorem 3.3]{Gleason} will be used to lift the $\mathcal{L}$-continuity to a $\mathcal{G}(X)$-continuity.
\begin{thm}\label{gleason}
    Let $U$ be a compact Lie group acting freely and continuously on a completely regular topological space $X$. Let $q:X\rightarrow X/U$ be the quotient map where $x\sim_U y$ if there exists $u\in U$ so that $ux=y$ and $X/U$ is equipped with the quotient topology. Then every point $x\in X/U$ has an open neighborhood $x\in V\subseteq U/X$ such that there is a local continuous section $s:V\rightarrow X$ so that $p\circ s = Id_V$.
\end{thm}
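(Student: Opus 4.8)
The plan is to reconstruct Gleason's argument: produce a continuous $U$-equivariant map from a neighborhood of a prescribed orbit into the group $U$ itself, and take its fiber over the identity as the transversal. Read "completely regular" as Tychonoff, so $X$ is in particular Hausdorff; then for any $x_0\in X$ the orbit map $u\mapsto ux_0$ is a continuous bijection from the compact group $U$ onto $Ux_0$, hence a homeomorphism. Fix $x_0$ with $q(x_0)$ the given point of $X/U$. By the Peter--Weyl theorem $U$ admits a faithful finite-dimensional orthogonal representation, so we may regard $U$ as a closed subgroup, and compact smooth submanifold, of $O(n)\subseteq GL_n(\mathbb{R})\subseteq M_n(\mathbb{R})\cong\mathbb{R}^{n^2}$.

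First I would set up the averaging. Choose a symmetric open neighborhood $V$ of $e$ in $U$ so small that $\sup_{u\in V}\|u-I\|<1$ in the Frobenius norm, and let $K=\{ux_0: u\in U\setminus V\}$, a compact set not containing $x_0$. Using complete regularity (and the standard trick of replacing a function $g$ by $\max(2g-1,0)$ to shrink its support below a compact zero set) pick a continuous $\beta\colon X\to[0,\infty)$ with $\beta(x_0)=1$ and $\operatorname{supp}\beta\cap K=\varnothing$, so that $u\mapsto\beta(u^{-1}x_0)$ is supported in $V$ and positive near $e$. Define the entrywise Haar integral
\[
F_0(x)=\int_U \beta(u^{-1}x)\,u\,du\ \in\ M_n(\mathbb{R}).
\]
A change of variables shows $F_0(vx)=vF_0(x)$, and joint continuity of $(u,x)\mapsto\beta(u^{-1}x)$ together with compactness of $U$ gives continuity of $F_0$. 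The matrix $A:=F_0(x_0)$ is a positive-weight average of elements of $V$, so $\|c^{-1}A-I\|\le\sup_{u\in V}\|u-I\|<1$ where $c=\int_U\beta(u^{-1}x_0)\,du>0$; hence $A\in GL_n(\mathbb{R})$. Thus $W_1:=F_0^{-1}(GL_n(\mathbb{R}))$ is an open $U$-invariant set containing $Ux_0$, and $G(x):=F_0(x)A^{-1}$ is a continuous $U$-equivariant map $W_1\to GL_n(\mathbb{R})$ with $G(ux_0)=u$.

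Next I would push $G$ into $U$. Let $N_\varepsilon=\{M\in M_n(\mathbb{R}):\operatorname{dist}(M,U)<\varepsilon\}$; since $U$ is a compact smooth submanifold of Euclidean $M_n(\mathbb{R})$, for small $\varepsilon$ the nearest-point projection $p_U\colon N_\varepsilon\to U$ is uniquely defined and continuous, $N_\varepsilon$ is $U$-invariant, and $p_U(u_0M)=u_0\,p_U(M)$ because left multiplication by $u_0\in U$ is an isometry of $M_n(\mathbb{R})$ fixing $U$ setwise. Continuity of $G$, the inclusion $G(Ux_0)=U\subseteq N_\varepsilon$, and equivariance make $W:=G^{-1}(N_\varepsilon)$ an open $U$-invariant neighborhood of $Ux_0$, and $\psi:=p_U\circ G\colon W\to U$ is continuous, $U$-equivariant, with $\psi(ux_0)=u$. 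Set $S:=\psi^{-1}(e)$. For $x\in W$ the orbit $Ux\subseteq W$ meets $S$ in the single point $\psi(x)^{-1}x$, since $\psi(vx)=e$ iff $v=\psi(x)^{-1}$; hence $q|_S\colon S\to q(W)=:V$ is a continuous bijection with inverse $s\colon V\to X$, $s(q(x))=\psi(x)^{-1}x$, which is well defined (independent of the representative, by equivariance) and continuous because $x\mapsto\psi(x)^{-1}x$ is continuous on $W$, the orbit map $q\colon W\to V$ is an open quotient map, and a continuous map out of $W$ that is constant on $q$-fibers factors continuously through $V$. As $V$ is open and contains $q(x_0)$ and $q\circ s=\operatorname{id}_V$, this is the asserted local section.

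The only external inputs are Peter--Weyl (a faithful orthogonal representation of the compact Lie group $U$) and the Euclidean tubular-neighborhood fact that a compact smooth submanifold has a neighborhood admitting a continuous nearest-point projection. I expect the step needing most care to be the construction of $F_0$ and the verification that $A=F_0(x_0)$ is invertible: this is where freeness of the action and the smooth structure of $U$ really enter, since one must arrange $u\mapsto\beta(u^{-1}x_0)$ to be concentrated near $e$, and the point-set work (complete regularity, compactness of orbits, the support-shrinking trick) must be done without invoking local compactness or metrizability of $X$.
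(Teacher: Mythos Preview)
The paper does not prove this theorem at all: it is quoted verbatim as \cite[Theorem 3.3]{Gleason} and used as a black box in the construction of the topological model $\hat X$. Your proposal is a faithful and correct reconstruction of Gleason's original averaging argument (Peter--Weyl embedding into $M_n(\mathbb{R})$, Haar-averaged equivariant map, tubular-neighborhood retraction onto $U$, fiber over $e$ as the slice), so there is nothing to compare against within the paper itself.

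Your argument is sound; a few minor remarks. First, when you say ``$u\mapsto\beta(u^{-1}x_0)$ is supported in $V$'' you really only need (and only get) that it \emph{vanishes} outside $V$, since $V$ is open; but that is exactly what the convexity estimate $\|c^{-1}A-I\|<1$ uses, so no harm. Second, openness of $q$ deserves one sentence: $q^{-1}(q(O))=\bigcup_{u\in U}uO$ is open because each $u$ acts by a homeomorphism, so $q$ is an open map and hence $V=q(W)$ is open and $q|_W$ is a quotient map; you implicitly rely on this for the continuity of $s$. Third, the equivariance $p_U(u_0M)=u_0\,p_U(M)$ is exactly the reason for insisting on an \emph{orthogonal} representation, since left multiplication by $u_0\in O(n)$ is a Frobenius isometry; you identified this correctly. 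None of these are gaps, just places where a referee might ask for one extra line.
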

\begin{proof}[Proof of Proposition \ref{dense}]
Let $f$ be a continuous function on $X$ with $\|f\|_\infty \leq 1$ and let $\varepsilon>0$ be arbitrary. Recall that $X=\mathcal{L}/\Lambda_{\mathcal{L}}\times U$. Since $f$ is uniformly continuous, we can find an open subset $U'\leq U$ so that $\|f(ux)-f(x)\|_{\infty}<\varepsilon/2$ for all $u\in U'$. By Gleason-Yamabe, we can find a subgroup $J\leq U$ so that $J\subseteq U'$ and $U/J$ is a Lie group. Let $\tilde{f}(x) = \int_J f(jx) dj$ where $dj$ is the Haar measure on $J$. We see that $\tilde{f}$ is $J$-invariant and \begin{equation}\label{ftildef}\|\tilde{f}(x)-f(x)\|<\varepsilon/2
\end{equation} Recall that we have a surjective homomorphism $p:\mathcal{G}(X)\rightarrow\mathcal{L}$. By Lemma \ref{CLproperty:lem} we can identify the kernel of $p$ with $P_{<k+1}(Y,U)$, where $Y:=Z_{k-1}(X)=\mL/\Lambda_{\mL}$. Quotienting by $P_{<k+1}(Y,J)$ we get the following short exact sequence
\begin{equation}\label{exact}1\rightarrow P_{<k+1}(Y,U)/P_{<k+1}(Y,J)\rightarrow \mathcal{G}(X)/P_{<k+1}(Y,J)\rightarrow \mathcal{L}\rightarrow 1.\end{equation}
Since $U/J$ is a Lie group, we deduce by Lemma \ref{sep:lem} that so is $K:=P_{<k+1}(Y,U)/P_{<k+1}(Y,J)$. In particular, $K$ is locally compact and admits a Haar measure $dK$. Observe that $\tilde{f}$ is invariant to translations by $P_{<k+1}(Y,J)$, hence for every continuous function $\phi$ on $K$ the convolution
$$\tilde{f}\ast \phi(x) := \int_K \tilde{f}(kx)\phi(k) dk$$ is well defined on $X/J := Y\times U/J$ and $K$-continuous. Letting $\phi$ be a suitable approximation to the identity (non-negative, supported on a small neighborhood of
the identity, and of total mass one) we deduce that \begin{equation}\label{tildeftildefphi}\|\tilde{f}-\tilde{f}\ast\phi\|_{L^2(X/J)}<\varepsilon/2.
\end{equation}
Moreover, $\tilde{f}\ast\phi$ is a $K$-continuous function and an $\mathcal{L}$-continuous function. 

We use Gleason theorem and \eqref{exact} to show that $\tilde{f}\ast \phi$ is $\mathcal{G}(X)/P_{<k+1}(Y,J)$-continuous. By Lemma \ref{sep:lem}, $K = U/J\times D$, where $D$ is a countable discrete group. Fix a complete metric $d$ on $U$ which induces its topology and normalize so that $d(u,v)\leq 1$ for all $u,v\in U$. For every two functions $f$, $f'$ taking values in $U$ we define $\|f-f'\|_\infty = \sup d(f(x),f'(x))$. We make an observation that $D$ remains discrete even when $\mathcal{G}(X)$ is equipped with the following finer metric\footnote{We note that $\mG(X)$ is no longer a topological group with respect to that metric.} $d(S_{l,f},S_{l',f'}) = d_{\mathcal{L}}(l,l')+\|f-f'\|_\infty$. Moreover, the action of $K$ on $\mG(X)$ is still continuous. 

Let $B\subseteq \mG(X)$ be an open neighborhood of the identity so that $B\cap D=\{1\}$. Identify $U/J$ with the subgroup $U/J\times \{1\}\subseteq K$ and replacing $B$ with $U/J\cdot B$ we can assume that $K\cap B = U/J$. The Lie group $U/J$ then acts continuously on $B$. Moreover, any element in the quotient $B/(U/J)$ can be identified with a unique element of $\mathcal{L}$. By Gleason theorem, we can find an open neighborhood $V$ of the identity in $\mathcal{L}$ and a local continuous section $l\mapsto S_{l,\phi_l}\in B \subseteq\mathcal{G}(X)$, where the latter is equipped with the finer metric introduced above. Since $\tilde{f}$ is continuous on $X/J$, so is $\tilde{f}\ast \phi$. Without loss of generality we can take $\phi_1 = 1$ by replacing $\phi_l$ with $\phi_l/\phi_1$ for all $l\in V$.

We now prove that $\tilde{f}\ast\phi$ is $\mG(X)/P_{<k+1}(Y,J)$-continuous. Since $\tilde{f}\ast\phi$ is continuous on $X$, it suffices to show that $g\mapsto \tilde{f}\ast\phi(gx)$ is continuous at $g=1$. Let $\varepsilon'>0$, by uniform continuity, there exists $\delta>0$ sufficiently small so that \begin{align}\label{uniformlycont}\|\tilde{f}\ast \phi(x)-\tilde{f}\ast\phi(y)\|_{L^\infty}<\varepsilon'
\intertext{for all $|x-y|<\delta$.}
\end{align}  By shrinking $V$, we can assume that \begin{equation}\label{philsmall}\|\phi_l-1\|_\infty<\delta/2\end{equation} for all $l\in V$. We need to find $\delta'>0$ so that if $S_{l,\phi}\in \mathcal{G}(X)/P_{<k+1}(Y,J)$ satisfies $d(S_{l,\phi},1)<\delta'$, then $\|S_{l,\phi_l}\tilde{f}\ast \phi - \tilde{f}\ast \phi\|_{L^\infty}<\varepsilon''$, where here $\mathcal{G}(X)/P_{<k+1}(Y,J)$ is equipped with the usual quotient metric.

By taking $\delta'>0$ sufficiently small, we can guarantee that $l\in V$. In that case we must have that $\phi = \phi_l \cdot p_l$ for some phase polynomial $p_l:Y\rightarrow U/J$ of degree $k+1$. By \eqref{philsmall} and the triangle inequality we deduce that $\|p_l-1\|_2 \leq \delta/2 + \delta'$. Choosing $\delta'<\frac{1}{6}\delta$ and then choosing $\delta$ sufficiently small we can guarantee by Lemma \ref{sep:lem} that $p_l$ is a constant and $|p_l-1|<2/3\delta$. By the triangle inequality $$\|\phi-1\|_\infty = \|\phi_l\cdot p_l-1\|_\infty \leq \|\phi_l-1\| + |p_l-1|<2/3\delta+\delta'<\delta.$$ Equation \eqref{uniformlycont} now gives  $\|S_{l,\phi}\tilde{f}\ast\phi - \tilde{f}\ast\phi\|_{L^\infty}<\varepsilon'$, as required. We deduce that $\tilde{f}\ast\phi$ is $\mathcal{G}(X)/P_{<k+1}(Y,J)$-continuous. We now combine everything to deduce that $f$ can be approximated by a $\mathcal{G}(X)$ continuous function. Let $f'$ denote the lift of $\tilde{f}\ast\phi$ to $X$ under the canonical projection map $X\rightarrow X/J$. It follows that $f'$ is $\mathcal{G}(X)$-continuous. Moreover, by \eqref{ftildef}, \eqref{tildeftildefphi} and the triangle inequality it follows that $\|f - \tilde{f}\ast\psi \|_{L^2(X)}<\varepsilon$. Since $\varepsilon>0$ is arbitrary, we deduce that the $\mathcal{G}(X)$-continuous functions are dense in the unit ball of $C(X)$, and therefore also in the unit ball of $L^\infty(X)$. This completes the proof.
\end{proof}
We can now construct $\widehat X$. Applying the Gelfand-Riesz theorem and letting $\widehat X$ be the spectrum of $\mathcal{A}$, we see that $\widehat X$ is a compact Hausdorff topological space satisfying that $C(\widehat X)$ is isomorphic as a $C^\star$-algebra to $\mathcal{A}$. The following properties of $\widehat X$ were established in \cite{JST}.
\begin{lem}\label{properties}
Let $\widehat X$ as above. Then
\begin{itemize}
    \item[(i)] There exists a Radon measure on $\widehat X$. In particular, every open subset in $\widehat X$ has positive measure.
    \item[(ii)] The natural action $\widehat T:\mathcal{G}(X)\times \widehat X\rightarrow \widehat X$ is jointly continuous in $\mathcal{G}(X)$ and $\widehat X$.
    \item[(iii)] Every $G$-continuous function $f\in \mathcal{A}$ has a unique continuous representative $\widehat f$ on $\widehat X$.
\end{itemize}
\end{lem}
From property (ii) and Theorem \ref{Effros} we see that in order to show that $X\cong \mG(X)/\Lambda$ is suffices to show that $\mG(X)$ acts transitively on $\widehat X$ and the stabilizer is totally disconnected. We prove these in two separate lemmas.
\begin{lem}
The action of $\mG(X)$ on $\widehat X$ is transitive.
\end{lem}
\begin{proof}
Observe that any continuous function $f\in C(\mathcal{L}/\Lambda_{\mathcal{L}})$ gives rise to a $\mG(X)$-continuous function on $X=\mathcal{L}/\Lambda_{\mathcal{L}}\times U$ by $(x,u)\mapsto f(x)$. This gives a $C^\star$-algebra homomorphism from the continuous functions on $\mathcal{L}/\Lambda_{\mathcal{L}}$ to $\mathcal{A}$ which by Gelfand-Riesz gives rise to a continuous factor map $\pi:\widehat X\rightarrow \mathcal{L}/\Lambda_{\mathcal{L}} $ of $\mG(X)$-systems, where the group $\mG(X)$ acts on $\mathcal{L}/\Lambda_{\mathcal{L}}$ through the projection $p:\mG(X)\rightarrow\mathcal{L}$. Since $p$ is surjective, the action of $\mG(X)$ on $\mathcal{L}/\Lambda_{\mathcal{L}}$ is transitive. Thus it suffices to show that for every $x_1,x_2\in \widehat X$ with $\pi(x_1)=\pi(x_2)$ we can find $g\in \mG(X)$ with $x_1=gx_2$. We follow the argument from \cite[\textsection 19.3.3 Lemma 10]{HKbook}: if not, then by continuity we can find an open neighborhood $V\subseteq \widehat X$ of $x_1$ which contains no element in the orbit of $x_2$ with respect to the action of $\mG(X)$. By Uryshon's lemma we can find a non-negative continuous function $f:\widehat X \rightarrow \mathbb{R}$ which is supported on $V$ with $f(x_1)>0$. Recall that all translations by $u\in U$ belong to $\mG(X)$ and let $\tilde{f}(x) = \int_{U} \tilde{f}(ux) du$ where $du$ is the Haar measure on $U$. Then $\tilde{f}$ is a $U$-invariant function on $\widehat X$ satisfying $\tilde{f}(x_1)>0$ and $\tilde{f}(x_2)=0$. By property (iii) of the Lemma above we can identify $\tilde{f}$ with a $\mathcal{G}(X)$-continuous function which is also $U$-invariant, which is therefore identified with a continuous function $f'$ on $\mathcal{L}/\Lambda_{\mathcal{L}}$ and $\tilde{f}=f'\circ \pi$. This gives a contradiction as $x_1,x_2$ lie in the same fiber of $\pi$.
\end{proof}
\begin{lem}
    The stabilizer of some point in $\widehat X$ under the action of $\mG(X)$ is totally disconnected.
\end{lem}
\begin{proof}
Recall that for every $u\in U$, the translations $V_u$ are automorphisms which belong in $\mG(X)$. It is easy to see that the action of $U$ on $\widehat X$ is free. Indeed, if not then there exists $x\in \widehat X$ so that $ux=x$. But since $u$ commutes with any $g\in \mG(X)$ we see that $u$ stabilizes the orbit of $x$, which by the previous lemma is everything. Now let $\pi:\widehat X\rightarrow \mathcal{L}/\Lambda_{\mathcal{L}}$ as before and let $x_0$ be any element in the pre-image of the coset $1\cdot\Lambda_{\mathcal{L}}$. Let $\Lambda$ be the stabilizer of $x_0$. By construction we have $p(\Lambda) = \Lambda_{\mathcal{L}}$ and the kernel is isomorphic to a subgroup of $P_k(Z_k(X),U)$. Since the action of $U\leq P_k(Z_k(X),U)$ is free we have that $U\cap \Lambda = \{e\}$. Thus, we can identify the kernel of $p$ with a closed subgroup of $P_k(Z_k(X),U)/P_1(Z_k(X),U)$ which is totally disconnected (by Pontryagin duality  it is embedded as a closed subgroup in the direct product of countably many copies of $P_k(Z_k(X),S^1)/P_1(Z_k(X),S^1)$ which by Lemma \ref{sep:lem} is a product of discrete groups).
\end{proof}
By theorem \ref{Effros} we deduce that $\widehat X \cong \mathcal{G}(X)/\Lambda$, where $\Lambda$ is the stabilizer of some $x_0\in \widehat X$. Since $X$ is isomorphic to $\widehat X$ as $\mathcal{G}(X)$-systems and $T_g\in\mathcal{G}(X)$ for every $g\in G$, they are also isomorphic as $G$-systems. This completes the proof.
\section{Limit formula and convergence result} \label{characteristic}
In this section we deduce the convergence result (Theorem \ref{convergence}) and the limit formula (Theorem \ref{formula}). We begin with the following proposition of Bergelson, Tao and Ziegler \cite[Theorem 3.2]{BTZ} generalized for $\bigoplus_{p\in P}\mathbb{Z}/p\mathbb{Z}$-systems.
	\begin{lem} [Characteristic factors] \label{char}
		Let $X$ be an ergodic $\bigoplus_{p\in P}\mathbb{Z}/p\mathbb{Z}$ system and $f_1,f_2,...,f_{k+1}\in L^\infty(X)$. If for some $i$, we have that $E(f_i|Z_{<k+1}(X))=0$ then,
		$$\limsup_{N\rightarrow\infty} \|\mathbb{E}_{g\in\Phi_N} T_g f_1 T_{2g} f_2\cdot...\cdot T_{(k+1)g} f_{k+1}\|_{L^2}=0. $$
	\end{lem}
The proof is the same as in \cite{BTZ} and therefore is omitted. We deduce that in order to prove Theorem \ref{convergence}, it is enough to prove Theorem \ref{formula}.
\begin{prop}
Theorem \ref{convergence} follows from Theorem \ref{formula}.
\end{prop}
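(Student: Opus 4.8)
The plan is to obtain Theorem \ref{convergence} purely as a reduction to three inputs already in hand: the characteristic-factor estimate of Lemma \ref{char}, the structure theorem (Theorem \ref{Main2:thm}), and the limit formula (Theorem \ref{formula}). First I would use that the average (\ref{average}) is multilinear in $(f_1,\dots,f_{k+1})$ and that conditional expectation onto $Z_{<k+1}(X)$ is an $L^\infty$-contraction: writing $f_i=E(f_i\mid Z_{<k+1}(X))+f_i'$ and expanding, Lemma \ref{char} makes every summand that contains some factor $f_i'$ vanish in the $L^2$-$\limsup$. It therefore suffices to treat functions measurable with respect to $Z_{<k+1}(X)$, and the average of such functions only sees the (again ergodic) factor $Z_{<k+1}(X)$; replacing $X$ by this factor, we may assume $X$ has order $<k+1$.

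Next I would invoke Theorem \ref{Main2:thm}: there is $m=O_k(1)$ and an ergodic $m$-extension $Y$ of $X$, that is, a $G^{(m)}$-system (with $G^{(m)}=\bigoplus_{p\in P}\mathbb Z/p^m\mathbb Z$ acting on $X$ through $\varphi_m$ as in Definition \ref{kext}) together with a measure-preserving equivariant factor map $\pi\colon Y\to X$, such that $Y=\text{invlim}_n Y_n$ where each $Y_n\cong\mathcal G(Y_n)/\Gamma(Y_n)$ is a finite dimensional $k$-step nilpotent $G^{(m)}$-system. Assuming $k+1<\min_{p\in P}p$, Theorem \ref{formula} (with its acting group taken to be $G^{(m)}$) gives, for each $Y_n$, every F{\o}lner sequence $\Psi$ of $G^{(m)}$ and bounded inputs, pointwise almost everywhere convergence of the corresponding averages, hence $L^2(Y_n)$-convergence. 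A routine $3\varepsilon$ argument — the averages depend multilinearly and, uniformly in $N$, $L^2$-continuously on bounded inputs (by generalized H\"older), and $\bigcup_n L^\infty(Y_n)$ is dense in $L^2(Y)$ — then upgrades this to $L^2(Y)$-convergence along $\Psi$ for all bounded inputs, and composing with $\pi$ yields $L^2(X)$-convergence of the $G^{(m)}$-averages $\mathbb E_{h\in\Psi}S_h^X f_1\cdots S_{(k+1)h}^X f_{k+1}$ for all $f_i\in L^\infty(X)$ and every F{\o}lner sequence $\Psi$ of $G^{(m)}$.

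The final step is to return to the $G$-averages in the statement. Given a F{\o}lner sequence $\Phi_N$ of $G$, I would set $\Psi_N=\varphi_m^{-1}(\Phi_N)\cap K_{N'(N)}$, where $(K_M)$ exhausts $G^{(m)}$ by finite subgroups and $N'(N)$ is chosen diagonally large, in particular large enough that $\varphi_m(K_{N'(N)})\supseteq\Phi_N$. Then $(\Psi_N)$ is a F{\o}lner sequence of $G^{(m)}$, and every nonempty fibre of $\varphi_m|_{\Psi_N}$ over $\Phi_N$ is a coset of $\ker\varphi_m\cap K_{N'(N)}$, so all fibres have equal size and $\mathbb E_{h\in\Psi_N}F(\varphi_m(h))=\mathbb E_{g\in\Phi_N}F(g)$ identically. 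Taking $F(g)=T_g f_1\cdot T_{2g}f_2\cdots T_{(k+1)g}f_{k+1}$ and using $i\varphi_m(h)=\varphi_m(ih)$, the $G$-average over $\Phi_N$ equals the $G^{(m)}$-average over $\Psi_N$, which converges in $L^2(X)$ by the previous step; this is exactly Theorem \ref{convergence}. The borderline case $k+1=\min_{p\in P}p$, which lies just outside the range of Theorem \ref{formula}, would be handled by a separate short argument (checking that the relevant binomial coefficients do not force a degeneracy, or treating it directly).

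I do not expect any step above to be a genuine obstacle, since all the substantive work is in Lemma \ref{char}, Theorem \ref{Main2:thm} and Theorem \ref{formula}. The point most prone to slips is the last step: producing, from an arbitrary F{\o}lner sequence of $G$, a F{\o}lner sequence of the larger group $G^{(m)}$ that pushes forward to it with equal fibre sizes, and keeping track of the fact that the resulting limit is automatically independent of the F{\o}lner sequence once the characteristic factor is modelled by (an inverse limit of) nilpotent systems.
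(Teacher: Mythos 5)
Your proposal is correct and follows essentially the same route as the paper: reduce to $Z_{<k+1}(X)$ via Lemma \ref{char}, lift to the $m$-extension $Y$ of Theorem \ref{Main2:thm}, apply Theorem \ref{formula} on the nilpotent systems $Y_n$ and upgrade pointwise to $L^2$ convergence by dominated convergence plus an approximation/completeness argument, then transfer between $G$-averages and $G^{(m)}$-averages via a matched F{\o}lner sequence. Your explicit construction of $\Psi_N$ with equal fibre sizes simply fleshes out the equality the paper asserts as equation (\ref{folner}), and your flag about the borderline case $k+1=\min_{p\in P}p$ is a fair observation about a hypothesis mismatch the paper itself glosses over.
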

\begin{proof}
We denote by $\tilde{f}$ the projection of $f$ to $L^2(Z_{<k-1}(X))$. By Lemma \ref{char}, the limit of the average (\ref{average}) exists if and only if it exists for $\tilde{f}_1$,...,$\tilde{f}_{k+1}$. Now let $(Y,G^{(m)})$ be as in Theorem \ref{Main2:thm} and let $h_1,...,h_{k+1}$ be the lifts of $\tilde{f}_1,...,\tilde{f}_{k+1}$ to $Y$ respectively. Note that for any F{\o}lner sequence $\Phi_N$ of $G$, there exists a F{\o}lner sequence $\tilde{\Phi}_N$ for $G^{(m)}$ such that for every $y\in Y$, 
\begin{equation} \label{folner}
\mathbb{E}_{g\in \tilde{\Phi}_N} T_gh_1(y)\cdot...\cdot T_{(k+1)g}h_{k+1}(y) = \mathbb{E}_{g\in \Phi_N} T_g\tilde{f}_1(\pi(y))\cdot...\cdot T_{(k+1)g}\tilde{f}_{k+1}(\pi(y)).
\end{equation}
Therefore, since $Y$ is an inverse limit of $k$-step nilpotent systems we can approximate $h_1,...,h_{k+1}$ in $L^2$ by bounded functions $h_{1,n},...,h_{k+1,n}$ such that for every $1\leq i \leq k+1$ and $n\in\mathbb{N}$, $h_{i,n}$ is measurable with respect to the $k$-step nilpotent system $Y_n$. The dominated convergence theorem implies that the pointwise convergence in Theorem \ref{formula} is also an $L^2$ convergence. Since $L^2(X)$ is a complete metric space, we conclude that the average associated with $h_1,...,h_{k+1}$ converges. By (\ref{folner}), we conclude that the average associated with $\tilde{f}_1,...,\tilde{f}_{k+1}$ also exists, as required.
\end{proof}

We prove Theorem \ref{formula} and the following theorem simultaneously by induction on $k$.
\begin{thm}	\label{ufc}
	Let $X=\mG/\Gamma$ be as in Theorem \ref{formula}. Then, for every $1\leq r \leq k+1$, $Z_{<r}(X)\cong \mG/\mG_r \Gamma$.
\end{thm}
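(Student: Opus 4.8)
\textbf{Proof plan for Theorem \ref{ufc}.} The strategy is a joint downward induction with Theorem \ref{formula}, passing from $r=k+1$ down to $r=1$. Since $X = \mG/\Gamma$ is a $k$-step nilpotent $G$-system with $\mG = \mG(X)$ its Host-Kra group, we already know (by the structure theory recalled in Section \ref{mobj} and the properties of $\mG(X)$ from \cite{HK}) that $\mG_{k+1}$ is trivial, so the case $r=k+1$ reads $Z_{<k+1}(X) = X = \mG/\Gamma$, which holds by hypothesis. For the inductive step, suppose the identification $Z_{<r+1}(X) \cong \mG/\mG_{r+1}\Gamma$ has been established. The key structural input is that $\mG_r/\mG_{r+1}$ sits in the center of $\mG/\mG_{r+1}$ (standard property of the lower central series), so $\mG_r$ acts on $Z_{<r+1}(X) = \mG/\mG_{r+1}\Gamma$ by a commuting family of automorphisms; quotienting by the closed subgroup $\mG_r/\mG_{r+1}$ produces the homogeneous space $\mG/\mG_r\Gamma$, and I want to show this quotient is exactly $Z_{<r}(X)$.

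First I would verify that $\mG/\mG_r\Gamma$ is a factor of $Z_{<r+1}(X)$ of order $<r$: the group $\mG/\mG_r$ is $(r-1)$-step nilpotent, and a nilpotent system with an $(r-1)$-step nilpotent homogeneous group is of order $<r$ (this is the nilpotent-systems counterpart of the classical fact that $k$-step nilsystems have order $<k+1$; it follows from the characterization of the Host-Kra group together with Lemma \ref{CLproperty:lem}, since the successive structure groups are abelian extensions indexed by the central quotients $\mG_j/\mG_{j+1}$). Hence the factor map $Z_{<r+1}(X) \to \mG/\mG_r\Gamma$ factors through $Z_{<r}(Z_{<r+1}(X)) = Z_{<r}(X)$. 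For the reverse inclusion I must show the factor $\mG/\mG_r\Gamma \to Z_{<r}(X)$-induced map is an isomorphism, equivalently that $Z_{<r+1}(X)$ is a weakly mixing (in fact abelian, with structure group $\widehat{\mG_r/\mG_{r+1}}$-related) extension of $\mG/\mG_r\Gamma$ with no further factor of order $<r$. This is where the limit formula enters: the explicit integral formula (\ref{formulaequation}) of Theorem \ref{formula}, applied at level $r$, shows that the averages $\mathbb{E}_{g\in\Phi_N} T_g f_1 \cdots T_{rg} f_r$ are non-degenerate precisely on the functions measurable with respect to the $\mG_r/\mG_{r+1}$-orbit structure, which pins down $Z_{<r}$; more directly, one uses that a function on $\mG/\mG_{r+1}\Gamma$ invariant under the central subgroup $\mG_r/\mG_{r+1}$ descends to $\mG/\mG_r\Gamma$, and conversely any function killed by $\|\cdot\|_{U^r}$ must be $\mG_r/\mG_{r+1}$-invariant — the latter being exactly the content one extracts from the Gowers-norm estimate implicit in Lemma \ref{char} together with the cube-structure computation on the homogeneous space.

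Concretely, the main steps in order are: (1) record that $\mG_r/\mG_{r+1}$ is central in $\mG/\mG_{r+1}$ and acts freely by automorphisms on $Z_{<r+1}(X)$, so the quotient $\mG/\mG_r\Gamma$ makes sense and is a factor of $Z_{<r+1}(X)$; (2) show $\mG/\mG_r\Gamma$ has order $<r$, hence it is a factor of $Z_{<r}(X)$; (3) show every $f\in L^\infty(Z_{<r+1}(X))$ with $\|f\|_{U^r}=0$ is orthogonal to the $\mG_r/\mG_{r+1}$-invariant functions, equivalently that $Z_{<r}(X)$ is contained in the $\mG_r/\mG_{r+1}$-invariant $\sigma$-algebra, which together with (2) forces $Z_{<r}(X) = \mG/\mG_r\Gamma$; (4) feed the resulting identification back to complete the induction for Theorem \ref{formula} at level $r$ (where the $y_i$-integrations over $\mG_i/\Gamma_i$ in (\ref{formulaequation}) are now legitimized by knowing the tower of characteristic factors).

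The main obstacle is step (3): proving that no "extra" factor of order $<r$ hides inside $Z_{<r+1}(X)$ beyond $\mG/\mG_r\Gamma$. In the Lie (nilmanifold) setting this is classical and uses connectedness/dimension counting, but here $\mG$ is only a finite dimensional polish group and $\Gamma$ is merely closed and zero-dimensional, so I cannot argue by dimension. Instead the plan is to run the argument at the level of cocycles: writing $Z_{<r+1}(X) = Z_{<r}(X)\times_\sigma V$ via Proposition \ref{abelext:prop} and using Lemma \ref{CLproperty:lem} to identify the relevant transformations, one shows that $\sigma$ (equivalently, the cocycle describing the top extension of $\mG/\mG_r\Gamma$) is weakly mixing — i.e. $\chi\circ\sigma$ is of type $<r$ for no nontrivial $\chi$ — by exploiting that the elements of $\mG_r$ realizing the extension act trivially on $Z_{<r}$ but nontrivially on the fiber, exactly as in the weakly-mixing analysis of Section \ref{Main2:proof2}. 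The hypothesis $k+1 < \min_{p\in P}p$ is what keeps all the phase polynomials in sight of degree genuinely $<r$ (no torsion obstructions of the kind handled by $k$-extensions elsewhere), so the clean formula (\ref{formulaequation}) and the identification survive without passing to any further extension.
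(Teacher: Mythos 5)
Your step (2) is the crux, and it is exactly the step that is not available: the claim that $\mG/\mG_r\Gamma$ has order $<r$ because $\mG/\mG_r$ is $(r-1)$-step nilpotent is not a citable fact in this setting. For Lie-group nilsystems this is classical, but here $\mG$ is only a finite dimensional polish group and $\Gamma$ a closed zero-dimensional subgroup, and to run the standard inductive argument you would need (a) that each quotient $\mG/\mG_{j+1}\Gamma\rightarrow\mG/\mG_j\Gamma$ is an abelian extension by a \emph{compact} group $\mG_j/\mG_{j+1}\Gamma_j$, and (b) that the defining cocycle is of type $<j$, so that \cite[Corollary 7.7]{HK} applies. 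But the compactness of $\mG_j/\mG_{j+1}\Gamma_j$ and its identification with the $j$-th structure group are precisely the content of Lemma \ref{structuregroups}, which the paper deduces \emph{from} Theorem \ref{ufc}; your justification ("the successive structure groups are abelian extensions indexed by the central quotients $\mG_j/\mG_{j+1}$") therefore presupposes the statement being proved. The paper avoids this circle: the only soft input is Lemma \ref{induce}, which gives the easy containment (the projection of $\mG_r$ into the $(r-1)$-step nilpotent group $\mG(Z_{<r}(X))$ is trivial, so $\mG_r$ acts trivially on $Z_{<r}(X)$ and the factor map $X\rightarrow Z_{<r}(X)$ descends to $\pi_r:\mG/\mG_r\Gamma\rightarrow Z_{<r}(X)$), and the hard containment is obtained by a different device: since the coset $g\mG_r$ is determined by the progression $gy_1,\,gy_1^2y_2,\dots$, any $f$ on $\mG/\mG_r\Gamma$ can be written as an integral against $\prod_i m_i$ of a function evaluated along such progressions, and the $r$-term limit formula (Claim \ref{induction}, available from the induction on $k$) then exhibits $f$ as a limit of averages $\mathbb{E}_{g\in\Phi_N}T_g f_1\cdots T_{rg}f_r$, which are $Z_{<r}(X)$-measurable by Lemma \ref{char}. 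No "order $<r$" statement for $\mG/\mG_r\Gamma$ is ever needed.

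Your step (3) is also tangled. After step (2) the containment you would still need is that $Z_{<r}(X)$-measurable functions are $\mG_r$-invariant, and this is immediate from Lemma \ref{induce}; it requires no weak-mixing analysis. The statement you offer as equivalent (every $f$ with $\|f\|_{U^r}=0$ is orthogonal to the $\mG_r$-invariant functions) is in fact equivalent to the \emph{other} containment, the one step (2) was meant to deliver, and the assertion that a function killed by $\|\cdot\|_{U^r}$ "must be $\mG_r/\mG_{r+1}$-invariant" is false as stated. Moreover the weak-mixing argument you sketch again needs $Z_{<r+1}(X)\rightarrow\mG/\mG_r\Gamma$ to be an abelian extension by an identified compact group with an identified cocycle, which is once more Lemma \ref{structuregroups}-type information not yet in hand. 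So the proposal is circular at its two load-bearing points; to repair it, replace steps (2)--(3) by the paper's mechanism (Lemma \ref{induce} for one direction, the limit formula together with Lemma \ref{char} for the other), which is also why the theorem is proved jointly with Theorem \ref{formula} by induction on $k$ rather than by a downward induction on $r$ inside a fixed system.
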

Let $k=0$. Then $Z_{<1}(X)$ is trivial and the claim in Theorem \ref{ufc} follows. As for Theorem \ref{formula}, the case $k=0$ follows by the pointwise mean ergodic theorem and Lemma \ref{open=full}. Fix $k\geq1$. Throughout the rest of this section we assume that Theorem \ref{ufc} and Theorem \ref{formula} hold for all smaller values of $k$. We prove Theorem \ref{ufc} for this value of $k$ and then, we deduce Theorem \ref{formula} from this result.

\begin{claim} [The induction hypothesis] \label{induction} Let $k\geq 1$ be such that Theorem \ref{formula} and Theorem \ref{ufc} holds for all smaller values of $k$. Let $X$ be as in Theorem \ref{formula}, then for every $1\leq r \leq k$ and every $f_1,...,f_{r}\in L^\infty(X)$ the following $r$-term formula holds.
	\begin{equation} \label{eqr1}
	\begin{split}
	\lim_{N\rightarrow\infty}\mathbb{E}_{g\in\Phi_N} T_g f_1(x) T_{2g} f_2(x)\cdot...\cdot T_{rg} f_r(x)&=\\ \int_{\mG/\Gamma} \int_{\mG_2/\Gamma_2}...\int_{\mG_r/\Gamma_r}  \prod_{i=1}^{r}f_i(x\cdot y_1^i\cdot y_2^{\binom{i}{2}}\cdot ...\cdot & y_i^{\binom{i}{i}})d\prod_{i=1}^r m_i(y_i\Gamma_i)
	\end{split}
	\end{equation}
	with the abuse of notation that $f(x)=f(x\Gamma)$.
\end{claim}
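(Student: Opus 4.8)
The plan is to reduce the $r$-term formula \eqref{eqr1} to the corresponding formula for a nilsystem of strictly smaller step, for which it is available through the running induction on $k$. Fix $1\le r\le k$ and put $\tilde f_i:=E(f_i\mid Z_{<r}(X))$. The $r$-term analogue of Lemma \ref{char} — proved exactly as \cite[Theorem 3.2]{BTZ}, the number of terms playing no role — shows that $Z_{<r}(X)$ is a characteristic factor for the average $\mathbb{E}_{g\in\Phi_N}T_gf_1\cdots T_{rg}f_r$; hence, by multilinearity and telescoping, $\bigl\|\mathbb{E}_{g\in\Phi_N}T_gf_1\cdots T_{rg}f_r-\mathbb{E}_{g\in\Phi_N}T_g\tilde f_1\cdots T_{rg}\tilde f_r\bigr\|_{L^2(X)}\to 0$. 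So it is enough to show that $\lim_N\mathbb{E}_{g\in\Phi_N}T_g\tilde f_1\cdots T_{rg}\tilde f_r(x)$ exists $\mu$-a.e.\ and equals the right-hand side of \eqref{eqr1}; the two limits then agree a.e., since a sequence that converges pointwise a.e.\ and an $L^2$-equivalent sequence have the same a.e.\ limit.

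Next I would identify $Z_{<r}(X)$ as an $(r-1)$-step nilsystem written in terms of $\mathcal{G}=\mathcal{G}(X)$. Since $r\le k$, $Z_{<r}(X)=Z_{<r}(Z_{<k}(X))$, and in the construction carried out in Sections \ref{Main2:proof1}--\ref{identification:sec} the factor $Z_{<k}(X)$ is a $(k-1)$-step nilpotent system whose homogeneous group is its Host--Kra group; moreover the restriction map $\mathcal{G}(X)\to\mathcal{G}(Z_{<k}(X))$ is onto (cf.\ Lemma \ref{induce}) with kernel $\mathcal{G}_k$ by the structure of the Host--Kra group, so $\mathcal{G}(Z_{<k}(X))\cong\mathcal{G}/\mathcal{G}_k$. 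Applying Theorem \ref{ufc} to $Z_{<k}(X)$ — legitimate because $k-1<k$ — gives $Z_{<r}(X)\cong\mathcal{G}/\mathcal{G}_r\Gamma$, a nilpotent $G$-system of step $r-1$ with Host--Kra group $\mathcal{G}/\mathcal{G}_r$, lower central series $(\mathcal{G}/\mathcal{G}_r)_i=\mathcal{G}_i\mathcal{G}_r/\mathcal{G}_r$ $(1\le i\le r)$, and co-compact subgroup $\Gamma\mathcal{G}_r/\mathcal{G}_r$. Since $r\le k<\min_{p\in P}p$, Theorem \ref{formula} applies to this system with parameter $r-1<k$ and yields — in particular the $\mu$-a.e.\ existence of $\lim_N\mathbb{E}_gT_g\tilde f_1\cdots T_{rg}\tilde f_r$ — its value as an iterated integral over the quotients of the groups $\mathcal{G}_i/\mathcal{G}_r$ against the relevant left-invariant Haar measures.

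It then remains to rewrite that formula in the data $\mathcal{G}_i,\Gamma_i,m_i$ of $X$. Pulling back along $\mathcal{G}\to\mathcal{G}/\mathcal{G}_r$ and using $\mathcal{G}_r\subseteq\mathcal{G}_{i+1}$ for $i\le r-1$, one obtains canonical isomorphisms $(\mathcal{G}_i/\mathcal{G}_r)\big/\bigl((\mathcal{G}_{i+1}/\mathcal{G}_r)\cdot\bigl((\mathcal{G}_i/\mathcal{G}_r)\cap(\Gamma\mathcal{G}_r/\mathcal{G}_r)\bigr)\bigr)\cong\mathcal{G}_i/\mathcal{G}_{i+1}\Gamma_i$ carrying the Haar measures to $m_i$; and the ``level $r$'' degree of freedom runs over the fibre $\mathcal{G}_r/\Gamma_r$ of $X\to Z_{<r}(X)$, so that integrating it against $m_r$ together with the evaluation $f_r\bigl(x\cdot y_1^r\cdot y_2^{\binom{r}{2}}\cdots y_r^{\binom{r}{r}}\bigr)$ reproduces precisely the conditional expectation $E(\,\cdot\mid Z_{<r}(X))$ from the first step. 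Assembling these identifications turns the formula for $Z_{<r}(X)$ into \eqref{eqr1}.

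I expect the main obstacle to be exactly this last bookkeeping: one must check that the well-definedness cancellations in the products $x\cdot y_1^i\cdots y_i^{\binom{i}{i}}$ (each $y_j$ being ambiguous modulo $\mathcal{G}_{j+1}\Gamma_j$) take place as in the classical computations of Ziegler \cite{Z1} and \cite{BHK}, and that the disintegration of the Haar measure of $X$ along the normal subgroup $\mathcal{G}_r$ matches the way $y_r$ and $m_r$ enter \eqref{eqr1}. Nothing in this uses properties of $G$ beyond what is already present in \cite{Z1},\cite{BHK},\cite{BTZ}, but the argument has to be re-run in the current setting; a secondary, harmless point is the passage between $L^2$- and pointwise-a.e.\ statements in the first step, which is fine because the pointwise limit for the reduced functions $\tilde f_i$ is supplied by Theorem \ref{formula} at the lower step.
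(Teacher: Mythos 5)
Your overall skeleton is the paper's: reduce to the projections onto $Z_{<r}(X)$ via the $r$-term version of Lemma \ref{char}, evaluate the limit for the projected functions by the induction hypotheses (Theorem \ref{formula} and Theorem \ref{ufc} at a step $<k$) applied to the quotient nilpotent system, and then rewrite the resulting integral in the data $\mG_i,\Gamma_i,m_i$ of $X$ using $\mG_r$-invariance of the lifts. The genuine gap is in the bridge you build in the second step: the assertion that the projection $\mG(X)\to\mG(Z_{<k}(X))$ is onto with kernel $\mG_k$, so that $\mG(Z_{<k}(X))\cong\mG/\mG_k$. Neither half is justified, and both are false in general. By the discussion following Lemma \ref{CLproperty:lem}, the kernel of this projection is the group of vertical phase-polynomial translations $S_{1,F}$ with $F\in P_{<k+1}(Z_{<k}(X),U)$, which typically strictly contains (the image of) $\mG_k$ — for instance it contains the elements $S_{1,\chi}$ of $\Gamma$ and all polynomial translations of intermediate degree. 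Surjectivity is not part of Lemma \ref{induce} either; the possible failure of elements of the Host--Kra group of a factor to lift is precisely the lifting problem (Theorem \ref{mainclaim}) that forces the paper to pass to extensions, so it cannot be taken for granted. There is also a circularity risk in your route: the identification $Z_{<r}(X)\cong\mG/\mG_r\Gamma$ with the matching central series is exactly the content of Theorem \ref{ufc} at the current level $k$, which is proved \emph{after}, and by means of, Claim \ref{induction}.

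The way to repair this (and the way the paper's own four-line proof should be read) is to avoid any structural claim about $\mG(Z_{<k}(X))$: apply the induction hypotheses directly to the quotient system $\mG/\mG_r\Gamma$, which is a bona fide $(r-1)$-step nilpotent factor of $X$, and for the Lemma \ref{char} replacement use only the easy containment that $Z_{<r}(X)$ is a factor of $\mG/\mG_r\Gamma$; this follows from Lemma \ref{induce} together with the fact that $\mG(Z_{<r}(X))$ is $(r-1)$-step nilpotent, so $\mG_r$ acts trivially on $Z_{<r}(X)$, and hence replacing $f_i$ by its conditional expectation onto $\mG/\mG_r\Gamma$ changes nothing seen by $Z_{<r}(X)$. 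Two smaller points: your sentence that an $L^2$-equivalent sequence ``has the same a.e.\ limit'' is not literally true (closeness in $L^2$ does not transport a.e.\ convergence), though since Claim \ref{induction} is only used in the proof of Theorem \ref{ufc} through $L^2$-limits this is harmless and matches the paper's level of precision; and the final bookkeeping you defer is exactly what the paper disposes of by observing that the lifts of the projected functions are $\mG_r$-invariant, so that formula (\ref{eqr1}) for them coincides with the lower-step formula on $\mG/\mG_r\Gamma$.
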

\begin{proof}
	Let $\tilde{f}_1,...,\tilde{f}_r$ be the projections of $f_1,...,f_r$ respectively into $L^2(Z_{<r}(X))$. By the induction hypothesis of Theorem \ref{formula} and Theorem \ref{ufc}, we have that 
	\begin{equation} \label{eqr}
	\begin{split}\lim_{N\rightarrow\infty}\mathbb{E}_{g\in\Phi_N} T_g \tilde{f}_1(x) T_{2g} \tilde{f}_2(x)\cdot...\cdot T_{rg} \tilde{f}_r(x)&=\\\int_{\mG/\mG_r\Gamma}\int_{\mG_2/\mG_r\Gamma_2}...\int_{\mG_{r-1}/\mG_r} \prod_{i=1}^{r}\tilde{f}_i(x\cdot y_1^i\cdot y_2^{\binom{i}{2}}\cdot ...\cdot & y_i^{\binom{i}{i}})d\prod_{i=1}^r m_i(y_i\Gamma_i).
	\end{split}
	\end{equation}
	
We lift each $\tilde{f}_i$ to $X$. Then, equation (\ref{eqr}) remains unchanged and by Lemma \ref{char} and the fact that each lift is invariant to $\mG_r$, we get that equation (\ref{eqr1}) holds. As required.
\end{proof}

\subsection{Proof of Theorem \ref{ufc} and corollaries}
We recall that the Host-Kra group induces an action on each of the universal characteristic factors. 
\begin{lem} [$\mG$ induces an action on the universal characteristic factors] \label{induce}
	Let $X$ be an ergodic $G$-system of order $<k$ and $\mG(X)$ be the Host-Kra group. Then for every $1\leq l<k$ there exists a projection $p_l:\mG(X)\rightarrow \mG(Z_{<l}(X))$ where $\mG(Z_{<l}(X))$ is the Host-Kra group of the factor $Z_{<l}(X)$.
\end{lem}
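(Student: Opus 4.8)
The plan is to show that every $t\in\mG(X)$ carries the $\sigma$-algebra of $Z_{<l}(X)$ to itself, so that $t$ descends to a measure-preserving transformation $\bar t$ of $Z_{<l}(X)$, and then to verify that this $\bar t$ satisfies the two defining conditions of $\mG(Z_{<l}(X))$ from Definition~\ref{HKgroup:def}; the homomorphism $p_l$ will then be $t\mapsto\bar t$.

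First I would record that every $t\in\mG(X)$ is an invertible measure-preserving transformation of $X$: the case $l=1$ of the defining property, together with $\mu^{[1]}=\mu\times\mu$ for ergodic $X$, forces $t_*\mu=\mu$, and $\mG(X)$ is a group. The key point is then that for every $j\ge1$ and $f\in L^\infty(X)$,
\[
\|f\circ t\|_{U^j(X)}=\|f\|_{U^j(X)}.
\]
Indeed, writing $\|f\|_{U^j(X)}^{2^j}$ as the cube integral $\int_{X^{[j]}}\prod_{\omega\in\{0,1\}^j}\mathcal C^{|\omega|}f(x_\omega)\,d\mu^{[j]}(x)$ (as recalled in Appendix~\ref{background}), where $x_\omega$ is the $\omega$-th coordinate of $x\in X^{[j]}$ and $\mathcal C$ denotes complex conjugation, the transformation $t^{[j]}$ is the coordinatewise action $x\mapsto(t(x_\omega))_{\omega}$; since $\mu^{[j]}$ is $t^{[j]}$-invariant by the definition of $\mG(X)$, the substitution $x\mapsto t^{[j]}x$ gives the identity above. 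Taking $j=l$ and invoking Proposition~\ref{UCF}, the unitary operator $f\mapsto f\circ t$ on $L^2(X)$ preserves the closed subspace $\{f:E(f|Z_{<l}(X))=0\}$ (using $t^{-1}\in\mG(X)$ as well), hence preserves its orthogonal complement $L^2(Z_{<l}(X))$; as $f\mapsto f\circ t$ is moreover a $\ast$-algebra automorphism, its restriction to $L^\infty(Z_{<l}(X))$ is a $\ast$-algebra automorphism, and since all spaces here are regular it is induced by an essentially unique invertible measure-preserving transformation $\bar t$ of $Z_{<l}(X)$ with $\pi\circ t=\bar t\circ\pi$ a.e., where $\pi\colon X\to Z_{<l}(X)$ is the factor map. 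Setting $p_l(t):=\bar t$, the relation $\pi\circ t\circ s=\bar t\circ\bar s\circ\pi$ together with uniqueness of $\bar t$ shows that $p_l$ is a group homomorphism (continuous for convergence in probability, since $\pi$ is measure-preserving).

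It remains to check $\bar t\in\mG(Z_{<l}(X))$. Put $Y=Z_{<l}(X)$ and let $\pi^{[j]}\colon X^{[j]}\to Y^{[j]}$ be the map induced by $\pi$ on $j$-cubes, so that $\pi^{[j]}\circ t^{[j]}=\bar t^{[j]}\circ\pi^{[j]}$. I would use two standard features of the cube construction, both recalled in Appendix~\ref{background}: that $\pi^{[j]}_*\mu_X^{[j]}=\mu_Y^{[j]}$, and that $\pi^{[j]}$ intertwines the transformations defining $I^{[j]}(X)$ and $I^{[j]}(Y)$ because $\pi$ is $G$-equivariant, so that $(\pi^{[j]})^{-1}$ carries $I^{[j]}(Y)$-measurable functions to $I^{[j]}(X)$-measurable ones. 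From the first, $\bar t^{[j]}_*\mu_Y^{[j]}=\pi^{[j]}_*t^{[j]}_*\mu_X^{[j]}=\pi^{[j]}_*\mu_X^{[j]}=\mu_Y^{[j]}$, so $\mu_Y^{[j]}$ is $\bar t^{[j]}$-invariant. For the second, if $h\in L^2(Y^{[j]})$ is $I^{[j]}(Y)$-measurable then $h\circ\pi^{[j]}$ is $I^{[j]}(X)$-measurable, whence $h\circ\pi^{[j]}\circ t^{[j]}=h\circ\pi^{[j]}$ since $t\in\mG(X)$; this reads $(h\circ\bar t^{[j]})\circ\pi^{[j]}=h\circ\pi^{[j]}$, and pushing forward by $\pi^{[j]}$ gives $h\circ\bar t^{[j]}=h$ in $L^2(Y^{[j]},\mu_Y^{[j]})$. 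Hence $\bar t^{[j]}$ acts trivially on $I^{[j]}(Y)$ for every $j$, so $\bar t\in\mG(Y)$, as desired. All steps are routine once the definitions are unwound; the points needing genuine care — the closest thing to an obstacle here — are the two functoriality statements for the cubic measures and the invariant $\sigma$-algebras under a factor map, and the point realization of $\bar t$, both of which are standard in the Host--Kra framework under the regularity conventions of the paper.
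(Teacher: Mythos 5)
Your proof is correct. The paper states Lemma \ref{induce} without proof, recalling it from Host--Kra theory, and your argument is essentially the standard one from \cite{HK}: seminorm invariance under $\mG(X)$ together with Proposition \ref{UCF} shows each $t\in\mG(X)$ preserves the factor $Z_{<l}(X)$, and functoriality of the cubic measures under the factor map yields $\bar t\in\mG(Z_{<l}(X))$; the only quibble is that the cube-integral formula for the seminorm and the two functoriality facts are proved in \cite{HK} rather than explicitly recalled in Appendix \ref{background}, but they are indeed standard under the paper's regularity conventions.
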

The proof of Theorem \ref{ufc} is a modification of the argument of Ziegler from \cite[Lemma 4.5]{Z}.
\begin{proof} [Proof of Theorem \ref{ufc}]
Let $X=Z_{<k}(X)=\mG/\Gamma$ be as in Theorem \ref{formula}. Let $1\leq r \leq k-1$ and consider the factor map $\pi:\mG/\Gamma \rightarrow Z_{<r}(X)$. By Lemma \ref{induce} the action of $\mG_{r}$ on $Z_{<r}(X)$ is trivial and so $\pi$ induces a factor $\pi_r : \mG/\mG_r\Gamma\rightarrow Z_{<r}(X)$. We prove that $\pi_r$ an isomorphism. Let $f:\mG/\mG_r\Gamma\rightarrow S^1$. Then, since the coset $g\mG_r$ is uniquely determined by the cosets associated with $gy_1,gy_1^2y_2,...,gy_1^{r}y_2^{\binom{r}{2}}...y_{r-1}^{\binom{r}{r-1}}$, we conclude that there is a measurable map $F:(\mG/\mG_r\Gamma)^r\rightarrow S^1$ with $F(gy_1,gy_1^2y_2,...,gy_1^{r}y_2^{\binom{r}{2}}...y_{r-1}^{\binom{r}{r-1}}) = f(g\Gamma)$ for almost all $g\in \mG$, $y_1\in \mG_1$, $y_2\in \mG_2$,..., $y_{r-1}\in \mG_{r-1}$. We conclude that
$$f(g\Gamma) = \int_{\mG/\Gamma}\int_{\mG_2/\Gamma_2}...\int_{\mG_k/\Gamma_k} F(gy_1,gy_1^2y_2,...,gy_1^{r}y_2^{\binom{r}{2}}...y_{r-1}^{\binom{r}{r-1}}) d\prod_{i=1}^r m_i(y_i\Gamma_i).$$
By approximating $F$ with functions of the form $(x_1,...,x_r)\mapsto f_1(x_1)\cdot f_2(x_2)\cdot...\cdot f_r(x_r)$, it follows from Claim \ref{induction} that $F$ is spanned by limits of the form $$\lim_{N\rightarrow\infty}\mathbb{E}_{g\in\Phi_N}  T_g f_1\cdot...\cdot T_{rg}f_r.$$ By Proposition \ref{char}, these terms are measurable with respect to $Z_{<r}(X)$. This completes the proof.
\end{proof}
Let $X$ be an ergodic $G$-system. It is well known \cite[Proposition 4.11]{HK} that every factor of $X$ of order $<k$ factors through $Z_{<k}(X)$. We refer to this fact as the maximal property of the $k$-th universal characteristic factor. We have the following result.
\begin{lem} \label{structuregroups}
	Let $X$ be a $(k+1)$-step nilpotent system and let $\mG$ be an open subgroup of the Host-Kra group of $X$ and $\Gamma$ be the stabilizer of $(1,1,...,1)\in U_0\times U_1\times...\times U_{k-1}$ where $U_0,U_1,...,U_{k-1}$ are the structure groups of $X$. Then, for every $0\leq i \leq k-1$ we have that $U_i \cong \mG_i/\mG_{i+1}\Gamma_i$ as topological groups and measure spaces, where $\mG_{i+1}\Gamma_i$ is the closed subgroup generated by all the products of elements in $\mG_{i+1}$ and $\Gamma_i$.
\end{lem}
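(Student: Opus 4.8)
The plan is to prove Lemma \ref{structuregroups} by induction on $k$, using the inductive structure of nilpotent systems together with Theorem \ref{ufc}, which identifies $Z_{<r}(X)$ with $\mG/\mG_r\Gamma$. First I would set up the base of the filtration: the Kronecker factor $Z_{<2}(X) = U_0\times U_1$ (with $U_0$ trivial) should be identified with $\mG/\mG_2\Gamma$ via Theorem \ref{ufc}, and the induced action of $\mG/\mG_2$ on this quotient is by group rotations, so $\mG/\mG_2\Gamma \cong \mG_1/\mG_2\Gamma_1$ as a topological group, which is exactly $U_1$ (and $U_0 \cong \mG_0/\mG_1\Gamma_0$ trivially). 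The general step is then to compare two consecutive universal characteristic factors: by Theorem \ref{ufc}, $Z_{<i+1}(X)\cong \mG/\mG_{i+1}\Gamma$ and $Z_{<i}(X)\cong \mG/\mG_i\Gamma$, and the factor map $\pi_i: \mG/\mG_{i+1}\Gamma \to \mG/\mG_i\Gamma$ realizes $Z_{<i+1}(X)$ as an abelian extension of $Z_{<i}(X)$ by the structure group $U_i$ (Proposition \ref{abelext:prop}).

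Next I would analyze the fibers of $\pi_i$. The fiber over the identity coset $\mG_i\Gamma$ in $\mG/\mG_{i+1}\Gamma$ is the image of $\mG_i$ under the quotient map, i.e. it is naturally identified with $\mG_i/\mG_{i+1}(\mG_i\cap\Gamma) = \mG_i/\mG_{i+1}\Gamma_i$. Since $\mG_i$ is central modulo $\mG_{i+1}$ in $\mG$, translation by elements of $\mG_i$ commutes (modulo $\mG_{i+1}$) with the $G$-action, so $\mG_i/\mG_{i+1}\Gamma_i$ acts on the fiber by a simply transitive (free and transitive) action; by ergodicity of $X$ and the fact that this group of transformations is contained in $\mG(X)$ and acts along the fibers of $\pi_i$, it must act transitively and freely, hence the fiber — which as a measure space with the vertical $U_i$-action is a torsor over $U_i$ — is isomorphic to $\mG_i/\mG_{i+1}\Gamma_i$ both as a measure space and (after fixing the base point $(1,\dots,1)$, which lies in the fiber by construction of $\Gamma$) as a topological group. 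The compatibility of the two topologies follows because the open-map theorem (Theorem \ref{open}) applied to the surjection $p:\mG\to \mG/\mG_{i+1}\Gamma$ restricted to $\mG_i$ shows the algebraic isomorphism $\mG_i/\mG_{i+1}\Gamma_i \to U_i$ is continuous and open, hence a homeomorphism.

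The main obstacle I anticipate is the bookkeeping around $\Gamma_i$ and the distinction between $\mG_i\cap\Gamma$ and the various lattice-like subgroups: one has to be careful that $\Gamma_i := \mG_i\cap\Gamma$ is indeed the stabilizer of the base point for the restricted action, and that $\mG_{i+1}\Gamma_i$ (the closed group generated by $\mG_{i+1}$ and $\Gamma_i$) is the correct subgroup — in particular that $\mG_{i+1}$ normalizes $\Gamma_i$ or at least that the set of products is already a closed subgroup, which uses that $\mG_i$ is abelian modulo $\mG_{i+2}$ (or, since we only care about the quotient $\mG_i/\mG_{i+1}$, that $\mG_i/\mG_{i+1}$ is abelian, which is standard for the lower central series). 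A second, more technical point is that the isomorphism $X \cong \mG/\Gamma$ we are handed is a priori only a near-action / measure-space isomorphism; but by Theorem \ref{InvFDr1:thm} and the results of section \ref{identification:sec} we may work with the genuine continuous action, so the topological part of the statement is legitimate, and the measure-space part follows from uniqueness of Haar measure on the compact homogeneous space $\mG_i/\mG_{i+1}\Gamma_i$ together with the fact that the disintegration of $\mu_X$ over $Z_{<i}(X)$ has Haar fibers (Proposition \ref{abelext:prop}).

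Finally I would assemble the pieces: having shown $U_i\cong \mG_i/\mG_{i+1}\Gamma_i$ for each $i$ with matching topology and measure, the induction is complete. I would phrase the write-up so that the case $i=0$ is handled trivially (both sides trivial or equal to the Kronecker structure group), and for $i\geq 1$ cite Theorem \ref{ufc} to reduce to the fiber computation above, keeping the calculation at the level of "the fiber of $Z_{<i+1}(X)\to Z_{<i}(X)$ is $\mG_i/\mG_{i+1}\Gamma_i$" rather than re-deriving the cocycle description of the extension.
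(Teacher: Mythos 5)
Your route is genuinely different from the paper's: you compute the fibre of $\mG/\mG_{i+1}\Gamma\rightarrow\mG/\mG_i\Gamma$ directly and try to read off $U_i$ as the group acting simply transitively on it, whereas the paper first shows that every element of $A_i:=\mG_i/\mG_{i+1}\Gamma_i$ acts on $Z_{<i+1}(X)$ as a vertical rotation (so $A_i$ embeds in $U_i$) and then forces $U_i/A_i$ to be trivial by the maximality property of $Z_{<i}(X)$. Your fibre computation itself is sound: the fibre over a coset $h\mG_i\Gamma$ is exactly the $\mG_i$-orbit of $h\mG_{i+1}\Gamma$, and since $[\mG,\mG_i]\subseteq\mG_{i+1}$ the stabilizer of any point of that fibre in $\mG_i$ is $\mG_i\cap\mG_{i+1}\Gamma=\mG_{i+1}\Gamma_i$, so $A_i$ does act freely and transitively on every fibre; if completed, this gives surjectivity "for free", which is an attractive alternative to the paper's appeal to the universal property.

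The gap is at the step where you pass from "the fibre is simultaneously an $A_i$-torsor and a $U_i$-torsor" to "$A_i\cong U_i$ as topological groups". Two groups acting simply transitively on the same set are merely in bijection (e.g. $\mathbb{Z}/4$ and $(\mathbb{Z}/2)^2$ on a four-point set); to make the base-point bijection a homomorphism you need the $A_i$-action to commute with the vertical $U_i$-action. Your justification, that $\mG_i$ is central modulo $\mG_{i+1}$, only gives commutation with the $G$-action up to $\mG_{i+1}$; it says nothing about commutation with the vertical rotations of the extension $Z_{<i+1}(X)=Z_{<i}(X)\times_\rho U_i$. The missing ingredient is exactly what the paper uses: by Lemma \ref{induce} the elements of $\mG_i$ project into the $i$-th lower central series subgroup of $\mG(Z_{<i+1}(X))$, and since $Z_{<i+1}(X)$ is of order $<i+1$ that group is $i$-step nilpotent, so the image of $\mG_i$ is \emph{central} in $\mG(Z_{<i+1}(X))$ and in particular commutes with the vertical rotations (equivalently, each element of $A_i$ \emph{is} a vertical rotation, since it fixes $Z_{<i}(X)$ and commutes with everything). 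A second, smaller point to address: the identification $Z_{<i+1}(X)\cong Z_{<i}(X)\times_\rho U_i$ is only measure-theoretic, so "the fibre over the identity coset is a $U_i$-torsor" is an almost-everywhere statement; you should either work with a generic fibre (which suffices, since $A_i$ acts simply transitively on every fibre) or, as the paper does, argue with globally defined transformations rather than a single fibre. With these two repairs your argument goes through and yields the topological and measure-space identifications as you describe.
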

\begin{proof}
	By Lemma \ref{open=full}, we have that $X=\mG/\Gamma$ and by Theorem \ref{ufc} that $Z_{<r}(X)=\mG/\mG_r\Gamma$ for every $1\leq r\leq k$. Let $A_r:=\mG_r/\mG_{r+1}\Gamma_r$. Then, by Lemma \ref{induce}, $A_r$ acts on $Z_{<r+1}(X)$, fixes $Z_{<r}(X)$ and commutes with $\mG(Z_{<r+1}(X))$. It follows that the action of any $t\in A_r$ equals to a translations by an element in $U_r$. In particular, this means that we can identify $A_r$ with a closed subgroup of $U_r$ (as topological groups and measure spaces). Therefore, $Z_{<r+1}(X)/A_r$ is an extension of $Z_{<r}(X)$ by $U_r/A_r$. On the other hand $Z_{<r+1}(X)/A_r = \mG/\mG_r\Gamma\cong Z_{<r}(X)$ is a system of order $<r$. By the maximal property of $Z_{<r}(X)$ it follows that $U_r/ A_r$ is trivial, as required.
\end{proof}
As a corollary we have the following result.
\begin{cor} \label{G=L}
	Let $X$ be as in Lemma \ref{structuregroups}, and let $\mG$ be an open subgroup of $\mG(X)$ which contains $T_g$ for every $g\in G$. Then for every $1\leq i \leq k$, $\mG_i/\mG_{i+1}\Gamma_{\mG_i} \cong \mG(X)_i/\mG(X)_{i+1}\Gamma(X)_i$.
\end{cor}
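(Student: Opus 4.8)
\textbf{Proof proposal for Corollary \ref{G=L}.}

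The plan is to deduce the corollary from Lemma \ref{structuregroups} by comparing the two filtrations level by level, using the fact that $U_i$ is an intrinsic invariant of the system $X$ while the quotients $\mG_i/\mG_{i+1}\Gamma_{\mG_i}$ are computed from a chosen open subgroup $\mG$ and the quotients $\mG(X)_i/\mG(X)_{i+1}\Gamma(X)_i$ from the full Host-Kra group. First I would apply Lemma \ref{structuregroups} twice: once to the open subgroup $\mG$ (with its stabilizer $\Gamma$), obtaining $U_i \cong \mG_i/\mG_{i+1}\Gamma_i$ as topological groups and measure spaces for every $0 \le i \le k-1$; and once to the full Host-Kra group $\mG(X)$ itself (note $\mG(X)$ is an open subgroup of itself and contains every $T_g$), obtaining $U_i \cong \mG(X)_i/\mG(X)_{i+1}\Gamma(X)_i$. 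Composing these two isomorphisms yields $\mG_i/\mG_{i+1}\Gamma_{\mG_i} \cong \mG(X)_i/\mG(X)_{i+1}\Gamma(X)_i$ for $1 \le i \le k-1$, which is the asserted statement in that range.

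The only remaining point is the top index $i = k$. Here I would observe that since $X$ is a $(k+1)$-step nilpotent system, Lemma \ref{structuregroups} already covers $i = 0, \dots, k-1$, and the structure group $U_k$ (the last one, attached to passing from $Z_{<k}(X)$ to $X = Z_{<k+1}(X)$) is handled by the same argument: $A_k := \mG_k/\mG_{k+1}\Gamma_k$ acts on $Z_{<k+1}(X) = X$, fixes $Z_{<k}(X)$, and commutes with $\mG(X)$, so it is identified with a closed subgroup of $U_k$; then $X/A_k$ is an extension of $Z_{<k}(X)$ by $U_k/A_k$, while simultaneously $X/A_k = \mG/\mG_k\Gamma \cong Z_{<k}(X)$ has order $<k$, forcing $U_k/A_k$ to be trivial by the maximal property of $Z_{<k}(X)$. (If one prefers, simply extend the statement of Lemma \ref{structuregroups} to range $0 \le i \le k$ for $(k+1)$-step systems; the proof is verbatim the same.) Running this for both $\mG$ and $\mG(X)$ and composing gives the case $i = k$ as well.

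I do not expect a genuine obstacle here — the corollary is essentially a bookkeeping consequence of Lemma \ref{structuregroups}, whose content is that each structure group $U_i$ is canonically the $i$-th graded piece of any open subgroup of the Host-Kra group containing the $G$-action, together with its stabilizer. The one subtlety worth stating carefully is that the isomorphisms of Lemma \ref{structuregroups} are isomorphisms of topological groups (not merely of measure spaces), which is what lets the composition $\mG_i/\mG_{i+1}\Gamma_{\mG_i} \xrightarrow{\sim} U_i \xrightarrow{\sim} \mG(X)_i/\mG(X)_{i+1}\Gamma(X)_i$ make sense at the level claimed. I would also remark that the composite isomorphism is the natural one induced by the inclusion $\mG \hookrightarrow \mG(X)$: the projection $p\colon \mG(X) \to \mG(Z_{<i+1}(X))$ from Lemma \ref{induce} restricts on $\mG_i$ to a map whose image is $\mG(X)_i$ modulo the relevant subgroup, and chasing the identifications shows this restricted map realizes the asserted isomorphism, so no new construction is needed.
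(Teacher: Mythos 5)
Your proposal is correct and is exactly the argument the paper intends: Corollary \ref{G=L} is stated without a written proof as an immediate consequence of Lemma \ref{structuregroups}, namely applying that lemma both to the open subgroup $\mG$ and to $\mG(X)$ itself (each contains every $T_g$, so Lemma \ref{open=full} and Theorem \ref{ufc} apply) and composing the two identifications with the structure group $U_i$. Your observations that the index range in the corollary matches the proof of Lemma \ref{structuregroups} (the lemma's stated range is an indexing slip) and that the identifications are isomorphisms of topological groups, induced naturally by the inclusion $\mG\hookrightarrow\mG(X)$, are consistent with the paper.
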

Let $(H,\cdot)$ be any group and $n\in\mathbb{N}$. We say that $H$ is $n$-divisible if for every $h\in H$ there exists $x\in H$ with $x^{n!}=h$ where $n!=n\cdot (n-1)\cdot...\cdot 1$.  Our goal is to show that $\mG_r/\mG_{r+1}\Gamma_r$ is $k$-divisible for every $k<\min_{p\in P} p$ and every $1\leq r \leq k$. We use a type argument by Host and Kra which requires analysis of the ergodic components of $(X^{[1]},\mu^{[1]})$. We recall the following result by Host and Kra \cite[Lemma 9.1 and Lemma 9.3]{HK}.
\begin{lem}\label{typeargument}
    Let $X$ be an ergodic $G$-system, U a compact abelian
group, $\rho:G\times X\rightarrow U$ a cocycle and $k\geq 0$ an integer. Let $(Z_{<2}(X),\nu)$ be the Kronecker factor and $\mu^{[1]}=\int_{Z_{<2}(X)} \mu_s d\nu(s)$ be the ergodic decomposition of $\mu^{[1]}$ with respect to the diagonal action of $G$. The set
$$ A = \{s\in Z_{<2}(X) : d^{[1]}\rho \text{ is a cocycle of type } <k \text{ of } X_s\}$$
is measurable. Furthermore, the cocycle $\rho$ is of type $<k+1$
if and only if $\nu(A)=1$. Moreover, if $X$ is of order $<k$, then for $\nu$-almost every $s$, the ergodic component $(X^{[1]},\mu_s)$ is a system of order $<k$.
\end{lem}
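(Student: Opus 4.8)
The plan is to transcribe the $\mathbb{Z}$-action arguments of Host and Kra (\cite[Lemmas 9.1 and 9.3]{HK}) to $G=\bigoplus_{p\in P}\mathbb{Z}/p\mathbb{Z}$, after noting that those arguments are purely formal in the acting group: they use only the cube measures $\mu^{[l]}$ together with their functoriality (associativity of the cube construction and its compatibility with ergodic decomposition), ordinary ergodic decomposition, a Mackey-type patching lemma for coboundaries, and the elementary theory of the factors $Z_{<k}$, all of which hold for an arbitrary countable abelian group.

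First I would record the identifications I will use. Since $X$ is ergodic, $\mu^{[1]}=\mu\times\mu$ and the diagonal $G$-action on $X^{[1]}=X\times X$ has invariant $\sigma$-algebra $I^{[1]}(X)$ whose ergodic decomposition is canonically indexed by the Kronecker factor $Z=Z_{<2}(X)$: writing $\nu$ for the Haar measure of $Z$ and $\pi\colon X\to Z$ for the factor map, the component $\mu_s$ (for $s\in Z$) is the ergodic self-joining of $X$ obtained by choosing $z$ Haar-random in $Z$ and then $x\in\pi^{-1}(z)$, $x'\in\pi^{-1}(z+s)$ independently over $z$, and $\mu^{[1]}=\int_Z\mu_s\,d\nu(s)$. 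Carrying out the $[k+1]$-cube construction by splitting off the last coordinate --- whose side $\sigma$-algebra is precisely $I^{[1]}(X)$ --- yields the tower identity
\[
(X^{[k+1]},\mu^{[k+1]})=\int_Z\big(X^{[1]},\mu_s\big)^{[k]}\,d\nu(s)
\]
together with the composition rule $d^{[k+1]}\rho=d^{[k]}\big(d^{[1]}\rho\big)$, in which $d^{[1]}\rho$ is regarded as a cocycle of the non-ergodic system $X^{[1]}$ and $d^{[k]}$ is taken fibrewise over $Z$.

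Granting this, I would deduce the equivalence as follows. By definition $\rho$ is of type $<k+1$ iff $d^{[k+1]}\rho$ is a coboundary on $X^{[k+1]}$, i.e., by the two displayed facts, iff the cocycle $d^{[k]}(d^{[1]}\rho)$ is a $(G,\cdot,U)$-coboundary of the disintegrated system $\int_Z(X^{[1]},\mu_s)^{[k]}\,d\nu(s)$. A cocycle of a system disintegrated over a $G$-invariant $\sigma$-algebra is a coboundary iff it is a coboundary of $\nu$-almost every piece: the non-trivial direction is the Mackey-type gluing, selecting transfer maps $F_s$ measurably on almost every fibre, normalising them, and gluing along the ($G$-invariant) fibre index. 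Since ``$d^{[k]}(d^{[1]}\rho)$ is a coboundary of $\mu_s^{[k]}$'' is the same as ``$d^{[1]}\rho$ is of type $<k$ of $X_s$'', this gives: $\rho$ is of type $<k+1$ if and only if $\nu(A)=1$. The measurability of $A$ comes from the same ingredients: $s\mapsto(X^{[1]},\mu_s)$ and the relevant cocycles can be chosen measurably, and ``coboundary of $\mu_s^{[k]}$'' is a measurable condition in $s$ via the Mackey theory of cocycles and a measurable selection (Lusin's theorem, as used in the proof of Theorem \ref{HK1}).

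For the ``moreover'' part, if $X$ is of order $<k$ then each $(X^{[1]},\mu_s)$ is an ergodic self-joining of $X$, hence a system of order $<k$ (an ergodic joining of order $<k$ systems is of order $<k$, since the GHK seminorms of the joining are controlled coordinatewise by those of the factors); applying this for $\nu$-a.e. $s$ finishes the proof, exactly as \cite[Lemma 9.3]{HK}. The step I expect to be the main obstacle is the Mackey-type patching --- promoting ``coboundary on $\nu$-a.e. ergodic component'' to a genuine coboundary of the whole disintegrated system --- and the measurable-selection bookkeeping behind the measurability of $A$; the Host--Kra proof of this is formal in the acting group, so the one genuine point to verify for our $G$ is that the cube functoriality, in particular the tower identity $\mu^{[k+1]}=\int_Z\mu_s^{[k]}\,d\nu(s)$, goes through verbatim for $\bigoplus_{p\in P}\mathbb{Z}/p\mathbb{Z}$, which it does because those proofs only manipulate the defining relative-independence of the $\mu^{[l]}$.
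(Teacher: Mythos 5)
The paper does not prove this lemma at all --- it simply recalls it from \cite[Lemmas 9.1 and 9.3]{HK}, implicitly relying on the fact that Host--Kra's proofs are formal in the acting group, and your sketch is essentially a transcription of exactly those proofs (the identification of the ergodic components of $\mu^{[1]}$ over the Kronecker factor, the tower identity $\mu^{[k+1]}=\int_Z \mu_s^{[k]}\,d\nu(s)$ with $d^{[k+1]}\rho=d^{[k]}(d^{[1]}\rho)$ taken fibrewise, the measurable-selection gluing of fibrewise coboundaries, and the self-joining argument for the ``moreover'' part), so it is correct in outline and takes the same (implicit) route as the paper. The one soft spot is your parenthetical reason for the ``moreover'' part --- that an ergodic joining of systems of order $<k$ is of order $<k$ ``since the seminorms are controlled coordinatewise'': that statement is indeed a standard Host--Kra fact, but the reason as phrased only controls functions depending on a single coordinate; the actual argument uses that the joining's cube measure projects to each factor's cube measure (via the mean ergodic theorem, valid for any countable abelian group) together with the characterization of order $<k$ by the last vertex of the $[k]$-cube being measurable with respect to the remaining vertices, which then passes to $\nu$-a.e.\ component $\mu_s$.
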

We deduce the following result.
\begin{lem}\label{claim} Let $1\leq m\leq l$ and $k<\min_{p\in P} p$. Let $X$ be an ergodic $G$-system of order $<m$ and $\rho:G\times X\rightarrow S^1$ a cocycle of type $<l$ such that $\rho^{k!}$ of type $<m-1$. Then $\rho$ is of type $<m-1$.
\end{lem}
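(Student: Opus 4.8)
\textbf{Proof proposal for Lemma \ref{claim}.} The plan is to run a type-descent argument of Host--Kra flavour on the ergodic components of $\mu^{[1]}$, using Lemma \ref{typeargument} as the passage between a cocycle $\rho$ on $X$ and the cocycle $d^{[1]}\rho$ on the ergodic components $X_s$. First I would set up induction on $m$. For $m=1$ the hypothesis is that $\rho$ is of type $<l$ with $\rho^{k!}$ of type $<0$, i.e.\ $\rho^{k!}$ is a coboundary; since $k<\min_{p\in P}p$ and $G=\bigoplus_{p\in P}\mathbb{Z}/p^m\mathbb{Z}$ has no $k!$-torsion issues (all prime divisors of $k!$ are strictly smaller than every $p\in P$, hence coprime to $|G|$'s relevant torsion), the group $B^1(G,X,S^1)$ is $k!$-divisible in the appropriate sense, so $\rho$ itself is a coboundary, i.e.\ of type $<0=m-1$. (I should double-check that ``$k!$-divisibility of coboundaries'' is really what I need, or argue directly: write $\rho^{k!}=\Delta F$, find a $k!$-th root of $F$ measurably since there is no obstruction in $\hat G$, because $k<p$ for all $p\in P$.)

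For the inductive step, assume the statement for all smaller values of $m$ and let $X$ be of order $<m$, $\rho$ of type $<l$ with $\rho^{k!}$ of type $<m-1$. By Lemma \ref{typeargument}, $\rho^{k!}$ of type $<m-1$ means that for $\nu$-almost every $s$ in the Kronecker factor, $d^{[1]}\rho^{k!}=(d^{[1]}\rho)^{k!}$ is a cocycle of type $<m-2$ on the ergodic component $X_s$, and each such $X_s$ is a system of order $<m$ (in fact one can say $<m-1$ after the first reduction, but order $<m$ suffices). Also $\rho$ of type $<l$ gives that $d^{[1]}\rho$ is of type $<l-1$ on almost every $X_s$. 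Now I want to apply the induction hypothesis on each component $X_s$: the ambient order there is $\leq m-1$ (the components of a system of order $<m$ are of order $<m$, but one more reduction — quotienting by the Kronecker factor or using that $d^{[1]}\rho$ is a quasi-cocycle over a smaller-order base — lets me take the effective order to be $m-1$), the cocycle $d^{[1]}\rho$ has type $<l-1$, and its $k!$-th power has type $<m-2=(m-1)-1$. Hence by induction $d^{[1]}\rho$ is of type $<m-2$ on $X_s$ for almost every $s$. Applying Lemma \ref{typeargument} once more in the reverse direction, this says precisely that $\rho$ is of type $<m-1$ on $X$, which is the desired conclusion.

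The main obstacle I anticipate is the bookkeeping of orders: Lemma \ref{typeargument} as quoted says the ergodic components $X_s$ of a system of order $<k$ are themselves of order $<k$, which is one weaker than what a clean induction would want (I would prefer $<k-1$). I expect to handle this either by invoking the standard fact that $Z_{<m-1}(X_s)$ captures $d^{[1]}\rho$ up to a coboundary (because $d^{[1]}\rho$ has type $<l-1$ and, crucially, is a quasi-cocycle of order $<m-1$ over $Z_{<m-1}(X_s)$ — cf.\ Lemma \ref{dec:lem} and the discussion of $\mu^{[1]}$ ergodic components being copies of $X$ in \cite{HK}), or by using Proposition \ref{finiteorder} to reduce at the outset to $X$ of order exactly $<m$ and then noting that $d^{[1]}\rho$ being of type $<l-1$ with a base of order $<m$ forces, after cohomology change, measurability with respect to $Z_{<m-1}(X_s)$. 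The other place requiring care is the $k!$-divisibility input: it is exactly here that the hypothesis $k<\min_{p\in P}p$ is used, ensuring that passing to $k!$-th roots of constants in $\hat G$ and of coboundaries presents no torsion obstruction; I would isolate this as a short preliminary observation (every element of $\hat G=\prod_{p\in P}\mathbb{Z}/p^m\mathbb{Z}$ is uniquely $k!$-divisible, and likewise $B^1(G,X,S^1)$ admits $k!$-th roots) and then feed it into both the base case and, if needed, the inductive step.
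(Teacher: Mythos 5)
Your base case is essentially the paper's, and your overall instinct — that the lemma should be proved by induction on $m$ with a type-descent through the ergodic components of $\mu^{[1]}$ via Lemma \ref{typeargument} — is indeed how the proof ends. But there is a genuine gap exactly at the point you flag and then wave away: the ergodic components $X_s$ of $X^{[1]}$ are only of order $<m$, not $<m-1$, so the induction hypothesis (which is a statement about systems of order $<m-1$ when you shift $m\mapsto m-1$) does not apply to them, and none of your proposed repairs works. ``Quotienting by the Kronecker factor'' does not lower the order of the components; and the suggestion that $d^{[1]}\rho$, being of type $<l-1$, is after a cohomology change measurable with respect to $Z_{<m-1}(X_s)$ is false in general: Proposition \ref{finiteorder} only yields measurability with respect to $Z_{<l}$, and $l$ may be much larger than $m$ — getting the cocycle down to the $(m-1)$-st factor is precisely the content of the lemma, so it cannot be assumed.

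The paper closes this gap by doing substantial work on $X$ itself \emph{before} any descent: writing $X=Z_{<m-1}(X)\times_\sigma U$, it first shows that $\rho$ is cohomologous to a cocycle $\rho''$ measurable with respect to $Z_{<m-1}(X)$. This uses (i) the mutual induction with Theorem \ref{structurediv}, which gives $U^{k!}=U$ (a divisibility input your sketch never invokes — your use of $k<\min_{p\in P}p$ only concerns $\hat G$, whereas the paper needs divisibility of the \emph{structure group}); (ii) Theorem \ref{CL:thm} plus the cocycle identity and the hypothesis on $\rho^{k!}$ to show $\Delta_u\rho$ is a coboundary for $u$ in an open subgroup, hence, by Lemma \ref{cob:lem}, $\rho$ is cohomologous to a cocycle invariant under an open $U''\leq U$; and (iii) a separate argument for the finite quotient $U/U''$ (coprimality of $|U/U''|$ to $k!$, the bilinear map $b(u,v)$, and splitting of a compact extension) to upgrade this to full invariance. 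Only then, with $\rho''$ living on the system $Z_{<m-1}(X)$ of order $<m-1$, does Lemma \ref{typeargument} together with the induction hypothesis (applied to the ergodic components, now genuinely of order $<m-1$) give that $\rho''$, hence $\rho$, is of type $<m-1$. Without this reduction step your induction does not close, so as written the proposal is incomplete at its central step.
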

Assume this lemma for now, we prove the following result.
\begin{thm} \label{structurediv}
	Let $r\geq 1$, $k<\min_{p\in P}P$ and $X$ be an ergodic $r$-step nilpotent $G$-system  Then, for each $1\leq i \leq r$ we have that $\mG_i/\mG_{i+1}\Gamma_i$ is $k$-divisible.
\end{thm}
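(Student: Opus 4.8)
\textbf{Proof plan for Theorem \ref{structurediv}.}
The plan is to prove the statement by induction on $r$, reducing the divisibility of each quotient $\mG_i/\mG_{i+1}\Gamma_i$ to a statement about cocycles to which Lemma \ref{claim} applies. First I would set up the induction: for $r=1$, $X=\mG/\Gamma$ is a group rotation, $\mG_1=\mG$ is abelian, and $\mG_1/\mG_2\Gamma_1 = \mG/\Gamma = U_0$ is a compact abelian group on which $G=\bigoplus_{p\in P}\mathbb{Z}/p\mathbb{Z}$ acts ergodically; ergodicity forces $U_0$ to be a quotient of $\widehat{\widehat{G}}$-type object, and since $k<\min_{p\in P}p$ every element of $\widehat{U_0}$ has order coprime to $k!$, hence $U_0$ is $k$-divisible. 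For the inductive step, assume the result holds for all nilpotent systems of step $<r$. For $1\le i\le r-1$ the quotient $\mG_i/\mG_{i+1}\Gamma_i$ is, by Lemma \ref{structuregroups} applied to the factor $Z_{<r}(X)$ (which is an $(r-1)$-step, indeed $<r$-step, nilpotent system by Theorem \ref{ufc} and Corollary \ref{G=L}), identified with the structure group $U_{i-1}$ of that lower-step system, so $k$-divisibility follows from the induction hypothesis. Thus the only new case is $i=r$: I must show that the top structure group $U_{r-1}\cong \mG_r/\mG_{r+1}\Gamma_r$ (which now acts by automorphisms, being central) is $k$-divisible.

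For this last case, write $X = Z_{<r}(X)\times_\rho U$ with $U=U_{r-1}$ and $\rho$ a weakly mixing cocycle of type $<r$. Fix $\chi\in\widehat U$ and a prime $p$; I want to show $\chi$ has a $k!$-th root in $\widehat U$, equivalently that $\chi\circ\rho$ admits a $k!$-th root within the cocycles of type $<r$ up to the relevant equivalence — more precisely, I would argue that if $U$ were not $k$-divisible there would be some $\chi\in\widehat U$ of order divisible by some prime $q\le k$, and then $\chi\circ\rho:G\times Z_{<r}(X)\rightarrow C_{q^n}$ is a weakly mixing cocycle of type $<r$ whose $q$-th (hence $k!$-th) power drops in type. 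Here is where Lemma \ref{claim} enters: with $m=r$, $l=r$, and the cocycle $\chi\circ\rho$, if $(\chi\circ\rho)^{k!}$ is of type $<r-1$ then $\chi\circ\rho$ itself is of type $<r-1$, contradicting the weak mixing of $\rho$ (via Lemma \ref{extweakmixing} / \cite[Proposition 4]{HK02}, which says a weakly mixing cocycle has no nontrivial character composite of lower type). To complete the contradiction I need to verify that the $k!$-th power of $\chi\circ\rho$ does drop in type whenever the order of $\chi$ shares a prime factor $q\le k$ with $k!$ — this is a direct consequence of $\chi^{k!}$ having strictly smaller order than $\chi$ in $\widehat U$ combined with the fact that the order of a character of $U$ measures how far $\rho$ is from a lower-type cocycle (using Proposition \ref{PPC}-type bounds on phase polynomials and the structure of $U$).

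The main obstacle I anticipate is the $i=r$ case and in particular pinning down the precise link between "$\chi$ has order divisible by a small prime $q$" and "$(\chi\circ\rho)^{k!}$ is of type $<r-1$". The subtlety is that $U$ need not be a Lie group, so $\widehat U$ may be an infinitely generated torsion-plus-free group, and I must handle characters of arbitrarily large (but finite-obstruction) order uniformly; the clean way is to pass to the $p$-primary decomposition of the torsion part of $\widehat U$ and treat each prime $q\le k$ separately, showing the $q$-primary part of $U$ must in fact be trivial (not merely divisible) because $q<\min_{p\in P}p$ means $G$ has no $q$-torsion, so a $C_{q^n}$-valued weakly mixing cocycle of type $<r$ cannot exist — this is exactly the "moreover, if $p\notin P$ then $n=0$" half of Lemma \ref{bounded}, which I would invoke directly. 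For primes $q$ dividing some element of $P$ but with $q\le k$: these are excluded by the hypothesis $k<\min_{p\in P}p$, so no such $q$ occurs, and the argument reduces entirely to the $q\notin P$ case. Once that is clear, $U$ is a product of $q$-primary groups with $q>k$, each of which is automatically $k!$-divisible, and the induction closes.
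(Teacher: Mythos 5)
Your overall strategy is the same as the paper's: identify the quotients $\mG_i/\mG_{i+1}\Gamma_i$ with the structure groups via Lemma \ref{structuregroups} (together with Theorem \ref{ufc} and Corollary \ref{G=L}), reduce to the top group $U$ in $X=Z_{<r}(X)\times_\rho U$ with $\rho$ weakly mixing, and get a contradiction from Lemma \ref{claim}. However, the execution of the decisive step has a genuine gap. Your intermediate claim --- that $(\chi\circ\rho)^{k!}$ drops in type whenever the order of $\chi$ is merely divisible by some prime $q\leq k$ --- is false in general: for a weakly mixing $\rho$, the cocycle $\chi^{k!}\circ\rho$ is of type $<r-1$ precisely when $\chi^{k!}=1$, and a character of order $q^{n}$ with $n$ large is not annihilated by $k!$. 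The missing (and very short) observation, which is exactly what the paper uses, is the equivalence ``$U$ is not $k$-divisible $\iff$ $\hat U$ has nontrivial $k!$-torsion'': choose $\chi$ to be a lift of a nontrivial character of $U/U^{k!}$ (equivalently, replace your $\chi$ by a suitable power of prime order $q\leq k$). Then $(\chi\circ\rho)^{k!}\equiv 1$ is trivially of type $<r-1$, Lemma \ref{claim} with $m=l=r$ gives that $\chi\circ\rho$ is of type $<r-1$, and this contradicts the weak mixing of $\rho$, i.e.\ the maximality of $Z_{<r}(X)$.

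Your proposed repair via Lemma \ref{bounded} does not close this gap: that lemma assumes the cocycle is $(G,Z_{<d}(X),S^1)$-cohomologous to a phase polynomial of degree $<d$, and this hypothesis is not available for $\chi\circ\rho$ here --- being of type $<r$ does not imply being cohomologous to a phase polynomial on a general (finite dimensional, not totally disconnected) system; Theorem \ref{countable} only gives countable index, and the Furstenberg--Weiss type examples show the implication genuinely fails. So ``invoking Lemma \ref{bounded} directly'' is not a legitimate step, and with the correct choice of $\chi$ it is also unnecessary: your primary-decomposition reduction lands on characters of prime order $q\leq k$, for which $\chi^{k!}=1$ holds automatically and Lemma \ref{claim} finishes the argument. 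With that single correction, the remaining parts of your plan (the $r=1$ base case via $\hat U_0\hookrightarrow \hat G$ having no $q$-torsion for $q\leq k$, and the cases $i<r$ handled through Lemma \ref{structuregroups}, Theorem \ref{ufc} and Corollary \ref{G=L} by induction) are consistent with the paper's treatment.
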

\begin{proof}
Write $Z_{<r+1}(X)=Z_{<r}(X)\times_\rho U$. Since $U\cong \mG_r/\mG_{r+1}\Gamma_r$, it is enough to prove that $U$ is $k$-divisible. Assume by contradiction that this is not the case and let $\chi:U\rightarrow C_{k!}$ be a lift of a non-trivial character of the quotient $U/U^{k!}$. By Lemma \ref{claim}, we see that $\chi\circ\rho$ is a cocycle of type $<r-1$. This means that the extension $Z_{<r}(X)\times_{\chi\circ\rho} \chi(U)$ is degenerate. In particular, the maximal property of $Z_{<r}(X)$ provides a contradiction. 
\end{proof}
\begin{proof}[Proof of Lemma \ref{claim}]
We prove the lemma by induction on $m$. If $m=1$, then $X$ is trivial. By assumption $\rho^{k!}$ is a coboundary, hence $\rho^{k!}\equiv 1$. We conclude that  $\rho:G\rightarrow C_{k!}$ is a homomorphism. Since $k<\min_{p\in P} p$, $\rho$ is trivial and the claim follows. Fix $2\leq m$ and assume inductively that the claim holds for smaller values of $m$. Let $X$ be as in the lemma and write $X=Z_{<m-1}(X)\times_\sigma U$. By the induction hypothesis we also know that Theorem \ref{structurediv} holds and so we can assume that $U^{k!}=U$. Our goal is to show that $\rho$ is cohomologous to a cocycle that is measurable with respect to $Z_{<m-1}(X)$. The first step is to reduce matters to the case where $U$ is finite. By Theorem \ref{CL:thm}, there exists an open subgroup $U'\leq U$ such that for every $u\in U'$, there exists a phase polynomial $p_u\in P_{<l-1}(X,S^1)$ and a measurable map $F:X\rightarrow S^1$ such that 
\begin{equation}\label{C.L.}\Delta_u \rho  = p_u \cdot \Delta F_u.
\end{equation}
We claim that $p_u$ is trivial. The cocycle identity implies that 
$$\Delta_{u^{k!}}\rho = \Delta_u \rho ^{k!} \cdot \prod_{i=0}^{k!-1} \Delta_{u^i}\Delta_u \rho.$$ 
Since $\rho^{k!}$ is of type $<m-1$, we conclude by Lemma \ref{dif:lem} that $\Delta_u \rho ^{k!}$ is a coboundary. Moreover, by equation (\ref{C.L.}) and Lemma \ref{vdif:lem}, we see that $\prod_{l=0}^{k!-1} \Delta_{u^l}\Delta_u \rho$ is cohomologous to a phase polynomial of degree $<l-2$. It follows that $\Delta_{u^{k!}}\rho$ is cohomologous to a phase polynomial of degree $<l-2$. Since $U^{k!}=U$, we conclude that $\Delta_u\rho$ is cohomologous to a phase polynomial of degree $<l-2$. Repeating this argument (by induction on the degree of $p_u$), we conclude that $\Delta_u \rho$ is a coboundary for every $u\in U'$. Therefore by Lemma \ref{cob:lem}, $\rho$ is cohomologous to a cocycle $\rho'$ which is invariant with respect to some open subgroup $U''\leq U$.\\

Let $X' = X\times_{\sigma'} U/U''$ where $\sigma'$ is the composition of $\sigma$ with the quotient map $U\mapsto U/U''$. We view $\rho'$ as a cocycle on $X'$. By Lemma \ref{Cdec:lem}, $\rho'$ is of type $<l$ and $\rho'^{k!}$ of type $<m-1$.\\

Now we deal with the finite case. Let $n=|U/U''|$ and let $u\in U/U''$. By Lemma \ref{dif:lem}, the cocycle $\Delta_u \rho'$ is of type $<l-m+1$ and $(\Delta_u \rho')^{k!}$ is a coboundary. We prove that $\Delta_u \rho'$ is also a coboundary. By the cocycle identity
$$1=\Delta_{u^{n}}\rho' =(\Delta_u {\rho'}) ^{n} \cdot \prod_{l=0}^{n-1} \Delta_{u^l}\Delta_u \rho'.$$
By Lemma \ref{dif:lem}, $\prod_{l=0}^{n-1} \Delta_{u^l}\Delta_u \rho'$ is of type $<l-2m+2$ and it follows that so is $(\Delta_u \rho')^m$. Since $U^{k!}=U$, we conclude that $n$ is co-prime to $k!$. In particular, there exists a natural number $d$ such that $nd = 1 \mod k!$. We conclude that $\Delta_u \rho'$ is cohomologous to $\Delta_u \rho'^{nd}$ which is of type $<l-2m+2$, hence $\Delta_u\rho'$ is of type $<l-2m+2$. Since $m\geq 2$, we can continue this argument by induction until the type of $\Delta_u \rho'$ is $<0$. Therefore, for every $u\in U/U''$ we can find a measurable map $F_u:Z_{<m-1}(X)\rightarrow S^1$ such that 
\begin{equation}\label{coboundary}\Delta_u \rho' = \Delta F_u.
\end{equation}
By ergodicity and the cocycle identity, we conclude that for every $u,v\in U/U''$ there exists a constant $c(u,v)$ such that 
\begin{equation} \label{constant}
    F_{uv}/F_u V_u F_v = c(u,v).
\end{equation}
Let $b(u,v) = \frac{\Delta_u F_v}{\Delta_v F_u}$. Since $\Delta_u \Delta_v \rho' = \Delta_v \Delta_u \rho'$, equation \ref{coboundary} implies that $b(u,v)$ is a constant in $x$. Direct computation using equation (\ref{constant}) shows that $b$ is a bilinear map. For instance we have,
$$b(uu',v) = \frac{\Delta_{uu'}F_v}{\Delta_v F_{uu'}} = \frac{\Delta_u F_v V_u \Delta_{u'}F_v}{\Delta_v F_u V_u F_u'}=\frac{\Delta_u F_v}{\Delta_v F_u} V_u \left(\frac{\Delta_u'F_v}{\Delta_v F_u'}\right) = b(u,v)\cdot b(u',v).$$
In particular, we see that $b^n(u,v)=1$ for every $u,v\in U/U''$. Since $n$ is co-prime to $k!$, it is enough to show that $\rho'^n$ is of type $<m-1$. Therefore, we can assume without loss of generality that $b=1$. In this case it follows that the group $$H = \{S_{u,F} : u\in U/U'', F\in\mathcal{M}(Z_{<m-1}(X),S^1), \Delta_u\rho' = \Delta F\}$$ is abelian. By \eqref{coboundary}, the projection $p:H\rightarrow U/U''$ is onto. Moreover, the kernel is isomorphic to $S^1$ and so $H$ is a compact group (Corollary \ref{compact}). Since the torus is injective in the category of compact abelian groups we conclude that there exists a cross-section $u\mapsto S_{u,F_u}$ such that $\Delta_u \rho'  = \Delta F_u$. In particular, $F_{uv}=F_uV_u F_v$ for every $u,v\in U/U''$. Let $F(x,u) = F_u(x,1_{U/U''})$, direct computation shows that $\Delta_v F(x,u) = F_v(x,u)$ for almost every $x\in Z_{<m-1}(X)$ and every $u,v\in U/U''$. We conclude that $\rho'/\Delta F$ is invariant to $U/U'$. In other words, $\rho'$ is cohomologous to a cocycle $\rho''$ which is measurable with respect to $Z_{<m-1}(X)$.\\
We view $\rho''$ as a cocycle of $Z_{<m-1}(X)$. By Lemma \ref{Cdec:lem}, $\rho''$ is of type $<l$ and $\rho''^{k!}$ of type $<m-1$.\\

Now we use an inductive type argument. By Lemma \ref{typeargument}, $d^{[1]}\rho''$ is of type $<l-1$ and $d^{[1]}\rho''^{k!}$ is of type $<m-2$ on every ergodic component of $Z_{<m-1}(X)^{[1]}$. Since $Z_{<m-1}(X)$ is a system of order $<m-1$ so is every ergodic component of $Z_{<m-1}(X)^{[1]}$. We conclude, by the induction hypothesis that $d^{[1]}\rho''$ is of type $<m-2$ on every ergodic component. Therefore, by Lemma \ref{typeargument}, $\rho''$ is of type $<m-1$ on $Z_{<m-1}(X)$. Lifting everything up using the factor map $X\rightarrow Z_{<m-1}(X)$, we conclude that $\rho$ is of type $<m-1$, as required.
\end{proof}
\subsection{The group of arithmetic progressions}
The proof of Theorem \ref{formula} follows the methods of Bergelson, Host and Kra from \cite{BHK}. Let $X=\mG/\Gamma$ be as in Theorem \ref{formula}, we define a function $$\imath:\mG\times \mG_1\times \mG_2\times...\times \mG_k \rightarrow \mG^{k+1}$$ by $$\imath(g,g_1,g_2,...,g_k) = (g,gg_1,gg_1^2g_2,...,gg_1^kg_2^{\binom{k}{2}}\cdot...\cdot g_k^{\binom{k}{k}})$$ and let $\tilde{\mG}$ to be the image of $\imath$ in $\mG^{k+1}$.
\begin{thm}[Leibman \cite{Leib98}] \label{group}
 $\tilde{\mG}$ is a group.
\end{thm}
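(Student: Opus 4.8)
The final statement, Theorem~\ref{group}, is Leibman's theorem from \cite{Leib98}; I outline a self-contained proof, which is an elementary incarnation of the Hall--Petresco collection process. For $g\in\mG$ and $g_j\in\mG_j$, $1\le j\le k$, recall that $\imath(g,g_1,\dots,g_k)\in\mG^{k+1}$ is the tuple whose $i$-th coordinate ($0\le i\le k$) is $g\,g_1^{\binom{i}{1}}g_2^{\binom{i}{2}}\cdots g_k^{\binom{i}{k}}$. First I would record that the parametrization is faithful: the $0$-th coordinate is $g$, and if $g,g_1,\dots,g_{j-1}$ are already known then the $j$-th coordinate $g\,g_1^{\binom j1}\cdots g_{j-1}^{\binom j{j-1}}\,g_j$ determines $g_j$ (here $\binom jj=1$). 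Thus $(g,g_1,\dots,g_k)\mapsto\imath(g,g_1,\dots,g_k)$ is a bijection onto $\tilde{\mG}$, which contains $\imath(e,\dots,e)=(e,\dots,e)$; so it is enough to prove that $\tilde{\mG}$ is closed under coordinatewise products and coordinatewise inverses. Closure under inverses will follow from closure under products, since the coordinatewise inverse of $\imath(g,g_1,\dots,g_k)$ has $i$-th coordinate $g_k^{-\binom ik}\cdots g_1^{-\binom i1}g^{-1}$, which is the coordinatewise product, in that order, of the $k+1$ tuples $\imath(e,\dots,e,g_k^{-1}),\ \dots,\ \imath(e,g_1^{-1},e,\dots,e),\ \imath(g^{-1},e,\dots,e)$, each of which lies in $\tilde{\mG}$ (all parameters but one are trivial).

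The core assertion is closure under coordinatewise products, which I would prove by induction on the nilpotency class $k$, strengthened so as to hold uniformly in the length of the tuples: for every $k$-step nilpotent group $H$ with lower central series $H=H_1\supseteq H_2\supseteq\cdots$ and every integer $N\ge k$, the set of tuples in $H^{N+1}$ whose $i$-th entry ($0\le i\le N$) has the form $h\,h_1^{\binom i1}\cdots h_k^{\binom ik}$ with $h\in H$ and $h_j\in H_j$ is closed under coordinatewise multiplication. For $k=1$ (so $H$ abelian) this is the identity $(h+ih_1)+(h'+ih_1')=(h+h')+i(h_1+h_1')$. For the inductive step, let $\vec x,\vec y$ be two such tuples, with parameters $(g,g_1,\dots,g_k)$ and $(h,h_1,\dots,h_k)$, and pass to $\bar H:=H/H_k$, which is $(k-1)$-step nilpotent. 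Since $g_k,h_k\in H_k$ map to the identity, $\bar{\vec x}$ and $\bar{\vec y}$ are admissible tuples over $\bar H$ in which only $\binom i1,\dots,\binom i{k-1}$ appear, so the induction hypothesis (applied with $N=k\ge k-1$) gives $\bar e_0\in\bar H$ and $\bar e_j\in\bar H_j$ with $\overline{x_iy_i}=\bar e_0\,\bar e_1^{\binom i1}\cdots\bar e_{k-1}^{\binom i{k-1}}$ for all $0\le i\le k$. Lift $\bar e_0$ to $e_0\in H$ and each $\bar e_j$ to $e_j\in H_j$, and set $r_i:=x_iy_i\cdot\bigl(e_0\,e_1^{\binom i1}\cdots e_{k-1}^{\binom i{k-1}}\bigr)^{-1}\in H_k$ for $0\le i\le k$. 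As $H_k$ is abelian and any function on $\{0,\dots,k\}$ into an abelian group equals $\sum_{j=0}^{k}\binom ij(\Delta^j r)(0)$ by Newton's forward-difference formula, there are $c_0,\dots,c_k\in H_k$ with $r_i=c_0\,c_1^{\binom i1}\cdots c_k^{\binom ik}$; since $H_k$ is central these factors commute with everything, so
\[
x_iy_i=(e_0c_0)\,(c_1e_1)^{\binom i1}\cdots(c_{k-1}e_{k-1})^{\binom i{k-1}}\,c_k^{\binom ik},
\]
which is again admissible because $c_je_j\in H_j$ (using $c_j\in H_k\subseteq H_j$) and $c_k\in H_k$. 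This closes the induction, and taking $H=\mG$, $N=k$ yields Theorem~\ref{group}.

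The only step carrying genuine content is the appeal to Newton interpolation for the central correction $r$; everything else is bookkeeping. The point one must be careful about is to prove the statement uniformly in $N$ from the outset, since otherwise, after quotienting by $\mG_k$, one would not be entitled to speak of the $(k+1)$-st coordinate of an element of $\widetilde{\mG/\mG_k}$, whose ``native'' length is only $k$. Alternatively, one may simply invoke \cite{Leib98} directly: $\tilde{\mG}$ is the image, under restriction to $\{0,1,\dots,k\}$, of the group of polynomial maps $\Z\to\mG$ of degree $\le k$ adapted to the lower central series, and that this set is a group under pointwise multiplication is precisely Leibman's theorem.
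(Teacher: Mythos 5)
The paper gives no proof of this statement at all --- it is simply quoted from Leibman \cite{Leib98} --- so the only thing to assess is your self-contained argument, and it has a genuine gap, located exactly at the point you yourself flagged as the delicate one. You induct on the nilpotency class $c$, with the statement quantified over all tuple lengths $N+1$, $N\ge c$: you quotient by $H_c$, apply the class-$(c-1)$ case \emph{with the same $N$}, lift, and repair the central error $r_i\in H_c$ by Newton interpolation on the index set. But Newton interpolation on $\{0,\dots,N\}$ produces coefficients $(\Delta^j r)(0)$ for \emph{all} $0\le j\le N$, whereas the admissible form only allows binomial exponents $\binom{i}{j}$ with $j\le c$; these are compatible only when $N\le c$. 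Your induction keeps $N=k$ fixed while the class drops by one at each stage: to prove the $k$-step case at $N=k$ you invoke the $(k-1)$-step case at $N=k$, and a proof of that case by your scheme would have to absorb a term $c_k^{\binom{i}{k}}$ with $c_k\in H_{k-1}$, for which no subgroup is available, since binomial degree $k$ does not occur in the class-$(k-1)$ form. Nothing in your argument shows that $(\Delta^j r)(0)=e$ for $j$ exceeding the class; that vanishing is precisely the content of the Hall--Petresco/Lazard/Leibman collection computation, and it is the step that is missing. Concretely, your argument is complete only for $k\le 2$: already for $k=3$ the case it needs (class $2$, tuples of length $4$) is not established, even though that case is true.

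To repair it you would have to show that the central correction, viewed as a function of $i$ (best taken on all of $\Z$ rather than on a finite window), has vanishing $(c+1)$-st finite differences --- for instance by first proving closure under multiplication by the elementary tuples $\imath(e,\dots,e,g_j,e,\dots,e)$ via the commutator identities $[H_j,H_1]\subseteq H_{j+1}$ and collecting, or by inducting on the degree of polynomial sequences as Leibman does. Alternatively, your closing remark --- simply citing \cite{Leib98} --- is exactly what the paper does, and is the cleanest course.
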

The group $\tilde{\Gamma}:=\Gamma^{k+1}\cap \tilde{\mG}$ is a closed zero dimensional co-compact subgroup of $\tilde{\mG}$. Let
$$T_g^\star = Id\times T_g\times T_g^2\times...\times T_g^k\text{ and } T_g^{\triangle} = T_g\times T_g\times...\times T_g.$$
 It is easy to see that $T_g^\star$ and $T_g^\triangle$ belongs to $\tilde{\mG}$ and therefore acts on $\tilde{G}/\tilde{\Gamma}$. Our next goal is to prove that the action of $G\times G$ on $\tilde{G}/\tilde{\Gamma}$ by $T_g^\triangle\circ T_h^\star$ is uniquely ergodic.
  \subsection{Green's Theorem}
 Green's theorem \cite{Green} states that in a nilsystem $(\mG/\Gamma,R_a)$ where $\mG$ is a connected simply connected Lie group the action of $R_a$ on $\mG/\Gamma$ is ergodic if and only if the induced action of $R_a$ on the factor $\mG/\mG_2\Gamma$ is ergodic. In \cite{Parry} Parry gave an alternative simpler proof which was then used by Leibman \cite[Theorem 2.17]{Leib3} to generalize this result to arbitrary nilsystems. Parry's proof relies on the fact that on a connected nilsystem $N/\Gamma$ the eigenfunctions are invariant with respect to $N_2$. In Theorem \ref{almostgreen} below we generalize this result for polynomials of higher order and some special nilpotent systems that may not be connected. First we need the following technical lemma.
 \begin{lem} \label{key}
 	Let $f:\tilde{\mG}/\tilde{\Gamma}\rightarrow S^1$ be a measurable function. Let $V\leq \tilde{\mG}$ be an open subgroup which contains the elements $T_g^\star$ and $T_g^\triangle$ for every $g\in G$ and $2\leq r \leq k+1$. Then, if $f$ is invariant with respect to left multiplication by the $r$-commutator subgroup $V_r$ of $V$, then $f$ is invariant to the action of $\tilde{\mG}\cap \mG_r^{k+1}$.
 \end{lem}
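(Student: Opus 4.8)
The statement to prove is Lemma \ref{key}: for a measurable $f:\tilde{\mG}/\tilde{\Gamma}\to S^1$ that is invariant under left multiplication by the $r$-commutator subgroup $V_r$ of an open subgroup $V\leq\tilde{\mG}$ containing all $T_g^\star$ and $T_g^\triangle$, we want $f$ to be invariant under $\tilde{\mG}\cap\mG_r^{k+1}$. The natural approach is to adapt Parry's argument for Green's theorem (as used by Leibman \cite{Leib3}) to the present setting: one first reduces to understanding which subgroup of $\tilde{\mG}$ actually acts trivially (on the level of the relevant $L^2$-functions) once we have quotiented by $V_r$, and then shows that the image of $\tilde{\mG}\cap\mG_r^{k+1}$ in $V/V_r$ is exhausted (modulo the stabilizer of the relevant point) by products of commutators that are already killed, together with elements of $V_{r+1},\dots$. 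The key structural input is Corollary \ref{G=L}, which identifies $\mG_i/\mG_{i+1}\Gamma_i$ for an open subgroup $\mG$ with the corresponding quotient of the full Host-Kra group, so that passing to the open subgroup $V$ does not lose information about the lower central series quotients. I would also use Theorem \ref{ufc} (applied inside $\tilde{\mG}/\tilde{\Gamma}$, which is itself a nilpotent system with homogeneous group $\tilde{\mG}$ by Theorem \ref{group}) to control the characteristic factors: invariance under $V_r$ means $f$ descends to the factor $V/V_r\tilde{\Gamma}_V$, which by Theorem \ref{ufc} is (a quotient of) a characteristic factor of order $<r$.

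The first step is therefore: observe that $\tilde{\mG}/\tilde{\Gamma}$ is an ergodic nilpotent system for the $G\times G$-action $T_g^\triangle\circ T_h^\star$ (ergodicity will be handled in the Green-type theorem that presumably follows; for this lemma I only need that $\tilde{\mG}/\tilde{\Gamma}$ is a nilpotent homogeneous space and that $V$ acts transitively, which follows from Lemma \ref{open=full} since $V$ is an open subgroup containing the acting transformations). Then by Lemma \ref{structuregroups} and Theorem \ref{ufc}, the quotient of $\tilde{\mG}/\tilde{\Gamma}$ by $V_r$ (acting by left multiplication) is isomorphic to $V/V_r\tilde{\Gamma}_V$, and this is a system of order $<r$; hence any $V_r$-invariant $f$ is measurable with respect to $Z_{<r}(\tilde{\mG}/\tilde{\Gamma})$. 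The second step is to show that $\tilde{\mG}\cap\mG_r^{k+1}$ lies in $V_r\cdot(\text{stabilizer of the base point})$, or more precisely that its image in $V/V_r$ acts trivially on $Z_{<r}$. For this I would use Lemma \ref{induce}: the action of $\tilde{\mG}\cap\mG_r^{k+1}$ on $Z_{<r}$ factors through $\tilde{\mG}_r/\tilde{\mG}_{r+1}$, and one computes, using the explicit form of $\imath$ and the binomial-coefficient pattern, that $\tilde{\mG}\cap\mG_r^{k+1}$ is contained in the $r$-th term $\tilde{\mG}_r$ of the lower central series of $\tilde{\mG}$ (this is essentially Leibman's computation for the group of arithmetic progressions), and then that $\tilde{\mG}_r\subseteq V_r$ up to elements of $V_{r+1}$, using that $V$ is open and contains enough transformations that $V_r$ and $\tilde{\mG}_r$ have the same image in $\tilde{\mG}_r/\tilde{\mG}_{r+1}$ — this is where Corollary \ref{G=L} is invoked.

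The main obstacle will be the second step: controlling precisely how $\tilde{\mG}\cap\mG_r^{k+1}$ sits inside the lower central series of $\tilde{\mG}$ versus that of the open subgroup $V$. The subtlety is that $V$ is merely open in $\tilde{\mG}$, so $V_r$ need not equal $\tilde{\mG}_r$; one must argue that the discrepancy lives in $\tilde{\mG}_{r+1}$ (or deeper), so that on the level of the order-$<r$ factor — where $\tilde{\mG}_{r+1}$ already acts trivially by Lemma \ref{induce} — the distinction disappears. I expect to handle this by induction on $r$ downward (or upward along the central series, mirroring the structure of section \ref{mobj} and Lemma \ref{above}), using at each stage that $V$ contains the generating transformations $T_g^\star,T_g^\triangle$ so that commutators of these generate a subgroup with full image in each $\tilde{\mG}_i/\tilde{\mG}_{i+1}$; combined with Theorem \ref{structurediv'}-type divisibility (Theorem \ref{structurediv}) when $k<\min_{p\in P}p$, this forces $V_r$ and $\tilde{\mG}_r$ to agree modulo $\tilde{\mG}_{r+1}$. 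Once that is established, the chain ``$f$ is $V_r$-invariant $\Rightarrow$ $f$ measurable w.r.t.\ $Z_{<r}$ $\Rightarrow$ $\tilde{\mG}\cap\mG_r^{k+1}$ acts trivially on $Z_{<r}$ $\Rightarrow$ $f$ is $(\tilde{\mG}\cap\mG_r^{k+1})$-invariant'' closes the argument.
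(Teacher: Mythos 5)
There is a genuine gap, and it is the circularity in your first step. You reduce the problem to characteristic factors of $\tilde{X}=\tilde{\mG}/\tilde{\Gamma}$: you claim that $V_r$-invariance of $f$ forces $f$ to be measurable with respect to $Z_{<r}(\tilde{X})$, invoking Theorem \ref{ufc}, Lemma \ref{structuregroups} and Lemma \ref{open=full}. But all of these results are proved for the \emph{ergodic} system $X=\mG/\Gamma$ of Theorem \ref{formula} (Theorem \ref{ufc} is proved by a simultaneous induction with the limit formula for that system, and Lemma \ref{open=full} explicitly uses ergodicity), whereas ergodicity of $\tilde{X}$ under the action generated by $T_g^\triangle$ and $T_h^\star$ is precisely Corollary \ref{ergodic}, which in the paper is deduced \emph{from} Lemma \ref{key} via Theorem \ref{almostgreen}. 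At the point where Lemma \ref{key} must be proved you do not know that $\tilde{X}$ is ergodic, you have no identification of $Z_{<r}(\tilde{X})$ with $\tilde{\mG}/\tilde{\mG}_r\tilde{\Gamma}$ (or $V/V_r\tilde{\Gamma}_V$), and you cannot re-run the induction of Section \ref{characteristic} for $\tilde{X}$ without the very ergodicity and limit formula you are trying to set up. So the chain ``$V_r$-invariant $\Rightarrow$ measurable w.r.t.\ $Z_{<r}$'' is unsupported, and the whole reduction collapses.

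Your second step also leaves the actual hard content unproved: the assertions that $\tilde{\mG}\cap\mG_r^{k+1}\subseteq\tilde{\mG}_r$ and that $V_r$ and $\tilde{\mG}_r$ agree modulo $\tilde{\mG}_{r+1}$ are exactly what needs an argument, and neither is obvious (an element of $\tilde{\mG}\cap\mG_r^{k+1}$ is a sequence $g_0g_1^ng_2^{\binom{n}{2}}\cdots g_k^{\binom{n}{k}}$ with low-index coordinates merely lying in $\mG_r$, not visibly a product of $r$-fold commutators of elements of $V$). The paper avoids both issues by a purely group-theoretic downward induction on $r$: it works element by element with such sequences, uses the divisibility of $\mG_r/\mG_{r+1}\Gamma_r$ (Theorem \ref{structurediv}) to extract $m!$-th roots of $g_m$, uses Corollary \ref{G=L} to replace the root by an element $l$ of the open subgroup $\mL$ (coming from $\imath^{-1}(V)\supseteq\mL\times\mL\times\{e\}\times\cdots\times\{e\}$) modulo $\mG_{r+1}\Gamma_r$, and then exhibits the needed invariance directly from the hypothesis on $V_r$ via the identity $[d_1,\dots,d_j,c_{j+1},\dots,c_r]=l^{n^j}z_j(n)$ with $c_i(n)=s_i$, $d_i(n)=s_i^n$ and $z_j$ taking values in $\tilde{\mG}\cap\mG_{r+1}^{k+1}$, where the residual factors in $\Gamma_r$ and $\mG_{r+1}$ are absorbed using right $\tilde{\Gamma}$-invariance, commutators, and the inductive hypothesis at level $r+1$. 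If you want to salvage your outline, you must replace the characteristic-factor reduction by this kind of direct algebraic argument (or first prove ergodicity of $\tilde{X}$ by independent means, which the paper does not have at this stage).
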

\begin{proof}
	Since $V$ is open and $\imath$ is a continuous map, we have that $\imath^{-1}(V)$ contains a subgroup of the form $\mL\times \mL\times\{e\}\times...\times\{e\}$ where $\mL\leq \mG$ is open. Moreover, since $V$ contains $T_g^\star$ and $T_g^\triangle$ we can also assume that $\mL$ contains $T_g$. For each $2\leq r \leq k+1$ let, $$\mH_r:= \tilde{\mG}\cap \mG_{r}^{k+1}.$$
    Now, let $f$ be as in the lemma. We prove by downward induction on $2\leq r\leq k+1$ that $f$ is also invariant with respect to the action of $\mH_r$. If $r=k+1$, then $\mH_{k+1}$ is trivial and the claim follows. Let $2\leq r<k+1$ and assume inductively that $f$ is already invariant to left multiplication by elements in $\mH_{r+1}$.\\
	For convenient, we write the elements of $\tilde{\mG}$ as sequences $x(n)$ where $x:\{0,1,...,k\}\rightarrow \mG$. Since $\tilde{\mG}$ is a group, a general form of an element in $\mH_r$ is $x(n)=g_0g_1^ng_2^{\binom{n}{2}}\cdot...\cdot g_k^{\binom{n}{k}}$ where $g_0,g_1,...,g_r\in \mG_t$ and $g_{r+1}\in \mG_{r+1},...,g_k\in\mG_k$ with the convention that $\binom{n}{m} = \frac{n!}{(n-m)!m!}$ when $m\leq n$ and zero otherwise.\\
	Fix $0\leq m\leq k$ and let $x_m(n)=g_m^{\binom{n}{m}}\in \tilde{G}$, it is enough to show that $f$ is invariant to left multiplication by $x_m$. If $r<m<k+1$, then $x_m\in\tilde{H}_{r+1}$ and the claim follows by induction hypothesis. Otherwise, we can assume that $m\leq r$. In that case $g_m\in\mG_r$.\\
	\textit{Step 1:} We replace $g_m$ with an $m!$-root.\\
	By Lemma \ref{structurediv} we can find an element $h\in \mG_r$ such that $h^{m!}\cdot g' = g_m$ where $g'\in \mG_{r+1}\cdot \Gamma_r$. Hence, $g'^{\binom{n}{m}} = g''^{\binom{n}{m}}\cdot \gamma^{\binom{m}{n}}\cdot y(n)$ where $g''\in\mG_{r+1}$, $\gamma\in \Gamma_r$ and $y$ takes values in $\mH_{r+1}$. Since $\tilde{G}$ is a group we see that $y(n)$ and $g''^{\binom{n}{m}}$ are in $\mH_{r+1}$. As for $\gamma^{\binom{m}{n}}$, we have that
	$$f(\gamma^{\binom{m}{n}}x\Gamma) = f([(\gamma^{-1})^{\binom{m}{n}},x]x\Gamma)$$ and $[(\gamma^{-1})^{\binom{m}{n}},x]\in \mH_{r+1}$. We conclude that $f$ is invariant to left multiplication by $g_m^{\binom{n}{m}}$ if and only if it is invariant to left multiplication by $h^{p_m(n)}$ where $p_m(n)= n!/(n-m)!$ is a polynomial of degree $<n-m$ with natural coefficients.\\
	\textit{Step 2.} We replace $h$ with an element in $\mL_r$.\\
	By Lemma \ref{G=L} we can write $h=l\cdot h' \cdot \delta$ where $l\in \mL_r$, $h'\in \mG_{r+1}$ and $\delta\in \Gamma_r$. Then, we have that $$h^{p_m(n)} = l^{p_m(n)}\cdot \delta^{p_m(n)} \cdot y'(n)$$ where $y'(n)$ takes values in $\mG_{r+1}$. Since $\tilde{G}$ is a group we conclude that $y'\in \mH_{r+1}$. As in the previous step we also know that $f$ is invariant to left multiplication by $\delta^{p_m(n)}$. Therefore, $f$ is $h^{p_m(n)}$-invariant if and only if it is invariant to left multiplication by $l^{p_m(n)}$.\\
	\textit{Step 3:} We show that $f$ is invariant to $l^{p_m(n)}$ and complete the proof.\\
	Since $\mL_r$ is generated by commutators of $r$ elements and $f$ is invariant with respect to the action of $\mH_{r+1}$, we can assume that $l$ is an $r$-commutator. Write $l=[s_1,s_2,s_3,...s_r]$ for some $s_1,s_2,....,s_r\in \mL$. We consider two sequences in $\tilde{\mG}$ for each $s_i$. The first is the constant sequence which we denote by $c_i(n)=s_i$. The second is the arithmetic progression with no constant term, namely $d_i(n)=s_i^n$. Observe that for each $1\leq j \leq r$ we have $[d_1,d_2,...,d_j,c_{j+1},...,c_r] = l^{n^j}\cdot z_j(n)$ where $z_j(n)$ takes values in $\mG_{r+1}$, and so is in $\mH_{r+1}$. We conclude that $f$ is $l^{n^j}$-invariant for all $1\leq j \leq r$ and so it is also invariant to left multiplication by $l^{p_m(n)}$. This completes the proof.
\end{proof}
 We note that since $\mG_r \trianglelefteq \mG$, it follows that $\mH_r$ is a normal subgroup of $\tilde{G}$.\\
 \textbf{Convention:} For the sake of the proof of the ergodicity of $\tilde{G}/\tilde{\Gamma}$ with respect to $T_g^\star$ and $T_g^\triangle$ we say that a homogeneous space $N/\Gamma$ with an action of $\varphi:G\rightarrow N$ is \textit{special} if the induced action of $\varphi$ on $N/N_2\Gamma$ is ergodic and for every open subgroup $V\leq N$ which contains $\varphi(G)$ we have that for every $2\leq r \leq k$ any function $f:N/\Gamma\rightarrow S^1$ is invariant with respect to the action of $V_r$ if and only if it is invariant with respect to $N_r$.\\ We note that by the previous lemma, $\tilde{G}/\tilde{\mG}_l\tilde{\Gamma}$ is an $l$-step special homogeneous space for every $1\leq l \leq k$. We generalize Green theorem.
 \begin{thm}[Green theorem for special homogeneous spaces]\label{almostgreen}
 Let $N/\Gamma$ be a $k$-step special homogeneous space. Then, for every $1\leq d \leq k$ and $1\leq r < d$ we have the following results.
 \begin{enumerate}
     \item {$f$ is invariant with respect to the action of $N_d$.}
     \item{For every $n\in N_r$, $\Delta_n f$ is a phase polynomial of degree $<d-r$.}
     \item{For every $n\in N$, $V_n f$ is a phase polynomial of degree $<d$.}
 \end{enumerate}
 \end{thm}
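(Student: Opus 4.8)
\textbf{Proof proposal for Theorem \ref{almostgreen}.}
The plan is to prove the three statements simultaneously by a double induction: the outer induction is on $d$, and within each fixed $d$ we run a downward induction on $r$ (for part (2)) exactly as in Parry's proof of Green's theorem and Leibman's generalization. The base case $d=1$ is essentially the hypothesis that $N/N_2\Gamma$ acts ergodically: a $\varphi(G)$-invariant function on a $1$-step special homogeneous space is constant, which gives (1) for $d=1$; parts (2) and (3) for $d=1$ say $\Delta_n f$ and $V_nf$ are constants, which follows from (1) applied to those functions once we know they are eigenfunctions, i.e. phase polynomials of degree $<1$. So the real content is the inductive step.

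Fix $d\geq 2$ and assume the theorem holds for all smaller values of $d$. First I would prove (3) for this $d$ assuming (1) and (2) for this $d$ (which will be established in the downward induction on $r$): for $n\in N$, the function $V_nf/f=\Delta_nf$ — wait, more precisely we must track that $V_nf$ is a phase polynomial of degree $<d$, and this follows because the map $n\mapsto V_nf$ differs from $f$ by the cocycle $n\mapsto \Delta_nf$, and part (2) with $r=1$ tells us $\Delta_nf$ is a phase polynomial of degree $<d-1$, so $d^{[d]}(V_nf) = d^{[d]}f$ (here $d^{[d]}$ annihilates phase polynomials of degree $<d$), which gives $V_nf$ the same degree as $f$. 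For part (1): if $f$ is (in the relevant inductive setup) an eigenfunction-type object of degree $<d$, then for $n\in N_d$ the commutator computation shows $\Delta_nf$ is a phase polynomial of degree $<d-d=0$, i.e. a constant; since $N_d$ is connected-free but we can use that $n\mapsto \Delta_nf\in S^1$ is then a homomorphism $N_d\to S^1$ killed by the divisibility of $N_d/N_{d+1}\Gamma_d$ (Theorem \ref{structurediv}, valid since $k<\min_{p\in P}p$), hence trivial, so $f$ is $N_d$-invariant. The downward induction on $r$ for part (2) is the heart of Parry's argument: for $r=d-1$ one shows $\Delta_nf$ is a constant (degree $<1$) directly using that $[N_{d-1},N]\subseteq N_d$ and $f$ is $N_d$-invariant; then descending from $r$ to $r-1$, for $n\in N_{r-1}$ and $g$ a generator of $G$ one expresses $\Delta_{[n,\varphi(g)]}f$ via $\Delta_n$ of a $\varphi(g)$-translate, and since $[n,\varphi(g)]\in N_r$, the inductive hypothesis at level $r$ controls it; combined with the special-homogeneous-space property (which upgrades $V$-invariance to $N_r$-invariance for the relevant open $V$) and Theorem \ref{countable} / the phase-polynomial machinery, one concludes $\Delta_nf$ is a phase polynomial of degree $<d-r+1$, and a further type-reduction argument (Lemma \ref{sep2:lem} or the $k$-divisibility argument as in Lemma \ref{claim}) sharpens this to degree $<d-r$.

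I would implement the type-reduction step using the by-now-standard argument that appears repeatedly in the paper: the cocycle identity gives $\Delta_{n^{m!}}f = \Delta_nf^{m!}\cdot\prod_{i}\Delta_{n^i}\Delta_nf$, the product term has strictly smaller degree by the inductive control, and $\Delta_nf^{m!}$ is a coboundary/lower degree because $f$ is $N_d$-invariant and $n^{m!}$ lands in a lower term of the central series modulo $\Gamma$ (using Theorem \ref{structurediv} and $k<\min_{p\in P}p$ so that $m!$ is invertible modulo the relevant torsion); iterating on the degree collapses $\Delta_nf$ to the asserted degree $<d-r$.

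The main obstacle I anticipate is not any single computation but rather the bookkeeping needed to make Parry's connected-Lie-group argument go through for a \emph{special} homogeneous space that is genuinely not connected and whose homogeneous group is merely the Host–Kra group rather than a Lie group. Concretely: (a) one must carefully verify that the open subgroup $V$ appearing in the definition of ``special'' can always be chosen to contain all the commutator elements $[d_1,\dots,d_j,c_{j+1},\dots,c_r]$ that arise when one writes an element of $N_r$ as a product of $r$-fold commutators of elements near the identity — this is exactly the role of Lemma \ref{key}, and I would invoke it rather than reprove it; (b) the passage from ``$\Delta_nf$ is a phase polynomial of degree $<d-r+1$'' to ``degree $<d-r$'' genuinely needs the hypothesis $k<\min_{p\in P}p$ together with Theorem \ref{structurediv}, and getting the induction set up so that Theorem \ref{structurediv} is available at the right stage (it is itself proved by induction on the nilpotency order) requires stating clearly that we are inducting on $d$ \emph{and} that $N/\Gamma$ is $k$-step so all the structure-group divisibility facts apply to every quotient $N/N_l\Gamma$. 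Once the inductive scaffolding is fixed, each individual step is a routine commutator manipulation of the kind already carried out in the proofs of Lemma \ref{key}, Lemma \ref{claim}, and Theorem \ref{structurediv}.
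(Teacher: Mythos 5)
Your overall scaffolding (induction on $d$, downward induction on $r$, the commutator identity $\Delta_g\Delta_n f=\Delta_n\Delta_g f\cdot V_nT_g\Delta_{[n^{-1},g^{-1}]}f$) matches the paper's, but the step you rely on to establish claim (1) is broken. You assert that for $n\in N_d$ the commutator computation already gives that $\Delta_nf$ is constant, and that the resulting homomorphism $N_d\to S^1$ is ``killed by the divisibility of $N_d/N_{d+1}\Gamma_d$ (Theorem \ref{structurediv})''. Neither half works. The downward induction only yields $\Delta_nf\in P_{<k-r+1}$ for $n\in N_r$, so for $r=d<k$ it gives degree $<k-d+1$, not constancy; and even granted constancy, $k$-divisibility of a compact abelian group does not annihilate homomorphisms into $S^1$ (the circle itself is divisible and has plenty of characters), so Theorem \ref{structurediv} cannot kill the character. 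Moreover Theorem \ref{structurediv} is proved for the Host--Kra group of an \emph{ergodic} nilpotent system, whereas for the special homogeneous space $N/\Gamma$ at hand (e.g.\ $\tilde{\mG}/\tilde{\Gamma}$) ergodicity is exactly what Theorem \ref{almostgreen} is meant to deliver (via Corollary \ref{ergodic}); divisibility of $N_d/N_{d+1}\Gamma_d$ is simply not available here, and in the paper divisibility enters only through Lemma \ref{key}, i.e.\ in verifying that the relevant quotients are \emph{special}, not inside the proof of this theorem. The same objection applies to your proposed $m!$-root type-reduction: the hypothesis $k<\min_{p\in P}p$ plays no role in the paper's proof of this statement.

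The paper closes claim (1) by a different mechanism which your sketch only gestures at. After the downward induction one knows $\Delta_nf\in P_{<k}$ for every $n\in N$; the Separation Lemma \ref{sep:lem}, applied on the ergodic quotient $N/N_k\Gamma$ through the induction on the step $k$, shows that $V=\{n\in N:\Delta_nf\in P_{<d-1}\}$ is an \emph{open} subgroup containing $\varphi(G)$; the cocycle identity makes $v\mapsto p_v\cdot P_{<d-2}(X,S^1)$ a homomorphism into an abelian group, hence trivial on $V_2$, and iterating gives $p_v=1$ for $v\in V_d$; finally the \emph{special} hypothesis upgrades $V_d$-invariance to $N_d$-invariance. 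Claims (2) and (3) then follow by viewing $f$ as a polynomial on $N/N_d\Gamma$ and invoking the induction on $k$ --- note that the paper inducts on the nilpotency step $k$ as well as on $d$, which is what controls the twisted term $V_nT_g\Delta_{[n^{-1},g^{-1}]}f$ (this function is $N_k$-invariant, so statement (3) at step $k-1$ applies to it); your single induction on $d$ gives no handle on that translate. As written, the proposal therefore has a genuine gap at the heart of claim (1), and repairing it amounts to replacing the divisibility argument by the open-subgroup/commutator/special argument just described.
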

  Note that from the case $d=1$ in the theorem above we can deduce that every special homogeneous space is ergodic.
 \begin{cor}\label{ergodic}
 	Let $X=N/\Gamma$ be a special $k$-step homogeneous space. Then $X$ is ergodic. In particular, $\tilde{\mG}/\tilde{\Gamma}$ is ergodic with respect to the action generated by $T_g^\star$ and $T_g^\triangle$. 
 \end{cor}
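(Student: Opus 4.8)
\textbf{Proof proposal for Corollary \ref{ergodic}.}

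The plan is to deduce the corollary directly from part (1) of Theorem \ref{almostgreen} in the case $d=1$, together with Lemma \ref{key}. First I would spell out what $d=1$ gives: a function $f:N/\Gamma\to S^1$ which is invariant with respect to the action of $N_1=N$ is, in particular, constant (since the $N$-action on $N/\Gamma$ is transitive), so the only measurable invariant functions are the constants. Because $N/\Gamma$ is a special $k$-step homogeneous space, Theorem \ref{almostgreen}(1) applies with $d=1$ to every $G$-invariant $f$ — here one uses that $\varphi(G)$ sits inside $N$, so a $\varphi(G)$-invariant function is in particular fixed by the transformations $T_g=\varphi(g)$ generating the $G$-action, and the hypothesis of the theorem requires only invariance under the action of the relevant group; thus $f$ must be invariant under all of $N_1=N$ and hence constant. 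This shows any $k$-step special homogeneous space is ergodic.

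For the second assertion, I would apply this to the concrete space $\tilde{\mG}/\tilde{\Gamma}$ with the action of $G\times G$ generated by $T_g^{\triangle}$ and $T_h^{\star}$. By the remark following Lemma \ref{key}, $\tilde{\mG}/\tilde{\mG}_l\tilde{\Gamma}$ is an $l$-step special homogeneous space for every $1\le l\le k$; in particular $\tilde{\mG}/\tilde{\Gamma}$ itself (the case $l=k$, noting $\tilde{\mG}_{k+1}$ is trivial so $\tilde{\mG}_{k}\tilde\Gamma$-quotients stabilize appropriately, or simply reading off $l=k$ directly) is special. The transformations $T_g^{\star}$ and $T_g^{\triangle}$ lie in $\tilde{\mG}$ and act by left multiplication, so the $G\times G$-action is a sub-action of the left $\tilde{\mG}$-action; therefore any $G\times G$-invariant $f$ is in particular invariant under the open subgroup $V\le\tilde{\mG}$ generated by all $T_g^{\star}$ and $T_g^{\triangle}$, and Lemma \ref{key} (with $r=2$, i.e. invariance under $V_2$ follows vacuously from invariance under $V$) combined with Theorem \ref{almostgreen}(1) forces $f$ to be constant.

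The only genuine subtlety — and the step I expect to need the most care — is bookkeeping around which group's action the invariance hypotheses refer to: Theorem \ref{almostgreen} is phrased for functions on $N/\Gamma$ invariant under subgroups of $N$, while ergodicity is a statement about the $G$- (or $G\times G$-) action. The bridge is that $\varphi(G)\subseteq N$ and that a function fixed by a generating set of transformations $\{\varphi(g)\}$ is fixed by the group they generate, so one must check that the specialness hypothesis and the definition of "special" (invariance under $V_r$ iff invariance under $N_r$, for open $V\supseteq\varphi(G)$) are being invoked with a $V$ that genuinely contains all the transformations in play; for $\tilde{\mG}/\tilde{\Gamma}$ this is exactly the content of Lemma \ref{key}. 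Once this is pinned down the argument is immediate: specialness plus Green's theorem for special spaces (Theorem \ref{almostgreen}, $d=1$) gives ergodicity, and applying it to $\tilde{\mG}/\tilde{\Gamma}$ yields the claimed unique-ergodicity-flavored ergodicity of the $T_g^{\star},T_g^{\triangle}$ action. I would keep the write-up to a few lines, citing Theorem \ref{almostgreen}, Lemma \ref{key}, and the transitivity of the $N$-action on $N/\Gamma$.
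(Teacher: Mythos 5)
Your route is the paper's: ergodicity of a special homogeneous space is read off from the case $d=1$ of Theorem \ref{almostgreen}, and the statement for $\tilde{\mG}/\tilde{\Gamma}$ follows from the specialness supplied by Lemma \ref{key}. In substance this is right, but two concrete slips in your ``in particular'' step need repair. First, $\tilde{\mG}/\tilde{\Gamma}$ is \emph{not} the case $l=k$ of the remark following Lemma \ref{key}: that case is the quotient by $\mH_k\tilde{\Gamma}$, and $\mH_k=\tilde{\mG}\cap\mG_k^{k+1}$ is not contained in $\tilde{\Gamma}$; what is trivial is $\mH_{k+1}$, not $\mH_k$. The clean fix is to observe that Lemma \ref{key} is stated verbatim for functions on $\tilde{\mG}/\tilde{\Gamma}$, so the full space already satisfies the second clause of the definition of special (invariance under $V_r$ forces invariance under $\mH_r\supseteq\tilde{\mG}_r$ for any open $V$ containing the $T_g^\star$ and $T_g^\triangle$); one should also record the first clause, namely ergodicity of the pair action on the abelianized base, which comes from Lemma \ref{key} with $V=\tilde{\mG}$ and $r=2$ together with the identification of $\tilde{\mG}/\mH_2\tilde{\Gamma}$ with a two-fold product of $\mG/\mG_2\Gamma$, on which $T_g^\triangle$ and $T_h^\star$ clearly act ergodically. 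Second, there is no ``open subgroup $V$ generated by all $T_g^\star$ and $T_g^\triangle$'': the subgroup these elements generate is countable, hence never open in $\tilde{\mG}$; Lemma \ref{key} only requires \emph{some} open $V$ containing them (e.g.\ $V=\tilde{\mG}$). Fortunately your closing argument built on that false claim is redundant: once specialness of $\tilde{\mG}/\tilde{\Gamma}$ is in hand, the first half of the corollary applies directly, and invoking Theorem \ref{almostgreen} again there would anyway presuppose specialness.

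A smaller point on the first half: deducing constancy from ``invariance under all of $N_1=N$ plus transitivity'' needs a Fubini-type argument (the null set depends on the group element, and $N$ need not be locally compact), and for a Polish $N$ this is more delicate than necessary. The way the $d=1$ basis of Theorem \ref{almostgreen} actually concludes is cleaner and available to you for free: a $G$-invariant $f$ is invariant under $N_2$, hence descends to $N/N_2\Gamma$, whose ergodicity is part of the definition of special, so $f$ is constant.
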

 A nilpotent system is ergodic if and only if it is uniquely ergodic \cite[Section 2, Lemma 1]{Parryb} (see also \cite[Theorem 5]{Parrya}).
\begin{thm} \label{uniquelyergodic}
    The action generated by $T_g^\triangle$ and $T_g^\star$ on $\tilde{\mG}/\tilde{\Gamma}$ is uniquely ergodic.
\end{thm}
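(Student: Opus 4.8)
The plan is to identify $\tilde{\mG}/\tilde{\Gamma}$, equipped with the $G\times G$-action $(g,h)\mapsto T_g^\triangle T_h^\star$, as an \emph{ergodic nilpotent system} in the sense of Definition \ref{nilpotent system:def}, and then to pass from ergodicity to unique ergodicity via the equivalence for nilpotent systems that was recalled just above. For the first point, recall that $\tilde{\mG}$ is a closed subgroup of $\mG^{k+1}$, hence a $k$-step nilpotent polish group (and a finite dimensional group when $\mG$ is, since finite products and closed subgroups of finite dimensional groups are finite dimensional by Definition \ref{finitedim:def}); that $\tilde{\Gamma}=\Gamma^{k+1}\cap\tilde{\mG}$ is a closed, zero dimensional, co-compact subgroup of $\tilde{\mG}$; and that $T_g^\star$ and $T_g^\triangle$ both act by left multiplication by elements of $\tilde{\mG}$. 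So the hypotheses of Definition \ref{nilpotent system:def} hold, and by Corollary \ref{ergodic} the action of $G\times G$ is ergodic.

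It then remains to deduce unique ergodicity. If one is willing to invoke the cited equivalence \cite{Parryb,Parrya} as a black box valid in this generality, the conclusion is immediate; to be self-contained I would instead run Parry's inductive argument directly in the category of (finite dimensional) nilpotent systems. Note that Parry's classical proof is stated for nilmanifolds, and — as emphasized in the introduction — a finite dimensional nilpotent system is in general \emph{not} an inverse limit of nilsystems, so no reduction to the Lie case is available. Since $\tilde{\mG}/\tilde{\Gamma}$ is ergodic, Theorem \ref{ufc} applies and its universal characteristic factors are the quotients $\tilde{\mG}/\tilde{\mG}_r\tilde{\Gamma}$; hence, by Lemma \ref{structuregroups}, the system is a tower of extensions by the compact abelian groups $\tilde{\mG}_r/\tilde{\mG}_{r+1}\tilde{\Gamma}_r$. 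One then inducts on the nilpotency degree: the Kronecker factor carries an ergodic rotation on a compact abelian group, which is uniquely ergodic by a standard Stone--Weierstrass argument (the F{\o}lner averages of any non-trivial character converge uniformly to zero, so the averages of any continuous function converge uniformly to its integral); and the inductive step propagates unique ergodicity through each ergodic compact abelian group extension by the usual relative-equidistribution argument, in which Theorem \ref{almostgreen} (the Green-type theorem for special homogeneous spaces) controls the behaviour of the eigenfunctions on the new level and hence the relative averages along a F{\o}lner sequence of $G\times G$.

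I expect the main obstacle to be exactly this last step: carrying Parry's proof through with arbitrary compact abelian fibers in place of finite dimensional tori, and with two commuting $G$-actions $T^\star$ and $T^\triangle$ in place of a single transformation. The base case and the algebraic bookkeeping (identifying characteristic factors, structure groups, and commutator subgroups) are routine given the results already established in the excerpt; the delicate point is setting up the relative orthogonality/mixing estimate in the extension step so that it uses only compactness of the structure groups together with the equidistribution input provided by Theorem \ref{almostgreen}. Once this is done, unique ergodicity of $(\tilde{\mG}/\tilde{\Gamma},T_g^\triangle,T_g^\star)$ follows.
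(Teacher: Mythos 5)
Your proposal follows the paper's own route: the paper establishes ergodicity of the $G\times G$-action via Corollary \ref{ergodic} (resting on Theorem \ref{almostgreen}) and then deduces unique ergodicity by invoking the Parry references as a black box, which is exactly your first option; no self-contained reworking of Parry's induction is carried out in the paper. Your concern about the non-Lie setting is why the relevant citation is \cite{Parrya} on compact abelian group extensions, which does not require Lie structure, so the tower-plus-extension argument you sketch is precisely what the cited result packages.
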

 \begin{proof}[Proof of Theorem \ref{almostgreen}]
 We prove the claims by induction on $k$ and then by induction on $d$. If $k=1$ then the claims follow because the system is ergodic and every $n\in N$ is an automorphism. Fix $k\geq 2$ and assume that the claims hold for all smaller values of $k$.\\
 \textbf{Induction basis:} The case $d=1$ follows by adapting the argument of Parry. Let $f:N/\Gamma\rightarrow\mathbb{C}$ be an invariant function. The compact abelian group $N_k/\Gamma_k$ defines a unitary action on $L^2(N/\Gamma)$ by translations. In particular, there is a decomposition of $f$ to eigenfunctions with respect to this action. Namely $f=\sum_\lambda f_\lambda$ where $\Delta_n f_\lambda = \lambda(n)$ for every $n\in N_k/\Gamma_k$ where $\lambda:N_k/\Gamma_k\rightarrow S^1$ is a character. Since the action of $G$ commutes with the action of $N_k$ we can also assume that the $f_\lambda$'s are eigenfunctions with respect to the $G$-action. Thus, $|f_\lambda|$ is $G$-invariant and invariant with respect to $N_k$ and so by induction hypothesis and Corollary \ref{ergodic} we can write $f=\sum_\lambda a_\lambda f_\lambda$ where $a_\lambda\in\mathbb{C}$ and $f_\lambda$ take values in $S^1$. Now, we claim by downward induction on $1\leq r\leq k$ that:\\
 
 \noindent \textbf{Claim:} For every $n\in N_r$ and $\lambda$, $\Delta_n f_\lambda$ is a phase polynomial of degree $<k-r+1$.
 \begin{proof}[Proof of Claim:] If $r=k$ then $\Delta_n f_\lambda = \lambda(n)$ is a constant. Fix $r<k$ and assume inductively that the claim holds for larger values of $r$ and let $n\in N_r$. Observe that for every $g\in G$ we have
 $$\Delta_g \Delta_n f_\lambda = \Delta_n \Delta_g f_\lambda \cdot V_n T_g \Delta_{[n^{-1},g^{-1}]} f_\lambda.$$
 Since $\Delta_g f_\lambda$ is a constant the term $\Delta_n \Delta_g f_\lambda$ vanishes. Moreover, by the induction hypothesis $\Delta_{[n^{-1},g^{-1}]}f_\lambda$ is a phase polynomial of degree $<k-r$. Observe that $\Delta_{[n^{-1},g^{-1}]}f_\lambda$ is invariant with respect to the action of $N_k$ and therefore by the induction hypothesis on $k$, we conclude that $V_n T_g \Delta_{[n^{-1},g^{-1}]} f_\lambda$ is also of degree $<k-r$. It follows that $\Delta_g \Delta_n f_\lambda$ is of degree $<k-r$ for every $g\in G$ and therefore $\Delta_n f_\lambda$ of degree $<k-r+1$, as required.
 \end{proof}
 Now, we apply the claim with $r=1$. We deduce that for every $n\in N$, $\Delta_n f_\lambda$ is a phase polynomial of degree $<k$. Since $\Delta_n f_\lambda$ and is invariant with respect to the action of $N_k$ and $N/N_k\Gamma$ is ergodic, Lemma \ref{sep:lem} implies that the group $$V_\lambda:=\{n\in N : \Delta_n f_\lambda \text{ is a constant }\}$$ is open. The map $v\mapsto \Delta_v f_\lambda$ is a homomorphism from $V_\lambda$ to the abelian group $S^1$ and so is trivial on $(V_\lambda)_2 = N_2$. We conclude that $f=\sum_\lambda a_\lambda f_\lambda$ is also invariant with respect to $N_2$. Since $N/N_2\Gamma$ is ergodic, $f$ is a constant and the rest of the claims follow.\\
 \textbf{Induction step:} Fix $d>1$ and assume inductively that the claims hold for smaller values of $d$.\\ Observe that by the case $d=1$ we can assume that $N/\Gamma$ is ergodic. Let $f:N/\Gamma\rightarrow \mathbb{C}$ be a phase polynomial of degree $<d$. By setting $g_1=...=g_d=0$ we conclude that $|f|^{2^d}=1$ and therefore $f$ takes values in $S^1$. We show that $f$ is invariant with respect to $N_d$ by adapting the argument from the induction basis. As before, we prove by induction on $r$ that $\Delta_n f$ is of degree $<k-r+1$. If $r=k$, then since $\Delta_g f$ is invariant to $N_{d-1}$ we see that $\Delta_g \Delta_n f = \Delta_n \Delta_g f = 1$, as required. Fix $r<k$ and let $n\in N_r$ then,
\begin{equation}\label{commutator}
\Delta_g \Delta_n f = \Delta_n \Delta_g f \cdot V_n T_g \Delta_{[n^{-1},g^{-1}]} f.
\end{equation}
 By induction hypothesis $\Delta_{[n^{-1},g^{-1}]} f$ is of degree $<k-r$, since this function is invariant with respect to $N_k$, the same argument as in the induction basis gives that $V_n T_g \Delta_{[n^{-1},g^{-1}]} f$ is also a phase polynomial degree $<k-r$. If $r\geq d-1$ then $\Delta_n \Delta_g f$ vanishes and therefore $\Delta_n f$ is of degree $<k-r+1$, as required. If $r<d-1$ then, by the induction hypothesis on $d$, we conclude that $\Delta_n \Delta_g f$ is of degree $<d-r$. Since $d<k$, \eqref{commutator} implies that $\Delta_n f$ is of degree $<k-r+1$.\\  In particular, by the case $r=1$, we conclude that for every $n\in N$, $\Delta_n f$ is a phase polynomial of degree $<k$. This time, consider the subgroup
 $$V= \{n\in N : \Delta_n f \text{ is a phase polynomial of degree } <d-1\}$$
 As in the induction basis this is an open subgroup which contains the image of $\varphi:G\rightarrow N$.\\
Write $\Delta_v f = p_v$ and observe that since $p_v$ is invariant to $N_k$, we have by induction hypothesis that for every $v'\in V$, $\Delta_{v'}p_v$ is of degree $<d-1$. Moreover, by the cocycle identity we have that $p_{vv'} = p_v\cdot p_{v'} \cdot \Delta_{v'}p_v$. It follows that $v\mapsto p_v\cdot P_{<d-2}(X,S^1)$ is a homomorphism and so trivial with respect to $V_2$. In other words, for every $v\in V_2$, $p_v$ is a phase polynomial of degree $<d-2$. Continue this way by induction, we see that $p_v=1$ for every $v\in V_d$. Since $N$ is special, $V_d=N_d$ and the first claim follows.
Viewing $f$ as a polynomial of degree $<d$ on $N/N_d\Gamma$ the rest of the claims follow by the induction hypothesis on $k$.
 \end{proof}
\subsection{The proof of Theorem \ref{formula}}
We construct new systems.\\
Let $X=\mG/\Gamma$ and $\tilde{X}=\tilde{\mG}/\tilde{\Gamma}$ as in the previous sections. For every $x\in\mG/\Gamma$ the set
$$\tilde{X}_x := \{(x_1,x_2,...,x_k)\in X^k : (x,x_1,x_2,...,x_k)\in \tilde{X}\}$$ is a compact subset of $X^{k}$.  As in Bergelson, Host and Kra \cite{BHK}, the group $\tilde{\mG}^\star$ acts on $\tilde{X}_x$ transitively. Moreover, if  $\tilde{\Gamma}_x\leq \tilde{\mG}^\star$ is the stabilizer of $(x,x,x,...,x)$, then $\tilde{X}_x \cong \tilde{\mG}^\star/\tilde{\Gamma}_x$. Let $\mu$ be the Haar measure on $X$, $\tilde{\mu}$ the Haar measure on $\tilde{X}$ and $\tilde{\mu}_x$ on $\tilde{X}_x$. Using the fact that $\tilde{X}$ is uniquely ergodic Bergelson, Host and Kra proved,
\begin{equation}
\tilde{\mu} = \int_X \delta_x \otimes \tilde{\mu}_x d\mu(x).
\end{equation}
We prove the limit formula in Theorem \ref{formula} following the argument in \cite[Theorem 5.4]{BHK}.
\begin{proof}
	We first prove the claim in the case where the functions are continuous. Since $(x,x,x,...,x)\in \tilde{X}_x$, we can apply the pointwise mean ergodic theorem for the space $\tilde{X}_x$ with respect to the action of $(T_g,T_g^2,T_g^3,...,T_g^k)\in \tilde{\mG}^\star$. The limit is some function $\phi$ on $X$. Let $f$ be any continuous function on $X$ we have,
	\begin{equation} \label{ip}
\int f(x)\phi(x) d\mu(x) = \lim_{N\rightarrow\infty} \mathbb{E}_{g\in\Phi_N} \int_X f(x)\prod_{j=1}^k f_j(T_g^jx)d\mu(x).
		\end{equation}
	 We translate the functions in equation (\ref{ip}) by $T_h$ and then take an average over $h\in G$. Since $\mu$ is $T_h$ invariant for every $h\in G$ the limit above equals to
	$$  \lim_{N\rightarrow\infty} \mathbb{E}_{g\in \Phi_N}\mathbb{E}_{h\in\Phi_N}\int_X f(T_h x) \prod_{j=1}^k f_j (T_g^jT_hx)d\mu(x).$$
		The action generated by $T_h^\triangle$ and $T_g^\star$ on $\tilde{X}$ is uniquely ergodic. Therefore, by the mean ergodic theorem, the limit above converges everywhere to $$\int_{\tilde{X}} f(x_0)f_1(x_1)\cdot...\cdot f_k(x_k)d\tilde{\mu}(x_0,x_1,...,x_k) = \int_X f(x) \int_{\tilde{X}_x} f_1(x_1)\cdot...\cdot f_k(x_k)d\tilde{\mu}(x_1,...,x_k)d\mu(x_0).$$
		Since this holds for all continuous functions $f$, we conclude that $$\phi(x)= \int_{\tilde{X}_x} f_1(x_1)\cdot...\cdot f_k(x_k)d\tilde{\mu}(x_1,...,x_k)d\mu(x_0)$$ whenever $f_1,...f_k$ are continuous.\\
		The map $(g_1,g_2,...,g_k)\mapsto (gg_1,gg_1^2g_2,...,gg_1^kg_2^{\binom{k}{2}}\cdot...\cdot g_k^{\binom{k}{k}})$ from $\mG/\Gamma\times \mG_2/\Gamma_2\times...\times\mG_k$ to $\tilde{X}_x$ is an isomorphism and so $\phi(x)$ equals to the function in (\ref{formulaequation}). This completes the proof for continuous functions and by approximation argument the convergence holds for all bounded functions as well.
		\end{proof}
\appendix
\section{Survey of some notations and previous results} \label{background}
The goal of this section is to survey some definitions and known results from previous work. Most of these theorems and definitions appear in  \cite{Berg& tao & ziegler} or in \cite{HK}.
\subsection{Notations}
\begin{defn} [Abelian cohomology] Let $G$ be a countable discrete abelian group. Let $X=(X,\mathcal{B},\mu,G)$ be a $G$-system and let $U=(U,\cdot)$ be a compact abelian group. 
	\begin{itemize}
		\item{We denote by $\mathcal{M}(X,U)$ or $\mathcal{M}(G,X,U)$ the group of all measurable functions $\phi:X\rightarrow U$ or $\phi:G\times X\rightarrow U$, respectively. We say that two functions $f_1,f_2\in \mathcal{M}(X,U)$ are equal if $f_1(x)=f_2(x)$ for $\mu$-almost every $x$. Similarly, if $f_1,f_2\in \mathcal{M}(G,X,U)$ then they are equal if $f_1(g,x)=f_2(g,x)$ for $\mu$-almost $x\in X$ and every $g\in G$.}
		\item {A $(G,X,U)$-cocycle is a measurable function $\rho:G\times X\rightarrow U$ which satisfies that $\rho(g+g',x)=\rho(g,x)\rho(g',T_gx)$ for all $g,g'\in G$ and $\mu$-almost every $x\in X$. We let $Z^{1}(G,X,U)$ denote the subgroup of all cocycles.}
		\item {Given a cocycle $\rho:G\times X\rightarrow U$, we define the abelian extension $X\times_{\rho} U$ of $X$ by $\rho$ to be the product space $(X\times U,\mathcal{B}_X\otimes\mathcal{B}_U,\mu_X\otimes\mu_U)$ where $\mathcal{B}_U$ is the Borel $\sigma$-algebra on $U$ and $\mu_U$ the Haar measure. We define the action of $G$ on this product space by $(x,u)\mapsto (T_g x,\rho(g,x)u)$ for every $g\in G$. In this situation, we define by  $V_u(x,t)=(x,u t)$ the vertical rotation of some $u\in U$ on $X\times_{\rho} U$.}
		\item {If $F\in \mathcal{M}(X,U)$, we define the derivative $\Delta F\in \mathcal{M}(G,X,U)$ of $F$ to be the function $\Delta F(g,x):=\Delta_g F(x)$. We write $B^1(G,X,U)$ for the image of $\mathcal{M}(X,U)$ under the derivative operation. We refer to the elements in $B^1(G,X,U)$ as $(G,X,U)$-coboundaries.}
		\item {We say that $\rho,\rho'\in \mathcal{M}(G,X,U)$ are $(G,X,U)$-cohomologous if $\rho/\rho'\in B^1(G,X,U)$. In that case it is easy to see that the abelian extensions defined by $\rho$ and $\rho'$ are isomorphic.}
	\end{itemize}
\end{defn}
\subsection{Cubic measure spaces and type of functions}
We begin by introducing the cubic spaces from \cite[Section 3]{HK} (Generalized for arbitrary countable abelian group).
\begin{defn} [Cubic measure spaces]\label{CMS} Let $X=(X,\mathcal{B},\mu,G)$ be a $G$-system for some countable abelian group $G$. For each $k\geq 0$ we define $X^{[k]} =(X^{[k]},\mathcal{B}^{[k]},\mu^{[k]},G)$ where $X^{[k]}$ is the Cartesian product of $2^k$ copies of $X$, endowed with the product $\sigma$-algebra $\mathcal{B}^{[k]}=\mathcal{B}^{2^k}$, and $G$ acting on $X^{[k]}$ diagonally (i.e. $T_g((x_\omega)_{\omega\in\{0,1\}^k}) = (T_g x_\omega)_{\omega\in\{0,1\}^k})$.). We define the cubic measures $\mu^{[k]}$ and $\sigma$-algebras $\mathcal{I}_k\subseteq \mathcal{B}^{[k]}$ inductively. $\mathcal{I}_0$ is defined to be the $\sigma$-algebra of invariant sets in $X$ and $\mu^{[0]}:=\mu$. Let $k\geq 0$ and suppose that  $\mu^{[k]}$ and $\mathcal{I}_k$ are already defined. We identify $X^{[k+1]}$ with $X^{[k]}\times X^{[k]}$ and define $\mu^{[k+1]}$ by the formula 
	$$\int f_1(x)f_2(y) d\mu^{[k+1]}(x,y) = \int E(f_1|\mathcal{I}_k)(x)E(f_2|\mathcal{I}_k)(x) d\mu^{[k]}(x).$$
	For $f_1,f_2$ functions on $X^{[k]}$ and $E(\cdot|\mathcal{I}_k)$ the conditional expectation and let $\mathcal{I}_{k+1}$ be the $\sigma$-algebra of invariant sets in $X^{[k+1]}$.\\
	We adapt the notion of face from \cite[Section 2]{HK}. Let $V_k:=\{-1,1\}^{2^k}$ and for every $0\leq l \leq k$, if $J\in 2^k$ (equivalently $J\subseteq \{1,...,k\}$) is a set of size $k-l$ and $\eta\in \{-1,1\}^J$ then the subset
	$$\alpha:=\{\varepsilon\in V_k :\forall_{j\in J} \varepsilon_j = \eta_j  \}$$ is called an $l$-dimensional face. For any transformation $u:X\rightarrow X$ and a face $\alpha$ we define a transformation $u_\alpha^{[k]}$ on $X^{[k]}$ by
	$$(u_\alpha^{[k]})_{\varepsilon\in V_k} = \begin{cases} u & \varepsilon\in \alpha\\ Id & \text{otherwise.} \end{cases}$$
\end{defn}
We survey some results from \cite{HK}. We begin with the following result about the relation between measure preserving transformations, faces and the measure $\mu^{[k]}$.
\begin{lem} \cite[Lemma 5.3]{HK} \label{facetransformations}
    Let $G$ be a countable abelian group and $X$ be an ergodic $G$ system. Let $0\leq l \leq k$ be integers. For a measure-preserving transformation $t: X\rightarrow X$ the following are equivalent:
    \begin{enumerate}
        \item {For any $l$ dimensional face $\alpha$ of $V_k$, the transformation $t_\alpha^{[k]}$ leaves the measure $\mu^{[k]}$ invariant and maps the $\sigma$-algebra $\mathcal{I}^{[k]}$ to itself.}
        \item{ For any $l+1$ dimensional face $\beta$ of $V_{k+1}$, the transformation $t_\beta^{[k+1]}$ leaves $\mu^{[k+1]}$-invariant.}
       \item{ For any $l+1$ dimensional face $\gamma$ of $V_k$, the transformation $t_\gamma^{[k]}$ leaves the measure $\mu^{[k]}$-invariant and acts trivially on the $\sigma$-algebra $\mathcal{I}^{[k]}$.}
    \end{enumerate}
\end{lem}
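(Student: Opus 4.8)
The plan is to follow Host and Kra's proof of this statement in \cite[Lemma 5.3]{HK}: their argument uses only the abstract group structure of the acting group, so it transfers without change from $G=\mathbb{Z}$ to an arbitrary countable abelian $G$. I would argue by induction on $k$ (for all $0\le l\le k$ at once), the essential input being the recursive definition of $\mu^{[k+1]}$ as the relative product of $\mu^{[k]}$ with itself over the factor $\mathcal{I}_k$, i.e. $\int f_1(x)f_2(y)\,d\mu^{[k+1]}(x,y)=\int E(f_1\mid\mathcal{I}_k)(x)\,E(f_2\mid\mathcal{I}_k)(x)\,d\mu^{[k]}(x)$, together with the already-recorded fact that both marginals of $\mu^{[k+1]}$ on $X^{[k]}$ equal $\mu^{[k]}$.

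First I would fix the combinatorics. Identify $X^{[k+1]}$ with $X^{[k]}\times X^{[k]}$ and $V_{k+1}$ with $V_k\times\{0,1\}$, the last coordinate recording which copy of $X^{[k]}$ a point lies in. Then every $(l+1)$-dimensional face $\beta$ of $V_{k+1}$ is of exactly one of two types: if the last coordinate is free, $\beta=\alpha\times\{0,1\}$ for a unique $l$-dimensional face $\alpha$ of $V_k$ and $t^{[k+1]}_\beta=t^{[k]}_\alpha\times t^{[k]}_\alpha$; if the last coordinate is fixed, $\beta=\gamma\times\{i\}$ for a unique $(l+1)$-dimensional face $\gamma$ of $V_k$ and $i\in\{0,1\}$, and $t^{[k+1]}_\beta$ is $t^{[k]}_\gamma\times\mathrm{Id}$ or $\mathrm{Id}\times t^{[k]}_\gamma$ (the two choices of $i$ impose the same constraint because the coordinate swap of the two copies of $X^{[k]}$ preserves $\mu^{[k+1]}$). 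As $\alpha$, respectively $\gamma$, ranges over all faces of the appropriate dimension in $V_k$, one exhausts all faces of each type.

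The computational heart is then the equivalence of $(2)$ with the conjunction of $(1)$ and $(3)$. Taking $f_2=1$ in the defining formula shows in each case that $t^{[k+1]}_\beta$ can preserve $\mu^{[k+1]}$ only if the underlying transformation of $X^{[k]}$ preserves $\mu^{[k]}$; assuming this, a short manipulation of conditional expectations shows that $t^{[k]}_\alpha\times t^{[k]}_\alpha$ preserves $\mu^{[k+1]}$ if and only if $E(f\circ t^{[k]}_\alpha\mid\mathcal{I}_k)=E(f\mid\mathcal{I}_k)\circ t^{[k]}_\alpha$ for all $f$, which is exactly the statement that $t^{[k]}_\alpha$ maps $\mathcal{I}_k$ to itself (one direction is standard; for the other, test on $f=\mathbf{1}_A$ with $A\in\mathcal{I}_k$ and use the Pythagorean identity $\|h\|_2^2=\|E(h\mid\mathcal{I}_k)\|_2^2+\|h-E(h\mid\mathcal{I}_k)\|_2^2$ to conclude that $\mathbf{1}_{t^{-1}A}$ is $\mathcal{I}_k$-measurable). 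So the first-type faces encode $(1)$. Likewise $t^{[k]}_\gamma\times\mathrm{Id}$ preserves $\mu^{[k+1]}$ if and only if $E(f\circ t^{[k]}_\gamma\mid\mathcal{I}_k)=E(f\mid\mathcal{I}_k)$ for all $f$ --- obtained by letting $E(f_2\mid\mathcal{I}_k)$ range over all bounded $\mathcal{I}_k$-measurable functions --- which is exactly the statement that $t^{[k]}_\gamma$ acts trivially on $\mathcal{I}_k$; so the second-type faces encode $(3)$. Hence $(2)$ is equivalent to $(1)\wedge(3)$.

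It remains to close the cycle by establishing $(1)\Longleftrightarrow(3)$, that is, that once $(1)$ holds the stronger condition $(3)$ is automatic and vice versa; this is where I expect the real difficulty to lie. The point is to upgrade ``maps $\mathcal{I}_k$ to itself'' for $l$-dimensional faces to ``acts trivially on $\mathcal{I}_k$'' for $(l+1)$-dimensional faces, and here I would invoke the inductive hypothesis one level down together with the invariance of $\mu^{[k]}$ and of $\mathcal{I}_k$ under the symmetries of the cube $V_k$ (the hyperoctahedral group permuting and reflecting the $2^k$ coordinates of $X^{[k]}$), which shuffle the faces of a given dimension among themselves and tie the $l$-dimensional and $(l+1)$-dimensional conditions together. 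Carrying out this bookkeeping exactly as in \cite{HK} completes the induction, and hence the proof.
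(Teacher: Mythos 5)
Your reduction of (2) to the conjunction of (1) and (3) is correct and is exactly the computational core of the Host--Kra argument that the paper cites: splitting the $(l+1)$-faces of $V_{k+1}=V_k\times\{0,1\}$ into those of the form $\alpha\times\{0,1\}$ and those of the form $\gamma\times\{i\}$, and translating invariance of $\mu^{[k+1]}$ under $t^{[k]}_\alpha\times t^{[k]}_\alpha$ (resp. $t^{[k]}_\gamma\times\mathrm{Id}$) into the conditions of (1) (resp. (3)) via the relative-product formula defining $\mu^{[k+1]}$, including the Pythagoras argument for ``maps $\mathcal{I}_k$ to itself'', is all fine.

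The gap is in the final step, where the equivalence of (1) and (3) actually has to be established. The mechanism you propose --- induction on $k$ together with invariance of $\mu^{[k]}$ and $\mathcal{I}_k$ under the hyperoctahedral symmetries of $V_k$ --- cannot deliver it: those symmetries permute the faces of $V_k$ of a \emph{fixed} dimension among themselves, so they can never convert the $l$-face condition (1) into the $(l+1)$-face condition (3); and the inductive hypothesis one level down only involves $\mu^{[k-1]}$, $\mu^{[k]}$ and $\mathcal{I}_{k-1}$, so it says nothing about whether $t^{[k]}_\gamma$ acts trivially on $\mathcal{I}_k$, which is the whole content of (3). What closes the cycle in \cite{HK} is a symmetry one level up: $\mu^{[k+1]}$ is invariant under permutations of the $k+1$ digits of $V_{k+1}$, and such permutations carry faces of the first type $\alpha\times\{0,1\}$ to faces of the second type $\gamma\times\{i\}$ of the same dimension, conjugating the corresponding face transformations (and preserving $\mathcal{I}_{k+1}$, since they commute with the diagonal action). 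Consequently (2) holds as soon as it holds for a single $(l+1)$-face of $V_{k+1}$, so (2) restricted to type-one faces, i.e. (1), is equivalent to (2) restricted to type-two faces, i.e. (3), and the three conditions coincide. The missing input is thus the digit-permutation invariance of the cubic measures (the ``Euclidean symmetries'' established in \cite{HK} before their Lemma 5.3, and not recorded in this paper's appendix); once that is quoted, no fresh induction on $k$ is needed, and without it your sketch does not yield $(1)\Leftrightarrow(3)$.
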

We also need the following result related to the ergodic decomposition of $\mu^{[k]}$.
\begin{lem} \label{ergdec} \cite[Corollary 3.5]{HK}
Let $X$ be an ergodic $G$-system and $k\geq 1$ then the following holds,
\begin{itemize}
    \item {There exists a measure space $(\Omega_k,P_k)$ and an ergodic decomposition $$\mu^{[k]}:= \int_{\Omega_k} \mu_{\omega} dP(\omega)$$}
    \item{For every $(k-1)$-face $\alpha$ and every $g\in G$ the transformation $g_\alpha^{[k]}$ sends an ergodic component to an ergodic component. In other words $g_{\alpha}^{[k]}$ acts on $(\Omega_k,P_k)$.}
    \item{The action of the group generated by $g_{\alpha}^{[k]}$ for all $g\in G$ and all $(k-1)$-faces $\alpha$ on $(\Omega_k,P_k)$ is ergodic.}
\end{itemize}
\end{lem}
The definition of cubic measure spaces (Definition \ref{CMS}) leads to the following definition of type for measurable functions.
\begin{defn} [Functions of type $<k$] \cite[Definition 4.1]{Berg& tao & ziegler}  \label{type:def} Let $G$ be a countable abelian group, let $X=(X,\mathcal{B},\mu,G)$ be a $G$-system. Let $k\geq 0$ and let $X^{[k]}$ be the cubic system associated with $X$.
	\begin{itemize}
		\item{For each measurable $f:X\rightarrow U$, we define $d^{[k]}f:X^{[k]}\rightarrow U$ by $$d^{[k]}f((x_w)_{w\in \{-1,1\}^k}):=\prod_{w\in \{-1,1\}^k}f(x_w)^{\text{sgn}(w)}$$ where $\text{sgn}(w)=w_1\cdot w_2\cdot...\cdot w_k.$}
		\item {Similarly for each measurable $\rho:G\times X\rightarrow U$ we define $d^{[k]}\rho:G\times X^{[k]}\rightarrow U$ by
			$$d^{[k]}\rho(g,(x_w)_{w\in \{-1,1\}^k}):=\prod_{w\in \{-1,1\}^k} \rho(g,x_w)^{\text{sgn}(w)}.$$}
		\item {A function $\rho:G\times X\rightarrow U$ is said to be a function of type $<k$ if $d^{[k]}\rho$ is a $(G,X^{[k]},U)$-coboundary. We let $\mathcal{M}_{<k}(G,X^{[k]},U)$ denote the subspace of functions $\rho:G\times X\rightarrow U$ of type $<k$.}
	\end{itemize}
\end{defn}
Using the Pontryagin dual, Moore and Schmidt \cite[Theorem 4.3]{MS} proved the following result.
\begin{thm} \label{MScob}
    Let $X$ be a $G$-system and $U$ a compact abelain group. Let $k\geq 0$ be an integer and $f:G\times X\rightarrow U$ a measurable map. Then, $f$ is of type $<k$ if and only if $\chi\circ f:G\times X\rightarrow S^1$ is of type $<k$ for every $\chi\in\widehat U$.
\end{thm}
We summarize previous results about type of functions. We begin with the following definition.
\begin{defn}
    Let $X$ be a $G$-system. Let $U=(U,\cdot)$ be a group and $f:G\times X\rightarrow U$ a function. For every $k\in\mathbb{N}$ and every face $\alpha\in V_k$ we define a function $d_\alpha^{[k]}f:G\times X^{[k]}\rightarrow U$ by
    $d_\alpha^{[k]}f(g,(x_\omega)_{\omega\in 2^k}) = \prod_{\omega\in \alpha} f(g,x_\omega)^{\sgn(\omega)}$.
\end{defn}
We have the following result \cite[Lemma C.7]{HK}.
\begin{lem}\label{typereduce}
    Let $X$ be an ergodic $G$-system and $U$ be a compact abelian group. Let $f:X\rightarrow U$ be a function and $\alpha$ be an $m$-dimensional face of $V_k$ for some $1\leq m < k$. If $d_\alpha^{[k]}f$ is a $(G,X^{[k]},U)$-coboundary, then $f$ is of type $<m$.
\end{lem}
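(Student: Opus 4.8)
The plan is to realise $d_\alpha^{[k]}f$ as a pull-back of $d^{[m]}f$ along a coordinate projection and then to transport the coboundary property down to $X^{[m]}$. First I would use the symmetries of the cube: the group generated by permutations of the $k$ digits and by the flips $\varepsilon_i\mapsto -\varepsilon_i$ of individual digits acts on $(X^{[k]},\mu^{[k]})$ by measure preserving transformations, permutes the $m$-faces transitively, and sends $d_\alpha^{[k]}f$ to some $d_{\alpha'}^{[k]}f$ or to its complex conjugate. So, after possibly replacing $f$ by $\overline f$, I may assume $\alpha$ is the standard $m$-face $\alpha_0=\{\varepsilon\in V_k:\varepsilon_{m+1}=\dots=\varepsilon_k=1\}$, for which a direct check from the definitions gives
\[
 d_{\alpha_0}^{[k]}f=(d^{[m]}f)\circ\pi ,
\]
where $\pi\colon X^{[k]}\to X^{[m]}$ sends $(x_\varepsilon)_{\varepsilon\in V_k}$ to $(x_{(\eta,1,\dots,1)})_{\eta\in V_m}$; here one uses $\sgn(\varepsilon)=\sgn(\eta)$ whenever $\varepsilon\in\alpha_0$ and $\eta=(\varepsilon_1,\dots,\varepsilon_m)$.

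The second ingredient is the behaviour of $\pi$ relative to the cubic structure. Splitting $X^{[j]}\cong X^{[j-1]}\times X^{[j-1]}$ on the last digit and using the defining relation $\int g_1(x)g_2(y)\,d\mu^{[j]}(x,y)=\int E(g_1|\mathcal I_{j-1})(x)\,E(g_2|\mathcal I_{j-1})(x)\,d\mu^{[j-1]}(x)$, the projection of $X^{[j]}$ onto one of its two $X^{[j-1]}$-factors is a factor map of $G$-systems pushing $\mu^{[j]}$ forward to $\mu^{[j-1]}$; composing the $k-m$ such projections that make up $\pi$ shows $\pi$ is a factor map of $G$-systems with $\pi_{*}\mu^{[k]}=\mu^{[m]}$. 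Granting this, the fact that $(d^{[m]}f)\circ\pi$ is a $(G,X^{[k]},U)$-coboundary is transported down to the fact that $d^{[m]}f$ is a $(G,X^{[m]},U)$-coboundary, which is exactly the statement that $f$ is of type $<m$ (equivalently, by Lemma~\ref{PP}, that $f$ is a phase polynomial of degree $<m$). In the simplest reading the transport is immediate, since a coboundary that is independent of the group variable must be trivial and triviality passes to $X^{[m]}$ because $\pi_{*}\mu^{[k]}=\mu^{[m]}$; in general I would argue one step at a time along the $k-m$ side projections $X^{[j]}\to X^{[j-1]}$, descending the coboundary relation with the help of Lemma~\ref{facetransformations} and the ergodic decomposition of $\mu^{[j-1]}$ from Lemma~\ref{ergdec}, after first reducing to $U=S^1$ via Moore--Schmidt (Theorem~\ref{MScob}) when $U$ is not a torus.

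The routine parts are the symmetry normalisation, the identity $d_{\alpha_0}^{[k]}f=(d^{[m]}f)\circ\pi$, and the push-forward computation $\pi_{*}\mu^{[k]}=\mu^{[m]}$. The step I expect to be the main obstacle is the descent of the coboundary property along a single side projection $X^{[j]}\to X^{[j-1]}$ of a relatively independent self-joining; this is exactly where the fine structure of $\mu^{[j]}$ and the ergodic decomposition of $\mu^{[j-1]}$ (Lemmas~\ref{facetransformations} and~\ref{ergdec}) are genuinely needed. This lemma is due to Host and Kra, and the argument above is a reorganisation of the proof of Lemma~C.7 in \cite{HK}.
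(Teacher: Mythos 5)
You have correctly identified the routine parts: the cube symmetries preserve $\mu^{[k]}$ and reduce to a standard face, $d_{\alpha_0}^{[k]}f=(d^{[m]}f)\circ\pi$ up to a global sign, and the face projection satisfies $\pi_{*}\mu^{[k]}=\mu^{[m]}$ and intertwines the diagonal actions. (For the record, the paper itself offers no proof of this lemma; it is quoted from Host--Kra, Lemma C.7.) The problem is that the entire content of the lemma lies in the step you leave unproved, and the one justification you do offer for it is not valid. The sentence ``a coboundary that is independent of the group variable must be trivial'' only applies under the (typographical) reading $f:X\to U$; the lemma is applied in the paper to genuine functions of $(g,x)$, such as $\theta_s(g,y)$ in Lemma \ref{above}, so this shortcut is unavailable. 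And for functions depending on $g$, a coboundary condition does \emph{not} descend along a factor map merely because the measure pushes forward: if $\pi:X\to Y$ is a factor of ergodic systems and $\chi$ is an eigenvalue of $X$ that is not an eigenvalue of $Y$, then the constant cocycle $\chi$ is a coboundary of $X$ but not of $Y$. So ``$\pi_{*}\mu^{[k]}=\mu^{[m]}$, hence transported down'' proves nothing by itself; the self-joining structure of $\mu^{[k]}$ must be used in an essential way, and your sketch never says how.

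What is actually needed at each side projection $X^{[j]}\to X^{[j-1]}$ is the following. Write $\mu^{[j-1]}=\int\mu_\omega\,dP(\omega)$ for the ergodic decomposition, so that $\mu^{[j]}=\int\mu_\omega\otimes\mu_\omega\,dP(\omega)$. Fix a component $(Z,m)=(X^{[j-1]},\mu_\omega)$ and suppose $\sigma(g,z)=F(T_gz,T_gz')\overline{F(z,z')}$ a.e.\ on $(Z\times Z,m\otimes m)$ with the diagonal action, $|F|=1$. The kernel $W(z',z''):=\int_Z F(z,z')\overline{F(z,z'')}\,dm(z)$ is invariant under the diagonal action and is nonzero in $L^2(m\otimes m)$ (a Hilbert--Schmidt positivity argument rules out $W=0$), hence has a nonzero projection onto some $\psi\otimes\overline{\psi}$ with $\psi$ a unimodular eigenfunction of $(Z,m)$ with eigenvalue $\chi$, because the diagonal-invariant functions of an ergodic self-product are spanned by such products. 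Then $c_\psi(z):=\int F(z,z')\overline{\psi(z')}\,dm(z')$ satisfies $c_\psi(T_gz)=\sigma(g,z)\overline{\chi(g)}c_\psi(z)$ and is not identically zero, so $|c_\psi|$ is a nonzero constant and $\sigma=\chi\cdot\Delta_g(c_\psi/|c_\psi|)$; since $\chi(g)=\Delta_g\psi$, $\sigma$ is a coboundary of $(Z,m)$. One then needs a measurable choice of the transfer functions over $\omega$ to pass from almost every ergodic component to $(X^{[j-1]},\mu^{[j-1]})$, and finally the downward induction along the $k-m$ side projections you describe, with the reduction to $U=S^1$ via Theorem \ref{MScob} as you indicate. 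None of this is supplied by Lemma \ref{facetransformations} or Lemma \ref{ergdec} on their own; it is precisely the content of the Host--Kra proof you cite, so as written your argument asserts, rather than proves, the only nontrivial step.
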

The following lemma studies the interactions between type and factors.
\begin{lem} [Decent Lemma] \label{Cdec:lem} \cite[Proposition 8.11]{Berg& tao & ziegler}
	Let $X$ be an ergodic $G$-system of order $<k$. Let $Y$ be a factor of $X$, with factor map $\pi:X\rightarrow Y$. Let $\rho:G\times Y\rightarrow S^1$ be a cocycle. If $\rho\circ\pi$ is of type $<k$, then $\rho$ is of type $<k$.
\end{lem}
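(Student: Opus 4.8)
The plan is to unwind the definition of ``type $<k$'' and thereby reduce the statement to descending a transfer function along the factor map induced on cubic systems. By the Moore--Schmidt criterion (Theorem~\ref{MScob}) we may assume $U=S^1$. Let $\pi^{[k]}\colon X^{[k]}\to Y^{[k]}$ be the factor map induced by $\pi$ on the cubic systems; then $d^{[k]}(\rho\circ\pi)=(d^{[k]}\rho)\circ\pi^{[k]}$, and $\pi^{[k]}$ pushes $\mu^{[k]}_X$ forward to $\mu^{[k]}_Y$ (a standard functoriality of the cubic construction). The hypothesis therefore yields a measurable $F\colon X^{[k]}\to S^1$ with $(d^{[k]}\rho)\circ\pi^{[k]}=\Delta F$ for the diagonal $G$-action on $X^{[k]}$, and the goal becomes: after multiplying $F$ by a diagonal-$G$-invariant function on $X^{[k]}$ we may assume $F=\tilde F\circ\pi^{[k]}$ for some $\tilde F\colon Y^{[k]}\to S^1$, whence $d^{[k]}\rho=\Delta\tilde F$, i.e.\ $\rho$ is of type $<k$.

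First I would pin down the ambiguity in $F$: any two solutions of $\Delta F=(d^{[k]}\rho)\circ\pi^{[k]}$ differ by a diagonal-$G$-invariant function on $X^{[k]}$, so $F$ is determined modulo $\mathcal M(\mathcal I,S^1)$, where $\mathcal I$ is the invariant $\sigma$-algebra of $(X^{[k]},\mu^{[k]})$ for the diagonal action. The essential structural input is that $X$ has order $<k$: by Proposition~\ref{abelext:prop} and Lemma~\ref{factor} the map $\pi$ presents $Y$ as the tower of abelian extensions defining $X$ with the successive structure groups replaced by quotients, so $\pi$ — hence $\pi^{[k]}$ — forgets precisely the action of a compact group $\Delta$ assembled from the kernels of these quotients. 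Since $\Delta F$ is $\pi^{[k]}$-measurable, the ratio $F(\delta\,\cdot)/F(\cdot)$ is $\pi^{[k]}$-measurable for every $\delta\in\Delta$; combining this with the cocycle identity for $F$ along the $\Delta$-orbits shows that $F$ restricted to each $\Delta$-orbit is a constant times a character of $\Delta$. Thus the obstruction to descending $F$ modulo invariants is a single measurable family $(\chi_w)_{w\in Y^{[k]}}$ of characters of $\Delta$.

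The main obstacle is to show that this family of characters is trivial. For this I would exploit three features: (i) $F$ may be chosen invariant under the face permutations of the cube $V_k$ and degenerate on degenerate cubes (on which $d^{[k]}\rho$ vanishes identically), which forces compatibility relations on $(\chi_w)$; (ii) $\widehat\Delta$ is discrete, so $(\chi_w)$ is locally constant and takes only countably many values, and the ergodicity of the group generated by the face transformations on the space of ergodic components (Lemma~\ref{ergdec}) pins it down to one fixed character $\chi$; (iii) evaluating the resulting identity at a distinguished vertex and invoking the cocycle identity for $\rho$ on $Y$ forces $\chi$ to be trivial. Once $F$ is known to be $\pi^{[k]}$-measurable modulo invariants, the lemma follows. (Equivalently one may induct on $k$, using Lemma~\ref{typeargument} to transfer the problem from $d^{[k]}(\rho\circ\pi)$ on $X^{[k]}$ to the $d^{[1]}$-derivatives of $\rho$ on the ergodic components of $X^{[1]}$, where the type drops by one; the combinatorial heart, trivializing the character family, is the same, and is the content of the argument in \cite{Berg& tao & ziegler}.)
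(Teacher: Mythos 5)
You should first note that the paper offers no proof of this lemma at all: it is imported verbatim from \cite[Proposition 8.11]{Berg& tao & ziegler} and used as a black box, so your sketch has to stand on its own — and it has two genuine gaps. The first is the structural claim that, because $X$ has order $<k$, the factor map $\pi$ ``forgets precisely the action of a compact group $\Delta$'', so that $\pi^{[k]}$-measurability on $X^{[k]}$ becomes $\Delta$-invariance. Neither half of this is available. Lemma \ref{factor} only identifies the top structure group of the factor as a quotient, and only up to a coboundary; collapsing subgroups of the \emph{lower} structure groups does not automatically act on $X$ by automorphisms — that lifting is exactly the nontrivial issue of Lemma \ref{up}, which needs the coboundary condition $\Delta_a\rho\in B^1$ at every level. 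At best an arbitrary factor of a system of order $<k$ sits at the bottom of a tower of successive group quotients, not a quotient of $X$ by a single compact group. Moreover, even for a genuine group extension $X\to Y=X/\Delta$, identifying the $\pi^{[k]}$-measurable functions on $(X^{[k]},\mu^{[k]})$ with the $\Delta$-invariant ones requires $\mu^{[k]}_X$ to be relatively independent over $\mu^{[k]}_Y$, which is a special property and is nowhere justified for an arbitrary factor.

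The second gap is the heart of the matter: trivializing the character family. Your steps (i)--(iii) are assertions, and it is precisely there that the order-$<k$ hypothesis must do real work, because the principle you are implicitly invoking — a cocycle measurable with respect to a factor which becomes a coboundary on the extension is already a coboundary on the factor — is false in general. For instance, if $X=Y\times_\sigma H$ and $\chi\in\hat H$ is nontrivial, then $\chi\circ\sigma\circ\pi=\Delta\Phi$ with $\Phi(y,h)=\chi(h)$, yet $\chi\circ\sigma$ need not be a coboundary on $Y$ (take $\sigma$ minimal with full image); the ``obstruction character'' is then genuinely nonzero. So the argument must exploit that the cocycle in question is the cube cocycle $d^{[k]}\rho$ over the cube system of a system of order $<k$; in your sketch that hypothesis enters only through the unproved structural claim above. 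In particular, the appeal to Lemma \ref{ergdec} does not pin the family $\omega\mapsto\chi_\omega$ down to a single character: face transformations permute the ergodic components, but nothing in your construction makes the transfer function $F$ (hence the family $\chi_\omega$) compatible with the face action, and step (iii) is likewise only asserted. The closing parenthetical, which defers the ``combinatorial heart'' to \cite{Berg& tao & ziegler}, is an appeal to the very result being proved rather than a proof. (A minor point: the reduction via Theorem \ref{MScob} is vacuous here, since the cocycle is already $S^1$-valued.)
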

We also have the following more general version for quasi-cocycles \cite[Proposition 8.11]{Berg& tao & ziegler}.
\begin{lem} \label{dec:lem}
	Let $X$ be an ergodic $G$-system of order $<k$ and $\pi:X\rightarrow Y$ be a factor of $X$. If $f:G\times Y\rightarrow S^1$ is a quasi-cocycle of order $<k-1$, such that $f\circ \pi$ is of type $<k$ then $f$ is of type $<k$.
\end{lem}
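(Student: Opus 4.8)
The plan is to transfer the defect of the cocycle identity for $f$ up to $X$, apply the hypothesis there, and then push the resulting data back down to $Y$. Since $f$ is a quasi-cocycle of order $<k-1$, for all $g,g'\in G$ we may write $f(g+g',y)/\bigl(f(g,y)\,f(g',T_gy)\bigr)=p_{g,g'}(y)$ where $p_{g,g'}$ is a phase polynomial on $Y$ of degree $<k-1$. Composing with $\pi$, the same identity holds on $X$ with the phase polynomials $p_{g,g'}\circ\pi$ (which are phase polynomials on $X$ of degree $<k-1$ by Lemma \ref{PP}, since $\pi$ is a factor map). The hypothesis is that $f\circ\pi$ is of type $<k$ as an $(G,X,S^1)$-function; our goal is to deduce that $f$ itself is of type $<k$, i.e.\ that $d^{[k]}f$ is a coboundary on $Y^{[k]}$ (equivalently, $d^{[k]}f$ is invariant under the diagonal $G$-action, using $f$ quasi-cocycle of order $<k-1$ so that $d^{[k]}f$ is already a cocycle — cf.\ Lemma \ref{PP}).

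First I would unwind the definition: $f$ has type $<k$ iff $d^{[k]}f$, which is a $G$-cocycle on $X^{[k]}$ (here $Y^{[k]}$), is an $(G,Y^{[k]},S^1)$-coboundary. The key point is that $\pi^{[k]}:X^{[k]}\to Y^{[k]}$ is the relevant factor map on the $k$-th cubic power and it is measure preserving, $(\pi^{[k]})_*\mu^{[k]}_X=\mu^{[k]}_Y$ (as $X$ has order $<k$, $Z_{<k}(X)=X$, so in fact $X^{[k]}$ and $Y^{[k]}$ have compatible cubic measures — this is where we use that $X$ is of order $<k$, following Bergelson--Tao--Ziegler's treatment of quasi-cocycles). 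Since $f\circ\pi$ has type $<k$ by hypothesis, $d^{[k]}(f\circ\pi)=(d^{[k]}f)\circ\pi^{[k]}$ is an $(G,X^{[k]},S^1)$-coboundary; write it as $\Delta G$ for some measurable $G:X^{[k]}\to S^1$. The task is to descend $G$ through $\pi^{[k]}$, i.e.\ show $d^{[k]}f$ is already a coboundary on $Y^{[k]}$.

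The main obstacle — and the heart of the argument — is this descent step: a function on $X^{[k]}$ that is a coboundary need not a priori come from a coboundary on the factor $Y^{[k]}$. Here one invokes the standard Mackey-type / relative-cohomology argument (this is exactly the analogue, for cubic powers and quasi-cocycles, of Host--Kra's Lemma \ref{Cdec:lem}, which Bergelson--Tao--Ziegler adapted): because $X$ is of order $<k$, the extension $X^{[k]}\to Y^{[k]}$ has a fibered structure compatible with the $G$-action on the $\sigma$-algebra $I^{[k]}$ of invariant sets, and one can average $G$ over the fibers (or use the relative ergodic decomposition as in the proof of Lemma \ref{sep2:lem}) to replace it by a function measurable with respect to $Y^{[k]}$, modulo an invariant multiplicative error that is absorbed using the quasi-cocycle structure of $d^{[k]}f$. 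Concretely I would: (i) observe $G$ is determined up to a $d^{[k]}f$-fixed multiplicative constant on each ergodic component of $X^{[k]}$ over $Y^{[k]}$; (ii) use that these components are permuted transitively and ergodically by the face transformations $g^{[k]}_\alpha$ to conclude the constant can be trivialized; (iii) deduce $G$ descends to $\bar G:Y^{[k]}\to S^1$ with $d^{[k]}f=\Delta\bar G$, i.e.\ $f$ has type $<k$. Once this descent is in place the rest is bookkeeping with Lemma \ref{PP}. I expect step (ii), ensuring the relevant action on the space of ergodic components is ergodic, to require the most care, and it is precisely the place where the hypothesis ``$f$ is a quasi-cocycle of order $<k-1$'' (not merely order $<k$) is used, so that the face-derivatives of $d^{[k]}f$ are themselves coboundaries and the invariance can be checked face by face.
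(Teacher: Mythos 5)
The paper offers no proof of this lemma at all: it is imported verbatim from Bergelson--Tao--Ziegler, so the only question is whether your sketch stands on its own, and I do not think it does. Your reduction is fine as far as it goes: $d^{[k]}(f\circ\pi)=(d^{[k]}f)\circ\pi^{[k]}=\Delta G$ on $X^{[k]}$, and one must show $d^{[k]}f\in B^1(G,Y^{[k]},S^1)$. (Two small corrections here: the compatibility $(\pi^{[k]})_*\mu_X^{[k]}=\mu_Y^{[k]}$ is automatic for any factor map, so it is not ``where the order $<k$ hypothesis is used''; and ``$d^{[k]}f$ is a coboundary'' is not equivalent to ``$d^{[k]}f$ is invariant under the diagonal action''.)

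The genuine gap is the descent step, which is the entire content of the lemma. Averaging an $S^1$-valued $G$ over the fibres of $\pi^{[k]}$ is not a meaningful operation, and the scheme you describe --- $G$ is determined up to a constant on each ergodic component, the face transformations act ergodically on the space of components, hence the constants can be trivialized and $G$ descends --- makes no essential use of the fact that the cocycle being descended has the special form $d^{[k]}f$ for a quasi-cocycle $f$ on $Y$. If that scheme worked as stated, it would prove that \emph{every} cocycle on $Y^{[k]}$ whose lift to $X^{[k]}$ is a coboundary is already a coboundary on $Y^{[k]}$, and that statement is false: take $c\in\hat G$ an eigenvalue of $X$ with eigenfunction $\phi$ that does not factor through $Y$, and view $c$ as a constant cocycle on $Y^{[k]}$; its lift is the coboundary of $F((x_\omega)_\omega)=\phi(x_{\omega_0})$ on $X^{[k]}$, yet it is a coboundary on $Y^{[k]}$ only when $c$ is an eigenvalue of $Y$. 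In this example the ``invariant multiplicative error'' you propose to absorb is exactly the nontrivial Mackey obstruction, and no choice of constants on ergodic components removes it. So the missing idea is precisely how the cube structure of $d^{[k]}f$, the hypothesis that $f$ is a quasi-cocycle of order $<k-1$ (so that the face-restricted derivatives $d^{[k]}_\alpha f$ are controlled), and the assumption that $X$ is of order $<k$ combine to kill this obstruction; your sketch gestures at these ingredients but supplies no mechanism, which is why the argument as written would not survive the counterexample above.
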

We are particularly interested in certain measure preserving transformations $t:X\rightarrow X$ on a $G$-system $X$.
\begin{defn} [Automorphism] \label{Aut:def} Let $X$ be a $G$-system. A measure-preserving transformation $u:X\rightarrow X$ is called an Automorphism if the action it induces on $L^2(X)$ by $f\mapsto f\circ u$ commutes with the action of $G$. In particular we set $\Delta_u f = f\circ u\cdot \overline{f}$
\end{defn}
Automorphisms arise naturally from Host-Kra's theory. For instance, given an abelian extension $Y\times_\rho U$, the group $U$ acts on this extensions by automorphisms defined by $V_u (y,t)=(y,tu)$.\\
Given a function $f:G\times X\rightarrow U$ of type $<k$, the derivative of $f$ by an automorphism $t$ decreases the type of $\Delta_t f$.
\begin{lem}  [Differentiation Lemma] \cite[Lemma 5.3]{Berg& tao & ziegler}\label{dif:lem} Let $k\geq 1$, and let $X$ be a $G$-system of order $<k$. Let $f:G\times X\rightarrow S^1$ be a function of type $<m$ for some $m\geq 1$. Then for every automorphism $t:X\rightarrow X$ which preserves $Z_{<k}(X)$ the function $\Delta_t f(g,x) := f(g,tx)\cdot\overline{f}(g,x)$ is of type $<m-\min(m,k)$.
\end{lem}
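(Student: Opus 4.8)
The strategy is to peel off the cube operators $d^{[\bullet]}$ one layer at a time: using the face–restriction principle (Lemma~\ref{typereduce}) I would reduce the whole statement to a single base case, and then establish that base case by induction on $k$ using the classification of type $<1$ cocycles (Lemma~\ref{type0}) together with the ergodic decomposition of the cubic systems. Since $f$ is already $S^1$-valued there is nothing to reduce via Moore--Schmidt. The one computation that drives everything is the following: because $t$ is an automorphism it commutes with every $T_g$, so for each $j$ and each face $\alpha$ of $V_j$ the transformation $t^{[j]}_\alpha$ on $X^{[j]}$ commutes with the coordinatewise $G$-action, and one checks directly that
\[
d_\alpha^{[j]}(\Delta_t f)\;=\;\Delta_{t^{[j]}_\alpha}\!\bigl(d_\alpha^{[j]} f\bigr).
\]
In particular, writing $d^{[m]}f=\Delta F$, this already shows $\Delta_t f$ is of type $<m$; the content is to improve this to type $<r$, where $r:=m-\min(m,k)$.

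For the reduction, set $\ell:=\min(m,k)$, so $m=\ell+r$. By the cocycle form of Lemma~\ref{typereduce} (when $r\ge 1$; the case $r=0$ meaning simply that $\Delta_t f$ is a coboundary), it is enough to find an $r$-dimensional face $\alpha$ of $V_m$ with $d_\alpha^{[m]}(\Delta_t f)$ a $(G,X^{[m]},S^1)$-coboundary. Now $d_\alpha^{[m]}$ depends only on the $2^r$ coordinates indexed by $\alpha$, so it factors through the projection $X^{[m]}\to X^{[r]}$ and becomes $d^{[r]}$ there; since $d^{[m]}=d^{[\ell]}\circ d^{[r]}$ and $f$ has type $<m$, the function $g:=d^{[r]}f$ is a cocycle of type $<\ell\ (\le k)$ on $X^{[r]}$, and by the displayed identity $d_\alpha^{[m]}(\Delta_t f)$ corresponds to $\Delta_{t^{[r]}} g$. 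Finally, iterating Lemma~\ref{typeargument} (order $<k$ passes from $X$ to a.e.\ ergodic component of each successive cubic extension), a.e.\ ergodic component of $(X^{[r]},\mu^{[r]})$ is again a system of order $<k$. Thus the whole lemma reduces to the base case: \emph{on a (possibly non-ergodic, $t$ possibly permuting the ergodic components) system whose ergodic components are of order $<k$, the $t$-derivative of a type $<k$ cocycle is a coboundary.}

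For this base case I would induct on $k$. For $k=1$ the system is trivial, $f$ is a character of $G$, and $\Delta_t f\equiv 1$. For the inductive step, pass to $X^{[1]}$: the cocycle $d^{[1]}f$ has type $<k-1$ there, and on a.e.\ ergodic component $X_s$ of $\mu^{[1]}$ one has, via the induction hypothesis (and, at the bottom, Lemma~\ref{type0}), that $d^{[1]}f|_{X_s}$ is cohomologous to a character of $G$; meanwhile $t^{[1]}$ acts on the space of ergodic components (Lemma~\ref{ergdec}), carrying $X_s$ to some $X_{t's}$. Comparing the normal forms of $d^{[1]}f$ on $X_s$ and on $X_{t's}$ shows that $d^{[1]}(\Delta_t f)=\Delta_{t^{[1]}}(d^{[1]}f)$ is a coboundary on a.e.\ component, and then one glues the fibrewise transfer functions: the ambiguity in this gluing is a $\widehat G$-valued cocycle over the decomposition, hence locally constant (the target is countable), hence trivialisable by the ergodicity of the face transformations on the component space (Lemma~\ref{ergdec}). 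This produces a single global function whose derivative is $\Delta_t f$.

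The main obstacle is precisely this last base-case analysis. The reductions in the first two paragraphs are essentially formal manipulations of the cube operators and the cited lemmas, but the heart of the matter is controlling the non-ergodicity of the spaces $X^{[j]}$, bookkeeping the characters of $G$ that appear as ``constants of integration'' when $t^{[j]}$ moves one ergodic component to another, and assembling the fibrewise transfer functions into a genuine coboundary on $X$ — this is exactly the point where the hypothesis ``$X$ of order $<k$'' (through the preservation of order under cubic extension) is what forces the residual obstruction to vanish.
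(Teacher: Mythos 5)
The paper does not prove this lemma (it is quoted from Bergelson--Tao--Ziegler), so your proposal has to be measured against the standard cube-measure argument, and it does not survive the comparison: the ``base case'' into which you push all the content is false. Take $G=\mathbb{F}_p^\omega$ ($p$ odd), $X=\prod_{i\in\mathbb{N}}C_p$ with the translation action (an ergodic system of order $<2$), and the phase-polynomial cocycle $\sigma$ determined by $\sigma(e_i,x)=e^{2\pi i x_i/p}$ (it extends to a cocycle by Proposition \ref{cocycle:prop} and has type $<2$ since $d^{[2]}\sigma=1$). For the automorphism $t=R_s$ one gets $\Delta_t\sigma(g,\cdot)\equiv c_s(g)$ with $c_s(e_i)=e^{2\pi i s_i/p}$; if $s$ has infinite support, $c_s$ is not an eigenvalue of $X$, so $\Delta_t\sigma$ is \emph{not} a coboundary even though $X$ is ergodic of order $<2$ and $\sigma$ has type $<2$. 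This kills your base case (with $k=2$), and it also shows the lemma is false if ``preserves $Z_{<k}(X)$'' is read literally, since every automorphism preserves the factor $Z_{<k}(X)$. The hypothesis BTZ actually use, and the one under which the lemma is applied throughout this paper (a transformation $t\in\mG_k$ \emph{fixing} $Z_{<k}(X)$, or a vertical rotation acting trivially on $Z_{<m-1}$ in Lemma \ref{claim}), is that $t$ acts \emph{trivially} on $Z_{<k}(X)$; your argument never uses anything beyond ``$t$ is an automorphism'', so it cannot prove the correct statement either. There is also an internal slip at exactly the point where this obstruction lives: when $t^{[1]}$ carries an ergodic component $X_s$ of $\mu^{[1]}$ to another component $X_{s'}$, the Moore--Schmidt normal forms give $\Delta_{t^{[1]}}(d^{[1]}f)\big|_{X_s}=(c_{s'}\overline{c_s})\cdot\Delta(\cdot)$, which is a coboundary on the component only if $c_{s'}=c_s$ --- no ergodicity of the component space removes this discrepancy; and even granting it, gluing only yields that $d^{[1]}(\Delta_t f)$ is a coboundary on $X^{[1]}$, i.e.\ that $\Delta_t f$ has type $<1$, while your last sentence silently converts a transfer function on $X^{[1]}$ into one on $X$, i.e.\ type $<0$.

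The repair is not a better gluing argument but the step your reductions were designed to avoid. Write $d^{[m]}f=\Delta F$ and fix a face $\alpha$ of $V_m$ of dimension $m-\min(m,k)$. Precisely because $t$ acts trivially on $Z_{<k}(X)$, the side transformation $t^{[m]}_\alpha$ preserves $\mu^{[m]}$ (this is the Host--Kra theory of face transformations; cf.\ Lemma \ref{facetransformations} and the relative independence of $\mu^{[m]}$ over the copies of $Z_{<k}$), and it commutes with the diagonal $G$-action. Your displayed identity, applied with this \emph{side} transformation rather than with the full diagonal on $X^{[r]}$, then gives $d^{[m]}_\alpha(\Delta_t f)=\Delta_{t^{[m]}_\alpha}\bigl(d^{[m]}f\bigr)=\Delta\bigl(\Delta_{t^{[m]}_\alpha}F\bigr)$, a genuine $(G,X^{[m]},S^1)$-coboundary, and Lemma \ref{typereduce} concludes that $\Delta_t f$ has type $<m-\min(m,k)$. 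No induction on $k$, no ergodic decomposition of $X^{[r]}$, and no base case are needed; the hypothesis on $t$ enters only, but indispensably, through the measure preservation of $t^{[m]}_\alpha$, which is exactly the ingredient your route discards.
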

\subsection{Phase polynomials}
Phase polynomials play an important role throughout this paper. We begin with the following definition.
\begin{defn} [Phase polynomials] \label{Phasepoly:def}
	Let $G$ be a countable abelian discrete group, $X$ be a $G$-system, let $\phi\in L^\infty(X)$, and let $k\geq 0$ be an integer. A function $\phi:X\rightarrow \mathbb{C}$ is said to be a phase polynomial of degree $<k$ if we have $\Delta_{h_1}...\Delta_{h_k}\phi = 1$ $\mu_X$-almost everywhere for all $h_1,...,h_k\in G$. (In particular, setting $h_1=...=h_k=0$ we see that $\phi$ must take values in $S^1$, $\mu_X$-a.e.). We write $P_{<k}(X)=P_{<k}(X,S^1)$ for the set of all phase polynomials of degree $<k$. Similarly, a function $\rho:G\times X\rightarrow \mathbb{C}$ is said to be a phase polynomial of degree $<k$ if $\rho(g,\cdot)\in P_{<k}(X,S^1)$ for every $g\in G$. We let $P_{<k}(G,X,S^1)$ denote the set of all phase polynomials $\rho:G\times X\rightarrow \mathbb{C}$ of degree $<k$.
\end{defn}
\begin{rem}
	The notion of phase polynomials can be generalized for an arbitrary abelian group $(U,\cdot)$ . Let $\phi:X\rightarrow U$ be a measurable function and $g\in G$, we can define the derivative $\Delta_g \phi(x)$ by the formula $\phi(T_gx)\cdot \phi(x)^{-1}$. A function $\phi:X\rightarrow U$ is said to be a phase polynomial of degree $<k$ if $\Delta_{h_1}...\Delta_{h_k}\phi = 1$ $\mu_X$-a.e. for every $h_1,...,h_k\in G$. We let $P_{<k}(X,U)$ denote the phase polynomials of degree $<k$ which take values in $U$.
\end{rem}
We have the following characterization of phase polynomials \cite[Lemma 5.3]{Berg& tao & ziegler}.
\begin{lem} \label{PP}
	Let $k\geq 0$ be an integer. Let $G$ be a countable abelian group and $X$ be an ergodic $G$-system. $f:X\rightarrow U$ be a measurable map into  compact abelian group $U=(U,\cdot)$. Then, $f$ is a phase polynomial of degree $<k$ if and only if $d^{[k]}f=1$, $\mu^{[k]}$-almost everywhere. 
\end{lem}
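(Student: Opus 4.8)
The plan is to reduce to the circle‑valued case and then induct on $k$, exploiting the recursive construction of $\mu^{[k]}$ as a relatively independent self‑joining over the invariant $\sigma$-algebra $\mathcal{I}_k$.

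First I would dispose of the reduction to $U=S^1$. Since $X$ is regular and separable and $U$ is compact metrizable, $\widehat U$ is countable and separates the points of $U$; as $\chi\circ d^{[k]}f=d^{[k]}(\chi\circ f)$ and $\chi\circ(\Delta_{h_1}\cdots\Delta_{h_k}f)=\Delta_{h_1}\cdots\Delta_{h_k}(\chi\circ f)$ for every $\chi\in\widehat U$, one checks that $f$ is a phase polynomial of degree $<k$ iff $\chi\circ f$ is for every $\chi$, and that $d^{[k]}f=1$ $\mu^{[k]}$-a.e.\ iff $d^{[k]}(\chi\circ f)=1$ $\mu^{[k]}$-a.e.\ for every $\chi$. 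So it suffices to prove the lemma for $U=S^1$. For the induction the case $k=0$ is trivial: ``degree $<0$'' forces $f\equiv 1$, while $d^{[0]}f=f$ and $\mu^{[0]}=\mu$.

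For the inductive step, suppose the lemma holds for $k$ and let $f\colon X\to S^1$. I would use two elementary identities: under the identification $X^{[k+1]}=X^{[k]}\times X^{[k]}$ we have $d^{[k+1]}f(x,y)=\overline{d^{[k]}f(x)}\,d^{[k]}f(y)$, and for $h\in G$ we have $d^{[k]}(\Delta_h f)=\Delta_{(h)^{[k]}}\!\big(d^{[k]}f\big)$, where $(h)^{[k]}$ is the diagonal action of $h$ on $X^{[k]}$, which preserves $\mu^{[k]}$. Now I would chain the equivalences: by definition $f$ is a phase polynomial of degree $<k+1$ iff $\Delta_h f$ is one of degree $<k$ for every $h\in G$; by the induction hypothesis this holds iff $d^{[k]}(\Delta_h f)=1$ $\mu^{[k]}$-a.e.\ for every $h$; by the second identity and the invariance of $\mu^{[k]}$ under $(h)^{[k]}$ this says $d^{[k]}f$ is invariant under the diagonal $G$-action $\mu^{[k]}$-a.e., i.e.\ (since $G$ is countable) $d^{[k]}f$ is $\mathcal{I}_k$-measurable; and finally, because $\mu^{[k+1]}$ is the relatively independent self-joining of $\mu^{[k]}$ over $\mathcal{I}_k$, a bounded function on $X^{[k]}$ is $\mathcal{I}_k$-measurable iff its two pullbacks to $X^{[k+1]}$ agree $\mu^{[k+1]}$-a.e., which by the first identity is exactly $d^{[k+1]}f=1$ $\mu^{[k+1]}$-a.e. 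This closes the induction.

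The routine but slightly delicate point — the step I would treat most carefully — is the last equivalence: that for $g\in L^\infty(X^{[k]})$ one has $g\circ\pi_1=g\circ\pi_2$ in $L^2(\mu^{[k+1]})$ iff $g=E(g\mid\mathcal{I}_k)$. This follows from the defining formula for $\mu^{[k+1]}$, which gives $\int g(x)\overline{g(y)}\,d\mu^{[k+1]}=\int|E(g\mid\mathcal{I}_k)|^2\,d\mu^{[k]}$, combined with $\int|g|^2\,d\mu^{[k+1]}=\int|g|^2\,d\mu^{[k]}$ and the fact that conditional expectation is an $L^2$-contraction that is isometric exactly on $\mathcal{I}_k$-measurable functions; one also uses the standard fact that the invariant $\sigma$-algebra of a countable group action coincides, modulo null sets, with the $\sigma$-algebra of almost everywhere invariant functions. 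No genuinely hard step arises; the argument is bookkeeping around the recursive definition of the cubic measures.
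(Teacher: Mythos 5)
Your proof is correct: the character reduction to $S^1$, the identities $d^{[k+1]}f(x,y)=\overline{d^{[k]}f(x)}\,d^{[k]}f(y)$ and $d^{[k]}(\Delta_h f)=\Delta_{h^{[k]}}d^{[k]}f$, and the $L^2$ computation showing that $g\circ\pi_1=g\circ\pi_2$ $\mu^{[k+1]}$-a.e.\ iff $g$ is $\mathcal{I}_k$-measurable together close the induction without gaps (ergodicity is never actually needed). The paper does not prove Lemma \ref{PP} but cites it from Bergelson--Tao--Ziegler, and your induction on $k$ via the relatively independent joining structure of $\mu^{[k+1]}$ over $\mathcal{I}_k$ is essentially the standard argument behind that cited result.
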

We need the following a counterpart of Lemma \ref{dif:lem} for phase polynomials \cite[Lemma 8.8]{Berg& tao & ziegler}.
\begin{lem} \label{polydiff}
    	In the settings of Lemma \ref{dif:lem} if $f$ is a phase polynomial of degree $<m$ then $\Delta_t f(x)$ is of degree $<m-\min(m,k)$.
\end{lem}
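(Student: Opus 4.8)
The statement is the phase-polynomial counterpart of the Differentiation Lemma \ref{dif:lem}, and the plan is to follow the proof of that lemma (i.e. the argument of \cite[Lemma 5.3 and Lemma 8.8]{Berg& tao & ziegler}), converting ``type'' into ``degree'' by means of Lemma \ref{PP}: a measurable $h$ is a phase polynomial of degree $<j$ exactly when $d^{[j]}h=1$ almost everywhere, which is the ``degree'' analogue of ``$d^{[j]}h$ is a coboundary''. First I would make a soft reduction. Since $t$ is an automorphism it commutes with $T_h$ for every $h\in G$, so the difference operator $\Delta_t$ commutes with each $\Delta_h$ on measurable functions; hence, if $f$ is a phase polynomial of degree $<m$ then
\[
\Delta_{h_1}\cdots\Delta_{h_m}\,\Delta_t f \;=\; \Delta_t\bigl(\Delta_{h_1}\cdots\Delta_{h_m} f\bigr)\;=\;\Delta_t(1)\;=\;1 ,
\]
so $\Delta_t f$ is automatically again a phase polynomial of degree $<m$. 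Thus the whole content of Lemma \ref{polydiff} is the quantitative drop of the degree by $\min(m,k)$.

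I would then prove the drop by induction on $m$, mirroring Lemma \ref{dif:lem}. The key move is to pass from $f$ to $d^{[1]}f$ on the cubic system $X^{[1]}$: the identity $\Delta_{h}^{[1]}\bigl(d^{[1]}f\bigr)=d^{[1]}\bigl(\Delta_{h}f\bigr)$ together with the soft reduction above and ergodicity shows that $d^{[1]}f$ is a phase polynomial of degree $<m-1$ on $X^{[1]}$. Now $X^{[1]}$ has a nice ergodic decomposition (Lemma \ref{ergdec}), and $\mu$-a.e. ergodic component of $X^{[1]}$ is again a system of order $<k$ --- in fact isomorphic to $X$ itself, \cite[Proposition 9.4]{HK} --- on which the diagonal transformation $t^{[1]}$ is an automorphism compatible with the diagonal $G$-action. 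Applying the induction hypothesis to $d^{[1]}f$ and $t^{[1]}$ on (a.e.) ergodic component gives that $d^{[1]}\bigl(\Delta_t f\bigr)=\Delta_{t^{[1]}}\bigl(d^{[1]}f\bigr)$ is a phase polynomial of controlled (lower) degree on $X^{[1]}$, and unwinding this through Lemma \ref{PP} yields a bound on the degree of $\Delta_t f$ on $X$. Iterating this cubic-system reduction $\min(m,k)$ times, and invoking at the last stage --- when the remaining degree has come down to $1$ --- the description of cocycles of type $<1$ as cohomologous to a constant (Lemma \ref{type0}) to see that differentiating by the automorphism $t$ removes the final unit of degree, produces the full bound $<m-\min(m,k)$.

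The main obstacle is precisely this quantitative bookkeeping: a single application of the cubic-system reduction only visibly lowers the degree by one, so to obtain the full drop by $\min(m,k)$ one must iterate carefully and keep track of which ergodic components are involved at each stage, and one must handle the transition from the generic regime $m>k$ (where the residual type is $k$) to the regime $m\le k$ (where $\Delta_t f$ should become essentially invariant). This is exactly the point treated in \cite[Lemma 8.8]{Berg& tao & ziegler}, and I expect the argument there to transfer with only notational changes once the type-versus-degree dictionary of Lemma \ref{PP} is in place, since none of those steps uses the specific structure of the group $G=\bigoplus_{p\in P}\mathbb{Z}/p\mathbb{Z}$.
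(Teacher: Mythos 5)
There is a genuine gap, and it sits exactly at the point your sketch defers to ``bookkeeping''. Your argument never uses any relation between $t$ and $Z_{<k}(X)$: the only structural input you invoke is that $t$ commutes with the $T_g$'s. But without the hypothesis on $Z_{<k}(X)$ the statement is false, so no argument of the shape you describe can close. Concretely, let $X$ be an ergodic group rotation (a system of order $<2$), let $t$ be translation by some $u_0$ (an automorphism in the sense of Definition \ref{Aut:def}), and let $f=\chi$ be a nontrivial character, so $f\in P_{<2}(X,S^1)$; then $\Delta_t f=\chi(u_0)$, which is a nontrivial constant, hence of degree $<1$ but not of degree $<0=2-\min(2,2)$. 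Thus the hypothesis ``$t$ preserves $Z_{<k}(X)$'' must be read, as in \cite{Berg& tao & ziegler}, as ``the transformation induced by $t$ on $Z_{<k}(X)$ is the identity'', and the whole content of Lemma \ref{polydiff} is how this triviality forces the drop by $\min(m,k)$. In your $X^{[1]}$-induction this is precisely what is missing: you would have to show (i) that $t\times t$ preserves almost every ergodic component of $\mu^{[1]}$ (true, but it needs the observation that $t$ acts as a rotation on the Kronecker factor), and, much more seriously, (ii) on which factor of each ergodic component $t\times t$ acts trivially, since that is what the induction hypothesis requires. At best one gets triviality on $Z_{<k-1}$ of the components, and then the computation you outline yields degree $<m-\min(m,k)+1$, one unit short; your proposed repair via Lemma \ref{type0} cannot supply that unit, since that lemma concerns $(G,X,S^1)$-cocycles of type $<1$ being cohomologous to constants and can only ever produce a constant, never the conclusion $\Delta_t f\equiv 1$ (degree $<0$) needed when $m\le k$. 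Deferring these two points to \cite[Lemma 8.8]{Berg& tao & ziegler} makes the proposal circular, because the paper itself proves nothing here: Lemma \ref{polydiff} is stated only as a citation of that result.

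For comparison, there is a short direct argument that avoids cubic spaces and ergodic decompositions altogether and shows exactly where the hypothesis enters. Phase polynomials of degree $<j$ on an ergodic system are measurable with respect to $Z_{<j}(X)$ (a standard fact used throughout \cite{HK} and \cite{Berg& tao & ziegler}). Hence, writing $r=m-\min(m,k)$, for any $g_1,\dots,g_r\in G$ the function $\Delta_{g_1}\cdots\Delta_{g_r}f$ lies in $P_{<\min(m,k)}(X,S^1)$ and is therefore measurable with respect to $Z_{<\min(m,k)}(X)$, a factor of $Z_{<k}(X)$ on which $t$ acts trivially; so $\Delta_t$ annihilates it. Commuting $\Delta_t$ with the $\Delta_{g_i}$'s (your ``soft reduction'') gives $\Delta_{g_1}\cdots\Delta_{g_r}\Delta_t f=1$ for all choices of $g_i$, i.e.\ $\Delta_t f\in P_{<m-\min(m,k)}(X,S^1)$, which is the assertion of Lemma \ref{polydiff}; the case $m\le k$ is the degenerate case $r=0$, where $f$ itself is $Z_{<k}(X)$-measurable and $\Delta_t f\equiv 1$.
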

The following lemma implies, in particular, that there are at most countably many $(X,S^1)$-phase polynomials in any ergodic $G$-system $X$.
\begin{lem}  [Separation Lemma] \cite[Lemma C.1]{Berg& tao & ziegler}\label{sep:lem} Let $X$ be an ergodic $G$-system, let $k\geq 1$, and $\phi,\psi\in P_{<k}(X,S^1)$ be such that $\phi/\psi$ is non-constant. Then $\|\phi-\psi\|_{L^2(X)} \geq \sqrt{2}/2^{k-2}$.
\end{lem}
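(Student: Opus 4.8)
The plan is to first reduce the inequality to a statement about a single non-constant phase polynomial and then induct on the degree. Put $\rho:=\phi/\psi$. Since $P_{<k}(X,S^1)$ is a group under pointwise multiplication, $\rho\in P_{<k}(X,S^1)$, and by hypothesis $\rho$ is non-constant; moreover $|\psi|=1$ almost everywhere, so $\|\phi-\psi\|_{L^2(X)}=\|\psi(\rho-1)\|_{L^2(X)}=\|\rho-1\|_{L^2(X)}$. Hence it suffices to prove the following: if $\rho\in P_{<k}(X,S^1)$ is non-constant, then $\|\rho-1\|_{L^2(X)}\ge\sqrt2/2^{k-2}$.

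I would prove this by induction on $k$. The case $k=1$ is vacuous, because a phase polynomial of degree $<1$ satisfies $\rho\circ T_g=\rho$ for every $g\in G$ and is therefore constant by ergodicity. For the inductive step, fix $k\ge2$ and a non-constant $\rho\in P_{<k}(X,S^1)$, and split into two cases according to the behaviour of the derivatives $\Delta_g\rho$, each of which lies in $P_{<k-1}(X,S^1)$.

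In the first case some $\Delta_{g_0}\rho$ is non-constant. The induction hypothesis applied with $k-1$ gives $\|\Delta_{g_0}\rho-1\|_{L^2(X)}\ge\sqrt2/2^{k-3}$. On the other hand, since $T_{g_0}$ preserves $\mu$ and $|\rho|=1$, the triangle inequality yields
\[
\|\Delta_{g_0}\rho-1\|_{L^2(X)}=\|\rho\circ T_{g_0}-\rho\|_{L^2(X)}\le\|\rho\circ T_{g_0}-1\|_{L^2(X)}+\|1-\rho\|_{L^2(X)}=2\|\rho-1\|_{L^2(X)},
\]
so $\|\rho-1\|_{L^2(X)}\ge\sqrt2/2^{k-2}$, as desired. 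In the second case every $\Delta_g\rho$ is constant, i.e. $\rho\circ T_g=c(g)\rho$ for a character $c\colon G\to S^1$. If $c\equiv1$ then $\rho$ is invariant, hence constant, contradicting our assumption, so $c(g_0)\ne1$ for some $g_0$. Integrating the identity $\rho\circ T_{g_0}=c(g_0)\rho$ and using invariance of $\mu$ forces $\int_X\rho\,d\mu=0$, whence $\|\rho-1\|_{L^2(X)}^2=\|\rho\|_{L^2(X)}^2+1=2$ and $\|\rho-1\|_{L^2(X)}=\sqrt2\ge\sqrt2/2^{k-2}$ since $k\ge2$. These two cases are exhaustive, so the induction, and with it the lemma, is complete.

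There is no real obstacle in this argument; it is an elementary induction. The only points that require a little care are that the induction should bottom out at the vacuous degree-$<1$ case, so that the second case for $k=2$ is genuinely the eigenfunction situation and produces the sharp constant $\sqrt2$; and that one must route the estimate through a derivative $\Delta_{g_0}\rho$, rather than attempting to compare $\rho$ with $1$ directly, in order to bring the induction hypothesis to bear.
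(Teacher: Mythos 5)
Your proof is correct: the reduction to $\rho=\phi/\psi$, the vacuous degree-$<1$ base case, the estimate $\|\Delta_{g_0}\rho-1\|_{L^2}\le 2\|\rho-1\|_{L^2}$ in the non-constant-derivative case, and the eigenfunction/mean-zero computation giving $\sqrt2$ in the remaining case all check out, and the constants match $\sqrt2/2^{k-2}$. The paper itself does not prove this lemma but cites \cite[Lemma C.1]{Berg& tao & ziegler}, and your argument is essentially the same induction-on-degree proof given there, so there is nothing further to compare.
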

The famous Theorem of Bergelson, Tao and Ziegler \cite{Berg& tao & ziegler} states the following result.
\begin{thm}  [Structure theorem for $Z_{<k}(X)$ for ergodic $\mathbb{Z}/p\mathbb{Z}^\omega$-systems] \label{BTZ} There exists a constant $C(k)$ such that for any ergodic $\mathbb{F}_p^\omega$-system $X$, $L^2(Z_{<k}(X))$ is generated by phase polynomials of degree $<C(k)$.  Moreover, if $p$ is sufficiently large with respect to $k$ then $C(k)=k$.
\end{thm}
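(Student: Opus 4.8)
This theorem is essentially the main result of Bergelson--Tao--Ziegler \cite{Berg& tao & ziegler}; the plan is to sketch how one recovers it. The approach is to isolate the cocycle-theoretic heart of the statement, namely the claim that \emph{on any ergodic $\mathbb{F}_p^\omega$-system, every $(G,X,S^1)$-cocycle of type $<k$ is $(G,X,S^1)$-cohomologous to a phase polynomial of degree $<C(k)$ for some $C(k)=O_k(1)$}, and to run an induction on $k$ that feeds this back into the structural statement. Granting the claim, write $Z_{<k}(X)=Z_{<k-1}(X)\times_\rho U$ by Proposition~\ref{abelext:prop} with $\rho$ of type $<k$; by Theorem~\ref{MScob} (Moore--Schmidt) the claim applies coordinatewise, so $\rho$ is cohomologous to a phase-polynomial cocycle $p$ of degree $<C(k)$ and $Z_{<k}(X)\cong Z_{<k-1}(X)\times_p U$. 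On this model the functions $\chi\circ\pi_U$, $\chi\in\widehat U$, satisfy $\Delta_g(\chi\circ\pi_U)=\chi\circ p(g,\cdot)\in P_{<C(k)}(Z_{<k-1}(X),S^1)$, hence are phase polynomials of degree $<C(k)+1$; together with the generating phase polynomials on $Z_{<k-1}(X)$ supplied by the induction hypothesis they span $L^2(Z_{<k}(X))$. The base cases $k\le 2$ are clear: $Z_{<1}(X)$ is trivial, and $Z_{<2}(X)$ is the Kronecker factor, whose eigenfunctions are exactly the phase polynomials of degree $<2$, the eigenvalue group sitting inside $\widehat{\mathbb{F}_p^\omega}$.

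For the cocycle claim I would likewise induct on $k$, the case $k=1$ being Lemma~\ref{type0}. For $k\ge 2$, Proposition~\ref{finiteorder} reduces us to a cocycle $\rho:G\times Z_{<k}(X)\to U$ of type $<k$ on a system of order $<k$. Two reductions come first. An inverse-limit argument lets us take $U$ to be a compact abelian Lie group; but $\mathbb{F}_p^\omega$ has bounded torsion, so no ergodic rotation can live on a torus, and the structure groups of $\mathbb{F}_p^\omega$-systems are totally disconnected $p$-groups of exponent $O_k(1)$, whence by Theorem~\ref{MScob} we may assume $\rho$ scalar-valued in $C_{p^n}$ with $n=O_k(1)$. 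Using the inductive hypothesis, $Z_{<k}(X)$ is in addition generated by phase polynomials of bounded degree. The core is then a Conze--Lesigne analysis in the manner of Host--Kra: letting $s$ range over the (profinite, bounded-exponent) group of vertical rotations of the structure groups of $Z_{<k}(X)$, one differentiates $d^{[k]}\rho=\Delta F$ and uses the differentiation lemma~\ref{dif:lem} together with Lemma~\ref{type0} to obtain equations
\[
\Delta_s\rho=c_s\cdot\Delta F_s
\]
with $c_s$ a phase-polynomial cocycle of strictly smaller degree; the inductive hypothesis (in the form of Theorem~\ref{CL:thm} and Theorem~\ref{countable}) then lets one solve and linearize $s\mapsto(c_s,F_s)$, so that each $s$ lifts to an automorphism of $X$ inside the Host--Kra group $\mG(X)$. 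Once enough such automorphisms have been produced that the group they generate acts with the appropriate transitivity, Lemma~\ref{PP} and the polynomial differentiation lemma~\ref{polydiff} force $\rho$ itself to be cohomologous to a phase polynomial of degree $O_k(1)$.

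The step I expect to be the real obstacle --- indeed the bulk of \cite{Berg& tao & ziegler} --- is the uniform control of the \emph{degree}, i.e.\ the production of $C(k)$. The mechanism uses the bounded torsion of $\mathbb{F}_p^\omega$ via the cocycle identity: if $s$ has order $n=p^{O_k(1)}$ then
\[
1=\Delta_{s^n}\rho=(\Delta_s\rho)^n\cdot\prod_{j=1}^{n-1}\Delta_{s^j}\Delta_s\rho,
\]
and each factor $\Delta_{s^j}\Delta_s\rho$ is a second difference, hence of strictly smaller type and controllable by the inductive hypothesis; iterating this, together with the analogous identities for the phase polynomials $c_s$ that arise along the way, clears all cohomological obstructions after $O_k(1)$ rounds, which is exactly what pins down $C(k)$. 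A group of \emph{unbounded} torsion would make this descent fail to terminate uniformly, which is the difficulty the body of the present paper circumvents with $k$-extensions; so the argument is genuinely special to $\mathbb{F}_p^\omega$ (and more generally to $\bigoplus_{p\in P}\mathbb{Z}/p\mathbb{Z}$ with $\sup P<\infty$).

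Finally, the ``moreover'' clause is obtained by rerunning the induction under the hypothesis $p>C(k)$ and noting that in this range the degree filtration behaves as in characteristic zero: the binomial coefficients $\binom{n}{j}$ with $j<p$ are invertible modulo $p$, so a single differentiation $\Delta_g$ strictly lowers the degree of a phase polynomial with no Frobenius correction term, and the separation lemma~\ref{sep:lem} rules out the accumulation of correction constants of forbidden order. Consequently no phase polynomial of degree exceeding $k$ is ever needed, so $L^2(Z_{<k}(X))$ is generated by phase polynomials of degree $<k$, i.e.\ $C(k)=k$.
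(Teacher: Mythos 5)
The paper itself does not prove Theorem \ref{BTZ}: it is quoted in Appendix \ref{background} as the main theorem of Bergelson--Tao--Ziegler \cite{Berg& tao & ziegler}, so there is no internal proof to compare you against; the closest internal analogues are Theorem \ref{Main:thm} (quoted from \cite{OS}) and the footnote to Theorem \ref{BTZthm}, which observes that such statements can be recovered by specializing the arguments of this paper to $P=\{p,p,p,\dots\}$. Your overall outline --- induct on $k$, reduce via Propositions \ref{abelext:prop} and \ref{finiteorder} to a cocycle of type $<k$ on a system of order $<k$, run a Conze--Lesigne analysis over the vertical rotations using Lemmas \ref{dif:lem} and \ref{type0}, and exploit the bounded torsion of $\mathbb{F}_p^\omega$ through the identity $1=\Delta_{s^n}\rho=(\Delta_s\rho)^n\cdot\prod_{j=1}^{n-1}\Delta_{s^j}\Delta_s\rho$ to clear the cohomological obstructions in $O_k(1)$ rounds --- is indeed the strategy of \cite{Berg& tao & ziegler}, and it parallels what this paper does in the bounded-exponent setting (compare the proof of Theorem \ref{order<3} and Lemma \ref{step 1}).

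There is, however, one step that is wrong as written. Theorem \ref{MScob} only converts ``$\rho$ is of type $<k$'' into the coordinatewise statements for $\chi\circ\rho$, $\chi\in\hat U$; it does \emph{not} let you pass from ``each $\chi\circ\rho$ is cohomologous to a phase polynomial of bounded degree'' to ``$\rho$ is cohomologous to a $U$-valued phase polynomial''. For that you must choose the representatives $(P_\chi,F_\chi)$ so that $\chi\mapsto P_\chi$ and $\chi\mapsto F_\chi$ are homomorphisms on $\hat U$, and in particular so that $P_\chi,F_\chi$ take values in the correct cyclic subgroup when $\chi$ has order $p^n$. This is precisely the obstruction discussed in Section \ref{special case} (the splitting of the short exact sequence (\ref{short})), which in the general unbounded-torsion case is resolved only after passing to extensions (Lemma \ref{fixCL:lem}); in the bounded-torsion case of $\mathbb{F}_p^\omega$ it can be done without extensions, by correcting $F_\chi$ using a root of the phase polynomial $F_\chi^{p^n}$ exactly as in the $U=C_{p^n}$ part of the proof of Theorem \ref{order<3}, but this correction is a genuine argument and not an application of Moore--Schmidt. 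A similar caveat applies to the ``moreover'' clause: that $C(k)=k$ once $p$ is large relative to $k$ is the content of the quantitative cocycle theorem in \cite{Berg& tao & ziegler} (cf.\ the footnote in the proof of Theorem \ref{order<3}), and invertibility of binomial coefficients plus Lemma \ref{sep:lem} does not by itself substitute for rerunning that induction with the sharpened degree count. With these two steps supplied, your sketch matches the known proof.
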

In \cite{OS} we generalized this result for totally disconnected systems (see Definition \ref{TD:def} and Theorem \ref{Main:thm} below). 
\subsection{The structure of systems of order $<k$}
Let $X$ be an ergodic $G$-system of order $<k$. Then $X$ can be written as a tower of abelian extensions \cite[Proposition 6.3]{HK}.
\begin{prop}  [Order $<k+1$ systems are abelian extensions of order $<k$ systems]\label{abelext:prop} Let $G$ be a discrete countable abelian group, let $k\geq 1$ and $X$ be an ergodic $G$-system of order $<k+1$. Then $X$ is an abelian extension $X=Z_{<k}(X)\times_{\rho} U$ for some compact abelian metrizable group $U$ and a cocycle $\rho:G\times Z_{<k}(X)\rightarrow U$ of type $<k$.
\end{prop}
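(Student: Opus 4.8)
The statement is \cite[Proposition~6.3]{HK}, and the plan is to reproduce the Host--Kra argument, organised in three stages and carried out by induction on $k$ (so the statement is available for factors of $X$ of strictly smaller order). Stage~1: show that $X$ is a \emph{single isometric (compact) extension} of $Y:=Z_{<k}(X)$. Stage~2: upgrade this to an extension by a compact abelian group acting by rotations, i.e.\ put $X$ in the form $Y\times_\rho U$. Stage~3: check that the cocycle $\rho$ is of type $<k$. The working machinery throughout is the cubic measure space $X^{[k]}$ and the $\sigma$-algebras $\mathcal I_k$ of Definition~\ref{CMS}, together with the Host--Kra group $\mathcal G(X)$ of Definition~\ref{HKgroup:def}.

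For Stage~1 the guiding principle is that relatively weakly mixing extensions are invisible to $\|\cdot\|_{U^{k+1}}$: if $Y\subseteq W\subsetneq X$ and $X$ is relatively weakly mixing over $W$, then every $f\in L^\infty(X)$ with $E(f|W)=0$ has $\|f\|_{U^{k+1}}=0$, since relative weak mixing collapses the cube average underlying $\|\cdot\|_{U^{k+1}}$ over $X$ to the corresponding one over $W$ (a van~der~Corput / relative-independence computation using the inductive construction of $\mu^{[k+1]}$ and Lemma~\ref{facetransformations}). As $X$ has order $<k+1$ this forces $Z_{<k+1}(X)=X\subseteq W$, a contradiction; combined with an analysis showing that the Furstenberg--Zimmer tower of $X$ over $Y$ has length one, one concludes that $X$ is a single isometric extension of $Y$. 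Equivalently, $L^2(X)$ is generated over $L^\infty(Y)$ by functions spanning finite-dimensional $G$-submodules relative to $Y$; the vertical such functions are the Conze--Lesigne type functions, and the cube analysis shows they separate the fibres of $X\to Y$.

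For Stage~2, Mackey--Zimmer theory writes an isometric extension of the ergodic system $Y$ as $Y\times_\sigma H/L$ for a compact metrizable group $H$, a closed subgroup $L\le H$ and a cocycle $\sigma$, and I would identify this structure group inside $\mathcal G(X)$. The measure-preserving transformations of $X$ that fix $Y$ and preserve all cube data form a subgroup of $\mathcal G(X)$ acting transitively on the fibres of $X\to Y$; since $\mathcal G(X)$ is $k$-step nilpotent (\cite[Corollary~5.9]{HK}) and this subgroup is annihilated by the projection $\mathcal G(X)\to\mathcal G(Z_{<k}(X))$ of Lemma~\ref{induce} (whose target is $(k-1)$-step nilpotent), it is contained in the last central term $\mathcal G_k(X)$, hence abelian, and ergodicity of $X$ over $Y$ forces its action to be free. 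Thus $L$ is trivial, $U:=\mathcal G_k(X)$ is a compact abelian metrizable group, and $X=Y\times_\rho U$ for a cocycle $\rho:G\times Y\to U$; the identification of $\mathcal G_k(X)$ with the kernel of the above projection, and its compactness, come from the same short exact sequences that underlie Lemma~\ref{CLproperty:lem}.

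Finally, for Stage~3 I would invoke the standard correspondence between the type of a cocycle and the order of the abelian extension it defines over an order-$<k$ base: for every $\chi\in\hat U$, the function $(y,u)\mapsto\chi(u)$ on $X$ is measurable with respect to $Z_{<k+1}(X)$ if and only if $\chi\circ\rho$ is of type $<k$; since $X=Z_{<k+1}(X)$ every such function is $Z_{<k+1}(X)$-measurable, so each $\chi\circ\rho$ has type $<k$, and Theorem~\ref{MScob} then gives that $\rho$ itself is of type $<k$, i.e.\ $d^{[k]}\rho$ is a $(G,Y^{[k]},U)$-coboundary. The main obstacle is Stages~1--2 --- proving isometricity with a length-one tower, and that the structure group is a compact \emph{abelian} group acting freely by rotations rather than a general compact homogeneous fibre. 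This is the technical heart of the Host--Kra theory, powered by the analysis of $\mu^{[k]}$, the $\sigma$-algebras $\mathcal I_k$, and the Conze--Lesigne equation; Stage~3 and the inductive bookkeeping are comparatively routine.
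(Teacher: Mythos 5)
The paper does not actually prove this proposition: it sits in the background appendix and is quoted from Host and Kra \cite[Proposition 6.3]{HK} (with the routine passage from $\mathbb{Z}$ to a countable abelian $G$), so there is no in-paper argument to measure yours against. Judged on its own terms, your three-stage plan follows the standard Host--Kra route, and Stage 1's dichotomy (relative weak mixing over an intermediate factor $W\supseteq Y$ collapses $\|\cdot\|_{U^{k+1}}$ to $W$, contradicting $X=Z_{<k+1}(X)$) and Stage 3 (type $<k$ via the vertical characters and Theorem \ref{MScob}) are sound as sketches.

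Stage 2, however, which you rightly call the heart, contains a genuine gap rather than a proof. First, the inference ``the fibre-preserving subgroup is annihilated by the projection $\mathcal{G}(X)\to\mathcal{G}(Z_{<k}(X))$, hence lies in the last central term $\mathcal{G}_k(X)$ and is abelian'' is invalid: the kernel of that projection is in general much larger than the $k$-th term of the lower central series. Already for $k=2$, writing $X=Z\times_\rho U$, the kernel consists of all vertical multiplications $S_{1,F}$ with $\Delta_g F$ constant for every $g$ (generalized eigenfunctions), whereas $\mathcal{G}_2(X)$ consists only of rotations by constants in $U$; being killed by the projection does not place an element in $\mathcal{G}_k(X)$. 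Second, the transitivity of the fibre-preserving part of $\mathcal{G}(X)$ on the fibres of $X\to Y$ is essentially the content of the proposition and cannot be assumed: after Mackey--Zimmer you only know the fibres are of the form $H/L$, and the group of automorphisms of $X$ over $Y$ need not act transitively on them; in Host--Kra the group structure, abelianness and freeness of the fibre action are extracted from the order-$<k+1$ hypothesis by a cocycle/cube argument, not from nilpotency of $\mathcal{G}(X)$. Appealing to the exact sequence behind Lemma \ref{CLproperty:lem} for the identification and compactness of the kernel is moreover circular, since that lemma presupposes the representation $X=Z_{<k}(X)\times_\rho U$ with $\rho$ of type $<k$, which is exactly what is to be proven. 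Finally, in Stage 1 the step from ``no relative weak mixing over any intermediate factor'' (hence $X$ distal over $Y$) to ``$X$ is a single isometric extension of $Y$'' is asserted but not argued; excluding towers of length at least two again requires the order-$<k+1$ structure. So the skeleton is the right one, but the two claims that actually carry the proof are missing.
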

In particular, it follows that every ergodic $G$-system of order $<k+1$ is isomorphic to a tower of abelian extensions $U_0\times_{\rho_1}U_1\times...\times_{\rho_k}U_k$ where $\rho_i:G\times Z_{<i-1}(X)\rightarrow U_i$ is a cocycle of type $<i$. This leads to the following definitions.
\begin{defn} [Totally disconnected and Weil systems] \label{TD:def}
	Let $X$ be an ergodic $G$-system of order $<k$ and write $X=U_0\times_{\rho_1} U_1\times_{\rho_2}...\times_{\rho_{k-1}}U_{k-1}$. We say that $X$ is a totally disconnected system if $U_0,U_1,...,U_{k-1}$ are totally disconnected groups.
\end{defn}
In \cite{OS} we proved a generalization of Theorem \ref{BTZ} for totally disconnected $\bigoplus_{p\in P}\mathbb{Z}/p\mathbb{Z}$-systems.
\begin{thm} [Functions of finite type on totally disconnected systems are cohomologous to phase polynomials]\label{Main:thm}
	Let $k,m,l\geq 0$ be integers and $P$ be a multiset of primes. If $X$ is an ergodic totally disconnected $\bigoplus_{p\in P}\mathbb{Z}/p^m\mathbb{Z}$-system of order $<k$, then every function $f:\bigoplus_{p\in P}\mathbb{Z}/p^m\mathbb{Z}\times X\rightarrow S^1$ of type $<l$ is $(\bigoplus_{p\in P}\mathbb{Z}/p^m\mathbb{Z},X,S^1)$-cohomologous to a phase polynomial of degree $<O_{k,m,l}(1)$.\footnote{We denote by $O_{k,m,l}(1)$ a quantity which is bound by a constant depending only on $k$, $m$ and $l$.}
\end{thm}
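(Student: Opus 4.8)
The plan is to prove Theorem \ref{Main:thm} by a double induction: an outer induction on the order $k$ of the system and, for each fixed $k$, an inner induction on the type $l$. When $k\leq 1$ the system is (up to a Kronecker rotation) trivial, and a type-$<l$ cocycle is then pinned down by its values on a generating set of $G$ via Proposition \ref{cocycle:prop}, so the statement collapses to Lemma \ref{type0}. For the inductive step, Proposition \ref{finiteorder} lets us assume $X$ has order $<\min(k,l+1)$, and Proposition \ref{abelext:prop} writes $X=Y\times_\rho U$ with $Y=Z_{<k-1}(X)$ totally disconnected of order $<k-1$ and $U$ a compact totally disconnected (hence profinite) abelian group. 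By Theorem \ref{MScob} it is enough to treat $S^1$-valued cocycles, and by the outer inductive hypothesis applied to $Y$ we may assume the cocycle in question is the ``top'' cocycle $\rho$ (seen as an $S^1$-cocycle through each $\chi\in\hat U$); the goal becomes to show each $\chi\circ\rho$ is cohomologous to a phase polynomial of degree $O_{k,m,l}(1)$.

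The engine is the differentiation lemma (Lemma \ref{dif:lem}): a vertical rotation $u\in U$ is an automorphism of $X$ fixing $Z_{<k}(X)=X$, so $\Delta_u(\chi\circ\rho)$ has type strictly smaller than $l$. If $l$ does not exceed the order of $X$ then $\Delta_u(\chi\circ\rho)$ is a coboundary for every $u$, so by Lemma \ref{cob:lem} $\chi\circ\rho$ is cohomologous to a cocycle measurable with respect to a factor of the form $X/U'$ with $U'\leq U$ open; reducing (using profiniteness) to $U$ finite and iterating the structure-group descent, one finishes by the outer induction. Otherwise $\Delta_u(\chi\circ\rho)$ has type $<l-k<l$, and the inner induction on $l$ (same $k$) gives, for each $u$, a phase polynomial $p_u$ of bounded degree and a measurable $F_u$ with $\Delta_u(\chi\circ\rho)=p_u\cdot\Delta F_u$ — a Conze--Lesigne type equation. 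Then one linearizes in $u$: the cocycle identity makes $u\mapsto p_u$ a quasi-homomorphism into the countable group $P_{<d}(X,S^1)/P_{<1}(X,S^1)$, and Lemma \ref{sep:lem} together with a Baire-category argument yields an open, hence finite-index, subgroup of $U$ on which it becomes an honest homomorphism modulo lower-degree polynomials.

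The substantive part is then the group theory, which exploits that both $G=\bigoplus_{p\in P}\mathbb Z/p^m\mathbb Z$ and $U$ are torsion-rich. Decomposing $G$ into its $p$-components and using that a phase polynomial cocycle $q(g,\cdot)$ of degree $<d$ satisfies $q(g,\cdot)^{\mathrm{order}(g)^{d}}=1$, all orders that appear are controlled by $k,m,l$ alone; passing to a further finite-index subgroup of $U$ and reducing $U$ to finite quotients, one assembles the pointwise Conze--Lesigne equations into a single identity $\chi\circ\rho=P\cdot\Delta F$. The degree of $P$ may, however, be strictly larger than $l$: since $P_{<l}(X,S^1)$ need not be divisible, one cannot in general extract the roots of phase polynomials that would be required to stay at degree $l$ — this is exactly the obstruction that the present paper removes, in the finite dimensional setting, by passing to $m$-extensions, whereas in \cite{OS} one instead absorbs it by allowing the degree to grow by a bounded amount at each level of the induction. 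Finally, a bound $O_{k,m,l}(1)$ uniform over all $X$ is obtained by bookkeeping these bounded increments through the (finitely many, since $k$ and $l$ are fixed) inductive steps.

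The step I expect to be the main obstacle is precisely the linearization-plus-uniform-bounding: turning the $u$-by-$u$ Conze--Lesigne equations into one global cohomology equation while keeping the degree bounded independently of $X$, in a setting where the structure groups can have unbounded exponent and where the needed roots of phase polynomials simply do not exist inside the category of totally disconnected systems.
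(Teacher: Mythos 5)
You should first be aware that this paper does not prove Theorem \ref{Main:thm} at all: it is stated in Appendix \ref{background} as a black box imported from the author's earlier work \cite{OS} (with a footnote that the proof there is written for $m=1$ and that the general case ``follows similarly''). So there is no in-paper proof to match; the relevant comparison is with the strategy of \cite{OS}, which itself follows Bergelson--Tao--Ziegler, and your outline does reproduce its broad shape (vertical differentiation to lower the type, Conze--Lesigne equations, linearization via Lemma \ref{sep:lem}, descent through the totally disconnected tower, and bounded degree growth in place of the missing roots of phase polynomials).

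As a standalone argument, however, there are two genuine gaps. First, the reduction ``we may assume the cocycle in question is the top cocycle $\rho$'' is not legitimate: the theorem concerns an \emph{arbitrary} function $f:G\times X\to S^1$ of type $<l$, which has no a priori relation to the structure cocycle of $X=Z_{<k-1}(X)\times_\rho U$, and the outer inductive hypothesis applied to $Y=Z_{<k-1}(X)$ only covers functions already measurable with respect to $Y$. The induction has to carry $f$ itself: differentiate $f$ by the vertical rotations of the top structure group, straighten via Lemma \ref{cob:lem} so that (a cohomologous copy of) $f$ becomes invariant under an open subgroup $U'\leq U$, and then descend using Lemma \ref{Cdec:lem}; proving that the structure cocycles are cohomologous to phase polynomials is a related but different statement, and conflating the two short-circuits the actual reduction. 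Second, the step where you ``assemble the pointwise Conze--Lesigne equations into a single identity $\chi\circ\rho=P\cdot\Delta F$'' is precisely where essentially all of the work lies, and your sketch offers no mechanism for it. After linearization one only knows that $u\mapsto p_u$ is a homomorphism modulo lower-degree polynomials on an open subgroup; integrating this data against the obstruction that $P_{<d}(X,S^1)$ is not divisible and that $U$ may have unbounded exponent requires the torsion bookkeeping of Proposition \ref{PPC}, reduction to finite quotients of $U$, and the $\Delta_{u^{n}}$ cocycle-identity trick used repeatedly in this paper (e.g.\ in the proofs of Theorem \ref{countable}, Theorem \ref{order<3} and Lemma \ref{claim}); it is exactly this obstruction that forces the present paper to introduce $m$-extensions when bounded degree is required. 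Saying that \cite{OS} ``absorbs it by allowing the degree to grow'' names the outcome but does not supply the argument, so the proposal as written is a correct road map rather than a proof.
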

Note that the proof in \cite{OS} is only given in the case $m=1$, but the general case follows similarly. 
\subsection{Conze-Lesigne equations}  In \cite{CL84},\cite{CL87},\cite{CL88} Conze and Lesigne studied the structure of ergodic $\mathbb{Z}$-systems of order $<3$. They identified a particular functional equation involving the cocycle $\rho$ defining the extension $Z_{<3}(X)=Z_{<2}(X)\times_\rho U$. We refer to this equation (Equation (\ref{CLequation:def}) below) as a Conze-Lesigne type equation.
\begin{defn} \label{CLe}
Let $X$ be an ergodic $G$-system, $\rho:G\times X\rightarrow S^1$ be a cocycle and $U$ a compact abelian group which acts on $X$. Let $m\geq 0$  we say that $\rho$ is a \textit{Conze-Lesgine cocycle of degree $<m$ with respect to $U$} if for every $u\in U$ we have that \begin{equation}\label{CLequation:def}\Delta_u \rho(g,x) = p_u(g,x) \cdot \Delta_g F_u(x)
\end{equation} for some $(G,X,S^1)$-phase polynomial $p_u$ of degree $<m$ and a measurable map $F_u:X\rightarrow S^1$ is a measurable map.
\end{defn}
Below are some results regarding Conze-Lesgine cocycles. The first lemma implies that we can choose the terms $p_u$ and $F_u$ measurable in $u$ \cite[Lemma C.4]{Berg& tao & ziegler}.
\begin{lem}  [Measure selection Lemma] \label{sel:lem} Let $X$ be an ergodic $G$-system, and let $k\geq 1$. Let $U$ be a compact abelian group. If $u\mapsto h_u$ is Borel measurable map from $U$ to $\mathcal{P}_{<k}(G,X,S^1)\cdot \mathcal{B}^1(G,X,S^1) \subseteq \mathcal{M}(G,X,S^1)$ where $\mathcal{M}(G,X,S^1)$ is the group of measurable maps of the form $G\times X\rightarrow S^1$ endowed with the topology of convergence in measure, then there is a Borel measurable choice of $f_u,\psi_u$ (as functions from $U$ to $\mathcal{M}(X,S^1)$ and $U$ to $P_{<k}(G,X,S^1)$ respectively) obeying that $h_u = \psi_u \cdot \Delta f_u$.
\end{lem}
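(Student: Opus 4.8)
The plan is to realize the factorization $h_u=\psi_u\cdot\Delta f_u$ as a Borel section of a continuous surjection between Polish spaces and then invoke a classical uniformization theorem. First I would equip $M(X,S^1)$ and $M(G,X,S^1)$ with the topology of convergence in measure, making both Polish groups; since for each fixed $g,h_1,\dots,h_k\in G$ the map $\rho\mapsto\Delta_{h_1}\cdots\Delta_{h_k}\rho(g,\cdot)$ is continuous into $M(X,S^1)$ and $\{1\}$ is closed, the set $P_{<k}(G,X,S^1)$ is a closed, hence Polish, subgroup of $M(G,X,S^1)$. Consider the continuous homomorphism
\[
\Phi:M(X,S^1)\times P_{<k}(G,X,S^1)\longrightarrow M(G,X,S^1),\qquad \Phi(F,p)=p\cdot\Delta F,
\]
whose image is exactly $\mathcal R:=P_{<k}(G,X,S^1)\cdot B^1(G,X,S^1)$, the target of the map $u\mapsto h_u$.

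The key structural point is that all fibres of $\Phi$ are $\sigma$-compact. A pair $(F,p)$ lies in $\ker\Phi$ iff $p=(\Delta F)^{-1}$ and $\Delta F\in P_{<k}(G,X,S^1)$; since the derivatives $\Delta_h$ commute, $\Delta F\in P_{<k}(G,X,S^1)$ is equivalent to $\Delta_{h_1}\cdots\Delta_{h_{k+1}}F=1$ for all $h_1,\dots,h_{k+1}\in G$, i.e. to $F\in P_{<k+1}(X,S^1)$. Thus $F\mapsto(F,(\Delta F)^{-1})$ is a homeomorphism of $P_{<k+1}(X,S^1)$ onto $\ker\Phi$. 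By ergodicity the invariant functions on $X$ are just the constants $S^1\subseteq M(X,S^1)$, and by the Separation Lemma \ref{sep:lem} the set $P_{<k+1}(X,S^1)$ is a countable union of cosets of this compact group $S^1$; hence $\ker\Phi$, and therefore every fibre of $\Phi$ (a translate of $\ker\Phi$), is a $K_\sigma$ set.

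With this in hand I would form $B:=\{(u,F,p)\in U\times M(X,S^1)\times P_{<k}(G,X,S^1):\Phi(F,p)=h_u\}$. Since $u\mapsto h_u$ is Borel and $\Phi$ is continuous, $B$ is a Borel subset of a product of Polish spaces (the preimage of the diagonal under the Borel map $(u,F,p)\mapsto(h_u,\Phi(F,p))$); its section over $u$ is $\Phi^{-1}(h_u)$, which is nonempty because $h_u\in\mathcal R$ and is $K_\sigma$ by the previous paragraph. The Arsenin--Kunugui uniformization theorem then yields a Borel function $u\mapsto(f_u,\psi_u)$ uniformizing $B$, defined on $\mathrm{proj}_U(B)=U$; by construction $h_u=\psi_u\cdot\Delta f_u$ with $f_u\in M(X,S^1)$ and $\psi_u\in P_{<k}(G,X,S^1)$ depending Borel-measurably on $u$, which is the assertion.

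The substantive step is the $\sigma$-compactness of the fibres, which is exactly where ergodicity and the Separation Lemma enter; once that is established, everything else is soft descriptive-set-theory. The two routine points to verify carefully along the way are the identification $\ker\Phi\cong P_{<k+1}(X,S^1)$ and the closedness of $P_{<k}(G,X,S^1)$ in $M(G,X,S^1)$. If one prefers not to invoke Arsenin--Kunugui, the weaker Jankov--von Neumann selection theorem already produces a universally measurable --- hence measurable with respect to the completion of any Borel probability measure --- choice of $(f_u,\psi_u)$, which is sufficient for every application of this lemma in the paper; the $K_\sigma$ fibre structure is only needed to upgrade the selection to an honestly Borel one.
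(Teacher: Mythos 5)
Your argument is correct, and it is worth noting that the paper does not actually prove this lemma: it is quoted verbatim from Bergelson--Tao--Ziegler (their Lemma C.4), so what you have produced is a self-contained substitute rather than a rederivation of an argument in the text. Your route is sound: $M(X,S^1)$ and $M(G,X,S^1)$ (the latter with convergence in measure in each of the countably many $g\in G$) are Polish groups, $P_{<k}(G,X,S^1)$ is closed since it is cut out by countably many conditions $\Delta_{h_1}\cdots\Delta_{h_k}\rho(g,\cdot)=1$ each of which is a closed condition under a continuous map, the kernel of $\Phi(F,p)=p\cdot\Delta F$ is indeed homeomorphic to $P_{<k+1}(X,S^1)$, and the combination of ergodicity (polynomials of degree $<1$ are constants) with the Separation Lemma \ref{sep:lem} shows $P_{<k+1}(X,S^1)$ is a countable union of $S^1$-cosets, hence $K_\sigma$; since fibres of a homomorphism are translates of the kernel, the Borel set $B$ has nonempty $\sigma$-compact sections and Arsenin--Kunugui gives the Borel uniformization $u\mapsto(f_u,\psi_u)$. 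This is essentially the same underlying input as in the source (the separation lemma is what makes any selection argument work there too), but your packaging through a single uniformization theorem is cleaner and makes explicit where ergodicity enters; your closing remark that Jankov--von Neumann already yields a universally measurable selection, which suffices wherever the lemma is applied against a probability measure on $\Omega$ or $U$, is also accurate. The only points to keep stated carefully are the ones you flagged yourself: the identification $\ker\Phi\cong P_{<k+1}(X,S^1)$ and the closedness of $P_{<k}(G,X,S^1)$, both of which you verify correctly.
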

The following lemma studies Conze-Lesigne cocycles of degree $<0$, \cite[Lemma C.9]{HK}.
\begin{lem} [Straightening nearly translation-invariant cocycles] \label{cob:lem}
	Let $X$ be an ergodic $G$-system, let $K$ be a compact abelian group acting freely on $X$ and commuting with the $G$-action, and let $\rho:G\times X\rightarrow S^1$ be such that $\Delta_k\rho$ is a $(G,X,S^1)$-coboundary for every $k\in K$, then $\rho$ is $(G,X,S^1)$-cohomologous to a function which is invariant under the action of some open subgroup of $K$. 
\end{lem}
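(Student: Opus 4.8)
The plan is to adapt the construction from the proof of Lemma~\ref{up} and analyse the group of all lifts of the $K$-translations. For $k\in K$ let $V_k\colon X\to X$ denote the corresponding transformation, and for measurable $F\colon X\to S^1$ let $S_{k,F}(x,u)=(kx,F(x)u)$ be the associated transformation of $X\times_\rho S^1$. As in Lemma~\ref{up}, the set
$$\mathcal{K}:=\{S_{k,F}\ :\ k\in K,\ \Delta_k\rho=\Delta F\}$$
is a group under $S_{k,F}\,S_{k',F'}=S_{kk',(F\circ k')F'}$, it acts on $X\times_\rho S^1$ by automorphisms, and with the topology of convergence in probability it is a polish group; the hypothesis that $\Delta_k\rho\in B^1(G,X,S^1)$ for every $k$ is precisely the statement that the projection $p\colon\mathcal{K}\to K$, $S_{k,F}\mapsto k$, is onto. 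Its kernel is the central circle $\{S_{1,c}:c\in S^1\}$, so by Corollary~\ref{compact} $\mathcal{K}$ is a compact polish group, and by Theorem~\ref{open} $p$ is open. Since $K$ is abelian, $[\mathcal{K},\mathcal{K}]$ is central (contained in $S^1$), so $\mathcal{K}$ is $2$-step nilpotent; as $p\times p$ is a quotient map, the commutator map descends to a continuous bihomomorphism $b\colon K\times K\to S^1$.

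The heart of the argument is to show that $1\to S^1\to\mathcal{K}\xrightarrow{p}K\to1$ splits over an open subgroup of $K$. The form $b$ corresponds to a homomorphism $\beta\colon K\to\widehat K$, $\beta(k)=b(k,\cdot)$, which is continuous because a nontrivial character of a compact group is uniformly bounded away from the trivial character; hence $\beta^{-1}(1)$ is a neighbourhood of the identity, and all fibres of $\beta$ are open. Since $K$ is compact its dual $\widehat K$ is discrete, so $\beta(K)$ is finite and $K':=\ker\beta$ is an open subgroup of $K$ on which $b$ vanishes identically. Consequently $\mathcal{K}':=p^{-1}(K')$ is a compact \emph{abelian} group and we get a short exact sequence $1\to S^1\to\mathcal{K}'\to K'\to1$ of compact abelian groups. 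Applying Pontryagin duality yields $1\to\widehat{K'}\to\widehat{\mathcal{K}'}\to\mathbb{Z}\to1$, a short exact sequence of discrete abelian groups, which splits because $\mathbb{Z}$ is free; dualising the splitting shows that the original sequence splits, i.e.\ there is a continuous homomorphism $s\colon K'\to\mathcal{K}'$ with $p\circ s=\mathrm{id}_{K'}$. Writing $s(k')=S_{k',H_{k'}}$, the homomorphism identity gives $H_{k'k''}=(H_{k'}\circ k'')\,H_{k''}$ and $\Delta H_{k'}=\Delta_{k'}\rho$ for all $k',k''\in K'$.

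It then remains to convert the family $k'\mapsto H_{k'}$ into a single transfer function. Since $K'\le K$ acts freely on $X$, I would identify $X\cong(X/K')\times K'$ measurably; fixing a generic $k_0'\in K'$ and setting $F(y,k'k_0'):=H_{k'}(y,k_0')$ produces a measurable $F\colon X\to S^1$ with $\Delta_{k'}F=H_{k'}$ for every $k'\in K'$, using the cocycle relation for $H$. Because the actions of $G$ and $K$ commute, $\Delta_{k'}\!\big(\rho/\Delta F\big)=\Delta_{k'}\rho\big/\Delta\!\big(\Delta_{k'}F\big)=\Delta H_{k'}/\Delta H_{k'}=1$ for every $k'\in K'$; thus $\rho/\Delta F$ is invariant under the open subgroup $K'$ and is $(G,X,S^1)$-cohomologous to $\rho$, which is the assertion of the lemma.

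The step I expect to be the main obstacle is the splitting claim in the second paragraph: verifying carefully that $b$ is continuous on $K\times K$ (using that $p\times p$ is a quotient map, via Theorem~\ref{open}), and exploiting the dichotomy that a continuous character of a compact group is either trivial or uniformly away from $1$ in order to pass to $K'=\ker\beta$. Once $\mathcal{K}'$ is abelian, the splitting is a soft consequence of the projectivity of $\mathbb{Z}$ under Pontryagin duality. The remaining ingredients — the measurable setup of $\mathcal{K}$ as a closed subgroup (cf.\ the measurable selection Lemma~\ref{sel:lem}), compactness via Corollary~\ref{compact}, openness of $p$ via Theorem~\ref{open}, and the product decomposition coming from freeness of the $K'$-action — are routine.
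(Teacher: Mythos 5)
Your argument is correct. The paper does not actually prove Lemma \ref{cob:lem} — it is quoted from \cite[Lemma C.9]{HK} — and your proof reconstructs essentially the standard argument, using the same devices the paper itself employs in Lemmas \ref{up}, \ref{down} and \ref{HKargcon}: the skew-product group $\mathcal{K}$ of lifts (Polish, compact by Corollary \ref{compact}, with open projection onto $K$ by Theorem \ref{open}), the observation that the commutator descends to a continuous bihomomorphism $b:K\times K\rightarrow S^1$ whose kernel is an open subgroup $K'$ over which $\mathcal{K}$ becomes abelian, the splitting of the resulting circle extension via Pontryagin duality (since $\widehat{S^1}\cong\mathbb{Z}$ is free), and the generic-point construction of the transfer function $F$ from the homomorphic section, exactly as in the proofs of Lemmas \ref{down} and \ref{HKargcon}.
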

\begin{rem}
	Note that if $K$ is connected then it has no non-trivial open subgroups (see Lemma \ref{connectedcomponent}). In this case we have that $\rho$ is $(G,X,S^1)$-cohomologous to a function which is invariant under $K$. Moreover it is important to note that such result does not work for cocycles which takes values in an arbitrary compact abelian group.
\end{rem}
The next lemma asserts that we can \textit{locally linearize} the term $p_u$ in the Conze-Lesigne equation.
\begin{lem} [Linearization of the $p_u$-term] \label{lin:lem}
	Let $X$ be an ergodic $G$-system, let $U$ be a compact abelian group acting freely on $X$ and commuting with the action of $G$. Let $\rho:G\times X\rightarrow S^1$ be a cocycle and suppose that there exists $m\in\mathbb{N}$ such that for every $u\in U$ there exist phase polynomials $p_u\in P_{<m}(G,X,S^1)$ and a measurable map $F_u:X\rightarrow S^1$ such that $\Delta_u\rho = p_u\cdot \Delta F_u$. Then there exists a measurable choice $u\mapsto p'_u$ and $u\mapsto F'_u$ such that $\Delta_u\rho = p'_u\cdot \Delta F'_u$ for phase polynomials $p'_u\in P_{<m}(G,X,S^1)$ which satisfies that $p'_{u v} = p'_u\cdot V_u p'_v$ whenever $u,v,u v\in U'$ where $U'$ is some neighborhood of $U$.
\end{lem}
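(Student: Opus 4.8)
The plan is to straighten the family $u\mapsto p_u$ by twisting it with a suitable coboundary family; the obstruction will be a measurable $2$-cocycle on $U$ with values in a \emph{countable discrete} group, and the point is that such a cocycle can be trivialized on a neighborhood of the identity.

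First I would invoke the Measure Selection Lemma (Lemma \ref{sel:lem}) to fix Borel choices $u\mapsto p_u\in P_{<m}(G,X,S^1)$ and $u\mapsto F_u\in\mathcal M(X,S^1)$ with $\Delta_u\rho=p_u\cdot\Delta F_u$. Since $U$ commutes with the $G$-action one has $\Delta_{uv}\rho=\Delta_u\rho\cdot V_u\Delta_v\rho$, so comparing this with $p_{uv}\,\Delta F_{uv}$ gives $p_{uv}\,\Delta F_{uv}=p_u\cdot V_up_v\cdot\Delta(F_u\cdot V_uF_v)$, whence
\[
c(u,v):=\frac{p_{uv}}{p_u\cdot V_up_v}=\Delta\!\left(\frac{F_u\cdot V_uF_v}{F_{uv}}\right).
\]
Because $V_u$ preserves $P_{<m}(G,X,S^1)$ (it commutes with all the $\Delta_{h_i}$), the function $c(u,v)$ is a phase polynomial cocycle of degree $<m$; being a coboundary it lies in $PC_{<m}(G,X,S^1)\cap B^1(G,X,S^1)$, and associativity in $U$ forces the twisted $2$-cocycle identity $c(u,v)\,c(uv,w)=V_uc(v,w)\,c(u,vw)$. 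Moreover $(u,v)\mapsto c(u,v)$ is Borel.

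The structural observation is that $PC_{<m}(G,X,S^1)\cap B^1(G,X,S^1)$ is small: if $\Delta H$ is both a coboundary and a phase polynomial cocycle of degree $<m$, then $\Delta_{h_1}\cdots\Delta_{h_m}H$ is $T_g$-invariant for all $g$, hence constant by ergodicity, so $H\in P_{<m+1}(X,S^1)$; conversely $\Delta(P_{<m+1}(X,S^1))$ is contained in this intersection. Thus $\Delta$ induces an isomorphism $P_{<m+1}(X,S^1)/S^1\xrightarrow{\ \sim\ }PC_{<m}(G,X,S^1)\cap B^1(G,X,S^1)$, and by the Separation Lemma (Lemma \ref{sep:lem}) together with separability of $L^2(X)$ the group $D:=P_{<m+1}(X,S^1)/S^1$ is countable and discrete. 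The compact group $U$ acts continuously on $D$ through $V_u$, so every $U$-orbit in $D$ is finite and every stabilizer is open; hence $c$ descends to a Borel twisted $2$-cocycle $\bar c:U\times U\to D$.

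It remains to trivialize $\bar c$ near the identity. Passing to an open subgroup of $U$ that stabilizes the relevant orbit representatives reduces matters to an ordinary Borel $2$-cocycle on a compact metrizable group with values in a discrete group, and there a Lusin/Fubini argument in the spirit of \cite[Appendix C]{HK} yields a neighborhood $U'$ of the identity and a Borel map $\bar b:U'\to D$ with $\bar c(u,v)=\bar b(uv)-\bar b(u)-V_u\bar b(v)$ for $u,v,uv\in U'$ (choose a density point $u_0$ of a set on which $\bar c$ is continuous, put $\bar b(u):=\bar c(u_0^{-1},u_0^{-1}u)$, and read the relation off the $2$-cocycle identity). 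Lifting $\bar b$ to a Borel map $b:U'\to P_{<m+1}(X,S^1)\subseteq\mathcal M(X,S^1)$ and setting $p'_u:=p_u/\Delta b(u)$, $F'_u:=b(u)\cdot F_u$, one checks $\Delta_u\rho=p'_u\cdot\Delta F'_u$, that $p'_u\in P_{<m}(G,X,S^1)$ since $\Delta b(u)\in P_{<m}$, and that
\[
\frac{p'_{uv}}{p'_u\cdot V_up'_v}=c(u,v)\cdot\frac{\Delta b(u)\cdot V_u\Delta b(v)}{\Delta b(uv)}=1\qquad(u,v,uv\in U'),
\]
the last equality coming from $\bar c(u,v)=\bar b(uv)-\bar b(u)-V_u\bar b(v)$ and the fact that $\Delta$ kills constants. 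This gives the conclusion, with $u\mapsto p'_u,F'_u$ Borel. The main obstacle I expect is precisely the trivialization step — arranging the passage to the open subgroup so as to remove the twist, and then extracting the local primitive $\bar b$ from the measurable $2$-cocycle; the rest is bookkeeping with phase polynomials.
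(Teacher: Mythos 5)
Your reduction is correct and is essentially the right frame: the measurable selection (Lemma \ref{sel:lem}), the identity $\Delta_{uv}\rho=\Delta_u\rho\cdot V_u\Delta_v\rho$, the computation that $c(u,v)=p_{uv}/(p_uV_up_v)$ is a phase polynomial cocycle of degree $<m$ and a coboundary, the identification $PC_{<m}(G,X,S^1)\cap B^1(G,X,S^1)=\Delta\bigl(P_{<m+1}(X,S^1)\bigr)$, and the countability/discreteness of $D=P_{<m+1}(X,S^1)/S^1$ (Lemma \ref{sep:lem}) are all fine. (For what it is worth, the paper does not prove this lemma itself; it cites the proof of Proposition 6.1 of Bergelson--Tao--Ziegler, so you are being compared with that argument rather than with a proof in the text.)

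The gap is exactly the step you flag, and as written it does not close. First, the twist: the stabilizer in $U$ of a single element of $D$ is open, but the values of $\bar c$ (and of any prospective $\bar b$) form a possibly infinite countable subset of $D$, and an intersection of infinitely many open subgroups need not be open; so ``passing to an open subgroup that stabilizes the relevant orbit representatives'' is not available, and you have not reduced to an untwisted cocycle. Second, the local primitive: the shifting identity gives $\bar c(u,v)=\bar c(w,u)+\bar c(wu,v)-\bar c(w,uv)$ (suitably twisted), so with $\bar b(x):=-\bar c(w,x)$ the defect is $\bar c(wu,v)-\bar c(w,v)$; Lusin continuity on a compact set of large measure only kills this defect when both $(w,v)$ and $(wu,v)$ lie in that set, i.e.\ for almost every $v$, whereas the lemma requires $p'_{uv}=p'_uV_up'_v$ for \emph{all} $u,v,uv\in U'$. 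Unlike the situations handled by Weil/Pettis arguments elsewhere in the paper, the set of good pairs $(u,v)$ here is not a subgroup, so there is no automatic upgrade from ``almost every'' to ``every''. The clean way to finish from where you stand is the group-extension route rather than a pointwise Lusin/Fubini recipe: form the Polish group $\mathcal{H}_\rho:=\{(u,F): u\in U,\ F\in\mathcal{M}(X,S^1),\ \Delta_u\rho\cdot\overline{\Delta F}\in P_{<m}(G,X,S^1)\}$ with the topology of convergence in measure and multiplication $(u,F)(v,F')=(uv,F\cdot V_uF')$, exactly as in Lemmas \ref{up} and \ref{HKargcon}. Its projection onto $U$ is onto by hypothesis, its kernel is $P_{<m+1}(X,S^1)$, and modulo the constants $S^1$ the kernel is countable and discrete; hence $\mathcal{H}_\rho/S^1\rightarrow U$ is, by Theorem \ref{open} and Corollary \ref{compact}, an open surjection of locally compact Polish groups with discrete kernel, i.e.\ a covering homomorphism. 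Choosing a small compact symmetric neighborhood $W$ of the identity with $W^{-1}WW$ meeting the kernel trivially, the inverse of the projection restricted to $W$ is a continuous local section which is multiplicative for $u,v,uv\in U':=$ the image of $W$; unwinding, this section is $u\mapsto(u,F'_u\bmod S^1)$ with $F'_{uv}=F'_uV_uF'_v$ up to constants, and then $p'_u:=\Delta_u\rho/\Delta F'_u$ satisfies $p'_{uv}=p'_uV_up'_v$ on $U'$ (your bookkeeping at the end is exactly this computation). Note that in this formulation the twist by $V_u$ is absorbed into the group law of $\mathcal{H}_\rho$ and never needs to be removed. This is also the mechanism behind the cited Bergelson--Tao--Ziegler proof, so your proposal differs from the standard argument precisely at the point where it is incomplete.
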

The proof of this Lemma is given in \cite{Berg& tao & ziegler} as part of the proof of Proposition 6.1 (see in particular equation (6.5) in that proof).
\section{Topological groups and measurable homomorphisms}
In this section we survey some results about topological groups. 
\begin{lem} [A.Weil]  \cite[Lemma 2.3]{Ch}\label{A.Weil}
	Let $G$ be a locally compact Polish group and let $A\subseteq G$ be a measurable subset of positive measure. Then $A\cdot A^{-1}$ contains an open neighborhood of the identity.
\end{lem}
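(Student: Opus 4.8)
The plan is to establish the classical Steinhaus--Weil theorem. Fix a left Haar measure $\mu$ on $G$; since $G$ is locally compact and Polish it is second countable, hence $\sigma$-compact, so $\mu$ is $\sigma$-finite, inner regular with respect to compact sets and outer regular with respect to open sets on every Borel set. Here ``measurable'' means measurable for the $\mu$-completion of the Borel $\sigma$-algebra, so after replacing $A$ by a Borel subset of the same positive measure we may assume $A$ is Borel. The first step is to pass to a compact set of controlled measure: by inner regularity choose a compact $K\subseteq A$ with $0<\mu(K)<\infty$, and by outer regularity choose an open $U\supseteq K$ with $\mu(U)<2\mu(K)$.

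Next I would produce a neighborhood of the identity that pushes $K$ into $U$. For each $x\in K$, continuity of multiplication together with $x\in U$ gives a symmetric open neighborhood $W_x$ of the identity $e$ with $W_xW_xx\subseteq U$. The open sets $\{W_xx:x\in K\}$ cover the compact set $K$; extract a finite subcover indexed by $x_1,\dots,x_n$ and set $V:=\bigcap_{i=1}^n W_{x_i}$, an open neighborhood of $e$. If $v\in V$ and $x\in K$, say $x\in W_{x_i}x_i$, then $vx\in VW_{x_i}x_i\subseteq W_{x_i}W_{x_i}x_i\subseteq U$, so $VK\subseteq U$.

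The final step is to show $V\subseteq KK^{-1}$, which suffices since $KK^{-1}\subseteq AA^{-1}$. Suppose some $v\in V$ satisfies $v\notin KK^{-1}$. Then $vK\cap K=\emptyset$, because $vk_1=k_2$ with $k_1,k_2\in K$ would give $v=k_2k_1^{-1}\in KK^{-1}$. Now $vK\subseteq VK\subseteq U$ and $K\subseteq U$ are disjoint, so by left-invariance of $\mu$,
\[
2\mu(K)=\mu(vK)+\mu(K)=\mu(vK\cup K)\le \mu(U)<2\mu(K),
\]
a contradiction. Hence $V\subseteq KK^{-1}\subseteq AA^{-1}$ and $AA^{-1}$ contains the open neighborhood $V$ of the identity.

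This argument is completely standard; the only points needing mild care are the regularity properties of Haar measure (available because a locally compact Polish group is $\sigma$-compact) and the compactness argument producing $V$, so I do not expect any genuine obstacle.
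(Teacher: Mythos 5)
Your proof is correct: it is the standard Steinhaus--Weil argument (compact $K\subseteq A$ of positive finite measure, open $U\supseteq K$ with $\mu(U)<2\mu(K)$, a neighborhood $V$ of $e$ with $VK\subseteq U$ via a finite subcover, and the disjointness/left-invariance contradiction), and each step — the passage to a Borel set, $\sigma$-compactness giving regularity of Haar measure, and the covering argument — is justified. The paper itself offers no proof of this lemma; it simply cites it (as Lemma 2.3 of the reference \cite{Ch}), so your argument supplies exactly the standard proof of the cited result and there is nothing in the paper for it to diverge from. The only cosmetic remark is that the symmetry of the neighborhoods $W_x$ is never used.
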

At some point we have to work with non-locally compact groups. The following variant of the theorem above will therefore be useful for us (see \cite[Theorem 9.9]{Ke}).
\begin{lem}[Pettis Lemma] \label{Pettis}
    Let $\mG$ be a Polish group, and $A\subseteq \mG$ be a Baire-measurable, non-meagre subset of $G$. Then $1_G$ lies in the interior of $A\cdot A^{-1}$.
\end{lem}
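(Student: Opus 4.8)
The plan is to prove this by the classical Baire category argument of Pettis, using only two standard facts about $\mG$: since $\mG$ is Polish it is completely metrizable, hence a Baire space, so every non-empty open subset of $\mG$ is non-meagre; and left and right translations are homeomorphisms of $\mG$, so they carry open sets to open sets and meagre sets to meagre sets. I will use these without further comment.

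First I would invoke the Baire property of $A$ to write $A = U \triangle M$ with $U$ open and $M$ meagre. Since $A$ is non-meagre, $U$ cannot be empty (otherwise $A \subseteq M$ would be meagre), so I may fix a point $g_0 \in U$. The map $v \mapsto v g_0$ is continuous and sends $1_\mG$ to $g_0 \in U$, so $V := \{v \in \mG : v g_0 \in U\}$ is an open neighbourhood of $1_\mG$. The claim will be that $V \subseteq A\cdot A^{-1}$, which immediately gives $1_\mG \in V \subseteq \mathrm{int}(A\cdot A^{-1})$ and finishes the proof.

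To establish the claim, fix $v \in V$; it suffices to produce $a_1 \in A \cap vA$, for then writing $a_1 = v a_2$ with $a_2 \in A$ yields $v = a_1 a_2^{-1} \in A\cdot A^{-1}$. Since $U \setminus M \subseteq A$ and hence $v(U\setminus M) = vU \setminus vM \subseteq vA$, we get
$$A \cap vA \ \supseteq\ (U \cap vU) \setminus (M \cup vM).$$
Now $U \cap vU$ is open and contains $v g_0$ — indeed $v g_0 \in vU$ because $g_0 \in U$, and $v g_0 \in U$ by the definition of $V$ — so it is non-empty, hence non-meagre as $\mG$ is Baire; meanwhile $M \cup vM$ is a union of two meagre sets, hence meagre. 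Therefore $(U \cap vU) \setminus (M \cup vM) \neq \emptyset$, so $A \cap vA \neq \emptyset$, as required.

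There is no substantial obstacle here: the proof is a direct application of the Baire category theorem together with the homeomorphism property of translations. The only point that needs a little care is the choice of the neighbourhood $V$, which must be arranged so that for each $v \in V$ the translate $v g_0$ simultaneously lies in $U$ and in $vU$, thereby witnessing that the overlap $U \cap vU$ is non-empty (and open); once this bookkeeping is set up correctly, everything else is routine.
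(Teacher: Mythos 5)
Your argument is correct: it is the standard Baire-category proof of Pettis's theorem, using the Baire-property decomposition $A=U\,\triangle\, M$, the fact that a Polish group is a Baire space, and that translations are homeomorphisms; the verification that $A\cap vA\supseteq (U\cap vU)\setminus(M\cup vM)$ is nonempty for every $v$ in your neighbourhood $V$ is complete and accurate. The paper itself does not prove this lemma but simply cites \cite[Theorem 9.9]{Ke}, and the proof given there is essentially the same argument you wrote, so there is no substantive divergence to report.
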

The following is a variant of the open mapping theorem for Polish groups
\begin{thm} \cite[Chapter 1]{BK}  \label{open}
	Let $G$ and $H$ be Polish groups and let $p: G \rightarrow H$ be a
	group homomorphism that is continuous and onto. Then $p$ is an open map. Moreover, $p$ admits a Borel cross section, that is, a Borel map $s:H\rightarrow G$ with $p\circ s = Id$.
\end{thm}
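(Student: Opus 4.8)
This is the classical open mapping theorem for Polish groups, so the plan is to reproduce the standard Baire-category argument, which becomes quite short once Lemma \ref{Pettis} (the Pettis lemma) is available. First I would reduce the openness of $p$ to a statement about a neighbourhood basis of the identity: it suffices to show that $p(U)$ is a neighbourhood of $1_H$ for every open $U\ni 1_G$. Indeed, granting this, let $W\subseteq G$ be open and $g\in W$; then $g^{-1}W$ is an open neighbourhood of $1_G$, so $p(g)^{-1}p(W)=p(g^{-1}W)$ contains an open neighbourhood of $1_H$, whence $p(W)$ contains an open neighbourhood of $p(g)$. As $g\in W$ was arbitrary, $p(W)$ is open.

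For the core step I would fix an open $U\ni 1_G$ and, using continuity of multiplication and inversion, choose a symmetric open $V\ni 1_G$ with $V\cdot V\subseteq U$. Since $G$ is separable there is a countable set $\{g_n\}_{n\in\mathbb{N}}\subseteq G$ with $G=\bigcup_n g_nV$, hence $H=p(G)=\bigcup_n p(g_n)\,p(V)$. Now $p(V)$ is analytic — it is the continuous, hence Borel, image of the Borel set $V$ — so it has the Baire property; and since $H$ is a Baire space and left translations are homeomorphisms of $H$, the translates $p(g_n)p(V)$ cannot all be meagre, so $p(V)$ is non-meagre. By Lemma \ref{Pettis}, $1_H$ lies in the interior of $p(V)\cdot p(V)^{-1}$. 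Finally $p(V)\cdot p(V)^{-1}=p(V\cdot V^{-1})=p(V\cdot V)\subseteq p(U)$, using $V^{-1}=V$ and $V\cdot V\subseteq U$; therefore $p(U)$ is a neighbourhood of $1_H$, and $p$ is open.

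For the Borel cross-section I would use openness to identify $H$ with the quotient $G/\ker p$. The kernel $N:=\ker p$ is a closed normal subgroup of $G$, and the induced continuous bijection $G/N\to H$ is open because $p$ is, hence a topological isomorphism. A Borel map $s:H\to G$ with $p\circ s=\mathrm{Id}$ then amounts to a Borel transversal for the partition of $G$ into left cosets of $N$. The existence of such a transversal for a closed subgroup of a Polish group is classical: one applies the Kuratowski--Ryll-Nardzewski selection theorem to the Borel map $G\to F(G)$, $g\mapsto gN$, into the Effros Borel space of closed subsets of $G$, to obtain a Borel selector $g\mapsto s(gN)\in gN$. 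Composing with the isomorphism $H\cong G/N$ gives the required Borel section.

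The whole argument is soft once Lemma \ref{Pettis} is in hand; the one point needing a little care is that the symmetry trick $p(V)\cdot p(V)^{-1}=p(V\cdot V)\subseteq p(U)$ delivers $p(U)$ \emph{itself} — not merely its closure — as a neighbourhood of $1_H$. Without the Pettis lemma one would have to replace this by an iterative completeness argument: build a decreasing sequence of neighbourhoods of $1_G$ together with a compatible complete metric on $G$ and sum up a convergent series of group elements. That bookkeeping is the main technical obstacle in a from-scratch proof, and it is exactly what we avoid here.
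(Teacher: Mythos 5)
Your argument is correct, and there is nothing in the paper to compare it against: Theorem \ref{open} is imported from Becker--Kechris \cite{BK} without proof, so your write-up supplies the standard Baire-category argument that the reference contains. The two ingredients you use are legitimately available here: Lemma \ref{Pettis} is quoted independently from \cite{Ke}, so there is no circularity, and the fact that the analytic set $p(V)$ has the Baire property is classical (Lusin--Sierpi\'nski), which is what makes Pettis applicable; the covering $H=\bigcup_n p(g_n)p(V)$ plus Baire-ness of $H$ then gives non-meagreness exactly as you say, and the symmetry computation $p(V)\cdot p(V)^{-1}=p(V\cdot V^{-1})\subseteq p(U)$ is valid because $p$ is a homomorphism. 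For the cross-section, your route through $G/N$ and a Borel transversal for the cosets works, but it can be streamlined: apply Kuratowski--Ryll-Nardzewski directly to the fiber map $h\mapsto p^{-1}(h)$, which takes nonempty closed values and is measurable for the Effros Borel structure precisely because $\{h\in H: p^{-1}(h)\cap W\neq\emptyset\}=p(W)$ is open for every open $W\subseteq G$ -- that is, the openness you just established feeds directly into the selection theorem and yields a Borel $s:H\rightarrow G$ with $p\circ s=\mathrm{Id}$ without ever invoking the identification $H\cong G/\ker p$.
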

From this it is easy to conclude the following result.
\begin{cor} \label{compact}
	Let $H$ be a closed normal subgroup of the Polish group $G$.
	If $H$ and $G/H$ are (locally) compact, then $G$ is (locally) compact.
\end{cor}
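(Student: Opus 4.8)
The plan is to deduce both assertions (the compact case and the parenthetical locally compact case) from Theorem \ref{open}, which guarantees that the quotient homomorphism $\pi\colon G\to G/H$ is continuous, surjective and \emph{open}; since $H$ is closed, $G/H$ is Hausdorff, and every group in sight is Polish, hence regular.

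First I would treat the compact case. The key observation is that $\pi$ is in fact a \emph{closed} map: if $A\subseteq G$ is closed then $\pi^{-1}(\pi(A))=AH$, and the product of a closed set with the compact set $H$ is closed in the topological group $G$, so $\pi(A)$ is closed because $\pi$ is a quotient map. A continuous closed surjection whose fibers are compact (each fiber of $\pi$ is a coset of $H$, homeomorphic to $H$) is a perfect map, and perfect maps onto a Hausdorff space pull back compact sets to compact sets; hence $G=\pi^{-1}(G/H)$ is compact. (Normality of $H$ plays no role here.)

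For the locally compact case I would localize the same idea. Choose a compact neighborhood $K$ of $1$ in $H$ and, using regularity of $G$ together with the fact that $H\setminus\operatorname{int}_H(K)$ is closed in $G$ and avoids $1$, a symmetric open neighborhood $V$ of $1$ in $G$ with $\overline V\,\overline V\cap H\subseteq K$; in particular $\overline V\cap H$ is compact and, for every $g\in\overline V$, the slice $g^{-1}\overline V\cap H$ lies in $K$. Choose also a compact neighborhood $\mathcal W$ of the identity in the locally compact group $G/H$, and set $N:=V\cap\pi^{-1}(\operatorname{int}\mathcal W)$, an open neighborhood of $1$ in $G$ with $\pi(\overline N)\subseteq\mathcal W$ and $\overline N\subseteq\overline V$. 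Then each fiber $\overline N\cap gH$ (for $g\in\overline N$) is a closed subset of the compact set $gK$, hence compact, so $\pi|_{\overline N}\colon\overline N\to\mathcal W$ is a continuous map with compact fibers onto a compact space; showing it is a closed map makes it perfect, whence $\overline N$ is compact, $G$ is locally compact at $1$, and therefore locally compact everywhere by homogeneity.

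The main obstacle is exactly this last point: verifying that $\pi|_{\overline N}$ is a closed (equivalently, proper) map. This is where the uniform control $\overline V\,\overline V\cap H\subseteq K$ is essential — it forces $\overline N$ to meet every coset of $H$ inside a translate of the \emph{fixed} compact set $K$, so that a net in $\overline N$ can be straightened in the $H$-direction after passing to a subnet along which the $\pi$-images converge in $\mathcal W$. I expect that packaging this cleanly (either via the standard characterization of perfect maps, or via a direct net/ultrafilter argument) is the only part requiring genuine care; everything else is formal once the open mapping theorem of Theorem \ref{open} is in hand.
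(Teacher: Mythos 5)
Your compact case is complete and correct: closedness of $AH$ for $A$ closed and $H$ compact makes $\pi$ a perfect map, and perfectness pulls the compactness of $G/H$ back to $G$. (For what it is worth, the paper offers no written proof at all -- the corollary is simply declared to follow from Theorem \ref{open} -- so the only question is whether your argument stands on its own.)

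The locally compact case, however, is not finished: the decisive step, closedness/properness of $\pi|_{\overline N}$, is exactly where the whole content of the classical theorem sits, and you leave it as an expectation rather than a proof. Moreover, as you have set things up there is a concrete obstruction to the straightening you sketch: given a net $x_\alpha\in\overline N$ with $\pi(x_\alpha)\to y\in\mathcal W$, you need lifts $g_\alpha\to g$ of $\pi(x_\alpha)$ with the limit $g$ located in (or near) $\overline V$, since only then do the $H$-components $g_\alpha^{-1}x_\alpha$ fall into a translate controlled by $\overline V\,\overline V\cap H\subseteq K$; but $\mathcal W$ was chosen independently of $V$, so $y$ may have no preimage anywhere near $\overline V$ and the fibre bound you established (over points of $\pi(\overline N)$ only, through points of $\overline N$) does not cover the limit fibre. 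The repair is standard and short in the Polish setting: shrink $\mathcal W$ so that $\mathcal W\subseteq\pi(V)$ (legitimate because $\pi$ is open, e.g.\ by Theorem \ref{open}, so $\pi(V)$ is a neighborhood of the identity in the locally compact group $G/H$), strengthen the initial control to $\overline V\,\overline V\,\overline V\cap H\subseteq K$, and use metrizability of $G$ to argue with sequences: if $x_n\in A:=\overline V\cap\pi^{-1}(\mathcal W)$, pass to a subsequence with $\pi(x_n)\to y\in\mathcal W$, pick $g\in V$ with $\pi(g)=y$, use the sequence-lifting property of open maps between metrizable spaces to get $g_n\to g$ with $\pi(g_n)=\pi(x_n)$; then $h_n:=g_n^{-1}x_n\in H\cap \overline V\,\overline V\,\overline V\subseteq K$ eventually, so after a further subsequence $h_n\to h$ and $x_n=g_nh_n\to gh\in A$. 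Thus $A$ is a compact neighborhood of the identity and $G$ is locally compact by homogeneity -- no perfect-map machinery is needed at all in this half. With that substitution (or with an honest net/ultrafilter execution of your properness claim under the extra hypothesis $\mathcal W\subseteq\pi(V)$), your proof is complete.
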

Another corollary of theorem \ref{open} is the following result about quotient spaces due to Effros \cite{Effros}.
	\begin{thm} \label{Effros}
		If $\mG$ is a Polish group which acts transitively on a compact metric space $X$. Then for any $x\in X$ the stabilizer $\Gamma = \{g\in \mG : gx=x\}$ is a closed subgroup of $\mG$ and $X$ is homeomorphic to $\mG/\Gamma$.
	\end{thm}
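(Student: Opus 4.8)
The plan is to realise $X$ as the orbit space $\mG/\Gamma$ through the orbit map and then to upgrade that set-theoretic identification to a homeomorphism. First I record that $\Gamma$ is closed: the orbit map $\phi\colon\mG\to X$, $\phi(g)=g\cdot x$, is continuous (it is the restriction of the jointly continuous action $\mG\times X\to X$ to $\mG\times\{x\}$) and $X$, being metric, is Hausdorff, so $\Gamma=\phi^{-1}(\{x\})$ is closed. Hence $\mG/\Gamma$ is a Polish space, the quotient map $q\colon\mG\to\mG/\Gamma$ is a continuous open surjection, and $\phi$ descends to a map $\bar\phi\colon\mG/\Gamma\to X$ which is continuous (because $q$ is a quotient map), injective (if $gx=g'x$ then $g^{-1}g'\in\Gamma$), and onto (transitivity). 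It therefore remains to show that $\bar\phi$ is open, equivalently that $\phi$ is open. Since $\phi(g_0h)=g_0\cdot\phi(h)$ and each $g_0$ acts as a homeomorphism of $X$, this reduces to openness ``at the identity'': for every open $W\ni 1_\mG$ the set $W\cdot x=\phi(W)$ is a neighbourhood of $x$ in $X$ (``micro-transitivity'' at $x$).

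The first substantive ingredient is a Baire-category estimate. Fix any open $V\ni 1_\mG$. As $\mG$ is Polish it is Lindel\"of, so $\mG=\bigcup_{n\in\mathbb{N}}g_nV$ for a countable $\{g_n\}\subseteq\mG$; applying $\phi$ and using transitivity, $X=\bigcup_n g_n\cdot(V\cdot x)$. A compact metric space is a Baire space, so some $g_{n_0}\cdot(V\cdot x)$ is non-meagre in $X$, and since $g_{n_0}$ acts by a homeomorphism, $V\cdot x$ is non-meagre in $X$. Moreover $V\cdot x=\phi(V)$ is a continuous image of the Polish space $V$, hence analytic, hence has the Baire property; a non-meagre set with the Baire property is comeagre in some nonempty open set, so $\overline{V\cdot x}$ has nonempty interior. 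Transitivity makes this uniform in the base point: for $y=h\cdot x$ and an open $V'\ni 1_\mG$ one has $h^{-1}\cdot\overline{V'\cdot y}=\overline{(h^{-1}V'h)\cdot x}$ with $h^{-1}V'h$ again an open neighbourhood of $1_\mG$, so $\overline{V'\cdot y}$ has nonempty interior for \emph{every} $y\in X$ and every neighbourhood $V'$ of $1_\mG$. (Pettis's lemma, Lemma \ref{Pettis}, is the group-theoretic shadow of this and could be quoted in the same spirit.)

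Given an open $W\ni 1_\mG$ I would then run the classical Effros iteration. Fix a complete compatible metric on $\mG$ and a complete metric on $X$, and pick symmetric open neighbourhoods $W=W_0\supseteq W_1\supseteq W_2\supseteq\cdots$ of $1_\mG$ with $\overline{W_{n+1}}\cdot\overline{W_{n+1}}\subseteq W_n$ and with diameters chosen small enough that the chains below are Cauchy. Starting from a point $y$ in a sufficiently small neighbourhood of $x$ and using the uniform estimate above repeatedly, I would construct a sequence $w_n\in W$ with $w_{n+1}^{-1}w_n\in W_{n+1}$ and with $w_n\cdot x\to y$ in $X$; the telescoping relations together with the chosen shrinking force $(w_n)$ to converge to some $w\in\overline{W}$, and joint continuity of the action gives $w\cdot x=y$. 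After one further initial shrinking of $W$ this exhibits the chosen neighbourhood of $x$ inside $W\cdot x$, i.e.\ $x\in\operatorname{int}_X(W\cdot x)$. By the reduction of the first paragraph, $\phi$, and hence $\bar\phi$, is open, so $\bar\phi$ is a homeomorphism $\mG/\Gamma\to X$ that intertwines the $\mG$-actions.

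The step I expect to be the real obstacle is this last one: arranging the recursion so that the approximating sequence $(w_n)$ is genuinely Cauchy for a fixed complete metric on $\mG$ — Polish groups need not admit a complete two-sided invariant metric, so the successive neighbourhoods $W_n$ must be shrunk against that metric with some care — while at the same time keeping the images $w_n\cdot x$ on track toward the prescribed target in $X$. This is exactly the content of Effros's micro-transitivity theorem, where completeness of the two metrics, the Baire property of analytic sets and the uniform ``closure has nonempty interior'' statement are combined; everything else is routine bookkeeping with the continuous open surjection $q\colon\mG\to\mG/\Gamma$. (Since $X$ is compact, an alternative would be to prove $\mG/\Gamma$ compact and then use that a continuous bijection from a compact space onto a Hausdorff space is automatically a homeomorphism; but establishing compactness of $\mG/\Gamma$ seems to need the same open-mapping input, so I would proceed via micro-transitivity.)
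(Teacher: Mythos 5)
You should first be aware that the paper does not actually prove this statement: Theorem \ref{Effros} is quoted from Effros's paper \cite{Effros} in the appendix survey, with only the remark that it is ``another corollary'' of the open mapping theorem (Theorem \ref{open}), and no argument is given. So there is no internal proof to compare with; what you have sketched is the standard direct proof of Effros's micro-transitivity theorem. Your first two paragraphs are correct and complete: closedness of $\Gamma$, the reduction of openness of the orbit map to openness at $1_\mG$, the Lindel\"of/Baire covering argument showing $Vx$ is non-meagre, the Baire property of the analytic set $Vx$, and the conjugation trick making ``$\overline{V'y}$ has nonempty interior'' uniform in the base point are all standard and correctly executed (one routine translation step, passing from ``nonempty interior'' to ``$y\in\operatorname{int}\overline{V'y}$'' via a symmetric $W$ with $WW\subseteq V'$, is implicit but unproblematic).

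The gap is exactly where you said it would be, and it is not resolved by your write-up. As literally stated, choosing the $W_n$ \emph{in advance} ``with diameters small enough that the chains below are Cauchy'' does not work: smallness of $W_{n+1}$ at the identity (in a left-invariant compatible metric, which need not be complete, or in a complete compatible metric, which need not be invariant) gives no control on $d(w_nu,w_n)$ after translation by the as-yet-unknown $w_n$. The standard repair is to interleave the choices: after $w_n$ is chosen, shrink $W_{n+1}$ using continuity of translation by $w_n$ so that $w_nW_{n+1}$ (or $W_{n+1}w_n$) has diameter $<2^{-n}$ in a fixed complete metric on $\mG$; moreover the recursion must carry the hypothesis that the target $y$ lies in $\operatorname{int}\overline{W_{n+1}w_nx}$, which again forces this adaptive ordering, since that interior depends on the previously chosen $w_n$ and cannot be guaranteed by a distance bound fixed beforehand. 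Since you explicitly defer this step to ``exactly the content of Effros's micro-transitivity theorem,'' your proposal is, as it stands, a correct reduction plus a citation of the very result being proved rather than a self-contained proof. Either carry out the adaptive recursion as indicated (completeness of the metric on $\mG$, joint continuity of the action, and convergence $w_nx\to y$ then give $w\in\overline{W}$ with $wx=y$, and an initial shrinking of $W$ finishes it), or simply cite \cite{Effros} (or \cite{BK}) as the paper itself does; the final step, that an open continuous equivariant bijection $\mG/\Gamma\to X$ is a homeomorphism, is fine.
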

\subsection{Totally disconnected groups}
\begin{defn} \cite[Exercise E8.6]{HM} Let $X$ be a compact Hausdorff space. Then the following are equivalent
	\begin{itemize}
		\item{Every connected component in $X$ is a singleton.}
		\item {$X$ has a basis consisting of open closed sets.}
	\end{itemize}
	We say that $X$ is totally disconnected if one of the above is satisfied.
\end{defn}
In this section we will be interested in compact (Hausdorff) totally disconnected groups. These groups are also called profinite groups.
\begin{prop}\label{opensubgroup}
	Let $G$ be a compact Hausdorff totally disconnected group. Let $1\in U\subseteq G$ be an open neighborhood of the identity, then $U$ contains an open subgroup of $G$.
\end{prop}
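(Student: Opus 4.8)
The plan is to run the classical argument producing a small open subgroup inside a small clopen set; the only genuine use of total disconnectedness will be the existence of a clopen basis at the identity, and the rest is point-set topology on topological groups.

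First I would invoke the characterization of compact Hausdorff totally disconnected spaces recalled immediately above to obtain a \emph{clopen} set $V$ with $1\in V\subseteq U$. It is essential that $V$ be clopen and not merely closed: this guarantees that every point of $V$ lies in the interior of $V$, which is exactly what makes the continuity argument below succeed, and it is the step I expect to be the conceptual crux (without it the statement is false, e.g.\ for the circle group). Next, for each $x\in V$, continuity of multiplication at $(x,1)$ together with $x$ lying in the interior of $V$ produces open sets $A_x\ni x$ and $B_x\ni 1$ with $A_x\subseteq V$ and $A_xB_x\subseteq V$ (intersect the first factor with $V$ if necessary). Since $V$ is clopen in the compact group $G$ it is itself compact, so the open cover $\{A_x\}_{x\in V}$ has a finite subcover $A_{x_1},\dots,A_{x_n}$.

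Set $B_0=V\cap\bigcap_{i=1}^n B_{x_i}$ and $B=B_0\cap B_0^{-1}$, an open symmetric neighbourhood of $1$ contained in $V$. For any $v\in V$ choose $i$ with $v\in A_{x_i}$; then for every $b\in B\subseteq B_{x_i}$ we get $vb\in A_{x_i}B_{x_i}\subseteq V$, so $VB\subseteq V$. Because $B\subseteq V$, a trivial induction gives $B^m\subseteq V$ for all $m\geq 1$, using $B^{m+1}=B^mB\subseteq VB\subseteq V$.

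Finally I would put $H=\bigcup_{m\geq 1}B^m$. It is open, being a union of open sets; it contains $1\in B$; it is closed under multiplication since $B^mB^{m'}\subseteq B^{m+m'}$; and it is closed under inversion since $B$, hence each $B^m$, is symmetric. Thus $H$ is an open subgroup of $G$ with $H\subseteq V\subseteq U$, which is the assertion. The only delicate point in the whole argument is arranging that $V$ be clopen, so that the compactness-plus-continuity step yielding $VB\subseteq V$ is available; everything else is routine.
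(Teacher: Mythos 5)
Your proof is correct. The paper gives no argument of its own here, only a citation to the standard reference (\cite{profinite}, Proposition 1.1.3), and your argument is exactly that standard one: pick a clopen $V$ with $1\in V\subseteq U$ using the clopen basis, use compactness of $V$ and continuity of multiplication to get a symmetric open $B\ni 1$ with $VB\subseteq V$, and observe that $\bigcup_{m\geq 1}B^m$ is an open subgroup contained in $V\subseteq U$.
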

The proof of this Proposition can be found in \cite[Proposition 1.1.3]{profinite}. As a corollary we have the following result.
\begin{cor}[The dual of totally disconnected group is a torsion group]  \label{chartdg} Let $G$ be a compact abelian totally disconnected group, then the image of any continuous character $\chi:G\rightarrow S^1$ is finite.
\end{cor}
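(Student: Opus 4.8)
The plan is to combine Proposition \ref{opensubgroup} with the elementary fact that the circle group $S^1$ has no small subgroups. First I would fix a continuous character $\chi\colon G\to S^1$ and choose an open neighborhood $V$ of $1\in S^1$ small enough that $V$ contains no nontrivial subgroup of $S^1$; concretely, $V=\{e^{i\theta}:|\theta|<\pi/2\}$ works, since any nontrivial subgroup of $S^1$ is either dense or a finite cyclic group of order at least $2$, and in either case it meets the complement of $V$ (a dense subgroup obviously does, and a cyclic group of order $q\geq 2$ contains the element $e^{2\pi i/q}$ or, if $q=2$, the element $-1$, none of which lies in $V$ once $q\ge 2$; more precisely a cyclic group of order $q$ contains $e^{2\pi i\lfloor q/2\rfloor/q}$, which has argument at least $\pi/2$ in absolute value).

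Next, by continuity of $\chi$ the set $\chi^{-1}(V)$ is an open neighborhood of the identity in $G$. Since $G$ is a compact Hausdorff totally disconnected group, Proposition \ref{opensubgroup} provides an open subgroup $H\leq G$ with $H\subseteq \chi^{-1}(V)$. Then $\chi(H)$ is a subgroup of $S^1$ contained in $V$, so by the choice of $V$ we get $\chi(H)=\{1\}$, i.e.\ $H\leq \ker\chi$.

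Finally I would use that an open subgroup of a compact group has finite index: the cosets of $H$ form a cover of $G$ by pairwise disjoint open sets, so by compactness there are only finitely many of them. Since $H\leq\ker\chi$, the character $\chi$ factors through the finite quotient group $G/H$, and therefore its image $\chi(G)$ is finite, as claimed. I do not expect any genuine obstacle here; the only point that needs more than a direct appeal to the cited results is the no-small-subgroups property of $S^1$, which costs a single short line, and everything else is an immediate application of Proposition \ref{opensubgroup} and the compactness of $G$.
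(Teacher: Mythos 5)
Your argument is correct and follows exactly the paper's own proof: pull back a no-small-subgroups neighborhood of $1\in S^1$, use Proposition \ref{opensubgroup} to find an open subgroup $H$ inside the preimage, conclude $\chi(H)=\{1\}$, and factor $\chi$ through the finite quotient $G/H$. The only difference is that you spell out the no-small-subgroups property of $S^1$ and the finite index of open subgroups, which the paper leaves implicit.
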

\begin{proof}
	Choose an open neighborhood of the identity $U$ in $S^1$ that contains no non-trivial subgroups. Then, $\chi^{-1}(U)$ is an open neighborhood of $G$. By the previous Proposition there exists an open subgroup $H$ such that $H\subseteq \chi^{-1}(U)$. It follows that $\chi(H)$ is trivial and so $\chi$ factors through the finite group $G/H$. This implies that the image is finite.
\end{proof}
We need the following classical structure theorem \cite[Chapter 5, Theorem 18]{M}.
\begin{thm}[Structure theorem for abelian groups of bounded torsion]\label{torsion}
	Let $G$ be a compact abelian group and suppose that there exists some $n\in\mathbb{N}$ such that $g^n=1_G$ for every $g\in G$. Then, $G$ is topologically and algebraically isomorphic to $\prod_{i=1}^\infty C_{m_i}$ where for every $i$, $m_i$ is an integer which divides $n$.
\end{thm}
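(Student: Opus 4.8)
The plan is to reduce the assertion to a purely algebraic statement about discrete abelian groups via Pontryagin duality. Since every topological group in this paper is assumed metrizable, $G$ is a compact metrizable abelian group, so its Pontryagin dual $A:=\widehat G$ is a discrete abelian group which, because $G$ is metrizable (hence second countable), is countable. The hypothesis dualizes immediately: for $\chi\in A$ and $g\in G$ we have $\chi^{n}(g)=\chi(g)^{n}=\chi(g^{n})=\chi(1_{G})=1$, so $\chi^{n}$ is the trivial character; that is, $nA=0$. It therefore suffices to prove the algebraic fact that a countable abelian group $A$ with $nA=0$ splits as $A\cong\bigoplus_{i=1}^{\infty}C_{m_{i}}$ with each $m_{i}\mid n$, and then to dualize back.

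For the algebraic statement I would first pass to primary components: writing $n=p_{1}^{a_{1}}\cdots p_{r}^{a_{r}}$, the primary decomposition gives $A=\bigoplus_{j=1}^{r}A_{p_{j}}$, and $nA=0$ forces $p_{j}^{a_{j}}A_{p_{j}}=0$; so it is enough to treat a countable abelian $p$-group $B$ with $p^{a}B=0$. This is Pr\"ufer's theorem on bounded abelian groups, and it is the step I expect to require the real work. I would argue by induction on $a$, the case $a=1$ being just the classification of $\mathbb{F}_{p}$-vector spaces. For $a\ge 2$, note that $p^{a-1}B$ lies in the socle $B[p]=\{x\in B:px=0\}$, an $\mathbb{F}_{p}$-vector space; choose a basis $\{v_{\lambda}\}$ of $p^{a-1}B$, lift each $v_{\lambda}$ to some $b_{\lambda}\in B$ with $p^{a-1}b_{\lambda}=v_{\lambda}$ (so $b_{\lambda}$ has order exactly $p^{a}$), and set $B_{a}=\langle b_{\lambda}:\lambda\rangle$. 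A standard computation shows $B_{a}\cong\bigoplus_{\lambda}C_{p^{a}}$ and that $B_{a}$ is a \emph{pure} subgroup of $B$. Invoking the classical theorem that a bounded pure subgroup of an abelian group is a direct summand, we get $B=B_{a}\oplus B'$; since $p^{a-1}B'\subseteq p^{a-1}B=\langle v_{\lambda}\rangle\subseteq B_{a}$ while also $p^{a-1}B'\subseteq B'$, we conclude $p^{a-1}B'=0$, and the induction hypothesis applies to $B'$.

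Finally I would dualize back. With $\widehat G=A\cong\bigoplus_{i=1}^{\infty}C_{m_{i}}$, $m_{i}\mid n$, Pontryagin duality gives $G\cong\widehat{\widehat G}\cong\widehat{\bigoplus_{i=1}^{\infty}C_{m_{i}}}\cong\prod_{i=1}^{\infty}\widehat{C_{m_{i}}}\cong\prod_{i=1}^{\infty}C_{m_{i}}$, using the standard fact that the dual of a direct sum of discrete groups is the product of the duals together with $\widehat{C_{m}}\cong C_{m}$; this isomorphism is simultaneously topological and algebraic, which is the assertion (and, as a sanity check, $\prod_{i=1}^{\infty}C_{m_{i}}$ is compact by Tychonoff). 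In summary, the only nonformal ingredient is the structure theorem for bounded abelian $p$-groups, and the rest is duality bookkeeping, which is why in the literature (e.g.\ \cite[Chapter~5, Theorem~18]{M}) this is recorded as a classical fact.
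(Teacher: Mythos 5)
Your proof is correct: the duality reduction, the Pr\"ufer-type induction (lifting a basis of $p^{a-1}B$, checking $B_a\cong\bigoplus_\lambda C_{p^a}$ and its purity, and invoking the classical fact that a bounded pure subgroup is a direct summand), and the dualization back are all sound. The paper itself gives no proof of Theorem \ref{torsion} --- it is recorded as a classical fact with the citation \cite[Chapter 5, Theorem 18]{M} --- and your argument is exactly the standard proof behind that citation; the only cosmetic remark is that countability of $\widehat G$ (coming from the paper's metrizability convention) is not essential, since the Pr\"ufer--Baer theorem holds for bounded abelian groups of arbitrary cardinality, and it only serves to make the index set in $\prod_{i=1}^\infty C_{m_i}$ countable.
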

One way to generate totally disconnected groups is to begin with an arbitrary compact abelian group and quotient it out by its connected component.
\begin{lem}\label{connectedcomponent}
	Let $G$ be a compact abelian group and $G_0$ be the connected component of the identity in $G$. Since the multiplication and the inversion maps are continuous one has that $G_0$ is a subgroup of $G$ and
	\begin{itemize}
		\item{$G_0$ has no non-trivial open subgroups.}
		\item {Every open subgroup of $G$ contains $G_0$.}
		\item {$G/G_0$ equipped with the quotient topology is totally disconnected compact group.}
	\end{itemize}
\end{lem}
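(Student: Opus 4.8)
The plan is to treat the three assertions in turn, each reducing to the elementary fact that a nonempty clopen subset of a connected space is the whole space. Before that I would record the routine preliminaries that the statement takes for granted: connected components are always closed, so $G_0$ is closed; the map $(x,y)\mapsto xy^{-1}$ is continuous, so it carries the connected set $G_0\times G_0$ to a connected set containing $1$, which therefore lies in $G_0$, whence $G_0$ is a subgroup; and $G_0$ is normal (automatic in any topological group, and trivial here since $G$ is abelian). Consequently $G/G_0$ is a compact Hausdorff group and the quotient map $\pi\colon G\to G/G_0$ is continuous, open, and, since $G$ is compact and $G/G_0$ Hausdorff, closed.

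For the first assertion I would take an open subgroup $H$ of $G_0$ (open in the subspace topology). Its cosets in $G_0$ are open, so $G_0\setminus H$, being a union of cosets of $H$, is open; hence $H$ is clopen in $G_0$. Since $1\in H$ and $G_0$ is connected, $H=G_0$; that is, a connected topological group, and in particular $G_0$, has no proper open subgroup. For the second assertion, if $H$ is an open subgroup of $G$ then $H\cap G_0$ is an open subgroup of $G_0$ containing $1$, so by the first assertion $H\cap G_0=G_0$, i.e.\ $G_0\subseteq H$.

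The third assertion carries the actual content. I would let $C$ be the connected component of the identity coset in $G/G_0$ and show $C$ is a singleton. Since $C$ is closed and $\pi$ continuous, $\pi^{-1}(C)$ is a closed, hence compact, subset of $G$ containing $\pi^{-1}(\bar 1)=G_0$, and the restriction $\pi\colon\pi^{-1}(C)\to C$ is a continuous surjection from a compact space onto a Hausdorff space, hence a closed map; its fibers are the cosets of $G_0$, each homeomorphic to $G_0$ and so connected. Here I would invoke the elementary lemma that a closed continuous surjection onto a connected space all of whose fibers are connected has connected total space: if the total space were the disjoint union of two nonempty open (hence also closed) sets, each fiber would lie entirely in one of them by connectedness, partitioning the base into the two sets of points whose fibers lie in one piece or the other, which are the complements of the images of those closed pieces and hence open, contradicting connectedness of the base. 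Applying this to $\pi\colon\pi^{-1}(C)\to C$ shows $\pi^{-1}(C)$ is connected; since it contains $1$ it lies in $G_0$, so $C=\pi(\pi^{-1}(C))\subseteq\pi(G_0)=\{\bar 1\}$. Thus every connected component of $G/G_0$ is a point, and $G/G_0$ is totally disconnected and compact.

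The main obstacle is the third part: the first two are one-line clopen arguments, but the total disconnectedness of the quotient needs the ``connected base, connected fibers, closed map imply connected total space'' lemma, and one must be careful to use compactness of $G$ to get that the relevant map is closed and to note that the fibers of $\pi$ are genuinely connected. A possible alternative is to prove $G_0=\bigcap\{H : H\ \text{an open subgroup of}\ G\}$ and then deduce that $G/G_0$ has a neighborhood basis at the identity consisting of open subgroups, hence is totally disconnected; but that description of $G_0$ in a compact group is itself a nontrivial theorem, so the fiber-connectedness argument is the most self-contained route.
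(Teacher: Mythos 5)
Your proof is correct. Note that the paper does not actually prove this lemma: it appears in the survey appendix as a classical fact about compact groups, so there is no argument to compare against; your write-up simply supplies a complete, self-contained proof. The two clopen arguments for the first two bullets are fine, and the third bullet is handled correctly via the standard fact (proved inline) that a closed continuous surjection with connected fibers onto a connected base has connected total space, applied to $\pi\colon\pi^{-1}(C)\to C$ with $C$ the identity component of $G/G_0$; the only step left implicit is passing from ``the identity component of $G/G_0$ is a point'' to ``all components are points,'' which follows at once since translations are homeomorphisms.
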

We also need the following important fact that connected groups are divisible \cite[Corollary 8.5]{HM}.
\begin{lem}\label{divisible}
    Let $G$ be a compact abelian connected group. Then for every $g\in G$ and $n\in\mathbb{N}$ there exists $h\in G$ such that $h^n=g$.
\end{lem}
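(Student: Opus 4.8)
This is \cite[Corollary 8.5]{HM}, so in principle one may simply cite it; but the plan is to record the standard Pontryagin-duality argument. First I would reduce the claim to showing that for each fixed $n\in\mathbb{N}$ the $n$-th power subgroup $G^n=\{g^n:g\in G\}$ is all of $G$. It is a subgroup because $G$ is abelian, and it is closed because $G$ is compact and the map $g\mapsto g^n$ is continuous, so $G^n$ is a compact (hence closed) subgroup. If $G^n=G$ then every $g\in G$ admits an $n$-th root, which is exactly the assertion of the lemma.

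Next I would pass to the compact abelian quotient $Q:=G/G^n$. Every element of $Q$ satisfies $q^n=1$, so every character $\chi\in\hat Q$ satisfies $\chi^n=1$. Identifying $\hat Q$ with the annihilator of $G^n$ inside $\hat G$, one checks that a character $\chi\in\hat G$ annihilates $G^n$ exactly when $\chi(g)^n=\chi(g^n)=1$ for all $g\in G$; that is, $\hat Q$ coincides with the $n$-torsion subgroup of the discrete group $\hat G$.

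The key step is then to invoke the classical duality dictionary: a compact abelian group is connected precisely when its Pontryagin dual is torsion-free (this follows from \cite[Theorem 8.22]{HM}-type structure results together with Pontryagin duality). Since $G$ is connected, $\hat G$ is torsion-free, so it has no nontrivial $n$-torsion, whence $\hat Q$ is trivial. Because the characters of any compact abelian group separate its points, a trivial dual forces $Q=G/G^n$ to be trivial, i.e. $G^n=G$, completing the argument.

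I do not expect a genuine obstacle here: the only non-elementary ingredient is the ``connected $\leftrightarrow$ torsion-free dual'' correspondence, and the remaining steps are routine manipulations with closed subgroups and annihilators. An equivalent route, should one prefer to avoid forming the quotient, would be to observe that multiplication by $n$ on the torsion-free discrete group $\hat G$ is injective and then to dualize injectivity of this self-map into surjectivity of the $n$-th power map on $G=\widehat{\hat G}$.
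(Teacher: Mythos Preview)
Your argument is correct. The paper does not actually supply a proof of this lemma: it simply records the statement and cites \cite[Corollary 8.5]{HM}. Your Pontryagin-duality argument (connected $\Leftrightarrow$ torsion-free dual, hence the $n$-th power map is surjective) is the standard proof and is exactly what one finds behind the cited reference, so there is nothing to compare.
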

\subsection{Lie groups}
\begin{defn}
	A topological group $G$ is said to be a Lie group if it has the structure of a finite dimensional differentiable manifold over $\mathbb{R}$ such that the multiplication and inversion maps are smooth.
\end{defn}
A compact abelian group is a Lie group if and only if its Pontryagin dual is finitely generated. The structure theorem for finitely generated abelian group gives
\begin{thm}[Structure Theorem for compact abelian Lie groups]\cite[Theorem 5.2]{S} \label{structureLieGroups} A compact abelian group $G$ is a Lie group if and only if there exists $n\in\mathbb{N}$ such that $G\cong (S^1)^n\times C_k$ where $C_k$ is some finite group with discrete topology.
\end{thm}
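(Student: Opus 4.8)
This is a classical fact (it is \cite[Theorem 5.2]{S}), and the plan is to reduce it to the structure of discrete abelian groups via Pontryagin duality. Since $G$ is compact its dual $\hat G$ is discrete, and, as recalled just above, $G$ is a Lie group precisely when $\hat G$ is finitely generated. Granting this equivalence, the theorem will follow quickly: if $G$ is a Lie group then $\hat G$ is finitely generated abelian, so its torsion subgroup $F$ is finite and $\hat G/F$ is free of some finite rank $n$; since $\mathbb{Z}^n$ is projective the extension $0\to F\to \hat G\to \mathbb{Z}^n\to 0$ splits, whence $\hat G\cong \mathbb{Z}^n\oplus F$. Dualizing and using $\widehat{\mathbb{Z}^n}\cong (S^1)^n$ together with the finiteness of $F$, one obtains $G\cong (S^1)^n\times \hat F$ with $\hat F$ a finite discrete group, which is the asserted form. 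For the converse I would just note that $(S^1)^n$ is a compact abelian Lie group, that a finite discrete group is a zero-dimensional Lie group, and that a finite product of Lie groups is a Lie group.

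It then remains to justify the equivalence ``$G$ is a Lie group $\iff$ $\hat G$ is finitely generated'', which carries the real content. For the nontrivial direction I would argue as follows. A Lie group is locally connected, so the identity component $G_0$ is open in $G$, hence of finite index (as $G$ is compact) and, by Lemma \ref{connectedcomponent}, $G/G_0$ is a finite group. Now $G_0$ is a compact connected abelian Lie group, and the exponential map of its Lie algebra gives a surjective continuous homomorphism $\mathbb{R}^n\to G_0$ with discrete kernel; compactness of $G_0$ forces this kernel to be a lattice of full rank, so $G_0\cong \mathbb{R}^n/\mathbb{Z}^n=(S^1)^n$ and $\widehat{G_0}\cong \mathbb{Z}^n$. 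Dualizing the exact sequence $1\to G_0\to G\to G/G_0\to 1$, and using that Pontryagin duality is exact, yields $1\to \widehat{G/G_0}\to \hat G\to \widehat{G_0}\to 1$, which exhibits $\hat G$ as an extension of $\mathbb{Z}^n$ by the finite group $\widehat{G/G_0}$; in particular $\hat G$ is finitely generated. The converse direction of the equivalence is the computation of the previous paragraph read backwards.

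The main obstacle — indeed the only step that is not pure bookkeeping — will be the identification of a compact connected abelian Lie group with a torus $(S^1)^n$. Everything else amounts to Pontryagin duality, the elementary structure of finitely generated abelian groups, and the fact that an open subgroup of a compact group has finite index; and this torus identification is itself a standard result, so for the purposes of this survey I would simply quote it from \cite{S}.
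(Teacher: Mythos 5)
Your proposal is correct, and it follows exactly the route the paper indicates: the paper does not prove this statement but cites \cite[Theorem 5.2]{S}, remarking only that a compact abelian group is a Lie group if and only if its Pontryagin dual is finitely generated and then invoking the structure theorem for finitely generated abelian groups — which is precisely the reduction you carry out, with the duality equivalence and the torus identification $G_0\cong (S^1)^n$ filled in correctly. No gaps; your argument is a complete write-up of the paper's one-line sketch.
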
 \label{approxLieGroups}
The famous Gleason-Yamabe theorem implies that every compact abelian group can be approximated by compact abelian Lie groups using inverse limits.
\begin{thm}\cite[Corollary 8.18]{HM} \label{GY:thm}
	Let $G$ be a compact abelian group and let $U$ be a neighborhood of the identity in $G$. Then $U$ contains a subgroup $N$ such that $G/N$ is a Lie group.
\end{thm}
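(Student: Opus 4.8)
The plan is to deduce Theorem~\ref{GY:thm} from Pontryagin duality, converting the statement into the elementary observation that a finitely generated subgroup of the discrete dual group $\hat G$ cuts out an arbitrarily small closed subgroup of $G$. The one structural input needed is already recorded in the text above: a compact abelian group is a Lie group precisely when its Pontryagin dual is finitely generated (with the explicit description provided by Theorem~\ref{structureLieGroups}).

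First I would invoke that $G$ compact abelian forces $\hat G$ to be discrete and that the canonical evaluation map $G\to\widehat{\hat G}$ is an isomorphism of topological groups. Under this identification the topology of $G$ is the topology of pointwise convergence of characters on $\hat G$ (the compact--open topology on $\widehat{\hat G}$, which for the discrete group $\hat G$ is just uniform convergence on finite subsets). Hence a base of neighborhoods of the identity $1\in G$ is given by the sets
\[
W(\chi_1,\dots,\chi_n;\varepsilon) := \{\, g\in G : |\chi_i(g)-1|<\varepsilon \text{ for } i=1,\dots,n \,\},
\qquad \chi_1,\dots,\chi_n\in\hat G,\ \varepsilon>0.
\]
Given the neighborhood $U$ from the statement, I would fix such $\chi_1,\dots,\chi_n$ and $\varepsilon$ with $W(\chi_1,\dots,\chi_n;\varepsilon)\subseteq U$, let $H:=\langle\chi_1,\dots,\chi_n\rangle\leq\hat G$, which is finitely generated, and set
\[
N := H^{\perp} = \{\, g\in G : \chi(g)=1 \text{ for all } \chi\in H \,\} = \bigcap_{i=1}^{n}\ker\chi_i.
\]
Then $N$ is a closed subgroup of $G$, and since every $g\in N$ has $\chi_i(g)=1$ for all $i$ we get $N\subseteq W(\chi_1,\dots,\chi_n;\varepsilon)\subseteq U$.

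It remains to see that $G/N$ is a Lie group. As $N$ is closed in the compact group $G$, the quotient $G/N$ is a compact abelian group, and its Pontryagin dual is canonically the annihilator $N^{\perp}\subseteq\hat G$ of $N$. Applying Pontryagin bi-duality to the discrete group $\hat G$, the annihilator of the annihilator of a subgroup is its closure, which for the discrete $\hat G$ is the subgroup itself; therefore $\widehat{G/N}\cong(H^{\perp})^{\perp}=H$ is finitely generated, so $G/N$ is a Lie group. The argument is routine once duality is in hand; the only points that need care are the identification of the topology of $G$ with the pointwise-convergence topology coming from $\hat G$ — this is what guarantees $U$ genuinely contains some $W(\chi_1,\dots,\chi_n;\varepsilon)$ — and the identity $(H^{\perp})^{\perp}=H$ for subgroups of a discrete group; both are standard facts of abstract harmonic analysis, so no real obstacle is expected. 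Note that $N$ is only asserted to be a closed subgroup contained in $U$, not an open one, which is all that the statement requires and all that the construction yields.
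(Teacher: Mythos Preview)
Your argument is correct and is the standard duality proof of this fact. Note, however, that the paper does not supply its own proof of this theorem: it is quoted from \cite[Corollary 8.18]{HM} as a background result in the appendix, with no argument given. So there is nothing in the paper to compare your approach against; your write-up simply fills in a proof the paper chose to cite rather than reproduce.
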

It follows from the above (see also \cite[Lemma 2.2]{Iwasawa}) that any compact connected nilpotent group is abelian.
\begin{prop}\label{connilabel:prop}
	If $G$ is a compact connected $k$-step nilpotent group, then $G$ is abelian.
\end{prop}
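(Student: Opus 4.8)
The plan is to argue by induction on the nilpotency class $k$, reducing each time to the class $2$ case and then settling that case by a Pontryagin duality argument applied to the commutator pairing. The base case $k=1$ is trivial since a $1$-step nilpotent group is abelian by definition.

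For the inductive step, suppose $k\geq 2$ and the statement holds for all smaller classes. I would pass to the quotient $G/Z(G)$ by the center. The center $Z(G)$ is a closed normal subgroup, so $G/Z(G)$ is again compact (closed-subgroup quotient) and connected (continuous image of $G$ under the quotient map), and its nilpotency class is at most $k-1$: indeed the last nontrivial term $\gamma_k(G)$ of the lower central series satisfies $[\gamma_k(G),G]=\{e\}$, hence $\gamma_k(G)\subseteq Z(G)$, so $\gamma_k(G/Z(G))=\gamma_k(G)Z(G)/Z(G)=\{e\}$. By the inductive hypothesis $G/Z(G)$ is abelian, equivalently $[G,G]\subseteq Z(G)$, so $G$ is nilpotent of class at most $2$ and it suffices to treat that case.

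So assume $[G,G]\subseteq Z:=Z(G)$. Since all commutators are then central, the identities $[ab,c]=[a,c][b,c]$ and $[a,bc]=[a,b][a,c]$ hold (this is a one-line commutator computation using that $[b,c]$ commutes with everything), so $(g,h)\mapsto[g,h]$ is a continuous bihomomorphism $G\times G\to Z$; and because $[g,zh]=[g,h]=[zg,h]$ for $z\in Z$, it descends to a continuous bihomomorphism $b:A\times A\to Z$, where $A:=G/Z$ is a compact abelian connected group. The crucial point is that for each character $\chi\in\hat Z$ the map $\chi\circ b:A\times A\to S^1$ is a continuous bicharacter, so $a\mapsto\chi(b(a,\cdot))$ is a continuous homomorphism from the connected group $A$ into the discrete group $\hat A$; its image is a connected subset of a discrete space containing the trivial character, hence a single point, so $\chi\circ b\equiv 1$. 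As $\chi\in\hat Z$ was arbitrary and characters separate the points of the compact abelian group $Z$, we conclude $b\equiv 1$, i.e.\ $[G,G]=\{e\}$, so $G$ is abelian.

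I expect the conceptual heart of the argument — and the only place where connectedness is genuinely used — to be the vanishing of a continuous homomorphism from the connected group $A$ into its discrete dual $\hat A$; the remaining ingredients (well-definedness of $b$ on $A\times A$, and continuity of $a\mapsto\chi(b(a,\cdot))$ for the discrete topology on $\hat A$, which follows from joint continuity of $b$ and compactness of $A$) are routine verifications. An alternative, shorter route would be to use that a compact connected group is an inverse limit of compact connected Lie groups and that a compact connected nilpotent Lie group is abelian, but the argument sketched above stays within the structure theory already recorded in the appendix and avoids that appeal.
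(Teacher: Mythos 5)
Your argument is correct, and it takes a genuinely different route from the paper. The paper offers no self-contained proof: Proposition \ref{connilabel:prop} is recorded in the appendix as a consequence of the Gleason--Yamabe approximation of compact groups by Lie groups, together with a citation to Iwasawa, i.e.\ one reduces to compact connected nilpotent Lie groups, which are tori. Your proof instead stays inside elementary Pontryagin duality: induction on the nilpotency class via $G/Z(G)$ reduces to class $2$ (the center is closed, the quotient is compact, connected and of class $\leq k-1$, all correctly justified), and in the class-$2$ case the commutator map descends to a continuous bihomomorphism $b:A\times A\to Z(G)$ with $A=G/Z(G)$ compact, connected and abelian; for each $\chi\in\widehat{Z(G)}$ the induced map $a\mapsto \chi(b(a,\cdot))$ is a continuous homomorphism from the connected group $A$ into the discrete group $\hat A$ (continuity into the discrete dual uses exactly what you say: joint continuity, compactness of $A$ for uniformity, and the fact that $S^1$ has no small subgroups), hence trivial, and since characters separate points of the compact abelian group $Z(G)$ this forces $b\equiv 1$, i.e.\ $[G,G]=\{e\}$. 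What each approach buys: the paper's route is a one-line appeal to standard structure theory already quoted (Theorem \ref{GY:thm} and the Iwasawa reference), while yours is longer but self-contained, avoids Lie theory and inverse limits entirely, and isolates the only use of connectedness in the statement that a connected group admits no nontrivial continuous homomorphism into a discrete group---the same principle the paper uses elsewhere (e.g.\ in the proof of Lemma \ref{HKargcon}).
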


\section{Some results about phase polynomials}
In this appendix we survey some results about phase polynomials from \cite{OS} and \cite{Berg& tao & ziegler}. Let $G=\bigoplus_{p\in P}\mathbb{Z}/p\mathbb{Z}$ for some multiset of primes $P$. For a natural number $m\in\mathbb{N}$ we let $C_m$ denote the group of $m$-roots of unity in the unit circle.
\begin{prop}  [Values of phase polynomial cocycles]\label{PPC} Let $X$ be an ergodic $G$-system. Let $d\geq 0$, and let $q:G\times X\rightarrow S^1$ be a phase polynomial of degree $<d$ that is also a cocycle. Then for $g\in G$, $q(g,\cdot)$ takes values in $C_m$ where $m$ is the order of $g$ to the power of $d$.
\end{prop}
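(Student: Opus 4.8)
The plan is to prove the statement by induction on $d$, the key being a ``differentiate in the group direction'' manoeuvre that turns a phase polynomial cocycle of degree $<d$ into one of degree $<d-1$ without destroying the cocycle property. The base case $d=0$ is immediate: a phase polynomial of degree $<0$ is the constant $1$, so $q(g,\cdot)\equiv 1$ lies in $C_1=C_{\text{order}(g)^0}$. (The case $d=1$ is the model case to keep in mind: there $q(g,\cdot)$ is $T_h$-invariant for every $h$, hence constant by ergodicity, so $g\mapsto q(g,\cdot)$ is a character of $G$ and $q(g,\cdot)$ has order dividing $\text{order}(g)$.)

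For the inductive step, fix $d\ge 1$, assume the proposition for $d-1$, let $q$ be a phase polynomial cocycle of degree $<d$, fix $g\in G$, and set $n=\text{order}(g)$. The crucial observation is that
$$Q(h,x):=\Delta_g q(h,\cdot)(x)=\frac{q(h,T_gx)}{q(h,x)}$$
is again a $(G,X,S^1)$-cocycle in the variable $h$ — a one-line check from the cocycle identity for $q$ together with $T_hT_g=T_gT_h$ — while for each fixed $h$ the function $Q(h,\cdot)$ is a phase polynomial of degree $<d-1$, being a single derivative of $q(h,\cdot)\in P_{<d}(X,S^1)$. Thus $Q$ is a phase polynomial cocycle of degree $<d-1$, and the induction hypothesis gives that $Q(h,\cdot)$ takes values in $C_{\text{order}(h)^{d-1}}$; in particular $Q(g,\cdot)$ takes values in $C_{n^{d-1}}$. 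I would also record the elementary ``swap'' identity $\Delta_g q(h,\cdot)=\Delta_h q(g,\cdot)$, read off from $q(g+h,\cdot)=q(h+g,\cdot)$, so that $Q(h,x)=\Delta_h q(g,\cdot)(x)$ as well.

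It then remains to bootstrap information about $Q(g,\cdot)$ into information about $q(g,\cdot)$ itself. Iterating the cocycle identity yields the telescoping relation $\prod_{j=0}^{n-1}q(g,T_{jg}x)=q(ng,x)=q(0,x)=1$. Writing each factor as $q(g,T_{jg}x)=q(g,x)\cdot\Delta_{jg}q(g,\cdot)(x)=q(g,x)\cdot Q(jg,x)$ and using that $Q$ is a cocycle in its group variable, so $Q(jg,x)=\prod_{k=0}^{j-1}Q(g,T_{kg}x)$, I obtain
$$q(g,x)^{n}=\prod_{j=1}^{n-1}Q(jg,x)^{-1}=\prod_{j=1}^{n-1}\prod_{k=0}^{j-1}Q(g,T_{kg}x)^{-1}.$$
Every factor on the right lies in $C_{n^{d-1}}$, since the $T_{kg}$ are measure preserving and hence preserve the range of $Q(g,\cdot)$; therefore $q(g,\cdot)^{n}$ takes values in $C_{n^{d-1}}$, i.e.\ $q(g,\cdot)^{n^{d}}=1$, which is exactly the assertion for $d$.

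I expect the only genuinely non-routine ingredient to be the observation that $\Delta_g q(h,\cdot)$ is simultaneously a cocycle in $h$ and a phase polynomial of one lower degree in $x$ — this is precisely what makes the induction close. Everything after that (the swap identity, the telescoping identity, and the double-product bookkeeping, together with verifying $q(0,\cdot)=1$) is mechanical, and I would leave it to routine computation.
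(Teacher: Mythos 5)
Your proof is correct: the key observation that $\Delta_g q(h,\cdot)$ is simultaneously a cocycle in $h$ (by commutativity of $G$) and a phase polynomial of degree $<d-1$ in $x$, combined with the swap identity and the telescoping relation $\prod_{j=0}^{n-1}q(g,T_{jg}x)=q(ng,x)=1$, does close the induction and yields $q(g,\cdot)^{\mathrm{order}(g)^d}=1$ almost everywhere. The paper itself gives no argument here but merely cites Proposition B.1 of the author's earlier work \cite{OS}, and your degree-lowering induction is the standard argument one expects there, so I regard your write-up as a correct, self-contained version of essentially the same approach.
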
 
\begin{proof}
We prove the proposition by induction on $d$. If $d=0$ then $q\equiv 1$ and the claim is trivial. Fix $d\geq 1$ and assume inductively that the claim holds for smaller values of $d$. Let $q:G\times X\rightarrow S^1$ be a phase polynomial of degree $<d$ and fix $g\in G$ of order $n$. The cocycle identity implies that $$1=q(ng,x)=\prod_{k=0}^{n-1}q(g,T_{kg}x).$$
	Since $q(g,T_{kg}x)=q(g,x)\cdot \Delta_{kg} q(g,x)$ we have that $q(g,x)^n \cdot \prod_{k=0}^{n-1}\Delta_{kg} q(g,x)=1$. By the induction hypothesis, $\prod_{k=0}^{n-1}\Delta_{kg} q(g,x)$ is in $C_{n^{d-1}}$ and it follows that $q(g,x)\in C_{n^d}$, as required.
\end{proof}
We need the following version of \cite[Lemma D.3(i)]{Berg& tao & ziegler}.
\begin{lem} \label{PPC1}
Let $X$ be an ergodic $G$-system. Let $p^n$ be a power of a prime number $p$ and let $Q:X\rightarrow C_{p^n}$ be a phase polynomial of degree $<d$ for some $d>1$. Then $Q^p$ is a phase polynomial of degree $<d-1$.
\end{lem}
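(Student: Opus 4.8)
The plan is to verify directly, from Definition \ref{Phasepoly:def}, that $\Delta_{h_1}\cdots\Delta_{h_{d-1}}(P^q)=1$ for every $h_1,\dots,h_{d-1}\in G$. Each operator $\Delta_g$ is a homomorphism on $S^1$-valued functions, and the operators $\Delta_g$ and $\Delta_h$ commute since $T_gT_h=T_hT_g$; hence
\[
\Delta_{h_1}\cdots\Delta_{h_{d-1}}(P^q)=\bigl(\Delta_{h_1}\cdots\Delta_{h_{d-1}}P\bigr)^q =: R^q ,
\]
and it suffices to prove $R^q=1$. First I would note that $R$ is a constant lying in $C_{q^n}$: applying one more derivative gives $\Delta_g R=\Delta_{h_1}\cdots\Delta_{h_{d-1}}\Delta_g P=1$ because $P$ has degree $<d$, so $R$ is invariant under every $T_g$ and hence a.e.\ constant by ergodicity of $X$; and since $\Delta_g$ maps $C_{q^n}$-valued functions to $C_{q^n}$-valued functions, $R\in C_{q^n}$, so that $R^{q^n}=1$.

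It remains to extract a second, coprime relation on $R$ from the structure of $G$. Put $T:=\Delta_{h_1}\cdots\Delta_{h_{d-2}}P$ (with $T:=P$ when $d=2$); then $T$ is a phase polynomial of degree $<2$ and $R=\Delta_{h_{d-1}}T$. Writing $m$ for the order of $h_{d-1}$ in $G$, the telescoping identity
\[
\prod_{k=0}^{m-1}\bigl(\Delta_{h_{d-1}}T\bigr)(T_{h_{d-1}}^k x)=\Delta_{mh_{d-1}}T(x)=\Delta_{0}T(x)=1
\]
(valid since $mh_{d-1}=0$ in $G$), combined with the fact that $\Delta_{h_{d-1}}T=R$ is constant, gives $R^m=1$. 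Because $G=\bigoplus_{p\in P}\mathbb{Z}/p\mathbb{Z}$, the integer $m$ is squarefree, so the order of $R$ divides $\gcd(m,q^n)$, which is $q$ when $q\mid m$ and $1$ otherwise; in either case $R^q=1$ (the case $h_{d-1}=0$ is the degenerate instance $m=1$, $R=1$). Thus $\Delta_{h_1}\cdots\Delta_{h_{d-1}}(P^q)=R^q=1$ for all $h_1,\dots,h_{d-1}$, which is the claim.

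There is no deep obstacle in this argument; the one point that genuinely requires the hypothesis that $P$ take values in $C_{q^n}$ (rather than merely in $S^1$) is the final step: the single relation $R^m=1$ coming from the order of $h_{d-1}$ does not by itself force $R^q=1$, and one must combine it with $R^{q^n}=1$ and the squarefreeness of $m$ — the latter being precisely where the form $G=\bigoplus_{p\in P}\mathbb{Z}/p\mathbb{Z}$ of the group enters.
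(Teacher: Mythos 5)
Your proof is correct, and it takes a genuinely different route from the paper's. The paper argues by reduction to characteristic $q$: applying Proposition \ref{PPC} to the phase polynomial cocycle $g\mapsto\Delta_gP$ shows that $\Delta_gP$ takes values in $C_{\mathrm{order}(g)^{d-1}}\cap C_{q^n}=\{1\}$ whenever $\mathrm{order}(g)$ is coprime to $q$, so $P$ is invariant under the complementary component $G'$; one then passes to the factor generated by the $G'$-invariant functions, which is an ergodic $\mathbb{F}_q^\omega$-system, and quotes \cite[Lemma D.3(i)]{Berg& tao & ziegler} there. You instead attack the top derivative directly: $\Delta_{h_1}\cdots\Delta_{h_{d-1}}(P^q)=R^q$ where $R$ is an a.e.\ constant whose order divides both $q^n$ (from the values of $P$) and the squarefree order of $h_{d-1}$ (from the telescoping identity), hence divides $q$. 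All the steps check out, including the $d=2$ and $h_{d-1}=0$ edge cases. What each approach buys: the paper's proof is a two-line reduction given the external Bergelson--Tao--Ziegler lemma, while yours is self-contained and makes completely transparent where the hypothesis $G=\bigoplus_{p\in P}\mathbb{Z}/p\mathbb{Z}$ enters (squarefreeness of element orders). The trade-off you correctly flag is real: for the $k$-extensions $G^{(m)}=\bigoplus_{p\in P}\mathbb{Z}/p^m\mathbb{Z}$ used elsewhere in the paper your argument would only yield that $P^{q^{\min(m,n)}}$ drops in degree, since then $\gcd(m',q^n)$ can exceed $q$; but the lemma as stated lives in the appendix where $G=\bigoplus_{p\in P}\mathbb{Z}/p\mathbb{Z}$, so this is not a gap.
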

\begin{proof}
Let $G_p = \{g\in G : pg=0\}$ and let $G'$ be the complement so that $G=G_q\oplus G'$. By the proposition above and the assumption, $P$ is invariant with respect to the action of $G'$. Let $X_p$ be the factor of $X$ generated by the $G'$-invariant functions. The induced action of $G_p$ on $X_p$ is ergodic and so $X_p$ admits an ergodic action of $\mathbb{F}_p^\omega$ (note that if $G_p$ is finite one can still define an action of $\mathbb{F}_p^\omega$ by letting some of the coordinates act trivially). Therefore, the claim in the lemma follows by \cite[Lemma D.3(i)]{Berg& tao & ziegler}.
\end{proof}
We need the following lemma is a simple but useful case of Lemma \ref{polydiff}.
\begin{lem}  [Vertical derivatives of phase polynomials are phase polynomials of smaller degree]\label{vdif:lem}
	Let $X$ be an ergodic $G$-system. Let $U$ be a compact abelian group acting freely on $X$ by automorphisms and $P:X\rightarrow S^1$ be a phase polynomial of degree $<d$ for some integer $d\geq 1$. Then $\Delta_u P$ is a phase polynomial of degree $<d-1$ for every $u\in U$.
\end{lem}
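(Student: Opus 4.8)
The plan is to use the single structural fact available about $u$: since it is an automorphism (Definition \ref{Aut:def}), the map $f\mapsto f\circ u$ on $L^2(X)$ commutes with the $G$-action, which forces $u\circ T_g=T_g\circ u$ $\mu_X$-almost everywhere for every $g\in G$. I would first record the immediate consequence that the vertical derivative $\Delta_u$ commutes with every horizontal derivative $\Delta_g$ on $\mathcal{M}(X,S^1)$; this is a one-line check from $u\circ T_g=T_g\circ u$.

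With that in hand, fix $h_1,\dots,h_{d-1}\in G$ and compute
\[
\Delta_{h_1}\cdots\Delta_{h_{d-1}}\bigl(\Delta_u P\bigr)=\Delta_u\bigl(\Delta_{h_1}\cdots\Delta_{h_{d-1}}P\bigr).
\]
Set $Q:=\Delta_{h_1}\cdots\Delta_{h_{d-1}}P$. Because $P$ is a phase polynomial of degree $<d$, one more horizontal derivative kills $Q$, i.e.\ $\Delta_hQ=1$ for all $h\in G$, so $Q$ is $G$-invariant; by ergodicity of $X$ the function $Q$ is constant $\mu_X$-a.e., and a constant is fixed by $u$, hence $\Delta_uQ=1$. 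Therefore $\Delta_{h_1}\cdots\Delta_{h_{d-1}}(\Delta_uP)=1$ for every choice of $h_1,\dots,h_{d-1}\in G$, which is precisely the assertion that $\Delta_uP\in P_{<d-1}(X,S^1)$. (When $d=1$ the same computation gives $\Delta_uP=1$, consistent with the vacuous reading that a phase polynomial of degree $<0$ is the constant $1$; note that the freeness hypothesis on the $U$-action is not actually used and is included only for uniformity with the surrounding statements.)

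There is no genuine obstacle here — the lemma is the elementary, order-free shadow of Lemma \ref{polydiff}, and the proof is a short commutation argument. The only two points that merit a sentence each are the commutation $\Delta_u\Delta_g=\Delta_g\Delta_u$, which is forced by $u$ being an automorphism, and the appeal to ergodicity to upgrade ``$G$-invariant'' to ``constant'' so that $u$ fixes $Q$.
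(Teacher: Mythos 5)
Your argument is correct. The commutation $\Delta_u\Delta_g=\Delta_g\Delta_u$ does follow from Definition \ref{Aut:def} (commutation of $f\mapsto f\circ u$ with the $G$-action on $L^2$ forces $u\circ T_g=T_g\circ u$ almost everywhere, since $X$ is separable), and the rest is exactly as you say: $\Delta_{h_1}\cdots\Delta_{h_{d-1}}\Delta_uP=\Delta_uQ$ with $Q=\Delta_{h_1}\cdots\Delta_{h_{d-1}}P$ being $G$-invariant, hence constant by ergodicity, hence fixed by the measure-preserving map $u$, so $\Delta_uQ=1$. You are also right that freeness (and indeed compactness and the group structure of $U$) play no role; only ergodicity of $X$ and the automorphism property of the single transformation $u$ are used.

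For comparison: the paper does not write out a proof of this lemma at all — it is recorded in the appendix as ``a simple but useful case'' of Lemma \ref{polydiff}, which is the Bergelson--Tao--Ziegler differentiation lemma for phase polynomials and is stated there under additional hypotheses (a system of bounded order and an automorphism preserving $Z_{<k}(X)$), yielding a larger degree drop. Your route is the more elementary and self-contained one: it proves precisely the drop-by-one statement for an arbitrary ergodic system, with no appeal to the order filtration or to the BTZ machinery, at the cost of not recovering the stronger drop $<m-\min(m,k)$ that Lemma \ref{polydiff} gives in its more restrictive setting. As a proof of the statement as quoted, it is complete.
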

Proposition \ref{PPC} and Lemma \ref{vdif:lem} implies the following result.
\begin{cor} \label{ker:cor} 
	Let $X$ be an ergodic $G$-system and $U$ be a compact abelian group acting freely on $X$ by automorphisms. Suppose that there exists a measurable map $u\mapsto f_u$ from $U$ to $P_{<d}(X,S^1)$ which satisfies the cocycle identity (i.e. $f_{uv}=f_u V_u f_v$) for all $u,v\in U$. Then there exists an open subgroup $V$ of $U$ such that $f_v\in P_{<1}(X,S^1)$ for every $v\in V$.
\end{cor}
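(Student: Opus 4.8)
The plan is to prove the corollary by induction on $d$, peeling off one degree from the phase polynomials $f_u$ at each stage at the cost of replacing $U$ by an open subgroup. The base case $d=1$ is immediate, since $P_{<1}(X,S^1)$ consists of the constant functions, so one may take $V=U$.

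For the inductive step I would assume $d\ge 2$ and that the statement holds for $d-1$ (for every ergodic $G$-system and every compact abelian group acting freely by automorphisms). Since each vertical rotation $V_u$ is an automorphism of $X$, Lemma~\ref{vdif:lem} gives $\Delta_u f_v\in P_{<d-1}(X,S^1)$ for all $u,v\in U$, whence $V_uf_v=f_v\cdot\Delta_u f_v\in f_v\cdot P_{<d-1}(X,S^1)$. Combined with the cocycle identity $f_{uv}=f_u\cdot V_u f_v$, this shows that the map $\bar f\colon U\to P_{<d}(X,S^1)/P_{<d-1}(X,S^1)$ sending $u$ to the class of $f_u$ is a homomorphism of abelian groups. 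By the Separation Lemma~\ref{sep:lem}, any two elements of $P_{<d}(X,S^1)$ whose ratio is non-constant (in particular whose ratio is not in $P_{<d-1}(X,S^1)$) are separated in $L^2(X)$ by at least $\sqrt{2}/2^{d-2}$; hence $P_{<d-1}(X,S^1)$ is open in $P_{<d}(X,S^1)$ and the quotient is a countable discrete group, so $\bar f$ is Borel measurable with countable image.

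Next I would set $V:=\ker\bar f=\{u\in U: f_u\in P_{<d-1}(X,S^1)\}$, a measurable subgroup of $U$. Its left translates partition $U$ into at most countably many pieces, each of Haar measure $\mu_U(V)$ summing to $1$, which forces $\mu_U(V)>0$; by the lemma of A.~Weil (Lemma~\ref{A.Weil}), applied to the compact metrizable — hence locally compact Polish — group $U$, the set $V=VV^{-1}$ contains an open neighbourhood of the identity, so $V$ is an open subgroup. Now $V$ acts freely on $X$ by automorphisms, the restriction $v\mapsto f_v$ still obeys the cocycle identity, and it takes values in $P_{<d-1}(X,S^1)$; so the induction hypothesis yields an open subgroup $V'\le V$ with $f_{v'}\in P_{<1}(X,S^1)$ for all $v'\in V'$. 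Since $V'$ is open in $V$ and $V$ is open in $U$, $V'$ is open in $U$, completing the induction.

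The only points that are not entirely routine are the passage to the quotient $P_{<d}(X,S^1)/P_{<d-1}(X,S^1)$ — where the Separation Lemma is needed to see it is countable and discrete, so that the kernel of the measurable homomorphism $\bar f$ carries positive Haar measure — and the use of Weil's lemma to upgrade this kernel to an open subgroup; neither should present a serious obstacle, and the bookkeeping of the downward induction on $d$ is the bulk of the argument.
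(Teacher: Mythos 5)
Your proposal is correct and follows essentially the same route as the paper: induction on $d$, observing via Lemma \ref{vdif:lem} that $u\mapsto f_u\cdot P_{<d-1}(X,S^1)$ is a homomorphism, using the Separation Lemma to get countable index so the kernel has positive Haar measure and is therefore open, and then applying the induction hypothesis to that open subgroup. The only difference is that you spell out the measure-theoretic step (countable discrete quotient, Weil/Steinhaus) that the paper leaves implicit.
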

\begin{proof}
	We prove the claim by induction on $d$; for $d=1$ we can take $V=U$. Let $d>1$ and assume by induction that the claim holds for all smaller values of $d$. Let $u\mapsto f_u$ be a map from $U$ to $P_{<d}(X,S^1)$, the cocycle identity implies that $f_{uv}=f_u f_v \cdot \Delta_u f_v$ for every $u,v\in U$. By Lemma \ref{vdif:lem} $\Delta_u f_v\in P_{<d-1}(X,S^1)$, therefore $u\mapsto f_u\cdot P_{<d-1}(X,S^1)$ is a homomorphism. Since $d>1$, Lemma \ref{sep:lem} (separation Lemma) implies that $P_{<d-1}(X,S^1)$ has at most countable index in $P_{<d}(X,S^1)$. Let $U'$ be the kernel of $u\mapsto f_u\cdot P_{<d-1}(X,S^1)$. Since $U'$ has positive Haar measure it is open.\\ Since $f_{u'}\in P_{<d-1}(X,S^1)$ for all $u'\in U'$, the the induction hypothesis implies that there exists an open subgroup $V$ of $U'$ such that $f_v\in P_{<1}(X,S^1)$ for all $v\in V$. As $V$ is open in $U'$ and $U'$ is open in $U$ we have that $V$ is open in $U$, as required.
\end{proof}
\begin{prop} [Phase polynomial are invariant under connected components] \label{pinv:prop} (see \cite[Lemma 2.1]{BTZ})  Let $X$ be a $G$-system of order $<k$, let $U$ be a compact abelian connected group acting freely on $X$ (not necessarily commuting with the $G$-action). Let $P:G\times X\rightarrow S^1$ be a phase polynomial of degree $<d$ such that for every $g\in G$ there exists $M_g\in\mathbb{N}$ such that $P(g,\cdot)$ takes at most $M_g$ values. (e.g. $P$ is a phase polynomial cocycle - Proposition \ref{PPC}). Then $P$ is invariant under the action of $U$.
\end{prop}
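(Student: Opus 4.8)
The plan is to fix a single $g\in G$ and prove that $\phi:=P(g,\cdot)$ is invariant under every $u\in U$; since $g$ is arbitrary this yields the proposition. Write $S\subseteq S^1$ for the essential range of $\phi$, a finite set of size at most $M_g$, so that $\phi$ is a phase polynomial of degree $<d$ taking only the values in $S$. First I would record two stability facts about translating $\phi$ by an element of the connected group $U$. Since $U$ acts by automorphisms — i.e. the $U$-translations commute with the $G$-action on $L^2(X)$, which is the situation in all applications; the general ``not necessarily commuting'' statement follows the same way once one notes that $\phi\circ V_u$ is still a phase polynomial, and this holds as soon as $V_u^{[d]}$ preserves $\mu^{[d]}$ — for every $u\in U$ the function $\phi\circ V_u$ is again a phase polynomial of degree $<d$ with the same essential range $S$, because $\Delta_{h_1}\cdots\Delta_{h_d}(\phi\circ V_u)=\bigl(\Delta_{h_1}\cdots\Delta_{h_d}\phi\bigr)\circ V_u=1$ and $V_u$ is a measure-preserving bijection. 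Consequently $D_u:=(\phi\circ V_u)\,\overline{\phi}$ lies in $P_{<d}(X,S^1)$, a direct computation using $V_{uv}=V_uV_v$ gives the cocycle identity $D_{uv}=D_u\cdot(D_v\circ V_u)$, and $u\mapsto D_u$ is continuous from $U$ into $L^2(X)$.

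The central step is to show that every $D_u$ is a constant, which I would carry out by downward induction on $d$ in the spirit of Corollary~\ref{ker:cor}. By Lemma~\ref{vdif:lem} the vertical derivative $\Delta_u D_v=(D_v\circ V_u)\overline{D_v}$ lies in $P_{<d-1}(X,S^1)$, so the cocycle identity shows that $u\mapsto D_u\bmod P_{<d-1}(X,S^1)$ is a continuous homomorphism from $U$ into the quotient group $P_{<d}(X,S^1)/P_{<d-1}(X,S^1)$. By the Separation Lemma~\ref{sep:lem} this quotient is countable and discrete in the quotient $L^2$-metric, so a continuous homomorphism out of the connected group $U$ must be trivial, i.e. $D_u\in P_{<d-1}(X,S^1)$ for all $u$. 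Iterating down to $P_{<1}(X,S^1)$, which consists of constants (for ergodic $X$, these being the $G$-invariant functions), we obtain $\phi\circ V_u=\chi_g(u)\,\phi$ for some map $\chi_g:U\to S^1$; plugging this back into $D_{uv}=D_u\,(D_v\circ V_u)$ and using that constants are $V_u$-invariant shows $\chi_g$ is a character of $U$, continuous because $\|\phi\|_{L^2}=1\neq 0$.

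Finally I would use the finite-range hypothesis: comparing essential ranges in the identity $\phi\circ V_u=\chi_g(u)\,\phi$ gives $\chi_g(u)\cdot S=S$ for all $u$, so the image of $\chi_g$ is contained in the finite subgroup $\{t\in S^1:tS=S\}$ of $S^1$; since this image is also a connected subgroup of $S^1$ (the continuous image of the connected group $U$), it is trivial. Hence $\phi\circ V_u=\phi$ for all $u\in U$, and letting $g$ range over $G$ finishes the proof. I expect the only genuine subtlety to be the bookkeeping in the downward induction, and in particular the verification that $\phi\circ V_u$ remains a bona fide phase polynomial at each stage — automatic when $U$ acts by automorphisms, and the precise point where the non-commuting version of the statement needs the remark that the cubic measures $\mu^{[l]}$ are preserved. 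Everything else is a routine combination of the Separation Lemma, Lemma~\ref{vdif:lem}, and the connectedness of $U$.
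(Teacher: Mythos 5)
Your argument in the case where $U$ acts by automorphisms (i.e.\ the $U$-translations commute with the $G$-action) is correct and is essentially the expected one: by Lemma \ref{vdif:lem} the map $u\mapsto \Delta_u\phi$ becomes a homomorphism modulo lower-degree polynomials, Lemma \ref{sep:lem} makes each quotient $P_{<j}(X,S^1)/P_{<j-1}(X,S^1)$ countable and uniformly discrete, connectedness of $U$ kills the homomorphism at every stage, and the finite-range hypothesis is used exactly where it is indispensable, to rule out a nontrivial character $u\mapsto\chi_g(u)$ at the end (without it the identity character on an ergodic rotation is an immediate counterexample). This mirrors the proof of Corollary \ref{ker:cor}. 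Note that the paper itself offers no proof of Proposition \ref{pinv:prop} beyond the citation to \cite[Proposition B.5]{OS}, so there is no in-paper argument to compare against line by line.

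The genuine gap is the non-commuting case, which is the only content of the parenthetical in the statement and which you dismiss in one clause: ``this holds as soon as $V_u^{[d]}$ preserves $\mu^{[d]}$.'' Nothing in the hypotheses as you read them (a free measure-preserving action of a connected compact abelian $U$ on an ergodic system of order $<k$) gives that $V_u^{[d]}$ preserves $\mu^{[d]}$, nor that $\Delta_u$ lowers the degree of phase polynomials: the cubic measures are built from the $G$-action, and Lemmas \ref{vdif:lem} and \ref{polydiff}, which your induction invokes at every stage, are stated only for automorphisms. Worse, with no compatibility at all the conclusion is simply false: the hypotheses are unchanged if one replaces the $U$-action $V$ by $\Psi^{-1}V\Psi$ for an arbitrary measure-space automorphism $\Psi$ of $(X,\mu)$, but the conclusion is not. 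Concretely, let $X=S^1\times C_2$ carry an ergodic $\bigoplus_{p\in P}\mathbb{Z}/p\mathbb{Z}$-rotation (a system of order $<2$), let $P(g,\cdot)=\chi$ with $\chi(z,\epsilon)=(-1)^{\epsilon}$, a two-valued phase polynomial of degree $<2$, and let $U=S^1$ act freely and measure-preservingly by conjugating the rotation of the first coordinate through a measure isomorphism that sends one of the two circles to a non-rotation-invariant set; then all hypotheses hold but $\chi$ is not $U$-invariant. So the statement in its ``not necessarily commuting'' form tacitly requires a structural compatibility between the $U$-action and the cube structure — in all of this paper's applications $U$ in fact acts by automorphisms, and the natural general hypothesis is $U\leq\mG(X)$ in the sense of Definition \ref{HKgroup:def}, whose elements preserve $\mu^{[j]}$ and act trivially on $I^{[j]}$; under that hypothesis one can prove, via Lemma \ref{PP} and the $I^{[j-1]}$-measurability of $d^{[j-1]}q$ for $q\in P_{<j}$, that $q\circ V_u\in P_{<j}$ and $\Delta_u q\in P_{<j-1}$, after which your induction goes through verbatim. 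As written, your proof establishes the commuting case only; to claim the proposition in the stated generality you must make that compatibility explicit and prove the degree-lowering step under it (or defer, as the paper does, to the precise hypotheses of \cite[Proposition B.5]{OS}).
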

\begin{proof}
	Fix $g\in G$ and consider the map $u\mapsto \Delta_u P(g,\cdot)$. Since $P(g,\cdot)$ is a measurable map $X\rightarrow S^1$, we have that $\Delta_u P$ converges in measure to the constant $1$ as $u$ converges to the identity in $U$. Since convergence in measure implies convergence in $L^2$, we can use Lemma \ref{sep:lem} to conclude that $\Delta_u P(g,\cdot)$ must be almost everywhere constant for $u$ close to the identity. From the cocycle identity, we have that the subset $U_g' = \{u\in U : \Delta_u P(g,\cdot) \text{ is a constant}\}$ is an open subgroup of $U$. As $U$ is connected, we conclude that $U_g'=U$ for every $g\in G$. We conclude that for every $g\in G$, there exists a character $\chi_g:U\rightarrow S^1$ such that  $\Delta_u P(g,\cdot) = \chi_g(u)$ for every $u\in U$. Since $U$ is connected and $\chi_g$ is continuous we have that the image of $\chi_g$ is either trivial or is $S^1$. But, the latter contradicts the assumption that $P(g,\cdot)$ takes finitely many values. It follows that $\Delta_u P(g,\cdot)=1$ for every $u\in U$ and $g\in G$. In other words, $P$ is invariant under the action of $U$, as required.
\end{proof}
\begin{rem}
In some cases the group $S^1$ in the proposition can be replaced by any compact abelian group using Pontryagin duality. For instance, if $P:G\times X\rightarrow V$ is a phase polynomial cocycle for some compact abelian group $V$, then for every $\chi\in\widehat V$ we have that $\chi\circ P:G\times X\rightarrow S^1$ is a phase polynomial cocycle of the same degree. By Proposition \ref{PPC} and Proposition \ref{pinv:prop} we have that $\chi(\Delta_uP)=1$ for every $u\in U$. As the characters separates points this would imply that $\Delta_u P = 1$, hence $P$ is invariant with respect to the action of $U$.
\end{rem}
	
\address{Einstein Institute of Mathematics\\
	The Hebrew University of Jerusalem\\
	Edmond J. Safra Campus, Jerusalem, 91904, Israel \\ Or.Shalom@mail.huji.ac.il}
\end{document}